\documentclass[11pt,a4paper,reqno]{article}

\usepackage[utf8]{inputenc} 
\usepackage{lmodern}

\usepackage{caption}
\usepackage[left=2.2cm,
right=2.2cm,top=2.3cm,
bottom=3cm]{geometry}

\usepackage{graphicx} 

\usepackage{booktabs} % for much better looking tables
\usepackage{array} % for better arrays (eg matrices) in maths
\usepackage{paralist} % very flexible & customisable lists (eg. enumerate/itemize, etc.)
\usepackage{verbatim} % adds environment for commenting out blocks of text & for better verbatim
\usepackage{subfig} 
\usepackage{amsmath}
\usepackage{amsfonts}
\usepackage{amssymb}
\usepackage{mathrsfs}
\usepackage{amsthm, math}
\usepackage{soul}
\usepackage[colorinlistoftodos]{todonotes}
\usepackage{tikz-cd}
\usepackage{mathtools}
\usepackage{scrextend}
\usepackage{dsfont}

\usepackage{setspace}
\makeatletter
\renewcommand{\paragraph}{%
  \@startsection{paragraph}{4}{\z@}%
  {0.8ex}  % spazio sopra
  {-1em}   % testo subito inline
  {\normalfont\bfseries}%
}
\makeatother

\AtBeginDocument{%
  \setlength{\abovedisplayskip}{6.5pt} %Spazio sopra le formule
  \setlength{\belowdisplayskip}{6.5pt} %Spazio sotto le formule
    \setlength{\jot}{2pt} %Spazio nelle formule
}

\usepackage[rightcaption]{sidecap} 

\usepackage{blindtext}
\usepackage{hyperref}

\usepackage[nottoc,notlof,notlot]{tocbibind} % Put the bibliography in the ToC
\usepackage[titles,subfigure]{tocloft} % Alter the style of the Table of Contents

\DeclareMathOperator{\interior}{int}

\DeclareMathOperator{\rank}{rank} 
\DeclareMathOperator*{\toup}{\longrightarrow} 
\DeclareMathOperator*{\ttoup}{\llongrightarrow} 
 
\DeclareMathOperator*{\leftttoup}{\llongleftarrow}

\DeclareMathOperator*{\tttoup}{\xrightarrow{\hspace*{25pt}}}

\DeclareRobustCommand{\llongrightarrow}{\relbar\joinrel\relbar\joinrel\rightarrow}
\DeclareRobustCommand{\llongleftarrow}{\leftarrow\joinrel\relbar\joinrel\relbar}

\numberwithin{equation}{section}

\newtheorem{theorem}{Theorem}[section]
\newtheorem{proposition}[theorem]{Proposition}
\newtheorem{lemma}[theorem]{Lemma}
\newtheorem{corollary}[theorem]{Corollary}
\theoremstyle{definition}
\newtheorem{definition}[theorem]{Definition}
\newtheorem{remark}[theorem]{Remark}

\newtheorem{example}[theorem]{Example}

\title{\bf Equivariant critical point theory and bifurcation of $3d$ gravity-capillary Stokes waves}
\author{Tommaso Barbieri, Massimiliano Berti,  Marco Mazzucchelli}

\date{} 

\begin{document}

\maketitle

\begin{abstract}
We establish novel existence 
results of $3d$ gravity-capillary periodic traveling waves. 
In particular we prove the   
bifurcation of multiple, geometrically distinct truly $3d$ Stokes waves 
having  the {\it same} momentum of {\it any} non-resonant 
$2d$ Stokes wave.
This unexpected 
clustering phenomenon of Stokes waves, 
observed in physical fluids, is 
 a 
fundamental consequence of the  Hamiltonian nature of the water waves  
equations, their  symmetry groups,  and novel % purely 
topological   arguments. 
We first employ a variational Lyapunov-Schmidt reduction and we construct a natural constraint for the existence of critical points giving rise to Stokes waves with fixed momentum.
Although this construction appears singular 
near the $2d$-wave hyperplanes, we circumvent  this difficulty by fully 
exploiting the symmetries of the nonlinearity.  
Subsequently,  we implement  
equivariant  Morse-Conley theory 
for a Hamiltonian on a joined 
topological space invariant under a $2$-torus action. A primary challenge is distinguish  critical points that are genuinely $ 3d $ from $2d$-waves. 
 The present 
approach yields a complete  bifurcation picture of  $3d $ gravity-capillary Stokes waves.
\end{abstract}

    \smallskip
    
    \noindent 
    {{\it MSC 2020}: 76B15, 58E05, 35C07, 37K50, 58E07.} 

\smallskip

\noindent 
 {{\it Keywords}: Bifurcation of Stokes waves, equivariant Morse-Conley theory,   gravity-capillary water waves, Variational methods for Hamiltonian PDEs. 
 %equivariant %L\"usternik-Schnirelmann cuplength
 }

{  \footnotesize
\tableofcontents
}

\section{Introduction}

In 1847 Stokes \cite{stokes} discovered the nonlinear 
bifurcation of $ 2 d $-space  
% small amplitude 
periodic   traveling water waves solutions 
$  u(x-ct) $, nowadays referred to as Stokes waves, 
that are   
constant along the  direction
orthogonal to the propagation speed. 
These waves 
were the first global-in-time solutions ever discovered for dispersive PDEs.
Their first  rigorous mathematical existence 
proof  
was then achieved by Nekrasov \cite{Nek}, Levi-Civita \cite{LC}
and Struik \cite{Struik}   almost one century ago,  in \cite{goyon,dubreil} with vorticity and  in  \cite{Zei} including the effects of capillarity.   
Since then, numerous results 
have been obtained using a variety of innovative 
approaches, including global bifurcation theory, see e.g.
\cite{AFT,P1982,CSV,WW24,KK} and references therein. 
The literature
regarding  traveling water waves solutions 
% Stokes waves 
is immense and we 
refer to \cite{HHSTWWW} for extended presentations.

Mathematically rigorous results about genuinely $ 3d $-Stokes waves (namely non constant along any direction)
are less numerous; we refer to  \cite{HHS} for striking experimental realizations in  laboratory.  
The first bifurcation theorems of $ 3d$ gravity-capillary Stokes waves are due to Reeder-Shinbrot \cite{RS} for symmetric diamonds, under non-resonance
assumptions on the % resonant 
linear wave vectors,   extended  by Groves-Mielke \cite{GrovesMielke}, 
Groves-Haragus \cite{GrovesHaragus},  
Bagri-Groves \cite{BG}, Nilsson \cite{Nil}, via a spacial dynamics approach, also for non symmetric diamonds. We mention that 
Theorem 5 in  
\cite{GrovesHaragus}
deals with also a 
resonant case,  % ultimately 
applying the Weinstein-Moser %  Lyapunov
center theorem.
% to a reduced Hamiltonin. 

The general resonant  case
when multiple, arbitrarily many,
 linear wave vectors correspond to the same % bifurcation 
resonant speed $c_* $ 
 %--that we refer to as resonant speed--, 
has been addressed by 
Craig-Nicholls \cite{CN,CN2} by  
a variational approach,  demonstrating the occurrence of % bifurcating
small amplitude 
$3d$ % solutions 
gravity-capillary Stokes  waves 
with  
 momentum {\it non} collinear to any resonant wave vector;  
this elegant work  constitutes the most relevant precursor to our results as we comment below.  We also quote 
the   recent bifurcation results \cite{LSW, GNPW} of  doubly periodic gravity-capillary non-resonant  Beltrami flows
and with small non-zero  vorticity in \cite{SVW}.
Bifurcation of $3d$ Stokes waves in the pure gravity case is a small divisor problem and has been addressed in  Iooss-Plotnikov \cite{IP-Mem-2009,IP2}. 

\smallskip

In  this  paper we prove new bifurcation results of $3d$ gravity-capillary  Stokes waves periodic over 
an arbitrary   $2$-dimensional lattice $ \Gamma $ of $ \R^2 $. 
The main one is the following (see Theorem \ref{Existence of 3d solutions collinear nonresonant} for a precise formulation): 
\begin{quote}
\emph{For any value of 
gravity $ g >0 $, surface tension $ \kappa > 0 $, depth 
$ \tth \in (0,+\infty] $, any 
arbitrary lattice 
$ \Gamma \subset \R^2 $, 
and any resonant speed $ c_* $, 
there exist multiple genuinely
$3d$ Stokes waves  spatially  $ \Gamma $-periodic,  having the \emph{same} momentum 
 of a non-resonant $ 2 d $ Stokes wave,
thus  \emph{collinear} to some linear  wave vector.}   
\end{quote}
The emergence  
of such traveling waves  
follows by 
purely topological arguments based on  equivariant Morse-Conley theory for a Hamiltonian  invariant under the ($ \mathbb{T}^2_{\Gamma}
\rtimes \Z_2 $)-action of the  group generated by the  space translations
$\T^2_\Gamma:=\R^2/\Gamma $   and  the $ \Z_2 $-symmetry 
induced 
by the 
reversible nature of the water waves equations,  
see comments   below \eqref{XI}. 
Theorem  \ref{th:noncollinear}
improves 
\cite{CN} about existence and multiplicity  of 
Stokes waves with momentum non-collinear with any linear wave vector; actually, 
in this case,  Theorem 
\ref{th:(u,c) and cI}-($\mathfrak{I}$)
implies that 
any Stokes wave is $ 3$-dimensional.
Theorem \ref{th:(u,c) and cI} provides non-existence results of 
$ 2 d $ or $ 3d $ 
small amplitude Stokes waves,  according to the value of its momentum.
Altogether these results provide the  complete 
bifurcation scenario 
   of    $3d$  gravity-capillary  Stokes waves, as we 
  detail  in Section \ref{sec:main results}.

\smallskip 

The variational 
principle behind the
proof of 
Theorem \ref{Existence of 3d solutions collinear nonresonant}
is roughly the following. 
After a  variational Lyapunov-Schmidt procedure we construct a ``natural constraint"  by a ``Weinstein-Moser" choice of the  speed $ c $, reducing  
the problem  
to the search of stationary points of a Hamiltonian function 
$  H : \MA \to \R  $ 
defined on the 
level sets $ \MA $ 
of the ``reduced momentum".
To construct $ c(v) $  
in a full neighborhood of zero 
we must  exploit  the symmetries of the nonlinearity to compensate for  the singularities  of the linear part near the 
$2d$-waves hyperplanes.
For values  of the momentum $a$ collinear with a resonant wave vector, we show that 
$ \MA $ is equivariantly homeomorphic to 
the  compact $ \T^2_\Gamma$-topological space 
\begin{equation}
\label{casoin}
\MA  \cong \big( \, 
\underbrace{S^{2d_- -1} \times 
S^{2d_+-1}}_{3d-\text{waves}} \, \big)  \star \underbrace{S^{1}}_{2d-\text{waves}} \, , \quad d_-,d_+ \in \N  \, ,
\end{equation}
where $ S^{2 d_\pm -1}$
are unit spheres, 
$\star$ denotes the topological {\it join} 
product (cf. \eqref{defjoin}), and 
the  circle    
$ S^{1} $ corresponds to the celebrated non-resonant  $ 2 d $ Stokes orbit.
The waves of $ \MA $ which 
are 
mapped into 
$ S^{2d_- -1} \times 
S^{2d_+-1} $ in \eqref{casoin}  are genuinely $ 3d$-dimensional.
The compactness of $ \MA $ and its rich equivariant  topology under the group $ \T^2_\Gamma $  
imply a lower bound 
for the number of geometrically distinct
$ \T^2_\Gamma \rtimes \Z_2 $-Stokes  waves which are  truly $3$-dimensional. 
Several % new
topological and analytical difficulties are posed 
by the coexistence of  $2d$- and $3d$-waves inside $\MA $, overcome 
in  Theorem \ref{teo:ast}.   
On a more technical side
we  mention that, since $ \MA $ in \eqref{casoin} is not a manifold, 
we  
develop a Conley type approach 
to  equivariant L\"usternik-Schinerlman theory 
for 
the search of rest points of  a gradient-like flow for $ H $ on $ \MA $, 
%The coexistence of  $2d$- and $3d$-waves inside $\MA $ introduces several % new
%topological and analytical %difficulties,  
% with respect to  \cite{CN},  
as we explain in Section \ref{sec:ideas}. 
Theorem \ref{th:noncollinear} is simpler since  in the non-collinear case
(as in \cite{CN}) the space 
$ \MA $
is % equivariantly 
diffeomorphic to the manifold $ S^{2d_- -1} \times 
S^{2d_+-1} $ which consists of only $ 3 d$-waves.

Variational bifurcation theory with symmetry originated in the seminal works of 
Weinstein \cite{We2},  Moser \cite{Mo},  Fadell-Rabinowitz 
\cite{FR}, and Fadell-Husseini \cite{Fadell:1988aa} %(see also  \cite{R}), 
with main applications to the 
multiplicity of periodic 
orbits of finite dimensional Hamiltonian systems. Later Floer-Zehnder \cite{Floer:1988aa} reinterpreted Fadell-Rabinowitz's work in the context of Conley theory and Bartsch \cite{Bart1} generalized \cite{We2,Mo} under weaker assumptions. Most of these works deal with $S^1$ symmetries. More general group actions were also considered by Bartsch  and Bartsch-Clapp \cite{Bartsch:1990aa}.
In the present work,
we encounter  the natural  actions of  2-tori $ \T^2_\Gamma $, and semidirect products $ \T^2_\Gamma \rtimes \Z_2 $.

 We expect these robust ideas and techniques may lead to many other important bifurcation results in Hamiltonian PDEs.  
 For this reason, 
 Part \ref{part:equivariant_critical_point_theory} provides a rigorous and self-contained 
presentation
of equivariant critical  
 point theory in metric spaces. Unlike in the ordinary non-equivariant setting, here the arguments à la L\"usternik-Schinerlmann crucially employ the annihilator of the equivariant local cohomology.
  We now rigorously present the results.

\subsection{Formulation of the problem}

We consider the motion of a $ 3d$ incompressible  and inviscid fluid in irrotational regime under the action of  gravity  and capillarity forces
    at the free surface. The fluid occupies  the time dependent region
    \begin{equation}\label{Domain}
        \cD_{\eta(t, \cdot) }:= \big\{ (x,y)\in \T^2_\Gamma \times \R 
        \ | \ -\tth\leq y\leq \eta(t,x) \big\} \, , \quad 
        \T^2_\Gamma = \R^2/\Gamma \, ,   
    \end{equation}
where $ \tth\in(0,\infty]$ is the, possibly infinite, bottom depth, and $\Gamma\subset \R^2 $ is the $2$-dimensional lattice
\begin{equation}\label{eq:lattice}
\Gamma = % 2 \pi A \Z^2 = 
\big\{ m_1 2 \pi {\bf v}_1 +  
m_2 2 \pi {\bf v}_2 
\, , \ m_1, m_2 \in 
\Z \big\}
\end{equation}
with linearly independent generators
$ 2 \pi {\bf v}_1 $, 
$ 2 \pi {\bf v}_2  \in \R^2  $. 
 The 
irrotational and divergence free velocity field is the gradient $ \nabla_{x,y} \Phi (t,x,y)  $ of a scalar velocity potential $ \Phi (t,x,y)  $ which is  harmonic  in $ \cD_{\eta} $. 
    Inside $ \cD_{\eta}$ the velocity field  evolves according to
the Euler equations and the water waves 
dynamics is  
determined by 
two boundary conditions at the free surface.
The first condition is that   the pressure of the fluid 
plus the gravity and capillary forces at the free surface  is equal to the constant 
atmospheric pressure (dynamic boundary condition). 
The second one  is 
that the fluid particles at the free surface remain on it along the evolution
(kinematic boundary condition).   
 Denoting by $\psi(t,x) $ the value of $\Phi  $ on the free  surface $(x,\eta(t,x))$, the water waves equations are governed by 
 the Zakharov-Craig-Sulem system  \cite{Z,CS} of quasi-linear non-local equations 
 \begin{equation}\label{Zakharov formulation}\small
        \begin{cases}
            \partial_t \eta  = G(\eta)\psi\\
            \displaystyle{\partial_t \psi = - g\eta  - \frac{|\partial_x \psi|^2}{2} + 
            \frac{(  \partial_x \eta \cdot \partial_x \psi + G(\eta)\psi)^2}{2(1+|\pa_x \eta|^2)}  + \kappa\, \text{div} \Big(\frac{\partial_x \eta}{\sqrt{1+|\partial_x \eta|^2}}\Big)} 
        \end{cases}
    \end{equation}
    where 
    $g > 0 $ is gravity, $ \kappa> 0 $ is the surface tension and   $G(\eta)$ is the  Dirichlet-Neumann operator
    $$    G(\eta)\psi  :=
     %\sqrt{1+\eta_x^2} \, (\partial_{\vec n} \Phi )\vert_{y = \eta(x)}= 
     ( \pa_y \Phi  - \partial_x \eta \cdot \partial_x\Phi)|_{y = \eta(x)} \, .
    $$ 
    {\bf Hamiltonian structure.}
    The equations \eqref{Zakharov formulation} are the Hamiltonian system 
    \begin{equation}\label{Zakharov formulation Hamiltonian}
            \partial_t  u = X_{\cH} (u) \, , \ u = \vect{\eta}{\psi}  \, , 
            \ X_{\cH} (u) = J \nablal \cH (u) = 
             \vect{\nablal_\psi \cH}{-\nablal_\eta \cH} 
             (\eta,\psi)   \, , 
    \end{equation}
    where $J$ is the Poisson tensor
    \begin{equation}\label{def J}
        J := \begin{pmatrix}
            0 & \text{Id} \\
            - \text{Id} & 0
        \end{pmatrix} \, , 
    \end{equation}
$ \nablal = (\nablal_\eta, \nablal_\psi ) $ denotes  the $L^2$ gradient and the Hamiltonian 
    \begin{equation}\label{Hamiltonian}
        \cH(u) :=\frac{1}{2|\Gamma|}\int_{\T^2_\Gamma} ( \psi \, G(\eta)\psi +  g\, \eta^2 ) \, \di x 
        + \frac{\kappa}{|\Gamma|}\int_{\T^2_\Gamma} \sqrt{1+|\partial_x \eta|^2}
        \, \di x 
    \end{equation}
    is the sum of 
    kinetic,    potential and  
capillary energies. 
Introducing the symplectic form
    \begin{equation}\label{Def W}
        \boldsymbol{\Omega}\big(u,u_1\big)=
        \big( J^{-1} u , u_1 \big) := \frac{1}{|\Gamma|}\int_{\T^2_\Gamma}\big(\eta
        (x) \,\psi_1 (x) -
        \psi (x) \,\eta_1 (x) \big)
        \, \di x 
    \end{equation}
    where 
    $
    ( f  , g ) :=
    \frac{1}{|\Gamma|}    \int_{ \T^2_\Gamma} f (x) \cdot g (x) \, \di x $  
    denotes the $L^2(\mathbb{T}^2_\Gamma, \mathbb{R}^2)$ real scalar product, 
 the Hamiltonian vector field
    $X_{\cH}(u)   $  is the symplectic gradient of $ \cH$, namely 
    \begin{equation}\label{Def Hamiltonian vector field}
    \di_u \cH(u) =
    \boldsymbol{\Omega} ( X_\cH (u), \cdot ) \, . 
    \end{equation}
    \noindent
    {\bf $ \Z_2 $  and $ \T^2_\Gamma $-symmetry.} 
The water waves system 
\eqref{Zakharov formulation}  possesses both a
$ \Z_2 $  {\it and} a $ \T^2_\Gamma $-symmetry.  
    Indeed \eqref{Zakharov formulation} is {\it reversible}, namely
\begin{equation}\label{Z2 symmetry}
        \cH \circ 	\rho = \cH 
        \qquad \text{where} \qquad 
        \rho
    \begin{psmallmatrix}{}
            \eta\\
            \psi
        \end{psmallmatrix}
    (x) :=
    \begin{psmallmatrix}{}
            \eta(-x)\\
            -\psi(-x)
        \end{psmallmatrix}  
\end{equation}
is a linear  involution ($ \rho = \rho^{-1} $).
%thus 
%the Hamiltonian $ \cH $ is invariant under the 
%action of 
%$ \cS := \{ \text{Id}, \rho \}\cong \Z_2 $. 
Furthermore, 
since the bottom of \eqref{Domain} is flat, \eqref{Zakharov formulation} is also space invariant, namely
\begin{equation}\label{T2 symmetry}
    \cH \circ \tau_\theta = \cH 
    \qquad \text{where} \qquad 
            (\tau_\theta
            u) (x) :=
          u(x-\theta)  \, , \quad  \forall \theta\in \T^2_\Gamma \, .  
\end{equation} 
Since each   
$ \tau_\theta $ is 
symplectic, 
according to N\"oether theorem, 
the Hamiltonian system \eqref{Zakharov formulation Hamiltonian} 
possesses  
two prime  integrals,  
notably the momentum 
\begin{equation}\label{DefI}
\mathcal{I}(u) :=
\vect{\mathcal{I}_1}{\mathcal{I}_2} (u)  \quad 
    \text{where} \quad \mathcal{I}_i(u):=\frac{1}{|\Gamma|}\int_{ \T^2_\Gamma}\psi\,\partial_{x_i}\eta\, \di x \, . 
\end{equation}
 Actually  the  Hamiltonian vector field
    generated by $\cI_i (u) $ 
    is  
the generator $ \pa_{x_i} $ 
of the translations 
    \begin{equation}\label{XI}
     X_{\cI_i} (u) =J\nablal \mathcal{I}_i (u) =\partial_{x_i} u  \, ,  \quad i=1,2\, . 
    \end{equation}
Overall $ \cH $ is invariant  under the group of linear invertible maps 
generated by the translations 
$ 
\cT := \{ \tau_\theta \}_{\theta \in \T^2_\Gamma} \cong \T^2_\Gamma$ 
% \eqref{T2 symmetry} 
{\it and} reflection 
$ \cS := \{ \text{Id}, \rho \}\cong \Z_2 $,
%reflection \eqref{Z2 symmetry}, 
which is 
equal to $ \cT  \,  \cS =  \cS \, \cT $,  because  $ \tau_{\theta} \circ \rho=\rho \circ \tau_{-\theta}$ for any $ \theta \in \T^2_\Gamma $.
Note that  $ \cT \cap \cS = \{\text{Id} \} $ and  $\cT  $ is a normal subgroup of $\cT  \cS  $.
%hence $\cT\cS$ is the semi-direct product of $\cT $ and $\cS$.
%Identifying $\cT=\T^2_\Gamma$ and $\cS=\Z_2$ we shall denote $\cT\cS$ with the semi-direct product $\T^2_\Gamma\rtimes \Z_2$.
Furthermore  $ \cT \cS $ is isomorphic to the semi-direct product $\T^2_\Gamma \rtimes \Z_2$ given by the set of pairs $(\theta,\sigma)\in \T^2_\Gamma\times \Z_2$ (where $\Z_2=\{0,1\}$) with the product $(\theta,\sigma)\cdot (\theta',\sigma') :=(\theta+(-1)^\sigma\theta',\sigma+\sigma')$.
The isomorphism  is  $\T_\Gamma^2\rtimes \Z_2\to \cT\cS, (\theta,\sigma)\mapsto \tau_\theta \circ \rho^\sigma$.
\\[1mm]
    \noindent
    {\bf Traveling waves.}
    We seek for  
    $\Gamma$-periodic traveling  solutions 
    $  u (x-ct) $, 
    $ u : \T^2_\Gamma \to \R $,
    of  the water waves equations \eqref{Zakharov formulation Hamiltonian},  
    for some speed vector $c\in \R^2$.
    Thus 
   $ -c\cdot \partial_x u  = X_\cH (u)  $ 
   and, recalling \eqref{XI}, 
   we are reduced to 
  solve  
the equation 
    \begin{equation}\label{Bifurcation problem}
        \mathcal{F}(c, u) = 0
        \qquad \text{where} \qquad    \cF\colon \R \times X \to 
 Y 
    \end{equation}
     is  the nonlinear map 
    \begin{equation}\label{spaziY}
        \mathcal{F}(c, u):=X_{\cH + c\cdot \cI}(u)=
    c\cdot \partial_x u + 
    X_{\cH}(u )=  J\nablal( \cH + c\cdot \cI)(u) \, .
    \end{equation}
    The domain $ X $ is the dense subset  
    of $  L^2( \T^2_\Gamma) \times L^2_0(\T^2_\Gamma)$ defined in \eqref{spaceX} below, and 
      $Y \subset L^2_0( \T^2_\Gamma) \times L^2( \T^2_\Gamma)$ is the space in \eqref{spaceY}. 
    Here $L^2_0(\T^2_\Gamma)  $ is the subspace of $L^2 (\T^2_\Gamma)$ consisting of  zero average functions. 
Since $ J$ is invertible, the equation \eqref{Bifurcation problem} is equivalent 
 to search 
 critical  points,
 i.e. equilibria, of 
 the Hamiltonian 
	\begin{equation}\label{Variational}
\Psi (c, \cdot ) : X \to \R \, , \quad 	
 \Psi (c, u ) := 
 \big( \cH + c \cdot \cI \big) (u) \, , 
	\end{equation}
for some value of the 
speed $ c \in \R^2 $. 

By the group symmetries  \eqref{Z2 symmetry}, 
\eqref{T2 symmetry}, 
if $ u $ is a Stokes solution of
\eqref{Bifurcation problem} then each  translated  
$ \tau_\theta u $ and each reflected 
$ \rho u $ are solutions as well.  
Two Stokes waves   
are {\it geometrically  distinct}  if they are not obtained 
by translations and reflections of the other one, namely if they are not in the same $ \T^2_\Gamma \rtimes \Z_2 $-orbit.\\[1mm]
    \noindent
    {\bf Functional setting.}
    Let $  \lattice := ({\bf v}_1 \, | \,  {\bf v}_2 )$  be the 
    $ 2 \times 2 $ invertible matrix of 
    the lattice $\Gamma$ in \eqref{eq:lattice}, so that  $\Gamma=2\pi  \lattice\Z^2$. 
    The {\it dual lattice} associated to $ \Gamma $ is the $ 2 $-dimensional lattice 
    \begin{equation}\label{eq:Gamma'}
    \Gamma':=  \lattice^{-\top}\Z^2 =
\Big\{ k_1  {\bf v}_1' +  
k_2  {\bf v}_2' 
\, , \ k_1, k_2 \in 
\Z \Big\} 
    \end{equation}
where $ {\bf v}_1' :=  \lattice^{-\top} 
{\bf e}_1$ and 
$ {\bf v}_2' :=  \lattice^{-\top}
{\bf e}_2 $ 
and
$ {\bf e}_1, {\bf e}_2 $ denote the canonical 
basis of $ \R^2 $.

    Any $\Gamma$-periodic function $f:\T^2_\Gamma\to \C$ can be expanded in Fourier series $ 
        f(x)=\sum_{j \in \Gamma'} f_j \,  e^{\im j \cdot x}  $. We    define for any $ \sigma > 0 $ and 
$ s \in \R $ 
the Hilbert space  of $ \Gamma $-periodic 
real analytic functions
\begin{equation}\label{spaceX}
X := H^{\sigma,s} \times H^{\sigma,s}_0  := 
H^{\sigma,s}(\T^2_\Gamma) \times H^{\sigma,s}_0(\T^2_\Gamma)   
\end{equation}
where  $ H^{\sigma,s} := H^{\sigma,s}(\T^2_\Gamma)$ is the space 
\begin{equation}\label{def:Hs}
        H^{\sigma,s} := \Big\{f\in L^2(\T^2_\Gamma,\R) \  \big| \ 
        \|f\|_{\sigma,s}^2 := \sum_{j\in \Gamma'} |  f_j|^2 \, \langle j\rangle^{2 s}e^{2\sigma |j|_1} <+\infty\Big\}
    \end{equation}
with    $ \langle j \rangle := \max(1, |j|_1) $, $|j|_1 :=|j_1|+|j_2|$, and  
$ H^{\sigma,s}_0 := H^{\sigma,s}_0(\T^2_\Gamma)
$ is the subspace of zero average functions.
For any $ s > 1 $ 
each space $ H^{\sigma,s}(\T^2_\Gamma) $ 
is an algebra with respect to the product of functions. 
We assume that 
%the space $ X $ in  \eqref{spaceX} has   
$ s \geq 7 / 2 $ with $ s + \tfrac12 \in \N $, to directly apply  the analyticity result  
\cite{BMV2}[Theorem 1.2] 
for the Dirichlet-Neumann operator.
This  is 
actually  
not restrictive since $ \sigma > 0 $ is arbitrary.
%and, for any   
%$ \sigma' < \sigma $ and  $s' %\in \R$, we have  
%$ H^{\sigma',s'} (\T^2_\Gamma) %\subset H^{\sigma,s} (\T^2_\Gamma) $.

The target space in \eqref{spaziY} is 
\begin{equation}\label{spaceY}
Y := 
H^{\sigma,s-1}_0 \times H^{\sigma,s-2}
:= H^{\sigma,s-1}_0(\T^2_\Gamma) \times H^{\sigma,s-2}(\T^2_\Gamma)   \, . 
\end{equation}
Note that  $ \cF (c,0) = 0 $ for any $ c \in \R^2 $.
We are going to prove 
% by means of variational methods 
 new existence and multiplicity bifurcation results of $ 3d $-Stokes waves according to the following definition. 
\\[1mm]
{\bf Definition}
{\bf ($ 2d$ and $3d$-Stokes waves)}
A Stokes  wave  solution  $u(x) $
of \eqref{Bifurcation problem} is  $2d$ if  
it is  constant along one direction,  namely
  $   u(x)=  u\big((x\cdot j )j  \big) $
for some 
  $  j  \in \Gamma'  $.
  In this case 
  we say that $ u(x) $
  is $ 2d$-along the direction $j $. 
 Otherwise 
we say that 
$ u(x) $ is 
 {\it truly $3$-dimensional}. % Stokes wave.

Note that  % \label{eq:cparallel intro}
for a $2d$-Stokes wave $u(x) $
%as in  \eqref{2d Stokes waves}
it results 
\begin{equation}\label{eq:cparallel intro}
u(x-ct) = u( x-c^{\parallel_{\hat \jmath}}\hat \jmath \, t ) \qquad 
\text{where}  
\qquad  c^{\parallel_{\hat \jmath}} := c\cdot \hat \jmath \, , 
\ \  \hat \jmath := j / |j| \, , 
\end{equation}
and therefore  its  ``observed  velocity" 
reduces to only the parallel component 
$    c^{\parallel_{\hat \jmath}}\hat \jmath $. 

\subsection{Main results} \label{sec:main results}
As we show at 
the beginning of  Part \ref{part:II}, 
all the possible speeds  of bifurcation, namely the vectors $c_*\in\R^2$ such that $ {\mathcal L}_{c_*} := \di_u \cF(c_*,0)$ is not invertible,  
form the web 
\begin{equation}
\label{bifspeed}
    \mathsf C := 
    \bigcup_{j \in \Gamma'\setminus \{0\}} {\mathsf R}_j     \qquad
\text{where} \qquad  
{\mathsf R}_j :=  \big\{ c\in \R^2 \ | \ \omega(j)=c \cdot j \big\}
\end{equation}
and
% $\omega(\xi) $ %:=\omega(\xi;g,\tth, \kappa, \Gamma)$ 
\begin{equation}\label{omega}
		\omega(\xi) :=\omega(\xi;g, \kappa,\tth) 
        :=
   % \begin{cases} 
                \sqrt{(g+\kappa|\xi|^2) |\xi|\tanh(\tth |\xi|)} \, , 
            \qquad \xi \in \R^2 \, , 
	\end{equation}    
is the {\it dispersion relation} 
(if   $ \tth=+\infty $ then  
$\tanh(\tth |\xi|)$ is replaced by $ 1  $).
%Note that $ \omega (\xi ) $  is a % radial
%symbol  of order $ 3/ 2 $. 
Since $ \omega (j) >0 $,  each straight line $ {\mathsf R}_j $ in \eqref{bifspeed} lies in the half plane $ \{ c \in \R^2 \ | \  c \cdot j > 0 \} $ 
at a distance $ \omega (j) |j|^{-1} $ from the origin.

%\begin{figure}[h!]
 %       \centering
        %\includegraphics[width=0.35\textwidth]{Retta Rj.pdf}
        %\includegraphics[width=0.35\textwidth]{ValoriDiBiforcazione.eps}
   %     \caption{On the left: the line ${\mathsf R}_j$ in  \eqref{bifspeed} associated to a wave vector $j\in \Gamma'$.
   %     The ``web" on  the right is the union $ \mathsf C $  of all the resonant lines ${\mathsf R}_j$  
 %       for $ j \in %\Gamma'=\Z^2$.
  %      } \label{fig:bifspeeds}
 %   \end{figure}

For any % bifurcation 
speed 
   $c_*\in \mathsf C$
we  denote 
 \begin{equation}\label{Def cV}
        \cV:= \big\{
        j\in \Gamma'\setminus\{0\} \ | \ \omega(j)=c_*\cdot j \big\} 
    \end{equation}
the set of {\it resonant wave vectors}. 
The corresponding $ \ker {\mathcal L}_{c_*} $ 
    is the span of the  resonant plane waves  
\begin{equation}\label{vkern}
v = \sum_{j \in \cV }
\alpha_j 
\vect{M_j\cos(j\cdot x)}{M_{j}^{-1} \sin(j \cdot x)} + 
\beta_j \vect{-M_j\sin(j \cdot x)}{M_{j}^{-1}\cos(j \cdot x)}  \ ,  \quad  \alpha_j, \beta_j \in \R \, ,
\end{equation}
where $ M_j $ are constants defined in \eqref{eq:Mj}. 
Due to capillarity,  
    $ \ker {\mathcal L}_{c_*} $ 
   is  always finite dimensional, cf. 
    \eqref{count of Js}. 
A wave $ v $ in \eqref{vkern} is genuinely $ 3d$ if there are at least two  
non parallel resonant wave vectors $ j \not \parallel j'  $  with non-zero 
amplitudes $ (\alpha_j, \beta_j), (\alpha_{j'}, \beta_{j'}) \neq 0 $; the 
$ j, j' $ are called
``active frequencies".   
    The  number of collinear vectors in $ \cV$ is at most $ 2 $, 
    as  well known for 
    $2d $-fluids, cf.~Lemma \ref{lem:coll}. 
    Note that a   linear wave $ v $ in \eqref{Def cV} has momentum 
\begin{equation}\label{momeintro}
 \cI(v) =-\frac12 \sum_{j\in {\mathcal V}} j \, (\alpha_j^2+\beta_j^2) \, . 
\end{equation}
If $ \dim \ker(\cL_{c_*}) = 2 $ the problem reduces to the classical bifurcation of $ 2 d$ Stokes waves,
usually 
tackled  by the 
Crandall-Rabinowitz bifurcation theorem, see e.g. \cite{Wh0,M}. 
% and  \cite{BBMM}[Section 4].  
% cf.  paragraph \ref{dim2NR}. 
When $ \dim \ker(\cL_{c_*}) = 4 $ and the two  vectors in $ \cV $ are parallel, 
this is  % the problem reduces to
the resonant bifurcation problem of $ 2 d$-Stokes waves discussed e.g. in %  Jones-Toland 
\cite{JT,JT2,RS1,BBMM}.
If  the $ 2 $ 
wave vectors in $ \cV $  
are not-parallel,
bifurcation of 
 smooth families of $ 3d$  Stokes 
waves has been  
proved  in  \cite{RS,GrovesMielke,BG,Nil}; 
we discuss the 
non-resonant case
%$3d$ Stokes waves 
in  \eqref{casoNR}-\eqref{le2dNR}.

\smallskip 

In the resonant scenario when $ \dim \ker(\cL_{c_*}) \geq 6 $ the situation is much more complex and 
the only rigorous known results %we are aware of 
are due to Craig-Nicholls \cite{CN,CN2} and Groves-Haragus \cite{GrovesHaragus}[thm 5].
In particular  \cite{CN}  proves, using variational methods,  that any $ c_* $ is a 
bifurcation speed,   
constructing $ 3d $-Stokes waves, 
if the  momentum
$ \cI (u) = a $ is {\it not} parallel to any resonant wave vector 
$ j \in \cV $ (and sufficiently small). 
\begin{itemize}
\item {\sc Question:}
Are there  $ 3d $-Stokes waves if $ a $ is parallel 
to some resonant wave vector 
$ j_0 \in \cV $?   
\end{itemize}

\begin{SCfigure}[1.0]
[ht] % L'argomento opzionale [1.0] regola la larghezza relativa 
  \caption{  The closed convex cone $\mathcal{C}$ in \eqref{defC} associated to three resonant wave vectors $j_1$, $j_2$ and $j_3$.  According to Theorem \ref{th:(u,c) and cI}
 bifurcation of Stokes waves $ u $ 
  with momentum $  \cI (u) = a  $ occurs if and only if   $ a \in {\cal C }$, while $ 3d $-Stokes waves emerge when  $ a \in \interior({\cal C})$ 
  (if $ a \in \partial {\cal C} $  then 
  any $ u $ is $ 2 d $).
  The scenario $ a $ is not  collinear  with any resonant wave vector was addressed in  \cite{CN}, see the improved Theorem \ref{th:noncollinear}. The case $ a $ is collinear with  exactly
  one resonant wave vector is the subject of 
  Theorem \ref{Existence of 3d solutions collinear nonresonant} (in this figure, the vector
  $ j_0 \in \text{int}(\cC) $ in \eqref{assumthm} is $ j_2 $). 
  }
  \includegraphics[width=0.61\textwidth]{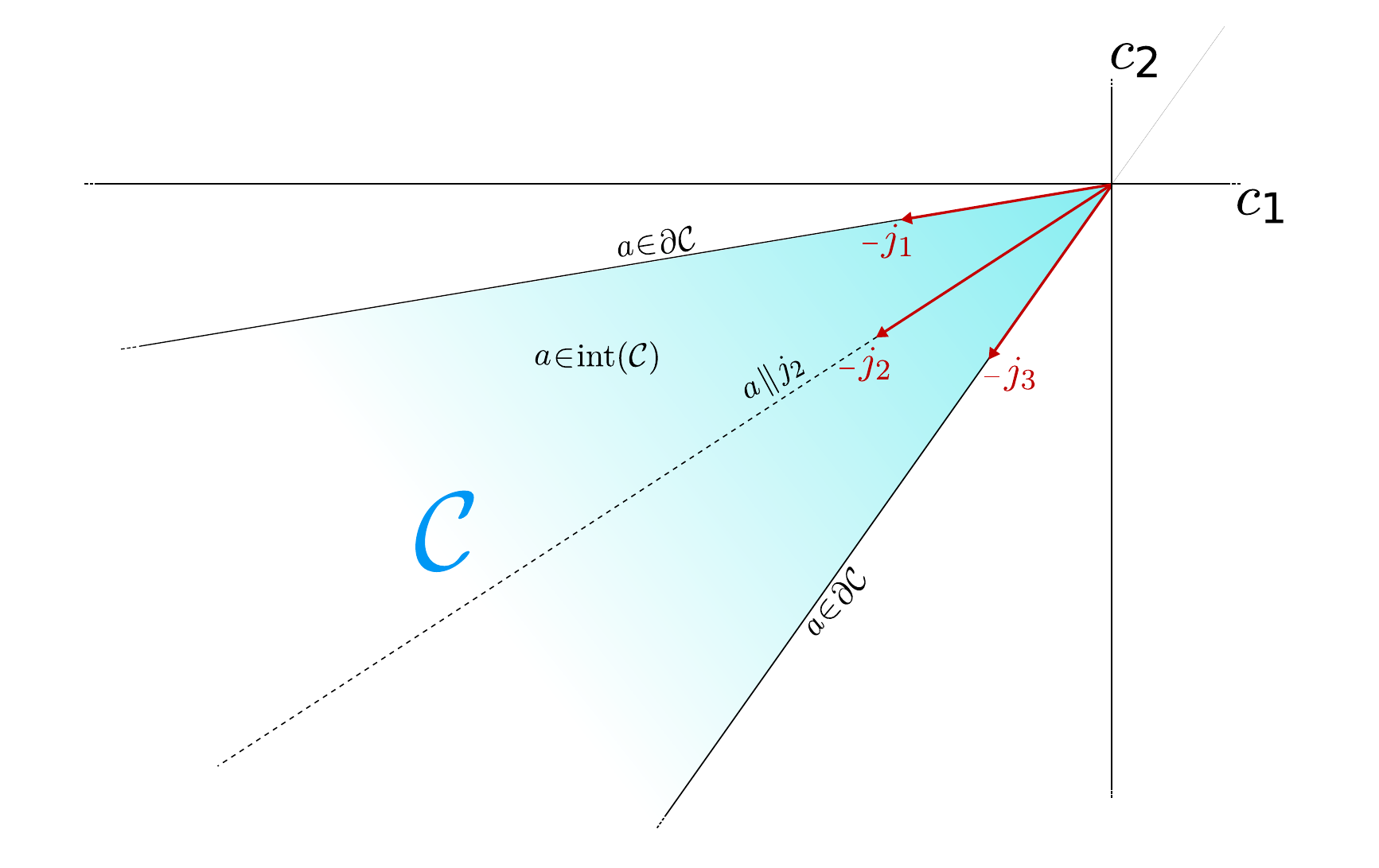} \label{fig:Cono}
\end{SCfigure}

This is a subtle 
question because, in this case,  
 $ 2 d $ Stokes waves  with momentum $ a $ 
 always exist. 
The first main result of this paper is to actually  prove   bifurcation of  multiple 
other truly $3d$-Stokes waves $ u $ having the same momentum 
$ \cI (u) = a $,  {\it collinear} with some 
$ j_0 \in \cV $.

To state precisely 
the result we introduce
the closed  cone
\begin{equation}\label{defC}
    \cC := \Big\{ \sum_{j \in \cV } 
    \tau_j \, j  \ \big| \ \tau_j \leq 0 \Big\}
\end{equation}
generated by the resonant wave vectors $ \cV $, see Figure \ref{fig:Cono},
which consists of
all  values of the momentum   \eqref{momeintro} of the resonant linear waves in $ V $.
Note that the cone $ \cC $ is convex 
and proper (its angle is  $ < \pi $), 
because $ c_* \cdot j = \omega (j) > 0 $ for any $ j \in \cV $, and  
\begin{equation}\label{eq:ilcono}
%\cC \subset \big\{ a \in \R^2 \ | \ a \cdot c_* \leq 0  \big\} \, ,
%\quad 
\cC \setminus \{0\} \subset \big\{ a \in \R^2 \ | \ a \cdot c_* < 0  \big\} \, . 
\end{equation} 

\begin{theorem}
{\bf (Multiplicity of $3d$-Stokes waves: collinear case)}
\label{Existence of 3d solutions collinear nonresonant}
Assume that the number  of resonant wave vectors is $ \# \cV \geq 3 $ (resonant case). 
       Then there is $\varepsilon >0$  such that 
    \begin{equation}\label{assumthm}
        \forall \,  
       a\in \text{int}(\cC) \, , \ 
       |a| \leq \varepsilon \, ,  \    \text{collinear 
       to a unique resonant wave vector} \ j_0 \in \cV \, , 
\end{equation}
       there exist,
       in addition to a unique 
      $S^1$-orbit of 
      $\twod $  Stokes wave $ u_*  $  
      along the direction $ j_0 $ with momentum $ \cI(u_*) = a $,  
      at least
          $ N := \#\cV-{2} $ 
         geometrically distinct 
         %$(\T^2_\Gamma\rtimes \Z_2)$-orbits of 
         truly $3d$ analytic Stokes waves 
         $$ 
         u_1, \ldots, 
         u_{N} \in X \, , 
         \quad  \text{with 
         momentum} \quad 
          \cI (u_i) = a \, , 
         $$ 
         having  size $ \| u_i \|_X = \cO (\sqrt{|a|}) $
         and 
         speeds   
        $ c_i (a)  = c_* + \cO (\sqrt{  |a|}) $.
    \end{theorem}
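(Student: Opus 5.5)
We follow the variational route sketched in the introduction. First, we perform a variational Lyapunov--Schmidt reduction of $\Psi(c,u)=(\cH+c\cdot\cI)(u)$ near $(c_*,0)$. Split $X=V\oplus W$ with $V=\ker\cL_{c_*}$ as in \eqref{vkern}, of dimension $2\#\cV$. Then solve the range equation $\Pi_W\cF(c,v+w)=0$ by the implicit function theorem, which gives $w=w(c,v)=\cO(|v|^2)$, analytic and $\T^2_\Gamma\rtimes\Z_2$-equivariant. Critical points of $\Psi$ near zero then correspond to critical points $v$ of the reduced function $\Phi(c,v):=\Psi(c,v+w(c,v))$ on $V$. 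The reduced momentum $\mathcal{I}_r(v):=\cI(v+w(c,v))$ is a perturbation of the quadratic form \eqref{momeintro}. Writing $c=c_*+\delta$, one has $\Phi(c,v)=\delta\cdot\cI(v)+H(v)+\dots$ with $H=\cO(|v|^3)$. The problem is to find $v$ and $c$ with $\di_v\Phi(c,v)=0$ and $\mathcal{I}_r(v)=a$.

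Next, following Weinstein--Moser, we look for a natural constraint. On the level set $\MA=\{\mathcal I_r=a\}$, a constrained critical point of $H_{|\MA}$ has Lagrange multiplier $\delta$. We need $\delta=c(v)-c_*$ as a well-defined continuous function, so that constrained critical points are true solutions with speed $c_*+\cO(\sqrt{|a|})$, given $|v|\sim\sqrt{|a|}$. The multiplier equation is solvable by a $2\times2$ linear system. That system degenerates exactly where only parallel frequencies are active, i.e.\ near the $2d$-wave hyperplanes, since there $\di\mathcal I_1,\di\mathcal I_2$ become dependent. Here we use the reversibility and translation symmetry of the nonlinearity. Restricted to a $2d$ subspace, the gradient of $H$ is tangent to that subspace and its component along $j^\perp$ vanishes to suitable order. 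This lets us define the transversal component of $c(v)$ continuously, for instance by setting the undetermined component to $c_*\cdot\hat\jmath^\perp$ with a correction that extends continuously. This step is a technical obstacle of its own.

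Then we describe the topology of the level set. For $a$ in the interior of $\cC$ and collinear with the unique $j_0$, we rescale $v=\sqrt{|a|}\tilde v$. The quadratic constraint $-\frac12\sum_j j(\alpha_j^2+\beta_j^2)=a$ splits the resonant vectors into those on either side of the line $\R j_0$, giving $d_-,d_+\ge1$ vectors with $d_-+d_++1=\#\cV$, together with $j_0$ itself (and possibly $-j_0$, which is excluded since $\cC$ is proper). A radial deformation then gives an equivariant homeomorphism $\MA\cong(S^{2d_--1}\times S^{2d_+-1})\star S^1$, as in \eqref{casoin}. Here $S^1$ is the $2d$ orbit along $j_0$, which is the unique $2d$ Stokes wave $u_*$; the other $2d$ directions cannot carry momentum $a$.

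Finally, we apply the equivariant Lusternik--Schnirelmann/Conley theory of Part I to a gradient-like flow of $H$ on the compact $\T^2_\Gamma$-space $\MA$. The main obstacle is to separate genuine $3d$ critical orbits from the $2d$ circle. The plan is to compute the $\T^2$-equivariant cohomology: $H^*_{\T^2}(S^{2d_--1}\times S^{2d_+-1})$ has cuplength-type length $(d_-)+(d_+)-1=\#\cV-2=N$, relative to the annihilator ideal. Then we isolate the $2d$ orbit $S^1$ by an invariant neighborhood. Its local equivariant cohomology is controlled by its isotropy circle, so it can absorb at most one cohomological level. Removing it via a Mayer--Vietoris/Conley index argument leaves $N$ distinct critical values or orbits among the $3d$ part, as established in Theorem~\ref{teo:ast}. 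Geometric distinctness modulo $\Z_2$ follows because $\rho$ preserves $\MA$ and $H$, and the count is of $\T^2$-orbits in the quotient. Analyticity follows from $w$ taking values in $X$.
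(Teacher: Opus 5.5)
Your overall architecture (Lyapunov--Schmidt reduction, a Weinstein--Moser speed $c(v)$ making level sets of the reduced momentum a natural constraint, the join topology, Theorem~\ref{teo:ast}) is the paper's, but the step carrying most of the weight is missing. You identify $\MA=\{I=a\}$ with the \emph{quadratic} level set $\cI^{-1}(a)$ via a rescaling and a radial deformation. However, $\MA$ is a level set of the reduced momentum $I(v)=\cI(v+w(c(v),v))=\cI(v)+\cO(\|v\|^3)$, which is merely $C^1$. After rescaling, $I$ is a $C^1$-small perturbation of $\cI$. But $\cI^{-1}(a)$ is singular exactly along the $2d$ circle $\cI^{-1}(a)\cap V_0$, where $\di_v\cI$ has rank one, so no transversality or isotopy argument applies. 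The second constraint $\cI^{\bot_{\hat\jmath_0}}(v)=0$ is the zero set of an indefinite quadratic form in $(\Pi_-v,\Pi_+v)$ vanishing on $V_0$. For example, replacing it by $\cI^{\bot_{\hat\jmath_0}}=\epsilon\neq0$ gives a level set homeomorphic to $S(V_\pm)\times S(V_\mp\oplus V_0)$, which contains no $2d$ wave at all. The paper's argument works because of two ingredients. First, the symmetry-based estimate $I^{\bot_{\hat\jmath}}-\cI^{\bot_{\hat\jmath}}=\cO(\|\Pi^\bot_{\hat\jmath}v\|^2\|v\|)$ of Lemma~\ref{Properties of I dritto}. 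Second, the straightening Theorem~\ref{th:Moser's trick}: an equivariant homeomorphism $\zeta$ with $I\circ\zeta=\cI$, built by Moser's trick. This requires solving $A(t,v)^\top\mu=\cI-I$ with a matrix degenerating on ${\bf V}^{2d}$, and integrating a vector field that is not Lipschitz there; uniqueness comes from the lower bound on $d(v(t),{\bf V}^{2d})$ in Lemma~\ref{lm:B1}. The same machinery is needed for the gradient-like flow that Theorem~\ref{teo:ast} presupposes and that you take for granted. Since $\MA$ is not a manifold and $H$ is only $C^1$, the paper constructs in Proposition~\ref{A flow} the field $Y=-\nabla_v\Phi(c(v),v)+\mu(v)\cdot\nabla_v\cI(v)$. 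This field preserves $I$, is equivariant, makes $H$ strictly decreasing on non-constant orbits, and keeps $\MA^{2d}$ stationary; its rest points in $\MA^{3d}$ are Stokes waves.

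Your remedy for the degeneracy of $c(v)$ is also not correct. Off ${\bf V}^{2d}$ the solution of $\di_v\Phi(c,v)[\nabla_v\cI(v)]=0$ is unique. Near $V_{\hat\jmath}$ its component $c^{\bot_{\hat\jmath}}$ is, to leading order, a ratio of two quadratic forms in $\Pi^\bot_{\hat\jmath}v$: bounded by $\cO(\|v\|)$ but in general direction-dependent. So it has no continuous extension to $V_{\hat\jmath}$, whatever value you prescribe there; the paper notes that $c(v)$ may be discontinuous on ${\bf V}^{2d}$. Continuity of $c$ is not needed. The missing idea is Lemma~\ref{Further properties of w(c,v)}(ii): since $X_{\cI^{\bot_{\hat\jmath}}}$ vanishes on $2d$ waves, for $v\in V_{\hat\jmath}$ the functions $w$, $\Phi$, ${\mathtt I}$, ${\mathtt H}$ do not depend on $c^{\bot_{\hat\jmath}}$. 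One therefore sets $c^{\bot_{\hat\jmath}}=c_*^{\bot_{\hat\jmath}}$ on $V_{\hat\jmath}$ and proves continuity (and $C^1$ off the origin) only for $c^{\parallel_{\hat\jmath}}$. Then $H,I\in C^1$, because $\partial_{c^{\bot_{\hat\jmath}}}$ of these functions is $\cO(\|\Pi^\bot_{\hat\jmath}v\|^2)$, while $\di_v c^{\bot_{\hat\jmath}}$ blows up at most like $\|v\|/\|\Pi^\bot_{\hat\jmath}v\|$ (Proposition~\ref{Construction of c close and away from PJ}, Lemma~\ref{lm:regularity 1}). Finally, fix the bookkeeping in the last step. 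The equivariant cup-length of $S(V_-)\times S(V_+)$ is $d_-+d_+-2=\#\cV-3$ (Example~\ref{cup-length of two spheres}), not $d_-+d_+-1$. Theorem~\ref{teo:ast} then gives cup-length $+1=\#\cV-2$ critical \emph{values} of $H$ different from $H(\MA^{2d})$, provided there are finitely many stationary orbits (otherwise the claim is trivial). Geometric distinctness follows because these are distinct values of the $\T^2_\Gamma\rtimes\Z_2$-invariant $H$, not from counting $\T^2_\Gamma$-orbits.
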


%The novel result is clearly the lower bound $ \# \cV - 2 $ of $ 3d$-Stokes waves having the {\it same} momentum of   the 
%$2d$-Stokes wave $ u_* $, whose  existence follows by the classical Crandall-Rabinowitz bifurcation theorem. 
\noindent
Note that $  N  \geq 1 $ since 
$ \# \cV \geq 3 $.
Theorem \ref{Existence of 3d solutions collinear nonresonant} is proved in Section \ref{sec:final}.
Let us make some comments: 
\begin{itemize}
\setlength{\itemsep}{5pt}
\item[(i)] {\bf ($2d$-Stokes wave)} The $ 2 d $-Stokes wave
$ u_* := u_* (a)  $ with momentum $ \cI(u_*) = a $
(whose  existence follows by Crandall-Rabinowitz bifurcation theorem)
has the form 
\begin{equation}
\label{laustar}
u_* = \varepsilon \vect{M_{j_0}\cos(j_0\cdot x+ \vartheta  )}{M_{j_0}^{-1} \sin(j_0 \cdot x+ \vartheta  )} + O(\varepsilon^2) \, , 
\ \vartheta \in S^1 \, ,  \  M_j  := (g + \kappa |j|^2)^{-\frac14}(|j | \tanh (\tth |j|))^{\frac14} \, ,
\end{equation} with amplitude 
$ \varepsilon  = \sqrt{\frac{2|a|}{ |j_0|}}(1+\cO(\sqrt{|a|}))$ 
and speed $ c_*^{\parallel_{\hat \jmath_0} }(a) = 
c_* \cdot {\hat \jmath_0} + \cO(|a|)$. %  cf. \eqref{2dstokesNR}.
\item[(ii)] {\bf ($ 3d$-Stokes waves shape)} 
The  $3d$-Stokes 
waves $u_i := u_i (a) $, $ i=1, \ldots, \#\cV-{2} $, are  
$(\T^2_\Gamma\rtimes \Z_2)$-orbits of  the form 
$$u_i = v_i + \cO(\| v_i\|_X^2)$$
where each $v_i := v_i (a) \in   \ker(\cL_{c_*}) \setminus \{ 0\} \, ,  \ \text{cf.} \   \eqref{vkern} \, , 
$ 
is a $3d$ wave
obtained  by variational/topological  arguments, 
and the higher order term depends analytically on $ v_i $.
A central  difficulty 
is proving  that each  
$v_i$ is  genuinely $3$-dimensional, i.e. it contains  at least two non parallel ``active frequencies",  unlike the first order term of $u_* $ 
% the $2d$-Stokes wave 
in \eqref{laustar}.  
The dependence of the $ v_i  $, thus $ u_i $, 
  with respect to  their momentum $ a $, may vary rather irregularly.  
 % as critical points %stationary points of a gradient-like flow 
 % may vary % highly 
 % erratically 
 %with respect to arbitrarily small perturbations.
\item[(iii)]{\bf ($ 3d$-Stokes waves speeds)} The velocity $ c_i (a) $ of each 
$3d$ traveling Stokes wave $u_i$
has  
component 
$$ 
c^{\bot_{\hat \jmath_0}}_i (a) 
:=c_i (a) \cdot \hat \jmath_0^\bot = 
c_*\cdot \hat \jmath_0^\bot  + 
\cO(|a|) \, , 
\quad
$$
in the direction orthogonal to $j_0$.
If  $ c_*\cdot \hat \jmath_0^\bot \neq 0 $, 
the motion of the 
$ 3d$-Stokes wave $ u_i (x- c_i (a) t )$  looks rather different from the  
$ 2d$-Stokes wave $ u_* (x -c_*^{\parallel_{\hat \jmath_0} }(a) \hat \jmath_0 \, t  )$ whose 
``observed velocity" is 
$ c_*^{\parallel_{\hat \jmath_0} }(a) \hat \jmath_0
$, accordingly to  comment
\eqref{eq:cparallel intro}. 
\item[(iv)]   
{\bf (Necessity of assumption $a\in \text{int}(\cC)  $)}
As shown in Theorem \ref{th:(u,c) and cI}  below, the assumption  $a\in \cC$   is a necessary condition for the existence of Stokes waves with momentum  $ a $,  
cf. \eqref{Identroa}; while  the assumption $a\in \interior(\cC)$ is necessary for the existence of 
truly $ 3 d $
Stokes waves having momentum  $ a $ since, by $(\mathfrak{B})$, if $ a \in \partial \cC $  any 
Stokes wave with momentum $ a $ is $ 2 $-dimensional.
% see Figure \ref{fig:Cono}.
\item[(v)]  
{\bf (Collinearity assumption)}
The assumption % \eqref{assumthm} 
that 
$ a $ is collinear to a {\it unique} resonant wave vector $j_0 \in \cV$ forces the  
existence 
of $3d$-Stokes waves by purely topological  arguments, as we explain in Section \ref{sec:ideas}. 
If there are {\it two} distinct 
%resonant wave vectors 
$j_0, j_1 \in \cV$ parallel to $ a $ then the 
existence of other $ 3d $-Stokes waves seems to depend on the nonlinearity and specific properties of the resonant wave vectors. %   $ \cV $. 
\end{itemize}

In the noncollinear case we improve the result of \cite{CN} as follows. 

    \begin{theorem}{\bf (Non-collinear case)}\label{th:noncollinear}
            For any $a\in \text{int}(\cC) \cap \overline{B_\varepsilon}$ non-collinear to any $j\in \cV$ there exist at least $ \#\cV-{1} $ 
            geometrically distinct
            %$(\T^2_\Gamma\rtimes \Z_2)$-orbits of 
            truly $3d$ analytic Stokes waves $ (u_i)_{i=1, \ldots, \#\cV-{1} } $ in $X $ with momentum  $\cI (u_i) = a $
            having  size $ \| u_i \|_X = \cO (\sqrt{|a|}) $
         and 
         speeds   
        $ c_i (a)  = c_* + \cO (\sqrt{  |a|}) $.
    \end{theorem}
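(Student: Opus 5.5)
We follow the same variational route used for Theorem~\ref{Existence of 3d solutions collinear nonresonant}, in the simpler setting where no $2d$-waves occur. First, a variational Lyapunov--Schmidt reduction near $c_*$ splits $u = v + w(c,v)$ with $v\in V:=\ker\mathcal L_{c_*}$ as in \eqref{vkern}. The map $w$ is analytic, equivariant under $\T^2_\Gamma\rtimes\Z_2$, and of size $\cO(\|v\|^2)$. Critical points of $\Psi(c,\cdot)$ then correspond to critical points of a reduced functional $\Phi(c,v)$ on the finite-dimensional space $V$. Next we choose the speed $c=c(v)$ in the Weinstein--Moser fashion, so that critical points of the reduced Hamiltonian $H(v):=\Phi(c(v),v)$ restricted to the level set $\MA:=\{\cI_{\rm red}(v)=a\}$ of the reduced momentum are genuine Stokes waves with $\cI(u)=a$. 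Here $\cI_{\rm red}$ is a small perturbation of the quadratic form \eqref{momeintro}. This construction of the natural constraint is taken from the earlier sections, where the singularity near $2d$-hyperplanes is handled through the symmetries of the nonlinearity.

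The topological step is the identification of $\MA$. Since $a\in\interior(\cC)$ is non-collinear to every $j\in\cV$, we split $\cV=\cV_-\sqcup\cV_+$ into the resonant vectors lying strictly on each side of the line $\R a$. No $j\in\cV$ lies on that line. Write $a=-\sum_{j}\tfrac12 j\,r_j^2$ with $r_j^2=\alpha_j^2+\beta_j^2$. Decompose along $\hat a$ and $\hat a^\perp$: the component along $\hat a^\perp$ fixes a linear relation between $\sum_{\cV_-}$ and $\sum_{\cV_+}$, and the component along $\hat a$ fixes the overall scale. At the quadratic level this makes $\MA$ diffeomorphic to $S^{2d_--1}\times S^{2d_+-1}$ with $d_\pm=\#\cV_\pm\ge1$, and $d_-+d_+=\#\cV$. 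Both sides are nonempty because $a$ lies in the interior of the cone. Since the quadratic level set is a smooth compact manifold, we then check that $\di\cI_{\rm red}$ does not vanish on it; rescaling $v=\sqrt{|a|}\,\tilde v$ and applying the implicit function theorem shows the same holds for small $|a|$. The manifold is invariant under the $\T^2_\Gamma$ action, $\theta\cdot(\alpha_j+\im\beta_j)=e^{-\im j\cdot\theta}(\alpha_j+\im\beta_j)$, and under $\rho$.

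Every point of $\MA$ has at least one active frequency in $\cV_-$ and one in $\cV_+$. Such frequencies are not parallel, since they lie on opposite sides of $\R a$ and $\cC$ is proper. Hence every $v\in\MA$ is genuinely $3d$, and so is $u=v+w$, because $w$ only contains frequencies generated by the active ones. By Theorem~\ref{th:(u,c) and cI}-($\mathfrak I$) every Stokes wave with this momentum is $3d$ in any case.

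The multiplicity estimate should come from equivariant Lusternik--Schnirelmann theory for the $\T^2_\Gamma$-invariant function $H$ on the closed manifold $\MA$, using the equivariant critical point results of Part~\ref{part:equivariant_critical_point_theory}. The main obstacle is computing the relevant cuplength in Borel cohomology $H^*_{\T^2}(\MA)$. The torus action is locally free on $\MA$, since the active $j\in\cV_-$ and $j'\in\cV_+$ are independent, so $H^*_{\T^2}(\MA)\cong H^*(\MA/\T^2)$ over $\mathbb Q$. We would compute this ring through the fibration $S^{2d_--1}\times S^{2d_+-1}\to E\T^2\times_{\T^2}\MA\to B\T^2$. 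The Euler classes of the two sphere bundles are $e_\pm=\prod_{j\in\cV_\pm}(j\cdot t)$ in $H^*(B\T^2)=\mathbb Q[t_1,t_2]$, and they are coprime because the $j$'s in $\cV_-$ and $\cV_+$ are pairwise non-parallel. The equivariant cohomology is then $\mathbb Q[t_1,t_2]/(e_-,e_+)$, whose top nonzero degree is $2(d_-+d_+-2)$. A product of $d_-+d_+-2$ degree-$2$ classes is therefore nonzero, giving cuplength at least $\#\cV-2$ and hence at least $\#\cV-1$ critical $\T^2_\Gamma$-orbits. Distinct $\T^2_\Gamma$-orbits might still be identified by $\rho$, which could reduce the count of geometrically distinct waves. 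To avoid this we would run the argument $\rho$-equivariantly, using $\T^2_\Gamma\rtimes\Z_2$ with rational coefficients, where $\rho$ acts on $t$ by $-1$ and the product classes of even total degree in $t$ survive. We then conclude with the size estimates $\|u_i\|=\cO(\sqrt{|a|})$ and $c_i=c_*+\cO(\sqrt{|a|})$ from the rescaling.
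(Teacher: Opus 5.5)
Your architecture matches the paper's: the Lyapunov--Schmidt reduction, the speed $c(v)$ from the earlier sections, the natural constraint on $\MA$, the splitting $\cV=\cV_-\sqcup\cV_+$, and the ring $\Q[t_1,t_2]/(e_-,e_+)$. Two steps, however, fail as proposed.

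\textbf{The treatment of $\rho$.} As written, it would lose exactly the factor $2$ that this theorem gains over \cite{CN}. For $G=\T^2_\Gamma\rtimes\Z_2$ over $\Q$ one has $\tilde H^*(BG)=\tilde H^*(B\T^2_\Gamma)^{\Z_2}$, the polynomials of even positive degree in $t$. So every factor $w_i$ in the $G$-cup-length \eqref{def:CL} has cohomological degree at least $4$. Now $H^*_{\T^2_\Gamma}(\MA)\cong\Q[t_1,t_2]/(e_-,e_+)$ vanishes above degree $2(\#\cV-2)$, and $H^*_G(\MA)$ is its $\Z_2$-invariant part. Hence $\cuplength_G(\MA)\le\lfloor(\#\cV-2)/2\rfloor$, which yields only about $\#\cV/2$ orbits.

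The missing observation is that no $\Z_2$-equivariant cohomology is needed. Suppose there are finitely many critical $\T^2_\Gamma$-orbits (otherwise there are infinitely many $\T^2_\Gamma\rtimes\Z_2$-orbits as well). Then Theorem \ref{prop:T2G} with $G=\T^2_\Gamma$ gives $\cuplength_{\T^2_\Gamma}(\MA)+1\ge\#\cV-1$ distinct critical \emph{values} of $H_{|\MA}$. Since $H$ is also $\rho$-invariant, critical orbits on different levels cannot be exchanged by $\rho$, so they are geometrically distinct.

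\textbf{Uniformity in $a$.} You identify $\MA\cong S(V_-)\times S(V_+)$ by rescaling $v=\sqrt{|a|}\,\tilde v$ and applying the implicit function theorem. This only works for $|a|$ below a threshold that depends on the angle between $a$ and the nearest resonant direction $\hat\jmath_0$. As that angle shrinks, the rescaled quadratic level set $\cI^{-1}(a/|a|)$ approaches $V_{\hat\jmath_0}$, where $\nabla_v\cI_1$ and $\nabla_v\cI_2$ become parallel. There the smallest singular value of $\di_v\cI(v)$ is only of order $\|\Pi^\bot_{\hat\jmath_0}v\|$ (cf.\ Lemma \ref{Properties of QaQT}). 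A generic error $\di_v(I-\cI)(v)=\cO(\|v\|^2)$ is then not small compared with it, unless $|a|$ is small relative to that angle. This reproduces the non-uniform result of \cite{CN}. The statement instead requires a single $\varepsilon$ for all non-collinear $a\in\interior(\cC)\cap\overline{B_\varepsilon}$.

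The paper obtains this uniformity from the straightening homeomorphism $\zeta$ of Theorem \ref{th:Moser's trick}. It is defined on a fixed neighborhood $U_1$ and built from the symmetry-refined estimates \eqref{Estimate on Ij}--\eqref{Estimate on d I1Q dritto}, obtained via Lemma \ref{lm:A}. These bound the part of the error orthogonal to $V_{\hat\jmath}$ in terms of $\|\Pi^\bot_{\hat\jmath}v\|$. Then $\MA=\zeta(\cI^{-1}(a))$ is equivariantly diffeomorphic to $S(V_-)\times S(V_+)$ by \eqref{diffeoint}, for all such $a$ at once. The proof concludes with Theorem \ref{prop:T2G} and \eqref{eq:CL product of spheres}. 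Note also that on the level set you need $\di_v I$ to have rank $2$, not merely to be nonzero.
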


Theorem \ref{th:noncollinear}  requires the uniform bound on $|a| \leq \varepsilon $ while  \cite{CN} requires  to take $ a $ smaller and smaller,  as  $a$ aligns to some resonant wave vector. In addition 
    Theorem \ref{th:noncollinear} 
    proves the existence of $ \# \cV - 1 $  
    $(\T^2_\Gamma\rtimes \Z_2)$-critical orbits,
    not just $ \T^2_\Gamma $-orbits,   
    %Thus we improve, 
improving \cite{CN} by a factor $ 2 $ (which obtained 
$ (\#\cV-{1})/2 $ geometrically distinct Stokes waves).

\smallskip

We complete the bifurcation picture 
of $ 3d$-Stokes waves of 
Theorems \ref{Existence of 3d solutions collinear nonresonant} and 
\ref{th:noncollinear} 
by the 
following 
result  valid for {\it any} small amplitude 
Stokes wave   $ u$ 
with  speed  $ c $ 
near $ c_* $.  

\begin{theorem} \label{th:(u,c) and cI}
    There exists $ \delta > 0 $ 
    such that if $ u \in X $ is an analytic 
    Stokes wave solution 
    of $ \cF (c,u) = 0 $ 
    with speed 
 $ c\in B_\delta(c_*)$ and 
    $\| u\|_{X}<\delta$,  
then its
momentum  
\begin{equation}\label{Identroa}
\cI(u)=a \in \cC  
\end{equation}
belongs 
to the convex cone $ \cC $ defined in \eqref{eq:ilcono},  and 
    \begin{itemize}
        \item[$(\mathfrak{B})$]
        {\bf (Boundary $ \partial \cC $)} 
        If $a\in \partial\cC$ then the Stokes wave $u $ is $2$-dimensional; 
        if $ a = 0 $ then $ u = 0 $.
        \item[$(\mathfrak{I})$]
        {\bf (Interior  $ \cC $)} 
        If $a\in \interior(\cC)$
        is not collinear to any  resonant wave vector $j\in \cV$,  then the Stokes wave $u (x) $ is $3$-dimensional.
    \end{itemize}
\end{theorem}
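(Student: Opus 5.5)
We reduce to the kernel by a Lyapunov--Schmidt argument and read off the momentum of the kernel component. Split $X = V \oplus W$ with $V = \ker \mathcal L_{c_*}$ as in \eqref{vkern} and $W$ its $L^2$-orthogonal complement (intersected with $X$). Write the range correspondingly and solve the range component of $\cF(c,v+w)=0$ by the implicit function theorem. This gives, for $c\in B_\delta(c_*)$ and $v\in V$ small, a unique $w = w(c,v) = \cO(\|v\|^2 + |c-c_*|\,\|v\|)$, which is analytic and $\T^2_\Gamma\rtimes\Z_2$-equivariant. Any small Stokes wave then has the form $u = v + w(c,v)$ with $v=\Pi_V u$, and $\|w\| \lesssim \|v\|(\|v\|+|c-c_*|)$. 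In particular $u=0$ if $v=0$, which settles the last claim in $(\mathfrak B)$ once we show $a=0\Rightarrow v=0$.

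The key step is the momentum estimate. Since $\cI$ is quadratic, $\cI(u) = \cI(v) + \cO(\|v\|\,\|w\| + \|w\|^2)$. Expanding in Fourier modes, $\cI(u) = -\frac12\sum_{j\in\Gamma'} j\,|\hat u_j|^2$ in the normalized coordinates, with modes outside $\cV$ coming only from $w$. This crude estimate is not enough near $\partial\cC$, so we use the equation itself. Pairing $\cF(c,u)=0$ with $\partial_{x_i}u$ and with $u$ gives the exact identities $\di\cH(u)[\partial_x u]=0$ and $\di\Psi(c,u)[u]=0$. I would rather use a refined identity: for every $j\in\Gamma'\setminus\{0\}$, project the equation onto the mode $e^{\im j\cdot x}$. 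The linear part has symbol $c\cdot j - \omega(j)$ for the symplectic pair, and this gives
\[
(\omega(j) - c\cdot j)\,|\hat u_j|^2 = \text{(cubic terms)}.
\]
Summing with the weights $j$ against $c_*$, a smallness argument shows that the non-resonant modes satisfy $|\hat u_j| \lesssim \|u\|^2$ with a decay in $j$.

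With this estimate in hand the momentum becomes
\[
a = -\tfrac12\sum_{j\in\cV} j\,(\alpha_j^2+\beta_j^2) + R, \qquad |R| \lesssim \|v\|^3 .
\]
This shows $a\in\cC$ up to an error, which does not yet suffice. To get exact membership, together with the two rigidity statements, one must exploit the $\T^2_\Gamma$-equivariance. We Fourier-decompose $u$ exactly, $u=\sum_j \hat u_j e^{\im j\cdot x}$, and write $\cI(u) = -\frac12\sum_j j\,m_j$ with weights $m_j\ge 0$ equal to the symplectic mode energies. Then $\cI(u)$ lies in the cone generated by the support frequencies $\operatorname{supp}\hat u$, with nonnegative coefficients. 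The Lyapunov--Schmidt structure means $\operatorname{supp}\hat u$ is contained in the additive lattice span of the active frequencies, $\{ \sum_k n_k j_k\}$. So what we need is that all modes $j\notin\cV$ appearing in $u$ lie in the cone generated by the active resonant frequencies.

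This is the main obstacle, since mode interactions can create frequencies such as $j_1-j_2$ outside the cone. I would handle it with a rotated norm instead of pure combinatorics. Take $\ell\in\R^2$ with $\ell\cdot j\ge0$ on $\cV$, supporting the cone. By the proper-cone property \eqref{eq:ilcono} we may choose $\ell$ close to $-c_*$ directions. Then prove that $\ell\cdot\cI(u)\le 0$, with equality only if every mode of $u$ has $\ell\cdot j=0$. This follows from the identity $\Psi(c,u)$-stationarity, $(c-c')\cdot\cI(u) = \cH$-differences: varying $c$ along $\ell$ and using that $(\cH+c\cdot\cI)$ restricted to the reduced problem is positive definite in the non-resonant directions for $c$ near $c_*$.

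Given $a\cdot\ell\le0$ for all such $\ell$, we get $a\in\cC$. If $a\in\partial\cC$, the equality case forces all modes onto the single ray orthogonal to $\ell$, hence $u$ is $2d$; and if $a=0$, every $\ell$ is admissible, forcing $u=0$. Finally, for $(\mathfrak I)$: if $u$ were $2d$ along $j$, then $\cI(u)=-\frac12\sum_k kj\,m_{kj}$ would be collinear with $j$. Moreover $v\neq0$ must contain some resonant $j'\parallel j$, so $a$ would be collinear with a resonant vector, contradicting the hypothesis.
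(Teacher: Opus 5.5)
Your reduction $u=v+w(c,v)$ and your argument for $(\mathfrak{I})$ are fine. For $(\mathfrak{I})$ you note that a $2d$ wave along $j$ has momentum parallel to $j$, and that $v=\Pi_Vu\neq0$ forces $j$ to be parallel to a resonant vector. This is more direct than the paper, which deduces $(\mathfrak{I})$ from Theorem~\ref{Topology of Sa}.

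The gap is in $a\in\cC$ and $(\mathfrak{B})$. Both rest entirely on the claim that $\ell\cdot\cI(u)\le 0$ for every $\ell$ with $\ell\cdot j\ge 0$ on $\cV$, with equality only if all modes of $u$ are orthogonal to $\ell$. The justification you offer does not prove this, for three reasons.
\begin{itemize}
\item The identity $\Psi(c,u)-\Psi(c',u)=(c-c')\cdot\cI(u)$ is a tautology. Stationarity of $\Psi(c,\cdot)$ at $u$ gives no information on $\Psi(c',\cdot)$ for $c'\neq c$, so ``varying $c$ along $\ell$'' produces nothing.
\item The quadratic part of $\cH+c\cdot\cI$ on $W$ is $\frac12\sum_{j\notin\cV}(\omega(j)-c\cdot j)(\alpha_j^2+\beta_j^2)$ (plus the $j=0$ term). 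It is negative at every lattice point of the region $\{\omega(\xi)<c_*\cdot\xi\}$, so it is not positive definite in general. Even if it were, it would not fix the sign of $\ell\cdot\cI(u)$.
\item The real difficulty sits near the $2d$ subspaces. Let $\hat\jmath_b\in\cD$ be an extremal direction of $\cC$ and $\ell\perp\hat\jmath_b$ its supporting normal. The main term $\ell\cdot\cI(v)=-\frac12\sum_{\hat\jmath'\in\cD}(\ell\cdot\hat\jmath')\|\Pi_{\hat\jmath'}v\|^2$ is only of size $\|\Pi^\bot_{\hat\jmath_b}v\|^2$. For $v$ close to $V_{\hat\jmath_b}$ this is far smaller than the $\cO(\|v\|^3)$ error you get from mode-by-mode smallness of $w$. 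Knowing that $\operatorname{supp}\hat u$ lies in the lattice generated by the active frequencies does not quantify this.
\end{itemize}

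What is missing is that the transverse error vanishes to second order on $V_{\hat\jmath_b}$, which follows from the $\T^2_\Gamma$-equivariance of $w$. By Lemma~\ref{Further properties of w(c,v)}, $w(c,v)$ is a $2d$ wave along $j_b$ whenever $v\in V_{\hat\jmath_b}$. Let $P$ denote the projection onto the Fourier modes not parallel to $j_b$. Then $Pw(c,\Pi_{\hat\jmath_b}v)=0$, and since $\di_v w(c,0)=0$, the mean value theorem gives $\|Pw(c,v)\|_X\lesssim\|v\|\,\|\Pi^\bot_{\hat\jmath_b}v\|$. Take $W$ to be the symplectic complement, as in the paper; then \eqref{Momuntum in coordinates} gives $\cI(v+w)=\cI(v)+\cI(w)$. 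Since $\cI$ is diagonal in the Fourier modes and $\ell\cdot j=0$ on $[j_b]$, you get $\ell\cdot\cI(w)=\ell\cdot\cI(Pw)=\cO(\|Pw\|_X^2)$. With $m:=\min_{\hat\jmath'\neq\hat\jmath_b}\ell\cdot\hat\jmath'>0$ this yields $\ell\cdot\cI(u)\le-\frac m2\|\Pi^\bot_{\hat\jmath_b}v\|^2+C\|v\|^2\|\Pi^\bot_{\hat\jmath_b}v\|^2$.

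Apply this to the two extremal directions of $\cC$, and use $c_*\cdot\cI(u)\le -C'\|v\|^2$ when $\#\cD=1$. This gives $a\in\cC$. When $a$ lies on a boundary ray, it forces $v\in V_{\hat\jmath_b}$, hence $u$ is $2d$. When $a=0$, it forces $v=0$, hence $u=0$.

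The paper relies on the same kind of transverse bound: the second component of \eqref{Estimate on Ij}, obtained from Lemma~\ref{lm:A}-$(ii)$. It then feeds this into much heavier machinery: the speed $c(v)$, Lemma~\ref{lem:uvw}, the straightening Theorem~\ref{th:Moser's trick} and Theorem~\ref{Topology of Sa}. With the estimate above, your supporting-functional route would be a more elementary proof of this theorem. As written, however, the proposal does not establish $a\in\cC$ or $(\mathfrak{B})$.
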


Before describing the main ideas of proof we mention that in the last years also the existence of quasi-periodic  traveling Stokes waves --which are the nonlinear superposition of Stokes waves moving with rationally independent speeds--
 has been proved  by means of KAM methods: in \cite{BFM,FG,BFM2} for 
 both $ 2 d $ gravity-capillary and pure gravity waves. 
Existence of $ 3 d $-quasi-periodic 
pure gravity Stokes waves  has been 
recently achieved in \cite{FMT}. The gravity-capillary case is still  completely open. 

\subsection{Ideas of the proof}
\label{sec:ideas}

We first perform in Section \ref{sec:LS}  a variational Lyapunov-Schmidt decomposition for  \eqref{Bifurcation problem} along the Kernel 
$ V := \ker(\cL_{c_*}) $ and  its symplectic orthogonal $ W := V^{\bot_{\boldsymbol{\Omega}}} $. The Kernel  \eqref{vkern} is the symplectic subspace  
$$
V := \ker(\cL_{c_*})  = 
\bigoplus_{j \in \cV} 
V_j \qquad \text{where}\qquad 
V_j := {\rm span}\{v_j^{(1)},v_j^{(2)} \} 
$$
are the $2$-dimensional real  subspaces in  \eqref{VJsym} with basis 
$ v_j^{(1)}, v_j^{(2)}$  in \eqref{symplectic base}, 
 and $ \cV $ are the resonant wave vectors
in \eqref{Def cV}. 
Decomposing the space 
$ X = V \oplus (W \cap X) $ 
in \eqref{spaceX}, 
we first solve in Lemma \ref{lem:range} the range equation  
$ \Pi_W {\mathcal F}(c, 
v + w) = 0 $ finding 
$ w(c,v) $ 
for any $ |c - c_* | < r $ and any  
$ v  \in B_r^V $ for some $ r > 0 $. In this way  problem \eqref{Bifurcation problem}  is reduced to solve the bifurcation equation 
$ \Pi_V {\mathcal F}(c, v + w(c,v)) = 0 $
which amounts to look for critical points of the $ \T^2_\Gamma \rtimes \Z_2 $-invariant % Hamiltonian 
functional  
$$ 
\Phi (c, \cdot ) : B_r^V \to \R \, , \quad 
\Phi (c, v) := 
\cH (c, v + w(c,v) ) +c\cdot \cI(v+w(c,v))\, .
$$ 
%The Lyapunov-Schmidt reduction  preserves the variational structure of  \eqref{spaziY}-\eqref{Variational} and  the
%symmetries 
%$ \T^2_\Gamma \rtimes \Z_2 $. %invariant. 
%The main properties of $ \Phi (c, \cdot )$  are summarized in Proposition 
%\ref{lem:expa}. 
In particular 
critical points of each restricted functional  
$ \Phi (c, \cdot) : V_{\hat \jmath} := 
\oplus_{j' \in \cV, j' \parallel j} V_{j'} \to \R $ are critical points on the whole $ V $
 and  give rise to 
$ 2 d $-Stokes waves along 
the direction $j $. 
%They contain all 
%the $ 2 d $ Stokes wave theory, cf. \cite{BBMM}.   
\\[1mm]
{\bf Construction of $ c(v) $.}
In  Section \ref{sec:cv} 
 we construct 
the speed $ c(v) $ close to $c_*$ solving 
the system 
\begin{equation}\label{difphi}
\di_v \Phi (c, v) [\nabla_v \cI (v)] = 
        \begin{pmatrix}
        \di_v \Phi (c, v) [\nabla_v \cI_1 (v)]   \\
        \di_v \Phi (c, v) [\nabla_v \cI_2 (v)] 
        \end{pmatrix} = 0 \, , \quad 
         \forall v \in B_{r'}^V 
\, ,
\end{equation}
for some $ 0 < r' < r $. 
We solve the two equations \eqref{difphi}  in the two unknowns $ c \in \R^2 $,  via   contraction mapping arguments. 
The main difficulty 
is that 
its linearization   becomes singular as $ v $ approaches the 
hyperplanes 
$ {\bf V}^{2d} = \cup_{j \in \cV} V_j $
of $ 2 d $-waves. Nevertheless, in Proposition 
\ref{Construction of c close and away from PJ},  exploiting the 
$\T^2_\Gamma $- symmetry of $ \Phi(c, v) $, we are able to define $ c(v) $ 
in a full neighborhood of $ v = 0 $,  
unlike  \cite{CN} which determines $ c(v) $ only  if $ v \notin {\bf V}^{2d} $ 
and, if $ v \to 
{\bf V}^{2d} $, requires to take  $ v \to 0 $. 
  The speed  $c (v) $
  is real analytic 
  in $ B_{r'} \setminus {\bf V}^{2d} $, 
  but 
  may  be not continuous at $ {\bf V}^{2d} $, as simple examples show. 
\\[1mm]
{\bf Variational principle.}
The equations \eqref{difphi} define a natural constraint for our original variational 
problem \eqref{Bifurcation problem}-\eqref{spaziY} in the sense that 
 a critical point $ \bar v $ of  the 
Hamiltonian
$ 
H(v) := \cH (v, w(c(v),v)) $
restricted to level sets 
$ I^{-1}(a) 
$ % near $ v = 0 $
of the  reduced momentum 
\begin{equation}\label{redmo}
I(v)  := \cI (v, w(c(v),v)) = \cI (v) + \cO(\| v \|^3 ) 
\, ,
\end{equation}
gives rise to a  critical point $  \bar u = \bar v + w(c(\bar v), \bar v)$ of the unconstrained  Hamiltonian $ \cH + c (\bar v) \cdot 
\cI $, thus a Stokes waves with speed $ c(\bar v)$.

In order to properly implement such ``Lagrange multiplier principle'' we have to first understand  the topology of the level sets  $ I^{-1}(a) 
$   
as $ a \in \R^2 $ varies.
\\[1mm]
{\bf Straightening of the momentum.} 
As first step we 
  ``straighten"
 $ I (v)  $ 
 into the  quadratic momentum  $ \cI (v)  $.  
 Theorem \ref{th:Moser's trick}  actually proves the existence 
of an equivariant homeomorphism $\zeta $  
such that 
\begin{equation}\label{cambiov}
(I \circ \zeta)(v) = \cI (v) \, , \quad
\zeta (0) = 0 \, ,
\quad \forall v \in B_{r''}^V \, , 
\end{equation}
for some $ 0 < r'' < r' $. 
 This 
result  is also the key of  the non-existence results of 
Theorem \ref{th:(u,c) and cI}.
The main difficulty to prove such   vector valued 
Morse type  result 
is when $ v $ approaches  the 
hyperplanes  $ {\bf V}^{2d} $.
We employ a Moser trick deformation argument and fully exploit the $ \T^2_\Gamma $-symmetries. 
\\[1mm]
{\bf Topology of $ \MA $.}
In view of \eqref{cambiov}, 
we are reduced 
to understand the topology of 
the level sets $ \cI^{-1} (a) $
which is not empty iff 
$ a \in \cC $, since     
$ \cI (\R^2) = \cC $ by \eqref{Momuntum in coordinates}. 
The $ \T^2_\Gamma \rtimes \Z_2 $
space 
$$ 
\MA := \big\{ v \in U_2 := \zeta (B_{r''}^V) \ | \ I(v) = a   \big\} 
$$ 
is compact but it may not be
a manifold, which is the case  
if $ a $
is parallel  to a unique resonant wave vector $ j_0 \in \cV  $.  
In this case 
$\MA $ is the {\it disjoint} union 
\begin{equation}\label{disj}
\MA = \MA^{3d} \sqcup 
\underbrace{\MA^{2d}}_{2d\text{-Stokes}} 
\quad   \text{where} \quad  
\MA^{3d} := 
\MA \setminus {\bf V}^{2d} 
\quad  \text{and} \quad  
\MA^{2d} = \MA \cap V_{j_0}
%=  \{ v \in V_{j_0} \ | \ \| v\|= \sqrt{2} |a| \}
\end{equation}
consists of the % celebrated
% non-resonant 
$ 2 d $ Stokes orbit with momentum $ a $. We prove that $ \MA^{3d} $ is a manifold and 
$$
    V= \underbrace{T_v\MA^{\threed}}_{= \ker \di_v I (v)  }\oplus \, \text{span}(\nabla_v \cI_1(v),\nabla_v \cI_2(v))\, ,
    \quad 
\forall  v \in \MA^{3d} \,  .
$$ %Actually, if  \eqref{assumthm} holds, 
In Theorem \ref{Topology of Sa}  
we also prove that $ \MA $ is equivariantly homeomorphic to the join  topological space (see definition in  \eqref{defjoin}-\eqref{eqrela})
\begin{equation}\label{Matop}
\MA 
\cong (\underbrace{S^{2d_- -1} \times 
S^{2d_+-1}}_{\rm product \, of
\, spheres})  \star S^{1}
\qquad \text{where} 
\qquad d_\pm \geq 1
\end{equation}
are the number of resonant wave vectors of $ \cV $ 
 at the right/left of $ j_0 $ respectively. If $ a $
 is parallel to a resonant wave vector in  
 $  \pa \cC $  
 then $ \MA \cong S(V_0) $ is formed by $ 2d$-waves. If $ a \in \cC $ is not collinear to any $ j \in \cV $ (case of  \cite{CN}) then $ \MA $ is a manifold diffeomorphic to the product of two spheres.
 \\[1mm]
{\bf Existence.}
Let us now explain the 
existence of at least one genuinely $3d$  Stokes wave  in Theorem \ref{Existence of 3d solutions collinear nonresonant}. Assume \eqref{assumthm}.
On  $ \MA $ the function $ H $ attains maximum $ M $ and minimum $m $.
If $ M = m $  the function  $ H_{|\MA} $ is constant and hence {\it any}  point of $ \MA^{\threed}$ is critical. Otherwise $ m < M $ 
    %the minimum and the maximum 
    are 
distinct critical levels,  
and since the $2d$-Stokes wave orbit  
$ \MA^{2d} \subset  H^{-1}(\ell_*) $ is contained in a 
unique level set of 
 $ H $, there exists a  minimum 
 or  maximum point $ \bar v $ of  $ H $ which belongs to $ \MA^{3d} $ (recall that $ \MA $ is the disjoint union \eqref{disj}).
Thus  
$ \bar u = \bar v + w(c(\bar v), \bar v)$  is a truly $3d$-Stokes wave  
solution  with momentum  $ \cI(\bar u) =  a $.  
 \\[1mm]
{\bf   Multiplicity.}  
We implement the 
equivariant  Conley-Morse theory
of Section \ref{s:equivariant_Morse_theory},   developed in a $G$-metric space,
for the $ \T^2_\Gamma  $-invariant  Hamiltonian $ H $ on $ \MA $.  
We  first  construct in Proposition 
\ref{A flow}
a gradient like flow $ \phi^t $ for $ H $ whose 
stationary points in $  \MA^{\threed} 
$ 
are  critical points of 
$ \Phi(c(v), \cdot) $ and  thus  give rise to Stokes waves. If there are infinitely many stationary 
    $ \T^2_\Gamma $-orbits of $ \phi^t $, then there exist also infinitely many
    $ \T^2_\Gamma \rtimes \Z_2 $
    critical orbits of $ H $ and the claim of Theorem \ref{Existence of 3d solutions collinear nonresonant} is trivial.
    Then we suppose there are only finitely many stationary 
    $ \T^2_\Gamma $-orbits and,  
by L\"usternik-Schnirelmann arguments and computations in equivariant cohomology for the space $ \MA \cong (S^{2d_- -1} \times 
S^{2d_+-1})\star S^{1}$ in \eqref{Matop},
we deduce using Theorem \ref{teo:ast}
the existence  
of at 
least  
$$ 
\cuplength_{\T^2_\Gamma}(S^{2d_- -1} \times 
S^{2d_+-1})+1  
\geq  d_- + d_+ - 2  =\#\cV- 2 
$$
{\it  distinct  critical values} of $ H $,  
{\it different} from the critical level  
$ \ell_* = H (\MA^{2d}) $  the space $ \MA \cong (S^{2d_- -1} \times 
S^{2d_+-1})\star S^{1}$ in \eqref{Matop} 
(the definition of equivariant cuplenght is in  \eqref{def:CL}). 
This implies the existence of the same number of 
$ (\mathbb{T}^2 \rtimes \Z_2) $-orbits, thus geometrically distinct Stokes waves, as stated in Theorem \ref{Existence of 3d solutions collinear nonresonant}.

The proof of Theorem 
\ref{teo:ast} faces a serious difficulty: unlike the stabilizers of the action at any point in
 $\MA^{3d}$ are finite, the stabilizers at any point of the $ 2d $-Stokes wave  $ {\MA^{2d}\cong} S^1 $
  are infinite.
This requires a precise 
understanding of the annihilator of the graded 
module $H^*_{\T^2_\Gamma}\big
((S^{2d_- -1} \times 
S^{2d_+-1})\star S^1\big) $
as addressed in 
 Lemma \ref{lm:join}. 
Part \ref{part:equivariant_critical_point_theory} 
contains  a rigorous and self-contained exposition of these ideas  and techniques.  

 \medskip
\noindent 
{\bf Acknowledgments.}  
We thank Thomas Goodwillie for his help with the 
computations of Proposition~\ref{p:cohomology_sphere_product}, and Bernard Deconinck, Alberto Maspero  
and Eric Wahl\'en  for useful comments. 

\part{Equivariant critical point theory}
\label{part:equivariant_critical_point_theory}

The main result of this part is Theorem \ref{teo:ast} which provides a lower bound for the number of critical orbits for a continuous Lyapunov function defined on a 
singular 
topological space
(as \eqref{Matop}) 
invariant under the action of a $d$-torus.
After  recalling basic definitions concerning graded modules over a graded ring, 
%which are the underlying algebraic structure of equivariant cohomology, 
we report 
in Section~\ref{s:equivariant_cohomology}   the notion of equivariant cohomology, we investigate some properties of its annihilator, and compute it for some spaces 
 needed later:  
 %that will be relevant for the variational theory of Skokes waves: 
 products of spheres, and joint products with spheres. In Section~\ref{s:equivariant_Morse_theory} we provide a self-contained account of % some aspects  of 
 equivariant Morse-Conley theory for continuous Lyapunov functions on compact metric spaces in a form needed to prove Theorem \ref{teo:ast}. These topics are present in literature, e.g.,   \cite{Morse:1996aa, Milnor:1974aa,Atiyah:1983aa,Conley:1978aa,Benci1991,Salamon1985} in ordinary or equivariant form. The arguments involving the annihilator of the local cohomology of invariant sets build on the works of Fadell-Rabinowitz 
\cite{FR}, Fadell-Husseini \cite{Fadell:1988aa}, Bartsch 
\cite{Bart1}, and Bartsch-Clapp \cite{Bartsch:1990aa}.\vspace{7pt}
%\section{Modules over a ring}
%\label{s:modules}
%We first % %briefly  
%recall 
%basic definitions
%and properties of abstract modules over rings with unit. 

\paragraph{\bf Modules and algebras.}\label{ss:modules}
We  consider a ring $R$ with unit $1\in R$, without non-trivial zero divisors (i.e.~$r_1r_2\neq0$ for all $r_1,r_2\in R\setminus\{0\}$), and that is commutative up to sign (i.e.~for each $r_1,r_2\in R$, we have  $r_1r_2=r_2r_1$ or $r_1r_2=-r_2r_1$, where the sign depends on the elements $r_1,r_2$). We denote the multiplication in the ring simply by juxtaposition. An \emph{$R$-module} $M$ is an abelian group equipped with a scalar multiplication
$ R\times M\to M $, $  (r,m)\mapsto r\cdot m $, 
such that, for any $x,y\in M$ and any 
$r,s\in R$, we have
$$
(r+s)\cdot x=r\cdot x + s\cdot x \, , \quad r\cdot(x+y)=r\cdot x + r\cdot y \, , \quad 
r\cdot(s\cdot x)=(rs)\cdot x \, , \quad 1\cdot x=x \, .
$$
An $R$-module $M$ is  finitely generated, if there exist finitely many  $x_1,...,x_n\in M$ such that any element of $M$ can be written as a linear combination $r_1\cdot x_1+...+r_n\cdot x_n$ for some $r_1,...,r_n\in R$. 
The \emph{torsion submodule} of $ M $ 
is defined as
\begin{align*}
 M_\tor := \big\{ x\in M\ \big|\ r\cdot x=0\mbox{ for some }r\in R\setminus\{0\} \big\}.
\end{align*}
When $M=M_\tor$, we say that the $R$-module is purely torsion, whereas when $M_\tor=0$ we say that it is torsion-free. The quotient abelian group 
\begin{equation}\label{Mfree}
M_\free := M/M_\tor
\end{equation}
inherits an $R$-module structure, and it is called the \emph{torsion-free quotient} module, as $(M_\free)_\tor=0$.
An ideal $I\subseteq R$ is an additive subgroup of the ring such that $IR\subseteq I$ (since the ring $R$ is commutative up to sign, we have $IR=RI$).
The \emph{annihilator} of a subset $S\subseteq M$ is the ideal 
\begin{equation}\label{Sann}
 S_\ann := \big\{ r\in R\ |\ r\cdot S=0 \big\} \, . 
\end{equation}
Note that a purely torsion module $M=M_\tor$ may still have trivial annihilator $M_\ann= 0$. Nevertheless, if $M$ is finitely generated, then $M=M_\tor$ if and only if $M_\ann\neq 0$. If $M$ and $N$ are $R$-modules, a \emph{homomorphism} $\phi:M\to N$ is a $R$-linear map, meaning that 
$ \phi(r\cdot x+y)=r\cdot\phi(x)+\phi(y) $,
for all $ r \in R $, 
$ x, y \in M $. 
We denote by $\image(\phi):=\phi(M)$ its image and by $\ker(\phi) := \{x\in M\ |\ \phi(x)=0 \}$ its kernel.
If $P$ is another $R$-module, a sequence of two homomorphisms 
$ M\toup^{\phi}N\toup^{\psi} P $
is \emph{exact} when $\image(\phi)=\ker(\psi)$. 
The following important elementary properties   hold.

\begin{proposition}$ $
\label{p:modules} 
\begin{enumerate}[$(i)$]
\setlength{\itemsep}{5pt}
\item Each $R$-module homomorphism $\phi:M\to N$ satisfies $\phi(M_\tor)\subseteq N_\tor$.

\item For each exact sequence of $R$-modules
\[M\toup^{\phi}N\toup^{\psi}P,\] 
we have $ M_\ann P_\ann$ $\subseteq N_\ann $. 

\item For each exact sequence of $R$-modules
$\displaystyle M\toup^{\phi}N\toup^{\psi}P $, 
starting at a purely torsion $R$-module $M=M_\tor$ $($equivalently, $M_\free = 0 $$)$, the homomorphism $\psi$ induces an injective homomorphism 
$ \psi:N_\free\hookrightarrow P_\free $.

\end{enumerate}
\end{proposition}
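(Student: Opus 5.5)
The three items are elementary algebra, and I will check them one at a time directly from the definitions. The only structural facts about $R$ that I use are that it has no nontrivial zero divisors and that it is commutative up to sign. Sign-commutativity lets me move scalars past each other in products such as $r_1 r_2 = \pm r_2 r_1$, and the sign never affects whether an element is zero.

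For $(i)$, take $x\in M_\tor$ with $r\cdot x=0$ for some $r\neq 0$. Then $r\cdot\phi(x)=\phi(r\cdot x)=0$, so $\phi(x)\in N_\tor$.

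For $(ii)$, the ideal $M_\ann P_\ann$ is generated additively by products $st$ with $s\in M_\ann$ and $t\in P_\ann$. Since $N_\ann$ is an additive subgroup, it suffices to show that each such product annihilates $N$. Fix $y\in N$ and compute:
\begin{itemize}
\item first, $\psi(t\cdot y)=t\cdot\psi(y)=0$, so $t\cdot y\in\ker\psi=\image\phi$;
\item hence $t\cdot y=\phi(x)$ for some $x\in M$;
\item then $(st)\cdot y=s\cdot\phi(x)=\phi(s\cdot x)=0$.
\end{itemize}
The order of the factors matters here: the element applied first must be $t$, coming from $P_\ann$. If the product is written as $ts$, sign-commutativity gives $ts=\pm st$, and the conclusion is unchanged.

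For $(iii)$, there are two steps: show $\psi$ descends to the quotients, then show the induced map is injective.
\begin{itemize}
\item \emph{Well-definedness.} By $(i)$, $\psi(N_\tor)\subseteq P_\tor$, so $\psi$ induces an $R$-linear map $\bar\psi\colon N_\free\to P_\free$.
\item \emph{Injectivity.} Suppose $y\in N$ and $\psi(y)\in P_\tor$. Choose $t\neq0$ with $t\cdot\psi(y)=0$. As in $(ii)$, $t\cdot y\in\ker\psi=\image\phi$, so $t\cdot y=\phi(x)$ for some $x\in M$. Since $M=M_\tor$, there is $s\neq0$ with $s\cdot x=0$. Then $(st)\cdot y=\phi(s\cdot x)=0$.
\item \emph{Conclusion.} Because $R$ has no zero divisors, $st\neq0$. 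Therefore $y\in N_\tor$, and $\bar\psi$ is injective.
\end{itemize}

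There is no real obstacle in any of this. The only points needing care are:
\begin{itemize}
\item in $(ii)$, justifying that testing the product generators $st$ is enough, which holds because the annihilator is an additive subgroup;
\item in $(iii)$, remembering that the absence of zero divisors is exactly what guarantees $st\neq 0$.
\end{itemize}
Finally, the parenthetical equivalence in $(iii)$, namely $M=M_\tor$ if and only if $M_\free=0$, is immediate from the definition \eqref{Mfree}.
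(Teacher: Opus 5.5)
Your proof is correct and follows the paper's argument item by item. In $(iii)$ you argue directly that $(st)\cdot y=0$ with $st\neq 0$, whereas the paper argues by contradiction that $r\cdot x$ would be a non-torsion element lying in the image of the purely torsion module $M$; this is the same reasoning, since the paper's step asserting that $r\cdot x$ is not torsion tacitly uses the absence of zero divisors, which you invoke explicitly.
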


\begin{proof}
$ (i)$ For each $x\in M_\tor$ there exists $r\in R\setminus\{0\}$ such that $r\cdot x=0$, and therefore $r\cdot\phi(x)=\phi(r\cdot x)=0$, which proves that $\phi(x)\in N_\tor$.

($ii$) For each $r\in P_\ann$ and $x\in N$, we have $\psi(r\cdot x)=r\cdot\psi(x)=0$. Therefore, using the exact sequence, there exists $y\in M$ such that $\phi(y)=r\cdot x$. For each $s\in M_\ann$, we have $sr\cdot x=s\cdot\phi(y)=\phi(s\cdot y)=0$, and therefore $sr\in N_\ann$.

($iii$) By ($i$) the map
$ \psi $   induces an homomorphism $ \psi : N_\free \to  P_\free $ between  torsion free quotient modules, cf.~\eqref{Mfree}. If by contradiction the induced homomorphism $\psi:N_\free\to P_\free$ is not injective,  there is % a non-torsion element 
$x\in N\setminus N_\tor$ such that $\psi(x)\in P_\tor$, and therefore there is $r\in R\setminus\{0\}$ such that $0=r\cdot\psi(x)=\psi(r\cdot x)$. By the exact sequence, we infer that $r\cdot x\in\image(\phi)$. However $r\cdot x$ is not a torsion element, and so item ($i$) implies that  $r\cdot x$ cannot lie in the image of $\phi$ whose domain is purely torsion.
%\item For each $r\in P_\ann$ and $x\in N$, we have $\psi(r\cdot x)=r\cdot\psi(x)=0$. Using the exact sequence, we have $r\cdot x\in\image(\phi)$.
\end{proof}

We also consider a special class of modules. An \emph{$R$-algebra} $A$ is a ring with unit $1\in A$, equipped with a ring homomorphism $\pi_A:R\to A$, which is a map such that $ \pi_A (1) = 1 $,  $\pi_A(r_1r_2)=\pi_A(r_1)\pi_A(r_2)$ and $\pi_A(r_1+r_2)=\pi_A(r_1)+\pi_A(r_2)$ for all $r_1,r_2\in R$. Notice that $A$ is in particular an $R$-module with the scalar multiplication
\begin{align*}
r\cdot x:=\pi_A(r)x \, ,\qquad\forall r\in R \, ,\ x\in A \, .
\end{align*}
A homomorphism of $R$-algebras $\phi:A\to B$ is an $R$-linear ring homomorphism, meaning that $\phi(r\cdot x+y)=r\cdot\phi(x)+\phi(y)$ and 
$ \phi(x \cdot y)=\phi(x) \cdot \phi(y) $, such that we have a commutative diagram
\[
\begin{tikzcd}
 A
 \arrow[rr, "\phi"]
 & & 
 B
 \\
 & 
 R \arrow[ul, "\pi_A"]\arrow[ur, "\pi_B"']
 & 
\end{tikzcd}
\]
\begin{lemma}\label{p:algebras}
The annihilator of an $R$-algebra $A$ is 
$ A_\ann=\ker(\pi_A) $. 
For each $R$-algebra homomorphism $\phi:A\to B$, we have 
$ A_\ann\subseteq B_\ann $. 
\end{lemma}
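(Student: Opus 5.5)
The plan is to prove both claims directly from the definitions, namely the annihilator \eqref{Sann} and the module structure $r\cdot x=\pi_A(r)x$ on an $R$-algebra.

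For the equality $A_\ann=\ker(\pi_A)$, I will check the two inclusions separately.
\begin{itemize}
\item If $r\in A_\ann$, then $r\cdot x=0$ for every $x\in A$. Taking $x=1\in A$ gives $\pi_A(r)=\pi_A(r)1=r\cdot 1=0$, so $r\in\ker(\pi_A)$.
\item Conversely, if $\pi_A(r)=0$, then for every $x\in A$ we have $r\cdot x=\pi_A(r)x=0\cdot x=0$ in the ring $A$. Hence $r\in A_\ann$.
\end{itemize}
Both inclusions only use that $A$ is a ring with unit and that the scalar multiplication is ring multiplication by $\pi_A(r)$.

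For the inclusion $A_\ann\subseteq B_\ann$, I will use the commutative diagram $\pi_B=\phi\circ\pi_A$, which holds for any $R$-algebra homomorphism. Let $r\in A_\ann$. By the first part, $\pi_A(r)=0$. Then $\pi_B(r)=\phi(\pi_A(r))=\phi(0)=0$, where $\phi(0)=0$ because $\phi$ is additive. So $r\in\ker(\pi_B)=B_\ann$, again by the first part applied to $B$.

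There is no real obstacle here. The one point needing care is to use the unit $1\in A$ in the first inclusion, which is what makes the annihilator of an algebra detectable on a single element. I also note that the compatibility $\pi_B=\phi\circ\pi_A$ is exactly the commutative diagram in the definition of an algebra homomorphism, so the argument does not need $\phi(1)=1$ separately.
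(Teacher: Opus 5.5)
Your proof is correct and follows the paper's argument essentially verbatim: you evaluate at the unit $1\in A$ to get $A_\ann\subseteq\ker(\pi_A)$, note the reverse inclusion is immediate, and then use $\pi_B=\phi\circ\pi_A$ to conclude $\ker(\pi_A)\subseteq\ker(\pi_B)=B_\ann$.
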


\begin{proof}
Clearly $\ker(\pi_A)\subseteq A_\ann$. 
Conversely, if $r\in A_\ann$, we have 
$ 0=r\cdot1=\pi_A(r)1=\pi_A(r) $, 
thus $\ker(\pi_A)=A_\ann$. Since 
$\pi_B=\phi\circ\pi_A$, we infer that 
$ A_\ann=\ker(\pi_A)\subseteq\ker(\pi_B)=B_\ann $. \qedhere
\end{proof}

\paragraph{\bf Gradings.}\label{ss:gradings}

In  algebraic topology, we  need rings and modules equipped with gradings. We denote by $\N=\{0,1,2,...\}$ the natural numbers (including zero). A ring $R^*$ with unit $1\in R^*$ and without non-trivial zero divisors is \emph{graded} when it can be written as the direct sum of abelian groups
\begin{align*}
R^*=\bigoplus_{d\in\N} R^d \qquad
\text{such that} \qquad R^{d_1}R^{d_2}\subseteq R^{d_1+d_2} \, , \ \forall  d_1,d_2\in\N \, . 
\end{align*}
 A non-zero element $r\in R^*\setminus\{0\}$ is called homogeneous when $r\in R^d$ for some $d := \degree{r}$, called the degree of $ r$. 
In the sequel, whenever we consider a non-zero  $r\in R^*\setminus\{0\}$ we always tacitly assume that it is homogeneous.
Note that the degree of the unit is $\degree{1}=0$. The ring $R^*$ is \emph{graded-commutative} when
$ r_1r_2=(-1)^{\degree{r_1}\,\degree{r_2}}r_2r_1 $, for any  $ r_1,r_2\in R\setminus\{0\} $.

An $R^*$-module $M^*$ is graded when it can be written as the direct sum of abelian groups
\begin{align*}
M^*=\bigoplus_{d\in\N} M^d \qquad
\text{such that}
\qquad R^{d_1}\cdot M^{d_2}\subseteq M^{d_1+d_2} \, , 
\  \forall 
d_1,d_2\in\N \, . 
\end{align*}

Homogeneous elements 
$x\in M^*\setminus\{0\}$ of $\degree{x}:=d$
are defined as for a graded ring. A homomorphism of $R$-graded modules $\phi^*:M^*\to N^*$ 
  preserves the degree of homogeneous elements, i.e. $\phi^*(M^d)\subseteq N^d$.
An $R^*$-algebra $A^*$ is graded when it is an $R^*$-graded ring with unit, equipped with a ring homomorphism $\pi_{A^*}:R^*\to A^*$ which preserves the degree of homogeneous elements.

\section{Equivariant cohomology}
\label{s:equivariant_cohomology}

Throughout this article, $G$ will denote a compact Lie group.

\paragraph{\bf Classifying spaces.}
 For any Lie group $ G $ (and indeed even general topological groups) there exists a contractible space $EG$ on which $ G $ acts freely. 
The quotient $BG:=EG/G$ is called the \emph{classifying space} of $G$. While $EG$ is not unique, $BG$ is unique up to homotopy equivalence. We will denote by $H^*(\cdot)$ the singular cohomology with coefficients in the ring of rational numbers $\Q$, and we will simply refer to it as the cohomology. 

%\begin{example}
%For $G=\Z_2=\Z/2\Z$, we can choose $E\Z_2$ to be the unit sphere in $\R^\infty$, equipped with the free $\Z_2$ action 
%$g\cdot x = (-1)^g x$. The classifying space is the infinite dimensional real projective space $B\Z_2=\RP^\infty$. 
% \end{example}
    
\begin{example}
For $G=S^1=\R/2\pi\Z$, we can choose $ES^1$ to be the unit sphere in $\C^\infty$, equipped with the free $S^1$ action $\theta \cdot z= e^{ \im \theta}z$. The classifying space  is the infinite dimensional complex projective space $BS^1=\CP^\infty$. Its cohomology is the ring of rational polynomials in one variable
$ H^*(BS^1)=\Q[u] $ 
where $u$ is a generator of $H^2(BS^1)\cong\Q$.
\end{example}

\begin{example}\label{torus}
Given two compact Lie groups $G_1$ and $G_2$ and their product $G:=G_1\times G_2$, we can choose $EG=EG_1\times EG_2$ equipped with the product action $(g_1,g_2)\cdot(e_1,e_2)=(g_1\cdot e_1,g_2\cdot e_2)$, and obtain the classifying space $BG=BG_1\times BG_2$. By the K\"unneth theorem, its cohomology is the tensor product
\begin{equation}\label{kunneth}
H^*(BG)=H^*(BG_1)\otimes H^*(BG_2) \, .
\end{equation}
We infer that the torus 
$\T^d := S^1 \times...\times S^1 $ 
($d$-fold product) has classifying space 
$ B \T^d = 
\CP^\infty \times... \times \CP^\infty $, 
whose cohomology is the ring of rational polynomials in $d$ variables
\begin{equation}\label{anntoro}
H^*(B\T^d) = \Q[u_1,...,u_d] \, ,
\end{equation}
where $u_1,...,u_d$ are generators of $H^2(B\T^d)\cong\Q^d$.
\end{example}

\begin{example}\label{ex:BF}
    If $F$ is a  finite group, its singular cohomology with $\Q$ coefficents is 
$  H^0(BF)=
            \Q $
            and 
           $ H^d(BF)= 
            0 $
            for any $ d \geq 1 $. 
\end{example}
Any compact Lie group $G$ contains a maximal subgroup isomorphic to a torus $\T^d$, where maximal means that such a subgroup is not contained in a torus subgroup of larger dimension. The dimension $d$ is called the {\it rank} of $G$. For any compact Lie group $G$ the classifying space $BG$ has cohomology ring 
$  H^*(BG)=\Q[u_1,...,u_d] $ 
with $ \degree{u_i}=d_i\in 2 \N $, 
where each $u_i\in H^{d_i}(BG)$ has {\it even} degree, see e.g.  \cite[Theorem 6.22]{M2010}. In particular, {\it $H^*(BG)$ is a ring without non-trivial zero divisors}.

\begin{remark}\label{rem:conne}
If  the compact Lie group $G$ is connected, the classifying space 
$ BG$ is simply connected. 
This follows from the long exact sequence of the homotopy groups associated with the principal $G$-bundle $EG\to BG$, see e.g.~\cite[Theorem~4.41]{Hatcher:2002aa}.
\end{remark}

\paragraph{\bf Borel quotient.}  

Let $X$ be a $G$-space, meaning a topological space equipped with a continuous $G$ action. 
If  the $G$ action is not free, there are no general techniques to compute the cohomology of the quotient $X/G$, 
and it is convenient to consider the \emph{Borel quotient}
\begin{equation}\label{Borelq}
X_G:= X \times_G EG \,. 
\end{equation}
Here, with common notation, $G$ acts diagonally on the product $X\times EG$, i.e.~$g\cdot(x,e) := (g\cdot x,g\cdot e)$, and $X \times_G EG := (X \times EG)/G $. The  Borel quotient
$ X_G $
is a natural substitute of the ordinary quotient  as $X\times EG$ is homotopy equivalent to $X$. The advantage is  that $G$ acts freely on $X\times EG$. The quotient projection $X\times EG\to X_G$ is a principal $G$-bundle, and the topology of $X_G$ can be studied via well established techniques in algebraic topology.

\paragraph{\bf Equivariant cohomology.}
Let $X$ be a $G$-space.
%namely a topological space equipped with a continuous $G$ action. 
Throughout this paper, all $G$-spaces are  assumed to be Hausdorff. 
A function $f:X\to Y$ is called $G$-{\it invariant} if  $ f(g\cdot x)=f(x)$ and $G$-{\it equivariant} if $f(g\cdot x)=g\cdot f(x)$ for any $ x\in X$.

\smallskip

Let $Y$ be a $G$-subspace of $X$. The \emph{$G$-equivariant cohomology} of the pair $(X, Y) $ is defined as the relative cohomology of their Borel quotients
\begin{align*}
H^*_G(X,Y):=H^*(X_G,Y_G) \, .
\end{align*}
Throughout this article $H^*(\cdot)$ denotes the singular cohomology with coefficients in the ring of rational numbers $\Q$.
We denote 
\[
H^*_G(X):=H^*_G(X,\emptyset) \, .
\]
The equivariant cohomology inherits many of the properties of the ordinary  cohomology,
including the compatibility with the $G$ action. More specifically, a continuous map between pairs of $G$-spaces $\phi:(X,Y)\to (W,Z)$ that is $G$-equivariant, meaning that 
$ \phi(g\cdot x)=g\cdot\phi(x) $, 
for any $ g\in G $,
$ x\in X $, 
induces a continuous map between the corresponding pairs of Borel quotients, still denoted with the same letter $\phi:(X_G,Y_G)\to (W_G,Z_G)$, $\phi([x,e])=[\phi(x),e]$, and therefore a ring homomorphism
\begin{align}\label{e:homomorphism_HG}
\phi^*:H^*_G(W,Z)\to H^*_G(X,Y) \, .
\end{align}
Two  $G$-equivariant continuous maps $\phi_0:(X,Y)\to (W,Z)$ and $\phi_1:(X,Y)\to (W,Z)$ induce the same homomorphism $\phi_0^*=\phi_1^*$ in equivariant cohomology provided they are interpolated by a $G$-equivariant homotopy, namely by a family of continuous $G$-equivariant maps $\phi_t:(X,Y)\to (W,Z)$ depending continuously on $t\in[0,1]$.

\smallskip

There is a crucial extra property that is specific to equivariant cohomology. 
While the cup product $\smallsmile$ gives a graded ring structure to the ordinary cohomology, the equivariant cohomology $H^*_G(X)$ is actually a \emph{graded $H^*(BG)$-algebra}, with the canonical homomorphism
\begin{align}\label{procan}
\pi_X^*:H^*(BG)\to H^*_G(X)
\end{align}
induced by the projection map 
\begin{equation}\label{proiecan}
\pi_X:X_G\to BG \, , \quad \pi([x,e]) :=[e] \, . 
\end{equation} 
Thus, by Lemma \ref{p:algebras}, for any $G$-space $X$,  
\begin{equation}\label{kerann}
\ker(\pi_X^*) = H^*_G(X)_\ann 
\, . 
\end{equation}

\begin{lemma}\label{l:pi_X_injective}
Let $X$ be a  $G$-space with  a fixed point $x=G\cdot x$ of the  action. Then  $ 
%\ker  \pi_X^*  = 
H^*_G(X)_\ann  
= 0 $.  
\end{lemma}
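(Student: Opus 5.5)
The plan is to exhibit a section of the projection $\pi_X:X_G\to BG$ coming from the fixed point, so that $\pi_X^*$ has a left inverse and is therefore injective. By \eqref{kerann} we have $H^*_G(X)_\ann=\ker(\pi_X^*)$, and injectivity of $\pi_X^*$ gives exactly the claim.

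Let $x\in X$ be the fixed point, $g\cdot x=x$ for all $g\in G$. Then $\{x\}$ is a $G$-invariant subspace, and the inclusion $\iota:\{x\}\to X$ is $G$-equivariant. By functoriality of the Borel construction it induces a continuous map
\[
\iota:\{x\}_G=\{x\}\times_G EG\to X_G,\qquad [x,e]\mapsto [x,e].
\]
Because $G$ acts trivially on $\{x\}$ and freely on $EG$, the space $\{x\}\times_G EG$ is canonically identified with $EG/G=BG$ via $[x,e]\mapsto[e]$. Under this identification the composition $\pi_X\circ\iota:BG\to X_G\to BG$ sends $[e]\mapsto[x,e]\mapsto[e]$, so it is the identity. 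Put differently, $\pi_{\{x\}}$ is a homeomorphism and $\pi_X\circ\iota=\pi_{\{x\}}$.

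Passing to cohomology gives $\iota^*\circ\pi_X^*=(\pi_X\circ\iota)^*=\mathrm{id}_{H^*(BG)}$, so $\pi_X^*$ is injective. The same conclusion also follows from Lemma \ref{p:algebras}: $\iota^*:H^*_G(X)\to H^*_G(\{x\})$ is an $H^*(BG)$-algebra homomorphism, hence $H^*_G(X)_\ann\subseteq H^*_G(\{x\})_\ann=\ker(\pi_{\{x\}}^*)=0$.

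The only point that needs some care is the identification $\{x\}\times_G EG\cong BG$, i.e. that $\pi_{\{x\}}$ is a homeomorphism. This holds because the map $\{x\}\times EG\to EG$, $(x,e)\mapsto e$, is a $G$-equivariant homeomorphism (the action on $\{x\}$ is trivial), so it descends to a homeomorphism of the quotients. Everything else is formal functoriality.
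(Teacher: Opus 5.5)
Your proposal is correct and follows the paper's proof: both use the inclusion of the fixed point $\{x\}\hookrightarrow X$, the factorization $\pi_{\{x\}}^*=\iota^*\circ\pi_X^*$ with $\pi_{\{x\}}$ a homeomorphism to get injectivity of $\pi_X^*$, and then \eqref{kerann}. Your alternative via Lemma \ref{p:algebras} restates the same factorization in the language of annihilators and is equally valid.
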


\begin{proof}
Since $x=G\cdot x$, the map $\pi_x:\{x\}_G\to BG$ is a homeomorphism, and therefore induces an isomorphism in cohomology. The inclusion $i:\{x\}\hookrightarrow X$ induces a homomorphism which fits in the commutative diagram
\[
\begin{tikzcd}[row sep=large]
 H^*_G(X)
 \arrow[r,"i^*"]
 & 
 H^*_G(\{x\})
 \\
 H^*(BG)
 \arrow[u,"\pi_X^*"]
 \arrow[ur,"\pi_x^*"',"\cong"]
 & 
\end{tikzcd}
\]
Since the diagonal isomorphism factorizes as $\pi_x^*=i^*\circ\pi_X^*$, we infer that $\pi_X^*$ is injective, and by \eqref{kerann}
we conclude that $ 
%\ker  \pi_X^*  = 
H^*_G(X)_\ann  
= 0 $. 
\end{proof}

The relative equivariant cohomology does not have an algebra structure, but nevertheless $H^*_G(X,Y)$ is a \emph{graded $H^*(BG)$-module} with scalar product
\begin{equation}\label{cuppro}
r\cdot h:=\pi_X^*(r)\smallsmile h \, ,
\qquad\forall r\in H^*(BG) \, ,
\ h\in H^*_G(X,Y) \, .
\end{equation}
Note that this is indeed well defined, as the cup product of a cohomology class in $H^*_G(X)$ with a relative cohomology class in $H^*_G(X,Y)$ is still a relative cohomology class in $H^*_G(X,Y)$. Clearly~\eqref{e:homomorphism_HG} is an $H^*(BG)$-module homomorphism, and even an $H^*(BG)$-algebra homomorphism if $Y=\emptyset$, meaning that it commutes with the scalar product, i.e.
$ \phi^*(r\cdot h)=r\cdot\phi^*(h) $. 
We will  often need the long exact sequences induced by a triple of $G$-spaces.
%and we stress that it is a sequence of $H^*(BG)$-module homomorphisms.

\begin{proposition}
\label{prop:long}

Any triple of $G$-spaces $Z\subseteq Y\subseteq X$ induces a long exact sequence of graded $H^*(BG)$-module homomorphisms
\begin{equation}
\label{incltripla}
 ...\toup^{\delta^{*-1}} H^*_G(X,Y) \toup H^*_G(X,Z) \toup H^*_G(Y,Z) \toup^{\delta^*} H^{*+1}_G(X,Y) \toup...
\end{equation}
where all the arrows  are induced by the inclusions, 
except  $\delta^*$. 
\end{proposition}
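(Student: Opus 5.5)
The plan is to obtain the sequence by transporting the ordinary long exact sequence of a triple of spaces through the Borel construction, and then to check that every map in it is $H^*(BG)$-linear.

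\textbf{Step 1: the Borel quotients form a triple.} Since $Z\subseteq Y\subseteq X$ are $G$-invariant, the products $Z\times EG\subseteq Y\times EG\subseteq X\times EG$ are invariant under the diagonal action. Passing to the quotients gives a triple of spaces $Z_G\subseteq Y_G\subseteq X_G$.

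We must check that these really are subspaces, i.e.\ that the quotient topology on $Y_G$ agrees with the subspace topology from $X_G$. The quotient map $q\colon X\times EG\to X_G$ by a compact group action is open. Hence, for an invariant subset $A$, the restriction of $q$ to $A\times EG$ is open onto its image, so the two topologies coincide.

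\textbf{Step 2: the ordinary sequence.} Singular cohomology with $\mathbb{Q}$ coefficients gives the long exact sequence of the triple $(X_G,Y_G,Z_G)$:
\[
\cdots\to H^*(X_G,Y_G)\to H^*(X_G,Z_G)\to H^*(Y_G,Z_G)\xrightarrow{\delta^*} H^{*+1}(X_G,Y_G)\to\cdots
\]
The unlabelled maps are induced by inclusions. By definition this is exactly \eqref{incltripla}, so exactness comes for free.

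\textbf{Step 3: module structure for the inclusion-induced maps.} All the maps preserve degree, so it remains to show that each is $H^*(BG)$-linear for the module structure \eqref{cuppro}.

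The key fact is compatibility of the projections \eqref{proiecan} with inclusions: if $i\colon Y_G\hookrightarrow X_G$, then $\pi_X\circ i=\pi_Y$. Consequently $i^*\pi_X^*(r)=\pi_Y^*(r)$ for every $r\in H^*(BG)$.

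Naturality of the relative cup product $H^*(X_G)\otimes H^*(X_G,Z_G)\to H^*(X_G,Z_G)$ then gives linearity of the maps induced by inclusions of pairs. For $j\colon(Y_G,Z_G)\to(X_G,Z_G)$,
\[
j^*\big(\pi_X^*(r)\smallsmile h\big)=i^*\pi_X^*(r)\smallsmile j^*h=\pi_Y^*(r)\smallsmile j^*h ,
\]
and the same argument handles $(X_G,Z_G)\to(X_G,Y_G)$.

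\textbf{Step 4: the connecting homomorphism (main obstacle).} The one step needing care is linearity of $\delta^*$, which requires the standard identity
\[
\delta(a|_{Y}\smallsmile b)=(-1)^{|a|}\,a\smallsmile\delta b
\]
for $a\in H^*(X_G)$ and $b\in H^*(Y_G,Z_G)$. This follows at cochain level, since the coboundary is a graded derivation for cup product. Taking $a=\pi_X^*(r)$ gives $\delta(r\cdot b)=(-1)^{|r|}r\cdot\delta b$.

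The classes $r\in H^*(BG)$ have even degree, so the sign is trivial, and $\delta^*$ is a module homomorphism. In general, one could instead absorb the sign into the convention for the module structure.

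Altogether, \eqref{incltripla} is a long exact sequence of graded $H^*(BG)$-module homomorphisms.
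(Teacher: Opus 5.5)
Your proposal is correct and follows the same route as the paper. Both arguments take the ordinary long exact sequence of the triple of Borel quotients $Z_G\subseteq Y_G\subseteq X_G$, get linearity of the inclusion-induced maps from naturality of the cup product with $\pi_X^*$, and get linearity of $\delta^*$ from the identity $\delta^*(i^*(a)\smallsmile b)=\pm\, a\smallsmile\delta^*(b)$. You are in fact more careful than the paper, which omits the subspace-topology check and the sign $(-1)^{|a|}$; that sign is harmless because $H^*(BG)$ is concentrated in even degrees.
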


\begin{proof}
The statement follows from the long exact sequence  for the triple of Borel quotients $Z_G\subseteq Y_G\subseteq X_G$, and since the inclusions induce graded $H^*(BG)$-module homomorphisms in equivariant cohomology. The connecting homomorphism $\delta^*$ is a graded $H^*(BG)$-module homomorphism as well, as  follows by the definition of the scalar product \eqref{cuppro} and since $\delta^*$ satisfies 
$ \delta^*(i^*(k)\smallsmile h)=k\smallsmile\delta^*(h) $,
for any $ k\in H^*_G(Y)$, 
$ h\in H^*_G(X,Y) $, 
where $i:Y\hookrightarrow X$ is the inclusion.
\end{proof}

\subsection{Orbits of the action}\label{ss:orbits}
We recall that the compact Lie group $G$ acts freely on the contractible space $EG$, and therefore any compact Lie subgroup $H\subseteq G$ acts freely on $EG$ as well. This implies that we can choose $EH=EG$ and classifying space $BH=EG/H$. Notice that $BH$ is still a $G$-space, and $BG=BH/G$. Up to homeomorphism, the classifying space $BH$ depends only on the conjugacy class of $H$. Indeed, the quotient group $G/H$, endowed with the quotient topology, is a $G$-space whose Borel quotient admits a homeomorphism
\begin{align}
\label{e:G/H}
(G/H)_G \toup^{\cong} BH \, ,\qquad [gH,e]\mapsto[g^{-1}e] \, .
\end{align}
If $K=gHg^{-1}$ is another subgroup conjugate to $H$, there is a $G$-equivariant homeomorphism 
\[G/H\toup^{\cong} G/K \, ,\qquad hH\mapsto hHg^{-1}=hg^{-1}K \, ,\] 
and we infer by \eqref{e:G/H} that $BH\cong BK$ are $G$-homeomorphic.

Let $X$ be a $G$-space (which, as everywhere else in this paper, is assumed to be Hausdorff). Each point $x\in X$ has \emph{stabilizer}
\begin{equation}\label{Def Stabilizer} 
    G_x:= \big\{ g\in G\ |\ g\cdot x=x \big\} \, ,
\end{equation}
 also referred to as {\it isotropy group},  
which is a compact subgroup of $G$. 
By the elementary orbit-stabilizer theorem, the orbit $G\cdot x$ is homeomorphic to the quotient $G/G_x$ via the $G$-equivariant homeomorphism
\begin{equation}\label{orbsth}
 G\cdot x \toup^{\cong} G/G_x \, ,\qquad g\cdot x\mapsto gG_x \, .
\end{equation}
The projection
$ \pi_{G/G_x}$
in \eqref{procan}
factorizes as
$ \pi_{G/G_x} = q_x \circ \nu_x $,
where $ \nu_x : (G/G_x)_G    \stackrel{\cong} \toup BG_x$ 
is the  canonical homeomorphism  
defined in~\eqref{e:G/H}, and  $ q_x $ is the quotient map 
\[
q_x:BG_x\to BG=BG_x/G \, .
\]
The homeomorphism 
$ \nu_x $ together with \eqref{orbsth} 
induces a canonical isomorphism
\begin{equation}
\label{HBGiso}
H^*(BG_x)\cong H^*_G(G\cdot x) \, .
\end{equation}
Overall, we have the commutative diagram
\begin{equation}\label{cd:q pi}
\begin{tikzcd}[row sep=large]
   H^*(BG)
   \arrow[r, "\pi_{G\cdot x}^*"]
   \arrow[dr, "q_x^*"']
   &  
   H^*_G(G\cdot x)\\
   & 
   H^*(BG_x)
   \arrow[u, "\cong"']
\end{tikzcd}
\end{equation}
By  \eqref{kerann} we have  \begin{equation}\label{eq:Hann=ker(q*)=ker(pi*)}
    H^*_G(G\cdot x)_\ann=\ker(\pi^*_{G\cdot x}) \stackrel{\eqref{cd:q pi}}=\ker(q_x^*) \, . 
\end{equation}

\begin{lemma}\label{rmk:nonnull}
    Let $T=\T^d$ be a torus, and $X$ a $T$-space.
    If $x\in X$ is not a fixed point of the action, then $H^{*}_T(T\cdot x)_\ann\cap H^2(BT)\neq 0$.
    \end{lemma}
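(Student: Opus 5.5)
By \eqref{eq:Hann=ker(q*)=ker(pi*)}, the annihilator $H^*_T(T\cdot x)_\ann$ equals $\ker(q_x^*)$, where $q_x^*:H^*(BT)\to H^*(BT_x)$ is induced by the inclusion of the stabilizer $T_x\subseteq T$. So it suffices to exhibit a nonzero element of $H^2(BT)$ killed by $q_x^*$. Since $x$ is not fixed, $T_x$ is a proper closed subgroup of $T$. Its identity component $T_x^0$ is then a subtorus of dimension $k<d$, and $T_x/T_x^0$ is a finite abelian group.

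\textbf{Step 1 (pass to the identity component).} I will first show that the map $BT_x^0\to BT_x$, which is a finite covering with deck group $T_x/T_x^0$, induces an injection $H^*(BT_x)\hookrightarrow H^*(BT_x^0)$ in rational cohomology. This follows from the transfer for finite coverings, which identifies $H^*(BT_x;\Q)$ with the invariants $H^*(BT_x^0;\Q)^{T_x/T_x^0}$. Alternatively, in the abelian case one can argue concretely: $T_x\cong T_x^0\times F$ with $F$ finite, and then Example~\ref{torus} and Example~\ref{ex:BF} with the Künneth formula \eqref{kunneth} give $H^*(BT_x)\cong H^*(BT_x^0)$. In either case $\ker q_x^*=\ker(j^*)$, where $j:T_x^0\hookrightarrow T$ is the inclusion and $j^*:H^*(BT)\to H^*(BT_x^0)$ is the induced map.

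\textbf{Step 2 (linear algebra in degree 2).} For a torus $S$, the space $H^2(BS;\Q)$ is canonically $\operatorname{Hom}(\pi_1(S),\Q)$, that is, the rationalized character lattice. This is compatible with Example~\ref{torus}: $H^2(B\T^d)\cong\Q^d$ is spanned by the $u_i$. Under this identification, the map $j^*:H^2(BT)\to H^2(BT_x^0)$ is the restriction map $\operatorname{Hom}(\pi_1 T,\Q)\to\operatorname{Hom}(\pi_1 T_x^0,\Q)$, a linear map from $\Q^d$ to $\Q^k$. Since $k<d$, this map has a nonzero kernel, which gives $0\neq u\in H^2(BT)\cap\ker q_x^*$, as claimed. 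To make this concrete, I would choose a splitting $T\cong T_x^0\times\T^{d-k}$, which exists because a subtorus of a torus is a direct factor. Then by \eqref{kunneth} the map $j^*$ sends the generators coming from the $\T^{d-k}$ factor to zero.

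\textbf{Main obstacle.} The only delicate point is Step 1: justifying that the finite part of the stabilizer does not affect rational cohomology. This holds because the cohomology of a finite group with $\Q$ coefficients vanishes in positive degrees (Example~\ref{ex:BF}). It also uses the structure of closed subgroups of tori as a subtorus times a finite group, together with the fact that a subtorus splits off as a direct factor.
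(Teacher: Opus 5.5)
Your proposal is correct and follows the paper's route: identify the annihilator with $\ker q_x^*$, then note that $q_x^*$ maps $H^2(BT)\cong\mathbb{Q}^d$ linearly into $H^2(BT_x)\cong\mathbb{Q}^m$ with $m<d$, so it has a nonzero kernel. Your Step 1 only supplies the justification the paper leaves implicit, namely that the finite component group $T_x/T_x^0$ is invisible in rational cohomology, so indeed $\dim H^2(BT_x)=\dim T_x^0<d$.
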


    \begin{proof}
%Let $d$ be the rank of $T$. 
The stabilizer $T_x$ is a closed abelian Lie-subgroup of $T$. Since $x$ is not a fixed point of the whole $T$ action, $T_x$ has rank $m<d$. Since $H^2(BT)\cong\Q^d$ and $H^2(BT_x)\cong\Q^{m}$, the homomorphism  $q_x^*:H^2(BT)\to H^2(BT_x)$ has a non-trivial kernel. This, together with \eqref{eq:Hann=ker(q*)=ker(pi*)}, implies the lemma.
\end{proof}

\begin{remark}\label{rm:finite stabilizer}
If $x$ has finite stabilizer $G_x$ then, by 
\eqref{HBGiso} and
Example \ref{ex:BF}, we 
deduce that 
$  H_G^0 (G\cdot x) \cong \Q$ and $ H_G^d (G\cdot x)=0 $ for any degree $d \geq 1 $. 
\end{remark}

\subsection{Annihilator of the equivariant cohomology}\label{ss:free_ann}
The following are basic properties of the annihilator of pairs of $G$-spaces.

\begin{proposition}\label{p:ann}
Let $X, Y $ be $G$-spaces. The following holds:  
\\[1mm]
$(i)$ \textnormal{\textbf{Monotonicity:}}   If there is a continuous $G$-equivariant map $\phi:X\to Y$, then
$ H^*_G(Y)_\ann\subseteq H^*_G(X)_\ann $.
\\[1mm]
$(ii)$
\textnormal{\textbf{Union subadditivity:}} for each $G$-space $X$ and for each pair of $G$-subspaces $Y,Z\subseteq X$ whose interiors cover $X$, i.e.~$X=\interior(Y)\cup\interior(Z)$, we have
\[H^*_G(Y)_\ann\smallsmile H^*_G(Z)_\ann\subseteq H^*_G(X)_\ann \, . \] 
$(iii)$
\textnormal{\textbf{Pair monotonicity:}} For each pair of $G$-spaces $Y\subseteq X$, we have 
$
H^*_G(X)_\ann\subseteq H^*_G(X,Y)_\ann $.  
\\[1mm]
$(iv)$ \textnormal{\textbf{Triple subadditivity:}} For each triple of $G$-spaces $Z\subseteq Y\subseteq X$, we have
\begin{equation}
\label{intriple}H^*_G(X,Y)_\ann\smallsmile H^*_G(Y,Z)_\ann \subseteq H^*_G(X,Z)_\ann \, . 
\end{equation}
\end{proposition}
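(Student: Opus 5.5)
The four items follow from the algebraic facts already established, namely Lemma~\ref{p:algebras}, Proposition~\ref{p:modules}$(ii)$, the identity \eqref{kerann}, and the long exact sequences of Proposition~\ref{prop:long}. We treat them in turn.

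For $(i)$, a continuous $G$-equivariant map $\phi:X\to Y$ induces $\phi:X_G\to Y_G$ with $\pi_Y\circ\phi=\pi_X$. Hence $\phi^*:H^*_G(Y)\to H^*_G(X)$ is a homomorphism of $H^*(BG)$-algebras, and Lemma~\ref{p:algebras} gives $H^*_G(Y)_\ann\subseteq H^*_G(X)_\ann$ directly.

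For $(iii)$, the module structure \eqref{cuppro} gives $r\cdot h=\pi_X^*(r)\smallsmile h$ for $h\in H^*_G(X,Y)$. If $r\in H^*_G(X)_\ann=\ker\pi_X^*$ by \eqref{kerann}, then $r\cdot h=0$.

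For $(iv)$, apply Proposition~\ref{prop:long} to the triple $Z\subseteq Y\subseteq X$. This yields the exact segment of $H^*(BG)$-modules
\[
H^*_G(X,Y)\toup H^*_G(X,Z)\toup H^*_G(Y,Z).
\]
Proposition~\ref{p:modules}$(ii)$ then gives $H^*_G(X,Y)_\ann\, H^*_G(Y,Z)_\ann\subseteq H^*_G(X,Z)_\ann$. The product in $H^*(BG)$ is the cup product, so this is exactly \eqref{intriple}. Since $H^*(BG)$ is commutative (all generators have even degree), the order of the factors is irrelevant.

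Item $(ii)$ is the step needing the most care, because of the excision hypothesis. Take $r\in H^*_G(Y)_\ann$ and $s\in H^*_G(Z)_\ann$, and set $a:=\pi_X^*(r)\in H^*_G(X)$. Restricting to $Y$ gives $a|_Y=\pi_Y^*(r)=0$, so by the long exact sequence of the pair $(X_G,Y_G)$ the class $a$ lifts to some $\tilde a\in H^*(X_G,Y_G)$. Similarly $b:=\pi_X^*(s)$ lifts to some $\tilde b\in H^*(X_G,Z_G)$. The relative cup product gives $\tilde a\smallsmile\tilde b\in H^*(X_G,Y_G\cup Z_G)=H^*(X_G,X_G)=0$, and its image in $H^*(X_G)$ is $a\smallsmile b=\pi_X^*(rs)$. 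Hence $rs\in\ker\pi_X^*=H^*_G(X)_\ann$.

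The point to check is that the relative cup product $H^*(X_G,Y_G)\otimes H^*(X_G,Z_G)\to H^*(X_G,Y_G\cup Z_G)$ exists for singular cohomology. This requires $\{Y_G,Z_G\}$ to be an excisive couple. That holds if the interiors of $Y_G$ and $Z_G$ cover $X_G$, so we need interiors to pass to Borel quotients. Because $Y$ and $Z$ are $G$-invariant, $\interior(Y)$ is $G$-invariant, and $\interior(Y)\times EG$ is an open invariant subset of $X\times EG$. Its image in $X_G$ is open, since the quotient map by a group action is open. Therefore $X_G=\interior(Y_G)\cup\interior(Z_G)$, the cup product is defined, and $(ii)$ follows.
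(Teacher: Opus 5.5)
Your proof is correct and follows the paper's argument essentially step for step. The paper proves $(i)$ via the factorization $\pi_X^*=\phi^*\circ\pi_Y^*$ (which is Lemma~\ref{p:algebras}), $(iii)$ via the module structure \eqref{cuppro}, $(iv)$ via Proposition~\ref{p:modules}$(ii)$ applied to the exact sequence of the triple, and $(ii)$ by lifting $\pi_X^*(r)$ and $\pi_X^*(s)$ to relative classes whose relative cup product lands in $H^*_G(X,X)=0$. Your additional check that $\interior(Y)\times EG$ projects to an open subset of $X_G$, so that $\{Y_G,Z_G\}$ is an excisive couple, fills in a detail the paper only asserts.
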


\begin{proof}

($i$) By \eqref{e:homomorphism_HG}
a continuous $G$-equivariant map $\phi:X\to Y$ induces a $H^*(BG)$-algebra homomorphism $\phi^*:H^*_G(Y)\to H^*_G(X)$ that fits into the commutative diagram, 
\[ 
\begin{tikzcd}
 H^*_G(Y)
 \arrow[rr,"\phi^*"]
 & & 
 H^*_G(X)
 \\ 
 &
 H^*(BG)
 \arrow[ul,"\pi_Y^*"]
 \arrow[ur,"\pi_X^*"']
 & 
\end{tikzcd}
\]
where $ \pi_X^*, \pi_Y^* $ 
are induced by the canonical projections \eqref{proiecan}.
This diagram implies $H^*_G(Y)_\ann=\ker(\pi_Y^*)\subseteq\ker(\pi_X^*)= H^*_G(X)_\ann$.
\\[1mm]
($ii$) 
Consider the commutative diagram
\[ 
\begin{tikzcd}
 H^*_G(X,Y)
 \arrow[r]
 & 
 H^*_G(X)
 \arrow[r]
 & 
 H^*_G(Y)
 \\
 & 
 H^*(BG)
 \arrow[u,"\pi_X^*"]
 \arrow[ur,"\pi_Y^*"']
 & 
\end{tikzcd}
\]
where the homomorphisms in the exact horizontal line are induced by the inclusions
$ (Y,\emptyset) \hookrightarrow 
(X,\emptyset) \hookrightarrow (X,Y) $.
By the exactness, for any $r\in H_G^*(Y)_\ann=\ker(\pi_Y^*)$, the cohomology class $\pi_X^*(r)$ is in the image of the homomorphism $H^*_G(X,Y)\to H^*_G(X)$. Analogously, for any $s\in H_G^*(Z)_\ann=\ker(\pi_Z^*)$, the cohomology class $\pi_X^*(s)$ is in the image of the homomorphism $H^*_G(X,Z)\to H^*_G(X)$ induced by the inclusion.
Since $X=\interior(Y)\cup\interior(Z)$, the pair $\{Y,Z\}$ is excisive and
the cup product $\smallsmile: H^{n}_G(X,Y)\smallsmile H^{m}_G(X,Z)\to H^{n+m}_G(X,Y\cup Z)$ is well defined.
By the naturality of the cup product we deduce that the cohomology class $\pi_X^*(r\smallsmile s)=\pi_X^*(r)\smallsmile \pi_X^*(s)$ is in the image of the homomorphism $H^*_G(X,Y\cup Z)\to H^*_G(X)$ induced by the inclusion. But $H^*_G(X,Y\cup Z)=H^*_G(X,X)=0$, and therefore $r\smallsmile s\in\ker(\pi_X^*)=H_G^*(X)_\ann$.
\\[1mm]
($iii$) For each $r\in H^*_G(X)_\ann=\ker(\pi_X^*)$ and $h\in H^*_G(X,Y)$, we have $r\cdot h=\pi_X^*(r)\smallsmile h=0$.
\\[1mm]
($iv$) The inclusion 
\eqref{intriple} follows by  Proposition \ref{p:modules}-($ii$) applied to the exact sequence \eqref{incltripla}.
\end{proof}

We now provide a more technical lemma involving the torsion-free quotient and the annihilator of the equivariant cohomology in the presence of a fixed point of the group action. Hereafter, for any fixed point $x=G\cdot x$ of a compact Lie group action, we will tacitly {\it identify} $H^*_G(\{x\})  $ with $H^*(BG) $ through the isomorphism  $ \pi_x^* $ induced by the canonical homeomorphism $\pi_x:\{x\}_G\to BG$.

\begin{lemma}\label{l:free_ann_technical}
Let $X$ be a $G$-space containing a fixed point $x=G\cdot x$ of the  action, and  a $G$-invariant subspace $Y\subseteq X$. If $H^*_G(Y)_\free=H_G^*(X,\{x\})_\free=0$ and the inclusion $Y\hookrightarrow X$ is $G$-equivariantly homotopic to the constant map $Y\to\{x\}$, then the inclusion of pairs $(\{x\},\emptyset)\hookrightarrow (X,Y)$ induces an  isomorphism
\[
H^*_G(X,Y)_\free\toup^{\cong} H^*_G(Y)_\ann \, .
\]
\end{lemma}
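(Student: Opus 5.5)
The map in question is the composition
\[
\Theta:=\iota_x^*\circ j^*:\ H^*_G(X,Y)\toup^{j^*} H^*_G(X)\toup^{\iota_x^*} H^*_G(\{x\})\cong H^*(BG),
\]
where $j:(X,\emptyset)\hookrightarrow(X,Y)$ and $\iota_x:\{x\}\hookrightarrow X$ are the inclusions. I will show three things: the image of $\Theta$ is exactly $H^*_G(Y)_\ann$; the kernel of $\Theta$ is exactly $H^*_G(X,Y)_\tor$; and hence $\Theta$ descends to an isomorphism $H^*_G(X,Y)_\free\to H^*_G(Y)_\ann$. The third point is immediate from the first two, since $H^*_G(Y)_\ann$, being a submodule of the domain-free ring $H^*(BG)$, is torsion-free.

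\textbf{Step 1: splitting of $H^*_G(X)$.} As in the proof of Lemma~\ref{l:pi_X_injective}, we have $\iota_x^*\circ\pi_X^*=\pi_x^*$, which is the identity under our identification. Hence $\iota_x^*$ is surjective, and
\[
H^*_G(X)=\pi_X^*(H^*(BG))\oplus\ker(\iota_x^*).
\]
The long exact sequence of the pair $(X,\{x\})$ (Proposition~\ref{prop:long}) splits because $\iota_x^*$ is surjective. It therefore identifies $\ker(\iota_x^*)$ with the image of $H^*_G(X,\{x\})$. By Proposition~\ref{p:modules}-(i) and the hypothesis $H^*_G(X,\{x\})_\free=0$, this summand is purely torsion.

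\textbf{Step 2: the image of $\Theta$.} Let $i:Y\hookrightarrow X$ be the inclusion. Since $i$ is $G$-homotopic to the constant map $c:Y\to\{x\}$, we get $i^*=c^*\circ\iota_x^*$. Consequently $i^*$ vanishes on $\ker(\iota_x^*)$, and on the other summand $i^*\pi_X^*(r)=\pi_Y^*(r)$. Therefore
\[
\ker(i^*)=\pi_X^*\big(\ker\pi_Y^*\big)\oplus\ker(\iota_x^*)=\pi_X^*\big(H^*_G(Y)_\ann\big)\oplus\ker(\iota_x^*),
\]
using \eqref{kerann}. Exactness of the sequence of the pair $(X,Y)$ gives $\image(j^*)=\ker(i^*)$. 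Applying $\iota_x^*$, we conclude $\image(\Theta)=H^*_G(Y)_\ann$.

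\textbf{Step 3: the kernel of $\Theta$.} If $h$ is torsion, then $\Theta(h)$ is a torsion element of $H^*(BG)$, hence zero. Conversely, suppose $\Theta(h)=0$. Then $j^*(h)\in\ker(\iota_x^*)$, which is torsion by Step~1, so $r\cdot j^*(h)=0$ for some $r\neq0$. By exactness, $r\cdot h=\delta(k)$ for some $k\in H^{*-1}_G(Y)$. Since $H^*_G(Y)$ is purely torsion, $s\cdot k=0$ for some $s\neq 0$, and hence $sr\cdot h=0$. Because $H^*(BG)$ has no zero divisors, $sr\neq0$, so $h$ is torsion.

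The step I expect to need the most care is Step~2, specifically justifying that the equivariant homotopy gives $i^*=c^*\iota_x^*$ as algebra maps compatible with $\pi^*$, and correctly identifying $\ker(i^*)$. The remaining steps are routine diagram chasing in the $H^*(BG)$-module setting.
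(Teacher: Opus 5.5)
Your proof is correct and follows the paper's argument. It uses the same composite $H^*_G(X,Y)\to H^*_G(X)\to H^*(BG)$, identifies its image with the annihilator of $H^*_G(Y)$ via the equivariant null-homotopy of $Y\hookrightarrow X$ and the section $\pi_X^*$ of $\iota_x^*$, and shows its kernel is the torsion submodule. The only differences are presentational. You make the splitting $H^*_G(X)=\pi_X^*(H^*(BG))\oplus\ker(\iota_x^*)$ explicit and prove the kernel claim by a direct chase. The paper instead invokes Proposition~\ref{p:modules}-(iii) twice, once for the pair $(X,Y)$ and once for $(X,\{x\})$, to get injectivity on the torsion-free quotients.
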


\begin{proof}
Consider the commutative diagram
\begin{equation}
\label{e:les_free_ann_technical}
\begin{tikzcd}[row sep=large]
    H_G^{*-1}(Y)
    \arrow[r, "\delta^{*-1}"]
    &  
    H_G^*(X,Y)
    \arrow[r, "i^*"]
    &  
    H_G^*(X)
    \arrow[r, "j^*"]
    &  
    H_G^*(Y)\\
    &&
    H^*(BG)
    \arrow[u,  "\pi_X^*"]
    \arrow[ur, "\pi_Y^*"']
\end{tikzcd}
\end{equation}
where the horizontal line is the exact sequence of the pair of $G$-spaces $Y\subseteq X$, 
so that $i^*$ is induced by the inclusion  $i : (X, \emptyset)  \hookrightarrow(X,Y)$. 
The homomorphism $\pi_X^*$ is injective according to Lemma~\ref{l:pi_X_injective}.  We have 
$\ker(\pi_Y^*)=H^*_G(Y)_\ann$, cf.~\eqref{kerann}, and  the exact sequence in \eqref{e:les_free_ann_technical} implies 
\begin{equation}
\label{leprimei}
\pi_X^*(H^*_G(Y)_\ann)\subseteq\image(i^*) \, .
\end{equation}
Since, by assumption,  the inclusion $j:Y\hookrightarrow X$ is $G$-equivariantly homotopic to the constant map
$Y\to\{x\}$, we have the commutative diagram
\begin{equation*}
\begin{tikzcd}
    H_G^*(X)
    \arrow[r, "j^*"]
    \arrow[dr, "l^*"']
    &  
    H_G^*(Y)\\
    & 
    H^*(BG)
    \arrow[u, "\pi_Y^*"']
\end{tikzcd}
\end{equation*}
where the diagonal homomorphism is induced by the inclusion $l:\{x\}\hookrightarrow X$.
This, together with the exact sequence in \eqref{e:les_free_ann_technical}, implies that $\image(l^*i^*)\subseteq H^*_G(Y)_\ann$.
Since $l^*\pi_X^*=\id$, we also have the opposite inclusion $H^*_G(Y)_\ann=l^*\pi_X^* (H^*_G(Y)_\ann)\subseteq\image(l^*i^*)$ by \eqref{leprimei}, and therefore
\begin{equation}
%\label{imageiso}
\image(l^*i^*)= H^*_G(Y)_\ann \, .
\end{equation}
Using 
the  first part of the 
 exact sequence in the horizontal line in  \eqref{e:les_free_ann_technical}
 and the assumption  $H_G^*(Y)_\free =0$, Proposition 
\ref{p:modules}-($iii$) implies that $i^*$ induces an injective homomorphism
$$ i^*:H_G^*(X,Y)_\free\hookrightarrow H_G^*(X)_\free \, . 
$$
Similarly, by Proposition 
\ref{p:modules}-($iii$), 
the exact sequence of the pair $\{x\}\subseteq X$, 
$$ H_G^*(X,\{x\})\toup H_G^*(X)\toup^{l^*} H^*(BG) \, ,  
$$
and the assumption 
$ H_G^*(X,\{x\})_\free=0$,
we get  that $l^*$ induces an injective homomorphism 
$ l^*: H_G^*(X)_\free\hookrightarrow H^*(BG) $. 
Thus, the inclusion $i\circ l:(\{x\},\emptyset)\hookrightarrow(X,Y)$ induces 
an injective homomorphism $H^*_G(X,Y)_\free\hookrightarrow H^*(BG)$,  
namely an
isomorphism  
\[(i\circ l)^*=l^*i^*: H_G^*(X,Y)_\free \toup^{\cong} H^*_G(Y)_\ann.
\qedhere\]
\end{proof}

\subsection{The Euler class and the Gysin sequence}

In our applications to the multiplicity of Stokes waves, we  need to compute the equivariant cohomology of spheres, products of spheres, and certain join products. The fundamental tool is the Gysin exact sequence for oriented sphere bundles, which we report in 
Theorem \ref{t:Gysin}, see  Milnor and Stasheff \cite{Milnor:1974aa} or Spanier \cite{Spanier}.
While everywhere else in the paper we employ singular cohomology with coefficients in $\Q$, in this section we will sometimes employ singular cohomology with coefficients in $\Z$, specifying the employed coefficients ring $R$ writing $H^*(\,\cdot\,;R)$.

Let $\pi:V\to B$ be an oriented real vector bundle of rank $r$,   i.e.\  each  fiber $ V_x:=\pi^{-1}(x) $ is 
a real oriented vector space of  dimension $ r $, and the orientations vary coherently with $x$. Any complex vector bundle is also a real oriented vector bundle with a canonical orientation, see e.g. \cite[Lemma 14.1]{Milnor:1974aa}.
A complex vector bundle of complex rank one is called a complex line bundle. With a common abuse of terminology, we will often refer to its total space $V$ as the ``vector bundle", at least when the projection $\pi$ on the base space $ B $ is implicit from the context. We denote by $0_V\subseteq V$ the zero-section of the bundle, which is the subspace intersecting every fiber $V_x$ at the origin.
Note that $0_V$ is homeomorphic to $B$.
The orientation of any fiber $V_x$ fixes a generator $m_x\in H^r(V_x,V_x\setminus\{0\};\Z)\cong\Z$.
The following is a fundamental result on the cohomology of vector bundles (see e.g. \cite[Theorem 9.1]{Milnor:1974aa}).

\begin{theorem}[Thom isomorphism] 
There is a unique cohomology class $m\in H^r(V,V\setminus 0_V;\Z)$, called the \emph{Thom class of $ V $}, 
that for any $ x \in B $  projects to 
$m_x \in H^r(V_x,V_x\setminus\{0\};\Z)$
under the homomorphism induced by the inclusion 
$ i_x : (V_x, V_x \setminus \{0\} ) \to (V, V \setminus 0_V )$. 
Furthermore  the map 
\begin{equation}\label{thomiso}
    H^*(V;\Z) 
    \ttoup^{\cong} 
    H^{*+r}(V,V\setminus 0_V;\Z) \, ,
    \qquad
    h\mapsto m\smallsmile h \, ,
\end{equation}
is an isomorphism.
\end{theorem}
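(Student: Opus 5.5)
The plan is the classical local-to-global argument of Milnor--Stasheff: establish existence and uniqueness of $m$ together with the isomorphism \eqref{thomiso} first for trivial bundles, then glue over open covers by Mayer--Vietoris, and finally pass to arbitrary base spaces. Throughout, write $V^0 := V\setminus 0_V$ and, for $U\subseteq B$, $V|_U := \pi^{-1}(U)$. It is convenient to prove the following stronger statement $(\ast)$ for the restriction to every open $U\subseteq B$:
\[
H^q(V|_U, V|_U^0;\Z)=0 \quad \text{for } q<r,
\]
and there is a unique class in $H^r(V|_U,V|_U^0;\Z)$ restricting to $m_x$ for every $x\in U$. Given $(\ast)$, \eqref{thomiso} follows by the same induction, with the map $h\mapsto m\smallsmile h$ (using $H^*(V)\cong H^*(B)$ via the zero section).

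\textbf{Step 1 (trivial bundles).} If $V|_U\cong U\times\R^r$ preserving orientations, then the pair is $U\times(\R^r,\R^r\setminus\{0\})$. Since $H^*(\R^r,\R^r\setminus\{0\};\Z)$ is free of rank one, concentrated in degree $r$, the relative K\"unneth theorem gives
\[
H^{*}(U;\Z)\otimes H^r(\R^r,\R^r\setminus\{0\};\Z)\cong H^{*+r}(V|_U,V|_U^0;\Z).
\]
The Thom class is then $1\times e$, where $e$ is the orientation generator, and the map $h\mapsto m\smallsmile h$ is exactly this cross-product isomorphism. Vanishing below degree $r$ and uniqueness follow because $H^0(U)$ injects into $\prod_{x\in U}\Z$ via restriction to points, and a degree-$0$ class is determined by its values at points.

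\textbf{Step 2 (gluing).} Suppose $(\ast)$ holds over open sets $U_1$, $U_2$ and $U_1\cap U_2$. The relative Mayer--Vietoris sequence for the pairs $(V|_{U_i},V|_{U_i}^0)$ is excisive because the sets are open. Vanishing below degree $r$ on both pieces and on the intersection forces vanishing on $U_1\cup U_2$. In degree $r$, the two local Thom classes agree on $U_1\cap U_2$ by uniqueness there, so they glue to a class on $U_1\cup U_2$. This glued class is unique because $H^{r-1}(V|_{U_1\cap U_2},V|_{U_1\cap U_2}^0)=0$. For the isomorphism \eqref{thomiso}, cup product with the Thom class maps the absolute Mayer--Vietoris sequence for $U_1,U_2$ to the relative one, and the five lemma finishes the argument. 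By induction, the statement holds whenever $B$ is covered by finitely many trivializing open sets, in particular for compact $B$.

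\textbf{Step 3 (arbitrary base) --- the main obstacle.} For a general (say paracompact, or arbitrary) $B$, finite covers need not suffice. I would follow Milnor--Stasheff. Vanishing and uniqueness pass to arbitrary unions: singular cohomology of the pair is detected on compact subsets, and each compact subset lies in a finite union of trivializing sets. For existence in degree $r$, one uses a Zorn-type argument, or a direct limit over an increasing union of open sets, together with the vanishing of $H^{r-1}$, which kills the $\varprojlim^1$ obstruction. The isomorphism \eqref{thomiso} then follows by comparing the Milnor $\varprojlim$--$\varprojlim^1$ sequences for the absolute and relative cohomologies under cup product with $m$. The delicate point is precisely this limit step, where one must verify that no $\varprojlim^1$ term appears. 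In our applications the base spaces, namely Borel quotients approximated by finite-dimensional models, can be taken compact or CW, and one may alternatively conclude by cellular induction, where Step 2 suffices.
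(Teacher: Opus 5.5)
Your Steps 1 and 2 are correct and reproduce the Milnor--Stasheff argument. That is all the paper relies on: it gives no proof and only cites \cite[Theorem 9.1]{Milnor:1974aa}. For bases covered by finitely many trivializing open sets, in particular compact $B$, your proposal is complete.

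The gap is in Step 3. The principle you invoke for vanishing and uniqueness, that singular cohomology of the pair ``is detected on compact subsets'', is false. It is singular \emph{homology} that commutes with the direct limit over compacta. The cohomology of a union differs from the inverse limit of the cohomologies of the pieces by $\varprojlim{}^1$ terms, and over an uncountable directed system by higher $\varprojlim{}^s$ terms. For instance, let $T$ be the mapping telescope of $S^1\xrightarrow{2}S^1\xrightarrow{2}\cdots$. Every compact subset of $T$ lies in a finite telescope $\simeq S^1$ with vanishing $H^2$, yet $H^2(T;\Z)\cong\varprojlim{}^1(\cdots\xrightarrow{2}\Z\xrightarrow{2}\Z)\neq 0$. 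So neither vanishing below degree $r$ nor uniqueness passes to unions for free. Here they do hold, but only because the groups of degree $<r$ vanish on \emph{every} piece, which kills all derived-limit terms. That is the same mechanism you use for existence, and it has to carry all three assertions.

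Moreover, your limit argument runs over an increasing \emph{sequence} of open sets, and you do not justify that $B$ is a countable increasing union of sets covered by finitely many trivializing opens. For numerable bundles, which is all the paper needs (Borel quotients $V_G\to BG$ and their pull-backs), this follows from a standard lemma giving a countable trivializing cover $\{W_n\}$. Then $B_N:=W_1\cup\dots\cup W_N$ exhausts $B$, every compact subset of $V$ lies in some $V|_{B_N}$, and the Milnor sequence
\[
0\to\varprojlim{}^1 H^{q-1}(V|_{B_N},V^0|_{B_N})\to H^q(V,V^0)\to\varprojlim H^q(V|_{B_N},V^0|_{B_N})\to 0
\]
gives vanishing for $q<r$ and existence and uniqueness of $m$ at once. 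The isomorphism \eqref{thomiso} then follows by the five lemma.

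For a completely arbitrary $B$, the clean route is to redo Steps 1--2 in homology and use $H_q(V,V^0)=\varinjlim_W H_q(V|_W,V^0|_W)$ over finite unions $W$ of trivializing sets. The universal coefficient theorem then gives $H^q(V,V^0)=0$ for $q<r$ and $H^r(V,V^0)\cong\mathrm{Hom}(H_r(V,V^0),\Z)\cong\varprojlim_W H^r(V|_W,V^0|_W)$, with no $\varprojlim{}^1$ issue. The isomorphism \eqref{thomiso} follows by dualizing the homology Thom isomorphism given by cap product with $m$.

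Finally, your fallback ``cellular induction, where Step 2 suffices'' fails for exactly the spaces the paper uses. The tautological line bundle over $B\T=\CP^\infty$ admits no finite trivializing open cover: for a cover by $k$ such sets, its Euler class $u$ would satisfy $u^k=0$ by the usual cup-length argument. So finite Mayer--Vietoris gluing only reaches the skeleta $\CP^N$, and passing to $\CP^\infty$ is exactly the limit step above. It is harmless here, since the skeleta form a countable sequence.
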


\begin{definition}
\label{eulerc} The \emph{Euler class} $e(V)\in H^r(B;\Z)$ is the image of the Thom class $m$ under the homomorphism induced by the compositions 
\[(B,\emptyset)
\stackrel{l}
\hookrightarrow
(V,\emptyset)
\stackrel{i}\hookrightarrow (V,V\setminus0_V)\, ,
\] 
where  $ l $ sends any  
$ x \in B $ 
to the corresponding origin 
$ 0$  in $ V_x $ %\subset V $, 
and $ i $  is the inclusion. 
\end{definition}

We list  properties of the Euler class which we will employ later on:
\begin{itemize}
\setlength{\itemsep}{5pt}
\item[(i)] If  $ \pi: V\to B $ is an oriented vector bundle and $\psi:C\to B$ is a continuous map between topological spaces, then the Euler class of the 
pull-back vector bundle
$\psi^*V\to C$ is $e(\psi^*V)=\psi^* e(V) $. 
Here 
$$
 \psi^*V:=\big\{(c,x)\in C \times V \ \big| \ \psi(c)=\pi(x) \big\}
    \stackrel{\psi^* \pi} \ttoup C \, ,  
   \     \ (c,x)\mapsto
   (\psi^* \pi)(c,x) :=   c\,  ,
$$
so that each fiber $ (\psi^*V)_c$ is isomorphic to $ V_{\psi(c)} $.
\item[(ii)] The direct sum of two oriented vector bundles $V_1\to B$ and $V_2\to B$ has Euler class
$ e(V_1\oplus V_2)=e(V_1)\smallsmile e(V_2) $. Here, $V_1 \oplus V_2\to B$ is the vector bundle with fibers $(V_1\oplus V_2)_x=(V_1)_x\oplus(V_2)_x$.
\item[(iii)] If an oriented vector bundle $\pi:V\to B$ has a nowhere vanishing section (namely a continuous map $s:B\to V\setminus 0_V$ such that $\pi\circ s=\id$) then $e(V)=0$.

\item[(iv)] If $\overline V$ is the vector bundle $V$ with opposite orientation, then $e(\overline V)=-e(V)$.

\item[(v)] The tensor product of two complex line bundles  $L_1\to B$ and $L_2\to B$ has Euler class 
$ e(L_1\otimes L_2)=e(L_1)+e(L_2) $.
Here, $L_1 \otimes L_2\to B$ is the complex line bundle with fibers $(L_1\otimes L_2)_x=(L_1)_x\otimes(L_2)_x$, where the tensor product is over $\C$.
\end{itemize}

If $V$ is a finite dimensional 
vector space with a linear action of $G$ then the Borel quotient $V_G = V \times_G EG$, with canonical projection 
$ \pi_V :  V_G \to BG $, is a vector bundle (see e.g. \cite{Tu} Proposition 4.7) which is orientable  since $ G $ is connected.

\begin{example}\label{ex1c}
Consider the circle $\T=\R/\Z$ acting on $V(k):=\C$ as
$  \theta\cdot z:=e^{ \im 2\pi k\theta }z $  for any  $ \theta\in \T $, $ z\in V(k) $, 
where $k\in\Z$ is an integer. 
The Borel quotient 
$ V(k)_\T=V(k)\times_{\T} E\T $ with the canonical projection $V(k)_\T\to B\T$, $[z,f]\mapsto [f]$,  is a complex line bundle. 

\begin{lemma}\label{l:e_k}
There exists a generator $u$ of $H^2(B\T;\Z)\cong\Z$ such that, for each $k\in\Z$, the Euler class of the complex line bundle $V(k)_\T\to B\T$ of Example \ref{ex1c} is 
$ e_k=k\,u $. 
\end{lemma}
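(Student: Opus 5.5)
The plan is to reduce to a single universal case and then propagate it by naturality of the Euler class under pullbacks and tensor products.

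\textbf{Reduction to $k=1$.} For $k=0$ the action is trivial, so $V(0)_\T\cong B\T\times\C$ is a trivial line bundle. It has the nowhere vanishing section $[f]\mapsto[1,f]$, and property (iii) gives $e_0=0$.

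For $k\geq 1$ there is a complex line bundle isomorphism $V(1)_\T^{\otimes k}\cong V(k)_\T$ over $B\T$. Fiberwise it is $[z_1,f]\otimes\cdots\otimes[z_k,f]\mapsto[z_1\cdots z_k,f]$. It is well defined because the $\T$-action on $V(1)^{\otimes k}=\C^{\otimes k}\cong\C$ is multiplication by $e^{\im 2\pi k\theta}$, which is the action on $V(k)$. Property (v), applied inductively, then gives $e_k=k\,e_1$.

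For $k<0$, write $V(k)\cong\overline{V(-k)}$. Complex conjugation $z\mapsto\bar z$ is a $\T$-equivariant real-linear isomorphism $V(-k)\to V(k)$, and it reverses the complex orientation. Property (iv) gives $e_k=-e_{-k}=k\,e_1$. Alternatively, $V(k)_\T\otimes V(-k)_\T\cong V(0)_\T$ is trivial, and property (v) gives $e_k+e_{-k}=0$.

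\textbf{Main step: $e_1$ generates $H^2(B\T;\Z)\cong\Z$.} I will set $u:=e_1$ and prove it is a generator. Model $E\T=S^\infty\subset\C^\infty$ with $\T$ acting by $e^{\im2\pi\theta}$, so that $B\T=\CP^\infty$.

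Since $\T$ acts freely on $V(1)\setminus\{0\}=\C^*$, the complement of the zero section deformation retracts $\T$-equivariantly onto the unit circle $S^1\subset\C^*$. Hence
\[
V(1)_\T\setminus 0\simeq S^1\times_\T E\T\cong E\T,
\]
which is contractible; the last homeomorphism sends $[z,f]$ to $z^{-1}f$.

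Now consider the long exact sequence of the pair $(V(1)_\T,\,V(1)_\T\setminus 0)$ with $\Z$ coefficients. Because $H^*(V(1)_\T\setminus0;\Z)$ vanishes in positive degrees, the map
\[
H^2(V(1)_\T,V(1)_\T\setminus0;\Z)\to H^2(V(1)_\T;\Z)\cong H^2(B\T;\Z)
\]
is an isomorphism. By Definition \ref{eulerc}, this map sends the Thom class $m$ to $e_1$.

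By the Thom isomorphism \eqref{thomiso} in degree $0$, $m=m\smallsmile1$ generates $H^2(V(1)_\T,V(1)_\T\setminus 0;\Z)\cong H^0(V(1)_\T;\Z)=\Z$. Therefore $e_1$ is a generator of $H^2(B\T;\Z)$.

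I expect the main obstacle to be this identification of $V(1)_\T\setminus 0$ with a contractible space. The degree bookkeeping is routine. The care needed is only in checking that the sphere-bundle retraction is equivariant and that the resulting map is indeed a homeomorphism onto $E\T$, which holds by freeness of the action on $S^1$.
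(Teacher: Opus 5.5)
Your proof is correct and follows the paper's argument essentially step by step. You get $e_0=0$ from the trivial bundle, $e_k=k\,e_1$ for $k\ge 1$ from $V(1)_\T^{\otimes k}\cong V(k)_\T$, and the case $k<0$ from complex conjugation reversing the orientation. You show that $e_1$ generates $H^2(B\T;\Z)$ using the exact sequence of the pair $(V(1)_\T,V(1)_\T\setminus 0)$, with $V(1)_\T\setminus 0\simeq E\T$ contractible. Your only addition is to make explicit, via the Thom isomorphism in degree $0$, that the Thom class generates $H^2(V(1)_\T,V(1)_\T\setminus 0;\Z)$, which the paper leaves implicit.
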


\begin{proof} 
By Definition \ref{eulerc}, 
the Euler class $ e(L) $
of $ L :=V(1)_\T $ is 
$ e_1:=l^*i^*m $ 
where 
$ m \in H^2(L,L\setminus0_L;\Z)$ is the Thom class of $ L $.
The homomorhism 
$ l^* : H^* (L;\Z) \to H^* (B\T;\Z) $ is an isomorphism because  $l:B\T\hookrightarrow L$ is an homotopic equivalence  
with homotopy inverse the base projection $ \pi_{V(1)} :L\to B\T$.
We now prove that  also 
$ i^* $ is an  isomorphism 
and therefore $e_1$ is a generator of $H^2(B\T;\Z)$.
Consider the following portion of the long exact sequence associated with the inclusion $L\setminus0_L\subset L $ 
\begin{equation}\label{pezzom}
H^{1}(L\setminus0_L;\Z)\toup H^2(L,L\setminus 0_L;\Z)\toup^{i^*} H^2(L;\Z)\to H^2(L\setminus0_L;\Z) \, . 
\end{equation}
Note that $L\setminus 0_L$ is homotopy equivalent to $E\T $. 
Indeed, 
$L\setminus 0_L$ is homotopy equivalent to the 
sphere bundle  
$\T\times_\T E\T$ of $L$ (here $ \T = \{ z \in \C 
\ | \ | z | = 1 \}$) 
and 
$\T\times_\T E\T\cong E\T$.  
Explicitly the inclusion $j:\T\times_\T E\T\hookrightarrow L\setminus 0_L$ has homotopy inverse $\ell:L\setminus 0_L\to \T\times_\T E\T:[z,f]\mapsto [\tfrac{z}{|z|},f]$.

For any integer $k\geq 1$, we have an isomorphism of complex line bundles
\begin{equation}\label{tensorp}
\underbrace{L\otimes...\otimes L}_{\times k} \ttoup^{\cong} V(k)_{\T} \, ,
\qquad
[z_1,f]\otimes...\otimes [z_k,f]\mapsto [z_1...z_k,f] \, . 
\end{equation}
Therefore, by properties (i) and (v), the Euler class of $V(k)_\T$ is
$ 
e_k =k\,e(L)=k\,e_1 $. 
Moreover, we have an isomorphism of complex line bundles
\begin{align*}
\overline L \ttoup^{\cong} V(-1)_{\T} \, ,
\qquad
[\overline{z},f]\mapsto [z,f] 
\end{align*}
where $\overline{L}$ denotes the complex conjugate vector bundle to $L$. As a real vector bundle, $L$ is equal to $\overline L$, but is equipped with the opposite complex structure, and thus has the opposite canonical orientation than $L$. Property (iv) implies that the Euler class of $V(-1)_\T$ is 
$ e_{-1}=e(\overline L)=-e(L)=- \,e_1 $, and, 
as in \eqref{tensorp} we deduce that $ e_{-k}
:= e(V(-k)_\T ) = k\, 
e_{-1} = - k e_1 $ for any $k \geq 1 $.
Finally, for $k=0$, 
the $ \T $-action in Example 
\ref{ex1c} is trivial, and 
the vector bundle $ V(0)_\T\cong \C\times BG$ is trivial. In particular, it admits a nowhere vanishing section, and property (iii) implies that its Euler class is $ e_0 = 0 $.
\end{proof}
\end{example}

\begin{example}
\label{es2}
More generally, consider a vector $k \in\Z^d$, and the $d$-torus $\T^d=\T\times...\times\T$ acting on $V(k):=\C$ as
$  \theta\cdot z:=e^{\im 2\pi \langle k,\theta\rangle }z $,
for any $  \theta\in \T^d $, $ z\in V(k) $. 
The Borel quotient $V(k)_{\T^d}$  with the canonical projection $ V(k)_{\T^d}\to B\T^d$, $[z,f]\mapsto [f]$ is a complex line bundle.

\begin{lemma}\label{lemma214}
For $k=(k_1,...,k_d)\in\Z^d$, the Euler class of the complex line bundle $V(k)\to B\T^d$ of Example \ref{es2} is 
$ e_k=k_1\,u_1+...+k_d\,u_d $, where $u_1,...,u_d$ is the canonical basis of the cohomology group $H^2(B\T^d;\Z)$, namely
$ H^*(B\T^d;\Z)=\Z[u_1,...,u_d]
$. 
\end{lemma}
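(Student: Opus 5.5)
The plan is to reduce the $d$-torus case to the circle case of Lemma \ref{l:e_k}, using pull-back (property (i)) together with the Künneth description \eqref{kunneth}--\eqref{anntoro} of $H^*(B\T^d;\Z)$.

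\textbf{Step 1: pull-back along $\chi_k$.} Consider the character $\chi_k:\T^d\to\T$, $\chi_k(\theta)=\langle k,\theta\rangle \bmod \Z$. This is a continuous group homomorphism, and the $\T^d$-action on $V(k)$ is the pull-back through $\chi_k$ of the $\T$-action on $V(1)$. We may choose $E\T^d=(E\T)^d$, and we fix some model of $E\T$. The map $E\T^d\times E\T\to E\T$ is awkward to write down directly. A cleaner device is to replace the model: $E\T^d\times E\T$ is contractible and carries a free $\T^d$-action via $\theta\cdot(e,f)=(\theta e,\chi_k(\theta)f)$, so it is also a model for $E\T^d$. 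The projection to the second factor then induces a map $B\chi_k:B\T^d\to B\T$.

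\textbf{Step 2: identify the bundle.} The map
$$[z,(e,f)]\mapsto\big([(e,f)],[z,f]\big)$$
defines an isomorphism of complex line bundles $V(k)_{\T^d}\cong (B\chi_k)^*V(1)_\T$. Since $B\T^d$ is independent of the model up to homotopy, naturally, property (i) gives
$$e_k=(B\chi_k)^*e_1=(B\chi_k)^*u.$$

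\textbf{Step 3: compute $(B\chi_k)^*u$.} The assignment $\chi\mapsto(B\chi)^*u$ from $\mathrm{Hom}(\T^d,\T)\cong\Z^d$ to $H^2(B\T^d;\Z)$ is additive. To see this, note that the line bundle associated with the character $\chi+\chi'$ is the tensor product of the bundles associated with $\chi$ and $\chi'$; the isomorphism is the same one as in \eqref{tensorp}. Additivity then follows from property (v).

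It therefore suffices to treat the coordinate characters $\chi_{\mathbf e_i}=\mathrm{pr}_i$. For these we take the product model $E\T^d=(E\T)^d$, on which $B\,\mathrm{pr}_i:B\T^d=(B\T)^d\to B\T$ is literally the $i$-th projection. We define $u_i:=\mathrm{pr}_i^*u$. By Künneth, $H^*(B\T^d;\Z)=\Z[u_1,\dots,u_d]$ with this canonical basis; integrally Künneth holds here because $H^*(B\T;\Z)$ is free. Hence
$$e_{\mathbf e_i}=u_i, \qquad e_k=\sum_i k_iu_i .$$
Negative coefficients are handled by the additivity itself, or equivalently by property (iv) as in Lemma \ref{l:e_k}.

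\textbf{Main obstacle.} The step needing the most care is the bookkeeping of models of $E\T^d$. One must make sure that the identification of $B\T^d$ under which the $u_i$ are defined, namely the product model, agrees with the one used in Step 2. This holds because any two models are related by a $\T^d$-equivariant map, unique up to equivariant homotopy, and the bundles pull back compatibly along it.
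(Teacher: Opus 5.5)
Your proof is correct. It uses the same ingredients as the paper: pull-back along the projections of the product model $B\T^d=(B\T)^d$ with property (i), the tensor-product isomorphism with property (v), and Künneth. The difference is in how these are organized.

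\begin{itemize}
\item \textbf{The paper's route.} It writes directly $V(k)_{\T^d}\cong p_1^*V(k_1)_\T\otimes\cdots\otimes p_d^*V(k_d)_\T$, via $([z_1,f_1]\otimes\cdots\otimes[z_d,f_d],[f_1,\dots,f_d])\mapsto[z_1\cdots z_d,f_1,\dots,f_d]$. It then quotes Lemma~\ref{l:e_k} for each factor. All the work for arbitrary integer weights, including zero and negative ones, is thus inherited from the circle case.
\item \textbf{Your route.} You show that $\chi\mapsto e(V(\chi)_{\T^d})$ is a homomorphism on the character lattice $\Z^d$ and evaluate it on the coordinate characters. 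In effect this re-derives the content of Lemma~\ref{l:e_k} in $d$ variables, using only its $k=1$ case. Its small advantage is that negative weights and $k=0$ follow from additivity alone, with no conjugate bundles and no properties (iii)--(iv).
\end{itemize}

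Your Steps 1--2 are correct, and the formula $e_k=(B\chi_k)^*u$ is a nice naturality statement, but they are not needed. The additivity in Step 3 is a statement about the bundles $V(\chi)_{\T^d}$ over one fixed model. For $\chi=\mathrm{pr}_i$, the identification $V(\mathbf e_i)_{\T^d}\cong\mathrm{pr}_i^*V(1)_\T$, $[z,f_1,\dots,f_d]\mapsto([f_1,\dots,f_d],[z,f_i])$, holds directly in the product model. If you work only in that model, as the paper does, the model-compatibility issue you flag as the main obstacle never arises.
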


\begin{proof}
Recalling that $B\T^d=B\T\times...\times B\T$ by Example \ref{torus}, 
we consider the projections 
$  p_j:B\T^d\to B\T $, $ p_j([f_1,...,f_d]):=[f_j] $ for any $ j = 1, \ldots, d $,
and 
%For each $k=(k_1,...,k_d)\in\Z^d$, we consider 
the complex line bundles 
$ L_{k_j}:=V(k_j)_\T\to B\T $. 
% L_k:=V(k)_{\T^d}\to B\T^d  \,  .
We have an isomorphism of complex line bundles
\begin{align*}
 p_1^*L_{k_1}\otimes...\otimes p_d^*L_{k_d}
 \ttoup^{\cong}
V(k)_{\T^d} \, ,\quad
 ([z_1,f_1]\otimes...\otimes[z_d,f_d], [f_1, \dots, f_d])
 \mapsto
 [z_1...z_d,f_1,...,f_d] \, .
\end{align*}
Therefore, by Lemma~\ref{l:e_k} and properties (i) and (v), the Euler class $e_k$ of $ V(k)_{\T^d} $ is 
\begin{align*}
 e_k
 =
 p_1^*e_{k_1}+...+p_d^*e_{k_d}
 =
 k_1\,p_1^*u+...+k_d\,p_d^*u =
 k_1\, u_1 +...+k_d\,u_d 
\end{align*}
where $u_j:=p_j^*u$  
% $j=1,...,d$, 
are the canonical generators of 
$ H^2(B\T^d;\Z)\cong H^2(B\T;\Z)\oplus...\oplus H^2(B\T;\Z) \cong\Z^d $. 
\end{proof}
\end{example}

\begin{example}\label{ex:Vk}
Even more generally, consider  integer vectors $k_1,...,k_n\in\Z^d$, which we group in a $d\times n$ matrix $k=(k_1,...,k_n)$, 
and the $d$-torus $\T^d$ acting on $V(k):=\C^n$ as
\begin{align}\label{eq:def action k1...kn}
 \theta\cdot z:=(e^{ \im 2\pi \langle k_1,\theta\rangle }z_1,...,e^{ \im 2\pi \langle k_n,\theta\rangle }z_n) \, ,
 \quad 
 \forall \theta\in \T^d \, ,\ z\in V(k) \, .
\end{align}
The Borel quotient $V(k)_{\T^d}$  with the canonical projection $\pi_{V(k)}  : V(k)_{\T^d}\to B\T^d$, $[z,f]\mapsto [f]$, is an oriented vector bundle of real rank $2n$. 
\end{example}

\begin{lemma}\label{lemmanew}
For a matrix $k=(k_1,...,k_n)$ with entries $k_i\in\Z^d$, the Euler class of the oriented vector bundle $V(k)_{\T^d} \to B\T^d$ is 
$e_k=e_{k_1}\smallsmile...\smallsmile e_{k_n}$ where 
each  $e_{k_i} \in H^2(B\T^d;\Z) $ is given by Lemma \ref{lemma214}.
\end{lemma}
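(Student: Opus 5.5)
The plan is to split the representation $V(k)=\C^n$ into coordinate lines and then apply the Whitney product formula. As a $\T^d$-representation, $V(k)$ is the direct sum $V(k_1)\oplus\cdots\oplus V(k_n)$. Here each $V(k_i)=\C$ carries the one-dimensional action of Example \ref{es2}, namely $\theta\cdot z_i=e^{\im 2\pi\langle k_i,\theta\rangle}z_i$. This is immediate from \eqref{eq:def action k1...kn}, since the action is diagonal in the coordinates $z_1,\dots,z_n$. The first step is to show that this splitting passes to the Borel construction: there is an isomorphism of oriented vector bundles over $B\T^d$
\[
V(k_1)_{\T^d}\oplus\cdots\oplus V(k_n)_{\T^d}\;\ttoup^{\cong}\;V(k)_{\T^d}\, ,\qquad \big([z_1,f],\dots,[z_n,f]\big)\mapsto [(z_1,\dots,z_n),f]\, .
\]

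To check that this map is well defined, take the fiber over $[f]\in B\T^d$. A point of the left-hand fiber is an $n$-tuple $([z_i,f_i])_i$ with $[f_i]=[f]$. Since $\T^d$ acts freely on $E\T^d$, each representative can be normalized to use the same $f$, and then the $z_i$ are uniquely determined. Under $f\mapsto\theta\cdot f$ all the $z_i$ change by the diagonal action, so the map is independent of choices. It is fiberwise complex linear and bijective. It is continuous with continuous inverse, which one checks on local trivializations of the principal bundle $E\T^d\to B\T^d$; over these the bundles are $U\times\C$ and $U\times\C^n$, and the map becomes the identity $\C\oplus\cdots\oplus\C=\C^n$.

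The second step concerns orientations. Both sides are complex vector bundles carrying their canonical orientations, and the isomorphism is complex linear. The canonical orientation of $\C^n$ agrees with the direct-sum orientation of $\C\oplus\cdots\oplus\C$, because the realification of a complex direct sum respects the complex orientations. So the isomorphism is orientation preserving and the Euler classes coincide.

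The third step applies property (ii) of the Euler class inductively on $n$, giving
\[
e_k=e\big(V(k_1)_{\T^d}\big)\smallsmile\cdots\smallsmile e\big(V(k_n)_{\T^d}\big)=e_{k_1}\smallsmile\cdots\smallsmile e_{k_n}\, ,
\]
where each factor is computed in Lemma \ref{lemma214}. Because these classes have even degree, the order of the factors is irrelevant.

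The only point needing care is the bookkeeping in the first two steps, namely that the splitting of the representation yields an \emph{oriented} bundle isomorphism. I expect this to be routine once the construction is carried out in local trivializations.
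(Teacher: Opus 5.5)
Your proposal is correct and follows the same route as the paper: identify $V(k)_{\T^d}$ with the Whitney sum $V(k_1)_{\T^d}\oplus\cdots\oplus V(k_n)_{\T^d}$, then apply the product property (ii) of the Euler class together with Lemma \ref{lemma214}. The paper states the bundle splitting in one line. Your checks of well-definedness via freeness of the action on $E\T^d$, and of compatibility of the canonical complex orientations, just fill in details the paper leaves implicit.
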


\begin{proof}
In  terms of bundle operations 
$ V(k)_{\T^d}=V(k_1)_{\T^d}\oplus...\oplus V(k_n)_{\T^d} $. 
Therefore,  by property (ii) and Lemma \ref{lemma214}, its Euler class is  
$e_k=e_{k_1}\smallsmile...\smallsmile e_{k_n}$.
\end{proof}

We now introduce  the Gysin exact sequence. 
Let $\pi:V\to B$ be an oriented vector bundle of rank $r $, that is  numerable,  i.e. it is trivial over an open covering which admits a locally finite partition of unity.
This assumption ensures that the vector bundle is a fibration (see e.g. \cite{Spanier}[ Th. 12 page 95]) and that admits a  bundle Riemannian metric which gives an inner product and a norm $\|\cdot\|_x$ on each fiber $V_x$. The space  
\[
E:=\sph(V):=\big\{v\in V\ \big|\ 
\| v \|_{\pi(v)}=1 \big\} 
\] 
with the  restricted base projection $\pi:E\to B$ is an oriented $(r-1)$-sphere bundle, namely each fiber $ \pi^{-1}(x) $ is the unit 
sphere in 
the $r$-dimensional vector space $ V_x $. With a common abuse of notation, we denote by $e(E):=e(V)$ the Euler class of $V$, and we call it the Euler class of $E$ as well. 

The following is another fundamental result of the theory of  vector bundles, see e.g.\ \cite[Th.~3, page 483]{Spanier}.

\begin{theorem}[Gysin sequence]
\label{t:Gysin}
Let $\pi:V\to B$ be an oriented numerable vector bundle of rank $r$ with associated sphere bundle $ E = \sph(V) $.  
For each subset $B'\subseteq B$  consider the sphere bundle pair $\pi:(E,E')\to(B,B')$
where $E':=\pi^{-1}(B')$.
Then there exists a homomorphism
\[ 
\pi_{*}:H^{*+r-1}(E,E';\Z) \to H^{*}(B,B';\Z)
\] 
that lowers the degree of $r-1$ and fits in a long exact sequence 
\begin{small}
\begin{align*}
 ...\tttoup^{\pi_*} H^*(B,B';\Z)&\tttoup^{e(E)\smallsmile} H^{*+r}(B,B';\Z)\tttoup^{\pi^*} H^{*+r}(E,E';\Z) \tttoup^{\pi_*} H^{*+1}(B,B';\Z) \tttoup ^{e(E)\smallsmile} ...
\end{align*}    
\end{small}
In particular, if $B'=\emptyset$, we have a long exact sequence
\begin{align}
 ...\tttoup^{\pi_*} H^*(B;\Z)\tttoup^{e(E)\smallsmile} &H^{*+r}(B;\Z)\tttoup^{\pi^*} H^{*+r}(E;\Z) \tttoup^{\pi_*} H^{*+1}(B;\Z) \tttoup ^{e(E)\smallsmile} ...
\label{eq:gysin} 
 \end{align}
An analogous Gysin sequence for singular cohomology with coefficients in $\Q$ holds.
\end{theorem}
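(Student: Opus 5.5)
The plan is the classical derivation of the Gysin sequence from the Thom isomorphism, carried out relative to the subspace $B'$. Set
\[
V':=\pi^{-1}(B')\subseteq V,\qquad V_0:=V\setminus 0_V,\qquad V_0':=V'\cap V_0 .
\]
The overall strategy is to write down the long exact sequence of the triple
\[
V_0\cup V' \;\subseteq\; V\cup\emptyset=V
\quad\text{relative to } V' ,
\]
that is, the triple $V'\subseteq V_0\cup V'\subseteq V$, and then identify its three terms.

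\textbf{Step 1: the middle term.} By Proposition-style exactness of triples,
\[
\cdots\to H^{*}(V,V_0\cup V')\to H^*(V,V')\to H^*(V_0\cup V',V')\xrightarrow{\ \delta\ }H^{*+1}(V,V_0\cup V')\to\cdots
\]
The zero section $l:(B,B')\hookrightarrow(V,V')$ is a homotopy equivalence of pairs, with homotopy inverse $\pi$ via the fiberwise linear contraction. Hence
\[
H^*(V,V')\cong H^*(B,B').
\]

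\textbf{Step 2: the third term.} The fiberwise radial retraction $v\mapsto v/\|v\|_{\pi(v)}$ uses the bundle metric, which exists by numerability. It is a homotopy equivalence of pairs $(V_0\cup V',V')\simeq(E\cup V',V')$. Excising $V'\setminus\overline{\ }$ is awkward for an arbitrary subset $B'$, so I would instead directly compare $(V_0\cup V',V')$ with $(V_0,V_0')$. The excision needed is that $\{V_0,V'\}$ is an excisive couple for $V_0\cup V'$. This holds without point-set hypotheses because $V'$ is a union of whole fibers and one can deform $V_0\cup V'$ near $V'$ fiberwise. If this is delicate, I would use the Mayer–Vietoris / singular-chain excisive-couple criterion, where chains subordinate to $\{V_0,V'\}$ suffice after fiberwise radial deformation. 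This gives
\[
H^*(V_0\cup V',V')\cong H^*(V_0,V_0')\cong H^*(E,E').
\]

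\textbf{Step 3: the first term (relative Thom isomorphism).} I claim that
\[
H^*(B,B')\longrightarrow H^{*+r}(V,V_0\cup V'),\qquad h\mapsto m\smallsmile \pi^*h,
\]
is an isomorphism, where $m$ is the Thom class. The cup product lands in the correct group because $\{V_0,V'\}$ is excisive, as in Step 2. To prove the claim, I would compare two long exact sequences:
\begin{itemize}
\item the sequence of the pair $(B,B')$;
\item the sequence of the triple $V_0\cup V'\subseteq V$ versus $V_0'\subseteq V'$. Via excision, $H^*(V,V_0\cup V')$ sits between $H^*(V,V_0)$ and $H^*(V',V_0')$.
\end{itemize}
Cup product with $m$, respectively with its restriction $m|_{V'}$ (the Thom class of the restricted bundle), gives a ladder of maps. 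This ladder commutes up to sign, by the naturality of cup products with the connecting homomorphism, as in the proof of Proposition~\ref{prop:long}. The absolute Thom isomorphism \eqref{thomiso} holds for both $V\to B$ and $V'\to B'$. The five lemma then gives the relative Thom isomorphism.

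\textbf{Step 4: assembling the sequence.} Substitute the three identifications into the triple sequence. The map
\[
H^*(B,B')\to H^{*+r}(B,B')
\]
becomes $h\mapsto l^*(m\smallsmile\pi^*h)=l^*m\smallsmile h=e(E)\smallsmile h$, by Definition~\ref{eulerc}. The map $H^{*+r}(B,B')\to H^{*+r}(E,E')$ becomes $\pi^*$. We then define
\[
\pi_*:=(\text{Thom})^{-1}\circ\delta .
\]
This lowers degree by $r-1$, and it yields the stated sequence. The case $B'=\emptyset$ is the special case \eqref{eq:gysin}. The version with $\Q$ coefficients follows by repeating the argument with the image of $m$ in rational cohomology, or via the universal coefficient theorem.

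\textbf{Expected main obstacle.} I expect the hardest part to be the point-set issues for an arbitrary $B'\subseteq B$: excisiveness of $\{V_0,V'\}$, and the legitimacy of the relative cup product in Step 3. My first attempt would be to exploit the fact that all deformations used are fiberwise and preserve $V'$. This reduces excisiveness to the statement that the inclusion of $\{V_0,V'\}$-small chains is a chain homotopy equivalence, which should follow from a fiberwise radial homotopy pushing simplices of $V'$ into $V_0$ or keeping them in $V'$.
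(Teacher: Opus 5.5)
The paper does not prove this theorem; it quotes it from Spanier (and Milnor--Stasheff). Your overall route (triple sequence, relative Thom isomorphism, excision) is the standard one. As written, however, it breaks exactly at the point you flagged, and your proposed repair cannot work.

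Steps~2 and~3 both need $\{V_0,V'\}$ to be an excisive couple in $V_0\cup V'$. You use this for $H^*(V_0\cup V',V')\cong H^*(V_0,V_0')$, for $H^*(V_0\cup V',V_0)\cong H^*(V',V_0')$, and for the relative product $m\smallsmile\pi^*h\in H^{*+r}(V,V_0\cup V')$. For an arbitrary $B'$ this is false. If $b\in B'$ is a limit of points of $B\setminus B'$, every neighbourhood of $0_b$ in $V_0\cup V'$ contains nonzero vectors over $B\setminus B'$, so $0_b$ is interior to neither piece. No fiberwise radial homotopy helps: it never moves points over $B\setminus B'$ into $V'$, and never moves $0_b$ into $V_0$.

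Here is a concrete counterexample. Take $B=[0,1]$, $B'=\{0\}\cup\{1/n:n\geq1\}$ and $V=B\times\R$. Then $V_0\cup V'$ is two strips joined by the bridges $\{q\}\times\R$, $q\in B'$. It contains a Hawaiian-earring-type loop: the infinite concatenation of loops based at $(0,0)$ that cross the axis at the bridges $x=1/(2n+1)$ and $x=1/(2n)$, with diameters tending to $0$. Its winding number about $(p,0)$, $p\notin B'$, jumps infinitely often as $p$ varies. By contrast, every cycle built from $V_0$-chains plus $V'$-chains meets the axis in finitely many points, since $B'$ is totally path-disconnected. So its winding function has only finitely many jumps. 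Hence $H_1(V_0,V_0')\to H_1(V_0\cup V',V')$ is not onto. With $\Q$ coefficients, $H^1(V_0\cup V',V';\Q)\to H^1(V_0,V_0';\Q)\cong H^1(E,E';\Q)$ has a nonzero kernel. So the identification in Step~2 is false here, even though the Gysin sequence itself trivially holds (the bundle is trivial).

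The missing idea is to keep a collar of the zero section over $B'$ inside $V'$. Replace $V_0$ by $V_{\geq1}:=\{\|v\|\geq1\}$; equivalently, work in the disk bundle $D$ with the triple $D'\subseteq E\cup D'\subseteq D$, which is the mapping-cylinder model. Two facts then hold for arbitrary $B'$:
\begin{itemize}
\item $V'_{<1}=\{v\in V_{\geq1}\cup V':\|v\|<1\}$ is open in $V_{\geq1}\cup V'$ and contained in $V'$;
\item $V_{\geq1}\cup V'_{>1/2}$ is open in $V_{\geq1}\cup V'$ and deformation retracts fiberwise-radially onto $V_{\geq1}$.
\end{itemize}
With these you can carry out the three steps that failed before:
\begin{itemize}
\item excise $V'_{<1/2}$ from $(V_{\geq1}\cup V',V')$ and retract radially to get $H^*(E,E')$;
\item excise $V_{\geq1}\setminus V'$ from $(V_{\geq1}\cup V',V_{\geq1}\cup V'_{>1/2})$ to get $H^*(V',V'_{>1/2})\cong H^*(V',V'_0)$;
\item define the relative cup product through the cover $\{V'_{<1},\,V_{\geq1}\cup V'_{>1/2}\}$, whose interiors cover $V_{\geq1}\cup V'$, placing $m$ in $H^r(V,V_{\geq1}\cup V'_{>1/2})\cong H^r(V,V_0)$.
\end{itemize}
With these replacements, your five-lemma argument for the relative Thom isomorphism and your Step~4 go through unchanged for arbitrary $B'\subseteq B$.
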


We apply the Gysin sequence to the  Borel quotient  $ V_G $ 
of a
vector space $ V $ with a linear action of $G$ with canonical projection $\pi_V : V_G \to BG $. Note that $ V_G $ is numerable, see  e.g. \cite{Dold} Theorem 8.

\begin{example}\label{ex:S(k)}
Consider the torus $T:=\T^d$ acting on $V(k)=\C^n$ as  in Example~\ref{ex:Vk}.
The unit sphere 
$ 
 S(k):=S^{2n-1}\subset V(k) $
is invariant under this action. The Borel quotient $S(k)_T=S(k)\times_T ET$ is the total space of the sphere sub-bundle of the vector bundle $V(k)_T\to BT$, i.e.
$  S(k)_T = \sph(V(k)_T) $. 
%The cohomology ring of the classifying space %$BT$ is 
%$H^*(BT;%\Z)=\Z[u_1,...,u_d]$%.
By Lemma 
\ref{lemmanew}
the sphere bundle $S(k)_T\to BT$, $[z,f]\mapsto [f]$,  has Euler class 
\begin{equation}\label{ekclass}
e_k=e_{k_1}\smallsmile...\smallsmile e_{k_n} \qquad \text{where} \qquad  
e_{k_i} = 
k_{i,1}u_1+...+k_{i,d}u_d  
\end{equation}
and 
$ k_i = (k_{i,1}, \ldots, k_{i,d}) \in \Z^d $ for any $ i =1, \ldots, n $. 

\begin{lemma}\label{p:cohomology_sphere}
Assume that the % $d\times n$ 
matrix $k = (k_1, \ldots, k_n ) $  has non-zero components $ k_1, \ldots, k_n \in \Z^d \setminus \{ 0 \} $. Then the $T$-equivariant cohomology of the unit sphere $S(k) \subset V(k)$ equipped with the action \eqref{eq:def action k1...kn} is  
\begin{equation}\label{isoSk}
 \frac{\Z[u_1,...,u_d]}{(e_k)}\cong H^*_T(S(k);\Z) \, ,
\end{equation}
where $(e_k):=e_k\smallsmile \Z[u_1,...,u_d]$ is the ideal generated by the Euler class 
$e_k$ in \eqref{ekclass}. The isomorphism in \eqref{isoSk} is given by $v+(e_k)\to \pi^*_{S(k)} v$. 
\end{lemma}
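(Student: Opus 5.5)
We apply the Gysin sequence \eqref{eq:gysin} to the sphere bundle $\pi=\pi_{S(k)}:S(k)_T=\sph(V(k)_T)\to BT$, which has rank $r=2n$ and Euler class $e_k$ by \eqref{ekclass}. Since $H^*(BT;\Z)=\Z[u_1,\ldots,u_d]$ is concentrated in even degrees, the sequence splits into short pieces. For each even degree $2m$ we get
\[
H^{2m-2n}(BT;\Z)\xrightarrow{e_k\smallsmile} H^{2m}(BT;\Z)\xrightarrow{\pi^*} H^{2m}_T(S(k);\Z)\xrightarrow{\pi_*} H^{2m-2n+1}(BT;\Z)=0,
\]
and for each odd degree $2m+1$ we get
\[
0=H^{2m+1}(BT;\Z)\xrightarrow{\pi^*} H^{2m+1}_T(S(k);\Z)\xrightarrow{\pi_*} H^{2m+2-2n}(BT;\Z)\xrightarrow{e_k\smallsmile} H^{2m+2}(BT;\Z).
\]

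From the even-degree piece, $\pi^*$ is surjective with kernel $e_k\smallsmile H^{*-2n}(BT;\Z)$. Summing over all degrees, $\pi^*$ induces an isomorphism $\Z[u_1,\ldots,u_d]/(e_k)\to H^{\mathrm{even}}_T(S(k);\Z)$. This map is $v+(e_k)\mapsto\pi^*_{S(k)}v$, which is exactly the map in \eqref{isoSk}. Because $\pi^*$ is a ring homomorphism, the isomorphism is one of rings, and of $\Z[u_1,\ldots,u_d]$-modules.

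It remains to show that odd-degree cohomology vanishes. From the odd-degree piece, $H^{2m+1}_T(S(k);\Z)\cong\ker\big(e_k\smallsmile:H^{2m+2-2n}(BT;\Z)\to H^{2m+2}(BT;\Z)\big)$. We argue that this kernel is zero, and this is where the hypothesis enters. Each $k_i\neq0$, so by Lemma \ref{lemma214} each $e_{k_i}=k_{i,1}u_1+\cdots+k_{i,d}u_d$ is a non-zero element of the polynomial ring $\Z[u_1,\ldots,u_d]$. This ring is an integral domain, so the product $e_k=e_{k_1}\smallsmile\cdots\smallsmile e_{k_n}$ is non-zero, and multiplication by $e_k$ is injective. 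Hence $H^{\mathrm{odd}}_T(S(k);\Z)=0$, and the isomorphism \eqref{isoSk} describes all of $H^*_T(S(k);\Z)$.

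The only delicate point is the bookkeeping with $\pi_*$, which lowers degree by $2n-1$. This shift is odd, so it always sends into odd-degree groups of $BT$, which vanish, or from odd-degree groups of $S(k)_T$. That parity fact is what produces the clean splitting used above. We also note that the Gysin sequence holds here because $V(k)_T$ is numerable, as remarked before Example \ref{ex:S(k)}.
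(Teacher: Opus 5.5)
Your proof is correct and follows the same route as the paper. You use the Gysin sequence of $S(k)_T\to BT$, together with the injectivity of $e_k\smallsmile$ on the integral domain $\Z[u_1,\ldots,u_d]$ (since each $e_{k_i}\neq 0$), to get $0\to H^{*-2n}(BT;\Z)\to H^{*}(BT;\Z)\to H^*_T(S(k);\Z)\to 0$. Your even/odd degree bookkeeping just spells out what the paper compresses into extracting that short exact sequence, including the vanishing of odd-degree equivariant cohomology, which is exactly where injectivity is needed.
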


\begin{proof}
Since $ k_i \neq 0 $ for any $ i = 1, \ldots, n $, the Euler class $e_k$ in 
\eqref{ekclass} is non-zero and, since 
the ring $ H^{*}(BT;\Z) $ is without  
non-trivial zero divisors,  the homomorphism 
\[H^*(BT;\Z)\tttoup^{e_k\smallsmile} H^{*+2n}(BT;\Z) \]
is injective. Then we can extract from the exact Gysin sequence 
\eqref{eq:gysin} of the sphere bundle $S(k)_T\to BT$ the short exact sequence
\begin{align*}
0\ttoup H^{*-2n}(BT;\Z) \ttoup^{e_k\smallsmile}   
H^{*}(BT;\Z) \ttoup^{\pi_{S(k)}^*}   H^*_T(S(k);\Z) \ttoup 0  
\end{align*}
where $H^*(BT;\Z)=\Z[u_1,...,u_d]$ by Lemma \ref{lemma214}.
Thus 
$\pi_{S(k)}^*$
is surjective, 
$ \ker \pi_{S(k)}^* =
(e_k) $  
and  the first isomorphism theorem 
implies   \eqref{isoSk}.
\end{proof}
\end{example}

\begin{example}\label{ex:sphere_ann}
Let $ G $ be a compact connected Lie group acting on the unit sphere $S^n \subset  \R^{n+1}$. 
We can extend the $G$ action to $ \R^{n+1}$ by setting 
$ g\cdot \rho z:=\rho(g\cdot z) $, 
for any 
$ g\in G $, $ \rho \geq 0  $, $ z\in S^n $. By Remark \ref{rem:conne}
 the classifying space $BG$ is simply connected
 and 
the vector bundle $ \R^{n+1}_G\to BG$ is orientable. Let  $e\in H^{n+1}(BG;\Q)$ be its Euler class with rational coefficients. 
The Borel quotient $ S^n_G $ is precisely the total space of the sphere bundle $\sph (\R^{n+1}_G)\to BG$, and its Gysin sequence  \eqref{eq:gysin}, 
\begin{align*}
...
\ttoup
H^*(BG;\Q)
\ttoup^{e\smallsmile}
H^{*+n+1}(BG;\Q)
\ttoup^{\pi_{S^n}^*}
H^{*+n+1}_G(S^n;\Q)
\ttoup... \, , 
\end{align*}
implies that
\begin{equation}
\label{annsfera}
    H^*_G(S^n;\Q)_\ann = 
    \ker (\pi_{S^n}^*) = (e) 
    \
    \text{where} \ 
    (e)=e\smallsmile H^*(BG;\Q) \, .   
\end{equation}
 %is the ideal generated by $e$.
Note that $e$ may be zero.
This is the case if the action on $S^n$ has fixed points, by Lemma~\ref{l:pi_X_injective}.
%or alternatively 
%because there is a section of the sphere bundle $\pi_{S^n}:S_G^n\to BG$ and using property (iii) of the Euler class.
\end{example}

\subsection{Some computations of equivariant cohomology}\label{sec:computations}

From now on we use the singular cohomology with coefficients in 
$\Q$.
All computations made with coefficients in $\Z$ still hold with coefficients in $\Q$
after taking the tensor product with $\Q$.

\paragraph{\bf Product of two spheres.}
With the notation of Example~\ref{ex:S(k)}, 
 we consider unit  spheres $S(k_1) \subset \C^{n_1}$ and $S(k_2) \subset \C^{n_2} $, equipped with $ T=\T^d $ actions of the form \eqref{eq:def action k1...kn}
defined by matrices   
$k_j=(k_{j,1},...,k_{j,n_j})
\in\Z^{d\times n_j}$ for $ j = 1,2 $,  and their product
\begin{align}\label{eq:defProduct of two spheres}
S(k_1,k_2) := S(k_1)\times S(k_2) \, ,
\end{align}
equipped with the diagonal $T$ action. We make the following \emph{non-collinearity} assumption: 
for any $h_1\in\{1,...,n_1\}$ and $h_2\in\{1,...,n_2\}$,
\begin{equation}\label{eq:non-coll}
    \text{the vectors} \ k_{1,h_1},\, k_{2,h_2} \in \Z^d \ \text{are linearly independent over} \ \Q\,.
\end{equation}

%\begin{remark}
%\label{rmk:non-coll torus}
%If the torus $T=\T^d$ has dimension $d=2$, the non-collinearity assumption is equivalent to 
%$    \langle k_{1,h_1}\rangle^\bot\cap \langle k_{2,h_2}\rangle^\bot=\{0\}$, for all $  h_1\in\{1,...,n_1\} $, 
%$ h_2\in\{1,...,n_2\} $
%and thus to the requirement that all $z\in S(k_1,k_2)$ have finite stabilizers $T_z=\{\theta\in T\ |\ \theta\cdot z=z\}$.
%\end{remark}

\begin{proposition}\label{p:cohomology_sphere_product}
{\bf ($ T $-cohomology of products of spheres)}
Under the non-collinearity assumption \eqref{eq:non-coll}, the rational $T$-equivariant cohomology of the product of spheres $S(k_1,k_2)$ in \eqref{eq:defProduct of two spheres} is 
\begin{equation}\label{prop323}
H^*_T(S(k_1,k_2);\Q)\cong \frac{\Q[u_1,...,u_d]}{(e_{k_1},e_{k_2})} \, ,
\end{equation}
where $(e_{k_1},e_{k_2}):=e_{k_1}\smallsmile \Q[u_1,...,u_d]+e_{k_2}\smallsmile \Q[u_1,...,u_d] $ is the ideal generated by the Euler classes $e_{k_j}$ of the sphere bundles $ S(k_j)_T =\sph(V(k_j)_T)$, i.e. for $ j = 1,2 $,  
\begin{equation}\label{ekj}
e_{k_j}=e_{k_j,1}\smallsmile...\smallsmile e_{k_j,n_j} \, , 
\qquad 
 e_{k_j,h} = k_{j,h,1}u_1+...+k_{j,h,d}u_d\,.
\end{equation}
\end{proposition}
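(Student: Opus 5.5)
The plan is to realize the Borel quotient of the product as an iterated sphere bundle and apply the Gysin sequence twice. By Lemma \ref{p:cohomology_sphere} the first application is already done. Note that \eqref{eq:non-coll} forces every $k_{j,h}\neq 0$, so Lemma \ref{p:cohomology_sphere} applies to $S(k_2)$ and gives $H^*_T(S(k_2);\Q)\cong \Q[u_1,\dots,u_d]/(e_{k_2})$, with the isomorphism induced by $\pi^*_{S(k_2)}$.

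\textbf{Step 1 (iterated sphere bundle).} Let $B:=S(k_2)_T$, with projection $p:B\to BT$, $[z_2,f]\mapsto[f]$. I claim that the map
\[
S(k_1,k_2)_T\to B,\qquad [z_1,z_2,f]\mapsto[z_2,f],
\]
is the unit sphere bundle of the pull-back $p^*V(k_1)_T$. Explicitly, the identification is $[z_1,z_2,f]\mapsto([z_2,f],[z_1,f])$, which is well defined and bijective on fibers because $T$ acts freely on $ET$. The pull-back bundle is a complex vector bundle, hence oriented, and it is numerable as the pull-back of a numerable bundle. By property (i) of the Euler class together with Lemma \ref{lemmanew}, its Euler class is $p^*e_{k_1}$. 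Under the isomorphism of Lemma \ref{p:cohomology_sphere}, this is the class of $e_{k_1}$ in $\Q[u]/(e_{k_2})$.

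\textbf{Step 2 (algebra: regular sequence).} I show that multiplication by $e_{k_1}$ is injective on $\Q[u_1,\dots,u_d]/(e_{k_2})$. The ring $\Q[u]$ is a UFD, and $e_{k_1}$, $e_{k_2}$ are products of the nonzero linear forms $e_{k_1,h_1}$ and $e_{k_2,h_2}$ of \eqref{ekj}. Linear forms are irreducible, and two of them are associated exactly when their coefficient vectors are proportional over $\Q$. Hypothesis \eqref{eq:non-coll} therefore says that no irreducible factor of $e_{k_1}$ divides $e_{k_2}$, so $\gcd(e_{k_1},e_{k_2})=1$. Now suppose $e_{k_1}f=e_{k_2}g$. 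Unique factorization gives $e_{k_2}\mid f$, which is the required injectivity.

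\textbf{Step 3 (Gysin).} Apply the rational Gysin sequence of Theorem \ref{t:Gysin} to the sphere bundle of Step 1, which has rank $r=2n_1$. By Step 2 the map $e\smallsmile$ is injective, so the sequence breaks into short exact sequences
\[
0\to H^{*-2n_1}(B)\xrightarrow{\ e\smallsmile\ } H^*(B)\xrightarrow{\ \pi^*\ } H^*_T(S(k_1,k_2))\to 0 .
\]
Hence $H^*_T(S(k_1,k_2))\cong H^*(B)/(e)\cong \big(\Q[u]/(e_{k_2})\big)/(e_{k_1})=\Q[u]/(e_{k_1},e_{k_2})$. The composite isomorphism is induced by $\pi^*_{S(k_1,k_2)}=\pi^*\circ p^*$, so it is also an isomorphism of $H^*(BT)$-algebras.

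\textbf{Main obstacle.} The only non-formal point is Step 2. The non-collinearity assumption is used exactly there: it guarantees coprimality, and therefore that $(e_{k_2},e_{k_1})$ is a regular sequence. A secondary point requiring care is the bundle identification in Step 1, including its orientation and numerability, which is needed to invoke Theorem \ref{t:Gysin}.
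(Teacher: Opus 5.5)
Your proposal is correct and follows essentially the same route as the paper: realize $S(k_1,k_2)_T$ as the unit sphere bundle of a pulled-back bundle, use non-collinearity to get coprimality of the Euler classes and hence injectivity of multiplication by the Euler class, and split the Gysin sequence into short exact sequences. The only difference is cosmetic: the paper fibers over $S(k_1)_T$, pulling back $V(k_2)_T$ and multiplying by $e_{k_2}$ on $\Q[u_1,\dots,u_d]/(e_{k_1})$, whereas you fiber over $S(k_2)_T$ with the roles of $k_1$ and $k_2$ exchanged, which is symmetric.
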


\begin{proof}
 We pull-back the vector bundle $\pi_2 := \pi_{V(k_2)} :V(k_2)_T\to BT$ by the projection $\pi_1 := \pi_{V(k_1)}   :S(k_1)_T\to BT$, obtaining the vector bundle $\pi_1^*V(k_2)_T\to S(k_1)_T$, whose total space is 
\begin{equation}\label{pullb1}
\pi_1^*V(k_2)_T = \big\{ (q_1,q_2)\in S(k_1)_T\times V(k_2)_T\ \big|\ \pi_1(q_1)=\pi_2(q_2) \big\}.
\end{equation}
By property (i) its Euler class is 
$ e(\pi_1^*V(k_2)_T)=\pi_1^* e_{k_2} $. 
We can rewrite the total space of  \eqref{pullb1} as
\begin{equation}\label{pi1p2}
\pi_1^*V(k_2)_T = \big\{ ([z_1,y],[z_2,y])\ \big|\ [z_1,y]\in S(k_1)_T,\ [z_2,y]\in V(k_2)_T,\ y\in ET \big\}. 
\end{equation}
We infer that the sphere sub-bundle $\sph(\pi_1^*V(k_2)_T)$   is homeomorphic to  $ S(k_1,k_2)_T$  via the homeomorphism
\begin{equation*}
\sph(\pi_1^*V(k_2)_T)  
\toup^{\cong} 
 S(k_1,k_2)_T  \, ,
 \qquad
 ([z_1,y],[z_2,y]) \longmapsto 
 [(z_1,z_2),y] \, .
\end{equation*}
We claim that the homomorphism 
(which  enters in the Gysin sequence of the sphere bundle)
\begin{equation}\label{isomG}
H^{*-2n_2}_T(S(k_1);\Q) \to H^{*}_T(S(k_1);\Q) \, , \ 
p\mapsto \pi_1^*e_{k_2}\smallsmile p \, ,  \quad \text{is injective} \, . 
\end{equation} 
Indeed, by Lemma \ref{p:cohomology_sphere}, we have the  commutative diagram
\begin{equation}\label{perora}
 {\begin{tikzcd}
{H^*_T(S(k_1);\Q)} \arrow[r,"{\pi_1^*e_{k_2}\smallsmile} "]           & {H^{*+2n_2}_T(S(k_1);\Q) }           \\
{\frac{\Q[u_1,\dots, u_d]}{(e_{k_1})}} \arrow[u,"\cong"] \arrow[r,"e_{k_2}\smallsmile"] & {\frac{\Q[u_1,\dots, u_d]}{(e_{k_1})}} \arrow[u,"\cong"]
\end{tikzcd}}
\text{where} \quad
2n_2-1=\dim(S(k_2)) \, , 
\end{equation}
and it is sufficient to show that the map $ e_{k_2} \smallsmile $ in the last row is injective. 
Note that the non-collinearity assumption  \eqref{eq:non-coll} implies that
the polynomials  
\begin{equation}\label{indep}
e_{k_1},e_{k_2} \in\Q[u_1, \ldots, u_d] \ \text{in \eqref{ekj} are 
relatively prime} \, , 
\end{equation} 
namely, for each $h_1\in\{1,...,n_1\}$ and $h_2\in\{1,...,n_2\}$, their irreducible factors $e_{k_1,h_1},e_{k_2,h_2}$ are not 
rational multiple of each other.
If $ e_{k_2} \smallsmile v+ (e_{k_1})= (e_{k_1})$
for some $ v \in 
\Q[u_1,\dots, u_d] $ then 
$ e_{k_2} \smallsmile v 
= e_{k_1} \smallsmile r $
for some $ r \in 
\Q[u_1,\dots, u_d] $
and, in view of \eqref{indep},
$ e_{k_1} $ divides $ v $,
so $ v \in (e_{k_1})$.
This proves  \eqref{isomG}. 
The Gysin sequence 
\eqref{eq:gysin} of the sphere bundle $S(k_1,k_2)_T \to S(k_1)_T: [(z_1,z_2),y]\mapsto [z_1,y]$, \eqref{isomG}
and  the diagram \eqref{perora}, 
implies the short exact sequence
\begin{align*}
    0\tttoup \frac{\Q[u_1,\dots, u_d]}{(e_{k_1})} \tttoup^{e_{k_2}\smallsmile} \frac{\Q[u_1,\dots, u_d]}{(e_{k_1})} \tttoup^{P^*} H^*_T(S(k_1,k_2);\Q) \tttoup 0 \, . 
\end{align*}
So $ H^*_T(S(k_1,k_2);\Q)\cong \frac{\Q[u_1,\dots, u_d]}{(e_{k_1})}\Big/\ker(P^*)=\frac{\Q[u_1,\dots, u_d]}{(e_{k_1})}\Big/\frac{(e_{k_1},e_{k_2})}{(e_{k_1})}\cong \frac{\Q[u_1,\dots, u_d]}{(e_{k_1},e_{k_2})}$ proving \eqref{prop323}.
\end{proof}

\begin{corollary}\label{cor:nonzero cohomology class}
We define the class $ f $ of $H^{2(n_1+n_2-2)}(B\T^d;\Q) $ as 
\begin{align}\label{eq:f cup product ei}
 f
 :=
 e_{k_1,1}\smallsmile...\smallsmile e_{k_1,n_1-1} \smallsmile
 e_{k_2,1}\smallsmile...\smallsmile e_{k_2,n_2-1}  \, .
\end{align}
Under the non-collinearity assumption \eqref{eq:non-coll},
\[
\pi_{S(k_1,k_2)}^*f\neq0 \, , 
\quad  i.e. \ f \notin (H^*_{\T^d}(S(k_1,k_2)))_\ann  \, , 
\]
where  
$\pi_{S(k_1,k_2)}:S(k_1,k_2)_T\to BT$ is the canonical projection.
\end{corollary}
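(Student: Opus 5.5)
By Proposition~\ref{p:cohomology_sphere_product}, the map $\pi_{S(k_1,k_2)}^*$ is surjective with kernel the ideal $(e_{k_1},e_{k_2})\subseteq\Q[u_1,\dots,u_d]$. By \eqref{kerann} this kernel is also the annihilator $H^*_T(S(k_1,k_2))_\ann$. The claim is therefore purely algebraic: we must show that $f\notin(e_{k_1},e_{k_2})$.

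We argue by contradiction and suppose $f=e_{k_1}\,a+e_{k_2}\,b$ for some $a,b\in\Q[u_1,\dots,u_d]$. We may take $a,b$ homogeneous, by projecting onto the homogeneous component of degree $2(n_1+n_2-2)$.

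\textbf{Degree argument.} Write $f=f_1f_2$ with $f_1:=e_{k_1,1}\cdots e_{k_1,n_1-1}$ and $f_2:=e_{k_2,1}\cdots e_{k_2,n_2-1}$. Then $e_{k_1}=f_1\,\ell_1$ and $e_{k_2}=f_2\,\ell_2$, where $\ell_1:=e_{k_1,n_1}$ and $\ell_2:=e_{k_2,n_2}$ are nonzero linear forms; they are nonzero because the non-collinearity assumption \eqref{eq:non-coll} forces each $k_{j,h}\neq0$. As polynomials of degree $n_1+n_2-2$ in the $u_i$, we get
\[
f_1f_2=f_1\ell_1 a+f_2\ell_2 b,
\]
where $a$ is a form of degree $n_2-2$ and $b$ is a form of degree $n_1-2$. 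If $n_2=1$ then $a=0$ by degree (and similarly if $n_1=1$ then $b=0$).

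\textbf{Unique factorization.} Since $\Q[u_1,\dots,u_d]$ is a UFD, we compare irreducible factors. By \eqref{eq:non-coll}, no factor $e_{k_1,h}$ is proportional to any factor $e_{k_2,h'}$, so $f_1$ and $f_2\ell_2$ are coprime. From $f_2\ell_2 b=f_1(f_2-\ell_1a)$ we deduce that $f_1$ divides $b$. Comparing degrees, $\deg b=n_1-2<n_1-1=\deg f_1$, hence $b=0$. Then $f_1f_2=f_1\ell_1a$, so $f_2=\ell_1a$, and therefore $\ell_1$ divides $f_2$. This is impossible, since the irreducible factors of $f_2$ are linear forms not proportional to $\ell_1$. (If $n_2=1$, then $f_2=1$ and $\ell_1\nmid 1$ directly.)

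This contradiction shows $f\notin(e_{k_1},e_{k_2})$, hence $\pi_{S(k_1,k_2)}^*f\neq0$.

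The only subtle point is the degree count. The claim needs exactly one linear factor removed from each $e_{k_j}$, so that $\deg b$ falls just short of $\deg f_1$; the rest is routine unique-factorization bookkeeping using the coprimality noted in \eqref{indep}.
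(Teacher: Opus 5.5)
Your proof is correct and follows the paper's route. You reduce via Proposition~\ref{p:cohomology_sphere_product} to showing $f\notin(e_{k_1},e_{k_2})$, and then apply unique factorization with the coprimality \eqref{indep} to the factorizations $e_{k_j}=f_j\,e_{k_j,n_j}$. The only difference is the endgame. The paper extracts both divisibilities $f_1\mid p_2$ and $f_2\mid p_1$ and divides by $f$ to reach $1=q_1e_{k_1,n_1}+q_2e_{k_2,n_2}$, which fails on constant terms and needs no homogeneity. You instead use one divisibility plus a degree count to force $b=0$, and then derive the contradiction from $e_{k_1,n_1}\mid f_2$.
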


\begin{proof}
In view of \eqref{prop323}, 
we have to show that $f\not\in(e_{k_1},e_{k_2})$.
Assume by contradiction  that
\begin{equation}
\label{e:f_lin_comb}
f=p_1\smallsmile e_{k_1}+p_2\smallsmile e_{k_2} 
\qquad \text{for some} \ \   p_1,p_2\in\Q[u_1,...,u_d] \, .
\end{equation}
We factorize each polynomial 
in \eqref{ekj}
as 
$e_{k_j}=f_j\smallsmile e_{k_j,n_j}$
where 
$ f_j:=e_{k_j,1}\smallsmile...\smallsmile e_{k_j,n_j-1} $.  
Note that  $f=f_1\smallsmile f_2$.  
By \eqref{e:f_lin_comb} 
and \eqref{indep} we get that $f_1$ divides $p_2$, and $f_2$ divides $p_1$, i.e.
$ p_2=f_1\smallsmile q_2$, $ 
p_1=f_2\smallsmile q_1 $ 
for some $q_1,q_2\in\Q[u_1,...,u_d]$. By dividing both sides of \eqref{e:f_lin_comb} by $f$, we obtain
\begin{align*}
1= q_1\smallsmile e_{k_{1},n_1}+q_2\smallsmile e_{k_{2},n_2} \, . 
\end{align*}
This is a contradiction, since the right-hand side does not have terms of degree 0.
\end{proof}

\paragraph{\bf Join product with a sphere.}
The \emph{join} of two topological spaces $X$ and $Y$ is the quotient space
\begin{equation}\label{defjoin}
X \star Y
:=
\frac{X\times Y\times[0,1]}\sim \, ,
\end{equation}
 where the equivalence relation $\sim$ collapses $X\times Y\times\{0\}$ to  $X$,  and $X\times Y\times\{1\}$  to $Y$, i.e.
\begin{equation}
\label{eqrela}
(x,y,0)\sim(x,y',0) \, ,\
(x,y,1)\sim(x',y,1) \, ,\quad
\forall x,x'\in X \, ,\ y,y'\in Y \, .
\end{equation}
If $X$ and $Y$ are $G$-spaces, their join is also a $G$-space with the action
$ g\cdot [x,y,t]=[g\cdot x,g\cdot y,t] $, for any $
 [x,y,t]\in X \star Y $, 
 $ g\in G $.
 From here on, we remove the coefficient field from the notation, and just write $H^*(\,\cdot\,)$ for $H^*(\,\cdot\,;\Q)$. The next result will be used in the proof of  Theorem \ref{teo:ast}.

\begin{lemma}\label{lm:join}
Let $G$ be a compact connected Lie group, $X$ a $G$-space, and $S^n$ a sphere with a $G$ action. If $H^*_G(X)_\free=H^*_G(X \star S^n)_\free=0$ then
$ H^*_G(X\star S^n)_\ann 
= 
H^*_G(X)_\ann \smallsmile H^*_G(S^n)_\ann $. 
\end{lemma}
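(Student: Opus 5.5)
The plan is to prove the two inclusions separately, writing $J:=X\star S^n$. On $J$ we use the coordinate $t\in[0,1]$ of \eqref{defjoin}. The inclusion $H^*_G(X)_\ann\smallsmile H^*_G(S^n)_\ann\subseteq H^*_G(J)_\ann$ should be formal. Set $U:=\{[x,y,t]\,|\,t<2/3\}$ and $V:=\{[x,y,t]\,|\,t>1/3\}$. These are $G$-invariant open sets covering $J$. The map $[x,y,t]\mapsto x$ is a $G$-equivariant retraction of $U$ onto $X$, and $[x,y,t]\mapsto y$ is one of $V$ onto $S^n$; in both cases the linear contraction in $t$ is an equivariant homotopy to the identity. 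Applying monotonicity (Proposition \ref{p:ann}-($i$)) in both directions gives $H^*_G(U)_\ann=H^*_G(X)_\ann$ and $H^*_G(V)_\ann=H^*_G(S^n)_\ann$. Union subadditivity (Proposition \ref{p:ann}-($ii$)) then yields the inclusion. Note that this direction uses none of the torsion hypotheses.

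For the reverse inclusion $H^*_G(J)_\ann\subseteq H^*_G(X)_\ann\smallsmile H^*_G(S^n)_\ann$ I would use the cone decomposition and excision. By Example \ref{ex:sphere_ann}, $H^*_G(S^n)_\ann=(e)$, where $e\in H^{n+1}(BG)$ is the Euler class of $\R^{n+1}_G\to BG$; this bundle is orientable by Remark \ref{rem:conne}. By excision, the pair $(J,\overline U')$, with $U'$ a slightly smaller copy of $U$, is identified with $(CX,X)\times S^n$. Here $CX$ is the cone on $X$, whose vertex is a $G$-fixed point. Let $r\in H^*_G(J)_\ann$. Its image in $H^*_G(U)\cong H^*_G(X)$ is $\pi_X^*(r)$. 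Since $\pi_J^*(r)=0$ and this restriction is compatible with the projections to $BG$, we get $\pi_X^*(r)=0$, so $r\in H^*_G(X)_\ann$. Symmetrically $r\in(e)$, say $r=e\,s$. Neither of these facts alone suffices, since in general $(e)\cap H^*_G(X)_\ann\neq e\,H^*_G(X)_\ann$. The aim is to show that $s$ can be chosen in $H^*_G(X)_\ann$.

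To do this I would use the relative Gysin sequence (Theorem \ref{t:Gysin}) of the sphere bundle $((CX,X)\times S^n)_G\to(CX,X)_G$, which is the pull-back of $\sph(\R^{n+1}_G)$:
\[
H^*_G(CX,X)\ttoup^{e\smallsmile}H^{*+n+1}_G(CX,X)\ttoup H^{*+n+1}_G((CX,X)\times S^n)\ttoup\cdots
\]
The construction of $CX$ gives an inclusion $X\hookrightarrow CX$ that is equivariantly homotopic to the constant map at the fixed vertex, and $H^*_G(CX,\{\mathrm{pt}\})=0$. Together with $H^*_G(X)_\free=0$, Lemma \ref{l:free_ann_technical} then gives $H^*_G(CX,X)_\free\cong H^*_G(X)_\ann$, which is a torsion-free ideal of $H^*(BG)$. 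Chasing $r$ through the exact sequence of the pair $(J,\overline U')$ and through this Gysin sequence, and passing to torsion-free quotients via Proposition \ref{p:modules}-($iii$), the class $r$ should come from an element of $H^*_G(X)_\ann$ multiplied by $e$. Two hypotheses enter here: $H^*_G(J)_\free=0$ is what lets one discard the error term coming from $H^{*-1}_G(V)$, and restriction to the vertex of the cone identifies everything inside $H^*(BG)$.

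The main obstacle is this last diagram chase, namely controlling the connecting maps so that the representative of $r$ in $H^*_G((CX,X)\times S^n)$ lifts modulo torsion to $e\smallsmile H^*_G(CX,X)_\free=e\,H^*_G(X)_\ann$. I would first try to compare the two long exact sequences, the pair sequence for $(J,U')$ and the Gysin sequence, by restricting everything to the fiber over the cone vertex. There the sequences become explicit sequences of $H^*(BG)$-modules and the torsion terms vanish by hypothesis.
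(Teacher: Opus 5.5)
\textbf{The first inclusion is fine.} Your inclusion $H^*_G(X)_\ann\smallsmile H^*_G(S^n)_\ann\subseteq H^*_G(J)_\ann$ is correct and complete. The equivariant retractions $U\to X$ and $V\to S^n$, together with monotonicity and union subadditivity (Proposition \ref{p:ann}), do the job, and no torsion hypothesis is needed.

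\textbf{The reverse inclusion is not proved.} The step you call ``the main obstacle'' is the real content of the lemma, and the mechanism you sketch for it would not work. Since $H^{*-1}_G(\overline{U'})\cong H^{*-1}_G(X)$ is purely torsion, Proposition \ref{p:modules}-($iii$) applied to $H^{*-1}_G(\overline{U'})\to H^*_G(J,\overline{U'})\to H^*_G(J)$ shows that $H^*_G(J,\overline{U'})\cong H^*_G((CX,X)\times S^n)$ is \emph{purely torsion}. So passing to torsion-free quotients there discards everything. Concretely, let $\tilde r\in H^*_G(CX,X)$ lift $r$, and let $p$ be the sphere bundle projection $((CX,X)\times S^n)_G\to(CX,X)_G$. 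A torsion argument only gives $s\cdot p^*\tilde r=0$ for some $s\neq0$, hence $s\,r\in e\,H^*_G(X)_\ann$. Writing $r=e\,r'$, this says $s\,r'\in H^*_G(X)_\ann$. That does not force $r'\in H^*_G(X)_\ann$, because this ideal need not be prime: it is $(u^2)$ for the free diagonal circle action on $S^3\subset\C^2$. Note also that the error term in the sequence of the pair $(J,\overline{U'})$ comes from $H^{*-1}_G(\overline{U'})\cong H^{*-1}_G(X)$, not from $V$.

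\textbf{The missing idea.} The error term must be absorbed \emph{exactly} into the kernel of restriction to the cone vertex, not discarded as torsion. This can be done in your own setup.
\begin{itemize}
\item Let $f\colon(J,\overline{U'})\to(CX,X)$, $f[x,y,t]=[x,\phi(t)]$, where $\phi$ vanishes on the $t$-range of $\overline{U'}$ and increases to $\phi(1)=1$. After excision, $f^*$ agrees with $p^*$ up to isomorphism. Also $f|_{\overline{U'}}\colon\overline{U'}\to X$ is an equivariant homotopy equivalence.
\item The class $f^*\tilde r$ maps to $f^*\pi^*_{CX}r=\pi_J^*(r)=0$ in $H^*_G(J)$, so $f^*\tilde r=\delta_J(z)$ for some $z$.
\item Write $z=(f|_{\overline{U'}})^*z'$. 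Naturality of connecting homomorphisms gives $\delta_J(z)=f^*\delta'(z')$, where $\delta'\colon H^{*-1}_G(X)\to H^*_G(CX,X)$.
\item Hence $p^*(\tilde r-\delta'z')=0$, and the Gysin sequence gives $\tilde r-\delta'z'=\pi^*_{CX}e\smallsmile\tilde q$.
\item Apply the restriction $l^*\colon H^*_G(CX,X)\to H^*_G(CX)\cong H^*(BG)$. It kills $\delta'z'$ by exactness and has image $H^*_G(X)_\ann$. This yields $r=e\smallsmile l^*\tilde q\in e\,H^*_G(X)_\ann$, again without using the torsion hypotheses.
\end{itemize}

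\textbf{Comparison with the paper.} The paper implements the same mechanism by coning off the whole join. Restriction to the vertex maps $H^*_G(CX\star S^n,X\star S^n)$ onto $H^*_G(X\star S^n)_\ann$ (Lemma \ref{l:free_ann_technical}). In the relative Mayer--Vietoris sequence of $CX\star S^n$, the $S^n$-side pair $(S^n,S^n)$ has zero cohomology. This identifies the image of that group in $H^*_G(CX,X)$ with $\ker p^*=\pi^*_{CX}e\smallsmile H^*_G(CX,X)$, and so gives both inclusions at once.
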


\begin{proof}
We consider the cone of $X$, which is the join product
$CX:=X \star \{q\}$
where $\{q\}$ is a singleton regarded  as a trivial $G$-space. The space $X$ can be seen as a $G$-invariant subspace of $CX$ via the continuous inclusion $X\hookrightarrow CX$, $x\mapsto [(x,q,0)]$. 
\\[1mm]
{\sc Step 1.}
{\it There is an exact sequence 
\begin{equation}\label{eq:MVS CXSn}
\begin{aligned}
...
\ttoup
H^*_G(CX \star S^n,X &\star S^n)
\ttoup^{i^*}
H^*_G(CX,X)\ttoup^{p^*}
H^*_G(CX\times S^n,X\times S^n)
\ttoup
...
\end{aligned}
\end{equation}
where $i:(CX,X)\hookrightarrow(CX\star S^n,X \star S^n)$ denotes the inclusion and 
\begin{equation}\label{eq:p def}
p:((CX\times S^n)_G,(CX\times S^n)_G)\to ((CX)_G,(X)_G) \, ,\quad p([z,(x,w)]) :=[z,x] \, .
\end{equation}}

We decompose the join $CX \star S^n$ as $U\cup V$, where $U$ and $V$ are the open subsets 
\[U=(CX\star S^n)\setminus(CX\times S^n\times\{0\}) \, ,
\ V=(CX\star S^n)\setminus(CX\times S^n\times\{1\}) \, .
\] 
We also set $U':=U\cap (X \star S^n)$ and $V':=V\cap (X \star S^n)$. By collapsing the interval factors, the pairs $(U,U')$, $(V,V')$, and $(U\cap V,U'\cap V')$ are $G$-equivariantly homotopy equivalent to $(CX,X)$, $(S^n,S^n)$, and $(CX\times S^n,X\times S^n)$ respectively.
Note that $H^*_G(V,V')\cong H^*_G(S^n,S^n)$ is trivial. The Mayer-Vietoris exact sequence associated with the decomposition 
$ (CX \star S^n,X \star S^n)=(U\cup V,U'\cup V') $ 
reduces to \eqref{eq:MVS CXSn}.
\\[1mm]
{\sc Step 2.} 
% Regarding  $\{q\}$ as a $G$-invariant subspace of $CX$ and 
% $CX \star S^n$ via the 
{\it Considering the inclusions $j:\{q\}\hookrightarrow (CX,X)$ and $k:\{q\}\hookrightarrow (CX \star S^n,X\star S^n)$ we have 
the  commutative diagram}
\begin{equation}\label{diagk}
\begin{tikzcd}[row sep=large]
 H^*_G(CX\star S^n,X \star S^n)
 \arrow[rr, "i^*"]
 \arrow[d, twoheadrightarrow, "k^*"']
 & & 
 H^*_G(CX,X)
 \arrow[d, twoheadrightarrow, "j^*"]
 \\
 H^*_G(X\star S^n)_\ann
 \arrow[rr, hookrightarrow]
 & 
 & 
 H^*_G(X)_\ann
\end{tikzcd}
\end{equation}
{\it where} 
$j^*$ and $k^*$
{\it are surjectives
and the bottom horizontal arrow is an inclusion}.

 As usual, we identify $H^*_G(\{q\})$ with $H^*(BG)$ via the homeomorphism $\pi_q:\{q\}_G\to BG$. 
Since the cone $CX$ is $G$-equivariantly contractible, the cohomology $H^*_G(CX,\{q\})$ is trivial, and the inclusion $X\hookrightarrow CX$ is $G$-equivariantly homotopic to the constant map to $q$. Analogously, the join $CX \star S^n$ is $G$-equivariantly contractible, and therefore $H^*_G(CX \star S^n,\{q\})$ is trivial, and the inclusion $X \star S^n\hookrightarrow CX \star S^n$ is $G$-equivariantly homotopic to the constant map to $q$. Furthermore $H^*_G(X)_\free=H^*_G(X \star S^n)_\free=0$ and by Lemma~\ref{l:free_ann_technical}, the inclusions $j$ and $k$ induce surjective homomorphisms.
The bottom horizontal arrow is an inclusion, which follows by the inclusion $X\hookrightarrow X \star S^n$ and the monotonicity of the annihilator (Proposition~\ref{p:ann}($i$)).
\\[1mm]
{\sc Step 3.}
The diagram  \eqref{diagk} and 
the exact sequence \eqref{eq:MVS CXSn} imply that 
\begin{equation}\label{eq:First part}
    H^*_G(X \star S^n)_\ann
=
\image(k^*)
=
\image(j^*i^*)
=
j^*(\ker(p^*)) \, . 
\end{equation}
{\sc Step 4.} 
% \label{e:join_prop_kerp}
$ \ker(p^*)
=
(\pi^*_{CX}e)\smallsmile H_G^*(CX,X) $ 
{\it where 
$e\in H^{n+1}(BG)$ is 
the Euler class  of the oriented sphere bundle 
$\pi_{S^n}:S^n_G\to BG $.}

Note that the oriented sphere bundle $\breve p:(CX\times S^n)_G\to (CX)_G:[z,(x,w)]\mapsto [z,x]$ is the pull-back of the sphere bundle $\pi_{S^n}:S^n_G\to BG$ via the canonical projection $\pi_{CX}:CX_G\to BG:[z,x]\mapsto [z]$. By property (i) its Euler class is therefore  $\pi_{CX}^*e$.
As an oriented sphere bundle pair 
$  p:((CX\times S^n)_G,(X\times S^n)_G)\to ((CX)_G,(X)_G) $, 
its associated Gysin exact sequence (Theorem \ref{t:Gysin}) reads
\begin{align*}
...
\ttoup
H^*_G(CX,X)
\ttoup^{\pi^*_{CX}e \smallsmile}
H^{*+n+1}_G(CX,X)\ttoup^{ p^*}
H^{*+n+1}_G(CX\times S^n,X\times S^n)
\ttoup
... 
\end{align*}
which implies $ \ker(p^*)
=
(\pi^*_{CX}e)\smallsmile H_G^*(CX,X) $.
\\[1mm]
{\sc Conclusion.} By 
\eqref{eq:First part} 
and Step 4  
we have  
\begin{equation}\label{Hinte1}
\begin{aligned}
H^*_G(X \star S^n)_\ann 
= j^*(\ker(p))=j^*((\pi_{CX}^*e)\smallsmile H^*_G(CX,X)) \\
= \breve \jmath^*(\pi_{CX}^*e)\smallsmile j^*(H^*_G(CX,X))
\end{aligned}
\end{equation}
where $\breve \jmath:\{q\}\hookrightarrow CX$.
Note that  $
    \breve \jmath^*\pi_{CX}^*e=e $, so  
% \eqref{annsfera}, % \eqref{e:join_prop_kerp}, 
\begin{align*}
\breve \jmath^*(\pi_{CX}^*e)\smallsmile j^*(H^*_G(CX,X))
%j^*&(\ker(p^*))
%\stackrel{\eqref{e:join_prop_kerp}} =
%j^*\big((\pi_{CX}^*e)\smallsmile H^*_G(CX,X)\big)\
\stackrel{\eqref{diagk}} =
e\smallsmile H^*_G(X)_\ann 
\stackrel{\eqref{annsfera}} =
H_G^*(S^n)_\ann\smallsmile H^*_G(X)_\ann 
\end{align*}
and by \eqref{Hinte1} the lemma is proved.  
\end{proof}

    \section{Equivariant Morse-Conley theory}\label{s:equivariant_Morse_theory}

Let $G$ be a compact Lie group and $X$ a {\it compact} metric $G$-space, namely a  metric space equipped with a continuous $G$ action of isometries.
A $G$-equivariant flow on $X$ is an $\R$ action that commutes with the $G$ action. We see this action as a family $\phi=(\phi^t)_{t\in\R}$ of homeomorphisms  $\phi^t:X\to X$, depending continuously on time  $t\in\R$, such that $\phi^0=\id$ and $\phi^{s+t}=\phi^s\circ\phi^t$ for any $s,t\in\R$. We denote the set of stationary points of $\phi$ by
\begin{equation*}
\fix(\phi)
:=
\big\{
x\in X\ |\ \phi^t(x)=x \, ,\ \forall t\in\R
\big\}.
\end{equation*}
The set $\fix(\phi)$ is $G$-invariant and compact. The alpha-limit and the omega-limit of the $\phi$-orbit through a point $x\in X$ are the compact, not-empty, connected $\phi$-invariant sets 
\begin{equation*}%\label{alphaomega} 
\alpha(x) :=\bigcap_{t>0}\overline{\phi^{(-\infty,-t]}(x)} \, ,
\qquad
\omega(x) := 
\bigcap_{t>0}\overline{\phi^{[t,\infty)}(x)}\, .
\end{equation*}
Equivalently, $\alpha(x)$ and $\omega(x)$ are the limit sets of $\phi^t(x)$ as $t\to
\mp \infty$ respectively, i.e.
\begin{align*}
\alpha (x) 
&= 
\big\{ y \in X \ | \
\exists\ t_n 
\to + \infty \ \mbox{such that }
\phi^{-t_n}(x) \to y \big\} \, ,\\
\omega (x) 
&= 
\big\{ y \in X \ | \
\exists\ t_n 
\to + \infty \ \mbox{such that }
\phi^{t_n}(x) \to y \big\} \, .
\end{align*}
The set  $\alpha(x)$, resp.\  $ \omega(x)$, is the smallest compact set $K\subseteq X$ such that, for any neighborhood 
$U$ of $K$, there exists $ t_U\in\R$ such that $\phi^t (x) \in U$ for all $t<t_U $, resp.\ for all $ t>t_U $.

\begin{definition}
{\bf (Lyapunov function and gradient-like flow)}
A $G$-invariant continuous function 
$F:X\to\R$ 
is a \emph{strict Lyapunov function} if 
\begin{equation}\label{Fphi}
\begin{aligned}
&F(\phi^t(x))\leq F(x) \, , \, \forall 
x\in X, 
t\geq 0,  \,  \\
&\text{with equality}  \,  F(\phi^t(x))=F(x) \ \text{if and only if} \ \phi^t(x)=x \, .
\end{aligned}
\end{equation} 
We say that $\phi$ is a \emph{gradient-like flow} for $F$.
\end{definition}

 We say that $c\in\R$ is a \emph{critical value} of $F$ if $F^{-1}(c)\cap\fix(\phi)\neq\emptyset$. A real number that is not a critical value is called a \emph{regular value} of $F$. Note that these notions depend on both $F$ and $\phi$.

\begin{remark}
If the $G$-space $X$ is a  manifold equipped with a $G$-invariant Riemannian metric, then any $G$-invariant smooth function $F$ is a strict Lyapunov function with respect to  the negative gradient flow $\phi^t$ of $F$. In this case, the notions of critical and regular values are the usual ones from analysis. 
\end{remark}

For any $x\in X$, since the function $t\mapsto F(\phi^t(x))$ is monotone decreasing, 
by  \eqref{Fphi}, and 
recalling that 
$ \omega (x)$ and $\alpha(x) $ are $ \phi $-invariant,  
there exist critical values  $a\leq b$ such that 
\begin{equation}\label{aeofix}
\omega(x)\subseteq F^{-1}(a) \, , \ 
\alpha(x)\subseteq F^{-1}(b) \quad \text{and} \quad 
\alpha(x)\cup\omega(x)\subseteq\fix(\phi) \, .
\end{equation}
{\bf Notation.} For any subset $Y\subseteq X$ and $c\in\R$, we denote
$ Y^{<c} :=Y\cap F^{-1}(-\infty,c) $, 
$
    Y^{\leq c}:=Y\cap F^{-1}(-\infty,c] $, 
    $ Y^{>c} :=Y\cap F^{-1}(c,\infty) $
    and 
$ Y^{\geq c}:=Y\cap F^{-1}[c,\infty) $. 
Since $ F $ is $ G $-invariant, if $Y$ is $G$-invariant, so are 
these four subsets.

\paragraph{\bf Isolating blocks.} 
For any $W\subseteq X$, we denote by $\Inv(W)$ its maximal $\phi$-invariant subset, i.e.
\begin{align*}
    \Inv(W):=\bigcap_{t\in\R} \phi^t(W)= \big\{ x \in W \ | 
    \ 
    \phi^t(x) \in W \, , \ 
    \forall t \in \R \big\} \, .
\end{align*}
%If $W$ is closed then
% $\Inv(W)$ is closed as well. 
We assume in the sequel that $W$ is compact and so 
 $\Inv(W)$ is compact as well. 

In view of 
\eqref{aeofix}, if $\Inv(W)$ is non-empty, it contains the non-empty $\phi$-invariant set  $\fix(\phi)\cap W$
and, if $F(W)$ contains at most one critical value, then $\Inv(W)=\fix(\phi)\cap W$. If instead $F(W)$ contains at least two critical values, $W$ may also contain non-stationary $\phi$-orbits with alpha-limit and omega-limit contained in  $W\cap\fix(\phi)$.

The exit set of $W$ with respect to the gradient-like flow 
$ \phi $ is 
\begin{equation*}
W_-:= \! \big\{ x\in W\ |\ \phi^{[0,t)}(x)\not\subseteq W\ \forall t>0 \big\} \! = \! \big\{ x\in \partial W\ |\ \phi^{[0,t)}(x)\not\subseteq W\ \forall t>0 \big\}.
\end{equation*} 
A subset $ S $ of $ X $   is called 
 $(G,\phi)$-invariant if it is both  $G$-invariant and $\phi$-invariant.
\begin{definition}\label{def:isolated isolating isolating}
{\bf (Isolated set, isolating  neighborhood, isolating block)}
A compact $(G,\phi)$-invariant subset $S\subseteq X$ is  \emph{isolated} when it admits an {\em isolating neighborhood} $W\subseteq X$, namely a subset $W\subseteq X $ such that 
$ \Inv(W) = S \subseteq \interior(W)$.
An \emph{isolating block} of $S$ is a $G$-invariant compact isolating neighborhood $W$ whose exit set  satisfies 
\begin{equation}
\label{exitblock}
\overline{W_-}\subseteq W^{<c} 
\ \text{where} \ 
c:=\min F|_{K} 
= \min
F_{|S} \, , \quad  
K:=\fix(\phi)\cap W \, .
\end{equation}
\end{definition}

The equality 
$ \min F|_{K}  = \min
F_{|S}$ follows by \eqref{aeofix}.
Note that the  $ G$-space $ X $ is an isolating block of itself with empty exit set.

\begin{remark}\label{r:split_block}
Let $K:=\fix(\phi)\cap W$ be the stationary set in an isolating block $W$. For each interval $[a,b]\subseteq\R$ with endpoints $a,b\in\R\setminus F(K)$, the intersection $Z:=W\cap F^{-1}[a,b]$ is an isolating block as well with maximal $\phi$-invariant subset
$ \Inv(Z) \subseteq  \Inv(W) \cap F^{-1}(a,b)  \subseteq \interior(Z) $
and exit set 
$ Z_-=(F^{-1}(a)\cap W)\cup (F^{-1}[a,b]\cap W_-) $.  
\end{remark}

An isolated $(G,\phi)$-invariant subset $S$ does not admit an isolating block if there is a point $x\in X\setminus S$ whose  $\alpha(x)$ and $\omega(x)$ limit sets are both contained in $S$. Such an obstruction  never occurs  if $S$ is contained in a level set of the Lyapunov function $F$, and the following holds.

\begin{lemma}
\label{l:existence_isolating_block}
Let $K\subseteq \fix(\phi)\cap F^{-1}(c)$ be a non-empty isolated compact $G$-invariant set.
Then any isolating neighborhood $V$ of $K$ contains an isolating block of $K$.
\end{lemma}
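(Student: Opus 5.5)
We build the block by intersecting a thin $G$-invariant tube around $K$ with a sublevel-type cut in $F$, after shrinking $V$.

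\textbf{Step 1: a small tube inside $V$.} Since $K\subseteq\interior(V)$ is compact and $G$ acts by isometries, there is $\delta>0$ with $\overline{B_{2\delta}(K)}\subseteq \interior(V)$. This neighborhood is $G$-invariant because $\mathrm{dist}(\cdot,K)$ is $G$-invariant. Set $U:=\overline{B_{2\delta}(K)}$ and $U':=\overline{B_{\delta}(K)}$. Then $U$ is an isolating neighborhood with $\Inv(U)=K$, since $\Inv(U)\subseteq\Inv(V)=K$.

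\textbf{Step 2: a uniform drop of $F$ across the annulus.} Consider the compact set $A:=U\setminus B_\delta(K)$. The key claim is that there exist $T>0$ and $\epsilon>0$ such that every $x\in A$ satisfies $\phi^{[-T,T]}(x)\not\subseteq U$ or $F(\phi^{T}(x))\le F(\phi^{-T}(x))-2\epsilon$. Argue by contradiction: otherwise there are $x_n\in A$ and $T_n\to\infty$ with $\phi^{[-T_n,T_n]}(x_n)\subseteq U$. A limit point $x\in A$ then has its whole orbit in $U$, so $x\in\Inv(U)=K$, contradicting $\mathrm{dist}(x,K)\ge\delta$. Hence some $T$ works for the first alternative on the remainder; points staying in $U$ for time $[-T,T]$ form a compact set disjoint from $\fix(\phi)$. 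By \eqref{Fphi} and compactness, $F$ strictly decreases along $[-T,T]$ on that set, uniformly by some $2\epsilon>0$.

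\textbf{Step 3: the block.} Define
\[
W:=\big\{x\in U\ \big|\ |F(x)-c|\le\epsilon\big\}\cap \Big(U'\cup\big\{x\ \big|\ \phi^{[0,\infty)}(x)\cap A'\ldots\big\}\Big),
\]
or, more concretely, take the standard Conley construction:
\[
W:=\big\{x\in U^{\le c+\epsilon}\ :\ \phi^{[-t,0]}(x)\subseteq U^{\ge c-\epsilon}\cap U \ \text{for some trajectory piece reaching } U'\big\}\cap F^{-1}[c-\epsilon,c+\epsilon].
\]
A cleaner choice that should suffice is
\[
W:=\{x\in U\ :\ F(x)\le c+\epsilon,\ \ F(x)\ge c-\epsilon \text{ or } x\in W_0\},
\]
where $W_0:=\{x\in U\ :\ c-\epsilon\le F(x)\le c+\epsilon\}$ is cut down by saturating $U'\cap F^{-1}[c-\epsilon,c+\epsilon]$ under forward flow while it remains in $U$ until $F$ reaches $c-\epsilon$.

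The properties to verify are the following. (a) $W$ is $G$-invariant and compact; this holds because all ingredients are $G$-invariant and closed. (b) $\Inv(W)=K\subseteq\interior(W)$; this holds because $K\subseteq F^{-1}(c)$ lies in the interior of $U'\cap F^{-1}(c-\epsilon,c+\epsilon)$. (c) Exit points can only occur where $F=c-\epsilon<c$. By Step 2, trajectories cannot leave through $\partial U$ while $F>c-\epsilon$: crossing the annulus costs a drop of $2\epsilon$, which is incompatible with staying in the band. They cannot leave through $\{F=c+\epsilon\}$ either, since $F$ is nonincreasing along the flow. Hence $\overline{W_-}\subseteq F^{-1}(c-\epsilon)\subseteq W^{<c}$, and since $c=\min F|_K$, this is \eqref{exitblock}.

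\textbf{Main obstacle.} The main difficulty is Step 3(c), namely ruling out exits through the lateral boundary $\partial U$ at levels above $c-\epsilon$. This is exactly where the uniform estimate of Step 2 is needed, with $\epsilon$ chosen small relative to the drop; a forward-saturated $W$ makes the argument rigorous.
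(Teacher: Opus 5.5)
There is a genuine gap: the isolating block is never actually defined, and the one concrete set you write down does not satisfy \eqref{exitblock}. Your ``cleaner choice'' is just the band $U\cap F^{-1}[c-\epsilon,c+\epsilon]$, because $W_0$ is contained in it, so the clause ``or $x\in W_0$'' adds nothing. The ``cutting down'' by saturation is never specified.

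The band itself is in general not an isolating block. Take a saddle $F=x^2-y^2$ with gradient-like flow $\dot x=-x$, $\dot y=\lambda y$, $\lambda>1$, and let $U$ be the closed disk of radius $\rho$ about $K=\{0\}$. On $\partial U$ one computes $\tfrac{d}{dt}(x^2+y^2)=(\lambda-1)\rho^2-(\lambda+1)F$, which is positive when $0<F<\tfrac{\lambda-1}{\lambda+1}\rho^2$. So boundary points at levels in $(c,c+\epsilon]$ are exit points, and $\overline{W_-}\not\subseteq W^{<c}$ for every small $\epsilon$.

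Your argument for (c) only controls trajectories that \emph{start} in $U'=\overline{B_\delta(K)}$ and then cross the annulus. Points of the band lying in the annulus near $\partial U$ can leave through $\partial U$ with an arbitrarily small drop of $F$. This is exactly why a forward saturation of $U'\cap F^{-1}[c-\epsilon,c+\epsilon]$ would be needed. But then (a) is no longer automatic, since a union of flow images over unbounded times is not obviously closed. Closedness needs an argument: if $\phi^{t_n}(y_n)\to z$ with $t_n\to\infty$, the backward orbit of $z$ stays in $U$, so $\alpha(z)\subseteq K$, and hence $z$ is itself a forward image of a point of $U'$ in the band.

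Step 2 also does not prove what (c) uses. Your contradiction argument shows that there is a uniform $T$ with $\phi^{[-T,T]}(x)\not\subseteq U$ for \emph{all} $x\in A$, so the ``drop'' alternative and the ``remainder'' never arise. That statement says nothing about how much $F$ decreases along an orbit segment crossing the annulus. The estimate you need is different:
\begin{itemize}
\item By uniform continuity of the flow there is $s_0>0$ such that any orbit segment from $\{d(\cdot,K)=\delta\}$ to $\{d(\cdot,K)=2\delta\}$ lasts at least $s_0$.
\item Set $\eta:=\min_{x\in A}\big(F(x)-F(\phi^{s_0}(x))\big)$. Then $\eta>0$, because $A$ contains no stationary points and by the equality case of \eqref{Fphi}.
\item Choose $2\epsilon<\eta$.
\end{itemize}
With a precisely defined forward-saturated $W$, a proof of its closedness, and this crossing-drop estimate, your route can be completed, but as written it is not a proof.

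The paper avoids all of this. It sets $V^\tau=\{x\in V\mid \phi^{[-\tau,0]}(x)\subseteq V\}$ and observes that points can exit $V^\tau$ only through $\partial V$. It then shows that the compact set $\partial V\cap V^{\ge c}$ misses $V^\infty$: the $\alpha$-limit of such a point would lie in $K\subseteq F^{-1}(c)$, while $F>c$ along its backward orbit. Compactness then gives $\tau$ with $\overline{V^\tau_-}\subseteq V^{<c}$, with no energy band and no drop estimate.
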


\begin{proof}
Since $ K $ is compact and $G$-invariant, 
we can assume 
with no loss of generality 
that the 
isolating neighborhood $V$
of $K $ is compact and $G$-invariant. 
For any $\tau>0$, we define
the {\it compact}  $G$-invariant  sets
\begin{align}
 V^\tau & :=
\bigcap_{t\in[0,\tau]} \phi^t(V)
= 
\big\{ x \in V \ \big|  \ 
\phi^{[-\tau,0]}(x) \subseteq V \big\}\, ,  \notag \\
 V^\infty & :=\bigcap_{t\in[0,\infty)} \phi^t(V) = 
\big\{ x \in V \ \big|  \ 
\phi^{(-\infty,0]}(x) \subseteq V \big\}  \, . \label{Vinfty}
\end{align}
Clearly $ K \subseteq V^\infty \subseteq V^\tau $ and, 
by the continuity of the flow, each  $V^\tau$ is a  {\it neighborhood} of $ K $ in $ V $. 
Moreover 
 $ K \subseteq \Inv(V^\tau)  \subseteq \Inv(V) = K $  
 and 
 $ V^\tau$ is an {\it isolating neighborhood of} $ K $
 for any $ \tau > 0 $.

We now prove that for any $ \tau $  sufficiently large  $V^\tau$ is an isolating block of $ K $.
If $V^\tau_-=\emptyset $ then $V^\tau$ is an isolating block, as 
\eqref{exitblock} trivially holds.
Assume now that $V^\tau_-\neq\emptyset$.

We first claim that the compact subset $Z:=\partial V\cap 
V^{\geq c} $ has no points in  $ V^\infty $, namely  
\begin{equation}
\label{e:Z_isolating_block}
Z\cap V^\infty=\emptyset \, .  
\end{equation}
Indeed, since no point of $Z$ is on a stationary $\phi$-orbit (as $V$ is an isolating block), for any $x\in Z$ we have $F(\phi^{-t}(x))>c$ for all $t>0$. 
 If there were a point $x\in Z\cap V^\infty$, its alpha-limit $\alpha(x)$ would be contained in $ V^{>c}$. This  is impossible because,  %$\alpha(x)$ is contained in $ K \subseteq F^{-1}(c) $.  Indeed 
 for any 
 $ x\in V^\infty $, 
 $ \alpha(x) 
$ is  non-empty and, by 
\eqref{aeofix}, 
$ \alpha(x)\subseteq V \cap \fix (\phi) \subseteq \Inv(V) = K \subseteq F^{-1}(c) $, having used that $V$ is an isolating neighborhood of $ K $. 

By \eqref{e:Z_isolating_block} and \eqref{Vinfty}, for any $x\in Z$ there is a time $\tau_x>0$ such that $\phi^{-\tau_x}(x)\not\in V$. Therefore, by the continuity of the gradient-like flow, there is an open neighborhood $U_x\subseteq X$ of $x$ such that $\phi^{-\tau_x}(U_x)\cap V=\emptyset$. Since $Z$ is compact, it is contained in a finite union  $U_{x_1}\cup...\cup U_{x_n}$. 
For any $\tau\geq \max\{\tau_{x_1}, ...,\tau_{x_n}\}$, if 
$ x \in  Z $ then $ \phi^{-\tau} (x) \notin V $ and   
we conclude that
\begin{equation}
\label{ciovol}
Z\cap V^\tau=\emptyset \, ,
\quad \forall 
\tau\geq \max\{\tau_{x_1}, ...,\tau_{x_n}\} \quad 
\text{in particular} 
\quad \overline{V^\tau_-} \cap Z = \emptyset \, . 
\end{equation}
Since 
$ \overline{V^\tau_-}\subseteq\partial V $
we deduce by \eqref{ciovol} that 
$ \overline{V^\tau_-}
\subseteq
\partial V\setminus Z\subseteq V^{<c} $, 
proving that $V^\tau$ is an isolating block of $K$.
% cf. \eqref{exitblock}.
\end{proof}

\subsection{Local cohomology}\label{s:local cohomoloy}

The following key notion originated in the 
% seminal 
work of Morse \cite{Morse:1996aa} and it is also referred to  as the equivariant cohomological Conley index, or as the equivariant critical module of $S$.

\begin{definition}
{\bf (Local cohomology)}
Let $S$ be an isolated $(G,\phi)$-invariant set with an isolating block $ W$. Let $K:=\fix(\phi)\cap S$ be the stationary set therein and $c:=\min F|_K=\min F|_{S}$ the corresponding minimal critical value. The $G$-equivariant \emph{local cohomology} of $S$ is defined as
\begin{align*}
    I^*(S)
    :=
    H^*_G(W,W^{<c}) \, . 
\end{align*}
\end{definition}
 As the notation suggests, $I^*(S)$ is {\it independent} of the choice of the isolating block $W$. This is a consequence of the following lemma.

\begin{lemma}\label{l:local_cohomology_well_posed}
Let $S$ be an isolated $(G,\phi)$-invariant set admitting two isolating blocks $V$ and $W$. Let $c:=\min F|_S$. Then the intersection $Z:=V\cap W$ is also an isolating block of $S$, and the inclusions induce isomorphisms
\begin{equation}
\label{isom3}
H^*_G(V,V^{<c}) \ttoup^{\cong}
H^*_G(Z,Z^{<c})
\leftttoup^{\cong}
H^*_G(W,W^{<c}) \, .
\end{equation}
\end{lemma}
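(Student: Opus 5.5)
By symmetry it suffices to treat the pair $V,W$ and show that $Z=V\cap W$ is an isolating block and that the restriction $H^*_G(W,W^{<c})\to H^*_G(Z,Z^{<c})$ is an isomorphism; the statement for $V$ is identical with the roles exchanged.

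\emph{$Z$ is an isolating block.} $Z$ is compact and $G$-invariant. Since $S\subseteq\interior(V)\cap\interior(W)=\interior(Z)$, we get $S\subseteq\Inv(Z)\subseteq\Inv(V)=S$. Hence $Z$ is an isolating neighborhood with $\fix(\phi)\cap Z=\fix(\phi)\cap S=K$, and the constant $c=\min F|_K$ is the same for $V$, $W$, $Z$. For the exit set, take $x\in Z_-$. Then the forward orbit leaves $Z$ immediately, so for a sequence $t_n\downarrow0$ the point $\phi^{t_n}(x)$ lies outside $V$ or outside $W$. Passing to a subsequence, $\phi^{t_n}(x)\notin V$ for all $n$ (say), hence $x\in V_-$. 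Therefore $Z_-\subseteq V_-\cup W_-$, and
\[
\overline{Z_-}\subseteq\big(\overline{V_-}\cup\overline{W_-}\big)\cap Z\subseteq Z^{<c}.
\]

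\emph{Isomorphism via a flow deformation.} I will show that the inclusion $(Z,Z^{<c})\hookrightarrow(W,W^{<c})$ is a $G$-equivariant homotopy equivalence of pairs, or at least induces isomorphisms after an excision. The key object is the compact set
\[
C:=\overline{W\setminus\interior(V)}\cap W^{\geq c}.
\]
It contains no point of $S$, since $S\subseteq\interior(V)$.

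\emph{Key claim.} There is $T>0$ such that every $x\in C$ satisfies $F(\phi^t(x))<c$ for some $t\le T$ with $\phi^{[0,t]}(x)\subseteq W$, or else exits $W$ through $W_-$ before time $T$. To prove it, suppose a point $x\in C$ stays in $W\cap F^{-1}[c,\infty)$ for all forward time. Then $\omega(x)\subseteq\fix(\phi)\cap W=K$, so $F(\phi^t(x))\to c$. Each forward orbit point then lies in $W^{\geq c}$ and has a limit in $K$, so the whole forward orbit lies in $W$, which by the block property gives $x\in W^\infty_{+}$. This contradicts $x\notin S$ only if we also control the backward direction. This is the \emph{main obstacle}. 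I would handle it by arguing instead with the $F$-value: as in Lemma \ref{l:existence_isolating_block}, compactness together with $C\cap\Inv(W)=\emptyset$ and the fact that forward-invariant points of $W^{\ge c}$ must accumulate on $K\subseteq\interior(V)$ yields a uniform time $T$. Points entering $\interior(V)$ are fine.

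\emph{Building the deformation.} With $T$ in hand, I define a $G$-equivariant deformation of $W$ that flows each point forward until it either enters $Z$ or reaches $W^{<c}$. The stopping time is made continuous using $F$ and a cutoff near $\overline{W_-}\subseteq W^{<c}$, and $G$-equivariance holds because $\phi$ commutes with $G$ and $F$ is $G$-invariant. This gives a $G$-deformation retraction of $(W,W^{<c})$ onto $(Z\cup W^{<c},W^{<c})$. The remaining step is excision: the pair $(Z\cup W^{<c},W^{<c})$ has the same $H^*_G$ as $(Z,Z^{<c})$. I would justify this by thickening $W^{<c}$ to $W^{<c-\epsilon'}$-type neighborhoods, or by using $W^{\le c-\epsilon}$ for $\epsilon$ small, which is admissible because there are no critical values in $[c-\epsilon,c)$ near $K$; here the flow again supplies the needed deformations. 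Passing to Borel quotients, with $EG$ a factor acted on trivially by the flow, gives \eqref{isom3}.
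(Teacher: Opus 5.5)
The first part is correct: your proof that $Z$ is an isolating block, and in particular the derivation of $Z_-\subseteq V_-\cup W_-$ through a sequence $t_n\downarrow 0$, is more careful than the paper's one-line claim. The uniform-time idea is also the right one. However, your Key claim as stated is false. A point of $C$ whose $\omega$-limit lies in $K$ stays in $W^{\geq c}$ for all forward time and never exits $W$, so such points do not go below $c$. Entering $\interior(Z)$ must therefore be one of the admissible outcomes. The version that works, and the one the paper uses, is the following. Choose $\epsilon>0$ with $\overline{Z_-}\cup\overline{W_-}\subseteq X^{<c-\epsilon}$, and take the compact set $Y:=W^{\geq c-\epsilon}\setminus\interior(Z)$. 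Every forward orbit from $Y$ leaves $Y$, because its $\omega$-limit lies in $K\subseteq\interior(Z)$. It therefore reaches the open set $\interior(Z)\cup X^{<c-\epsilon}$, and compactness gives a single $\tau>0$ with $\phi^\tau(Y)\subseteq Z\cup X^{<c-\epsilon}$.

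The genuine gap is in turning this into an isomorphism; there are two problems.

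\emph{The deformation.} Your deformation stops each orbit when it enters $Z$ or reaches $W^{<c}$. The first-entry time into $Z$ is in general not continuous, because isolating blocks here carry no transversality: an orbit may touch or slide along $\partial Z$ while nearby orbits enter $Z$ much later. Moreover $F$ cannot detect $Z$. You also cannot simply flow for a fixed time inside $W$, since orbits leave $W$ through $W_-$.

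\emph{The excision.} Passing from $(Z\cup W^{<c},W^{<c})$ to $(Z,Z^{<c})$ by excising $W^{<c}\setminus Z$ is not legitimate. The closure of $W^{<c}\setminus Z$ contains points of $F^{-1}(c)$, which do not lie in $W^{<c}$. Your proposed thickening would need further deformations that you do not supply.

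The missing device is to enlarge both pairs by the ambient forward-invariant sublevel set. That is, compare $(W\cup X^{<c-\epsilon},W^{<c}\cup X^{<c-\epsilon})$ with $(Z\cup X^{<c-\epsilon},Z^{<c}\cup X^{<c-\epsilon})$.
\begin{itemize}
\item Since $Z_-\subseteq X^{<c-\epsilon}$, the sets $Z\cup X^{<c-\epsilon}$ and $Z^{<c}\cup X^{<c-\epsilon}$ are forward invariant. Hence the fixed-time map $\phi^\tau$ sends the larger pair into the smaller one.
\item The homotopy $\phi^{s\tau}$, $s\in[0,1]$, then shows that the inclusion is a $G$-equivariant homotopy equivalence. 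No stopping time is needed, and orbits leaving $W$ cause no trouble because they stay in $X^{<c-\epsilon}$.
\item The excisions of $X^{<c-\epsilon}\setminus W$ and of $X^{<c-\epsilon}\setminus Z$ are valid. These sets have closure in $\{F\leq c-\epsilon\}$, and the open set $\{F<c\}$ meets $W\cup X^{<c-\epsilon}$ only inside $W^{<c}\cup X^{<c-\epsilon}$; the same holds with $Z$ in place of $W$.
\end{itemize}
A commutative square with the inclusion $(Z,Z^{<c})\hookrightarrow(W,W^{<c})$ then gives the isomorphism.
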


\begin{proof}
The intersection $Z=V\cap W$ is a compact $G$-invariant neighborhood of $S$, and 
$ S = \Inv(Z) \subseteq \interior(Z)$. Thus 
$ Z $ 
is an isolating neighborhood of $ S $. Furthermore
the exit set of $ Z $ satisfies
\[
\overline{Z_-} \, 
\subseteq \, 
Z\cap
\big( \overline{V_-}\cup \overline{W_-} \big) 
 \subseteq 
X^{<c}
\]
since both $ \overline{V_-} $ and
$ \overline{W_-} $ satisfy \eqref{exitblock}.
Therefore $Z$ is an isolating block of $S$ as well.  

Since $\overline{Z_-\cup W_-}\subseteq X^{<c}$, there is $\epsilon>0$ such that 
$ \overline{Z_-}\cup \overline{W_-}\subseteq X^{<c-\epsilon} $. 
We define the compact set 
\[
Y:=W^{\geq c-\epsilon}\setminus\interior(Z) \, ,
\] 
whose exit set satisfies $Y_-\subseteq F^{-1}(c-\epsilon)\cup\partial Z$. Since 
$ Y $ does not contain the invariant set $ S = \Inv(W) \subseteq  \interior(Z) $, we have  
$\Inv(Y)=\emptyset$ and, for any  $x\in Y$ there is a time $\tau_x>0$ such that 
$\phi^{\tau_x}(x)\notin Y $, thus 
$\phi^{\tau_x}(x)\in X^{<c-\epsilon}\cup \interior(Z)$. By the continuity of the flow $\phi$, there is an open neighborhood $U_x$ of $x$ such that $\phi^{\tau_x}(U_x)\subseteq X^{<c-\epsilon}\cup \interior(Z)$. 
Since the exit 
set $ Z_-
\subseteq X^{<c-\epsilon}$, the set 
$X^{<c-\epsilon}\cup Z$ is preserved by the gradient-like flow $\phi^t$ for any  $t\geq0$, and therefore $\phi^{t}(U_x)\subseteq X^{<c-\epsilon}\cup Z$ for all $t\geq\tau_x$. Since $Y$ is compact, it is contained in a finite union $U_{x_1}\cup...\cup U_{x_n}$ and we conclude that  
\begin{equation}\label{phincl}
    \phi^\tau(Y)\subseteq Z \cup 
    X^{<c-\epsilon}
    \qquad \text{where}
    \qquad 
\tau:=\max\{\tau_{x_1},...,\tau_{x_n}\} \, . 
\end{equation}
The inclusion of pairs 
\begin{equation}
\label{inclp}
i:(Z\cup X^{<c-\epsilon},Z^{<c}\cup X^{<c-\epsilon})\hookrightarrow(W\cup X^{<c-\epsilon},W^{<c}\cup X^{<c-\epsilon})
\end{equation}
is a $G$-equivariant homotopy equivalence, with homotopic inverse
\[
\psi:=\phi^\tau: \big( W\cup X^{<c-\epsilon},W^{<c}\cup X^{<c-\epsilon} \big) \to \big( Z\cup X^{<c-\epsilon},Z^{<c}\cup X^{<c-\epsilon} \big) \, ,
\] 
which is a map between pairs thanks to 
\eqref{phincl} and since $X^{<c-\epsilon}\cup Z$
and $ Z^{<c} \cup X^{<c-\epsilon} $ 
are preserved by the gradient-like flow $\phi^t$ for any  $t\geq0 $.
Indeed, both compositions $\psi\circ i$ and $i\circ\psi$ are homotopic to the identity through the corresponding restrictions of the homotopy $h_s:X\to X$, $h_s(x) :=\phi^{s\tau}(x)$, for $s\in[0,1]$. The inclusion $i$ in \eqref{inclp} induces the cohomology isomorphism
\begin{equation}
\label{ipairi}
i^*:
H_G^*(W\cup X^{<c-\epsilon},W^{<c}\cup X^{<c-\epsilon})
\toup^{\cong}
H_G^*(Z\cup X^{<c-\epsilon},Z^{<c}\cup X^{<c-\epsilon}) \, .
\end{equation}
By excision of 
the sets $ X^{<c-\epsilon} \setminus W $ and $  X^{<c-\epsilon} \setminus Z $ respectively,  the inclusion induces isomorphisms
\begin{equation}
\label{exc1}
\begin{aligned}
H_G^*(W\cup X^{<c-\epsilon},W^{<c}\cup X^{<c-\epsilon})
&\toup^{\cong}
H^*_G(W,W^{<c}) \, ,
\\
H_G^*(Z\cup X^{<c-\epsilon},Z^{<c}\cup X^{<c-\epsilon})
&\toup^{\cong}
H^*_G(Z,Z^{<c}) \, .
\end{aligned}
\end{equation}
The isomorphisms 
\eqref{ipairi} and \eqref{exc1} fit into the following commutative diagram, where 
$ j : (Z,Z^{<c}) \to (W,W^{<c})$ is the inclusion,  
and all homomorphisms are induced by inclusions, 
\begin{equation*}
\begin{tikzcd}[row sep=large]
H^*_G(W,W^{<c})
\arrow[rr,"j^*"] 
& &
H^*_G(Z,Z^{<c})
\\
H_G^*(W\cup X^{<c-\epsilon},W^{<c}\cup X^{<c-\epsilon})
\arrow[rr,"i^*","\cong"'] 
\arrow[u,"\cong"] 
& &
H_G^*(Z\cup X^{<c-\epsilon},Z^{<c}\cup X^{<c-\epsilon})
\arrow[u,"\cong"'] 
\end{tikzcd}
\end{equation*}
proving  that $j^*$ is an isomorphism. The same holds for the isolating block $ V $ and 
\eqref{isom3} is proved. 
\end{proof}

In order to study the local cohomology, we  need the following version of the classical deformation argument from Morse theory.

\begin{lemma}
\label{l:deformation}
Let $W$ be an isolating block, $K=\fix(\phi)\cap W$ the corresponding stationary set, $[b,c)\subseteq\R\setminus F(K)$ an interval, and $a\in(-\infty,b]$ a real number such that $\overline{W_-}\subseteq W^{<a}$. Then the  inclusion induces the cohomology isomorphism
\begin{equation}
\label{lemdef2}
    H^*_G(W^{<c},W^{<a})
    \ttoup^{\cong}
    H^*_G(W^{\leq b},W^{<a}) \, .
\end{equation}
\end{lemma}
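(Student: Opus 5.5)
We will construct a $G$-equivariant deformation retraction of the pair $(W^{<c},W^{<a})$ onto $(W^{\leq b},W^{<a})$ using the gradient-like flow, in the spirit of the proof of Lemma~\ref{l:local_cohomology_well_posed}. The inclusion $\iota:(W^{\leq b},W^{<a})\hookrightarrow(W^{<c},W^{<a})$ is a map of pairs because $a\leq b<c$, and it suffices to show that $\iota$ is a $G$-equivariant homotopy equivalence of pairs. All maps we use are built from $\phi^t$ and from $F$, which commute with, respectively are invariant under, the $G$ action.

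\textbf{Step 1: the flow cannot leave $W$ from $W^{\geq a}$.} Since $\overline{W_-}\subseteq W^{<a}$, no point of $W^{\geq a}$ is an exit point. Hence a forward trajectory starting in $W^{\geq a}$ stays in $W$ as long as it remains in $F^{-1}[a,\infty)$. Because $F$ decreases along the flow, $W^{<a}$ and $W^{<c}$ are forward invariant relative to $W$, in the sense that trajectories leave $W$ only after entering $W^{<a}$.

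\textbf{Step 2: uniform descent on the compact slab.} Fix $b<b'<c$ and consider the compact set $C:=W\cap F^{-1}[b,b']$, which contains no stationary points because $[b,c)\cap F(K)=\emptyset$. For each $x\in C$ strict decrease gives $F(\phi^1(x))<F(x)$. By continuity and compactness, there is $\delta>0$ with $F(\phi^1(x))\leq F(x)-\delta$ for all $x\in C$, as long as the trajectory stays in $W$, which holds by Step 1. Consequently every point of $W^{\leq b'}$ reaches $W^{\leq b}$ within a uniformly bounded time $T$.

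\textbf{Step 3: a continuous entrance time.} For $x\in W^{<c}$, define the entrance time $\tau(x):=\inf\{t\geq0 : F(\phi^t(x))\leq b\}$. It is finite, because on $W^{<c}\setminus W^{\leq b'}$ we can apply Step 2 with $b'$ pushed closer to $c$; a genuine uniform bound fails near $c$, but finiteness is enough. Continuity of $\tau$ holds on $F^{-1}(b,c)$ because the level set $F^{-1}(b)$ is crossed transversally in the sense that $F$ strictly decreases through it. The homotopy $h_s(x):=\phi^{s\tau(x)}(x)$, $s\in[0,1]$, is $G$-equivariant since $\tau$ is $G$-invariant. It fixes $W^{\leq b}$, preserves $W^{<a}$, and ends in $W^{\leq b}$, which gives a strong deformation retraction.

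\textbf{Main obstacle.} The expected difficulty is the continuity of $\tau$, because $F$ is only continuous and not smooth, so strict monotonicity must replace transversality. Upper semicontinuity of $\tau$ is immediate: if $F(\phi^{t}(x))<b$ at some time, that strict inequality persists for nearby points. For lower semicontinuity we would argue by contradiction. Suppose $x_n\to x$ with $\tau(x_n)\to t_0<\tau(x)$. Then $F(\phi^{t_0}(x))\leq b$, so $\phi^{t_0}(x)$ lies on $F^{-1}(b)$ before the entrance time, which contradicts strict decrease once we note that $F(\phi^{t}(x))>b$ for $t<\tau(x)$.

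If continuity still caused trouble, we would fall back on a two-stage argument. We would use only the uniform-time maps $\phi^T$ on the compact pieces $W^{\leq b'}$, and then pass to the limit $b'\uparrow c$ using the continuity of singular cohomology under the increasing union $W^{<c}=\bigcup_{b'<c}W^{\leq b'}$ in the Borel construction. That union is of open-subsets type, so the argument would go through a $\varprojlim^1$-free Milnor sequence, which applies because all the maps are isomorphisms.
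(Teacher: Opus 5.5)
Your proof is correct, and it takes a different route from the paper's.

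\textbf{The paper's route.} The paper uses a deformation time that depends only on the level, $\tau(x)=q(F(x))$, with $q$ built from an exhaustion of $W\cap F^{-1}[b,c)$ by the compact slabs $W\cap F^{-1}[b,c_{n+1}]$ and their uniform descent times. Continuity of $\tau$ is then automatic. The cost is that $\phi^{\tau(x)}(x)$ overshoots level $b$ and can leave $W$ through the exit set. So the paper enlarges both pairs by $X^{<a-\epsilon}$, which requires the gap $\overline{W_-}\subseteq W^{<a-\epsilon}$. It then shows that the enlarged inclusion is a $G$-homotopy equivalence with inverse $\phi^{\tau}$, and finally removes $X^{<a-\epsilon}\setminus W$ by excision.

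\textbf{Your route.} You stop each trajectory exactly at its first hitting time of $F^{-1}(b)$. This costs you the semicontinuity argument. In return, trajectories never drop below $b\geq a$, so your Step 1 keeps them inside $W$. You therefore get a genuine equivariant strong deformation retraction of $W^{<c}$ onto $W^{\le b}$ that fixes $W^{<a}$ pointwise. No enlargement or excision is needed, and only $W_-\subseteq W^{<b}$ is actually used.

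\textbf{Points to tighten.}
\begin{itemize}
\item You need continuity of $\tau$ on all of $W^{<c}$, not just on $F^{-1}(b,c)$. The delicate points are those of $F^{-1}(b)$, where $\tau$ switches from positive values to $0$.
\item Your upper-semicontinuity argument covers those points, but say why. Such a point lies in $W$ at the level $b\notin F(K)$, so it is non-stationary, and hence $F(\phi^{\epsilon}(x))<b$ for every $\epsilon>0$. This is exactly where the hypothesis that $[b,c)$ is closed at $b$ enters.
\item The fallback via Milnor sequences is unnecessary and can be dropped.
\end{itemize}
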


\begin{proof}
The argument is slightly more involved that the one in the proof of  Lemma~\ref{l:local_cohomology_well_posed}, since $ c $ may be a critical level. We consider a monotone increasing sequence $b:=c_0<c_1<c_2<...$ such that $c_n\to c$. The family $Z_n:=W\cap F^{-1}[b,c_{n+1}]$, for $n\in\N$, is an exhaustion by compact sets of $Z:=W\cap F^{-1}[b,c)$. Since $[b,c_{n+1}] $ are all regular values  $ \Inv(Z_n)=\emptyset$, and there is a monotone increasing sequence times $0<\tau_1\leq\tau_2\leq\tau_3<...$ such that 
\begin{equation}\label{phitaun}
    \phi^{\tau_n}(Z_n)\subseteq X^{\leq b},\qquad\forall n\in\N \, .
\end{equation}
Let $ q :(-\infty,c)\to(0,\infty)$ be a monotone increasing continuous positive function such that $ q (c_n)\geq\tau_n$ for all $n\in\N$. We define the continuous function $\tau:X^{<c}\to[0,\infty)$, $\tau(x) := q (F(x))$, and the $G$-equivariant continuous map $\psi:X^{<c}\to X^{<c}$, $\psi(x) := \phi^{\tau(x)}(x)$. In view of \eqref{phitaun}, 
\begin{equation}\label{}
    \psi(Z)\subseteq X^{\leq b}.
\end{equation}
Since $\overline{W_-}\subseteq W^{<a}$, there is $\epsilon>0$ so that $\overline{W_-}\subseteq W^{<a-\epsilon}$.
We claim that the inclusion 
\begin{equation}
\label{inc4}
i:
(W^{\leq b}\cup X^{<a-\epsilon},W^{<a}\cup X^{<a-\epsilon})
\hookrightarrow
(W^{<c}\cup X^{<a-\epsilon},W^{<a}\cup X^{<a-\epsilon})
\end{equation}
is a $G$-equivariant {\it homotopy equivalence}, with homotopy inverse 
\begin{equation}
\label{psiinv}
\psi:
(W^{<c}\cup X^{<a-\epsilon},W^{<a}\cup X^{<a-\epsilon})
\to
(W^{\leq b}\cup X^{<a-\epsilon},W^{<a}\cup X^{<a-\epsilon}) \, . 
\end{equation}
Let us first check that 
$ \psi 
$ is a map between  the topological pairs  \eqref{psiinv}.  
If $ x \in W^{<c}$ then
$ c_n \leq F(x) < c_{n+1} $ for some $ n \in \N $
and so
$ \tau_n \leq 
q(c_n) \leq q(F(x))
= \tau(x)$ implying that 
$  \phi^{\tau(x)} (x) \in X^{\leq b }$ by \eqref{phitaun}. 
Furthermore, since 
the exit set $ W_- 
\subseteq W^{<a-\epsilon}$ we conclude that  $ \psi (x) = \phi^{\tau(x)} (x) \in W^{\leq b }
\cup X^{<a-\epsilon}$. 
Similarly, if $ x \in W^{<a} $
then 
$ \phi^{\tau(x)} (x) \in W^{<a} \cup X^{<a-\epsilon} $.
The compositions $\psi\circ i$ and $i\circ\psi$ are homotopic to the identity by means of suitable restrictions of the homotopy $h_s:X^{<c}\to X^{<c}$, $h_s(x) := \phi^{s\tau(x)}(x)$, for $s\in[0,1]$.

%We now conclude as in the proof of Lemma~\ref{l:local_cohomology_well_posed}. 
The inclusion $i$ 
in \eqref{inc4} 
%is a $G$-equivariant homotopy equivalence, it 
induces the cohomology isomorphism
\begin{equation}\label{priso}\small
i^*:H_G^*(W^{<c}\cup X^{<a-\epsilon},W^{<a}\cup X^{<a-\epsilon})\toup^{\cong}H_G^*(W^{\leq b}\cup X^{<a-\epsilon},W^{<a}\cup X^{<a-\epsilon}) \, ,
\end{equation}
and, by excision of the set $ X^{<a-\epsilon} \setminus  W $, the inclusions induce isomorphisms
\begin{equation}
\label{idueiso}
\begin{aligned}
H_G^*(W^{<c}\cup X^{<a-\epsilon},W^{<a}\cup X^{<a-\epsilon})
&\toup^{\cong}
H^*_G(W^{<c},W^{<a}) \, ,
\\
H_G^*(W^{\leq b}\cup X^{<a-\epsilon},W^{<a}\cup X^{<a-\epsilon})
&\toup^{\cong}
H^*_G(W^{\leq b},W^{<a}) \, .
\end{aligned}
\end{equation}
The isomorphisms \eqref{priso}
and \eqref{idueiso} fit into the commutative diagram induced by inclusions
\begin{equation*}\small
\begin{tikzcd}[row sep=large, column sep=small]
H^*_G(W^{<c},W^{<a})
\arrow[rr,"j^*"] 
& &
H^*_G(W^{\leq b},W^{<a})
\\
H_G^*(W^{<c}\cup X^{<a-\epsilon},W^{<a}\cup X^{<a-\epsilon})
\arrow[rr,"i^*","\cong"'] 
\arrow[u,"\cong"] 
&&
H_G^*(W^{\leq b}\cup X^{<a-\epsilon},W^{<a}\cup X^{<a-\epsilon})
\arrow[u,"\cong"']
\end{tikzcd}
\end{equation*}
proving \eqref{lemdef2}.
%and conclude that $j^*$ is an isomorphism.
\end{proof}

\begin{lemma}\label{l:long_exact_sequence_local_cohomology}
Let $S$ be an isolated $(G,\phi)$-invariant set with isolating block $W$. Let $K:=\fix(\phi)\cap S$ be the corresponding stationary set, and $b\in\R\setminus F(K)$. We decompose $W$ as a union of isolating blocks $W^{\leq b}$ and $W^{\geq b}$, and  consider the corresponding $(G,\phi)$-invariant sets 
$ S_1:=\Inv(W^{\leq b}) $ and 
$ S_2:=\Inv(W^{\geq b}) $.   
Then there is a long exact sequence of $H^*(BG)$-modules
\begin{align}
\label{e:les_local_cohomology}
...
\ttoup^{\delta^{*-1}}
I^*(S_2)
\ttoup
I^*(S)
\ttoup
I^*(S_1)
\ttoup^{\delta^{*}}
I^{*+1}(S_2)
\ttoup
...
\end{align}
%all of whose homomorphisms except % the connecting ones 
%$\delta^*$ are induced by the inclusions. 
In particular
\begin{equation}\label{incann12}
    I^*(S_1)_\ann \smallsmile I^*(S_2)_\ann \subseteq I^*(S)_\ann \, .
\end{equation}
\end{lemma}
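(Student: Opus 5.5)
The plan is to realize \eqref{e:les_local_cohomology} as the long exact sequence of a triple, with the three relative groups identified with $I^*(S)$, $I^*(S_1)$ and $I^*(S_2)$. Inclusion \eqref{incann12} will then follow from Proposition~\ref{p:modules}-($ii$), once we check that all maps in the sequence are $H^*(BG)$-module homomorphisms (Proposition~\ref{prop:long}).

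\textbf{Step 1: preliminaries.} Set $c:=\min F|_K$. If $K^{\geq b}$ or $K^{\leq b}$ is empty, the statement degenerates: the corresponding set $S_i$ is empty with trivial local cohomology, and we argue directly. So assume both are non-empty. Then $c<b$, and $c$ is also $\min F$ on $K\cap W^{\leq b}$. Put $c_2:=\min F|_{K\cap W^{\geq b}}>b$. By Remark~\ref{r:split_block}, $W^{\leq b}$ and $W^{\geq b}$ are isolating blocks; strictly, $W^{\geq b}=W\cap F^{-1}[b,\max F]$ with $\max F$ above all critical values. Hence
\[
I^*(S)=H^*_G(W,W^{<c}),\quad I^*(S_1)=H^*_G(W^{\leq b},W^{<c}),\quad I^*(S_2)=H^*_G(W^{\geq b},W^{\geq b}\cap F^{-1}(-\infty,c_2)).
\]

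\textbf{Step 2: the triple and the identification of $I^*(S_2)$.} Consider the triple $W^{<c}\subseteq W^{\leq b}\subseteq W$. Proposition~\ref{prop:long} gives an exact sequence
\[
\cdots\to H^*_G(W,W^{\leq b})\to H^*_G(W,W^{<c})\to H^*_G(W^{\leq b},W^{<c})\to H^{*+1}_G(W,W^{\leq b})\to\cdots
\]
It remains to show $H^*_G(W,W^{\leq b})\cong I^*(S_2)$. We proceed in two moves.
\begin{itemize}
\item[(a)] By excision, $H^*_G(W,W^{\leq b})\cong H^*_G(W^{\geq b'},W^{\geq b'}\cap W^{\leq b})$ for some regular $b'<b$ close to $b$, using that $W^{<b}\setminus W^{\leq b'}$ is handled by an open collar. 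Here Lemma~\ref{l:deformation} may be used to replace $W^{\leq b}$ by $W^{<b''}$ for regular $b''$ slightly above $b$.
\item[(b)] Apply Lemma~\ref{l:deformation} to the block $W^{\geq b'}$, with the interval $[b,c_2)$ free of critical values. This identifies $H^*_G(W^{\geq b'},W^{\geq b'}\cap W^{<c_2})$ with $H^*_G(W^{\geq b'},W^{\geq b'}\cap W^{\leq b})$.
\end{itemize}
To run (b) we must check that $\overline{(W^{\geq b'})_-}\subseteq F^{-1}(-\infty,a)$ for a suitable $a$. We choose $a$ slightly above $b'$; the exit set is contained in $F^{-1}(b')\cup\overline{W_-}$, and $\overline{W_-}\subseteq W^{<c}$ is then removed by a further excision. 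Finally, Lemma~\ref{l:local_cohomology_well_posed} shows that the resulting group is $I^*(S_2)$, independently of the choices made.

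\textbf{Main obstacle.} The delicate point is this bookkeeping of exit sets and excisions in identifying $H^*_G(W,W^{\leq b})$ with $I^*(S_2)$. Exactness itself and the module structure are formal. To make the excisions legitimate, I would first replace all sublevel sets by open or closed ones at nearby regular values, via the deformation lemma. I would then perform the deformations exactly as in the proof of Lemma~\ref{l:deformation}, adjoining $X^{<a-\epsilon}$ to both members of each pair so that the flow maps the pairs into themselves.
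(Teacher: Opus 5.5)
Your argument is correct and uses the same toolkit as the paper: the exact sequence of a triple of sublevel sets, excision, Lemma~\ref{l:deformation}, then Proposition~\ref{p:modules}-($ii$). The difference is the middle set of the triple, and that choice is what creates your ``main obstacle''.

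The paper uses the triple $W^{<c}\subseteq W^{<c_2}\subseteq W$ instead of $W^{<c}\subseteq W^{\le b}\subseteq W$. With this choice both identifications are immediate.
\begin{itemize}
\item The first term is $I^*(S_2)$ by a single excision of $W^{<b}$. This is legitimate because $\overline{W^{<b}}\subseteq W^{\le b}\subseteq W^{<c_2}$, which is open in $W$, and it gives $H^*_G(W,W^{<c_2})\cong H^*_G(W^{\ge b},(W^{\ge b})^{<c_2})$ with no auxiliary level.
\item The third term $H^*_G(W^{<c_2},W^{<c})$ becomes $H^*_G(W^{\le b},W^{<c})=I^*(S_1)$ by Lemma~\ref{l:deformation} exactly as stated. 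It is applied to $W$ with $a=c$, and its hypothesis $\overline{W_-}\subseteq W^{<c}$ is just the block condition.
\end{itemize}

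Your triple makes the $I^*(S_1)$ term free, but all the work moves to $H^*_G(W,W^{\le b})$. You then need:
\begin{itemize}
\item an auxiliary regular level $b'$, with $[b',b]$ free of critical values so that $W^{\ge b'}$ is still a block of $S_2$;
\item the deformation lemma on $W^{\ge b'}$;
\item Lemma~\ref{l:local_cohomology_well_posed};
\item a five-lemma step you do not mention.
\end{itemize}
The five-lemma step is needed because Lemma~\ref{l:deformation} only compares pairs with a common second member. Writing $W':=W^{\ge b'}$, it compares $(W'^{<c_2},W'^{<a})$ with $(W'^{\le b},W'^{<a})$. To pass to $(W',W'^{<c_2})$ versus $(W',W'^{\le b})$ you must compare the exact sequences of the two triples.

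Also, no ``further excision'' of $\overline{W_-}$ is needed. Since $\overline{W_-}\subseteq W^{<c}$ and $c<b'$, the exit set of $W^{\ge b'}$ is just $W\cap F^{-1}(b')$.

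If you want to keep your triple, the shortest fix avoids $b'$ altogether. Apply Lemma~\ref{l:deformation} on $W$ itself, then the five lemma, to get $H^*_G(W,W^{\le b})\cong H^*_G(W,W^{<c_2})$, and finish with the paper's excision.
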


\begin{proof}
The compact sets $W^{\leq b}$ and $W^{\geq b}$ are isolating blocks respectively of $ S_1 $ and $ S_2 $, by Remark \ref{r:split_block}.
We can assume that both $S_1$ and $S_2$ are non-empty, for otherwise the lemma is trivial. 
We set $c_1:=\min F|_{S_1}=\min F|_S$ and $c_2 :=\min F|_{S_2} $, so that $ c_1 < b < c_2 $. The long exact sequence of the triple $W^{<c_1}\subseteq W^{<c_2}\subseteq W$ reads (cf.~Proposition \ref{prop:long})
\begin{equation}\label{e:les_local_cohomology_PROOF}
\begin{tikzcd}[column sep=small]
...
\arrow[r,"\delta^{*-1}"] 
& 
H^*_G(W,W^{<c_2}) 
\arrow[r] 
&
\underbrace{
H^*_G(W,W^{<c_1})}_{=
I^*(S)} 
\arrow[r] 
% \equaldown 
&
H^*_G(W^{<c_2},W^{<c_1})
\arrow[r,"\delta^*"]
&
...  
% \\ && I^*(S)
\end{tikzcd}
\end{equation}
The set 
$ V:=W^{\geq b} $ is an isolating block of $ S_2$ with exit set $ V_- = W \cap F^{-1} (b)$. By excision of the set  $W^{<b}$ (note that 
$ \overline{W^{<b}}  \subset W^{<c_2}$), the inclusion induces an isomorphism
\begin{equation}
\label{liso1}
H^*_G(W,W^{<c_2})
\ttoup^{\cong}
H^*_G(V,V^{<c_2})
=
I^*(S_2) \, .
\end{equation}
The set $W^{\leq b}$ is an isolating block for $S_1$, and Lemma~\ref{l:deformation} implies that the inclusion induces an isomorphism
\begin{equation}
\label{liso2}
H^*_G(W^{<c_2},W^{<c_1})
\ttoup^{\cong}
H^*_G(W^{\leq b},W^{<c_1})
=
I^*(S_1) \, .
\end{equation}
Inserting  \eqref{liso1}-\eqref{liso2} into \eqref{e:les_local_cohomology_PROOF}, we deduce \eqref{e:les_local_cohomology} and  Proposition \ref{p:modules}-($ii$) implies \eqref{incann12}.
\end{proof}

Within a level set of the Lyapunov function, a disjoint union decomposition of a $G$-invariant compact stationary set has the following effect on the local cohomology.

\begin{lemma}
\label{l:direct_sum_local_cohomology}
{\bf (Splitting)}
Let $K_1,K_2\subseteq \fix(\phi)\cap F^{-1}(c)$ be disjoint isolated $G$-invariant compact stationary sets contained in a level set of the Lyapunov function. 
Then the inclusion induces an $H^*(BG)$-module isomorphism
$ I^*(K_1\sqcup K_2)\ttoup^{\cong} I^*(K_1)\oplus I^*(K_2) $. 
In particular
$ I^*(K_1)_\ann\cap I^*(K_2)_\ann=I^*(K_1\sqcup K_2)_\ann $. 
\end{lemma}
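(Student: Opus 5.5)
The plan is to build an isolating block for $K_1\sqcup K_2$ as a disjoint union of isolating blocks $W_1$ of $K_1$ and $W_2$ of $K_2$. Then the isomorphism is just additivity of cohomology over disjoint unions, applied to the Borel quotients. The annihilator identity follows formally from the direct sum.

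\textbf{Step 1: disjoint isolating neighborhoods.} Since $K_1,K_2$ are disjoint, compact and $G$-invariant, and each is isolated, I choose isolating neighborhoods $V_1$ of $K_1$ and $V_2$ of $K_2$. Shrinking them, I take them compact, $G$-invariant (average over the compact group, or use $G$-invariant metric neighborhoods, since $G$ acts by isometries), and disjoint. Shrinking preserves being an isolating neighborhood, because $\Inv(V_i')\subseteq \Inv(V_i)=K_i\subseteq \interior(V_i')$.

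\textbf{Step 2: isolating blocks.} By Lemma~\ref{l:existence_isolating_block}, applied to each $K_i\subseteq \fix(\phi)\cap F^{-1}(c)$, there is an isolating block $W_i\subseteq V_i$ of $K_i$; in particular $W_1\cap W_2=\emptyset$. I set $W:=W_1\sqcup W_2$ and check the following.
\begin{itemize}
\item $W$ is compact and $G$-invariant.
\item $\Inv(W)=\Inv(W_1)\sqcup\Inv(W_2)=K_1\sqcup K_2\subseteq\interior(W)$. A full $\phi$-orbit inside $W$ is connected, so it lies in one of the two disjoint compact pieces.
\item $W_-=(W_1)_-\sqcup(W_2)_-$. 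The $W_i$ are at positive distance from each other, so short-time exit from $W$ is the same as exit from the piece containing the point.
\item $\overline{W_-}\subseteq W^{<c}$, since both $K_i$ have minimum level $c$.
\end{itemize}
Hence $W$ is an isolating block of $K_1\sqcup K_2$ with the same $c$.

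\textbf{Step 3: splitting the cohomology.} Now $(W,W^{<c})=(W_1,W_1^{<c})\sqcup(W_2,W_2^{<c})$, and the Borel quotient of a disjoint union of $G$-invariant sets is the disjoint union of the Borel quotients. Singular cohomology converts disjoint unions of pairs into direct products, here finite sums, via the inclusions. This gives the isomorphism $I^*(K_1\sqcup K_2)\to I^*(K_1)\oplus I^*(K_2)$, which by Lemma~\ref{l:local_cohomology_well_posed} does not depend on the chosen blocks. It is an $H^*(BG)$-module map because restriction along inclusions commutes with multiplication by $\pi^*_W(r)$, and $\pi_W$ restricts to $\pi_{W_i}$. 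For the annihilators, an $r$ kills $M_1\oplus M_2$ if and only if it kills both summands, which gives $I^*(K_1)_\ann\cap I^*(K_2)_\ann$.

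The only mildly delicate point is Step 2, namely verifying that the union of disjoint blocks is again a block, in particular the statement about exit sets. It is handled by the positive distance between the compact sets $W_1$ and $W_2$.
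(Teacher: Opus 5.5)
Your proposal is correct and follows the same route as the paper. You choose disjoint isolating neighborhoods, obtain disjoint isolating blocks $W_1,W_2$ from Lemma~\ref{l:existence_isolating_block}, observe that $W_1\sqcup W_2$ is an isolating block of $K_1\sqcup K_2$, and split the cohomology of the disjoint union of pairs. You also supply the checks the paper leaves implicit (maximal invariant set, exit set, level $c$); for the exit set, connectedness of $\phi^{[0,t)}(x)$ is the cleanest justification.
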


\begin{proof}
Since the compact stationary sets $K_1$ and $K_2$ are disjoint, they admit disjoint isolating neighborhoods, and thus disjoint isolating blocks $W_1$ and $W_2$ by  Lemma~\ref{l:existence_isolating_block}. 
The disjoint union $W_1\sqcup W_2$ is an isolating block of $K_1\sqcup K_2$.
The cohomology of a disjoint union of pairs is isomorphic to the direct sum of the cohomology of the pairs with an isomorphism induced by the pair of inclusions. 
Therefore
$$
\begin{aligned}
 I^*(K_1  \sqcup K_2)
    =
 H^*_G(W_1 \sqcup  W_2,W_1^{<c}
  \sqcup  W_2^{<c})&\\
  \cong 
 H^*_G(W_1,W_1^{<c})
 \oplus  H^*_G(W_2,W_2^{<c}) &
 =
 I^*(K_1) \oplus I^*(K_2) 
 \end{aligned}
$$
proving the lemma. 
 \qedhere
\end{proof}

The following lemma is used in 
the proofs of Lemma \ref{l:ann_orbit}
and Theorem \ref{teo:ast}.

\begin{lemma}\label{lm:cor of slice}
Any $G$-orbit $G\cdot x$ has a $G$-invariant open neighborhood $U$ such that 
\[H^*_G(G\cdot x)_\ann=H^*_G(U)_\ann.\]
\end{lemma}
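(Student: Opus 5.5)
The plan is to use the slice theorem for compact Lie group actions. Here $X$ is a metric $G$-space, hence completely regular, so the slice theorem applies. Let $H:=G_x$. There is a $G$-invariant open neighborhood $U$ of $G\cdot x$ and a $G$-equivariant retraction $r:U\to G\cdot x$. Concretely, $U\cong G\times_H S$ for a slice $S\ni x$. The retraction $r([g,s])=g\cdot x$ is well defined because $H$ fixes $x$ and $H$ preserves $S$.

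Given such an equivariant retraction, the equality of annihilators follows from monotonicity, Proposition~\ref{p:ann}($i$), applied twice. The inclusion $\iota:G\cdot x\hookrightarrow U$ is continuous and $G$-equivariant, which gives $H^*_G(U)_\ann\subseteq H^*_G(G\cdot x)_\ann$. The retraction $r:U\to G\cdot x$ is continuous and $G$-equivariant, which gives the reverse inclusion $H^*_G(G\cdot x)_\ann\subseteq H^*_G(U)_\ann$.

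It is worth noting that one does not need $U$ to equivariantly deformation retract onto the orbit; the existence of an equivariant map $U\to G\cdot x$ is already enough. Thus only the weakest form of the slice theorem is used: the existence of an invariant tube $U$ around the orbit and an equivariant map from $U$ back to $G/G_x\cong G\cdot x$, cf.~\eqref{orbsth}.

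The only real input, and the main point to verify, is the existence of this equivariant map. For compact Lie groups acting on completely regular (in particular metric) spaces this is the Gleason–Mostow–Palais tube theorem. If one wanted a self-contained argument, one would first embed a neighborhood of $x$ equivariantly via a $G$-invariant metric and Haar averaging. One would then pick an $H$-invariant local cross-section $S$ of the map $g\mapsto g\cdot x$ near the identity. Finally, one would check that $G\times_H S\to X$ is an open embedding onto its image, using compactness of $G$ and Hausdorffness. I would simply cite the slice theorem, as this is standard.
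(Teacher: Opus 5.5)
Your proposal is correct and matches the paper's proof: the slice theorem provides a $G$-invariant open neighborhood $U$ of $G\cdot x$ with a $G$-equivariant retraction $r:U\to G\cdot x$. Monotonicity (Proposition~\ref{p:ann}($i$)), applied to $r$ and to the inclusion $G\cdot x\hookrightarrow U$, then gives both inclusions of annihilators. Your observation that the slice theorem applies because the spaces in Section~\ref{s:equivariant_Morse_theory} are metric, hence completely regular, is a useful precision that the paper leaves implicit.
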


\begin{proof}
The existence theorem of a slice, see e.g. \cite{Bre}[ chapter II Theorem 5.4,  Def. 4.1, Theorem 4.2], 
implies the existence of a $G$-invariant neighborhood $U$ of $G\cdot x$ and a $G$-equivariant retraction $ r:U\to G\cdot x $.
Applying the  monotonicity property of the annihilator in
Proposition \ref{p:ann}($i$)
to the map $ r $ and the inclusion $i:G\cdot x\hookrightarrow U $  
we conclude that 
$ H^*_G(G\cdot x)_\ann \subseteq H^*_G(U)_\ann$
and 
$ H^*_G(U)_\ann \subseteq H^*_G(G\cdot x)_\ann  $.
\end{proof}  

The computation of the local cohomology of an isolated 
$\phi$-invariant set 
may be complicated. 
For our applications, it will be sufficient to single out 
an $H^*(BG)$-submodule of the local cohomology.

\begin{lemma}\label{l:ann_orbit}
If $G\cdot x\subseteq \fix(\phi)$ is an isolated $(G,\phi)$-invariant set, 
then $ H^*_G(G\cdot x)_\ann \subseteq I^*(G\cdot x)_\ann $.
\end{lemma}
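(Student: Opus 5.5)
To prove that $H^*_G(G\cdot x)_\ann \subseteq I^*(G\cdot x)_\ann$, we will choose a convenient isolating block and show that $I^*(G\cdot x)$ is a module over the equivariant cohomology of a neighborhood of the orbit. Let $c:=F(x)$, so $G\cdot x\subseteq F^{-1}(c)$ because $F$ is $G$-invariant.

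\emph{Choice of block.} By Lemma~\ref{lm:cor of slice} there is a $G$-invariant open neighborhood $U$ of $G\cdot x$ with
\[
H^*_G(U)_\ann=H^*_G(G\cdot x)_\ann .
\]
Since $G\cdot x$ is isolated, it has an isolating neighborhood $V$. Then $V\cap \overline{U'}$ is again an isolating neighborhood for any smaller $G$-invariant compact neighborhood $\overline{U'}\subseteq U$ of the orbit. Such a $U'$ exists because the orbit is compact and $G$ is compact: take a sublevel set of the $G$-invariant function $\mathrm{dist}(\cdot,G\cdot x)$. By Lemma~\ref{l:existence_isolating_block}, this neighborhood contains an isolating block $W$ of $G\cdot x$. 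Hence
\[
I^*(G\cdot x)=H^*_G(W,W^{<c})\qquad\text{with } W\subseteq U .
\]

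\emph{Module structure.} The $H^*(BG)$-module structure on $H^*_G(W,W^{<c})$ is $r\cdot h=\pi_W^*(r)\smallsmile h$. By naturality of $\pi$, $\pi_W^*=\iota^*\circ\pi_U^*$, where $\iota:W\hookrightarrow U$ is the inclusion. Take $r\in H^*_G(U)_\ann=\ker\pi_U^*$. Then $\pi_W^*(r)=\iota^*\pi_U^*(r)=0$, so $r\cdot h=0$ for every $h$. Equivalently, combine the monotonicity in Proposition~\ref{p:ann}($i$) applied to $\iota$, which gives $H^*_G(U)_\ann\subseteq H^*_G(W)_\ann$, with the pair monotonicity in Proposition~\ref{p:ann}($iii$), which gives $H^*_G(W)_\ann\subseteq H^*_G(W,W^{<c})_\ann$. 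Putting these together,
\[
H^*_G(G\cdot x)_\ann=H^*_G(U)_\ann\subseteq H^*_G(W)_\ann\subseteq I^*(G\cdot x)_\ann .
\]

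\emph{Main obstacle.} The only delicate point is making sure the isolating block can be taken inside the slice neighborhood $U$. Lemma~\ref{l:existence_isolating_block} handles this, provided we first intersect the given isolating neighborhood with a compact $G$-invariant neighborhood of the orbit contained in $U$. That intersection is still an isolating neighborhood: its maximal invariant set lies in $\Inv(V)=G\cdot x$, it contains the orbit, and the orbit lies in its interior. By Lemma~\ref{l:local_cohomology_well_posed}, the local cohomology does not depend on which block is used, so the inclusion of annihilators holds for $I^*(G\cdot x)$ as claimed.
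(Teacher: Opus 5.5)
Your proof is correct and is essentially the paper's argument: take the slice neighborhood $U$ from Lemma~\ref{lm:cor of slice}, find an isolating block $W\subseteq U$ via Lemma~\ref{l:existence_isolating_block}, and conclude by monotonicity together with the fact that classes in $\ker\pi_W^*$ act trivially on $H^*_G(W,W^{<c})$ (that is, Proposition~\ref{p:ann}($iii$)). Intersecting the given isolating neighborhood with a compact invariant distance-neighborhood of the orbit just makes explicit the paper's remark that any sufficiently small neighborhood of $G\cdot x$ inside $U$ is isolating.
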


\begin{proof}
By Lemma \ref{lm:cor of slice}
there is a $G$-invariant open neighborhood  $U$ of the orbit $G\cdot x$ such that $H^*_G(G\cdot x)_\ann=H^*_G(U)_\ann $.
Any sufficiently small neighborhood $V\subset U$ of  $G\cdot x $ is an isolating neighborhood, and contains an isolating block $W \subset V \subset U $ of $G\cdot x$ according to Lemma \ref{l:existence_isolating_block}.
The monotonicity property of the annihilator (cf. Proposition \ref{p:ann}-($i$)) implies that 
\[
    H^*_G(G\cdot x)_\ann\subseteq H^*_G(W)_\ann\subseteq H^*_G(U)_\ann=H^*_G(G\cdot x)_\ann\, .    
\]
The  local cohomology of $G\cdot x$ is $I^*(G\cdot x)=H^*_G(W,W^{<c})$ where $c:=F(G\cdot x)$.
For any  $r\in H^*_G(G\cdot x)_\ann 
 = H^*_G (W)_\ann = \ker (\pi_W^* ) $ and $h\in I^*(G\cdot x)$, we have
$ r\cdot h  = 0 $ 
by \eqref{cuppro} 
and therefore $r\in I^*(G\cdot x)_\ann$.
\end{proof}

\subsection{A cup-length lower bound for the number of critical values}

A cohomological version of the classical L\"usternik-Schnirelmann theorem asserts that the number of critical points of a smooth function on a closed manifold is strictly larger than the cup-length of the manifold, namely the maximal number of cohomology classes of positive degree whose total cup product is non-zero. For functions invariant by the action of a compact Lie group $G$, 
%as first remarked by Fadell and Rabinowitz \cite{FR},  
it is convenient to consider the cup-length with respect to the equivariant cohomology classes coming from $H^*(BG)$. For a given $G$-space $X$, we define
\begin{equation}\label{def:CL}
\cuplength_G(X)
:=
\sup
\Big\{
k\geq1\ \big|\ 
\exists\ w_1,...,w_k\in \tilde H^{*}(BG)\mbox{ s.t. } \ w_1\smallsmile ...\smallsmile w_k\not\in H^*_G(X)_\ann
\Big\}
\end{equation}
with the convention $\sup\varnothing=0$. Here,
$\tilde H^{*}(BG)$ is the reduced cohomology 
% of the classifying space $BG$, 
\begin{align}
\label{eq:redho}
\tilde H^d(BG)
:=
\left\{
\begin{array}{@{}ll}
  0 \, ,  &  \mbox{if }d=0 \, ,\vspace{5pt}\\
  H^d(BG) \, ,  & \mbox{if }d>0 \, .
\end{array}
\right.
\end{align}
%We shall prove 
%multiplicity of $ 3d$ %Stokes waves  
%as stationary  orbits of a functional on  the join  product of $ 
%S(k_1)\times S(k_2)$ 
%(as in  \eqref{eq:defProduct of two spheres})
%with a sphere $ S^1 $,  invariant under the action of $ \T^2 $, using 
%the  following lower bound.

\begin{example}
    \label{cup-length of two spheres}
   Let $S(k_1)\subset \C^{n_1}$ and $S(k_2)\subset \C^{n_2}$ denote the unit spheres 
   with $\T^d$-actions defined by two matrices $k_1\in \Z^{d\times n_1}$ and $k_2\in \Z^{d\times n_2}$ as in Example \ref{ex:Vk},  satisfying 
the non-collinearity assumption \eqref{eq:non-coll}. Then the product  $ S(k_1)\times S(k_2)$ 
equipped with the diagonal action satisfies 
    \begin{equation} \label{eq:CL product of spheres}
        \cuplength_{\T^d}(S(k_1)\times S(k_2))\geq n_1+n_2-2\, . 
    \end{equation}
Indeed,  
$f\in H^*(B\T^d)$ in  \eqref{eq:f cup product ei} is the cup product of $n_1+n_2-2$ cohomology classes of $\tilde H^*(B\T^d)$ and does not annihilate $H^*_{\T^d}(S(k_1,k_2))$.
    From the definition of cup-length \eqref{def:CL} we deduce \eqref{eq:CL product of spheres}.
\end{example}

In the proof of Theorem  \ref{Existence of 3d solutions collinear nonresonant} we shall use  the following 
result. 
Let $X\star Y$ be  the join product  of two $G$-spaces $X$ and $Y$, introduced in 
\eqref{defjoin}. 
We recall that $X$ and $Y$ can be identified with the subspaces $ (X\times Y\times \{0\})/ \sim $ and $ ( X\times Y\times\{1\} ) / \sim $ of $X \star Y$ respectively. A $G$ action on $X$ is \emph{transitive} when the whole space $X$ is a single $G$-orbit.

\begin{theorem}\label{teo:ast}
Let $T=\T^d$ be a torus of dimension $d\geq1$, $M$ a compact metric $ T $-space 
such that every point of $M$ has finite stabilizer, and $S^n$ an $n$-sphere equipped with a  %continuous 
transitive $T$ action. Let $F:M \star S^n\to\R$ be a continuous $T$-invariant function admitting a gradient-like flow $\phi$ such that the subspace $S^n\subseteq M\star S^n$ is stationary, i.e.
$ S^n\subseteq\fix(\phi) $. 
If there are only finitely many  $T$-orbits in $\fix(\phi)\setminus S^n$, then $F$ has at least $\cuplength_{T}(M)+1$ \emph{critical values} different from~$F(S^n)$.
\end{theorem}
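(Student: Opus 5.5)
The plan is to argue by contradiction via the subadditivity of annihilators of local cohomology along a filtration of $X:=M\star S^n$ by regular values. First I will show that $S^n$ is a single torus orbit $T/T_x$. By Lemma~\ref{rmk:nonnull}, $H^*_T(S^n)_\ann=\ker(q_x^*)$ then contains a nonzero class of degree $2$. Example~\ref{ex:sphere_ann} gives $H^*_T(S^n)_\ann=(e)$ with $e$ the Euler class, so $e\neq0$. Let $c_1<\dots<c_k$ be the critical values of $F$ different from $\ell_*:=F(S^n)$. There are finitely many of them, because $\fix(\phi)\setminus S^n$ consists of finitely many orbits. Since $X$ is compact and has empty exit set, it is an isolating block of itself, and its local cohomology is $H^*_T(X,X^{<\min F})=H^*_T(X)$.

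Next I cut $X$ at regular values $b_0<\dots<b_{k}$ separating the critical values $c_1,\dots,c_k,\ell_*$. Each piece $W\cap F^{-1}[b_{i-1},b_i]$ is an isolating block by Remark~\ref{r:split_block}, and its invariant set $S_i$ contains exactly one critical level. Iterating \eqref{incann12} of Lemma~\ref{l:long_exact_sequence_local_cohomology} gives $\prod_i I^*(S_i)_\ann\subseteq H^*_T(X)_\ann$. Inside one block the only stationary points lie on a single level, so $\Inv=\fix(\phi)\cap W$, that is, $S_i=K_i$ is the stationary set at that level.

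For a level $c_j\neq\ell_*$, $K_j$ is a finite disjoint union of orbits $T\cdot y$ with finite stabilizers. Lemma~\ref{l:ann_orbit} and Lemma~\ref{l:direct_sum_local_cohomology} give $I^*(K_j)_\ann\supseteq\bigcap_y\ker(q_y^*)\supseteq\tilde H^*(BT)$, using Remark~\ref{rm:finite stabilizer}. At the level $\ell_*$, $K_*=S^n\sqcup(\text{finitely many finite-stabilizer orbits})$, where $S^n$ is closed and isolated from the rest. The same two lemmas give $I^*(K_*)_\ann\supseteq\ker(q_x^*)\cap\tilde H^*(BT)=(e)$, since $q_x^*$ is an isomorphism in degree $0$. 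Hence, for arbitrary $w_1,\dots,w_k\in\tilde H^*(BT)$, we get $e\smallsmile w_1\smallsmile\dots\smallsmile w_k\in H^*_T(M\star S^n)_\ann$.

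To turn this into information on $M$ I will apply Lemma~\ref{lm:join}, which requires $H^*_T(M)_\free=H^*_T(M\star S^n)_\free=0$. By the slice theorem, Lemma~\ref{lm:cor of slice}, and compactness, $M$ is covered by finitely many invariant open sets $U_i$ with $H^*_T(U_i)_\ann\supseteq\tilde H^*(BT)$. Union subadditivity (Proposition~\ref{p:ann}$(ii)$) then gives a nonzero annihilator. Since $H^*_T(M)$ should be finitely generated (compact, slice-covered), this forces it to be purely torsion; I expect this finiteness check to need some care. For the join I will cover $M\star S^n$ by the two open sets $\{t<1\}$ and $\{t>0\}$, which equivariantly retract onto $M$ and $S^n$ respectively. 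Their annihilators contain $\tilde H^*(BT)^N$ and $(e)$, so subadditivity again gives a nonzero annihilator and hence zero free part. Lemma~\ref{lm:join} then yields $e\,w_1\cdots w_k=e\,a$ for some $a\in H^*_T(M)_\ann$. Since $H^*(BT)=\Q[u_1,\dots,u_d]$ has no zero divisors, I can cancel $e$ and conclude $w_1\cdots w_k\in H^*_T(M)_\ann$ for every choice of reduced classes.

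By definition \eqref{def:CL}, this means $k\geq\cuplength_T(M)+1$. When $k=0$ the empty product is $1$, and $1\notin H^*_T(M)_\ann$ for nonempty $M$, so the conclusion still holds. The main obstacle I anticipate is the torsion, or finite generation, verification needed to invoke Lemma~\ref{lm:join}, together with carefully checking that $S^n$ is isolated and that the block at level $\ell_*$ only contributes the factor $(e)$.
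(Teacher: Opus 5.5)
Your proof is correct and follows the paper's argument step by step: the Morse decomposition through Lemma~\ref{l:long_exact_sequence_local_cohomology}, the inclusions $\tilde H^*(BT)\subseteq I^*(K_{c_j})_\ann$ and $(e)\subseteq I^*(K_{\ell_*})_\ann$ via Lemmas~\ref{l:ann_orbit} and~\ref{l:direct_sum_local_cohomology}, then Lemma~\ref{lm:join} and cancellation of $e$; the only variation is that you obtain $H^*_T(M\star S^n)_\free=0$ from the two-set cover $\{t<1\}\cup\{t>0\}$ (retracting onto $M$ and $S^n$), whereas the paper uses slice neighborhoods of orbits in the fixed-point-free join. The finiteness issue you flag is not an obstacle, and you should not rely on finite generation: a nonzero $r\in H^*_T(M)_\ann$ kills every class, so $H^*_T(M)$ is purely torsion with no finiteness needed, finite generation being required only for the converse implication and possibly failing for an arbitrary compact metric $T$-space.
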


\begin{proof}
By assumption, the subspace $S^n\subseteq M\star S^n$ is a $T$-orbit in $\fix(\phi)$, and we denote by $ \ell_*:=F(S^n)$ its critical value. We assume that $\fix(\phi)$ consists of finitely many $T$-orbits, so that  in particular  $F(\fix(\phi))$ consists of finitely many pairwise distinct 
critical values $\ell_*,\ell_1,...,\ell_k$. We shall prove that 
\begin{equation}\label{ciochevoglio}
\underbrace{\tilde H^*(BT)
\smallsmile \ldots \smallsmile
\tilde H^*(BT)}_{\times k}
\subseteq 
 H^*_{T}(M)_\ann \, ,
 \end{equation}
 where $\tilde H^*(BT)$ is the reduced cohomology  of the classifying space $BT$ introduced in  \eqref{eq:redho}.
In view of \eqref{def:CL}, this implies that $k\geq \cuplength_{T}(M) + 1 $, and thus proves the theorem.
\smallskip 

\noindent
{\sc Step 1 (Morse decomposition).}
The whole space $M\star S^n$ is an isolating block of itself with local cohomology
$I^*(M\star S^n)=H^*_{T}(M \star S^n)$.
For each critical value $ \ell $ of $F$, we denote the corresponding stationary set by $K_{\ell}:=\fix(\phi)\cap F^{-1}(\ell)$. 
A repeated application of Lemma~\ref{l:long_exact_sequence_local_cohomology} and Remark \ref{r:split_block} implies that 
\begin{equation}\label{morsedeco}
I^*(K_{\ell_*})_\ann
\smallsmile
I^*(K_{\ell_1})_\ann 
\smallsmile ...\smallsmile 
I^*(K_{\ell_k})_\ann
\subseteq
I^*(M \star S^n)_\ann \, .
\end{equation}

\noindent
{\sc Step 2.} {\it The  annihilator of $I^*(M \star S^n)$ is given by
}
\begin{equation}
\label{e:ann_K_LS}
I^*(M \star S^n)_\ann =  e\smallsmile H^*_{T}(M)_\ann \, ,
\end{equation}
{\it where 
$e\in H^{n+1}(BT)\setminus\{0\}$ is a 
non-zero cohomology class such that} 
\begin{equation}
\label{e:ann_crit_LS_0}
H^*_{T}(S^n)_\ann=e\smallsmile H^*(BT) \, .
\end{equation}

The $T$ action on $M\star S^n$ has no fixed points; indeed, each point of $M$ has finite stabilizer, and the $T$ action on $S^n$ is transitive.
By Lemma \ref{rmk:nonnull}, any orbit $T\cdot x\subset M \star S^n$ has non-trivial annihilator $H^*_{T}( T \cdot x)_\ann \neq 0$. Moreover, by Lemma \ref{lm:cor of slice},  $T\cdot x$ has a $T$-invariant open neighborhood $U_x$ such that
\begin{equation}\label{intornoUx}
H^*_{T}(U_x)_\ann = H^*_{T}( T \cdot x)_\ann \neq 0. 
\end{equation}
By compactness, we can cover 
$M \star S^n \subset U_{x_1} \cup \ldots \cup U_{x_N}$ 
by finitely many neighborhoods as in \eqref{intornoUx},
and by  the union subadditivity property of the annihilators (Proposition \ref{p:ann}-($ii$)) 
we infer 
that $H^*_{T}(M \star S^n)_\ann \neq 0 $,
and thus 
that  $H^*_{T}(M \star S^n)_\free=0$. Similarly, $H^*_{T}(M)_\free=0$.
Thus  Lemma~\ref{lm:join} implies that 
\begin{align}
\label{e:ann_K_LS0}
I^*(M \star S^n)_\ann
=
H^*_{T}(M)_\ann\smallsmile H^*_{T}(S^n)_\ann 
\, .
\end{align}
By \eqref{annsfera}, there is a cohomology class 
$e\in H^{n+1}(BT) $ such that 
\eqref{e:ann_crit_LS_0} holds.
Once again, since the $T$ action on $S^n$ has no fixed points, Lemma \ref{rmk:nonnull} implies that $H^*_T(S^n)_\ann$ is non-trivial, and therefore $ e \neq 0 $. The identity  \eqref{e:ann_K_LS} follows by  \eqref{e:ann_K_LS0}
and \eqref{e:ann_crit_LS_0}. 
\\[1mm] 
{\sc Step 3.}
{\it The local cohomology of the critical sets $K_{\ell_*},K_{\ell_1},...,K_{\ell_k}$ satisfies} \begin{align}
\label{e:ann_Kci_LS}
e\smallsmile H^*(BT)\subseteq I^*(K_{\ell_*})_\ann
\, ,
\
\tilde H^*(BT)\subseteq I^*(K_{\ell_i})_\ann 
  \, , \ \forall i =1, \ldots, k   \,  , 
\end{align}
{\it where $\tilde H^*(BT) $
is the reduced cohomology 
 \eqref{eq:redho}.}\vspace{5pt}

\noindent
By Lemma~\ref{l:direct_sum_local_cohomology},
for any critical level $ \ell $, we have
\begin{equation}
\label{inclcr1}
I^*(K_\ell)_{\ann} = 
\bigcap_{T\cdot x\subseteq K_\ell}
I^*(T\cdot x)_\ann \,.
\end{equation} 
Lemma \ref{l:ann_orbit} implies 
\begin{equation}
\label{e:ann_inclusion_LS}
H^*_{T}(T\cdot x)_\ann
\subseteq
I^*(T\cdot x)_\ann \, , \quad \forall x \in M \star  S^n   \, .
\end{equation}
Since any point of $M$ has finite stabilizer,  any $x\in\fix(\phi)\setminus S^n$ has finite stabilizer as well. Therefore, by Remark \ref{rm:finite stabilizer},   the equivariant cohomology $H^d_T(T\cdot x)$ is trivial in every degree $d>0$. This implies  
\begin{equation}
\label{e:ann_crit_LS}
H^*_T(T\cdot x)_\ann
=
\tilde H^*(BT) \, . 
\end{equation}
If $ \ell \neq \ell_* $
we conclude, 
by \eqref{inclcr1} and   
\eqref{e:ann_inclusion_LS}-\eqref{e:ann_crit_LS},  that 
$ \tilde H^*(BT)\subseteq I^*(K_{\ell})_\ann $, 
proving  the last inclusions in 
\eqref{e:ann_Kci_LS}.
If $ \ell = \ell_* $ 
we deduce  similarly the first 
inclusion in 
\eqref{e:ann_Kci_LS} using 
also  
% \eqref{inclcr1},  
\eqref{e:ann_crit_LS_0}
where $ \deg(e) >0 $. 
%and \eqref{e:ann_inclusion_LS}, \eqref{e:ann_crit_LS}.
\\[1mm]
\noindent
{\sc Step 4.}
Substituting \eqref{e:ann_K_LS} and   \eqref{e:ann_Kci_LS} 
in \eqref{morsedeco}, we obtain
$ e\smallsmile \tilde H^*(BT)^{\smallsmile k}
\subseteq 
e\smallsmile H^*_T(M)_\ann $, 
and, since $ e \neq 0 $ and $ H^* (BT)$ has no non-trivial zero divisors, we deduce \eqref{ciochevoglio}. 
\end{proof}

The following, more ordinary,  L\"usternik-Schnirelmann's type  result can be proved in a way similar to Theorem \ref{teo:ast}.
    \begin{theorem}\label{prop:T2G}
        Let $G$ be a compact Lie group, $M$ be a 
        compact metric $G$-space such that every point of $M$ has finite stabilizer, and $F:M\to\R$ a continuous $G$-invariant function admitting a gradient-like flow $\phi$ such that $\fix(\phi)$ consists of finitely many  $G$-orbits. Then $F$ possesses at least $\cuplength_{G}(M)+1$ critical levels.
    \end{theorem}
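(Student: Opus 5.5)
We follow the proof of Theorem \ref{teo:ast}, dropping the join factor. Argue by contradiction: suppose $\fix(\phi)$ consists of finitely many $G$-orbits, so $F(\fix(\phi))$ is a finite set of critical values $\ell_1<\dots<\ell_k$. We shall show
\begin{equation*}
\underbrace{\tilde H^*(BG)\smallsmile\ldots\smallsmile\tilde H^*(BG)}_{\times k}\subseteq H^*_G(M)_\ann ,
\end{equation*}
which by definition \eqref{def:CL} forces $k\geq \cuplength_G(M)+1$. If $M=\emptyset$ the statement is vacuous; otherwise $k\ge1$, since the minimum of $F$ is attained on $\fix(\phi)$ by \eqref{aeofix}.

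\emph{Morse decomposition.} $M$ is an isolating block of itself with empty exit set, so $I^*(M)=H^*_G(M)$. Choose regular values $b_1<\dots<b_{k-1}$ with $\ell_i<b_i<\ell_{i+1}$. Slice $M$ into the blocks $M\cap F^{-1}[b_{i-1},b_i]$, with $b_0=-\infty$ and $b_k=+\infty$; each is an isolating block by Remark \ref{r:split_block}. Its maximal invariant set is exactly $K_{\ell_i}=\fix(\phi)\cap F^{-1}(\ell_i)$, since only one critical value lies in it. Applying Lemma \ref{l:long_exact_sequence_local_cohomology} inductively, splitting off one level at a time, gives
\begin{equation*}
I^*(K_{\ell_1})_\ann\smallsmile\cdots\smallsmile I^*(K_{\ell_k})_\ann\subseteq I^*(M)_\ann=H^*_G(M)_\ann .
\end{equation*}
Graded commutativity up to sign makes the order of the factors irrelevant for ideal inclusions.

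\emph{Local annihilators.} Each $K_{\ell_i}$ is a finite disjoint union of $G$-orbits. Disjoint compact invariant sets are isolated from each other, so Lemma \ref{l:direct_sum_local_cohomology} applies, inductively over the finitely many orbits, and gives $I^*(K_{\ell_i})_\ann=\bigcap_{G\cdot x\subseteq K_{\ell_i}} I^*(G\cdot x)_\ann$. Each orbit $G\cdot x$ is isolated, and by Lemma \ref{l:ann_orbit} we have $H^*_G(G\cdot x)_\ann\subseteq I^*(G\cdot x)_\ann$. Since $G_x$ is finite, Remark \ref{rm:finite stabilizer} gives $H^d_G(G\cdot x)=0$ for $d>0$. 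Hence every positive-degree element of $H^*(BG)$ annihilates $H^*_G(G\cdot x)$, i.e. $\tilde H^*(BG)\subseteq H^*_G(G\cdot x)_\ann$. Therefore $\tilde H^*(BG)\subseteq I^*(K_{\ell_i})_\ann$ for every $i$.

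\emph{Conclusion and main obstacle.} Combining the two inclusions gives the displayed inclusion, and hence the bound. The only delicate step is the Morse decomposition. One must check that the repeated use of Lemma \ref{l:long_exact_sequence_local_cohomology} is legitimate, that is, that each sub-block $W^{\le b}$ or $W^{\ge b}$ is again an isolating block containing the correct invariant set. This is where Remark \ref{r:split_block} is used, and it requires that every $b_i$ avoid $F(\fix(\phi))$. Unlike in Theorem \ref{teo:ast}, no join lemma and no Euler-class factor $e$ is needed, since here every stationary orbit has finite stabilizer.
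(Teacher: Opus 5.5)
Your argument is correct and is exactly the route the paper has in mind: the proof of Theorem~\ref{teo:ast} with Step~2 (the join lemma and the Euler class $e$) removed, since every stationary orbit now has finite stabilizer and $I^*(M)=H^*_G(M)$ directly. The only quibbles are cosmetic: the argument is direct rather than by contradiction, and for $M=\emptyset$ the statement is false rather than vacuous (there are no critical levels, but the bound asks for one), so nonemptiness of $M$ is tacitly assumed.
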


Theorems \ref{teo:ast}  
and \ref{prop:T2G}
provide a lower bound for the number of distinct {\it critical values}, allowing in Section \ref{sec:final}
to prove
multiplicity  of 
$ ( \T^2_\Gamma \rtimes \Z_2 ) $-geometrically distinct Stokes waves.

\part{Bifurcation of $3d$ Stokes waves}
\label{part:II}

In this part we apply the previous  results to prove  multiplicity 
of  gravity-capillary Stokes waves.

\section{The variational Lyapunov-Schmidt reduction}\label{sec:LS}
By \eqref{spaziY}
the linearized operator $\cL_c:=
\di_u \cF (c, 0)$  is equal to  
$$
    \cL_c= c \cdot \partial_x + J\di_u \nablal \cH(0)=c\cdot \partial_x +J\left(\begin{matrix}
            g -\kappa \Delta & 0 \\
            0                & G(0)
        \end{matrix}\right)\, 
$$
where  $ G(0) = |D|\tanh(\tth |D| ) $ is the Fourier multiplier 
with symbol $|\xi| \tanh (\tth |\xi| )$ 
%Dirichlet-Neumann operator 
%at $ \eta = 0 $, 
if $ \tth < \infty $, and 
$ G(0) = |D| $ in infinite depth
(we denote  
$ D := \im^{-1} \pa_x  $)
%the H\"ormander derivative). 

The operator $\cL_c $ can be  symplectically 
diagonalized:  
the real phase space 
$ L^2  := L^2 (\T^2_\Gamma, \R^2 )  $  decomposes as  direct sum of infinitely many $ 2 $ dimensional real  {\it symplectic}  subspaces {\it invariant} 
under $ \cL_c $ that we now describe. 
Define  the  real vectors, for $ j \in \Gamma'\setminus\{0\}$, 
\begin{equation}\label{symplectic base}
        v_j^{(1)}(x):=  \vect{M_j\cos(j\cdot x)}{M_{j}^{-1} \sin(j \cdot x)} \, , \quad 
        v_j^{(2)}(x):= \vect{-M_j\sin(j \cdot x)}{M_{j}^{-1}\cos(j \cdot x)}\, ,
\end{equation}
where \begin{equation}\label{eq:Mj}
    M_j  := (g + \kappa |j|^2)^{-\frac14}(|j | \tanh (\tth |j|))^{\frac14}
\end{equation} 
and 
$ v_0^{(1)}:= 
    \begin{psmallmatrix}
    1 \\ 0
    \end{psmallmatrix} $ 
    and $ v_0^{(2)}:= 
    \begin{psmallmatrix} 0 \\ 1
    \end{psmallmatrix} $.
The following lemma is directly verified. 

\begin{lemma}\label{symplectic base and coordinates}
The following holds:    
\\[1mm]
$(i)$ 
        {\sc Symplecticity.}
        For any  $ j \in \Gamma'$  
        the $ 2 $-dimensional real subspace  
\begin{equation}
\label{VJsym} V_j := \big\{ \alpha_j v_j^{(1)} + 
\beta_j v_j^{(2)} \ | \ \alpha_j, \beta_j \in \R \big\}  
\end{equation} 
is symplectic. 
Each $ V_j $ 
is symplectic orthogonal to any other $ V_k$ for any  $ j \neq k $. 
\\[1mm]
$(ii)$
        {\sc Completeness.}
        The phase space admits the decomposition 
$$ 
        L^2 := L^2 (\T^2_\Gamma, \R^2)   =  
        \overline{\bigoplus_{j \in \Gamma'}V_{j}}^{L^2}
$$ 
where   
        $\{v_j^{(1)},v_j^{(2)}\}_{j\in \Gamma'}$ is a symplectic basis  of $ L^2 $ and
\begin{equation}\label{coordiantes}
    u=\sum_{j\in \Gamma'} \alpha_j\, v_j^{(1)}+\beta_j\, v_j^{(2)}\, ,
    \quad \forall u\in L^2  \, , 
\end{equation} 
with coordinates 
        \begin{equation}
        \label{abu}
           \alpha_j := \alpha_j(u) :=  \boldsymbol{\Omega}(u,v_j^{(2)})\, , \quad  \beta_j := \beta_j (u) := \boldsymbol{\Omega}(v_j^{(1)},u) 
           \, , \quad \forall j \in \Gamma'\, .
        \end{equation}
  $(iii)$ 
        {\sc Invariance.} For any $ j \in \Gamma'\setminus\{0\}$, it results 
        $  \pa_{x} v_j^{(1)}= j v_j^{(2)} $, $ \pa_{x} v_j^{(2)} = - j v_j^{(1)} $ and 
        \begin{align}
        \label{actLc}
                \cL_c v_j^{(1)}=(c\cdot j-\omega(j)) v_j^{(2)} \, , \quad 
                \cL_c v_j^{(2)}=-(c\cdot j-\omega(j)) v_j^{(1)} \, , 
        \end{align}
where  $ \omega (j) $ is the frequency defined
in \eqref{omega}, 
        and  $\cL_c v_0^{(1)}=-g\, v_0^{(2)}$.
\\[1mm]
$(iv)$
        {\sc $ \T^2_\Gamma $ and $\Z_2$ symmetries.} For any 
        $ u \in L^2  $  and any $j \in \Gamma'$  we have
        \begin{equation}
        \label{eq:trasab}
        \begin{aligned}
            &\alpha_j (\tau_\theta u)+\im \, \beta_j (\tau_\theta u)=e^{-\im \, j \cdot \theta}(\alpha_j(u)+\im \, \beta_j(u)) \, , \quad \forall \theta\in \T^2_\Gamma \, , \\
            &\alpha_j(\rho u)+\im \, \beta_j(\rho u)=
            \alpha_j(u) - \im \, \beta_j(u) \, .
        \end{aligned}
         \end{equation} 
$(v)$ {\sc Momentum.} The momentum 
        $ \cI $ in \eqref{DefI} is equal to
        \begin{equation}\label{Momuntum in coordinates}
                \cI(u)= \tfrac12 \di_u \cI(u) [u] 
                \stackrel{\eqref{Def Hamiltonian vector field}} = \tfrac 12 {\bf \Omega} (X_{\cI}(u),u)\stackrel{\eqref{XI}}=-\tfrac12 \sum_{j\in \Gamma'\setminus\{0\}} j \, (\alpha_j^2(u)+\beta_j^2(u)) \, .
        \end{equation}
\end{lemma}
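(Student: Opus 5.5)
The whole lemma comes down to direct computation in Fourier coordinates, organized around the complex notation $z_j := \alpha_j + \im \beta_j$. First I rewrite the basis vectors in compact form:
\[
v_j^{(1)} + \im\, v_j^{(2)} = \vect{M_j e^{\im j\cdot x}}{-\im M_j^{-1} e^{\im j\cdot x}}\cdot(\text{up to conjugation conventions}) ,
\]
that is, $v_j^{(1)}$ and $v_j^{(2)}$ are the real and imaginary parts of the single complex exponential vector $\vect{M_j}{-\im M_j^{-1}} e^{\im j\cdot x}$.

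\textbf{Items (i) and (ii).} I compute $\boldsymbol{\Omega}(v_j^{(a)},v_k^{(b)})$ from \eqref{Def W} using orthogonality of $\cos(j\cdot x)$ and $\sin(j\cdot x)$ on $\T^2_\Gamma$ for $j\in\Gamma'$. Take $j\neq k$, both nonzero. The vectors $v_j$ contain only the frequencies $\pm j$. As long as $k\neq -j$, these integrals vanish.

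This exposes a subtlety. Since $v_{-j}^{(1)} = v_j^{(1)}$ up to sign changes in the sine entries, the span $V_{-j}$ coincides with $V_j$. So the decomposition must be read with $V_j$ and $V_{-j}$ as distinct subspaces, each spanned by a real combination. I would check this carefully: in fact $v_{-j}^{(1)}=(M_j\cos, -M_j^{-1}\sin)$ is independent of $v_j^{(1)}$, so $V_j\oplus V_{-j}$ spans the four-dimensional real space of frequency $\pm j$. The mixed pairings between $V_j$ and $V_{-j}$ also vanish, because the products $M_jM_j^{-1}$ cancel with opposite signs.

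Within $V_j$ itself I get $\boldsymbol{\Omega}(v_j^{(1)},v_j^{(2)})=\tfrac12+\tfrac12=1$. This gives symplecticity, and together with the above it shows that $\{v_j^{(1)},v_j^{(2)}\}$ is a symplectic basis. Completeness follows because these vectors span, frequency by frequency, the whole Fourier-mode space (including $j=0$). The coordinate formulas \eqref{abu} then follow by pairing the expansion \eqref{coordiantes} with $v_j^{(2)}$ and with $v_j^{(1)}$.

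\textbf{Item (iii).} Differentiation gives $\partial_x v_j^{(1)}=jv_j^{(2)}$ and $\partial_x v_j^{(2)} = -jv_j^{(1)}$ immediately. For $J\,\mathrm{diag}(g-\kappa\Delta,G(0))$ acting on $v_j^{(1)}$, I use that both operators are Fourier multipliers, with symbols $g+\kappa|j|^2$ and $|j|\tanh(\tth|j|)$. The output is $\big(M_j^{-1}|j|\tanh(\tth|j|)\sin,\ -M_j(g+\kappa|j|^2)\cos\big)$. The choice of $M_j$ in \eqref{eq:Mj} makes
\[
M_j^{-1}|j|\tanh(\tth|j|)=\omega(j)M_j, \qquad M_j(g+\kappa|j|^2)=\omega(j)M_j^{-1},
\]
so this equals $-\omega(j)v_j^{(2)}$. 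The computation for $v_j^{(2)}$ is analogous. For $j=0$, the operator gives $Jv_0^{(1)}$ multiplied by $g$.

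\textbf{Item (iv).} For translations, $\tau_\theta$ replaces $j\cdot x$ by $j\cdot x - j\cdot\theta$. Expanding by angle addition shows that $(\alpha_j,\beta_j)$ is rotated, and in complex form this is multiplication by $e^{-\im j\cdot\theta}$ (the sign is fixed by the computation). For the reflection, $\rho$ maps $v_j^{(1)}\mapsto v_j^{(1)}$ and $v_j^{(2)}\mapsto -v_j^{(2)}$, since $\eta(-x)$ flips sines and $-\psi(-x)$ flips cosines. This yields conjugation of $z_j$.

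\textbf{Item (v).} $\cI$ is quadratic, so $\cI(u)=\tfrac12\di_u\cI(u)[u]$. Using \eqref{Def Hamiltonian vector field} and \eqref{XI}, this equals $\tfrac12\boldsymbol{\Omega}(\partial_x u,u)$. Inserting the expansion \eqref{coordiantes} together with (iii), $\partial_x u=\sum_j j(\alpha_j v_j^{(2)}-\beta_j v_j^{(1)})$. The symplectic orthogonality from (i) and $\boldsymbol{\Omega}(v_j^{(1)},v_j^{(2)})=1$ then give $-\tfrac12\sum_j j(\alpha_j^2+\beta_j^2)$.

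I expect the main obstacle to be bookkeeping rather than any conceptual step: getting the signs and normalizations consistent in (i)/(ii) for the $\pm j$ pairs, and in the phase convention of (iv).
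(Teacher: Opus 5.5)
Your proposal is correct and is exactly the direct verification the paper intends (the paper only says the lemma ``is directly verified''). The following all check out: the pairings within $V_j$ and between $V_j$ and $V_{-j}$, the identities $M_j^{-1}|j|\tanh(\tth|j|)=\omega(j)M_j$ and $M_j(g+\kappa|j|^2)=\omega(j)M_j^{-1}$, the phase $e^{-\im j\cdot\theta}$, and the evaluation of $\tfrac12\boldsymbol{\Omega}(\partial_x u,u)$. There are two cosmetic slips, neither used later: $v_j^{(2)}$ is minus the imaginary part of $(M_j,-\im M_j^{-1})^\top e^{\im j\cdot x}$, i.e.\ $v_j^{(1)}+\im v_j^{(2)}=(M_j,\im M_j^{-1})^\top e^{-\im j\cdot x}$; and the passing claim that $V_{-j}$ coincides with $V_j$ should be deleted, since you correctly show right after that $V_j\oplus V_{-j}$ is four-dimensional.
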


In view of \eqref{actLc}, for any $c \in \R^2 $,  the Kernel of $ \cL_c $
is Fourier supported on the wave vectors 
$ j \in \Gamma' $ such that 
$ c \cdot j = \omega (j) $,  so 
$ \ker(\cL_{c})\not=0$ if and only if 
$ c $ belongs to the set $ \mathsf C $  defined in  \eqref{bifspeed}.

\smallskip 
We reduce  \eqref{Bifurcation problem} to a finite dimensional problem by a variational Lyapunov-Schmidt procedure.

\paragraph{\bf The Kernel $ V = \ker(\cL_{c_*}) $.}
Let $ c_* $ be a speed in the set $ \mathsf C $ defined in \eqref{bifspeed} so that 
$ \ker(\cL_{c_*})\not=0$. Specifically, in view of 
\eqref{actLc}, 
\begin{equation}
\label{defKer}
V := \ker(\cL_{c_*}) = 
\Big\{ c_* \cdot \pa_x v +
   \di_u X_\cH (0) v = 0 \Big\}  =
  \Big\{  
   \di_u \nablal  (\cH + c_* \cI)  
  (0) v = 0 \Big\} = 
\bigoplus_{j \in \cV} 
V_j 
\end{equation}
where 
$ V_j $
are the $2$ dimensional real  symplectic subspaces in \eqref{VJsym} and $ \cV $ is the set of resonant wave vectors
in \eqref{Def cV}. The subspace  
$V $ is  symplectic. 
Furthermore $ V $ is finite dimensional, because 
the dispersion relation  in 
\eqref{omega} satisfies 
$\omega(j)\approx \sqrt{\kappa} |j|^{\frac32}$,  and then
    \begin{equation}\label{count of Js}
    \# \cV   \leq \#\Big\{j \in \Gamma'\setminus\{0\} \ \big| \   |c|\geq \frac{\omega(j)}{|j|}\Big\}\lesssim_{\Gamma,\kappa,g,\tth} |c|^{4}\, .
    \end{equation}
    
\begin{lemma}\label{lem:coll}
{\bf (Collinear vectors in $\cV $)} For any value of surface tension   
$\kappa>0$,  gravity  $g>0$, depth 
$\tth>0$,  and any lattice $ \Gamma $, the number of collinear vectors of $ \cV$ is at most $ 2 $.
\end{lemma}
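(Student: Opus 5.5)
The plan is to reduce the statement to a convexity property of a one-variable function. Collinear resonant wave vectors are nonzero multiples of a common primitive vector $\hat\jmath$. If $j \in \cV$ and $j' \parallel j$, then $j' = t j$ for some real $t \neq 0$. By \eqref{eq:ilcono}, i.e. $c_*\cdot j = \omega(j) > 0$ for every $j\in\cV$, the case $t<0$ is excluded: it would give $c_*\cdot j' = t\,\omega(j) < 0$, while $\omega(j')>0$. So all collinear vectors of $\cV$ lie on a single ray $\{ s\hat e : s>0\}$, where $\hat e := j/|j|$.

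On this ray the resonance condition $\omega(s\hat e) = c_*\cdot (s\hat e)$ becomes $\omega(s\hat e)/s = c_*\cdot \hat e$. Since $\omega$ is radial, this reads $f(s) = \lambda$, with $\lambda := c_*\cdot\hat e > 0$ and
\[
f(s) := \frac{\omega(s)}{s} = \sqrt{\Big(\frac{g}{s} + \kappa s\Big)\tanh(\tth s)}\,, \qquad s>0 .
\]
It therefore suffices to show that, for every $\lambda>0$, the equation $f(s)=\lambda$ has at most two solutions $s>0$. The lattice plays no role beyond the fact that the resonant $s$ belong to a discrete set; the bound $2$ is a bound on solutions in $(0,\infty)$.

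The main step, and the only real obstacle, is to show that $f$ (equivalently $f^2$) is first strictly decreasing and then strictly increasing, so that every level is attained at most twice. In infinite depth this is immediate: $f^2(s) = g/s + \kappa s$ is strictly convex on $(0,\infty)$. In finite depth I would write $f^2(s) = (g + \kappa s^2)\,\frac{\tanh(\tth s)}{s}$ and try first to prove that $f^2$ has exactly one critical point, or that it is strictly monotone. Note that $f^2(s) \to g\tth$ as $s\to 0^+$ and $f^2(s)\to\infty$ as $s\to\infty$, so at small $s$ the function is not of the form $g/s$. My intended route is to set $h(s) := \tanh(\tth s)/s$, which is positive, strictly decreasing and log-convex, and to compute
\[
(f^2)' = 2\kappa s\, h + (g+\kappa s^2)\, h' .
\]
The sign of this derivative is the sign of $\phi(s) := \frac{2\kappa s}{g+\kappa s^2} + \frac{h'}{h}$. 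The claim to establish is that $\phi$ changes sign at most once, from negative to positive. For $h'/h$ I would use the explicit formula $\frac{h'}{h}(s) = \frac{2\tth}{\sinh(2\tth s)} - \frac1s$.

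If this one-sign-change argument turns out to be too delicate, the fallback is the classical approach from $2d$ gravity-capillary theory. One shows that $f^2$ is strictly convex in the variable $s$, or alternatively that $s\mapsto s f^2(s) = \omega^2(s)/s$ combined with the hyperbolic identities gives $(f^2)''>0$ wherever $(f^2)'=0$. This last property already rules out two distinct local minima, and hence a third solution. Once unimodality is established, any line $\{f=\lambda\}$ meets the graph in at most two points. These are the only possible resonant $s$ on the ray, so there are at most two collinear vectors in $\cV$.
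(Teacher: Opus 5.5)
Your reduction is the same as the paper's. The paper's proof consists only of the remark that $\omega$ is radial, together with the assertion that $t\mapsto\breve\omega(t)/t$ has at most one local minimum, so that each level is attained at most twice. Your exclusion of antiparallel vectors via $c_*\cdot j=\omega(j)>0$ is correct and makes explicit what the paper leaves implicit.

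The gap is that the only analytic content of the lemma is announced but not proved. That content is the claim that $f^2(s)=(g+\kappa s^2)\tanh(\tth s)/s$ is strictly decreasing and then strictly increasing (or simply increasing) on $(0,\infty)$. Two of the facts you intend to use for it are false:
\begin{itemize}
\item The function $h(s)=\tanh(\tth s)/s$ is not log-convex, since $\log h(s)=\log\tth-\tth^2s^2/3+O(s^4)$ is strictly concave near $s=0$.
\item The fallback ``$f^2$ is strictly convex'' fails whenever $\kappa<g\tth^2/3$, because $f^2(s)=g\tth+(\kappa\tth-g\tth^3/3)\,s^2+O(s^4)$. This is precisely the regime in which $f$ has an interior minimum, i.e.\ the case that matters.
\end{itemize}
Your other fallback, $(f^2)''>0$ at every critical point, would suffice, but it is left as vague as the main claim.

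Your primary route does close with one more computation. Set $\tau:=\kappa/(g\tth^2)$ and $x:=\tth s$. Then
\[
\phi(s)=\frac{\tth}{1+\tau x^2}\cdot\frac{x(\sinh 2x+2x)}{\sinh 2x}\,\big(\tau-R(x)\big),\qquad R(x):=\frac{\sinh 2x-2x}{x^2(\sinh 2x+2x)},
\]
so $(f^2)'(s)$ has the sign of $\tau-R(\tth s)$. Moreover, $\frac{d}{dx}\log R(x)$ has the sign of $2x^2-x\tanh x-\sinh^2x$. This is negative for $x>0$ because
\[
\cosh x\,\big(\sinh^2x+x\tanh x-2x^2\big)=\sum_{n\ge 3}\frac{\tfrac14(9^n-1)-8n^2+6n}{(2n)!}\,x^{2n}
\]
has positive coefficients (the would-be terms with $n=1,2$ vanish). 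Hence $R$ decreases strictly from $1/3$ to $0$. Therefore $(f^2)'$ changes sign at most once, and only from negative to positive, which is exactly the unimodality you need.

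A slightly cleaner variant avoids unimodality altogether. On the ray, the resonance condition is equivalent to $g+\kappa u=\lambda^2\sqrt u\,\coth(\tth\sqrt u)$ with $u=s^2$. The right-hand side is strictly concave in $u$ by the same inequality, so the affine left-hand side can meet it at most twice.
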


\begin{proof}
The dispersion relation $\omega (\xi) $ 
in \eqref{omega} 
is radial, i.e. $\omega(\xi)= 
\breve \omega(|\xi |)$. The function
 $ t\mapsto \frac{\breve \omega(t)}t$ has at most one local minimum and 
    $
            \#\big\{t > 0  \ \big| \ \frac{\breve \omega(t)}t=s \big\}\leq 2 $
            for any $ s > 0  
     $. 
\end{proof}

\paragraph{\bf Lyapunov Schmidt  decomposition.}
We decompose  
the phase space $ L^2 =  L^2 (\T^2_\Gamma, \R^2) $
equipped with the symplectic form \eqref{Def W} 
as 
\begin{equation}\label{L2L2VW}
L^2  = V \oplus W  
\end{equation}
where $V=\ker(\cL_{c_*})$ and $ W $ is its symplectic orthogonal 
\begin{equation}\label{siort}
W:= V^{\bot_{\boldsymbol{\Omega}}} = \overline{\bigoplus_{
j \in \Gamma' \setminus \cV} V_j }^{L^2} \, . 
\end{equation}
We denote by $ \Pi_V $ and $ \Pi_W= \text{Id} -\Pi_V$
the symplectic projectors onto $V$ and $W$ respectively.
According to  \eqref{L2L2VW} 
the space $X$  in \eqref{spaceX} 
admits the decomposition  
$ X = V\oplus (W\cap X) $ and  \eqref{Bifurcation problem} is equivalent to 
find a solution $ (c,v,w)\in \R^2 \times V \times (W\cap X) $ of 
\[
    \begin{cases}
     \Pi_{ V} {\mathcal F}(c,v+w) = 0 \qquad \quad \, \text{bifurcation equation}\\
        \Pi_{W\cap Y} {\mathcal F}(c,v+w)=0 \qquad \text{range equation}
    \end{cases}
\]
where $\Pi_{W\cap Y}$ and $\Pi_{V}$ are the projectors associated to the  decomposition  
$Y = V \oplus (W \cap Y) $.

The  range equation is solved by the implicit function theorem. We denote $B_r^V $ the ball of radius $r$ and center 0 in $V$.

\begin{lemma}\label{lem:range}
{\bf (Solution of the range equation)}
    There exists  an analytic function $w:B_{r}(c_*)\times B_{r}^V  \subset \R^2\times V\to W\cap X$
    defined in a neighborhood of $ (c_*,0) $ solving 
\begin{equation}\label{solution of the range equation}
        \Pi_{W\cap Y}
        {\mathcal F} (c, v+w(c,v)) = 0 \, , 
    \end{equation}
satisfying
\begin{equation}\label{properties of w(c,v)}
        w(c,0)=0\, , \quad \di_v w(c,0)=0 \, , \quad \forall c\in B_{r}(c_*) \, , 
    \end{equation}
and  
\begin{equation}\label{wequi}
w(c,\rho v)=\rho w(c, v) \, , \quad
   w(c, \tau_\theta v)=\tau_\theta 
   w(c,v) \, ,   
   \quad \forall 
   \theta \in\mathbb{T}^2_\Gamma \, .  
\end{equation} 
   Furthermore  
the range equation \eqref{solution of the range equation} is equivalent to
\begin{equation}\label{Variational Range}
    \di_u\Psi(c,v+w(c,v))[\hat w]=\boldsymbol{\Omega}(\cF(c,v+w(c,v)),\hat w)=0 \, , \quad \forall \hat w\in W\, .
\end{equation}
    \end{lemma}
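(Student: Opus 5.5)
The plan is to apply the analytic implicit function theorem to the map
\[
G : B_r(c_*)\times B_r^V\times (W\cap X)\to W\cap Y,\qquad G(c,v,w):=\Pi_{W\cap Y}\,\cF(c,v+w),
\]
near the point $(c_*,0,0)$. The steps, in order, are as follows.

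\textbf{Analyticity and the base point.} First, $\cF:\R^2\times X\to Y$ is analytic near $(c_*,0)$. This follows from the analyticity of the Dirichlet--Neumann operator on $H^{\sigma,s}$ (\cite{BMV2}, Theorem 1.2), the algebra property of $H^{\sigma,s}$ for $s>1$, and the analyticity of $\eta\mapsto \operatorname{div}(\pa_x\eta/\sqrt{1+|\pa_x\eta|^2})$ for small $\eta$. Moreover $G(c,0,0)=0$ for all $c$.

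\textbf{Invertibility of the linearization.} The partial derivative is $\di_w G(c_*,0,0)=\Pi_{W\cap Y}\cL_{c_*}|_{W\cap X}$. By \eqref{actLc}, $\cL_{c_*}$ maps each $V_j$ to itself. On $V_j$, with $j\notin\cV$ and $j\neq 0$, it acts as $(c_*\cdot j-\omega(j))$ times a rotation. On $V_0$ it acts through $-g$ (note that the zero-average condition pairs correctly with $X$ and $Y$). Hence its inverse on $W$ is the Fourier multiplier with symbol $(c_*\cdot j-\omega(j))^{-1}$.

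\textbf{Main obstacle: the gain of regularity.} The key point is that this inverse maps $H^{\sigma,s-1}_0\times H^{\sigma,s-2}$ boundedly into $H^{\sigma,s}\times H^{\sigma,s}_0$. I would check this directly in Fourier: the $\eta$-component of $\cL_c^{-1}$ applied to $(f,g)$ involves factors like $G(0)/(\omega^2-(c\cdot j)^2)$, of order $|j|/|j|^3$. Capillarity gives $\omega(j)\sim\sqrt\kappa|j|^{3/2}$, so for large $|j|$ the quantity $|c_*\cdot j-\omega(j)|$ grows like $|j|^{3/2}$. Since $\cV$ is finite, the remaining finitely many $j\notin\cV$ give a uniform lower bound $\inf_{j\notin\cV}|c_*\cdot j-\omega(j)|>0$. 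The same gain holds on $V_0$. This is the only step with real content; everything else is formal.

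\textbf{Existence and the properties \eqref{properties of w(c,v)}.} The analytic IFT now gives a unique analytic $w(c,v)$ on $B_r(c_*)\times B_r^V$, shrinking $r$ if necessary. Since $G(c,0,0)=0$, uniqueness gives $w(c,0)=0$. Differentiating at $v=0$ gives $\Pi_W\cL_c(\hat v+\di_v w[\hat v])=0$. Because $\cL_c$ preserves each $V_j$, we have $\Pi_W\cL_c\hat v=0$ for $\hat v\in V$, and invertibility on $W$ then forces $\di_v w(c,0)=0$.

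\textbf{Equivariance \eqref{wequi}.} The maps $\tau_\theta$ and $\rho$ preserve $V$ and $W$, since they act diagonally on the $V_j$ by \eqref{eq:trasab}. So they commute with $\Pi_V$ and $\Pi_W$. They also satisfy $\cF(c,\tau_\theta u)=\tau_\theta\cF(c,u)$, because $\cH$ and $\cI$ are translation invariant and $\tau_\theta$ is symplectic. Hence $\tau_\theta w(c,\tau_{-\theta}v)$ also solves the range equation, and uniqueness gives $w(c,\tau_\theta v)=\tau_\theta w(c,v)$.

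For $\rho$, note that $\rho$ is anti-symplectic, $\rho J=-J\rho$, while $\cH\circ\rho=\cH$ and $\cI\circ\rho=\cI$ (check: $\psi\pa\eta$ changes sign twice). Therefore $\cF(c,\rho u)=J\nabla(\cH+c\cdot\cI)(\rho u)=J\rho\nabla(\ldots)(u)=-\rho\cF(c,u)$. So the zero set of the range equation is again $\rho$-invariant, and uniqueness gives $w(c,\rho v)=\rho w(c,v)$.

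\textbf{The variational form \eqref{Variational Range}.} By \eqref{Def Hamiltonian vector field},
\[
\di_u\Psi(c,u)[\hat w]=\boldsymbol{\Omega}(\cF(c,u),\hat w).
\]
Since $V$ and $W$ are symplectic orthogonal and $\boldsymbol{\Omega}$ is nondegenerate on $W$, the quantity $\boldsymbol{\Omega}(\cF,\hat w)$ vanishes for all $\hat w\in W$ if and only if $\Pi_W\cF=0$.
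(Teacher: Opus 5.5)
Your proposal is correct and follows the paper's own route (the proof of Lemma \ref{lem:range} together with the Appendix): the analytic implicit function theorem for $\Pi_{W\cap Y}\cF(c,v+w)$, based on the analyticity of the Dirichlet--Neumann operator; mode-by-mode inversion of $\Pi_{W\cap Y}\cL_{c_*}|_{W\cap X}$, with the capillarity bound $|(c_*\cdot j)^2-\omega(j)^2|\approx|j|^3$ giving the gain from $Y$ to $X$; uniqueness for $w(c,0)=0$ and for the $\T^2_\Gamma\rtimes\Z_2$-equivariance; differentiation at $v=0$ for $\di_v w(c,0)=0$; and symplectic orthogonality of $V$ and $W$ for \eqref{Variational Range}. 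The only difference is cosmetic: the paper checks the regularity gain in the real symplectic coordinates by pairing the modes $\pm j$, whereas you read it off the complex Fourier symbol of $\cL_{c_*}^{-1}$, and for completeness you should also record the $\psi$-entry $(g+\kappa|j|^2)/(\omega(j)^2-(c_*\cdot j)^2)\sim|j|^{-1}$, which is the one that exactly saturates the one-derivative gain from $H^{\sigma,s-1}$ to $H^{\sigma,s}$.
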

    
\begin{proof}
The variational characterization 
\eqref{Variational Range} follows because, recalling  \eqref{Def Hamiltonian vector field}, \eqref{spaziY}, \eqref{Variational}, 
\begin{equation}\label{Variational Range0}
    \di_u\Psi(c,v+w(c,v))[\hat u]=\boldsymbol{\Omega}(\cF(c,v+w(c,v)),\hat u)=0 \, , \quad \forall \hat u\in X \, ,
\end{equation}
and the subspaces 
$ V $ and $ W $ are symplectic orthogonal, cf. \eqref{siort}. The solution of
\eqref{solution of the range equation} 
is constructed by  the analytic implicit function theorem in the Appendix.
   \end{proof}

In view of   Lemma  \ref{lem:range} 
we are reduced to 
solve the bifurcation equation
$    \Pi_V \cF (c,v+w(c,v)) = 0 $ 
where $ w(c,v)$ is the solution of \eqref{solution of the range equation}.  
This equation has still a
variational structure.

\begin{lemma}\label{lem:varia}
 {\bf (Variational 
 structure of the bifurcation equation)}
The reduced functional   $\Phi:B_{r}(c_*)\times B_{r}^V \to \R$,  
\begin{equation}\label{def reduced functional}
        \Phi(c,v):= \Psi (c, v + w(c,v)) = 
        (\cH + c\cdot \cI )(v+w(c,v))  \, , 
    \end{equation}
    is analytic and satisfies, for any   $(c,v)\in  B_r(c_*) \times  B_r^V $,
    \begin{equation}\label{symmetriesPhi}
        \Phi (c, \rho v)= \Phi (c, v) \, , \quad  
        \Phi( c, \tau_\theta v)= \Phi (c,v) \, , \quad\forall \theta\in \mathbb{T}^2_\Gamma  \, . 
    \end{equation} 
Furthermore, for any $ (c,v)\in B_{r}(c_*)\times B_{r}^V $, for any $ \hat v\in V $, 
    \begin{align}\label{X Phi}
        \di_v\Phi(c,v)[\hat v]   = 
        \di_u \Psi (c, v + w(c,v))[\hat v ] 
        & = 
        \boldsymbol{\Omega}(\Pi_{V}
        \cF (c,v+w(c,v)),\hat v)   \, .  
    \end{align}
    Therefore, for any $c\in B_{r}(c_*)$, any critical point  $v \in B_{r}^V $ of $\Phi(c,\cdot)$ gives rise to a solution $u := v+w(c,v) \in X $ of \eqref{Bifurcation problem}.
\end{lemma}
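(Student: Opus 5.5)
The statement to prove is Lemma~\ref{lem:varia}. The plan is to combine the chain rule with the range equation in its variational form \eqref{Variational Range}, and to derive the symmetries from those of $w$ in \eqref{wequi}.

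\emph{Analyticity.} $\Phi$ is the composition of the analytic map $(c,v)\mapsto (c,v+w(c,v))$, given by Lemma~\ref{lem:range}, with $\Psi$. Here $\Psi$ is analytic on a neighborhood of $\R^2\times\{0\}$ in $\R^2\times X$. Indeed $\cH$ is analytic by the analyticity of the Dirichlet--Neumann operator \cite{BMV2} and the algebra property of $H^{\sigma,s}$, while $\cI$ is a continuous quadratic form on $X$. Hence $\Phi$ is analytic.

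\emph{Symmetries \eqref{symmetriesPhi}.} By \eqref{wequi} we have $\rho v+w(c,\rho v)=\rho\,(v+w(c,v))$ and $\tau_\theta v+w(c,\tau_\theta v)=\tau_\theta(v+w(c,v))$. It then suffices to check that $\Psi(c,\cdot)$ is invariant under $\rho$ and $\tau_\theta$. For $\cH$ this is \eqref{Z2 symmetry}--\eqref{T2 symmetry}. For $\cI$, invariance under $\tau_\theta$ is immediate from \eqref{DefI} by the change of variables $x\mapsto x+\theta$. For $\rho$, if $\tilde\eta(x)=\eta(-x)$ and $\tilde\psi(x)=-\psi(-x)$, then $\tilde\psi\,\partial_{x_i}\tilde\eta(x)=(-\psi(-x))(-(\partial_{x_i}\eta)(-x))=\psi\,\partial_{x_i}\eta(-x)$, and the integral is unchanged. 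Thus $\cI\circ\rho=\cI$, so $c\cdot\cI$ is invariant and \eqref{symmetriesPhi} follows.

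\emph{Differential \eqref{X Phi}.} By the chain rule,
\[
\di_v\Phi(c,v)[\hat v]=\di_u\Psi(c,v+w(c,v))\big[\hat v+\di_v w(c,v)[\hat v]\big].
\]
Since $\di_v w(c,v)[\hat v]\in W$, the second contribution vanishes by \eqref{Variational Range}, which leaves $\di_u\Psi(c,v+w)[\hat v]$. By \eqref{Variational Range0} this equals $\boldsymbol\Omega(\cF(c,v+w),\hat v)$. Now write $\cF=\Pi_V\cF+\Pi_{W\cap Y}\cF$. The second summand is zero by \eqref{solution of the range equation}, and in any case it is symplectically orthogonal to $\hat v\in V$ by \eqref{siort}. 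This gives $\boldsymbol\Omega(\Pi_V\cF(c,v+w),\hat v)$.

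\emph{Conclusion.} Suppose $\di_v\Phi(c,v)=0$. Then $\boldsymbol\Omega(\Pi_V\cF,\hat v)=0$ for every $\hat v\in V$. Since $V$ is finite dimensional and symplectic, $\boldsymbol\Omega|_{V}$ is nondegenerate, so $\Pi_V\cF(c,v+w)=0$. Together with the range equation this gives $\cF(c,u)=0$ for $u=v+w(c,v)\in X$.

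The only delicate point is the pairing $\boldsymbol\Omega(\cF,\cdot)$ on $Y\times X$. It is well defined because $\cF\in Y$ has components in $H^{\sigma,s-1}_0\times H^{\sigma,s-2}$, while test functions lie in $X$ and, in particular, in the finite-dimensional space $V$ of trigonometric polynomials. The splitting $Y=V\oplus(W\cap Y)$ is compatible with $\boldsymbol\Omega$-orthogonality because the modes $V_j$ are mutually symplectic orthogonal (Lemma~\ref{symplectic base and coordinates}).
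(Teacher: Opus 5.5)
Your proposal is correct and follows the paper's proof. Both use the chain rule, kill the $W$-valued derivative of $w$ with the variational range equation \eqref{Variational Range}, identify $\partial_u\Psi$ with $\boldsymbol{\Omega}(\cF,\cdot)$, and use the symplectic orthogonality \eqref{siort} of $V$ and $W$. Your checks of analyticity, of $\Psi(c,\cdot)\circ\rho=\Psi(c,\cdot)$ and $\Psi(c,\cdot)\circ\tau_\theta=\Psi(c,\cdot)$ combined with \eqref{wequi}, and of the final step via nondegeneracy of $\boldsymbol{\Omega}$ on $V$ are all correct; the paper leaves these implicit.
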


\begin{proof}
Differentiating
\eqref{def reduced functional} with respect to $ v $ we have 
    $$
        \di_v\Phi(c,v)[\hat v]  = \di_u \Psi (c, v+w(c,v))[\hat v + \underbrace{\di_v w(c,v)[\hat  v]}_{\in W}]
      \stackrel{\eqref{Variational Range0}, \eqref{Variational Range}} = \boldsymbol{\Omega}({\cal F}(c,v+w(c,v)),\hat v) 
    $$
and 
\eqref{X Phi} follows since  
$ V $ and $ W $ are symplectic orthogonal, cf. \eqref{siort}.    \end{proof}

From now on, the analysis 
becomes drastically different from \cite{CN}, and crucially exploits the symmetries of $2d$ and $3d$ waves.

\paragraph{\bf Phase space decomposition along $2d$ and $3d$ waves.} For any $j\in\Gamma'\setminus\{0\}$ we denote by  
\begin{equation}\label{1dfs}
 [j]:= 
%\Z j = \big\{ k j \, | \, k \in \Z  \big\} \, , \quad 
\big\{ j'\in \Gamma' \ | \  
j'\parallel j \big\} 
\end{equation}
the $1$-dimensional 
sub-lattice of $ \Gamma' $ formed by its vectors parallel to $j $.  The lattice 
$ [j]$ is generated by a vector $ \underline{j} $ in $ [j]$
of minimal norm (which may not be $ j $)
namely
\begin{equation}\label{minimalj}
[j] = \Z \, \underline{j} \, . 
\end{equation}
Remark that the dual of $ [j] $ is the lattice 
$ 2 \pi \underline{j} |\underline j|^{-2}  \Z  $.

\begin{definition}{\bf ($2d$ and $3d$ waves)}\label{def:cX2d}
For any  $j \in\Gamma'\setminus\{0\} $  
we denote
$$
    \cX_{[j]}^{\twod}:=\overline{\bigoplus_{j' \in [j]} V_{j'}}^{L^2}=\overline{\bigoplus_{j' \in \Gamma', j'\parallel j} V_{j'}}^{L^2}
$$
the subspace of $ 2 d$ waves along the direction $[j ]$.  
A function $ u \in L^2  $ 
which does not belong to any
$ \cX_{[j]}^{\twod} $  
is referred to as a truly $ 3d$-wave. 
\end{definition}

Clearly, if two wave vectors $ j_1, j_2 \in  \Gamma' $ 
    are parallel, then $[j_1]=[j_2]$ and thus 
   $ \cX_{[j_1]}^{\twod} = \cX_{[j_2]}^{\twod} $.
The functions in $ \cX_{[j]}^{\twod}$ are really $ 2 d$-waves as described by the following lemma. 

\begin{lemma}\label{lm:2d-Waves}
{\bf (Shape of $2d$-waves)}
A $ 2 d$-wave $ u (x)   $ in $ \cX_{[j]}^{\twod}$ is
$ 2 \pi \underline j | \underline j|^{-2}$-periodic in space where $\underline j $
is the generator of the $1$-d lattice  $[j] $ defined in 
\eqref{minimalj}, i.e.
\begin{equation}\label{eq:periodu}
u (x + 2 \pi \underline j | \underline j|^{-2} ) = u (x)  \, ,
\quad \forall x \in \R^2 \, , 
\end{equation}
and it is constant along the directions orthogonal to the wave vector $ j $, namely
\begin{equation}\label{eq:orto}
u(x) = u \big( 
(x \cdot \hat\jmath )  \hat\jmath \big)  \quad \text{where} \quad  
\hat\jmath := j |j|^{-1} \, , 
\quad \forall x \in \R^2 \, . 
\end{equation}
\end{lemma}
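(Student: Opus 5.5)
The plan is to work directly with the Fourier expansion, using the symplectic coordinates of Lemma~\ref{symplectic base and coordinates}. By Definition~\ref{def:cX2d}, a function $u\in\cX^{\twod}_{[j]}$ is the $L^2$-limit of finite combinations $\sum_{j'\in[j]}\alpha_{j'}v_{j'}^{(1)}+\beta_{j'}v_{j'}^{(2)}$. By \eqref{symplectic base}, each basis vector $v_{j'}^{(1)},v_{j'}^{(2)}$ is a vector of trigonometric functions of $j'\cdot x$ alone. So I will first prove both properties \eqref{eq:periodu} and \eqref{eq:orto} for each single basis element, then for finite sums by linearity, and finally pass to the $L^2$-closure. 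Both properties hold as $L^2$ identities, that is, a.e. For the pointwise statement as written one should regard $u$ as continuous; this is automatic in the spaces $X$ used later, since there the series converges uniformly.

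For the periodicity, write $j'=m\,\underline j$ with $m\in\Z$, using \eqref{minimalj}. Then
\[
j'\cdot\bigl(x+2\pi\underline j|\underline j|^{-2}\bigr)=j'\cdot x+2\pi m,
\]
so $\cos(j'\cdot x)$ and $\sin(j'\cdot x)$ are unchanged by this shift. The same holds for $v_{j'}^{(1)}$ and $v_{j'}^{(2)}$, and for $j'=0$ the basis vectors are constants. Hence every finite combination satisfies \eqref{eq:periodu}.

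For the invariance along the orthogonal directions, note that $j'\parallel j$ means $j'=\lambda\hat\jmath$ for some $\lambda\in\R$. Therefore
\[
j'\cdot x=\lambda(x\cdot\hat\jmath)=j'\cdot\bigl((x\cdot\hat\jmath)\hat\jmath\bigr),
\]
which gives \eqref{eq:orto} for each basis vector, and hence for finite sums.

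It remains to pass to the limit. The translation $\tau_{-\theta}$ with $\theta=2\pi\underline j|\underline j|^{-2}$ is an isometry of $L^2(\T^2_\Gamma)$; note that $\theta$ need not lie in $\Gamma$, but translation by any vector of $\R^2$ is still an $L^2$ isometry. So the identity $\tau_{-\theta}u_N=u_N$ for the partial sums $u_N$ passes to the limit $u$.

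For \eqref{eq:orto} the limit is the main obstacle, because restriction to the line $\R\hat\jmath$ is not $L^2$-continuous. I would instead use the equivalent formulation $u(x+s\hat\jmath^\perp)=u(x)$ for all $s\in\R$. This follows for each basis vector from $j'\cdot\hat\jmath^\perp=0$, and it is preserved under $L^2$-limits because each map $\tau_{s\hat\jmath^\perp}$ is an isometry. For continuous $u$ it yields \eqref{eq:orto} pointwise, by taking $s=-x\cdot\hat\jmath^\perp$.
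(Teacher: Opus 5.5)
Your proposal is correct and follows the same route as the paper. You expand $u$ in the basis $v_{j'}^{(1)},v_{j'}^{(2)}$ with $j'\in[j]=\Z\,\underline j$ and check both properties termwise, using $j'\cdot\underline j\,|\underline j|^{-2}\in\Z$ and $j'\cdot\hat\jmath^\perp=0$; the paper's proof does exactly this. The one addition is your passage to the $L^2$-closure through translation isometries, together with reading \eqref{eq:orto} pointwise only for continuous $u$, which makes explicit a limiting step the paper leaves implicit.
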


\begin{proof}
Let $ u   \in \cX_{[j]}^{\twod}$. 
The periodicity property \eqref{eq:periodu} follows by  the Fourier series expansion   
\begin{equation}\label{Fouux}
        u(x)=\sum_{j' \in [j]} \alpha_{j'}\, v_{j'}^{(1)}(x) +\beta_{j'}\, v_{j'}^{(2)} (x) \quad \text{with} \ 
        v_{j'}^{(1)} \, , v_{j'}^{(2)} 
        \ \text{in} \ 
        \eqref{symplectic base} \, , 
\end{equation}
since 
$ j' \cdot \underline j | \underline j|^{-2} \in \Z $ for any $ j' \in [j] = \Z \underline j  $.  We deduce 
\eqref{eq:orto}, by \eqref{Fouux},  \eqref{symplectic base}, decomposing $ x = (x \cdot \hat\jmath) \hat\jmath + 
(x \cdot \hat\jmath^\bot) \hat\jmath^\bot $  and since  $j' \cdot \hat\jmath^\bot = 0 $ for 
any $j' \in [j] $. 
\end{proof}

A $2d$-wave $ u  $ 
may be characterized  
 in terms of the isotropy group 
\begin{equation}\label{stabilu}
            (\T_\Gamma^2)_u := 
            \Big\{ \theta \in \T^2_\Gamma \ |\ \tau_\theta u= u  \Big\}
            = 
            \bigcap_{j \in A} 
            \big\{ \theta \in 
            \T^2_\Gamma \ |\ e^{\im j \cdot \theta } = 1 \big\}
\end{equation}
    where $ A := \big\{ j \in \Gamma' \setminus \{0\} \ | \ \alpha_j (u) + \im \beta_j (u)\neq 0    \big\}  $ is the set of ``active frequencies" of $ u  $, 
cf.~\eqref{abu}, 
and  the subgroup    \begin{equation}\label{eq:def H[j]}
        {\mathtt T}_{[j]}:= 
        \bigcap_{j'\in [j]}\big\{\theta\in \T^2_\Gamma
        \ | \ e^{\im j'\cdot \theta}=1\big\} 
        \, .
    \end{equation}
     \begin{lemma}\label{ortoges}
    {\bf (Characterization of $2d$-waves)}
    Any function 
            $u \in L^2  $ is a 
            $ 2 d$-wave along the direction $ [j] $, i.e.  $ u\in \cX_{[j]}^{\twod} $, if and only if 
    \begin{equation}\label{One dimensional iff simmetric}
         \tau_\theta u=u \, , \quad \forall \theta\in 
                {\mathtt T}_{[j]} \, , \quad \text{i.e.} \quad {\mathtt T}_{[j]}\subseteq (\T_\Gamma^2)_u\, .
            \end{equation}
    \end{lemma}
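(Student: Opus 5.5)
The plan is to reduce both implications to a Fourier-coefficient computation, using the transformation rule \eqref{eq:trasab} for the coordinates $\alpha_{j'}+\im\beta_{j'}$ under translations together with the description \eqref{stabilu} of the isotropy group.

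For the direction ``$\Rightarrow$'', let $u\in\cX^{\twod}_{[j]}$ and write the expansion \eqref{Fouux}, so that only $j'\in[j]$ have non-zero coordinates. For $\theta\in{\mathtt T}_{[j]}$, \eqref{eq:trasab} gives
\[
\alpha_{j'}(\tau_\theta u)+\im\beta_{j'}(\tau_\theta u)=e^{-\im j'\cdot\theta}\big(\alpha_{j'}(u)+\im\beta_{j'}(u)\big).
\]
By the definition \eqref{eq:def H[j]}, $e^{-\im j'\cdot\theta}=1$ for every $j'\in[j]$. All coordinates of $\tau_\theta u$ and $u$ therefore coincide, and the completeness statement in Lemma \ref{symplectic base and coordinates}-(ii) gives $\tau_\theta u=u$.

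For the direction ``$\Leftarrow$'', assume ${\mathtt T}_{[j]}\subseteq(\T^2_\Gamma)_u$. Applying \eqref{eq:trasab} again, every active frequency $j'\in A$ of $u$ must satisfy $e^{\im j'\cdot\theta}=1$ for all $\theta\in{\mathtt T}_{[j]}$. It then suffices to prove the following lattice statement: if $k\in\Gamma'$ and $e^{\im k\cdot\theta}=1$ for all $\theta\in{\mathtt T}_{[j]}$, then $k\in[j]$.

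To prove it, note that ${\mathtt T}_{[j]}=\{\theta\in\T^2_\Gamma \mid \underline{j}\cdot\theta\in2\pi\Z\}$, using \eqref{minimalj}. Since $\underline j$ is primitive in $\Gamma'$, I would pick a basis $\underline j, m$ of $\Gamma'$ and the dual basis of $\Gamma$ (with the $2\pi$ normalization of \eqref{eq:Gamma'}). In these coordinates ${\mathtt T}_{[j]}$ is the closed one-dimensional subtorus $\{\theta \mid \underline j\cdot\theta\in 2\pi\Z\}$, on which $m\cdot\theta$ takes all values modulo $2\pi$.

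Now write $k=p\underline j+q m$ with $p,q\in\Z$. On ${\mathtt T}_{[j]}$ we have $e^{\im k\cdot\theta}=e^{\im q\, m\cdot\theta}$, so the character is trivial on the subtorus only if $q=0$, that is $k\in\Z\underline j=[j]$. Consequently all active frequencies of $u$ lie in $[j]$, and $u\in\cX^{\twod}_{[j]}$.

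The main point to check is this character/annihilator argument. In particular one must verify that ${\mathtt T}_{[j]}$ is connected, namely a full circle rather than a finite group. This uses the primitivity of $\underline j$, which holds because $\underline j$ has minimal norm in $[j]$. Everything else is the direct Fourier bookkeeping above.
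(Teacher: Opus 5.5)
Your proof is correct and takes the same route as the paper: compare the Fourier coordinates of $u$ and $\tau_\theta u$ via \eqref{eq:trasab}, and observe that invariance under ${\mathtt T}_{[j]}$ is equivalent to the vanishing of all coefficients off $[j]$. The paper asserts without proof the lattice fact that every $k\in\Gamma'\setminus[j]$ gives a character $e^{\im k\cdot\theta}$ that is nontrivial on ${\mathtt T}_{[j]}$; you justify it by extending the primitive vector $\underline j$ to a basis of $\Gamma'$, which fills that gap correctly.
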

    
    \begin{proof}
    We Fourier  decompose 
$
    u=\sum_{j'\in \Gamma'} \alpha_{j'}(u)\, v_{j'}^{(1)}+\beta_{j'}(u)\, v_{j'}^{(2)} $
    as in 
    \eqref{coordiantes}.    
By \eqref{eq:trasab}
we have 
$$
            \alpha_{j'}(u-\tau_\theta u)+\im \beta_{j'}(u-\tau_\theta u)=\big(\alpha_{j'}(u)+\im \, \beta_{j'}(u)\big)\big(1-e^{-\im \, {j'}\cdot \theta}\big) \, . 
$$
        Thus $\tau_\theta u=u$ for any $\theta\in {\mathtt T}_{[j]}$ is equivalent to   
        $
            \alpha_{j'}(u)+\im \, \beta_{j'}(u)=0
      $  
        for any $ j' \in \Gamma' $, $j'\not\in [j]$, namely  $ j' \not \parallel j $.
        Recalling  
        Definition \ref{def:cX2d}, this means that 
        $ u \in \cX_{[j]}^{\twod} $.
    \end{proof}

\paragraph{\bf Decomposition of $ V $ along resonant wave directions.}
We  introduce on the kernel $V$ in \eqref{defKer} the  scalar product  
\begin{equation}\label{eq:def scalar product}
    \langle v, v_1 \rangle =\Re\Big\{\sum_{j\in\cV} |j| \, z_j(v)\overline{z_j(v_1)}\Big\} \, , \qquad \|v \|^2 := \sum_{j\in\cV} |j| \, |z_j(v)|^2 \, , 
\end{equation}
where $z_j(v) =\alpha_j(v)+\im \beta_j(v)$. By  \eqref{eq:trasab}
it is $ \T^2_\Gamma\rtimes \Z_2 $-invariant, i.e. 
$$ 
\langle \tau_\theta v, \tau_\theta v_1\rangle=\langle v,v_1\rangle \, , \quad 
\forall 
\theta\in \T^2_\Gamma \, ,  \quad \langle \rho v, \rho v_1\rangle=\langle v, v_1\rangle \, . 
$$
In the sequel all the gradients in the variable $v$ are taken with respect to this scalar product.
The spaces $\{V_j\}_{j\in\cV}$ are pairwise orthogonal with 
respect to the scalar product \eqref{eq:def scalar product} and we denote $ \pi_{j}:V\to V_j $ the  orthogonal projectors associated to the decomposition \eqref{defKer}.

We also  consider the orthogonal decomposition of $ V $, 
\[
V = \bigoplus_{\hat\jmath \in \cD} V_{\hat\jmath} \, , 
\]
where $ \cD $ is the set of {\it resonant wave directions}
\begin{equation}\label{decokerj}
\cD:= \Big\{ \hat\jmath  \in S^1 \ | \  j \in \cV \Big\}   \, , \quad \hat\jmath = j  |j|^{-1} \, , 
\end{equation}
and 
    \begin{equation}\label{Def piJ}
            V_{\hat\jmath} := V \cap \cX_{[j]}^{\twod} =\bigoplus_{j'\in \cV \cap [j] 
            %j'\parallel \hat\jmath
            } V_{j'}  
    \end{equation} 
is 
the subspace  of $ V $  formed by the $ 2 d$-waves along $ [j] $.
For any resonant wave direction $ {\hat\jmath} \in \cD $, we denote the corresponding orthogonal projectors
\begin{equation}\label{Def P}
        \Pi_{\hat\jmath}:V\to V_{\hat\jmath} \, , \ \Pi_{\hat\jmath} = 
        \sum_{j' \in \cV \cap [j]} \pi_j \, , 
        \quad 
        \Pi_{\hat\jmath}^\bot:= \text{Id} -\Pi_{\hat\jmath}=\sum_{\hat\jmath' \in \cD\setminus \hat\jmath} \Pi_{\hat\jmath'} \,  .
    \end{equation}
For any $ v \in V $ it results 
$\| v \|^2 = \sum_{\hat\jmath \in \cD} \| \Pi_{\hat\jmath} v \|^2 $.   We denote 
\begin{equation}\label{V2d}
{\bf V}^{\twod} := \bigcup_{\hat\jmath \in \cD} V_{\hat\jmath}
\, , \qquad 
V_{\hat\jmath} = \big\{ v \in V \ | \ 
\Pi_{\hat\jmath}^\bot v = 0  \big\} \, , 
\end{equation}
the union of subspaces in $ V $  formed by $ 2 d$-waves. 
    \begin{remark}{\bf (Resonant wave  directions)}\label{rm:j hat}
        Since the set \eqref{Def cV} of resonant wave vectors $\cV$ is contained in the half-plane \eqref{eq:ilcono}, there is a $1-1$ correspondence between the resonant wave directions $\cD=\{\hat\jmath \  | \ j\in \cV\}$ and the lattices $\{[j] \ | \ j\in \cV\}$ in \eqref{1dfs}.
        Furthermore, in view of Lemma \ref{lem:coll}, for each resonant wave direction $\hat\jmath$,  the set $ [j]\cap \cV$ consists of 
$ 1 $ vector or $ 2 $ collinear vectors, and   
$ V_{\hat\jmath }$
is respectively   $ V_j $ 
% ($2d$ non-resonant case)  
or has 
dimension $ 4 $.  
%($2d$ resonant case).
    \end{remark}

 \begin{remark}\label{rem:Vinv}
By Lemma \ref{ortoges}
an equivariant function $ f : V \to V $ 
maps each subspace $ V_{\hat\jmath} \to V_{\hat\jmath}  $,  for any 
$ \hat\jmath \in \cD  $,  and $ V \setminus {\bf V}^{\twod}
\to V \setminus {\bf V}^{\twod} $. 
\end{remark}

  In view of \eqref{Momuntum in coordinates} and \eqref{eq:def scalar product} the momentum is  
    \begin{equation}\label{eq:cI}
        \cI(v)=-\frac12\sum_{j\in \cV} \hat\jmath \|\pi_{j} v\|^2=-\frac12\sum_{\hat\jmath \in \cD} \hat\jmath \|\Pi_{\hat\jmath} v\|^2
    \end{equation}
    where $\hat\jmath=j |j|^{-1} = (\hat\jmath_1, \hat\jmath_2)^\top $ and therefore 
    \begin{equation}\label{eq:Nabla cI}
        \nabla_v\cI(v):=\begin{pmatrix}
            \nabla_v \cI_1(v)\\
            \nabla_v \cI_2(v)
        \end{pmatrix} =
        -\sum_{\hat\jmath \in \cD} \begin{pmatrix}
            \hat\jmath_1\\
            \hat\jmath_2
        \end{pmatrix} \, \Pi_{\hat\jmath} v \, , \quad 
        \Pi_{\hat \jmath}\nabla_v \cI(v)
        =-\begin{pmatrix}
            \hat\jmath_1\\
            \hat\jmath_2
        \end{pmatrix} \Pi_{\hat\jmath}v
        \, ,\quad \forall 
        \hat\jmath \in \cD \, .
    \end{equation}
    Note that $\nabla_v\cI(v)$ commutes with the projectors $\Pi_{\hat \jmath}$,  i.e. $\Pi_{\hat \jmath}\nabla_v\cI(v)=\nabla_v\cI(\Pi_{\hat \jmath}v)$.
  The following notation  will be used throughout the paper.
\begin{definition}
{\bf (Parallel and orthogonal components to $ \hat \jmath $)}
    For any 
    resonant wave direction $ \hat\jmath \in \cD $,  cf. \eqref{decokerj}, 
    we decompose any  $y\in \R^2 $
    as
 \begin{equation}\label{eq:compoc}
    y=y^{\parallel_{\hat\jmath}} \hat\jmath + 
    y^{\bot_{\hat\jmath}} \hat\jmath^\bot \, , \quad  
        y^{\parallel_{\hat\jmath}}:= y\cdot \hat\jmath \, , \
        y^{\bot_{\hat\jmath}}:=y\cdot \hat\jmath^\bot \, , \ \hat\jmath = \vect{j_1}{j_2} |j|^{-1} \, , \ \hat\jmath^\bot 
    = \vect{-j_2}{j_1} |j|^{-1}.
    \end{equation}
We set 
    \begin{equation}\label{eq:def yj}
        y^{\hat\jmath}:=\begin{pmatrix}
            y^{\parallel_{\hat\jmath}}\\
            y^{\bot_{\hat\jmath}} 
        \end{pmatrix} = Q_{\hat\jmath} y \qquad \text{where}
        \qquad Q_{\hat\jmath} := [ \hat\jmath \ | \  \hat\jmath^\bot ] \, . 
    \end{equation}
Note that $Q_{\hat\jmath} $ is an orthogonal involution,  
$ Q_{\hat\jmath}^\top = Q_{\hat\jmath}  = Q_{\hat\jmath}^{-1} $, in particular $ \det Q_{\hat\jmath} = 1 $.  
\end{definition}

By Lemma \ref{lm:2d-Waves}, any function $u (x) $ in $ \cX^{\twod}_{[j]}$ is constant along the direction $\hat\jmath^{\bot}$ and hence the Hamiltonian vector field
generated by $ \cI^{\bot_{\hat \jmath}} (u) $,   
\begin{equation}\label{momezero}
X_{\cI^{\bot_{\hat \jmath}}}(u) \stackrel{\eqref{XI}} 
= (\hat\jmath^\bot \cdot \pa_x) u =0 \, , \quad \forall 
u\in \cX^{\twod}_{[j]} \, ,
\end{equation}
vanishes on the $ 2d$-waves along the direction $ [j]$. 
By \eqref{Momuntum in coordinates}  and 
\eqref{momezero} 
we also deduce
\begin{equation}\label{momezero2}
    \cI^{\bot_{\hat \jmath}}(u)=0\, , \quad \forall 
u\in \cX^{\twod}_{[j]} \, .
\end{equation}
Note that, for any $ \hat \jmath \in \cD $,   
\begin{equation}\label{pibot}
\nabla_v \cI^{\bot_{\hat\jmath}  } (v) = 0  \, , \  \forall v \in V_{\hat\jmath}  \, , 
\qquad 
\Pi_{\hat \jmath} \nabla_v \cI^{\bot_{\hat\jmath}} (v) = 0 \, , \  \forall v \in V \, , 
\end{equation}
because  $\di_v\cI^{\bot_{\hat \jmath}}(v)[\Pi_{\hat \jmath}v_1]=\di_v\cI^{\bot_{\hat \jmath}}(\Pi_{\hat \jmath}v_1)[v]= {\bf \Omega}(X_{\cI^{\bot_{\hat \jmath}}}(\Pi_{\hat \jmath}v_1),v) = 0 $ by \eqref{momezero}
(or use \eqref{eq:Nabla cI}).

\begin{lemma}\label{Further properties of w(c,v)} {\bf ($2d$-wave symmetry)}
For any $ {\hat \jmath} \in \cD $, 
any $c\in B_r(c_*)$ and $v\in B_{r}^V 
\cap    V_{\hat \jmath}  $, 
the solution $ w(c, v) $ of the range equation constructed in Lemma
\ref{lem:range}
satisfies 
\begin{enumerate}
            \item[$(i)$]  $w(c,v) \in  \cX_{[j]}^{\twod} \cap X $
            is a $ 2 d $-wave along the direction ${\hat \jmath}$;
            \item[$(ii)$] \label{lem843} $w(c,v)$ is independent on the component $c^{\bot_{\hat \jmath}} $ of $ c $ orthogonal to $j$, cf. \eqref{eq:compoc}.
\end{enumerate}
\end{lemma}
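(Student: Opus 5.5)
For $(i)$ the plan is to combine the uniqueness of the implicit-function solution with the translation equivariance \eqref{wequi} and the characterization of $2d$-waves in Lemma \ref{ortoges}. Take $v\in B_r^V\cap V_{\hat\jmath}$ and $\theta\in\mathtt T_{[j]}$. By Lemma \ref{ortoges}, $\tau_\theta v=v$. Then \eqref{wequi} gives
$$\tau_\theta w(c,v)=w(c,\tau_\theta v)=w(c,v).$$
So $w(c,v)$ is fixed by the whole subgroup $\mathtt T_{[j]}$, and Lemma \ref{ortoges} again gives $w(c,v)\in\cX^{\twod}_{[j]}$. Membership in $X$ is automatic from Lemma \ref{lem:range}.

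For $(ii)$ I would show that, for $u$ a $2d$-wave along $[j]$, the map $\cF(c,u)$ depends only on $c^{\parallel_{\hat\jmath}}$. Write $c=c^{\parallel_{\hat\jmath}}\hat\jmath+c^{\bot_{\hat\jmath}}\hat\jmath^\bot$. Then
$$c\cdot\partial_x u=c^{\parallel_{\hat\jmath}}(\hat\jmath\cdot\partial_x)u+c^{\bot_{\hat\jmath}}(\hat\jmath^\bot\cdot\partial_x)u,$$
and the last term vanishes by \eqref{momezero} (equivalently Lemma \ref{lm:2d-Waves}). Hence, for $u\in\cX^{\twod}_{[j]}$ and any $c,c'$ with the same parallel component,
$$\cF(c,u)=\cF(c',u).$$

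Now fix $v\in B^V_r\cap V_{\hat\jmath}$ and two speeds $c,c'\in B_r(c_*)$ with $c^{\parallel_{\hat\jmath}}=c'^{\parallel_{\hat\jmath}}$. By $(i)$, $u=v+w(c,v)$ lies in $\cX^{\twod}_{[j]}$. The identity above then gives
$$\Pi_{W\cap Y}\cF(c',v+w(c,v))=\Pi_{W\cap Y}\cF(c,v+w(c,v))=0,$$
so $w(c,v)$ also solves the range equation \eqref{solution of the range equation} at speed $c'$.

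The main point, and the only delicate step, is to conclude $w(c,v)=w(c',v)$ from local uniqueness in the implicit function theorem of Lemma \ref{lem:range}. For this I need $w(c,v)$ to lie in the uniqueness neighbourhood of the IFT at $(c',v)$. That uniqueness holds in a fixed small ball $\{\|w\|_X<\rho\}$, uniformly for $(c',v)\in B_r(c_*)\times B_r^V$. Since $w$ is continuous with $w(c,0)=0$, shrinking $r$ if necessary gives $\|w(c,v)\|_X<\rho$. Uniqueness then yields $w(c,v)=w(c',v)$, so $w(c,v)$ depends on $c$ only through $c^{\parallel_{\hat\jmath}}$.

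One should also check that the segment joining $c$ and $c'$ stays in $B_r(c_*)$, but only the endpoints are needed, so this is not an issue. If the Appendix's IFT is set up on a ball depending on $c$, I would instead argue by the analytic continuation of the unique local solution along the line $\{c^{\parallel_{\hat\jmath}}=\mathrm{const}\}\cap B_r(c_*)$, which is connected.
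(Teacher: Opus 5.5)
Your proof is correct and follows essentially the same route as the paper: item $(i)$ comes from the equivariance \eqref{wequi} combined with Lemma \ref{ortoges}. Item $(ii)$ comes from the identity $\mathcal{F}(c,u)=\mathcal{F}(c',u)$ for $u\in\mathcal{X}^{2d}_{[j]}$ and speeds with equal component along $\hat\jmath$ (via \eqref{momezero}), together with local uniqueness of the range-equation solution; your comparison of two speeds both lying in $B_r(c_*)$ is slightly more careful than the paper's comparison with $(c\cdot\hat\jmath)\hat\jmath$, which in general lies outside $B_r(c_*)$ where $w$ is defined.
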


\begin{proof}
Item  $(i) $ follows 
  by the equivariance property
  \eqref{wequi} of  $ w ( c, \cdot ) $ and   Lemma \ref{ortoges}.  By \eqref{spaziY} and 
  \eqref{momezero} we have
\begin{equation}\label{Fcuzero}
\mathcal{F}(c, u)  =
X_{\cH + c\cdot \cI}(u)  =
X_{\cH+c^{\parallel_{\hat \jmath}}\cI^{\parallel_{\hat \jmath}}}  = 
\mathcal{F}( (c\cdot \hat\jmath)\hat\jmath , u) \, 
, \quad \forall u \in \cX_{[j]}^{\twod} \, ,  
\end{equation}
where $\cI^{\parallel_{\hat \jmath}}(v)$ is the parallel components of $\cI(v)$ according to the decomposition \eqref{eq:compoc}.
  If $v\in V_{\hat \jmath}\cap B_r^V$ 
  then $v+w(c,v) \in \cX_{[j]}^{\twod} $ by item ($i$)
  and  therefore by \eqref{Fcuzero} and \eqref{solution of the range equation} we obtain
    $$
        \Pi_{W\cap Y} 
        \mathcal{F}( (c\cdot \hat\jmath)\hat\jmath , v+w(c,v)) = \Pi_{W\cap Y} 
        \mathcal{F}(c, v+w(c,v) ) = 0 \, . 
    $$
The uniqueness of the solution of the range equation implies that 
$w(c,v)=w((c\cdot \hat\jmath)\hat\jmath ,v)$.
\end{proof}

%In order to find non-zero critical orbits of $ \Phi (c, v) $ we  use the following properties of $\Phi$. 
    \begin{proposition}{\bf (Properties of $\Phi (c, v) $)}\label{lem:expa}
    For any $(c,v)\in B_{r}(c_*)\times B_{r}^V $, 
\\[1mm]
    $ \bullet $    the reduced Hamiltonian $\Phi(c,v)$ in \eqref{def reduced functional} has the form
        \begin{equation}\label{exp1Phi}
            \Phi(c,v)=(c-c_*)\cdot \cI(v)+G_{\geq 3}(c,v)
        \end{equation}
        where    
        $G_{\geq 3}(c,v) $ is an analytic $\T^2_\Gamma\rtimes \Z_2-$invariant function 
        satisfying 
\begin{equation}\label{Gprgeq3}
\di_v^\ell G_{\geq 3} (c,0) = 0 \, , \quad 
\forall \ell = 0,1,2\, , \quad \forall c \in B_r (c_*) \, . 
\end{equation}
  $ \bullet $       For any $ {\hat \jmath} \in \cD $, 
        for any $ c \in B_{r}(c_*)  $, 
        any $ v \in B_{r}^V \cap V_{\hat \jmath}$
        the reduced Hamiltonian 
        \begin{equation}
        \label{Phi is independent on c2Q if v is in V2dj}
            \Phi(c,v) 
        =    \Phi(c^{\parallel_{\hat \jmath}} 
        \hat\jmath+c^{\bot_{\hat \jmath}}_* \hat\jmath^\bot ,v) =
        (c-c_*)^{\parallel_{\hat \jmath}} \cI^{\parallel_{\hat \jmath}} (v) + G_{\geq 3} (c^{\parallel_{\hat \jmath}} \hat\jmath+c^{\bot_{\hat \jmath}}_* \hat\jmath^\bot, v) \, , 
        \end{equation}
        is independent of $ c^{\bot_{\hat \jmath}}$, 
and   any critical point 
$v $ of 
$ \Phi(c, \cdot) : V_{\hat \jmath} \cap B_r^V \to \R $ gives rise to a $2d$ Stokes  wave  $ v + w(c,v) $ in $ \cX^{\twod}_{[j]} \cap X $.
\\[1mm]  
   $ \bullet $     The 
    reduced momentum 
    \begin{equation}\label{eq:def I(c,v)}
  {\mathtt I} : B_{r}(c_*)\times B_{r}^V \to \R  \, , 
  \quad {\mathtt I} (c,v):=\cI(v+w(c,v)) \, , 
    \end{equation} 
is analytic, $\T^2_{\Gamma}\rtimes \Z_2$-invariant  and  
    \begin{equation}\label{partial c Phi}
        {\mathtt I} (c,v) = \partial_c\Phi(c,v)  \, , \quad \forall c \in 
        B_{r}(c_*) \, ,  \  
        v \in  B_{r}^V \,      
        . 
    \end{equation}
    \end{proposition}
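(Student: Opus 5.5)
We prove the three bullets in order; the first and third are essentially Taylor expansion and an envelope-type identity, and the second combines the $2d$ symmetry of Lemma~\ref{Further properties of w(c,v)} with \eqref{Fcuzero}.

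\emph{First bullet.} Analyticity and $\T^2_\Gamma\rtimes\Z_2$-invariance of $\Phi$ are already in Lemma~\ref{lem:varia}. We write
\[
\Phi(c,v)=(\cH+c_*\cdot\cI)(v+w)+(c-c_*)\cdot\cI(v+w), \qquad w=w(c,v).
\]
Since $\cH(0)=0$ up to the irrelevant constant $\kappa$, we drop it, and we define
\[
G_{\ge3}(c,v):=\Phi(c,v)-(c-c_*)\cdot\cI(v).
\]
\begin{enumerate}[$\bullet$]
\item The function $G_{\ge 3}$ is analytic and invariant, since $\cI$ is $\T^2_\Gamma\rtimes\Z_2$-invariant: by \eqref{eq:trasab} and \eqref{Momuntum in coordinates}, $\cI$ depends only on the moduli $|z_j|$.
\item At $v=0$ we have $w(c,0)=0$, so $G_{\ge 3}(c,0)=0$.
\item The first derivative vanishes at $v=0$, because $\nablal(\cH+c\cdot\cI)(0)=0$ and $\cI$ is quadratic.
\item For the second derivative, $\di_vw(c,0)=0$ by \eqref{properties of w(c,v)}, so the $v$-Hessian of $\Phi$ at $0$ is the quadratic form $\di_u\nablal(\cH+c\cdot\cI)(0)$ restricted to $V$. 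By \eqref{defKer} this equals $(c-c_*)\cdot\cI$ on $V$, using that $\cH+c_*\cdot\cI$ has vanishing Hessian on $V=\ker\cL_{c_*}$. Hence $\di_v^2G_{\ge3}(c,0)=0$.
\end{enumerate}
This gives \eqref{exp1Phi}--\eqref{Gprgeq3}.

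\emph{Second bullet.} Let $v\in V_{\hat\jmath}$. By Lemma~\ref{Further properties of w(c,v)}, $u=v+w(c,v)\in\cX^{2d}_{[j]}$ and $w$ is independent of $c^{\bot_{\hat\jmath}}$. By \eqref{momezero2}, $\cI^{\bot_{\hat\jmath}}(u)=0$, so
\[
c\cdot\cI(u)=c^{\parallel_{\hat\jmath}}\,\cI^{\parallel_{\hat\jmath}}(u).
\]
Therefore $\Phi(c,v)$ depends on $c$ only through $c^{\parallel_{\hat\jmath}}$. We may then replace $c^{\bot_{\hat\jmath}}$ by $c_*^{\bot_{\hat\jmath}}$, and the expansion \eqref{exp1Phi} gives the stated formula, since $\cI^{\bot_{\hat\jmath}}(v)=0$ on $V_{\hat\jmath}$.

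For the critical point claim we use the symmetric-criticality argument. Let $v\in V_{\hat\jmath}$ be a critical point of $\Phi(c,\cdot)$ restricted to $V_{\hat\jmath}$. By $\T^2_\Gamma$-equivariance, $\nabla_v\Phi(c,v)$ is fixed by the stabilizer ${\mathtt T}_{[j]}$ of $v$. By Lemma~\ref{ortoges} and Remark~\ref{rem:Vinv}, it therefore lies in $V_{\hat\jmath}$. Since its component in $V_{\hat\jmath}$ vanishes by assumption, $\nabla_v\Phi(c,v)=0$. Lemma~\ref{lem:varia} then shows that $u=v+w(c,v)$ is a Stokes wave, and it lies in $\cX^{2d}_{[j]}$ by Lemma~\ref{Further properties of w(c,v)}$(i)$.

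\emph{Third bullet.} Analyticity and invariance of ${\mathtt I}$ follow from those of $w$ and $\cI$. Differentiating $\Phi(c,v)=\Psi(c,v+w(c,v))$ in $c$ gives
\[
\partial_c\Phi=\partial_c\Psi(c,u)+\di_u\Psi(c,u)[\partial_cw].
\]
The first term is $\cI(u)$. The second term vanishes by \eqref{Variational Range}, since $\partial_cw\in W$.

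The only delicate point is the second-order vanishing in the first bullet: one must check that on $V$ the quadratic part of $\cH+c\cdot\cI$ is exactly $(c-c_*)\cdot\cI$ and that $w$ contributes no quadratic term. This follows from $\di_vw(c,0)=0$.
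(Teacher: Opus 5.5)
Your proposal is correct and follows essentially the paper's own proof: you define the same $G_{\geq 3}$ and kill its Hessian at $v=0$ using $V=\ker\cL_{c_*}$ together with $\di_v w(c,0)=0$, you use Lemma~\ref{Further properties of w(c,v)} with \eqref{momezero2} to obtain the $2d$ formula, and you use the same envelope computation for $\partial_c\Phi$. The only cosmetic difference is in the critical-point claim, where you argue through the equivariance of $\nabla_v\Phi$ and Lemma~\ref{ortoges}/Remark~\ref{rem:Vinv}, whereas the paper cites Lemma~\ref{lm:AAA}, a Schur-lemma formulation of the same fact.
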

    
    \begin{proof}
    In view of 
    \eqref{def reduced functional},  
   the expansion  \eqref{exp1Phi} holds with 
        $$
            G_{\geq 3}(c,v) =c\cdot \cI(v+w(c,v))+\cH(v+w(c,v))-(c-c_*)\cdot \cI(v)\, .
        $$
Let us prove  \eqref{Gprgeq3}. By 
\eqref{properties of w(c,v)} 
we have  $G_{\geq 3}(c,0)=0$ and $\di_v G_{\geq 3}(c,0)=0$ for any $ c \in B_r(c_*)$ and 
    \begin{equation*}
        \di_v^2 G_{\geq 3}(c,0)[\hat v]^2 =\di_v^2\big(c_*\cdot \cI + \cH \big)_{|v=0} [\hat v]^2=\boldsymbol{\Omega}(\di_v X_{c_*\cdot \cI+\cH}(0)[\hat v],\hat v)\quad \forall \hat v\in V\, .
    \end{equation*}
        Since $V =\ker (\cL_{c_*})=\ker \big(\di_v X_{c_*\cdot \cI+\cH}(0)\big)$ we also deduce $\di_v^2G_{\geq 3}(c,0)=0$.
        Using \eqref{momezero2} and Lemma \ref{Further properties of w(c,v)}-$(ii)$, by \eqref{exp1Phi} we deduce  \eqref{Phi is independent on c2Q if v is in V2dj}.
        In view of Lemma \ref{lm:AAA} we have that any critical point of $\Phi(c,\cdot)_{|V_{\hat \jmath}}$ is a critical point of $\Phi(c,\cdot)$ and thus gives rise to a $2d$-Stokes waves solution.
        Finally, differentiating  \eqref{def reduced functional}
we get 
$$
\pa_c \Phi (c, v) =   
(\pa_c \Psi) (c, v+ w(c,v)) + 
\underbrace{(\di_u \Psi) (c, v+ w(c,v))[ \pa_c w(c,v) ]}_{=0 \ \text{by}\, \eqref{Variational Range},
\ \pa_c w(c,v)  \in W}= {\mathcal I}(v+ w(c,v)) = {\mathtt I}(c,v)
$$
proving \eqref{partial c Phi}. 
    \end{proof}

\paragraph{\bf $2d$-Stokes waves.} In view of Proposition \ref{lem:expa} any critical point of $\Phi(c,\cdot)_{|V_{\hat \jmath}}$ is a critical point also for $\Phi(c,\cdot )$. 
 The 
 classical  $2d$-theory is then recovered by restricting to $V_{\hat \jmath}$ :
\\[1.5mm]
{$\bullet$ \bf ($2d$ Stokes waves: non-resonant case)}
If $V_{\hat \jmath}=V_j$ is $2$ dimensional, any $2d$-Stokes wave solution in $\cX_{j}^{2d}$ is, up to translation of $\vartheta \in S^1 $,  an analytic curve of functions
\begin{equation}\label{2dstokesNR} u(\varepsilon)=\varepsilon v_j^{(1)}+w(c^{\parallel_{\hat\jmath}}_*(\varepsilon)\hat \jmath+c_*^{\bot_{\hat \jmath}}\hat \jmath^\bot,\varepsilon v_j^{(1)})
%=\varepsilon v_j^{(1)} \cO(\varepsilon^2)
\end{equation}
with observed speed
$ c_*^{\parallel_{\hat \jmath} }(\varepsilon ) \hat \jmath = 
c_* \cdot {\hat \jmath} + \cO(\varepsilon^2)$
(see e.g. \cite{BBMM}[page 16] or \eqref{le2dNR})   
and momentum 
$ \cI(u(\varepsilon))=-\tfrac12 j \varepsilon^2+\cO(\varepsilon^3) $ (cf. \eqref{Momuntum in coordinates}). 
\\[1.5mm]
$ \bullet $ {\bf ($2d$ Stokes waves: resonant)} 
% If $ j \parallel j_* $, 
In this case 
$  V_{\hat \jmath} $ is $ 4 $-dimensional. 
This is the resonant bifurcation 
problem of  $2d$-Stokes waves fully described   
in \cite{BBMM}[Thm. 1.3, 1.4].

\paragraph{\bf $3d$ Stokes waves. Non-resonant case.} \label{dim4R} 
Suppose 
\begin{equation}\label{casoNR}
\ker(\cL_{c_*}) = V = 
V_{j_*}
\oplus V_j = \Big\{ v=  \alpha_{j_*} v_{j_*}^{(1)} +
\beta_{j_*} v_{j_*}^{(2)} +
\alpha_{j} v_{j}^{(1)} +
\beta_{j} v_{j}^{(2)} \colon \alpha_{j_*}, \alpha_{j}, \beta_{j*}, \beta_{j} \in \R\Big\}
\end{equation}
where  
$ j, j_* \in \Gamma' $ satisfy 
$ \omega(j_* )=c_*\cdot j_*  $
and 
$ \omega(j )=c_*\cdot j  $ and 
$j \not \parallel j_* $. 
The symmetries \eqref{symmetriesPhi} show,
recalling 
\eqref{eq:trasab}, that 
(for simplicity we denote
$ \alpha_* = \alpha_{j_*}$
and $ \beta_* = \beta_{j_*}$), for any  
$\theta \in \R^2 $, 
\begin{equation}\label{sim3dNR} 
\begin{aligned}
& \begin{aligned}
\Phi (c, \alpha_*, &\beta_*, \alpha_j, \beta_j ) = \Phi \Big( c,  
R(-j_* \cdot  \theta)(\alpha_*, \beta_* ),
R(-j \cdot \theta)(\alpha_j, \beta_j ) \Big) \, , 
\end{aligned} \\
&  \Phi (c, \alpha_*, \beta_*, 
\alpha_j, \beta_j) = \Phi (c,  \alpha_*, - \beta_*, \alpha_j, - \beta_j) \, .  
\end{aligned}
\end{equation}
where $R(\cdot)$ is the rotation matrix 
$
R(\varphi):=\begin{psmallmatrix}
        \cos(\varphi) & -\sin(\varphi)\\
        \sin(\varphi) & \cos(\varphi)
    \end{psmallmatrix}
$.
Here we  denoted  
$ \Phi (c, v) $ equivalently as 
$ \Phi (c, (\alpha_j,\beta_j)_{
j \in {\mathcal V}}) $.
Since $ j, j_* $ are independent,  the linear map
$ \R^2 \to \R^2 $, $ \theta \mapsto 
(j_* \cdot \theta, j \cdot \theta ) $
is invertible, and therefore 
$\Phi (c, \alpha_*, \beta_*, \alpha_j, \beta_j ) $
is  a function  of 
$ \alpha_*^2 + \beta_*^2 $ and  
$ \alpha_j^2 + \beta_j^2 $ only,  
% and even in $ \beta_* $, 
for any $c $. Thus all the 
critical points of $ \Phi (c, \alpha_*, \beta_*, \alpha_j, \beta_j  ) $
 are 
obtained by rotations of  
critical points of the function 
$ (\alpha_*, \alpha_j) \mapsto 
 \Phi (c, \alpha_*, 0, \alpha_j, 0 )  $ 
of two variables only.  
By  Proposition \ref{lem:expa}, \eqref{Momuntum in coordinates}
and the symmetry \eqref{sim3dNR},
this function has the form 
\begin{equation}\label{exparedur}
\Phi (c, \alpha_*,0, \alpha_j,0)  =
- \frac12 (c-c_*) \cdot  j_* \, \alpha_*^2 - \frac12 
(c-c_*) \cdot j \, \alpha_j^2 + 
\underbrace{G_{\geq 3}(c,\alpha_*,0,\alpha_j,0)}_{=: G(c,\alpha_*^2,\alpha_j^2)} \, ,
\end{equation}
where 
$ (c,y_1,y_2) \mapsto G(c,y_1, y_2)$ 
is an analytic function 
satisfying $ G(c,0,0) = 0 $, 
$ \partial_{y_1} G(c,0,0) = 
\partial_{y_2} G(c,0,0) = 0 $ for any $ c \in B_r (c_*) $.  
We have to find zeros  of 
$$
\begin{aligned}
& - (\pa_{\alpha_*} \Phi)(c, \alpha_*,0,\alpha_j,0) =  
  (c-c_*) \cdot j_* \, \alpha_*  -  2 (\pa_{y_1} G) (c, \alpha_*^2, \alpha_j^2) \alpha_* = 0 \\
&  - (\pa_{\alpha_j} \Phi)(c, \alpha_*,0,\alpha_j,0) =    (c-c_*) \cdot j \, \alpha_j  -  2 (\pa_{y_2} G) (c, \alpha_*^2, \alpha_j^2) \alpha_j = 0
\end{aligned}
$$
or  equivalently, dividing the first equation by 
$ \alpha_* \neq 0 $ and the second one by   $ \alpha_j \neq 0 $,  
\begin{equation}\label{le2dNR}
    \begin{aligned}
  (c-c_*) \cdot j_*   -  2 (\pa_{y_1} G) (c, \alpha_*^2, \alpha_j^2)  = 0 \, , \\
  (c-c_*) \cdot j   -  2 (\pa_{y_2} G) (c, \alpha_*^2, \alpha_j^2)  = 0 \, .
  \end{aligned}
\end{equation}
Since $ j_*, j $ are independent,  
by the  implicit function theorem
there exists 
a  unique analytic solution 
$ c(\alpha_*^2, \alpha_j^2) $ of \eqref{le2dNR}, defined for any 
$ (\alpha_*, \alpha_j) $ in a small neighborhood of 
$ (0,0) $ in $ \R^2 $, satisfying 
$ c(0,0)  =  c_*  $.  If $ \alpha_* 
\alpha_j \neq 0 $ 
these  waves are truly $ 3 d$.   
If $ \alpha_j  \alpha_* = 0 $  they %  Stokes wave 
are $ 2 d $.

This result improves \cite{CN}[Theorem 4.3] since it holds in a full  neighborhood of $ 0 $,
namely also near   $ \alpha_* 
\alpha_j = 0 $. 

\smallskip

In the {\it resonant} cases 
when  
$ \dim \ker {\cL_{c_*}} \geq 6 $ the choice of the speed vector $ c $ (which a $ 2 $ dimensional vector) does not determine uniquely a solution anymore. 
In the next section   \ref{sec:cv} we choose properly $ c := c(v) $ 
as a function of $v $. Then  in Section \ref{sec:final} variational/topological  arguments will determine the existence of suitable  $ v $'s from which non-zero Stokes wave bifurcate.

\section{Construction of $ c(v) $}\label{sec:cv}

The main result of  this section is to   construct a  speed 
$ c(v) \in \R^2 $, close to 
$ c_* $, solving 
the system 
\begin{equation}\label{EQ c Phi}
\di_v \Phi (c, v) [\nabla_v \cI (v)] = 
        \begin{pmatrix}
        \di_v \Phi (c, v) [\nabla_v \cI_1 (v)]   \\
        \di_v \Phi (c, v) [\nabla_v \cI_2 (v)] 
        \end{pmatrix} = 0
        % \forall v \in B_{r'}^V 
% \setminus {\bf V}^{\twod} 
\, , 
\end{equation}
for {\it any} $ v $ in  a 
small neighborhood  $ B_{r'}^V $
of the origin, 
        where $\Phi(c,v)$ is the reduced Hamiltonian  in \eqref{def reduced functional} and $\nabla_v \cI (v) $ is defined in \eqref{eq:Nabla cI}.

\smallskip

The construction of $ c (v) $ if very different if 
$ v $ is ``far" from $  {\bf V}^{2d}$ (this is the case addressed in \cite{CN}) or    
``near" $  {\bf V}^{2d}$. 
The difficulty is that as $ v $
 approaches  $ {\bf V}^{\twod} $ the construction
 becomes singular  as the bifurcation problem tends to the $2d$ one, and 
% In order to take into account this degeneracy i
it is necessary to exploit the $ \T^2_\Gamma$-symmetry of the problem.
% To compensate for this singularity, 
% Section \ref{sec:close} 
%and it is compulsory to exploit the symmetries induced by the $ 2 d $ existence theory.
  % (Lemma \ref{tilde cQ-c*Q d tilde cQ(v)}). 
In order to state precisely the result we define  the region 
\begin{equation}\label{defFr}
{\mathcal F}_{r'} := \Big\{ 
        v\in B_{r'}^V\setminus\{0\} \ | \  
        d(v,{\bf V}^{\twod}) \geq \sqrt{\disb/4}\, \|v\|
        \Big\} \, ,\quad 
        d(v, {\bf V}^{\twod}) := \min_{{\hat \jmath} \in \cD} \| \Pi_{\hat \jmath}^\bot v \| \, , 
\end{equation}
% for some $ r' \in (0,r) $, 
and  
%$ {\mathcal N}_{r'} \setminus {\bf V}^{2d}  $ where
\begin{equation}\label{defNr}
\cN_{r'} :=\bigcup_{\hat \jmath\in\cD} \cN_{\hat \jmath}(r') \, , \quad
%\Big\{v\in B_{r'}^V \ | \ \| \Pi_{\hat \jmath}^\bot v \| \leq \sqrt{\disb/4}\, \|v\| \Big\} \, , \qquad 
{\mathcal N}_{\hat \jmath}(r')  := \Big\{ 
        v\in B_{r'}^V \ | \  
        \|\Pi_{\hat \jmath}^\bot v\| \leq \sqrt{\disb/4}\, \|v\|
        \Big\}  \, . 
\end{equation}
In \eqref{defFr}
and \eqref{defNr} we choose 
    \begin{equation}\label{eq:def b}    
        \disb:=
        \min \Big\{(\hat\jmath'\cdot \hat\jmath^\bot)^2 \ | \ j\not\parallel j' \Big\} \, .
    \end{equation}
    Note that $\disb\in (0,1]$.
    The  role of the constant $ \disb $ enters in Lemma \ref{Properties of QaQT}.

        \begin{proposition}\label{Construction of c close and away from PJ} {\bf (Construction of $ c(v) $)}
            There exist $r'\in (0,r)$  and a $\T^2_\Gamma\rtimes \Z_2$-invariant %analytic 
            function
            $                c:B_{r'}^V
            % \setminus {\bf V}^{\twod}
            \to B_r(c_*) $ 
            solving  system \eqref{EQ c Phi},  satisfying
 \begin{equation}\label{cQ-c*Q d cQ(v)}
                    |c(v)-c_*|\lesssim \| v \| \, ,  \quad \forall   v\in B_{r'}^V \, , 
            \end{equation} 
            and the following properties:
            \begin{itemize}
            \item {\bf (Solution outside ${\bf V}^{2d}$)} The function   
            $ c : B_{r'}^V
             \setminus {\bf V}^{\twod} \to B_r(c_*) $ is analytic and it is the unique solution of \eqref{EQ c Phi}.
             Moreover 
        \begin{equation}\label{|c0-c*| |dc0|}
            \|\di_v c(v)[\hat v]\|\lesssim \|\hat v\|\, , \quad \forall v\in \cF_{r'} \, , \quad \forall 
            \hat v\in V \, , 
        \end{equation}
        and for any $v\in \cN_{\hat \jmath}(r')\setminus V_{\hat \jmath}$, $\hat \jmath \in \cD$, the parallel and orthogonal components $c^{\parallel_{\hat \jmath}}(v) = c(v) \cdot \hat \jmath $ and $c^{\bot_{\hat \jmath}}(v) = c(v) \cdot \hat \jmath^\bot $
        satisfy 
            \begin{equation}\label{derivicino}
                            |\di_v c^{\parallel_{\hat \jmath}}(v) [\hat v]|\lesssim \|\Pi_{\hat \jmath} \hat v\|+\frac{\|\Pi_{\hat \jmath}^\bot v\|}{\| v \|}\|\Pi_{\hat \jmath}^\bot  \hat v \|\, , \quad 
                                |\di_v c^{\bot_{\hat \jmath}}(v)[\hat v]|\lesssim \|\Pi_{\hat \jmath} \hat v\|+\frac{\|v\|}{\| \Pi_{\hat \jmath}^\bot v \|}\|\Pi_{\hat \jmath}^\bot  \hat v \| \, . 
            \end{equation}  
             %, and  for any ${\hat \jmath}\in \cD$  the components $ c^{\parallel_{\hat \jmath}} (v)  = c(v) \cdot \hat\jmath $,  $ c^{\bot_{\hat \jmath}} (v)  = c(v) \cdot \hat\jmath^\bot  $ satisfy, for  any $\hat v\in V $, 
            %\begin{equation}
            %    \|\di_v c(v)[\hat v]\|\lesssim \|\hat v\|\, , \quad \text{if}\quad  d(v,{\bf V}^{2d})>\tfrac12 \|v\| \quad \text{and}
            %\end{equation}
    \noindent
           \item  {\bf (Solution on ${\bf V}^{2d}$)}
            for any 
    $ \hat \jmath  \in \cD  $ the function 
    $c  : 
         (V_{\hat \jmath} \cap B_{r'}^V )\setminus\{0\}\to  \R $ 
    is analytic and
    any other solution of \eqref{EQ c Phi} is of the kind $c(v)+k\hat \jmath^\bot$ for $k\in \R$.
\item 
{\bf (Regularity of $c^{\parallel_{\hat \jmath}}(v)$)}
for any $ \hat \jmath \in \cD $
   the parallel component $c^{\parallel_{\hat \jmath}}(v)  $  is  of class 
        \begin{equation}\label{c1Q U tilde c1Q}
         C(B_{r'}^{\hat \jmath})\cap C^1(B_{r'}^{\hat \jmath}\setminus\{0\})\quad \text{where}\quad B_{r'}^{\hat \jmath} := 
         (B_{r'}^V\setminus {\bf V}^{\twod})\cup (B_{r'}^V\cap V_{\hat \jmath}). 
        \end{equation}
    \end{itemize}
\end{proposition}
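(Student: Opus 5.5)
The plan is to expand the system \eqref{EQ c Phi} with \eqref{exp1Phi}, rewrite it as a $2\times 2$ fixed-point problem for $c-c_*$, and solve it by contraction. The region $\cF_{r'}$ is handled directly. On each $\cN_{\hat\jmath}(r')$ the two equations need different rescalings, justified by the $\T^2_\Gamma$-symmetry.

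\textbf{Reformulation.} Since $\di_v \cI_k(v)[\nabla_v\cI_i(v)]=\langle \nabla_v\cI_k,\nabla_v\cI_i\rangle$, formula \eqref{eq:Nabla cI} makes \eqref{EQ c Phi} equivalent to
\begin{equation*}
A(v)\,(c-c_*) + \di_v G_{\geq 3}(c,v)[\nabla_v \cI(v)]=0 \, , \qquad A(v):=\sum_{\hat\jmath\in\cD}\hat\jmath\,\hat\jmath^{\top}\,\|\Pi_{\hat\jmath}v\|^2 \, .
\end{equation*}
By \eqref{Gprgeq3}, the nonlinear term is $\cO(\|v\|^4)$ and its $c$-derivative is also $\cO(\|v\|^4)$.

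\textbf{Region $\cF_{r'}$.} On $\cF_{r'}$ at least two non-parallel directions carry mass of order $\|v\|^2$. The definition \eqref{eq:def b} of $\disb$ then gives $A(v)\geq \tfrac{\disb}{C}\|v\|^2$. The map $c\mapsto c_*-A(v)^{-1}\di_vG_{\geq3}(c,v)[\nabla_v\cI(v)]$ is therefore a contraction on a ball $|c-c_*|\leq C\|v\|$. Its fixed point is unique and analytic in $v$. Differentiating implicitly, with $\|\di_vA\|\lesssim\|v\|$, gives \eqref{|c0-c*| |dc0|}.

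\textbf{Region $\cN_{\hat\jmath}(r')$.} Here I would write everything in the frame $(\hat\jmath,\hat\jmath^\bot)$ via $Q_{\hat\jmath}$. Set $\epsilon:=\|\Pi^\bot_{\hat\jmath}v\|$. The entries of $A$ in this frame have sizes $A^{\parallel\parallel}\sim\|v\|^2$ and $A^{\parallel\bot},A^{\bot\bot}=\cO(\epsilon^2)$. Moreover $A^{\bot\bot}\geq \disb\,\epsilon^2$.

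The key symmetry input is Remark \ref{rem:Vinv} applied to the equivariant map $\nabla_vG_{\geq3}(c,\cdot)$. It maps $V_{\hat\jmath}$ into $V_{\hat\jmath}$, so $\Pi_{\hat\jmath}^\bot\nabla_v G_{\geq3}(c,v)=\cO(\|v\|^2\epsilon)$. Since $\nabla_v\cI^{\bot_{\hat\jmath}}(v)=-\sum_{\hat\jmath'\neq\hat\jmath}(\hat\jmath'\cdot\hat\jmath^\bot)\Pi_{\hat\jmath'}v$ by \eqref{pibot}, this yields
\begin{equation*}
\di_vG_{\geq3}(c,v)[\nabla_v\cI^{\bot_{\hat\jmath}}(v)]=\cO(\|v\|^2\epsilon^2) \, .
\end{equation*}
By \eqref{Phi is independent on c2Q if v is in V2dj}, $G_{\geq3}$ restricted to $V_{\hat\jmath}$ does not depend on $c^{\bot_{\hat\jmath}}$. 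Hence $\partial_{c^{\bot}}\di_vG_{\geq3}(c,v)[\nabla_v\cI^{\parallel_{\hat\jmath}}(v)]=\cO(\|v\|^3\epsilon)$.

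I would divide the $\parallel$-equation by $\|v\|^2$ and the $\bot$-equation by $\epsilon^2$. The rescaled linear part is then close to the triangular matrix with diagonal entries $1$ and $A^{\bot\bot}/\epsilon^2\geq\disb$. All remainders are small, $\cO(\|v\|)$ or $\cO(\epsilon/\|v\|)$, which is where the factor $\tfrac14$ in $\sqrt{\disb/4}$ is used. A contraction in $(c^{\parallel},c^{\bot})$ then gives a unique solution with $|c-c_*|\lesssim\|v\|$ for $v\notin V_{\hat\jmath}$. It agrees with the solution from $\cF_{r'}$ on the overlap by uniqueness.

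\textbf{On $V_{\hat\jmath}$.} There $\nabla_v\cI^{\bot}=0$, so the $\bot$-equation is void. The $\parallel$-equation is independent of $c^{\bot}$ and determines $c^{\parallel}$ analytically. I set $c^{\bot}:=c_*^{\bot}$, which explains the non-uniqueness $c(v)+k\hat\jmath^\bot$.

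\textbf{Remaining claims.} Invariance under $\T^2_\Gamma\rtimes\Z_2$ follows from uniqueness and the invariance of $\Phi$ and $\cI$. The rescaled $\parallel$-equation depends continuously on $v$ up to $\epsilon=0$, and its limit is the $V_{\hat\jmath}$-equation. This gives the regularity \eqref{c1Q U tilde c1Q}. Implicit differentiation of the rescaled system, keeping track of the weights $\|v\|^2$ and $\epsilon^2$, gives \eqref{derivicino}. The factor $\|v\|/\epsilon$ in the estimate for $c^\bot$ comes from differentiating the normalization by $\epsilon^2$.

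The main obstacle is the near-$V_{\hat\jmath}$ step. One must check carefully that every cross term, including the derivatives in $c$, gains the factor $\epsilon^2$ needed to make the rescaled system uniformly invertible. In particular, the estimate $\Pi^\bot_{\hat\jmath}\nabla_vG=\cO(\|v\|^2\epsilon)$ has to be made quantitative from equivariance and analyticity. I would obtain it by Taylor expanding $\nabla_vG(c,v)$ in $\Pi^\bot_{\hat\jmath}v$ around $\Pi_{\hat\jmath}v$.
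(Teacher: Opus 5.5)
Your route is the paper's. You invert $\cA(v)$ directly on $\cF_{r'}$. On $\cN_{\hat\jmath}(r')$ you rotate by $Q_{\hat\jmath}$ and use $\T^2_\Gamma$-invariance to gain powers of $\epsilon=\|\Pi^\bot_{\hat\jmath}v\|$; your row rescaling is equivalent to the paper's anisotropic bound on $\cA^{\hat\jmath}(v)^{-1}$. On $V_{\hat\jmath}$ you solve a scalar equation for $c^{\parallel_{\hat\jmath}}$.

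\textbf{The gap.} It lies in your bound on $\partial_{c^{\bot_{\hat\jmath}}}\tG^{\parallel_{\hat\jmath}}_{\geq3}$. This term enters the off-diagonal entry $M_{12}=\cA^{\hat\jmath}_{12}+\partial_{c^{\bot_{\hat\jmath}}}\tG^{\parallel_{\hat\jmath}}_{\geq3}$ of the linearized matrix $M=\cA^{\hat\jmath}(v)+\partial_{c^{\hat\jmath}}\tG^{\hat\jmath}_{\geq3}$. From the $c^{\bot_{\hat\jmath}}$-independence of $G_{\geq3}$ on $V_{\hat\jmath}$ you extract only first-order vanishing, namely $\cO(\|v\|^2\epsilon)$ (your $\cO(\|v\|^3\epsilon)$ after the recount below). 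This suffices for the contraction, but not for \eqref{derivicino} and \eqref{c1Q U tilde c1Q}.

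To see why, write $\di_v c^{\hat\jmath}[\hat v]=M^{-1}R$. One has $\det M\sim\|v\|^2\epsilon^2$ and $R_2=\cO(\epsilon^2\|\Pi_{\hat\jmath}\hat v\|+\|v\|\epsilon\|\Pi^\bot_{\hat\jmath}\hat v\|)$. If $M_{12}$ is only $\cO(\|v\|^2\epsilon)$, then $(M^{-1})_{12}=\cO(\epsilon^{-1})$. The product $(M^{-1})_{12}R_2$ then contributes to $\di_v c^{\parallel_{\hat\jmath}}[\hat v]$ a term of size $\|v\|\|\Pi^\bot_{\hat\jmath}\hat v\|$. This term lacks the factor $\epsilon/\|v\|$ required by the first inequality in \eqref{derivicino}, and it does not vanish as $v\to V_{\hat\jmath}$.

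For the same reason, the term $M_{12}\,\di_v c^{\bot_{\hat\jmath}}[\hat v]$ in the first row of the differentiated system would not tend to zero, since $\di_v c^{\bot_{\hat\jmath}}$ has size $\|v\|/\epsilon$. Its vanishing is exactly what is needed to identify the limit of $\di_v c^{\parallel_{\hat\jmath}}$ with the derivative of the $V_{\hat\jmath}$-solution in the $C^1$ claim (Lemma \ref{lm:C1 regularity of cparallel}). Your sentence ``this gives the regularity'' only covers continuity.

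\textbf{The missing idea.} The function $\partial_{c^{\bot_{\hat\jmath}}}\tG^{\parallel_{\hat\jmath}}_{\geq3}(c,\cdot)$ is itself $\T^2_\Gamma$-invariant and vanishes on $V_{\hat\jmath}$. By Lemma \ref{lm:AAA}, its derivative in the $\Pi^\bot_{\hat\jmath}$-directions also vanishes there. A Taylor expansion in $\Pi^\bot_{\hat\jmath}v$ then gives quadratic vanishing, $\cO(\|\Pi^\bot_{\hat\jmath}v\|^2\|v\|)$ (Lemma \ref{lm:A}-$(ii)$), as in \eqref{Partial c1 g}. With this, both off-diagonal entries of $M$ are $\cO(\epsilon^2)$ and $(M^{-1})_{12}=\cO(\|v\|^{-2})$, so both inequalities in \eqref{derivicino} and the $C^1$ argument go through.

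\textbf{Smaller corrections.}
\begin{itemize}
\item \eqref{Gprgeq3} gives only $\di_v G_{\geq3}=\cO(\|v\|^2)$, so $\tG_{\geq3}=\cO(\|v\|^3)$, not $\cO(\|v\|^4)$. Cubic invariant terms can occur under three-wave resonances, e.g.\ $j,2j\in\cV$. All your orders therefore lose one power of $\|v\|$, e.g.\ $\Pi^\bot_{\hat\jmath}\nabla_v G_{\geq3}=\cO(\|v\|\epsilon)$ and $\tG^{\bot_{\hat\jmath}}_{\geq3}=\cO(\|v\|\epsilon^2)$. This is harmless for existence and for $|c(v)-c_*|\lesssim\|v\|$.
\item Run the contraction on all of $B_r(c_*)$, where the $c$-Lipschitz constant is $\cO(\|v\|)$ uniformly. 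Then uniqueness holds in $B_r(c_*)$, as the statement intends, and not merely in a ball of radius $C\|v\|$.
\item The factor $\|v\|/\epsilon$ in the bound for $c^{\bot_{\hat\jmath}}$ comes from $\epsilon^{-2}$ times the $\|v\|\epsilon\|\Pi^\bot_{\hat\jmath}\hat v\|$ part of $R_2$. It does not come from differentiating the weight $\epsilon^{-2}$, whose contribution multiplies the equation and vanishes along the solution.
\end{itemize}
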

%  The function $c$ may not be continuous, as simple examples show.
  Proposition  \ref{Construction of c close and away from PJ}  results from the construction of $ c_{\threed} (v) $ far from $ {\bf V}^{\twod}$ 
  (Proposition \ref{Construction of c}), near ${\bf V}^{\twod}$ (Proposition \ref{c near PJ}) and on $ {\bf V}^{2d}$ (Lemma \ref{tilde cQ-c*Q d tilde cQ(v)}). 

 \subsection{Construction of $ c (v) $ far away 
 from $ {\bf V}^{\twod}$}\label{sec:far}

In this section we prove the 
following result.

\begin{proposition}\label{Construction of c}
    {\bf (Speed  $c_{\threed}(v)$ in $\cF_{r'}$) 
    %far away from $ {\bf V}^{\twod}$)
    }
    There exists $ r'\in (0,r)$ and a $\T^2_\Gamma\rtimes \Z_2$-invariant analytic function 
    $
        c_{\threed}: \cF_{r'} \to B_{r}(c_*) 
    $ 
    which is the unique solution of \eqref{EQ c Phi}, 
    satisfying \eqref{cQ-c*Q d cQ(v)} in $\cF_{r'}$ and \eqref{|c0-c*| |dc0|}.
\end{proposition}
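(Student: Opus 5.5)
We rewrite \eqref{EQ c Phi} as a $2\times 2$ system for $c$ and solve it by the implicit function theorem after a homogeneous rescaling in $v$. By \eqref{exp1Phi},
\[
\di_v\Phi(c,v)[\nabla_v\cI_i(v)]=(c-c_*)\cdot \di_v\cI(v)[\nabla_v\cI_i(v)]+\di_vG_{\geq3}(c,v)[\nabla_v\cI_i(v)] .
\]
The linear part in $c-c_*$ is governed by the $2\times2$ Gram-type matrix
\[
A(v):=\big(\langle\nabla_v\cI_i(v),\nabla_v\cI_k(v)\rangle\big)_{i,k}
=\sum_{\hat\jmath\in\cD}\hat\jmath\,\hat\jmath^{\top}\,\|\Pi_{\hat\jmath}v\|^2 ,
\]
where the formula follows from \eqref{eq:Nabla cI}. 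System \eqref{EQ c Phi} therefore reads
\[
A(v)(c-c_*)+g(c,v)=0,\qquad g(c,v)=\Big(\di_vG_{\geq3}(c,v)[\nabla_v\cI_i(v)]\Big)_{i=1,2}.
\]
By \eqref{Gprgeq3} we have $\di_vG_{\geq3}(c,v)=O(\|v\|^2)$, and $\nabla\cI$ is linear in $v$, so $g=O(\|v\|^3)$. Its $c$-derivative is also $O(\|v\|^3)$, since $\partial_c$ preserves the vanishing to order $3$ at $v=0$ uniformly in $c$.

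\emph{Key step: uniform invertibility of $A(v)$ on $\cF_{r'}$.} We will show $A(v)\geq C^{-1}\|v\|^2\,\mathrm{Id}$ for $v\in\cF_{r'}$. For a unit vector $y\in\R^2$ we have $y^\top A(v)y=\sum_{\hat\jmath}(y\cdot\hat\jmath)^2\|\Pi_{\hat\jmath}v\|^2$. Pick $\hat\jmath_0$ maximizing $\|\Pi_{\hat\jmath}v\|$, so $\|\Pi_{\hat\jmath_0}v\|^2\geq\|v\|^2/\#\cD$. Membership in $\cF_{r'}$ gives $\|\Pi^\bot_{\hat\jmath_0}v\|^2\geq(\disb/4)\|v\|^2$, hence some non-parallel $\hat\jmath_1$ has $\|\Pi_{\hat\jmath_1}v\|^2\geq \disb\|v\|^2/(4\#\cD)$. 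Since $(y\cdot\hat\jmath_0)^2+(y\cdot\hat\jmath_1)^2\geq \disb/2$ (the angle between them is bounded below via \eqref{eq:def b}), we obtain $y^\top A(v)y\gtrsim \disb^2\|v\|^2$. I expect this to be the main, if elementary, point: it is exactly where the cone condition defining $\cF_{r'}$ is used, and it fails near ${\bf V}^{2d}$.

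\emph{Fixed point and bounds.} On $\cF_{r'}$ we write the equation as $c=c_*-A(v)^{-1}g(c,v)=:T_v(c)$. Since $\|A(v)^{-1}\|\lesssim\|v\|^{-2}$, we get $|T_v(c)-c_*|\lesssim\|v\|$ and $\|\partial_cT_v\|\lesssim\|v\|$. Thus for $r'$ small, $T_v$ is a contraction of $\overline{B_{C\|v\|}(c_*)}\subset B_r(c_*)$, giving a unique $c_{\threed}(v)$ with \eqref{cQ-c*Q d cQ(v)}. For uniqueness in all of $B_r(c_*)$ we use the same estimate: any solution satisfies $|c-c_*|\lesssim\|A^{-1}\|\,|g|\lesssim\|v\|$, so it lies in the contraction ball, possibly after shrinking $r'$. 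Analyticity follows from the analytic implicit function theorem, since $A$ and $g$ are analytic and the linearization $A(v)+\partial_cg$ is invertible. Invariance under $\T^2_\Gamma\rtimes\Z_2$ follows from uniqueness. Indeed, $\cI$ and $\nabla\cI$ are equivariant and $\Phi$ is invariant by \eqref{symmetriesPhi}, so $c(\tau_\theta v)$ and $c(\rho v)$ solve the same system; the region $\cF_{r'}$ is invariant because the norm and the projectors $\Pi_{\hat\jmath}$ commute with the action.

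\emph{Derivative bound \eqref{|c0-c*| |dc0|}.} We differentiate the identity $A(v)(c(v)-c_*)+g(c(v),v)=0$ in the direction $\hat v$. The terms are controlled as follows: $\di_vA[\hat v]=O(\|v\|\|\hat v\|)$ multiplies $c-c_*=O(\|v\|)$, and $\di_vg[\hat v]=O(\|v\|^2\|\hat v\|)$. Solving for $\di_vc[\hat v]$ with the invertible operator $A+\partial_cg\gtrsim\|v\|^2$ then yields $|\di_vc[\hat v]|\lesssim\|\hat v\|$.
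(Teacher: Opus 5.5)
Your proposal is correct and follows essentially the same route as the paper: rewrite \eqref{EQ c Phi} as $\cA(v)(c-c_*)+\tG_{\geq 3}(c,v)=0$, show $\cA(v)^{-1}=\cO(\|v\|^{-2})$ on $\cF_{r'}$ by finding two non-parallel resonant directions that each carry a fixed fraction of $\|v\|^2$, solve by the contraction $c\mapsto c_*-\cA(v)^{-1}\tG_{\geq 3}(c,v)$, and differentiate the identity to obtain \eqref{|c0-c*| |dc0|}. The only cosmetic difference is that you bound the smallest eigenvalue of $\cA(v)$ using the angle constant $\disb$, whereas the paper bounds $\det\cA(v)\gtrsim\|v\|^4$ via \eqref{detAespli} and then uses the cofactor formula \eqref{lem:c away from S* estimate 3}; for a symmetric positive semidefinite $2\times 2$ matrix with trace $\|v\|^2$ these amount to the same thing.
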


%Note that for any $ v $ in $ \cF_{r'}$  in \eqref{defFr},the norms  $ \| \Pi_{\hat \jmath}^\bot v\| \sim \| v \| $  are equivalent for any $ {\hat \jmath} \in \cD $, and therefore  \eqref{|c0-c*| |dc0|} amounts to  \eqref{derivicino}.\mass{togliere 6.19}

We now prove Proposition \ref{Construction of c}. 
 In view of
\eqref{exp1Phi},  
system \eqref{EQ c Phi} is equivalent to
    \begin{equation}\label{EQ' c}
    \cA(v)(c-c_*)+ \tG_{\geq 3}(c,v)
         = 0
    \end{equation}
    where $  \cA (v) $ is  the  
    symmetric matrix
    \begin{equation}\label{Def a}
        \cA (v) :=
        \begin{pmatrix}
            \cA_{11} (v)  & \cA_{12} (v)   \\
             \cA_{12} (v)  &  \cA_{22} (v) 
        \end{pmatrix}  = 
        \begin{pmatrix}
            \langle \nabla_v \cI_1(v), \nabla_v \cI_1(v)\rangle  &
            \langle \nabla_v \cI_2(v), \nabla_v \cI_1(v)\rangle \\
            \langle \nabla_v \cI_1(v), \nabla_v \cI_2(v)\rangle  &
            \langle \nabla_v \cI_2(v), \nabla_v \cI_2(v)\rangle 
        \end{pmatrix}  
    \end{equation} 
and  
$$ 
\tG_{\geq 3}(c,v) 
:= \begin{pmatrix}
\di_v G_{\geq 3}(c,v)[\nabla_v \cI_1(v)] 
\\    
\di_v G_{\geq 3}(c,v)[\nabla_v \cI_2 (v)]
\end{pmatrix} \, . 
$$ 
The function 
$ \tG_{\geq 3} : B_r (c_*)\times B_r^V \to \R^2 $ is analytic and satisfies 
\begin{equation}\label{Ggeq3}
\di_v^\ell \tG_{\geq 3} (c,0) = 0 \, , \quad 
\forall \ell = 0,1,2\, , \quad \forall c \in B_r (c_*) \, . 
\end{equation}

    \begin{lemma} \label{Properties of a}
   The matrix  $ \cA(v)$ in \eqref{Def a} is  equal to   
        \begin{equation} \label{NablaI(v)NablaI(v) in coordinates}
            \cA(v)= \sum_{j\in \cV}
            \|\pi_{j} v\|^2\, \hat\jmath\otimes \hat\jmath = \sum_{{\hat \jmath}\in \cD}
            \|\Pi_{\hat \jmath} v\|^2
            \, \hat\jmath\otimes \hat\jmath \, ,
            \quad  \forall 
            v \in V \, , 
        \end{equation}
    where $ \hat\jmath\otimes \hat\jmath $ is the symmetric, rank $ 1 $, positive semi-definite  matrix  $\hat \jmath \otimes \hat \jmath:=\hat \jmath \hat \jmath^\top$.
    For any $v\in V$
    \begin{equation}\label{detAespli}
     \det(\cA(v))=\frac12\sum_{\hat \jmath,\hat \jmath' \in \cD} \det([\hat \jmath| \hat \jmath'])^2\, \|\Pi_{\hat \jmath} v\|^2\|\Pi_{\hat \jmath'} v\|^2
    \end{equation}
    where $[\hat\jmath| \hat \jmath']$ is the matrix with columns  $\hat \jmath$ and $\hat \jmath'$.
      The matrix $ \cA(v) $ is singular    on the set 
        \begin{equation}\label{Def S*}
            \Big\{ v \in V \ |\ \det( \cA ( v))  =0 \Big\} = {\bf V}^{\twod}  = 
            \bigcup_{{\hat \jmath} \in \cD} V_{\hat \jmath} \, ,    
        \end{equation}
      % (cf. \eqref{V2d}), 
      more precisely 
    $ \det (\cA (v)) > 0 $  on  
$ d(v,{\bf V}^{\twod}) > 0 $. 
        \end{lemma}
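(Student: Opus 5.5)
The plan is to compute $\cA(v)$ directly from the explicit gradient \eqref{eq:Nabla cI}, and then expand its determinant by a Cauchy--Binet type identity.

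\textbf{Formula for $\cA(v)$.} By \eqref{eq:Nabla cI}, $\nabla_v\cI_i(v)=-\sum_{\hat\jmath\in\cD}\hat\jmath_i\,\Pi_{\hat\jmath}v$. The subspaces $V_{\hat\jmath}$ are pairwise orthogonal for the scalar product \eqref{eq:def scalar product}, so
\[
\langle\nabla_v\cI_i(v),\nabla_v\cI_k(v)\rangle=\sum_{\hat\jmath\in\cD}\hat\jmath_i\hat\jmath_k\|\Pi_{\hat\jmath}v\|^2 .
\]
This is the $(i,k)$ entry of $\sum_{\hat\jmath}\|\Pi_{\hat\jmath}v\|^2\,\hat\jmath\hat\jmath^\top$. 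Grouping by wave vectors $j\in\cV$ gives the first expression in \eqref{NablaI(v)NablaI(v) in coordinates}. Here one uses that parallel vectors in $\cV$ share the same $\hat\jmath$, since they lie in a half plane (Remark \ref{rm:j hat}), together with $\|\Pi_{\hat\jmath}v\|^2=\sum_{j\in[j]\cap\cV}\|\pi_jv\|^2$.

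\textbf{Determinant.} Write $\cA(v)=BB^\top$, where $B$ is the $2\times\#\cD$ matrix with columns $\|\Pi_{\hat\jmath}v\|\,\hat\jmath$. The Cauchy--Binet formula gives
\[
\det(BB^\top)=\sum_{\{\hat\jmath,\hat\jmath'\}}\det[\hat\jmath|\hat\jmath']^2\,\|\Pi_{\hat\jmath}v\|^2\|\Pi_{\hat\jmath'}v\|^2 ,
\]
summed over unordered pairs. Summing instead over ordered pairs counts each pair twice, and the diagonal terms vanish because $\det[\hat\jmath|\hat\jmath]=0$. This produces the factor $\tfrac12$ in \eqref{detAespli}. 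One can also check this by hand: expanding $\det(\sum a_{\hat\jmath}\hat\jmath\hat\jmath^\top)$ bilinearly yields $\sum a_{\hat\jmath}a_{\hat\jmath'}\det[\hat\jmath|\hat\jmath']^2$ times the appropriate factor.

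\textbf{Singular set.} Distinct directions in $\cD$ are not parallel: two distinct unit vectors cannot be antiparallel, because they lie in the open half plane \eqref{eq:ilcono}. Hence $\det[\hat\jmath|\hat\jmath']^2\ge\disb>0$ for $\hat\jmath\ne\hat\jmath'$. Every term of \eqref{detAespli} is therefore nonnegative, and $\det\cA(v)=0$ if and only if $\|\Pi_{\hat\jmath}v\|\,\|\Pi_{\hat\jmath'}v\|=0$ for all $\hat\jmath\ne\hat\jmath'$. This means that at most one $\Pi_{\hat\jmath}v$ is nonzero, that is, $v\in V_{\hat\jmath}$ for some $\hat\jmath$, which is \eqref{Def S*}.

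Conversely, if $d(v,{\bf V}^{2d})>0$, then at least two projections are nonzero, and $\det\cA(v)>0$. The matrix is positive semidefinite, being a sum of the positive semidefinite rank one matrices $\hat\jmath\otimes\hat\jmath$.

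There is no real obstacle in this argument. The only point requiring care is the factor-$\tfrac12$ bookkeeping in the Cauchy--Binet expansion.
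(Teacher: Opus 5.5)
Your proof is correct and follows the same route as the paper. The formula for $\cA(v)$ comes from \eqref{eq:Nabla cI} and the orthogonality of the $V_{\hat\jmath}$, and the singular set is read off from the nonnegative terms of \eqref{detAespli}. The one difference is how you get \eqref{detAespli}: you apply Cauchy--Binet to $\cA(v)=BB^\top$, whereas the paper expands the determinant multilinearly in the columns and then symmetrizes over $(\hat\jmath,\hat\jmath')$. Your remark that distinct directions in $\cD$ cannot be antiparallel makes explicit a point the paper leaves implicit.
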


 \begin{proof}
    Since the projectors $\Pi_{\hat \jmath}$ are orthogonal, using \eqref{eq:Nabla cI}, we deduce \eqref{NablaI(v)NablaI(v) in coordinates}.
    The determinant is multilinear and skew-symmetric in the columns and the rows, so writing
    $ \hat\jmath 
    = (\hat\jmath_1, 
    \hat \jmath_2)^\top  $, we get
        $$
            \begin{aligned}
                \det(\cA(v))&=\det\Big[\sum_{\hat \jmath\in \cD} \hat \jmath_1 \hat \jmath\|\Pi_{\hat \jmath} v\|^2\Big| \sum_{\hat \jmath'\in \cD} \hat \jmath_2' \hat \jmath'\|\Pi_{\hat \jmath} v\|^2\Big]\\
                &=\sum_{\hat \jmath,\hat \jmath'\in \cD} \hat \jmath_1\hat \jmath_2'\det[\hat \jmath|\hat \jmath']\, \|\Pi_{\hat \jmath} v\|^2 \|\Pi_{\hat \jmath'} v\|^2\\
                &=\frac12 \sum_{\hat \jmath,\hat \jmath'\in \cD} \big(\hat \jmath_1\hat \jmath_2'\det[\hat \jmath|\hat \jmath']+\hat \jmath_1'\hat \jmath_2\det[\hat \jmath'|\hat \jmath]\big)\, \|\Pi_{\hat \jmath} v\|^2 \|\Pi_{\hat \jmath'} v\|^2\\
                &=\frac12\sum_{\hat \jmath,\hat \jmath'\in \cD} \det([\hat \jmath|\hat \jmath'])^2\, \|\Pi_{\hat \jmath} v\|^2 \|\Pi_{\hat \jmath'} v\|^2 
            \end{aligned}
        $$
        proving \eqref{detAespli}. 
        Therefore $\det(\cA(v))>0$ if and only if there exists two distinct resonant wave directions $\hat \jmath,\hat \jmath'\in \cD$ such that $\|\Pi_{\hat \jmath} v\|\|\Pi_{\hat \jmath'}v\|>0$. This proves 
         \eqref{Def S*},   recalling \eqref{Def piJ}, \eqref{V2d}.
    \end{proof}
    If  $v\in V$ satisfies $d(v,{\bf V}^{\twod})\geq \sqrt{\disb/4}\|v\|$, cf. \eqref{defFr},
    there exist two distinct $\hat \jmath,\hat \jmath'\in \cD$ such that
    $
        \|\Pi_{\hat \jmath}v\|^2\geq  \tfrac{\disb/4}{\#\cD-1}\|v\|^2 $ and $ \|\Pi_{\hat \jmath'}v\|^2\geq  \tfrac{\disb/4}{\#\cD-1}\|v\|^2 $. Therefore  the determinant 
        \eqref{detAespli}
        satisfies  
\begin{equation}\label{lem:c away from S* estimate 30}
\det(\cA(v))\gtrsim \|v\|^4
\end{equation}
and, using also \eqref{NablaI(v)NablaI(v) in coordinates}, 
    \begin{equation}\label{lem:c away from S* estimate 3}
     \cA(v)^{-1}=\frac{1}{\det(\cA(v))}
        \left(\begin{matrix}
           \cA_{22}(v) &-\cA_{12}(v) \\
           -\cA_{12}(v) &\cA_{11}(v) 
        \end{matrix}\right)\lesssim \frac{1}{\|v\|^2} \, , 
        \quad \forall 
        v \in \cF_{r'} \, , 
    \end{equation}
    meaning that each component is bounded by $\lesssim \tfrac{1}{\|v\|^2}$ (actually the matrix $ \cA(v) $ in   
    \eqref{NablaI(v)NablaI(v) in coordinates} has 
    an explicit inverse,   
    that however we shall not use).

System \eqref{EQ' c} is  equivalent, 
for any $ v \in B_{r}^V 
\setminus {\bf V}^{\twod} $, to find a fixed point of
    \begin{equation}\label{Def G}
         c =c_* - \cA(v)^{-1}\tG_{\geq 3}(c,v)  =: \cG(c,v) \, .  
    \end{equation}

\begin{lemma}
There exists
$r'\in(0,r)$ such that for any 
$ v\in \cF_{r'} $, 
the map  $ \cG (\cdot, v) $ is a contraction in the ball
$ B_{r}(c_*) $.  
The unique fixed point solution $ c_{\threed}(v) $ of \eqref{Def G} 
in $ B_r(c_*) $ is analytic in $ v $ and 
    satisfies \eqref{cQ-c*Q d cQ(v)} in $\cF_{r'}$ and  \eqref{|c0-c*| |dc0|}. 
\end{lemma}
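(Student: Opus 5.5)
The plan is a standard contraction argument in the ball $B_r(c_*)$, with $v\in\cF_{r'}$ as a parameter. The only delicate point is that $\cA(v)^{-1}$ blows up like $\|v\|^{-2}$, and this has to be compensated by the cubic vanishing of $\tG_{\geq 3}$.

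\textbf{Step 1: bounds on $\tG_{\geq 3}$.} By \eqref{Ggeq3}, $\tG_{\geq 3}(c,0)$ and its first two $v$-derivatives vanish. Since $\tG_{\geq 3}$ is analytic on $B_r(c_*)\times B_r^V$, Taylor's formula (after shrinking $r$ slightly for uniform bounds) gives, uniformly in $c\in B_r(c_*)$,
$$|\tG_{\geq 3}(c,v)|\lesssim \|v\|^3 ,\qquad |\partial_c \tG_{\geq 3}(c,v)|\lesssim \|v\|^3 .$$
The second bound holds because $\partial_c$ commutes with $\di_v^\ell$ at $v=0$. For the $v$-derivative, note that $\tG_{\geq 3}$ is $\di_vG_{\geq3}$ paired with $\nabla_v\cI(v)$, which is linear in $v$. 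Hence
$$|\di_v\tG_{\geq 3}(c,v)[\hat v]|\lesssim \|v\|^2\|\hat v\| .$$

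\textbf{Step 2: contraction on $\cF_{r'}$.} By \eqref{lem:c away from S* estimate 3}, $|\cA(v)^{-1}|\lesssim \|v\|^{-2}$ on $\cF_{r'}$. Therefore
$$|\cG(c,v)-c_*|\lesssim \|v\|,\qquad |\partial_c\cG(c,v)|\lesssim \|v\| .$$
Choosing $r'$ small, $\cG(\cdot,v)$ maps $\overline{B_{r/2}(c_*)}$ into itself and has Lipschitz constant at most $1/2$. Banach's fixed point theorem then gives a unique fixed point $c_{\threed}(v)$ with $|c_{\threed}(v)-c_*|\lesssim\|v\|$, which is \eqref{cQ-c*Q d cQ(v)}.

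Uniqueness in all of $B_r(c_*)$ follows because any fixed point $c$ satisfies $|c-c_*|\le |\cA^{-1}\tG_{\geq 3}|\lesssim \|v\|$, so it lies in the smaller ball. Since \eqref{EQ' c} is equivalent to \eqref{Def G} off ${\bf V}^{\twod}$, $c_{\threed}(v)$ is the unique solution of \eqref{EQ c Phi} in $B_r(c_*)$.

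\textbf{Step 3: analyticity and invariance.} $\cG$ is analytic on $B_r(c_*)\times(B_r^V\setminus{\bf V}^{\twod})$ because $\det\cA\neq0$ there. Since $I-\partial_c\cG$ is invertible at the fixed point, the analytic implicit function theorem shows that $v\mapsto c_{\threed}(v)$ is analytic.

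$\T^2_\Gamma\rtimes\Z_2$-invariance holds for the following reasons:
\begin{itemize}
\item $\cA(v)$ depends only on the norms $\|\Pi_{\hat\jmath}v\|$, which are invariant.
\item $\tG_{\geq 3}(c,\cdot)$ is invariant, because $G_{\geq3}$ is invariant and $\nabla_v\cI$ is equivariant.
\end{itemize}
So $\cG(c,gv)=\cG(c,v)$, and uniqueness of the fixed point gives $c_{\threed}(gv)=c_{\threed}(v)$.

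\textbf{Step 4: the derivative bound \eqref{|c0-c*| |dc0|}.} Differentiating the fixed-point identity gives
$$\di_v c=(I-\partial_c\cG)^{-1}\,\di_v\cG .$$
Here $(I-\partial_c\cG)^{-1}$ has norm at most $2$. The factor $\di_v\cG$ contains two kinds of terms:
\begin{itemize}
\item $\cA^{-1}\,\di_v\tG_{\geq 3}[\hat v]$, of size $\|v\|^{-2}\cdot\|v\|^2\|\hat v\|$;
\item $\di_v(\cA^{-1})[\hat v]\,\tG_{\geq 3}$, which equals $\cA^{-1}(\di_v\cA[\hat v])\cA^{-1}\tG_{\geq 3}$. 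Since $\cA$ is quadratic in $v$, $\di_v\cA[\hat v]=O(\|v\|\|\hat v\|)$, so this term is of size $\|v\|^{-2}\cdot\|v\|\|\hat v\|\cdot\|v\|^{-2}\cdot\|v\|^3$.
\end{itemize}
Both terms are $\lesssim\|\hat v\|$.

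\textbf{Main obstacle.} The point requiring care is the degree bookkeeping: it must be checked that the cubic vanishing \eqref{Ggeq3} exactly compensates the $\|v\|^{-2}$ singularity of $\cA^{-1}$ in both the contraction estimate and the derivative estimate. This relies on $\cF_{r'}$ being a cone, so that \eqref{lem:c away from S* estimate 30} holds uniformly on it.
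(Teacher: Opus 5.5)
Your proposal is correct and follows the paper's proof. The contraction comes from balancing $\cA(v)^{-1}=\cO(\|v\|^{-2})$ on $\cF_{r'}$ against $\tG_{\geq 3}(c,v)=\cO(\|v\|^3)$, and the derivative bound comes from differentiating the defining identity. The paper differentiates $\cA(v)(c_{\threed}(v)-c_*)+\tG_{\geq 3}(c_{\threed}(v),v)=0$ and inverts $\cA(v)+\partial_c\tG_{\geq 3}$, which is algebraically the same as your $(I-\partial_c\cG)^{-1}\di_v\cG$ computation. Your extra care about the closed ball $\overline{B_{r/2}(c_*)}$, uniqueness in all of $B_r(c_*)$, and invariance fills in details the paper leaves implicit.
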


\begin{proof}
    By \eqref{Ggeq3}, it results   $\tG_{\geq 3}(c,v)=\cO(\|v\|^3)$, uniformly in $ c $, and, using also \eqref{lem:c away from S* estimate 3},  
    \begin{equation}\label{lem:c away from S* estimate 4}
        \cG(c,v)=c_*+\cO(\|v\|)\, ,\quad \partial_{c} \cG(c,v)=\cO(\|v\|)\, .
    \end{equation}
    By \eqref{lem:c away from S* estimate 4} we deduce that if $r'$ is sufficiently small, for any  $ v \in B_{r'}^V $, the map  $\cG(\cdot,v)$ is a contraction 
    in $ B_r(c_*) $.
    The unique solution $ c_{\threed}(v)$ of $
    c_{\threed}(v) = \cG(c_{\threed}(v),v) $ satisfies \eqref{cQ-c*Q d cQ(v)} and is analytic 
    in $v $ since $\cG$ is analytic. 
    Differentiating the equation $\cA(v)(c_{\threed}(v)-c_*)+\tG_{\geq 3}(c_{\threed}(v) ,v)=0$ we get
    $$
        \di_v\cA(v)[\hat v](c_{\threed}(v)-c_*)+\cA(v)\di_v c_{\threed}(v)[\hat v]
        =-\di_v\tG_{\geq 3}(c_{\threed}(v),v)[\hat v]-\partial_c\tG_{\geq 3}(c_{\threed}(v),v) \di_vc_{\threed}(v)[\hat v] \, .
    $$
    Since $\di_v\cA(v)[\hat v]=\cO(\|v\|\|\hat v\|)$, $\tG_{\geq 3}(c,v)=\cO(\|v\|^3)$ and $c_{\threed}(v)-c_*=\cO(\|v\|)$ we 
    deduce that 
    $$
        \di_vc_{\threed}(v)[\hat v]=(\cA(v)+\partial_c\tG_{\geq 3}(c_{\threed}(v),v)\big)^{-1}\cO(\|v\|^2\|\hat v\|)=\cO(\|\hat v\|) 
    $$
    which is     \eqref{|c0-c*| |dc0|}.
\end{proof}

\subsection{Construction of $ c(v) $ near $ {\bf V}^{\twod}$}
\label{sec:close}

We now construct the 
solution $ c_{\threed}(v) $ of system \eqref{EQ c Phi}  
in the region $\cN_{r'}  \setminus {\bf V}^{2d}$ (cf. \eqref{defNr})
for some $ r' \in (0,r)  $ eventually smaller than in Proposition \ref{Construction of c}. 
The main difficulty is that the matrix 
$ \cA (v)$ 
becomes singular when $ v $ approaches 
some subspace $ V_{\hat \jmath} $  
of  $ 2 d$-waves.  

\begin{proposition}\label{c near PJ}{\bf (Speed $c_{\threed}(v)$ 
in $\cN_{r'}\setminus{\bf V}^{2d}$) % near $ {\bf V}^{\twod}$)
}\label{lem:cJ}
    There is $r'\in (0,r)$ and a $\T^2_\Gamma\rtimes \Z_2$-invariant  analytic function
    $
        c_{\threed} : \cN_{r'}\setminus {\bf V}^{2d} \to B_r(c_*) $ 
 which is the unique solution of \eqref{EQ c Phi}, 
  satisfying \eqref{cQ-c*Q d cQ(v)} 
   in $\cN_{r'}\setminus {\bf V}^{2d}$   and \eqref{derivicino}.  
\end{proposition}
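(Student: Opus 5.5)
We work in a fixed neighborhood $\cN_{\hat\jmath}(r')\setminus V_{\hat\jmath}$; these sets are disjoint for distinct $\hat\jmath$, because $\disb$ in \eqref{eq:def b} is chosen small. The idea is to rewrite \eqref{EQ c Phi} in the rotated coordinates $(c-c_*)^{\hat\jmath}=Q_{\hat\jmath}(c-c_*)$ of \eqref{eq:def yj}, and then rescale the orthogonal equation by the small quantity $\|\Pi_{\hat\jmath}^\bot v\|^2$. Write $v=v_\parallel+v_\bot$ with $v_\parallel:=\Pi_{\hat\jmath}v$ and $v_\bot:=\Pi_{\hat\jmath}^\bot v$. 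In the frame $(\hat\jmath,\hat\jmath^\bot)$, Lemma~\ref{Properties of a} gives
\[
Q_{\hat\jmath}\cA(v)Q_{\hat\jmath}=\|v_\parallel\|^2 e_1\otimes e_1+\sum_{\hat\jmath'\neq\hat\jmath}\|\Pi_{\hat\jmath'}v\|^2\,(Q_{\hat\jmath}\hat\jmath')\otimes(Q_{\hat\jmath}\hat\jmath').
\]
The $(1,1)$ entry is $\sim\|v\|^2$ and the $(1,2)$ entry is $O(\|v_\bot\|^2)$. The $(2,2)$ entry is comparable to $\|v_\bot\|^2$: it is at least $\disb\|v_\bot\|^2$ by the definition of $\disb$, and at most $\|v_\bot\|^2$. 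So the system has a well-conditioned parallel equation and an orthogonal equation whose linear coefficient degenerates like $\|v_\bot\|^2$.

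The key step is to show that the nonlinear term $\tG^{\bot}_{\geq3}(c,v):=\di_vG_{\geq3}(c,v)[\nabla_v\cI^{\bot_{\hat\jmath}}(v)]$ is also divisible by $\|v_\bot\|^2$. First, $\nabla_v\cI^{\bot_{\hat\jmath}}(v)$ is linear in $v$ and vanishes on $V_{\hat\jmath}$ by \eqref{pibot}; hence it equals $L v_\bot$ for a bounded linear map $L$ with range in $V_{\hat\jmath}^\bot$. It remains to show $\di_v G_{\geq3}(c,v)[Lv_\bot]=O(\|v\|^2\|v_\bot\|)\cdot\|v_\bot\|/\|v\|$, i.e. of order $\|v\|\,\|v_\bot\|^2$. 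We use the $\T^2_\Gamma$-symmetry \eqref{symmetriesPhi}, which is the mechanism for $2d$ waves. Take the infinitesimal generator $X_{\cI^{\bot_{\hat\jmath}}}=\hat\jmath^\bot\cdot\partial_x$. Since $\Phi(c,\cdot)$ is translation invariant and $G_{\geq3}$ differs from $\Phi$ by a quadratic invariant, we get $\di_vG_{\geq3}(c,v)[\hat\jmath^\bot\cdot\partial_x v]=0$. In the coordinates \eqref{eq:trasab}, $\hat\jmath^\bot\!\cdot\partial_x$ acts on $V_{j'}$ as multiplication by $\im\, j'\cdot\hat\jmath^\bot$. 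This is zero on $V_{\hat\jmath}$, and on each $V_{j'}\subset V_{\hat\jmath}^\bot$ it is the same rotation generator that appears in $\nabla\cI^{\bot}$, up to the factors $|j'|$. So the difference $Lv_\bot-\hat\jmath^\bot\!\cdot\partial_x v$ is a linear expression in $v_\bot$ alone, on which $\di_vG_{\geq3}$ is tested.

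We then Taylor expand $\di_vG_{\geq3}(c,v_\parallel+v_\bot)[\cdot]$ around $v_\parallel$. By Lemma~\ref{Further properties of w(c,v)}, $v_\parallel+w$ is a $2d$ wave, so $\di_vG_{\geq3}(c,v_\parallel)$ annihilates $V_{\hat\jmath}^\bot$. This follows from Lemma~\ref{ortoges}: the $\mathtt T_{[j]}$-equivariance of the gradient forces it into $V_{\hat\jmath}$. Consequently the zeroth-order term vanishes and the remainder is $O(\|v\|\,\|v_\bot\|)\cdot\|v_\bot\|$. This gives $\tG^\bot_{\geq3}=\|v_\bot\|^2\,\breve G(c,v)$ with $\breve G=O(\|v\|)$ and analytic off $V_{\hat\jmath}$.

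Dividing the orthogonal equation by $\|v_\bot\|^2$, the system becomes $\breve\cA(v)(c-c_*)^{\hat\jmath}+\breve G(c,v)=0$. Here $\breve\cA(v)$ has rows $(\|v\|^2\text{-size},\,O(\|v_\bot\|^2))$ and $(O(1),\,\geq\disb)$, after dividing the first row by $\|v\|^2$. It is therefore uniformly invertible, with inverse $O(1)$, while the right-hand side is $O(\|v\|)$. We then repeat the contraction argument of the far-field lemma: $\cG(c,v)=c_*-Q_{\hat\jmath}\breve\cA^{-1}\breve G$. This yields existence and uniqueness, analyticity and invariance (uniqueness plus the invariance of the equation), and \eqref{cQ-c*Q d cQ(v)}.

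The derivative bounds \eqref{derivicino} follow by implicit differentiation, keeping track of the anisotropy. Derivatives of $\|v_\bot\|^{-2}$ in the direction $\Pi^\bot\hat v$ produce factors $\|\Pi^\bot\hat v\|/\|v_\bot\|$. These affect $c^{\bot}$ with weight $\|v\|/\|v_\bot\|$, and they affect $c^{\parallel}$ only through the small coupling entry $\|v_\bot\|^2/\|v\|^2$, which gives the factor $\|v_\bot\|/\|v\|$. The \textbf{main obstacle} is the divisibility $\tG^\bot_{\geq3}=O(\|v\|\|v_\bot\|^2)$ with analytic quotient. I expect to handle it via the $\mathtt T_{[j]}$-equivariance together with the Taylor expansion in $v_\bot$ described above, and to verify that the bound holds uniformly in $c$.
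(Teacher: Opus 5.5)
Your plan largely coincides with the paper's proof. Both use the rotated system \eqref{EQ c Linear system0}, the componentwise bounds of Lemma~\ref{Properties of QaQT}, the divisibility $\tG^{\bot_{\hat\jmath}}_{\geq 3}=\cO(\|v\|\,\|\Pi^\bot_{\hat\jmath}v\|^2)$, a contraction, and implicit differentiation. Dividing the second row by $\|\Pi^\bot_{\hat\jmath}v\|^2$ is just a repackaging of the anisotropic inverse bound \eqref{lem:c near PJ estimate 3}. Your Taylor expansion around $\Pi_{\hat\jmath}v$ does prove the divisibility; it uses that $\nabla_vG_{\geq 3}(c,\Pi_{\hat\jmath}v)\in V_{\hat\jmath}$ by $\mathtt T_{[j]}$-equivariance and Lemma~\ref{ortoges}, which is the content of Lemmata~\ref{lm:AAA} and \ref{lm:A}. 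The paragraph before it is wrong and should be dropped. The gradient $\nabla_v\cI^{\bot_{\hat\jmath}}(v)=-\sum_{\hat\jmath'\in\cD}(\hat\jmath'\cdot\hat\jmath^\bot)\Pi_{\hat\jmath'}v$ acts as a real scalar on each $V_{\hat\jmath'}$. By contrast, $\hat\jmath^\bot\cdot\partial_x v=X_{\cI^{\bot_{\hat\jmath}}}(v)$ acts as a rotation on each $V_{j'}$ and is orthogonal to that gradient. So the identity $\di_vG_{\geq3}(c,v)[\hat\jmath^\bot\cdot\partial_xv]=0$ says nothing about $\tG^{\bot_{\hat\jmath}}_{\geq 3}$.

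The genuine gap is in the first estimate of \eqref{derivicino}.
\begin{itemize}
\item Implicit differentiation inverts $\breve\cA+\partial_c\breve G$, not $\breve\cA$. The $(1,2)$ entry of this matrix contains $\partial_{c^{\bot_{\hat\jmath}}}\tG^{\parallel_{\hat\jmath}}_{\geq3}/\|v\|^2$.
\item Nothing in your argument bounds this entry better than $\cO(\|v\|)$, which is what \eqref{Ggeq3} gives. You instead assume $\cO(\|\Pi^\bot_{\hat\jmath}v\|^2/\|v\|^2)$ for the ``small coupling entry''.
\item The rescaled second right-hand side is in general only bounded by $\frac{\|v\|}{\|\Pi^\bot_{\hat\jmath}v\|}\|\Pi^\bot_{\hat\jmath}\hat v\|$, for instance through $\di_v\cA^{\hat\jmath}_{22}(v)[\hat v](c-c_*)^{\bot_{\hat\jmath}}$.
\item The coupling therefore contributes to $\di_vc^{\parallel_{\hat\jmath}}(v)[\hat v]$ a term of order $\frac{\|v\|^2}{\|\Pi^\bot_{\hat\jmath}v\|}\|\Pi^\bot_{\hat\jmath}\hat v\|$, which blows up as $v\to V_{\hat\jmath}$.
\end{itemize}
So neither the claimed bound nor even boundedness of $\di_v c^{\parallel_{\hat\jmath}}$ follows, and the latter is used later in Lemmata~\ref{lm:C1 regularity of cparallel} and \ref{lm:regularity 1}. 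This is structural, not bookkeeping: if $G_{\geq3}$ depended on $c^{\bot_{\hat\jmath}}$ along $V_{\hat\jmath}$, the wild variation of $c^{\bot_{\hat\jmath}}$ would really leak into $c^{\parallel_{\hat\jmath}}$ through the first row.

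The missing ingredient is the second symmetry of $2d$ waves. By Lemma~\ref{Further properties of w(c,v)}-$(ii)$ and \eqref{momezero2}, $G_{\geq3}(c,v)$ is independent of $c^{\bot_{\hat\jmath}}$ for $v\in V_{\hat\jmath}$ (Proposition~\ref{lem:expa}). Hence $\partial_{c^{\bot_{\hat\jmath}}}\tG^{\hat\jmath}_{\geq3}$ vanishes on $V_{\hat\jmath}$. Lemma~\ref{lm:A}-$(ii)$, or your own Taylor argument, then gives $\partial_{c^{\bot_{\hat\jmath}}}\tG^{\hat\jmath}_{\geq3}=\cO(\|v\|\,\|\Pi^\bot_{\hat\jmath}v\|^2)$ in both components, as in \eqref{Partial c1 g}. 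With this bound your derivative count goes through.

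You should also state explicitly that the divisibility holds for $\partial_c\tG^{\bot_{\hat\jmath}}_{\geq3}$, since the contraction needs it. Your Taylor argument applies verbatim to the invariant function $\partial_cG_{\geq3}(c,\cdot)$.
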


In order to highlight the singularity near each $ V_{\hat \jmath}$,  ${\hat \jmath}\in \cD$, we write  system \eqref{EQ c Phi} as 
\begin{equation}\label{EQ c Phij}
    \begin{pmatrix}
        \di_v\Phi(c,v)[\nabla_v\cI^{\parallel_{\hat \jmath}}(v)]\\
        \di_v\Phi(c,v)[\nabla_v\cI^{\bot_{\hat \jmath}}(v)]
    \end{pmatrix}=0 
\end{equation}
where $ \cI^{\parallel_{\hat \jmath}}$ and $\cI^{\bot_{\hat \jmath}}$ the parallel and orthogonal components of $\cI$ as defined by \eqref{eq:compoc}. Actually 
system \eqref{EQ c Phij} is obtained applying to \eqref{EQ c Phi}  the linear orthogonal matrix $ Q_{\hat \jmath}$  in 
\eqref{eq:def yj}. 
In view of \eqref{exp1Phi} and 
$ (c-c_*) \cdot  \cI (v) = 
(c-c_*)^{\parallel_{\hat \jmath}} \cI^{\parallel_{\hat \jmath}} (v) + 
(c-c_*)^{\bot_{\hat \jmath}} \cI^{\bot_{\hat \jmath}} (v) 
$, 
system  \eqref{EQ c Phij} is equivalent to
\begin{equation}\label{EQ c Linear system0}
    \cA^{\hat\jmath}(v)(c^{\hat \jmath}-c_*^{\hat \jmath})+ \tG^{\hat \jmath}_{\geq 3} (c^{\hat \jmath},v)=0
\end{equation}
where $\cA^{\hat \jmath}(v)$ is the symmetric  matrix 
\begin{equation}\label{Def aQ}
    \cA^{\hat \jmath}(v):= 
        \begin{pmatrix}
            \cA_{11}^{\hat \jmath} (v)  & \cA_{12}^{\hat \jmath} (v)   \\
             \cA_{12}^{\hat \jmath} (v)  &  \cA_{22}^{\hat \jmath} (v) 
        \end{pmatrix}
    = 
    \begin{pmatrix}
        \langle \nabla_v \cI^{\parallel_{\hat \jmath}}(v), \nabla_v \cI^{\parallel_{\hat \jmath}}(v)\rangle  &
        \langle \nabla_v \cI^{\bot_{\hat \jmath}}(v), \nabla_v \cI^{\parallel_{\hat \jmath}}(v)\rangle \\
        \langle \nabla_v \cI^{\parallel_{\hat \jmath}}(v), \nabla_v \cI^{\bot_{\hat \jmath}}(v)\rangle  &
        \langle \nabla_v \cI^{\bot_{\hat \jmath}}(v), \nabla_v \cI^{\bot_{\hat \jmath}}(v)\rangle 
    \end{pmatrix} \, , 
\end{equation}
 the vector $ c^{\hat \jmath}-c_*^{\hat \jmath}  = (c-c_*)^j = Q_{\hat \jmath} 
(c - c_* )$ (according to  \eqref{eq:def yj}) and 
\begin{equation}\label{Def tg}
    \tG_{\geq 3}^{\hat \jmath} (c^{\hat \jmath},v):=
    \vect{ \tG_{\geq 3}^{\parallel_{\hat \jmath}} (c^{\hat \jmath},v) }{\tG_{\geq 3}^{\bot_{\hat \jmath}} (c^{\hat \jmath},v)} =
    \vect{ \di_v G_{\geq 3}(c,v)[\nabla_v \cI^{\parallel_{\hat \jmath}}(v)]}{\di_v G_{\geq 3}(c,v)[\nabla_v \cI^{\bot_{\hat \jmath}}(v)]}\, .
\end{equation}
System \eqref{EQ c Linear system0} also follows by \eqref{EQ' c} since, by 
\eqref{Def aQ},  \eqref{Def a} and  
$ Q_{\hat \jmath}^\top = Q_{\hat \jmath}^{-1} $, 
\begin{equation}\label{coniugAQ}
\cA^{\hat \jmath}(v)=Q_{\hat \jmath}\cA(v)Q_{\hat \jmath}^{-1}\, .
\end{equation} 
The components of $ \cA^{\hat \jmath} (v)$ 
%are quadratic forms in 
%$ v = \Pi_{\hat \jmath} v + \Pi_{\hat \jmath}^\bot v $
have  different sizes in $ \Pi_{\hat \jmath} v $  and 
 $ \Pi_{\hat \jmath}^\bot v $.

    \begin{lemma}{\bf (Properties of $\cA^{\hat \jmath}(v) $)}\label{Properties of QaQT}
 For any  ${\hat \jmath} \in \cD$, for any $v\in V$, 
    \begin{equation}\label{Estimate on a(v,v)}
\left(\begin{matrix}
                \|v\|^2 & \| \Pi_{\hat \jmath}^\bot v \|^2 \\
                \|\Pi_{\hat \jmath}^\bot v \|^2 & \| \Pi_{\hat \jmath}^\bot v \|^2
            \end{matrix}\right)
\geq 
   \cA^{\hat \jmath} (v) \geq 
   \left(\begin{matrix}
                \|\Pi_{\hat \jmath} v\|^2 & -\| \Pi_{\hat \jmath}^\bot v \|^2 \\
                -\|\Pi_{\hat \jmath}^\bot v \|^2 &  \disb \| \Pi_{\hat \jmath}^\bot v \|^2
            \end{matrix}\right)        
        \end{equation}     
        componentwise, 
           where  $ \disb \in (0,1] $ is defined in \eqref{eq:def b}. 
        For any $ \hat v \in V $, 
\begin{align}\label{Estimates on d(A(v+w))}
            \di_v& \cA^{\hat \jmath} (v) [\hat v]=\cO\left(\begin{matrix}
                \|\Pi_{\hat \jmath} v\|\|\Pi_{\hat \jmath} \hat v\|+\|\Pi_{\hat \jmath}^\bot v\|\|\Pi_{\hat \jmath}^\bot \hat v\| & \| \Pi_{\hat \jmath}^\bot v \|\|\Pi_{\hat \jmath}^\bot \hat v\|\\
               \| \Pi_{\hat \jmath}^\bot v \|\|\Pi_{\hat \jmath}^\bot \hat v \| & \| \Pi_{\hat \jmath}^\bot v \|\|\Pi_{\hat \jmath}^\bot \hat v\|
            \end{matrix}\right) \, . 
        \end{align}
\end{lemma}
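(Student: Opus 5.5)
The plan is to compute $\cA^{\hat \jmath}(v)$ explicitly from \eqref{coniugAQ} and \eqref{NablaI(v)NablaI(v) in coordinates}. Conjugating by $Q_{\hat \jmath}$ sends each rank-one matrix $\hat\jmath'\otimes\hat\jmath'$ to $(Q_{\hat \jmath}\hat\jmath')\otimes(Q_{\hat \jmath}\hat\jmath')$. Writing $p_{j'}:=\hat\jmath'\cdot\hat\jmath$ and $q_{j'}:=\hat\jmath'\cdot\hat\jmath^\bot$, with $p_{j'}^2+q_{j'}^2=1$, we get
\begin{equation*}
\cA^{\hat \jmath}(v)=\sum_{\hat\jmath'\in\cD}\|\Pi_{\hat\jmath'}v\|^2\begin{pmatrix} p_{j'}^2 & p_{j'}q_{j'}\\ p_{j'}q_{j'} & q_{j'}^2\end{pmatrix}.
\end{equation*}
For $\hat\jmath'=\hat\jmath$ we have $p=1$ and $q=0$. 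Hence the $(1,1)$ entry is $\|\Pi_{\hat \jmath} v\|^2+\sum_{\hat\jmath'\neq\hat\jmath}p_{j'}^2\|\Pi_{\hat\jmath'}v\|^2$, and the $(1,2)$ and $(2,2)$ entries involve only $\hat\jmath'\neq\hat\jmath$, that is, only the orthogonal component $\Pi_{\hat \jmath}^\bot v$.

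Then I will read off the componentwise bounds \eqref{Estimate on a(v,v)} using $\|v\|^2=\sum_{\hat\jmath'}\|\Pi_{\hat\jmath'}v\|^2$ and $\|\Pi_{\hat \jmath}^\bot v\|^2=\sum_{\hat\jmath'\neq\hat\jmath}\|\Pi_{\hat\jmath'}v\|^2$.
\begin{itemize}
\item Since $0\le p^2\le 1$, the $(1,1)$ entry lies between $\|\Pi_{\hat \jmath} v\|^2$ and $\|v\|^2$.
\item Since $|pq|\le 1$ (indeed $\le 1/2$), the off-diagonal entry has absolute value at most $\|\Pi_{\hat \jmath}^\bot v\|^2$.
\item Since $q_{j'}^2\le 1$, the $(2,2)$ entry is at most $\|\Pi_{\hat \jmath}^\bot v\|^2$.
\end{itemize}
For the lower bound on the $(2,2)$ entry, every $\hat\jmath'\in\cD$ with $\hat\jmath'\neq\hat\jmath$ satisfies $j'\not\parallel j$. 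This is the point that needs care: a distinct direction could a priori be $-\hat\jmath$, but that is excluded by Remark \ref{rm:j hat}, since $\cV$ lies in a half plane. By the definition \eqref{eq:def b} of $\disb$ we then have $q_{j'}^2\ge\disb$, which gives the $(2,2)$ entry $\ge\disb\|\Pi_{\hat \jmath}^\bot v\|^2$.

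For \eqref{Estimates on d(A(v+w))} I will differentiate the explicit formula. Each entry is a combination, with bounded constant coefficients, of the quadratic forms $\|\Pi_{\hat\jmath'}v\|^2$, and $\di_v\|\Pi_{\hat\jmath'}v\|^2[\hat v]=2\langle\Pi_{\hat\jmath'}v,\Pi_{\hat\jmath'}\hat v\rangle$. For $\hat\jmath'\neq\hat\jmath$ this is bounded by $\|\Pi_{\hat \jmath}^\bot v\|\|\Pi_{\hat \jmath}^\bot\hat v\|$ via Cauchy--Schwarz, summed over $\hat\jmath'$ and using orthogonality of the $\Pi_{\hat\jmath'}$. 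The single $\hat\jmath$ term appears only in the $(1,1)$ entry and gives $\|\Pi_{\hat \jmath} v\|\|\Pi_{\hat \jmath}\hat v\|$.

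Everything is routine once the conjugated formula is written down. The only subtle step is checking that every other direction is genuinely non-parallel, so that $\disb$ applies.
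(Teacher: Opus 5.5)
Your proposal is correct and follows the paper's proof essentially verbatim. You conjugate $\cA(v)=\sum_{\hat\jmath'}\|\Pi_{\hat\jmath'}v\|^2\,\hat\jmath'\otimes\hat\jmath'$ by $Q_{\hat\jmath}$, read off the three entries (the $\hat\jmath'=\hat\jmath$ term contributes only to the $(1,1)$ entry), and differentiate the resulting quadratic forms. Your explicit check that no direction in $\cD$ other than $\hat\jmath$ is parallel to $j$ (so that $\disb$ applies) is a point the paper uses only implicitly when it writes its sums over $\cD\setminus[j]$.
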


\begin{proof} 
By \eqref{coniugAQ},
\eqref{NablaI(v)NablaI(v) in coordinates}, 
$ Q_{\hat \jmath}^\top = Q_{\hat \jmath}^{-1} $,  
and recalling \eqref{eq:compoc}, \eqref{eq:def yj},
we get 
   \begin{align*}
                \cA^{\hat \jmath}& (v) 
                 = \sum_{{\hat \jmath'} \in \cD} \|\Pi_{\hat \jmath'}  v\|^2 (Q_{\hat \jmath} \hat\jmath')\otimes (Q_{\hat \jmath} \hat\jmath')
                 \, , \\
 &(Q_{\hat \jmath} \hat\jmath')\otimes(Q_{\hat \jmath} \hat\jmath')=\begin{pmatrix}(\hat\jmath'\cdot\hat\jmath)^2 & (\hat\jmath'\cdot\hat\jmath)(\hat\jmath'\cdot\hat\jmath^\bot)\\  (\hat\jmath'\cdot\hat\jmath)(\hat\jmath'\cdot\hat\jmath^\bot) & (\hat\jmath'\cdot\hat\jmath^\bot)^2  
        \end{pmatrix}.      
            \end{align*} 
    Therefore 
    \begin{align*}
       &      \|v\|^2\geq \cA^{\hat \jmath}_{11}(v)=
       \|\Pi_{\hat \jmath} v\|^2 + \sum_{{\hat \jmath'} \in \cD\setminus [j]}  (\hat\jmath'\cdot  \hat\jmath)^2 \|\Pi_{\hat \jmath'} v\|^2 \geq \|\Pi_{\hat \jmath} v\|^2 \\
        & 
            \|\Pi_{\hat \jmath}^\bot v\|^2\geq \cA^{\hat \jmath}_{21}(v)=\cA^{\hat \jmath}_{12}(v)=\sum_{{\hat \jmath'} \in \cD\setminus [j]}  (\hat\jmath'\cdot  \hat\jmath)(\hat\jmath'\cdot  \hat\jmath^\bot) \|\Pi_{\hat \jmath'} v\|^2\geq -\|\Pi_{\hat \jmath}^\bot v\|^2 \\
& 
            \|\Pi_{\hat \jmath}^\bot v\|^2\geq \cA^{\hat \jmath}_{22}(v)=\sum_{{\hat \jmath'} \in \cD\setminus [j]}  (\hat\jmath'\cdot  \hat\jmath^\bot)^2 \|\Pi_{\hat \jmath'} v\|^2 \stackrel{\eqref{eq:def b}} 
            \geq \disb \|\Pi_{\hat \jmath}^\bot v\|^2 \, . 
\end{align*}
    Formula  \eqref{Estimates on d(A(v+w))} follows directly 
    by differentiation.
\end{proof}

The lower bound \eqref{Estimate on a(v,v)} implies that, 
for any $v\in B_r^V$ satisfying $0<\|
    \Pi_{\hat \jmath}^\bot v  \|\leq \sqrt{\disb/4}\, \|v\|$,  
$$
\det(\cA^{\hat \jmath}(v))  \geq \det   
        \left(\begin{matrix}
                \|\Pi_{\hat \jmath} v\|^2 & -\| \Pi_{\hat \jmath}^\bot v \|^2 \\
                -\|\Pi_{\hat \jmath}^\bot v \|^2 & \disb  \| \Pi_{\hat \jmath}^\bot v \|^2
            \end{matrix}\right) \geq  \frac \disb 2\|v\|^2\|\Pi_{\hat \jmath}^\bot v \|^2>0\, ,
 $$
 and, using also  the upper bound  
    \eqref{Estimate on a(v,v)}, 
    \begin{equation}\label{lem:c near PJ estimate 3}
        \cA^{\hat \jmath}(v)^{-1} \! = \! \frac{1}{\det(\cA^{\hat \jmath}(v))}
        \left(\begin{matrix}
           \cA_{22}^{\hat \jmath}(v)  & -\cA_{12}^{\hat \jmath}(v) \\
           -\cA_{12}^{\hat \jmath}(v) &  \cA_{11}^{\hat \jmath}(v) 
        \end{matrix}\right) \leq \frac{2}{\disb} \left(\begin{matrix}
            \frac1{\|v\|^2} & \frac1{\|v\|^2}\\
            \frac1{\|v\|^2}  & \frac1{\|\Pi_{\hat \jmath}^\bot v \|^2} 
        \end{matrix}\right) \, . 
    \end{equation}
System \eqref{EQ c Linear system0} is thus equivalent,
for any $v \neq 0 $, to find a
fixed point of 
\[
c^{\hat \jmath} = c^{\hat \jmath}_*-\cA^{\hat \jmath} (v)^{-1} \tG^{\hat \jmath}_{\geq 3}  (c^{\hat \jmath},v) =: \cG^{\hat \jmath}(c^{\hat \jmath},v) \, . 
\]
The next key  estimates  
enable to compensate the singularity 
of $ \cA^{\hat \jmath} (v)^{-1} $ 
as $ \Pi_{\hat \jmath}^\bot v \to 0 $. 
They follow by  general properties  of  $\T^2_\Gamma$-invariant functions, proved in Appendix \ref{Appendix A}.

   \begin{lemma} 
    For any $v\in B_{r}^V$ and any $c^{\hat \jmath}\in B_{r}(c_*)$, it results
    \begin{align}  \label{Estimates on g}
  &       \tG^{\hat \jmath}_{\geq 3}(c^{\hat \jmath},v)
        =\cO\vect{\|v\|^3}{\| \Pi_{\hat \jmath}^\bot v \|^2\|v\|} \quad \text{
    uniformly in} \ c^{\hat \jmath} \, , 
         \\
         &\partial_{c^{\parallel_{\hat \jmath}}}
         \tG^{\hat \jmath}_{\geq 3}(c^{\hat \jmath},v)
                =\cO\vect{\|v\|^3}{\| \Pi_{\hat \jmath}^\bot v \|^2\|v\|} \, , \quad \partial_{c^{\bot_{\hat \jmath}}}\tG^{\hat \jmath}_{\geq 3}(c^{\hat \jmath},v)
        =\cO\vect{\| \Pi_{\hat \jmath}^\bot v \|^2\|v\|}{\| \Pi_{\hat \jmath}^\bot v \|^2\|v\|} \, , 
        \label{Partial c1 g}
    \end{align}
where 
  $ \partial_{c^{\parallel_{\hat \jmath}}}=\hat\jmath \cdot \partial_c $ and  $  \partial_{c^{\bot_{\hat \jmath}}}=\hat\jmath^\bot \cdot \partial_c $
denote directional derivatives.
    \end{lemma}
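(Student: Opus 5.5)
The first component of \eqref{Estimates on g} is immediate. By \eqref{Gprgeq3} we have $\di_v G_{\geq 3}(c,v)=\cO(\|v\|^2)$ uniformly in $c$, and by \eqref{eq:Nabla cI} we have $\|\nabla_v \cI^{\parallel_{\hat \jmath}}(v)\|\lesssim \|v\|$. Together these give $\tG^{\parallel_{\hat \jmath}}_{\geq 3}=\cO(\|v\|^3)$. The same argument applies to $\partial_c$ of this component, since $\partial_c G_{\geq 3}$ still satisfies \eqref{Gprgeq3}. The whole difficulty lies in the orthogonal component, and in the parallel component of the $\partial_{c^{\bot_{\hat \jmath}}}$ derivative. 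These must gain the factor $\|\Pi_{\hat \jmath}^\bot v\|^2$, and this gain must come from the $\T^2_\Gamma$-symmetry.

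For the orthogonal component we use $\T^2_\Gamma$-invariance. Differentiating $G_{\geq 3}(c,\tau_\theta v)=G_{\geq 3}(c,v)$ in $\theta$ gives $\di_v G_{\geq 3}(c,v)[\partial_{x_i} v]=0$. By \eqref{XI} and the symplectic structure, $\partial_x v$ is the symplectic rotation $J\nabla\cI$. With respect to the scalar product \eqref{eq:def scalar product} restricted to $V$, this means $\partial_{x} v = \mathbb J \nabla_v \cI(v)$, where $\mathbb J$ acts on each $V_{j'}$ as the rotation $(\alpha,\beta)\mapsto(-\beta,\alpha)$, up to the normalization $|j'|$. We cannot use this directly, because $\nabla_v\cI^{\bot_{\hat \jmath}}(v)$ is a radial vector field and not the generator of translations.

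So the plan is to invoke the general Appendix A lemma on $\T^2_\Gamma$-invariant functions. That lemma says the following. Let $f$ be a smooth $\T^2_\Gamma$-invariant function on $V$, and let $v\mapsto\Pi_{\hat \jmath}v$ be the projection. The function $f(\Pi_{\hat \jmath}v + y)$, viewed in $y=\Pi^\bot_{\hat \jmath}v$, is invariant under the subgroup ${\mathtt T}_{[j]}$ of \eqref{eq:def H[j]}. This subgroup fixes $\Pi_{\hat \jmath} v$ by Lemma \ref{ortoges}. It acts on $V_{\hat \jmath}^\bot=\oplus_{\hat\jmath'\neq\hat\jmath}V_{\hat\jmath'}$ through characters $e^{\im j'\cdot\theta}$ that are all nontrivial, because $j'\not\parallel j$. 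Hence the action on $V_{\hat \jmath}^\bot$ has no fixed vectors. It follows that the Taylor expansion of $f(\Pi_{\hat \jmath}v+y)$ in $y$ has no linear term, so $f(\Pi_{\hat \jmath} v+y)=f(\Pi_{\hat \jmath} v)+\cO(\|y\|^2)$. By \eqref{pibot}, the field $\nabla_v\cI^{\bot_{\hat \jmath}}(v)$ lies in $V_{\hat \jmath}^\bot$ and equals $L y$ for a bounded linear map $L$ on $V_{\hat \jmath}^\bot$. Therefore
\[
\di_v G_{\geq 3}(c,v)[\nabla_v\cI^{\bot_{\hat \jmath}}(v)] = \di_y\big(G_{\geq 3}(c,\Pi_{\hat \jmath}v+y)\big)[Ly].
\]
Since $y\mapsto \di_yG[Ly]$ is again ${\mathtt T}_{[j]}$-invariant, we refine the expansion with explicit bounds. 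By Taylor expansion in $y$ with remainder, together with the vanishing of the $y$-linear part uniformly in $\Pi_{\hat \jmath}v$, we get $\di_y G=\cO(\|y\|\,\|v\|)$. Here the factor $\|v\|$ comes from $\di^2 G_{\geq 3}=\cO(\|v\|)$, which follows from \eqref{Gprgeq3}. Multiplying by $\|Ly\|\lesssim\|y\|$ gives the required $\cO(\|\Pi_{\hat \jmath}^\bot v\|^2\|v\|)$.

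For the $c$-derivatives, the bounds on $\partial_c\tG^{\bot_{\hat \jmath}}$ follow identically, since $\partial_c G_{\geq 3}$ is invariant and satisfies \eqref{Gprgeq3}. For $\partial_{c^{\bot_{\hat \jmath}}}\tG^{\parallel_{\hat \jmath}}$ we use \eqref{Phi is independent on c2Q if v is in V2dj}. It shows that $\partial_{c^{\bot_{\hat \jmath}}}G_{\geq 3}(c,\cdot)$ vanishes identically on $V_{\hat \jmath}$, and this remains true after differentiating along $V_{\hat \jmath}$. We combine this with the same no-linear-term argument in $y$. The function $h:=\partial_{c^{\bot_{\hat \jmath}}}G_{\geq3}(c,\Pi_{\hat \jmath} v+y)$ is ${\mathtt T}_{[j]}$-invariant and vanishes at $y=0$, so $h=\cO(\|y\|^2\|v\|)$. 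Differentiating in $v$ along $\nabla_v\cI^{\parallel_{\hat \jmath}}(v)$, whose components are $\cO(\|v\|)$ and preserve the splitting, we again get $\cO(\|y\|^2\|v\|)$ after a careful Taylor bookkeeping.

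The main obstacle is making this Taylor bookkeeping uniform, so that the gain is exactly $\|y\|^2$ times $\|v\|$ and not merely $\|y\|^2$. The other delicate point is differentiating $h$ in directions with a nonzero component along $V_{\hat \jmath}$ without losing the $y^2$ factor. The natural fix is to observe that every derivative of $h$ in $\Pi_{\hat \jmath}v$ is still ${\mathtt T}_{[j]}$-invariant and vanishes at $y=0$.
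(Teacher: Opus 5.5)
Your proposal is correct and uses the same mechanism as the paper: ${\mathtt T}_{[j]}$-invariance kills the term linear in $\Pi_{\hat\jmath}^\bot v$ (Lemma~\ref{lm:AAA}), and vanishing on $V_{\hat\jmath}$ combined with a second-order Taylor expansion produces the factor $\|\Pi_{\hat\jmath}^\bot v\|^2\|v\|$ (Lemma~\ref{lm:A}). The only difference is packaging: the paper applies Lemma~\ref{lm:A}-$(ii)$ directly to the invariant scalar functions $\tG^{\bot_{\hat\jmath}}_{\geq 3}$ and $\partial_{c^{\bot_{\hat\jmath}}}\tG^{\hat\jmath}_{\geq 3}$, which are $\cO(\|v\|^3)$ and vanish on $V_{\hat\jmath}$ (the latter because $\nabla_v\cI^{\parallel_{\hat\jmath}}(v)\in V_{\hat\jmath}$ for $v\in V_{\hat\jmath}$ and $G_{\geq 3}$ does not depend on $c^{\bot_{\hat\jmath}}$ there), so the derivative bookkeeping you flag as the main obstacle never arises.
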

    
    \begin{proof}
        Using \eqref{Ggeq3} 
        the  component $\tG_{\geq 3}^{\parallel_{\hat \jmath}}(c^{\hat \jmath},v) $ 
        in \eqref{Def tg} 
  satisfies  $\tG_{\geq 3}^{\parallel_{\hat \jmath}}(c^{\hat \jmath},v)=\cO(\|v\|^3)$. Moreover by \eqref{momezero2}, if $ v\in  V_{\hat \jmath} $ then 
 $ \nabla_v \cI^{\bot_{\hat \jmath}}(v) =0 $, 
 and therefore
        \begin{equation}\label{G3bot}
            \tG_{\geq 3}^{\bot_{\hat \jmath}}(c^{\hat \jmath},v) \stackrel{\eqref{Def tg} }=
            \di_v G_{\geq 3}(c,v)[
            \nabla_v \cI^{\bot_{\hat \jmath}}(v) ] = 0 \, , \quad \ \ \forall v\in B_{r}^V\cap V_{\hat \jmath}\, .
        \end{equation}
        By \eqref{Ggeq3} 
        we also have 
        $\tG^{\bot_{\hat \jmath}}_{\geq 3}(c^{\hat \jmath},v) = 
        \hat\jmath^\bot \cdot \tG^{\bot_{\hat \jmath}}_{\geq 3}(c^{\hat \jmath},v) =\cO(\|v\|^3)$ and  Lemma \ref{lm:A}-$(ii)$ implies the second estimate in \eqref{Estimates on g}.
        The first estimates in \eqref{Partial c1 g} follow by \eqref{Estimates on g}.
        By Proposition \ref{lem:expa} 
        we have
$ \partial_{c^{\bot_{\hat \jmath}}}\tG_{\geq 3}^{\hat \jmath}(c^{\hat \jmath},v)=0 $ for any 
        $ v\in B_{r}^V\cap V_{\hat \jmath} $. 
        Lemma \ref{lm:A}-$(ii)$ applied to $\partial_{c^{\bot_{\hat \jmath}}}\tG_{\geq 3}^{\hat \jmath}(c^{\hat \jmath},v)$ implies the second estimates in \eqref{Partial c1 g}.
    \end{proof}

    From \eqref{Def G}, \eqref{lem:c near PJ estimate 3},  \eqref{Estimates on g},
    \eqref{Partial c1 g} we deduce that for $r'\in (0,r)$ sufficiently small, the map  
    $$
        \cG^{\hat \jmath}:B_{r}(c_*)\times[ \cN_{\hat \jmath}(r') \setminus V_{\hat \jmath} ]\to \R^2 \, , \quad  \cG^{\hat \jmath}(c^{\hat \jmath},v):=c^{\hat \jmath}_*-\cA^{\hat \jmath} (v)^{-1}\tG^{\hat \jmath}_{\geq 3}(c^{\hat \jmath},v)\, , 
    $$
    satisfies
    \begin{equation}\label{lem:c near PJ estimate 4}
                |\cG^{\hat \jmath}(c^{\hat \jmath},v)-c^{\hat \jmath}_*|\lesssim \|v\|\, , \quad |\partial_{c^{\hat \jmath}}\cG^{\hat \jmath}(c^{\hat \jmath},v)|\lesssim \|v\| \, , 
    \end{equation}
    uniformly in $c$.
    We deduce that if $r'$ is sufficiently small $\cG^{\hat \jmath}(\cdot,v)$ is a contraction in $ B_r(c_*)  $ and hence there exists a unique fixed point 
    $c^{\hat \jmath}=c_{\threed}^{\hat \jmath}(v)$ of  $\cG^{\hat \jmath}(c^{\hat \jmath},v)=c^{\hat \jmath}$.
    Since $\cG^{\hat \jmath}$ is analytic the function $v\mapsto c^{\hat \jmath}_{\threed}(v)$ is analytic.
    Since $c_{\threed}^{\hat \jmath}(v)$ is a fixed point of $\cG^{\hat \jmath}(\cdot,v)$ from \eqref{lem:c near PJ estimate 4} we deduce
    \begin{equation}\label{lem:c near PJ estimate 5}
       c_{\threed}^{\hat \jmath}(v)=c^{\hat \jmath}_*+\cO(\|v\|) \, , 
       \quad \forall v\in \cN_{\hat \jmath}(r')\setminus V_{\hat \jmath}\, .
    \end{equation}   
  Thus \eqref{cQ-c*Q d cQ(v)} holds on $B_{r'}^V\setminus {\bf V}^{2d}$.

    \begin{lemma} 
    \eqref{derivicino} hold. 
    \end{lemma}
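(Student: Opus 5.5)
We differentiate the fixed point identity $\cA^{\hat \jmath}(v)(c^{\hat \jmath}_{\threed}(v)-c^{\hat \jmath}_*)+\tG^{\hat \jmath}_{\geq 3}(c^{\hat \jmath}_{\threed}(v),v)=0$ in the direction $\hat v$. This gives
\[
\big(\cA^{\hat \jmath}(v)+\partial_{c^{\hat \jmath}}\tG^{\hat \jmath}_{\geq 3}\big)\,\di_v c^{\hat \jmath}_{\threed}(v)[\hat v]
=-\di_v\cA^{\hat \jmath}(v)[\hat v]\,(c^{\hat \jmath}_{\threed}(v)-c^{\hat \jmath}_*)-\di_v\tG^{\hat \jmath}_{\geq 3}(c^{\hat \jmath}_{\threed}(v),v)[\hat v]\, .
\]
The plan is to bound the right-hand side componentwise, with different sizes in the parallel and orthogonal rows, and then invert the matrix on the left with the anisotropic bound \eqref{lem:c near PJ estimate 3}. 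Throughout, $v\in\cN_{\hat \jmath}(r')\setminus V_{\hat \jmath}$, and we write $p:=\|\Pi_{\hat \jmath}^\bot v\|$ and $n:=\|v\|$, so that $0<p\le \sqrt{\disb/4}\,n$.

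\textbf{Right-hand side.} By \eqref{Estimates on d(A(v+w))} and \eqref{lem:c near PJ estimate 5}, the first term has parallel row $\cO(n\|\Pi_{\hat \jmath}\hat v\|+p\|\Pi_{\hat \jmath}^\bot\hat v\|)\cdot n$ and orthogonal row $\cO(p\|\Pi_{\hat \jmath}^\bot\hat v\|)\cdot n$. For the derivative of $\tG^{\hat \jmath}_{\geq 3}$ we need analogues of \eqref{Estimates on g} for $v$-derivatives. The parallel row is $\cO(n^2\|\hat v\|)$ by \eqref{Ggeq3}. For the orthogonal row, $\tG^{\bot_{\hat \jmath}}_{\geq 3}$ vanishes on $V_{\hat \jmath}$ by \eqref{G3bot} and is $\T^2_\Gamma$-invariant. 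We would apply the appendix Lemma \ref{lm:A} (the same tool used for \eqref{Estimates on g}) to its derivative, expecting
\[
\di_v\tG^{\bot_{\hat \jmath}}_{\geq 3}[\hat v]=\cO\big(p^2\|\hat v\|+pn\|\Pi_{\hat \jmath}^\bot\hat v\|\big).
\]
Heuristically, an invariant function vanishing quadratically in $\Pi^\bot_{\hat \jmath}v$ loses one power of $p$ under a transversal derivative and none under a derivative along $V_{\hat \jmath}$. This is the step I expect to be the main obstacle. If Lemma \ref{lm:A} only controls values and not derivatives, I would apply it to the invariant function $v\mapsto\di_v\tG^{\bot}_{\geq3}(c,v)[\Pi_{\hat \jmath}\hat v]$ after symmetrizing in $\hat v$. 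Alternatively, I would write $\tG^{\bot}_{\geq3}=\langle B(c,v)\Pi^\bot_{\hat \jmath}v,\Pi^\bot_{\hat \jmath}v\rangle$ via a Taylor formula in the transversal variables, which is legitimate because invariance kills the linear term.

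\textbf{Inversion.} The left matrix is a perturbation of $\cA^{\hat \jmath}(v)$. By \eqref{Partial c1 g}, $\partial_c\tG^{\hat \jmath}_{\geq 3}$ has rows $\cO(n^3)$ and $\cO(p^2n)$, which is small relative to the lower bound in \eqref{Estimate on a(v,v)}. Hence its inverse satisfies the same bound as \eqref{lem:c near PJ estimate 3}: entries $\lesssim n^{-2}$, except the $(2,2)$ entry, which is $\lesssim p^{-2}$. Multiplying out gives the following.

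For the parallel component,
\[
n^{-2}\big(n^2\|\Pi_{\hat \jmath}\hat v\|+np\|\Pi_{\hat \jmath}^\bot\hat v\|+p^2\|\hat v\|\big)\lesssim \|\Pi_{\hat \jmath}\hat v\|+\tfrac{p}{n}\|\Pi_{\hat \jmath}^\bot\hat v\|.
\]
Here the $n^2\|\hat v\|$ term coming from the parallel row of $\di_v\tG$ has to be refined in the same way, using \eqref{Phi is independent on c2Q if v is in V2dj} to separate its $\Pi_{\hat \jmath}\hat v$ and $\Pi^\bot_{\hat \jmath}\hat v$ parts.

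For the orthogonal component,
\[
p^{-2}\big(p^2\|\hat v\|+pn\|\Pi_{\hat \jmath}^\bot\hat v\|\big)+n^{-2}(\cdots)\lesssim\|\Pi_{\hat \jmath}\hat v\|+\tfrac{n}{p}\|\Pi_{\hat \jmath}^\bot\hat v\|.
\]

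Finally, the parallel and orthogonal components $c^{\parallel_{\hat \jmath}},c^{\bot_{\hat \jmath}}$ are exactly the two entries of $c^{\hat \jmath}_{\threed}=Q_{\hat \jmath}c_{\threed}$, so these two bounds are \eqref{derivicino}.
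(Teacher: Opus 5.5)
Your proposal is correct and is essentially the paper's proof: differentiate the fixed-point identity as in \eqref{eq:dc}, bound the right-hand side row by row, and invert with the anisotropic bound \eqref{Estimate on (Partial cQ FQ[hat v])-1}. For that inversion, what you need is that the $\partial_{c^{\bot_{\hat\jmath}}}$ column of \eqref{Partial c1 g} is $\cO(\|\Pi^\bot_{\hat\jmath}v\|^2\|v\|)$ in both entries; it is not enough that the first row of $\partial_{c^{\hat\jmath}}\tG^{\hat\jmath}_{\geq 3}$ is $\cO(\|v\|^3)$.

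The step you flag as the main obstacle is not one, because Lemma \ref{lm:A} does control derivatives. With $m=3$, \eqref{A2ora} applied to the invariant function $\tG^{\parallel_{\hat\jmath}}_{\geq 3}$ gives the refined parallel row; this comes from $\T^2_\Gamma$-invariance, not from \eqref{Phi is independent on c2Q if v is in V2dj}. Likewise \eqref{eq:A3} together with \eqref{G3bot} gives exactly your expected orthogonal row, which is the paper's \eqref{eq:Gjecc}.
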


    \begin{proof}
     Differentiating $\cA^{\hat \jmath}(v)(c_{\threed}^{\hat \jmath}(v)-c^{\hat \jmath}_*)+\tG^{\hat \jmath}_{\geq 3}(c_{\threed}^{\hat \jmath}(v),v)=0$, for any $v\in \cN_{\hat \jmath} (r') \setminus V_{\hat \jmath} $ we have
    \begin{equation}\label{eq:dc}
        \big(\cA^{\hat \jmath}(v)+\partial_{c^{\hat \jmath}}\tG^{\hat \jmath}_{\geq 3}(c_{\threed}^{\hat \jmath}(v),v)\big)\di_v c_{\threed}^{\hat \jmath}(c)[\hat v]
        =-\di_v \cA^{\hat \jmath}(v)[\hat v](c_{\threed}^{\hat \jmath}(v)-c_*^{\hat \jmath})-\di_v \tG^{\hat \jmath}_{\geq 3}(c_{\threed}^{\hat \jmath}(v),v)[\hat v]\, .
    \end{equation}
      By \eqref{Estimate on a(v,v)}, the matrix 
    \begin{equation}\label{lem:c near PJ estimate 6}
        \cA^{\hat \jmath}(v)+\partial_{c^{\hat \jmath}}\tG^{\hat \jmath}_{\geq 3}(c^{\hat \jmath},v) \stackrel{\eqref{Partial c1 g}} =
            \cA^{\hat \jmath}(v)+
            \cO\left(\begin{matrix}
                \|v\|^3 & \|\Pi_{\hat \jmath}^\bot v \|^2\|v\|\\
                \| \Pi_{\hat \jmath}^\bot v \|^2\|v\| & \|\Pi_{\hat \jmath}^\bot v \|^2\|v\|
            \end{matrix}\right)  \, , \ v\in B_{r'}^V
    \end{equation}
   is invertible for any $v\in \cN_{\hat \jmath}(r') {\setminus V_{\hat \jmath}} $ provided $r'\in (0,r)$ is sufficiently small, and 
    \begin{equation}\label{Estimate on (Partial cQ FQ[hat v])-1}
        \big(\cA^{\hat \jmath}(v)+\partial_{c^{\hat \jmath}}\tG^{\hat \jmath}_{\geq 3}(c^{\hat \jmath},v)\big)^{-1}=
        \cO\begin{pmatrix}
                \frac1{\|v\|^2} & \frac1{\|v\|^2}\\
                \frac1{\|v\|^2} & \frac1{\|\Pi_{\hat \jmath}^\bot v \|^2}
            \end{pmatrix}\, .
    \end{equation}
    Using \eqref{G3bot}, \eqref{Ggeq3} and Lemma \ref{lm:A} we deduce
    \begin{equation}\label{eq:Gjecc}
        \di_v\tG^{\hat \jmath}_{\geq 3}(c^{\hat \jmath},v)[\hat v]=
        \cO \begin{pmatrix}             \|v\|^2\|\Pi_{\hat \jmath} \hat v\|+\|v\|\|\Pi_{\hat \jmath}^\bot v\|\|\Pi_{\hat \jmath}^\bot \hat v\|\\             \|\Pi_{\hat \jmath}^\bot v\|^2\|\Pi_{\hat \jmath} \hat v\|+\|v\|\|\Pi_{\hat \jmath}^\bot v\|\|\Pi_{\hat \jmath}^\bot \hat v\|         \end{pmatrix}\, , \ v\in B_{r'}^V \, . 
    \end{equation}  
    Hence, by \eqref{Estimates on d(A(v+w))}, \eqref{lem:c near PJ estimate 5} we obtain
    \begin{equation}\label{Estimate on dv FQ[hat v]}
        -\di_v \cA^{\hat \jmath}(v)[\hat v](c^{\hat \jmath}_{\threed}(v)-c_*^{\hat \jmath})-\di_v \tG^{\hat \jmath}_{\geq 3}(c^{\hat \jmath}_{\threed},v)[\hat v] = \cO \begin{pmatrix}             \|v\|^2\|\Pi_{\hat \jmath} \hat v\|+\|v\|\|\Pi_{\hat \jmath}^\bot v\|\|\Pi_{\hat \jmath}^\bot \hat v\|\\             \|\Pi_{\hat \jmath}^\bot v\|^2\|\Pi_{\hat \jmath} \hat v\|+\|v\|\|\Pi_{\hat \jmath}^\bot v\|\|\Pi_{\hat \jmath}^\bot \hat v\|         \end{pmatrix}.
    \end{equation}
    From \eqref{eq:dc}, \eqref{Estimate on (Partial cQ FQ[hat v])-1} and \eqref{Estimate on dv FQ[hat v]} we deduce \eqref{derivicino}.
    \end{proof}

\subsection{Construction  of $c(v)$ on $ {\bf V}^{\twod}$}\label{sec:speedcv}

For any  ${\hat \jmath} \in \cD$ 
and $ v \in V_{\hat \jmath} $ system \eqref{EQ c Phi} amounts to 
$$
\di_v \Phi (c, v) \Big[ \hat \jmath  
\nabla_v \cI^{\parallel_{\hat \jmath}} (v) + \hat \jmath^\bot  
\underbrace{\nabla_v \cI^{\bot_{\hat \jmath}}(v)}_{=0 \,  \text{by} \eqref{pibot}} \Big]  \!\stackrel{\eqref{Phi is independent on c2Q if v is in V2dj}} = \! 
\di_v \Phi (c^{\parallel_{\hat \jmath}}\hat \jmath+ c^{\bot_{\hat \jmath}}_*\hat \jmath^\bot, v) \Big[ \hat \jmath  
\nabla_v \cI^{\parallel_{\hat \jmath}} (v)  \Big] \! =\! 0  
$$
and we are reduced to look for a scalar  function 
$ c^{\parallel_{\hat \jmath}}(v) $ 
solving the equation 
\begin{equation}\label{eq:mf2d}
 \di_v\Phi(c^{\parallel_{\hat \jmath}}\hat \jmath +c^{\bot_{\hat \jmath}}_*\hat \jmath^\bot,v)[\nabla_v \cI^{\parallel_{\hat \jmath}}(v)] = 
 0 \, , \quad \forall v
 \in V_{\hat \jmath} \cap B_{r'}^V   
 \setminus \{0\} \, . 
\end{equation}

\begin{remark} For any $v\in B_{r}^V\cap V_{\hat \jmath}$, if $c\in B_{r}^V$ solves \eqref{EQ c Phi} then  $c^{\parallel_{\hat \jmath}}=c\cdot \hat \jmath$ solves \eqref{eq:mf2d}.
Viceversa if $c^{\parallel_{\hat \jmath}}$ solves \eqref{eq:mf2d} then $c^{\parallel_{\hat \jmath}}\hat \jmath+k\hat \jmath^\bot$ solves \eqref{EQ c Phi} for any $k\in \R$ such that $c^{\parallel_{\hat \jmath}}\hat \jmath+k\hat \jmath^\bot\in B_{r}^V$.
\end{remark}
In view of \eqref{EQ c Phij}, 
\eqref{EQ c Linear system0}-\eqref{Def tg}
and \eqref{momezero2}, the equation \eqref{eq:mf2d} is equivalent to
\begin{equation}\label{EQ c Linear system}
    \cA^{\hat \jmath}_{11}(v)(c^{\parallel_{\hat \jmath}}-c_*^{\parallel_{\hat \jmath}})+ 
    \tG^{\parallel \hat \jmath}_{\geq 3} (c^{\parallel_{\hat \jmath}},c_*^{\bot_{\hat \jmath}},v) = 0 
\end{equation}
where $ \cA^{\hat \jmath}_{11}(v) $ is defined in 
\eqref{Def aQ}
and 
$\tG^{\parallel \hat \jmath}_{\geq 3} (c^{\hat \jmath},v)$  in \eqref{Def tg}.

\begin{lemma}\label{c on PJ}
{\bf (Speed $ c^{\parallel_{\hat \jmath}}_{\twod} (v)$)}
    There exist $ r'\in (0,r)$ and, for any ${\hat \jmath} \in \cD$, a $\T^2_\Gamma\rtimes \Z_2$-invariant analytic function
    $
        c^{\parallel_{\hat \jmath}}_{\twod} : 
         (V_{\hat \jmath} \cap B_{r'}^V )\setminus\{0\}\to \big(c^{\parallel_{\hat \jmath}}_*-r,c^{\parallel_{\hat \jmath}}_* +r\big) $, $ v \mapsto c^{\parallel_{\hat \jmath}}_{\twod} (v) $, 
    which is the unique solution of \eqref{eq:mf2d}, satisfying 
    \begin{equation}\label{tilde cQ-c*Q d tilde cQ(v)}
        |  c^{\parallel_{\hat \jmath}}_{\twod} (v)-  c^{\parallel_{\hat \jmath}}_* |\lesssim \|v\| \, , \quad |\di_v c^{\parallel_{\hat \jmath}}_{\twod} (v)[\hat v]|\lesssim \|\hat v\|\, , \quad  \forall v \in V_{\hat \jmath} \cap B_{r'}^V, \,  \hat v \in V_{\hat \jmath} \, .
    \end{equation}
\end{lemma}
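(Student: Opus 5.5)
The plan is to treat \eqref{EQ c Linear system} as a scalar fixed point problem in $c^{\parallel_{\hat \jmath}}$, exactly as in Propositions \ref{Construction of c} and \ref{c near PJ}. The new feature is that on $V_{\hat \jmath}$ the coefficient $\cA^{\hat \jmath}_{11}(v)$ is non-degenerate. Indeed, for $v\in V_{\hat \jmath}$ we have $\Pi_{\hat \jmath}^\bot v=0$, and the bounds \eqref{Estimate on a(v,v)} then give $\cA^{\hat \jmath}_{11}(v)=\|v\|^2$. Equation \eqref{EQ c Linear system} is therefore equivalent, for $v\ne 0$, to
\[
c^{\parallel_{\hat \jmath}} = c^{\parallel_{\hat \jmath}}_* - \|v\|^{-2}\,\tG^{\parallel \hat \jmath}_{\geq 3}(c^{\parallel_{\hat \jmath}},c_*^{\bot_{\hat \jmath}},v) =: g^{\hat \jmath}(c^{\parallel_{\hat \jmath}},v).
\]

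\textbf{Contraction.} By \eqref{Ggeq3}, $\tG_{\geq 3}$ and $\partial_c\tG_{\geq 3}$ are $\cO(\|v\|^3)$ uniformly for $c\in B_r(c_*)$, using analyticity and Taylor's formula. Hence $|g^{\hat \jmath}-c^{\parallel_{\hat \jmath}}_*|\lesssim\|v\|$ and $|\partial_{c^{\parallel}} g^{\hat \jmath}|\lesssim \|v\|$. For $r'$ small, $g^{\hat \jmath}(\cdot,v)$ maps the interval $(c^{\parallel}_*-r,c^{\parallel}_*+r)$ into itself and is a contraction there. This gives a unique fixed point $c^{\parallel_{\hat \jmath}}_{\twod}(v)$, which satisfies the first bound of \eqref{tilde cQ-c*Q d tilde cQ(v)}. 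Analyticity on $(V_{\hat \jmath}\cap B^V_{r'})\setminus\{0\}$ follows from the analytic implicit function theorem, since $1-\partial_{c^{\parallel}} g^{\hat \jmath}\neq 0$. Invariance under $\T^2_\Gamma\rtimes\Z_2$ follows from uniqueness. Indeed, $\Phi$ is invariant by \eqref{symmetriesPhi}, $\cI$ is invariant, the scalar product \eqref{eq:def scalar product} is invariant, and $V_{\hat \jmath}$ is preserved by Remark \ref{rem:Vinv}. So if $c^{\parallel}$ solves \eqref{eq:mf2d} at $v$, it also solves it at $\tau_\theta v$ and at $\rho v$.

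\textbf{Derivative bound.} This is the only step with some content, because the denominator $\|v\|^2$ must be compensated. Differentiate the identity $\|v\|^2(c^{\parallel}_{\twod}(v)-c^\parallel_*)+\tG^{\parallel}_{\geq3}(c^\parallel_{\twod}(v),v)=0$ along $\hat v\in V_{\hat \jmath}$ to get
\[
\big(\|v\|^2+\partial_{c^\parallel}\tG^\parallel_{\geq 3}\big)\,\di_v c^{\parallel}_{\twod}(v)[\hat v] = -2\langle v,\hat v\rangle (c^{\parallel}_{\twod}-c^\parallel_*) - \di_v\tG^\parallel_{\geq 3}[\hat v].
\]
The left coefficient is at least $\tfrac12\|v\|^2$. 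On the right, the first term is $\cO(\|v\|^2\|\hat v\|)$ by the first bound. The second term is also $\cO(\|v\|^2\|\hat v\|)$, since \eqref{Ggeq3} says $\tG_{\geq 3}$ vanishes to third order at $v=0$ uniformly in $c$. Dividing gives $|\di_v c^\parallel_{\twod}(v)[\hat v]|\lesssim\|\hat v\|$. Finally, $r'$ is taken to be the minimum of the radii over the finitely many $\hat \jmath\in\cD$.
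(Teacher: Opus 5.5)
Your proposal is correct and follows the paper's proof essentially step by step. It uses the same scalar contraction for the fixed-point form of \eqref{EQ c Linear system} (with $\cA^{\hat \jmath}_{11}(v)=\|v\|^2$ on $V_{\hat \jmath}$), analyticity via the implicit function theorem, and the derivative bound obtained by differentiating \eqref{EQ c Linear system} and dividing by a coefficient $\gtrsim \|v\|^2$. Your uniqueness argument for the $\T^2_\Gamma\rtimes\Z_2$-invariance fills in a step the paper leaves implicit, and since $\hat v\in V_{\hat \jmath}$, your cruder bound $\di_v\tG^{\parallel_{\hat \jmath}}_{\geq 3}[\hat v]=\cO(\|v\|^2\|\hat v\|)$ suffices in place of \eqref{eq:Gjecc}.
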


\begin{proof}
To simplify notation we set
$    \tG_{\twod}(c^{\parallel_{\hat \jmath}},v):=\tG^{\parallel_{\hat \jmath}}_{\geq 3}(c^{\parallel_{\hat \jmath}},c_*^{\bot_{\hat \jmath}} ,v) $.
In view of \eqref{Estimate on a(v,v)} the equation \eqref{EQ c Linear system} is equivalent, for any $v \neq 0 $,  to find a fixed point of  
$$
\cG_{\twod}:\big(c^{\parallel_{\hat \jmath}}_*-r,c^{\parallel_{\hat \jmath}}_* +r\big)\times B_{r'}^V\to \R \, , \quad
    \cG_{\twod}(c^{\parallel_{\hat \jmath}},v):=c_*^{\parallel_{\hat \jmath}}+(\cA_{11}^{\hat \jmath}(v))^{-1}\tG_{\twod}(c^{\parallel_{\hat \jmath}},v) \, , 
$$
for some $r'\in (0,r)$.
Since $\tG_{\twod}(c^{\parallel_{\hat \jmath}},v)=\cO(\|v\|^3)$ uniformly in $c^{\parallel_{\hat \jmath}}$, for any
$c^{\parallel_{\hat \jmath}}
  \in \big(c^{\parallel_{\hat \jmath}}_*-r,c^{\parallel_{\hat \jmath}}_* +r\big)$ and  any $ v \in B_{r'}^V \setminus \{0\} $,  we have 
\begin{equation}\label{G2d is a contraction}
 |\cG_{\twod}(c^{\parallel_{\hat \jmath}},v)|\lesssim \|v\| \, , \qquad  |\partial_{c^{\parallel_{\hat \jmath}}}\cG_{\twod}(c^{\parallel_{\hat \jmath}},v)|\lesssim \|v\|\, .
\end{equation}
Therefore,  for $r'\in (0,r)$ small
enough, the map
$
    \cG_{\twod}( \cdot , v ):\big(c_*^{\parallel_{\hat \jmath}}-r,c_*^{\parallel_{\hat \jmath}}+r \big)\to \big (c_*^{\parallel_{\hat \jmath}}-r,c_*^{\parallel_{\hat \jmath}}+r \big)
$  is a contraction and 
 for any $v\in B_{r'}^V$ there exists a unique fixed point $c_{\twod}^{\parallel_{\hat \jmath}}(v)$ of $\cG_{\twod}(\cdot,v)$. 
The function $v\mapsto c^{\parallel_{\hat \jmath}}_{\twod}(v)$ is analytic by applying the implicit function theorem to the analytic function 
$ \cG_{\twod}(c^{\parallel_{\hat \jmath}},v)$. 
The first bound in \eqref{tilde cQ-c*Q d tilde cQ(v)} follows from $\cG_{\twod}(c^{\parallel_{\hat \jmath}}_{\twod}(v),v)=c^{\parallel_{\hat \jmath}}_{\twod}(v)$ and \eqref{G2d is a contraction}.
Taking the differential of  \eqref{EQ c Linear system} which is satisfied identically for $c^{\parallel_{\hat \jmath}}=c^{\parallel_{\hat \jmath}}_{\twod}(v)$ we have
\begin{equation}\label{eq:dc2d EQ}
    \big( \cA_{11}^{\hat \jmath}(v)+\partial_{c^{\parallel_{\hat \jmath}}}\tG^{\parallel_{\hat \jmath}}_{\geq 3}(c^{\parallel_{\hat \jmath}},c^{\bot_{\hat \jmath}}_*,v)\big)\di_vc^{\parallel_{\hat \jmath}}(v)[\hat v]=
    -\di_v\cA^{\hat \jmath}_{11}(v)[\hat v](c^{\parallel_{\hat \jmath}}-c_*^{\parallel_{\hat \jmath}})-\di_v\tG^{\parallel_{\hat \jmath}}_{\geq 3}(c^{\parallel_{\hat \jmath}},c^{\bot_{\hat \jmath}}_*,v)[\hat v] \, .
\end{equation}
Thus using \eqref{Estimate on a(v,v)}, \eqref{lem:c near PJ estimate 6}, $\Pi_{\hat \jmath} v=v$,  \eqref{Estimates on d(A(v+w))} ,\eqref{eq:Gjecc} and the first bound in \eqref{tilde cQ-c*Q d tilde cQ(v)} we obtain  the estimate for the derivative  in \eqref{tilde cQ-c*Q d tilde cQ(v)}.
\end{proof}

   \noindent
   {\bf Definition of $c(v)$.}
   We define the function 
   $ c : B_{r'}^V\to B_{r} (c_*), 
   v \mapsto  c(v) $ in Proposition \ref{Construction of c close and away from PJ} as 
    \begin{equation}\label{c defined on the ball}
     \begin{aligned}     c(v):=\begin{cases}
                c_{\threed}(v) & \text{if} \ v\in B_{r'}^V\setminus {\bf V}^{\twod}\\
                 c^{\parallel_{\hat \jmath}}_{\twod} (v)\hat\jmath + c_*^{\bot_{\hat \jmath}} \hat\jmath^\bot& \text{if} \ v\in (B_{r'}^V\cap V_{\hat \jmath})\setminus \{0\}\, , \ {\hat \jmath} \in \cD \, , \\
                 c_*  & \text{if} \  v=0\, ,
            \end{cases}
            \end{aligned}
    \end{equation}
where $ c_{\threed} : B_{r'}^V\setminus {\bf V}^{\twod} \to \R  $ is the analytic function  
defined in Propositions \ref{Construction of c}, \ref{c near PJ} and  
$ c^{\parallel_{\hat \jmath}}_{\twod} : B_{r'}^V\cap V_{\hat \jmath} \to \R  $ is the analytic function 
defined in Lemma \ref{c on PJ}.  
 In view of Propositions \ref{Construction of c} and \ref{c near PJ} and 
\eqref{tilde cQ-c*Q d tilde cQ(v)},
the function $c(v)$  satisfies  \eqref{cQ-c*Q d cQ(v)} and so  it is continuous at $v=0$.

  %  The function $c$ may not be continuous, as simple examples show. 
 % The parallel components 
  %  $ c^{\parallel_{\hat \jmath}} (v)$ along any resonant wave 
   % direction admit continuous extensions up to $ {\bf V}^{\twod} $. 

\subsection{Regularity properties}
To complete the proof of 
Proposition 
\ref{Construction of c close and away from PJ} we have to prove the 
regularity properties in 
\eqref{c1Q U tilde c1Q}. 

\begin{lemma}\label{lm:continuity of cparallel}
{\bf (Continuity  
of $ c^{\parallel_{\hat \jmath}} (v)$)}
    For any $ {\hat \jmath} \in \cD$, the parallel component $c^{\parallel_{\hat \jmath}}(v) = \hat\jmath \cdot c(v) $ of the speed $ c(v) $ defined in \eqref{c defined on the ball} is continuous on the set 
 $ B_{r'}^{\hat \jmath} := 
         (B_{r'}^V\setminus {\bf V}^{\twod})\cup (B_{r'}^V\cap V_{\hat \jmath}) $.
         %         \eqref{c1Q U tilde c1Q}. 
\end{lemma}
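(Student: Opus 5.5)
On the open set $B_{r'}^V\setminus{\bf V}^{\twod}$ the function $c=c_{\threed}$ is analytic by Propositions \ref{Construction of c} and \ref{c near PJ}. On $(B_{r'}^V\cap V_{\hat\jmath})\setminus\{0\}$ the component $c^{\parallel_{\hat\jmath}}=c^{\parallel_{\hat\jmath}}_{\twod}$ is analytic by Lemma \ref{c on PJ}. At $v=0$, continuity follows from \eqref{cQ-c*Q d cQ(v)}, since $|c^{\parallel_{\hat\jmath}}(v)-c_*^{\parallel_{\hat\jmath}}|\lesssim\|v\|$ on the whole ball. So the only thing to prove is continuity at a point $\bar v\in V_{\hat\jmath}\setminus\{0\}$, approached by a sequence $v_n\to\bar v$. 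Continuity along $V_{\hat\jmath}$ itself is already known, so I may assume $v_n\notin{\bf V}^{\twod}$. Then $\Pi_{\hat\jmath}^\bot v_n\to0$ and $\|v_n\|\to\|\bar v\|>0$, so $v_n\in\cN_{\hat\jmath}(r')\setminus V_{\hat\jmath}$ for $n$ large. Hence $c(v_n)$ is the fixed point $c^{\hat\jmath}_{\threed}(v_n)$ built in Section \ref{sec:close}.

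The plan is to pass to the limit in the first row of the system \eqref{EQ c Linear system0}:
\[
\cA^{\hat\jmath}_{11}(v_n)\big(c^{\parallel_{\hat\jmath}}(v_n)-c_*^{\parallel_{\hat\jmath}}\big)+\cA^{\hat\jmath}_{12}(v_n)\big(c^{\bot_{\hat\jmath}}(v_n)-c_*^{\bot_{\hat\jmath}}\big)+\tG^{\parallel_{\hat\jmath}}_{\geq3}(c(v_n),v_n)=0 .
\]
Several facts control the terms. By \eqref{Estimate on a(v,v)}, $|\cA^{\hat\jmath}_{12}(v_n)|\le\|\Pi^\bot_{\hat\jmath}v_n\|^2\to0$. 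The orthogonal component satisfies $|c^{\bot_{\hat\jmath}}(v_n)-c_*^{\bot_{\hat\jmath}}|\lesssim\|v_n\|$, so the middle term tends to $0$. Moreover $\cA^{\hat\jmath}_{11}(v_n)\to\cA^{\hat\jmath}_{11}(\bar v)=\|\bar v\|^2>0$. The vectors $c(v_n)$ remain in a compact subset of $B_r(c_*)$ because of \eqref{cQ-c*Q d cQ(v)}.

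I would take any subsequence along which $c(v_n)\to\bar c=(\bar c^{\parallel},\bar c^{\bot})$ in the $\hat\jmath$-coordinates. Since $\tG_{\geq3}$ is continuous (indeed analytic), the identity above gives in the limit
\[
\cA^{\hat\jmath}_{11}(\bar v)(\bar c^{\parallel}-c_*^{\parallel_{\hat\jmath}})+\tG^{\parallel_{\hat\jmath}}_{\geq3}(\bar c^{\parallel},\bar c^{\bot},\bar v)=0 .
\]
The key point, and the main obstacle, is that $\bar c^{\bot}$ is uncontrolled, since $c^{\bot_{\hat\jmath}}$ may genuinely be discontinuous. This has to be handled with the $2d$-symmetry. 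By Lemma \ref{Further properties of w(c,v)}-$(ii)$ and \eqref{Phi is independent on c2Q if v is in V2dj}, for $v\in V_{\hat\jmath}$ the function $G_{\geq3}(c,v)$ does not depend on $c^{\bot_{\hat\jmath}}$. Differentiating in $v$ along $\nabla_v\cI^{\parallel_{\hat\jmath}}(\bar v)\in V_{\hat\jmath}$, which is tangent to $V_{\hat\jmath}$, shows that $\tG^{\parallel_{\hat\jmath}}_{\geq3}(c,\bar v)$ is independent of $c^{\bot}$ as well. So $\bar c^{\bot}$ can be replaced by $c_*^{\bot_{\hat\jmath}}$, and $\bar c^{\parallel}$ solves \eqref{EQ c Linear system} at $\bar v$.

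It remains to identify this limit. The map $\cG_{\twod}(\cdot,\bar v)$ is a contraction on $(c_*^{\parallel_{\hat\jmath}}-r,c_*^{\parallel_{\hat\jmath}}+r)$, so the solution of \eqref{EQ c Linear system} there is unique. Since $\bar c^{\parallel}$ lies in that interval, $\bar c^{\parallel}=c^{\parallel_{\hat\jmath}}_{\twod}(\bar v)=c^{\parallel_{\hat\jmath}}(\bar v)$. Every subsequential limit is therefore the same, and so $c^{\parallel_{\hat\jmath}}(v_n)\to c^{\parallel_{\hat\jmath}}(\bar v)$, which proves the lemma.
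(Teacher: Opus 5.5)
Your proof is correct and follows essentially the paper's argument. You extract a convergent subsequence of $c(v_n)$ and pass to the limit in the $\hat\jmath$-parallel component of \eqref{EQ c Phi}. The uncontrolled limit $\bar c^{\bot}$ drops out because, on $V_{\hat\jmath}$, $\Phi$ and its tangential derivatives are independent of $c^{\bot_{\hat\jmath}}$, and you conclude by the uniqueness in Lemma \ref{c on PJ}. The only difference is that you write the equation in the expanded form \eqref{EQ c Linear system0} and spell out the $c^{\bot}$-independence step, which the paper leaves implicit.
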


\begin{proof}
We have just to prove the continuity
of $ c^{\parallel_{\hat \jmath}}(v) $ 
at any  $ \bar v \in B_{r'}^V\cap V_{\hat \jmath}  $, $ \bar v \neq 0 $.
    Let $\{v_i\}_{i\in \N}$ be a sequence in $B_{r'}^V $ converging  to $\bar v  $.
    Since $c(v_i) $ is bounded (cf.~\eqref{cQ-c*Q d cQ(v)}), for any subsequence $ v_{i_m} $ there is $ v_{{i_m}_\ell} $ such that 
    $ c(v_{{i_m}_\ell}) \xrightarrow[\ell\to+\infty]{}\bar c $ and $\bar c\in \overline{B_{r'}(c_*)}\subset B_r(c_*)$ for $r'\in (0,r)$ sufficiently small. 
    Since $c(v_i )$ solves \eqref{EQ c Phi} (equiv. \eqref{EQ c Phij}), by continuity, we deduce that
    $
        \di_v\Phi(\bar c,\bar v)[\nabla_v \cI^{\parallel_{\hat \jmath}}(\bar v)]=0 $,  
    and hence $\bar c^{\parallel_{\hat \jmath}}$ solves
        \eqref{eq:mf2d}. 
    By uniqueness,
    Lemma \ref{c on PJ} implies that $\bar c^{\parallel_{\hat \jmath}}=c^{\parallel_{\hat \jmath}}_{\twod}(\bar v)$ and thus $\lim_{i\to +\infty}c^{\parallel_{\hat \jmath}}(v_i)=c^{\parallel_{\hat \jmath}}_{\twod}(\bar v)=c^{\parallel_{\hat \jmath}}(\bar v)$. This proves that $c^{\parallel_{\hat \jmath}}(v)  $  is continuous.
\end{proof}

Actually $ c^{\parallel_{\hat \jmath}} (v)$ is also $ C^1 $ 
on $B_{r'}^{\hat \jmath}\setminus\{0\}$ as we show in Lemma \ref{lm:C1 regularity of cparallel} below.

\begin{lemma}\label{lm:regularity}
    Let $ {\mathtt E} :B_{r}(c_*)\times B_{r}^V\to \R$ be an analytic 
    function such that, for any
   $ {\hat \jmath} \in \cD $, 
        \begin{equation}\label{laderab}
    \partial_{c^{\bot_{\hat \jmath}}}{\mathtt E} (c,v)=0 \quad \forall   v\in  V_{\hat \jmath} \cap B_{r}^V , \   c\in B_{r}(c_*)    \, .
        \end{equation}
        Then the composed functions 
        $
            v\mapsto {\mathtt E} (c(v),v)$ and $v\mapsto (\nabla_v {\mathtt E}) (c(v),v)
        $, with $ c(v) $ defined in 
        \eqref{c defined on the ball},  
        are continuous on $B_{r'}^V$, analytic on $B_{r'}^V\setminus {\bf V}^{\twod}$ and 
        restricted to each $(B_{r'}^V\cap V_{\hat \jmath})\setminus\{0\}$ for any ${\hat \jmath} \in \cD $.
\end{lemma}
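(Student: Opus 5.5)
The plan is to split $B_{r'}^V$ into the three regions where $c(v)$ is defined separately in \eqref{c defined on the ball}: the open set $B_{r'}^V\setminus{\bf V}^{\twod}$, the punctured subspaces $(B_{r'}^V\cap V_{\hat\jmath})\setminus\{0\}$, and the origin. Analyticity is immediate in the first two. On $B_{r'}^V\setminus{\bf V}^{\twod}$ the map $c=c_{\threed}$ is analytic by Propositions \ref{Construction of c} and \ref{c near PJ}, and ${\mathtt E}$ is analytic, so $v\mapsto {\mathtt E}(c(v),v)$ and $v\mapsto(\nabla_v{\mathtt E})(c(v),v)$ are analytic there. On $(B_{r'}^V\cap V_{\hat\jmath})\setminus\{0\}$ we have $c(v)=c^{\parallel_{\hat\jmath}}_{\twod}(v)\hat\jmath+c_*^{\bot_{\hat\jmath}}\hat\jmath^\bot$. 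This is analytic by Lemma \ref{c on PJ}, so the restrictions of both composed functions are analytic as well.

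Continuity is the real content. At $v=0$ it follows from \eqref{cQ-c*Q d cQ(v)}, which gives $c(v)\to c_*=c(0)$, together with the joint continuity of ${\mathtt E}$ and $\nabla_v{\mathtt E}$. The remaining points are $\bar v\in V_{\hat\jmath}\cap B_{r'}^V$ with $\bar v\ne0$. Take $v_i\to\bar v$. By Lemma \ref{lm:continuity of cparallel} we have $c^{\parallel_{\hat\jmath}}(v_i)\to c^{\parallel_{\hat\jmath}}(\bar v)$, at least along sequences in $B_{r'}^{\hat\jmath}$. Near $\bar v$ one has $\|\Pi^\bot_{\hat\jmath}v\|\ll\|v\|$, so small neighborhoods of $\bar v$ meet no other $V_{\hat\jmath'}$, and every $v_i$ lies in $B_{r'}^{\hat\jmath}$. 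The orthogonal component $c^{\bot_{\hat\jmath}}(v_i)$, however, need not converge; it is only bounded in $B_r(c_*)$.

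To handle this I would use hypothesis \eqref{laderab}. Write
\[
{\mathtt E}(c(v_i),v_i)-{\mathtt E}(c(\bar v),\bar v)=\big[{\mathtt E}(c(v_i),v_i)-{\mathtt E}(c(v_i),\bar v)\big]+\big[{\mathtt E}(c(v_i),\bar v)-{\mathtt E}(c(\bar v),\bar v)\big].
\]
The first bracket tends to zero by uniform continuity of ${\mathtt E}$ on a compact set $\overline{B_{\rho}(c_*)}\times K$ with $\rho<r$; this uses that $c(v)$ stays in a smaller closed ball, which holds by \eqref{cQ-c*Q d cQ(v)} after shrinking $r'$. For the second bracket, \eqref{laderab} says that ${\mathtt E}(\cdot,\bar v)$ depends only on $c^{\parallel_{\hat\jmath}}$, since $B_r(c_*)$ is convex and so one can integrate along the $\hat\jmath^\bot$ direction. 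Hence the second bracket equals ${\mathtt E}(c^{\parallel_{\hat\jmath}}(v_i)\hat\jmath+c_*^{\bot_{\hat\jmath}}\hat\jmath^\bot,\bar v)-{\mathtt E}(c(\bar v),\bar v)$, which tends to zero by Lemma \ref{lm:continuity of cparallel}.

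The main obstacle is the gradient $\nabla_v{\mathtt E}$. Hypothesis \eqref{laderab} only holds on $V_{\hat\jmath}$, so it does not directly give $\partial_{c^{\bot_{\hat\jmath}}}\nabla_v{\mathtt E}(c,\bar v)=0$. The natural move is to differentiate $\partial_{c^{\bot_{\hat\jmath}}}{\mathtt E}(c,v)=0$ for $v\in V_{\hat\jmath}$, which kills only the $V_{\hat\jmath}$-components $\Pi_{\hat\jmath}\nabla_v\partial_{c^{\bot_{\hat\jmath}}}{\mathtt E}$, and this is not enough. I would therefore use the $\T^2_\Gamma$-invariance of the functions ${\mathtt E}$ to which the lemma is applied, namely $\Phi$ and ${\mathtt I}$. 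By the symmetry estimates of Lemma \ref{lm:A}, the vanishing of $\partial_{c^{\bot_{\hat\jmath}}}{\mathtt E}$ on $V_{\hat\jmath}$ gives $\partial_{c^{\bot_{\hat\jmath}}}{\mathtt E}(c,v)=\cO(\|\Pi^\bot_{\hat\jmath}v\|^2)$. Differentiating, $\partial_{c^{\bot_{\hat\jmath}}}\nabla_v{\mathtt E}(c,v)=\cO(\|\Pi^\bot_{\hat\jmath}v\|)$, which vanishes at $\bar v$. Equivalently, the equivariant field $\Pi^\bot_{\hat\jmath}\nabla_v{\mathtt E}$ vanishes on $V_{\hat\jmath}$ by Remark \ref{rem:Vinv}. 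With this vanishing of $\partial_{c^{\bot_{\hat\jmath}}}\nabla_v{\mathtt E}$ at $\bar v$, the same two-bracket splitting as above gives continuity of $(\nabla_v{\mathtt E})(c(v),v)$. If invariance is not available from the hypotheses as stated, I would add it as an implicit assumption, since it holds in every application in the paper.
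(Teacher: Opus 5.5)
Your proof is correct once $\T^2_\Gamma$-invariance of ${\mathtt E}(c,\cdot)$ is added, and that addition is genuinely necessary. For $v\mapsto{\mathtt E}(c(v),v)$ your argument is the paper's. The paper extracts a subsequence along which $c^{\bot_{\hat\jmath}}(v_i)$ converges and then uses \eqref{laderab} to replace the limit by $c_*^{\bot_{\hat\jmath}}$. You instead integrate \eqref{laderab} along $\hat\jmath^\bot$, which avoids subsequences. The ingredients are the same: independence of ${\mathtt E}(\cdot,\bar v)$ from $c^{\bot_{\hat\jmath}}$, plus Lemma \ref{lm:continuity of cparallel}.

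For the gradient the paper only says continuity ``is similar'', and you are right that it is not. Hypothesis \eqref{laderab} gives no control of $\Pi^\bot_{\hat\jmath}\nabla_v{\mathtt E}(c,\bar v)$. In fact the gradient claim cannot be derived from the stated hypotheses alone. Take $b\in V_{\hat\jmath'}\setminus\{0\}$ with $\hat\jmath'\neq\hat\jmath$, and set ${\mathtt E}(c,v):=(c\cdot\hat\jmath^\bot)\langle v,\bar v\rangle\langle v,b\rangle$. This function vanishes on ${\bf V}^{2d}$, so \eqref{laderab} holds for every direction. Yet $\nabla_v{\mathtt E}(c(v),v)-c^{\bot_{\hat\jmath}}(v)\|\bar v\|^2 b\to 0$ as $v\to\bar v$. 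So continuity at $\bar v$ is equivalent to $c^{\bot_{\hat\jmath}}(v)\to c_*^{\bot_{\hat\jmath}}$, which the paper never proves; it even remarks that $c(v)$ may be discontinuous on ${\bf V}^{2d}$. Invariance is the right extra hypothesis, and it holds wherever the gradient part is used:
\begin{itemize}
\item $\Phi$ in Lemma \ref{lm:mu and Y};
\item the invariant ${\mathtt E}$ of Lemma \ref{lm:regularity 1};
\item the components of $\tG^{\hat\jmath}_{\geq 3}$ in Lemma \ref{lm:C1 regularity of cparallel}.
\end{itemize}

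Of your two justifications, rely on the equivariance one. The route through Lemma \ref{lm:A} needs hypotheses not available here, namely $\partial_{c^{\bot_{\hat\jmath}}}{\mathtt E}=\cO(\|v\|^2)$ and bounded derivatives. Also, ``differentiating'' an $\cO$-bound is only justified through the Taylor formula \eqref{diffok} in that lemma's proof.

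The direct argument needs only $C^1$ regularity and invariance. The map $\nabla_v{\mathtt E}(c,\cdot)$ is equivariant for the invariant scalar product \eqref{eq:def scalar product}, and $\bar v$ is fixed by ${\mathtt T}_{[j]}$. So by Lemma \ref{ortoges}, $\nabla_v{\mathtt E}(c,\bar v)\in V_{\hat\jmath}$, that is, $\Pi^\bot_{\hat\jmath}\nabla_v{\mathtt E}(c,\bar v)=0$ for all $c$. Moreover, $\Pi_{\hat\jmath}\nabla_v{\mathtt E}(c,\bar v)$ is the gradient at $\bar v$ of ${\mathtt E}(c,\cdot)_{|V_{\hat\jmath}}$, which is independent of $c^{\bot_{\hat\jmath}}$ by \eqref{laderab}. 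Equivalently, apply Lemma \ref{lm:AAA} to the invariant function $\partial_{c^{\bot_{\hat\jmath}}}{\mathtt E}(c,\cdot)$, which vanishes on $V_{\hat\jmath}$. With this in place, your splitting argument applies verbatim to $(\nabla_v{\mathtt E})(c(v),v)$.
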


\begin{proof}
    Both the functions $v\mapsto {\mathtt E}(c(v),v)$ and $v\mapsto (\nabla_v {\mathtt E}) (c(v),v)$ are analytic on $B_{r'}^V\setminus {\bf V}^{\twod}$ and restricted to $ (B_{r'}^V\cap V_{\hat \jmath})\setminus\{0\}$ for any ${\hat \jmath} \in \cD $ by the analyticity property of the function $ c(v) $.
     Let us prove the continuity of $v\mapsto {\mathtt E} (c(v),v)$.
    Since the function $c(v)$ is continuous at $v=0$ (cf. \eqref{cQ-c*Q d cQ(v)}) we only have to verify it is continuous
    at $ \bar v \in B_{r'}^V\cap V_{\hat \jmath}  $, $ \bar v \neq 0 $.
        Let $\{v_i\}_{i\in \N}$ be a sequence in $B_{r'}^V$ converging to $\bar v $. Since $c(v_i)$
        is bounded (cf.~\eqref{cQ-c*Q d cQ(v)}), 
        for any subsequence $v_{i_m}$ there exists a subsequence $v_{i_{m_\ell}}$ and $\bar c^{\bot_{\hat \jmath}} \in (c_*^{\bot_{\hat \jmath}}-r,c_*^{\bot_{\hat \jmath}}+r) $ such that
        $ c^{\bot_{\hat \jmath}}(v_{i_{m_\ell}})
        \xrightarrow[\ell\to+\infty]{}  
        \bar c^{\bot_{\hat \jmath}}$.
        Hence,  
        using also the continuity 
        of $ c^{\parallel_{\hat \jmath}} (v) $ proved in  Lemma \ref{lm:continuity of cparallel}, we deduce
$ c(v_{i_{m_\ell}})     \to   \bar c := c^{\parallel_{\hat \jmath}}_{\twod}(\bar v)\hat\jmath +\bar c^{\bot_{\hat \jmath}} \hat\jmath^\bot $ and 
        $$
   {\mathtt E} (c(v_{i_{m_\ell}}),v_{i_{m_\ell}}) \xrightarrow[\ell\to+\infty]{}  {\mathtt E} (\bar c,\bar v) \stackrel{\eqref{laderab}} = 
{\mathtt E} (c^{\parallel_{\hat \jmath}}_{\twod}(\bar v)\hat\jmath+c_*^{\bot_{\hat \jmath}}\hat\jmath^\bot ,\bar v) \stackrel{\eqref{c defined on the ball}} 
= {\mathtt E}(c(\bar v), \bar v) \, .
        $$
Hence $v\mapsto {\mathtt E} (c(v),v)$ is continuous. 
The 
continuity of  $ (\nabla_v {\mathtt E}) (c(v),v)$ is similar. 
\end{proof}

\begin{lemma}{(\bf $C^1$ regularity of $c^{\parallel_{\hat \jmath}}(v) $)}\label{lm:C1 regularity of cparallel}
For any $ {\hat \jmath} \in \cD$,
    the parallel component $c^{\parallel_{\hat \jmath}}(v)$ of the speed $ c(v) $ in \eqref{c defined on the ball}  is of class $C^1$ on $B_{r'}^{\hat \jmath}\setminus\{0\}$.
\end{lemma}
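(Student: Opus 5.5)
Analyticity of $c^{\parallel_{\hat \jmath}}$ on $B_{r'}^V\setminus {\bf V}^{\twod}$ and on $(V_{\hat \jmath}\cap B_{r'}^V)\setminus\{0\}$ is already known, and continuity on $B_{r'}^{\hat \jmath}$ is Lemma \ref{lm:continuity of cparallel}. Hence it suffices to show that the differential $\di_v c^{\parallel_{\hat \jmath}}(v)$, defined for $v\notin{\bf V}^{\twod}$, extends continuously to each $\bar v\in (V_{\hat \jmath}\cap B_{r'}^V)\setminus\{0\}$. A standard argument then gives $C^1$ on $B_{r'}^{\hat \jmath}\setminus\{0\}$: $c^{\parallel_{\hat \jmath}}$ is continuous and $C^1$ off the closed subspace $V_{\hat \jmath}$, which has codimension $\geq 2$, and its derivative has a continuous limit there. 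One integrates along segments that meet $V_{\hat \jmath}$ in at most one point, or perturbs them to avoid it.

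The plan is to compute $\di_v c^{\parallel_{\hat \jmath}}$ from the implicit relation. Near $\bar v$ we have $v\in\cN_{\hat \jmath}(r')\setminus V_{\hat \jmath}$, and \eqref{eq:dc} holds. Let $M(v):=\cA^{\hat \jmath}(v)+\partial_{c^{\hat \jmath}}\tG^{\hat \jmath}_{\geq 3}$ and let $b(v)[\hat v]$ denote the right-hand side. By Cramer's rule,
\[
\di_v c^{\parallel_{\hat \jmath}}[\hat v]=\frac{M_{22}b_1-M_{12}b_2}{\det M}.
\]
From \eqref{Estimate on a(v,v)}, \eqref{lem:c near PJ estimate 6}, \eqref{Estimate on dv FQ[hat v]} and \eqref{Estimates on g}, the entries $M_{12},M_{22},b_2$ all carry a factor $\|\Pi_{\hat \jmath}^\bot v\|$, either squared or times $\|\Pi^\bot_{\hat \jmath}\hat v\|$. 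Likewise $\det M=M_{11}M_{22}-M_{12}^2$ with $M_{22}\sim\|\Pi^\bot_{\hat \jmath}v\|^2$. We therefore divide numerator and denominator by $M_{22}$:
\[
\di_v c^{\parallel_{\hat \jmath}}[\hat v]=\frac{b_1-(M_{12}/M_{22})\,b_2}{M_{11}-M_{12}^2/M_{22}}.
\]

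The key step, and the main obstacle, is to show that the ratios $M_{12}/M_{22}$, $M_{12}^2/M_{22}$ and $b_2/M_{22}$, paired appropriately, tend to limits compatible with the $2d$ formula \eqref{eq:dc2d EQ}. At first sight they are only bounded, since they depend on the direction in which $\Pi^\bot_{\hat \jmath}v\to0$.

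For the products, note $M_{12}^2/M_{22}=\cO(\|\Pi^\bot_{\hat \jmath} v\|^2)\to0$. Hence the denominator tends to $M_{11}(\bar v)=\cA^{\hat \jmath}_{11}(\bar v)+\partial_{c^\parallel}\tG^{\parallel}_{\geq3}(c(\bar v),\bar v)$, which is the coefficient appearing in \eqref{eq:dc2d EQ}.

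For the numerator, $M_{12}/M_{22}$ is bounded, so it suffices that $b_2\to0$ uniformly for $\|\hat v\|\le1$. For the components in $\Pi_{\hat \jmath}\hat v$ this follows from the bound $\|\Pi^\bot v\|^2\|\Pi\hat v\|$. The components in $\Pi^\bot_{\hat \jmath}\hat v$ carry only $\|v\|\|\Pi^\bot v\|\|\Pi^\bot\hat v\|$, which also tends to $0$.

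It remains to show $b_1(v)[\hat v]\to b_1^{2d}(\bar v)[\hat v]$, i.e.\ the right side of \eqref{eq:dc2d EQ} extended to all $\hat v\in V$. Here I would apply Lemma \ref{lm:regularity} to ${\mathtt E}=\di_v\cA^{\hat \jmath}_{11}$-type terms and to $\tG^{\parallel_{\hat \jmath}}_{\geq3}$. These satisfy \eqref{laderab}, because by Proposition \ref{lem:expa} $G_{\geq3}$ is independent of $c^\perp$ on $V_{\hat \jmath}$, and $\nabla\cI^{\parallel}$ does not depend on $c$. Thus $(\nabla_v{\mathtt E})(c(v),v)$ is continuous. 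Combined with the continuity of $c^{\parallel_{\hat \jmath}}$ this gives convergence of $b_1$. One must also check that the $\Pi^\bot\hat v$-part of the limiting derivative is consistent, namely zero in the $2d$ formula by the $\T^2_\Gamma$ symmetry; the bounds \eqref{derivicino} give $\cO(\|\Pi^\bot v\|/\|v\|)$ for that part, which vanishes in the limit. The resulting limit is independent of the approach, hence $\di_v c^{\parallel_{\hat \jmath}}$ extends continuously and equals the $2d$ derivative from Lemma \ref{c on PJ} on $V_{\hat \jmath}$ directions.
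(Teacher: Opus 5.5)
Your proposal is correct and is essentially the paper's argument. Both pass to the limit $v\to\bar v\in (V_{\hat\jmath}\cap B_{r'}^V)\setminus\{0\}$ in the first row of \eqref{eq:dc}. Both show that the coupling with the possibly discontinuous $c^{\bot_{\hat\jmath}}$ disappears, because $M_{12},M_{21},M_{22},b_2$ vanish to second order in $\Pi^\bot_{\hat\jmath}v$. Both then identify the limit with the unique solution of \eqref{eq:equation for dcparallelj} at $\bar v$. The paper bounds the cross term via the second estimate in \eqref{derivicino} and argues by subsequences and uniqueness; you instead eliminate $\di_v c^{\bot_{\hat\jmath}}$ with the Schur complement, which is equivalent.

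Two small corrections are needed.
\begin{itemize}
\item $M$ is not symmetric, so $\det M=M_{11}M_{22}-M_{12}M_{21}$. This is harmless, since $M_{21}=\cO(\|\Pi^\bot_{\hat\jmath}v\|^2)$ as well.
\item $b_1$ contains the term $-\di_v\cA^{\hat\jmath}_{12}(v)[\hat v]\,(c^{\bot_{\hat\jmath}}(v)-c^{\bot_{\hat\jmath}}_*)$. Its convergence does not follow from Lemma~\ref{lm:regularity} or from the continuity of $c^{\parallel_{\hat\jmath}}$. It does tend to $0$, because $\di_v\cA^{\hat\jmath}_{12}(v)[\hat v]=\cO(\|\Pi^\bot_{\hat\jmath}v\|\,\|\Pi^\bot_{\hat\jmath}\hat v\|)$ by \eqref{Estimates on d(A(v+w))} and $c(v)$ is bounded.
\end{itemize}
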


\begin{proof}
We have  just to prove that 
$ c^{\parallel_{\hat \jmath}}(v) $ 
is $ C^1 $
at any  $ \bar v \in B_{r'}^V\cap V_{\hat \jmath}  $, $ \bar v \neq 0 $.
    We show that for any direction $\hat v\in V$ the derivative $\di_v c^{\parallel_{\hat \jmath}}(v)[\hat v]$ on $B_{r'}^V\setminus {\bf V}^{\twod}$ admits a continuous extension to $B_{r'}^{\hat \jmath}\setminus\{0\}$.
    For any $\hat v\in V$, let $\ell:B_{r'}^{\hat \jmath}\setminus\{0\}\to \R$ be the function
    $$
        \ell(v):=\di_v c^{\parallel_{\hat \jmath}}(v)[\hat v] \quad \forall  \ v\in B_{r'}^V\setminus {\bf V}^{2d} \, , 
    $$
    and for any $v\in (B_{r'}\cap V_{\hat \jmath})\setminus \{0\}$ be the unique solution of the equation (cf. \eqref{eq:dc2d EQ})
    \begin{equation}\label{eq:equation for dcparallelj}
            \big( \cA_{11}^{\hat \jmath}(v)+\partial_{c^{\parallel_{\hat \jmath}}}\tG^{\parallel_{\hat \jmath}}_{\geq 3}(c^{\hat \jmath}(v),v)\big)\ell(v)
            =
            -\di_v\cA^{\hat \jmath}_{11}(v)[\hat v](c^{\parallel_{\hat \jmath}}(v)-c_*^{\parallel_{\hat \jmath}})-\di_v\tG^{\parallel_{\hat \jmath}}_{\geq 3}(c^{\hat \jmath}(v),v)[\hat v] \, .
        \end{equation}
    In view of \eqref{Estimate on a(v,v)} and \eqref{lem:c near PJ estimate 6} the solution of \eqref{eq:equation for dcparallelj} is unique.
    We now prove that $\ell(v)$ is continuous 
     at any $\bar v\in (B_{r'}^V\cap V_{\hat \jmath})\setminus\{0\}$.
    Let $\{v_i\}_{i\in \N}$ be a sequence in $B_{r'}^{\hat \jmath}\setminus\{0\}$  converging to $\bar v$.
    For any subsequence $v_{i_m}$ there exists a subsequence $v_{i_{m_\ell}}$ which is entirely contained in $(B_{r'}^V\cap V_{\hat \jmath})\setminus\{0\}$ or in $B_{r'}^V\setminus{\bf V}^{2d}$.
    In the first case taking the limit in \eqref{eq:equation for dcparallelj} by uniqueness we obtain that $\ell(v_{i_{m_\ell}})\to \ell(\bar v)$.
    Hence assume that $v_{i_{m_\ell}}\in B_{r'}^V\setminus { \bf V}^{2d}$.
    In view of \eqref{derivicino} we have that $\ell(v_{i_{m_\ell}})=\di_v c^{\parallel_{\hat \jmath}}(v_{i_{m_\ell}})[\hat v]$ is bounded.
    Hence there exists $\bar \mu\in \R$ such that 
    $   \ell(v_{i_{m_\ell}})=\di_v c^{\parallel_{\hat \jmath}} (v_{i_{m_\ell}})[\hat v]\to \bar \mu $ up to subsequence. 
    In view of \eqref{eq:dc} we have
    \begin{equation}\label{eq:dc first component}
    \begin{aligned}
        &\big(\cA^{\hat \jmath}_{11}(v)+\partial_{c^{\parallel_{\hat \jmath}}}\tG^{\parallel_{\hat \jmath}}_{\geq 3}(c^{\hat \jmath}(v),v)\big)\ell(v)+
        \underbrace{\big(\cA^{\hat \jmath}_{12}(v)+\partial_{c^{\bot_{\hat \jmath}}}\tG^{\parallel_{\hat \jmath}}_{\geq 3}(c^{\hat \jmath}(v),v)\big)\di_v c^{\bot_{\hat \jmath}}(c)[\hat v]}_{=:\boldsymbol{(I_\ell)}}\\
       & =\! - \di_v \cA^{\hat \jmath}_{11}(v)[\hat v](c^{\parallel_{\hat \jmath}}(v)-c_*^{\parallel_{\hat \jmath}})
       \! - \!\underbrace{\di_v \cA^{\hat \jmath}_{12}(v)[\hat v](c^{\bot_{\hat \jmath}}(v)-c_*^{\bot_{\hat \jmath}})}_{
        =: \boldsymbol{(II_\ell)} } \! - \di_v \tG^{\parallel_{\hat \jmath}}_{\geq 3}(c^{\hat \jmath}(v),v)[\hat v]
    \end{aligned}
    \end{equation}
    for $v=v_{i_{m_\ell}}$.
    Using \eqref{Estimate on a(v,v)}, \eqref{lem:c near PJ estimate 6} and \eqref{derivicino} we deduce $\boldsymbol{(I_\ell)}\xrightarrow[\ell\to +\infty]{} 0$ and using \eqref{Estimates on d(A(v+w))}, \eqref{cQ-c*Q d cQ(v)} that $\boldsymbol{(II_\ell)}\xrightarrow[\ell\to +\infty]{} 0$.
    The other terms converge in view of Lemmata \ref{lm:continuity of cparallel} and \ref{lm:regularity} and since $\partial_{c^{\bot_{\hat \jmath}}}\tG^{\parallel_{\hat \jmath}}_{\geq 3}(c,v)=0 $ for $v\in B_{r}^V\cap V_{\hat \jmath}$.
    Passing to the limit for $\ell \to +\infty$ in \eqref{eq:dc first component} we deduce that $\bar \mu$ solves \eqref{eq:equation for dcparallelj} at $ \bar v $. 
    Since the solution to \eqref{eq:equation for dcparallelj} is unique we deduce $\ell(\bar v)=\bar \mu$ and hence $\ell(v_i)=\di_v c^{\parallel_{\hat \jmath}}(v_i)[\hat v]$ converges to $\ell(\bar v)$.
    We have proved that $\ell(v)$ is a continuous extension of $\di_v c^{\parallel_{\hat \jmath}}(v)[\hat v]$.
    \end{proof}

   The next  lemma  will be used  in Lemmata \ref{lem:regI}
    and \ref{HisC1}. 
    
    \begin{lemma}\label{lm:regularity 1}
        Let $ {\mathtt E} :B_{r}(c_*)\times B_{r}^V\to \R$ be an analytic
    $ \T^2_{\Gamma}$-invariant 
    function such that 
\begin{equation}\label{assuC1}
         {\mathtt E} (c,0)=0  \, , \quad  \partial_{c^{\bot_{\hat \jmath}}} {\mathtt E} (c,v)=0 \, ,  \  
        \forall v\in B_{r}^V\cap V_{\hat \jmath}\, , \   c\in B_{r}(c_*) \,, \ {\hat \jmath} \in \cD\, .
        \end{equation}
        Then the composed function $v\mapsto {\mathtt E} (c(v),v)$, with $ c(v) $ defined in 
        \eqref{c defined on the ball},   is $C^1 (B_{r'}^V) $.
    \end{lemma}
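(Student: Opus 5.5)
To prove that $v\mapsto \mathtt E(c(v),v)$ is $C^1$ on $B_{r'}^V$, I first use Lemma~\ref{lm:regularity}, which is available since \eqref{assuC1} contains \eqref{laderab}. It gives that $v\mapsto \mathtt E(c(v),v)$ and $v\mapsto(\nabla_v\mathtt E)(c(v),v)$ are continuous on $B_{r'}^V$. They are also analytic on $B_{r'}^V\setminus{\bf V}^{\twod}$ and on each $(B_{r'}^V\cap V_{\hat\jmath})\setminus\{0\}$. The strategy is then the standard one. On the open dense set $B_{r'}^V\setminus{\bf V}^{\twod}$ I compute
\[
\di_v[\mathtt E(c(v),v)][\hat v]=(\di_v\mathtt E)(c(v),v)[\hat v]+(\partial_c\mathtt E)(c(v),v)\,\di_v c(v)[\hat v]\,,
\]
and I show that this extends continuously to all of $B_{r'}^V$, with extension $(\di_v\mathtt E)(c(v),v)[\hat v]$ on ${\bf V}^{\twod}$. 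Since ${\bf V}^{\twod}$ is a finite union of proper subspaces, a function that is continuous, $C^1$ off ${\bf V}^{\twod}$, and whose derivative extends continuously is $C^1$. To see this, apply the mean value theorem along segments, which meet ${\bf V}^{\twod}$ in finitely many points or lie in it. Segments lying in it are handled by approximating them with nearby segments. So the whole task is to show that $(\partial_c\mathtt E)(c(v),v)\,\di_vc(v)[\hat v]\to 0$ as $v\to\bar v\in{\bf V}^{\twod}$.

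In the region $\cF_{r'}$ this is easy. There \eqref{|c0-c*| |dc0|} gives $\|\di_vc(v)\|\lesssim 1$. Since $\mathtt E(c,0)=0$ for all $c$, we get $\partial_c\mathtt E(c,0)=0$ and hence $\partial_c\mathtt E(c,v)=\cO(\|v\|)$. The product is therefore $\cO(\|v\|)$, which tends to $0$ as $v\to 0$; near $\bar v\neq 0$ the set $\cF_{r'}$ stays away from ${\bf V}^{\twod}$, so nothing more is needed. The origin is also covered in $\cN_{r'}$ by the bounds below, since every term carries a factor $\|v\|$.

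The main step is the region $\cN_{\hat\jmath}(r')\setminus V_{\hat\jmath}$. I decompose $\partial_c\mathtt E\,\di_vc=\partial_{c^{\parallel_{\hat\jmath}}}\mathtt E\,\di_vc^{\parallel_{\hat\jmath}}+\partial_{c^{\bot_{\hat\jmath}}}\mathtt E\,\di_vc^{\bot_{\hat\jmath}}$. The parallel term is handled by Lemma~\ref{lm:C1 regularity of cparallel}: $\di_vc^{\parallel_{\hat\jmath}}$ extends continuously to $B_{r'}^{\hat\jmath}\setminus\{0\}$, and it is bounded near $0$ by \eqref{derivicino}. The factor $\partial_{c^{\parallel_{\hat\jmath}}}\mathtt E(c(v),v)$ needs care, because $c(v)$ need not be continuous. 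Still, at a limit point $\bar c$ with $\bar v\in V_{\hat\jmath}$, it equals $\partial_{c^{\parallel}}\mathtt E(\bar c^{\parallel}\hat\jmath+c_*^{\bot}\hat\jmath^\bot,\bar v)$, because $\partial_{c^\bot}\partial_{c^\parallel}\mathtt E=0$ on $V_{\hat\jmath}$. One could try to show the parallel term has the correct limit. It is simpler to use the chain rule for the $C^1$ function $c^{\parallel}$ on $B^{\hat\jmath}_{r'}$: the limit on $V_{\hat\jmath}$ is then consistent with the derivative of the restriction there. On the other hand, the candidate extension I wrote omits the $c$-derivative term on $V_{\hat\jmath}$. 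This has to be reconciled, and the point is that the value of $\mathtt E(c(v),v)$ on $V_{\hat\jmath}$ genuinely depends on $c_{\twod}^{\parallel}(v)$. So I will take as candidate derivative on $V_{\hat\jmath}$ the expression $(\di_v\mathtt E)+\partial_{c^\parallel}\mathtt E\,\ell(v)$, with $\ell$ the extension from Lemma~\ref{lm:C1 regularity of cparallel}.

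The orthogonal term is the obstacle. Here \eqref{derivicino} only gives $|\di_vc^{\bot_{\hat\jmath}}[\hat v]|\lesssim\|\hat v\|+\frac{\|v\|}{\|\Pi^\bot_{\hat\jmath}v\|}\|\hat v\|$, which blows up. This must be compensated by the vanishing of $\partial_{c^{\bot_{\hat\jmath}}}\mathtt E$ on $V_{\hat\jmath}$. The function $\partial_{c^\bot}\mathtt E$ is $\T^2_\Gamma$-invariant, vanishes on $V_{\hat\jmath}$, and vanishes at $v=0$. So Lemma~\ref{lm:A}-$(ii)$ from the Appendix, the same tool used for \eqref{Estimates on g}, should give the quadratic vanishing $\partial_{c^\bot}\mathtt E(c,v)=\cO(\|\Pi^\bot_{\hat\jmath}v\|^2)$, and possibly $\cO(\|\Pi^\bot_{\hat\jmath}v\|^2\|v\|^{k})$. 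It is this quadratic order, not merely linear, that I expect invariance to yield, and it is the key point. The product is then $\cO\big(\|\Pi^\bot_{\hat\jmath}v\|^2(1+\|v\|/\|\Pi^\bot_{\hat\jmath}v\|)\big)=\cO(\|\Pi^\bot_{\hat\jmath}v\|\,\|v\|)$, which tends to $0$ as $v\to V_{\hat\jmath}$ and as $v\to0$. This concludes the continuity of the extended differential, and with it the $C^1$ property.
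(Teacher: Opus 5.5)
Your argument is correct and is essentially the paper's proof. You use the same split of $\di_v[{\mathtt E}(c(v),v)]$ into three terms: $(\di_v{\mathtt E})(c(v),v)$; the parallel term, continuous on $B^{\hat\jmath}_{r'}\setminus\{0\}$ by Lemma~\ref{lm:regularity} applied to $\partial_{c^{\parallel_{\hat\jmath}}}{\mathtt E}$ together with Lemma~\ref{lm:C1 regularity of cparallel}; and the orthogonal term, bounded by $\cO(\|\Pi^\bot_{\hat\jmath}v\|\,\|v\|)$ via Lemma~\ref{lm:A}-$(ii)$ and \eqref{derivicino}. Your mean-value argument only makes explicit the step ``continuous extension of the differential $\Rightarrow C^1$'' that the paper leaves implicit.

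Two small clean-ups are worth making. First, the claim in your opening paragraph that the extension on ${\bf V}^{\twod}$ is $(\di_v{\mathtt E})(c(v),v)$ alone is false; as you correct later, the term $\partial_{c^{\parallel_{\hat\jmath}}}{\mathtt E}\,\ell$ survives. Second, invoking Lemma~\ref{lm:A} with $m=2$ for $\partial_{c^{\bot_{\hat\jmath}}}{\mathtt E}$ requires $\di_v{\mathtt E}(c,0)=0$, not just vanishing at $v=0$. This follows from Lemma~\ref{lm:AAA}, as the paper records in \eqref{assuC1 2}.
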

    
    \begin{proof}
    Note that, since ${\mathtt E}(c,\cdot)$ is $\T^2_\Gamma$-invariant,   Lemma \ref{lm:AAA} 
    implies that 
    \begin{equation}\label{assuC1 2}
        \di_v {\mathtt E}(c,0)=0\quad \forall c\in B_{r}(c_*)\, .
    \end{equation}
        By Lemma \ref{lm:regularity} the 
        composed function  $E(v) :=  {\mathtt E} (c(v),v)$ is analytic on $B_{r'}^V\setminus {\bf V}^{\twod}$ and 
        continuous on  $B_{r'}^V$ 
        %To prove that $E\in C^1(B_{r'}^V)$ we show that for any direction $\hat v\in V$ the derivative $\di_v E(v)[\hat v]$ on $B_{r'}^V\setminus {\bf V}^{\twod}$ admits a continuous extension on $ B_{r'}^V $.
        %Since $ E $ is analytic outside $ {\bf V}^{2d} $  
        % To prove that $E\in C^1(B_{r'}^V)$ 
        and we 
         have just to prove that $E  $ is $ C^1 $  on each 
        $ V_{\hat \jmath} \cap B_{r' }$ and  at $ v = 0 $.   
        Note that 
         $B_{r'}^V = \cup_{\hat \jmath\in \cD} \cF_{r'}\cup \cN_{\hat \jmath}(r') $ (with $\cF_{r'}$ and $\cN_{\hat \jmath}(r')$  defined in \eqref{defFr} and \eqref{defNr})
and we         
        show that for any direction $\hat v\in V$, for any $\hat \jmath \in \cD$ the restriction of $\di_v E(v)[\hat v]$ to $\cF_{r'}\cup \cN_{\hat \jmath}(r')\setminus V_{\hat \jmath}$ admits a continuous extension to $\cF_{r'}\cup \cN_{\hat \jmath}(r')$ which is equal to $0$ at $ v = 0$.
For any $ {\hat \jmath} \in \cD $, and  $v\in B_{r'}^V\setminus {\bf V}^{2d}$, we have 
        \begin{align*}\label{eq:dE(v)[hat v]}
        \di_v E(v)[\hat v] =
        \underbrace{(\di_v {\mathtt E}) (c(v),v)[\hat v]}_{=: \boldsymbol{(I)}} +
        \underbrace{(\partial_{c^{\parallel_{\hat \jmath}}} {\mathtt E}) (c(v),v)\di_v c^{\parallel_{\hat \jmath}}(v)[\hat v]}_{=: \boldsymbol{(II)} } +\underbrace{(\partial_{c^{\bot_{\hat \jmath}}}{\mathtt E}) (c(v),v)\di_v c^{\bot_{\hat \jmath}}(v)[\hat v]}_{\boldsymbol{(III)}} \, . 
        \end{align*}
        By  Lemma \ref{lm:regularity} the term $\boldsymbol{(I)}$ admits a continuous extension on $B_{r'}^V$ which is equal to $0$ at $v=0$  by \eqref{assuC1}.
        Similarly Lemma \ref{lm:regularity} implies  that  $(\partial_{c^{\parallel_{\hat \jmath}}}{\mathtt E})(c(v),v)$ is continuous on $B_{r'}^V$.
        % and using also Lemma \ref{lm:C1 regularity of cparallel} we conclude that $\boldsymbol{(II)}$ is continuous on $B_{r'}^{\hat \jmath}\setminus\{0\}$.
        Since $\di_v c^{\parallel_{\hat \jmath}}(v)$ is continuous on $B_{r'}^{\hat \jmath}\setminus\{0\}$ (by Lemma \ref{lm:C1 regularity of cparallel}) the term $\boldsymbol{(II)}$ extends with continuity to $B_{r'}^{\hat \jmath}\setminus \{0\}$.
        %$\cF_{r'}\cup \cN_{\hat \jmath}(r')\setminus\{0\}$.
        Moreover, $\di_v c^{\parallel_{\hat \jmath}}(v)[\hat v]$ is uniformly bounded on $\cF_{r'}\cup \cN_{\hat \jmath}(r')\setminus\{0\}$ 
        (by the first inequality in \eqref{derivicino} and \eqref{|c0-c*| |dc0|})
         and $(\partial_{c^{\parallel_{\hat \jmath}}}{\mathtt E}) (c,v)=\cO(\|v\|^2)$ (which follows by \eqref{assuC1} and \eqref{assuC1 2}) we deduce that 
        $$
        |\boldsymbol{(II)}_{|\cF_{r'}\cup \cN_{\hat \jmath}(r')\setminus\{0\}}| \lesssim \| v \|^2 \|\hat v\|
        $$ 
        and so $\boldsymbol{(II)}$ further extends with continuity at $0$.
        Lemma 
        \ref{lm:A}-$(ii)$ implies that $(\partial_{c^{\bot_{\hat \jmath}}}{\mathtt E}) (c(v),v)=\cO(\|\Pi_{\hat \jmath}^\bot v\|^2)$ and using also the second inequality in \eqref{derivicino} and \eqref{|c0-c*| |dc0|} we deduce that $$|\boldsymbol{(III)}_{|\cF_{r'}\cup \cN_{\hat \jmath}(r')\setminus V_{\hat \jmath}}|\lesssim \|\Pi_{\hat \jmath}^\bot v\|\|v\|\|\hat v\|$$ 
        and hence $\boldsymbol{(III)}$ extends with continuity to $\cF_{r'}\cup \cN_{\hat \jmath}(r')$. 
        \end{proof}
    
    \section{Straightening of the reduced momentum}\label{sec:diritto}

    Let $ I(v)$  be the reduced momentum 
\begin{equation}\label{Def I dritto}
I : B_{r'}^V \to \R^2 \, , \qquad 
    I(v) := {\mathtt I} (c(v),v) =
    \cI(v+w(c(v),v)) \, , 
\end{equation}
where $ {\mathtt I} (c,v) $ is defined in \eqref{eq:def I(c,v)} and $c(v)$ in Proposition 
\ref{Construction of c close and away from PJ}. 

\begin{lemma}\label{lem:regI}
The reduced momentum  $ I $ in 
\eqref{Def I dritto}   
is $\T^2_\Gamma\rtimes \Z_2$-invariant, $C^1(B_{r'}^V)$, analytic  in 
    $ B_{r'}^V \setminus  {\bf V}^{\twod}$ and analytic restricted to each  $(B_{r'}^V\cap V_{\hat \jmath})\setminus\{0\}$ for any $ {\hat \jmath} \in \cD$.
\end{lemma}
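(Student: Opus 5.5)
The plan is to apply Lemma \ref{lm:regularity 1} componentwise to ${\mathtt E} := {\mathtt I}_i$, $i=1,2$, where ${\mathtt I}(c,v)=\cI(v+w(c,v))$ is the analytic reduced momentum of Proposition \ref{lem:expa}. The invariance is straightforward. By Proposition \ref{lem:expa}, ${\mathtt I}$ is $\T^2_\Gamma\rtimes\Z_2$-invariant in $v$. By Proposition \ref{Construction of c close and away from PJ}, $c(v)$ is $\T^2_\Gamma\rtimes\Z_2$-invariant. Hence $I(\tau_\theta v)={\mathtt I}(c(v),\tau_\theta v)={\mathtt I}(c(v),v)$, and the same holds for $\rho$. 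Analyticity on $B_{r'}^V\setminus{\bf V}^{\twod}$ and on each $(B_{r'}^V\cap V_{\hat\jmath})\setminus\{0\}$ follows from Lemma \ref{lm:regularity}, because $c$ is analytic on these sets.

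For the $C^1$ statement I will verify the hypotheses \eqref{assuC1}. First, ${\mathtt I}(c,0)=\cI(0+w(c,0))=\cI(0)=0$ by \eqref{properties of w(c,v)}. Second, fix $\hat\jmath\in\cD$ and $v\in B_r^V\cap V_{\hat\jmath}$. Lemma \ref{Further properties of w(c,v)}-$(ii)$ shows that $w(c,v)$ does not depend on $c^{\bot_{\hat\jmath}}$. Therefore ${\mathtt I}(c,v)=\cI(v+w(c,v))$ is independent of $c^{\bot_{\hat\jmath}}$, which gives $\partial_{c^{\bot_{\hat\jmath}}}{\mathtt I}(c,v)=0$.

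An alternative check uses \eqref{partial c Phi}, namely ${\mathtt I}=\partial_c\Phi$. Since $\Phi(c,v)$ is independent of $c^{\bot_{\hat\jmath}}$ on $V_{\hat\jmath}$ by \eqref{Phi is independent on c2Q if v is in V2dj}, we get $\partial_{c^{\bot_{\hat\jmath}}}\partial_c\Phi=\partial_c\partial_{c^{\bot_{\hat\jmath}}}\Phi=0$ there. This is legitimate because differentiation in $c$ keeps $v\in V_{\hat\jmath}$ fixed. Either argument works, and the direct one via $w$ is cleaner.

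With both hypotheses checked, Lemma \ref{lm:regularity 1} gives $v\mapsto {\mathtt I}_i(c(v),v)\in C^1(B_{r'}^V)$, which concludes the proof. The only delicate point is the possible discontinuity of $c^{\bot_{\hat\jmath}}(v)$ across $V_{\hat\jmath}$. This is exactly what Lemma \ref{lm:regularity 1} is designed to absorb, through the vanishing $\partial_{c^{\bot_{\hat\jmath}}}{\mathtt E}=\cO(\|\Pi^\bot_{\hat\jmath}v\|^2)$ combined with \eqref{derivicino}. The proof therefore reduces to checking \eqref{assuC1}, and I expect no real obstacle beyond that.
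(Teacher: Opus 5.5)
Your proposal is correct and follows the paper's proof. Invariance comes from the invariance of ${\mathtt I}$ and $c(v)$, analyticity from that of $c(v)$, and $C^1$ regularity from checking the hypotheses \eqref{assuC1} of Lemma \ref{lm:regularity 1} for ${\mathtt I}$. The paper verifies $\partial_{c^{\bot_{\hat\jmath}}}{\mathtt I}=0$ on $V_{\hat\jmath}$ through ${\mathtt I}=\partial_c\Phi$ and Proposition \ref{lem:expa}, which is your alternative check. Your direct argument via Lemma \ref{Further properties of w(c,v)}-$(ii)$ is the same fact, taken one step earlier.
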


\begin{proof}
Since ${\mathtt I}(c,v)$ and  $c(v)$ are $\T^2_\Gamma\rtimes \Z_2$-invariant in $v$ we deduce that $I(v)$ is $\T^2_\Gamma\rtimes \Z_2$-invariant.
The  function ${\mathtt I} (c,v) =
\pa_c \Phi(c,v) $ in \eqref{eq:def I(c,v)}-\eqref{partial c Phi} satisfies the assumptions of Lemma \ref{lm:regularity 1},  by 
Proposition \ref{lem:expa}, and thus  
$ I $ in $ C^1 
(B_{r'}^V ) $. The other claimed analyticity properties of $ I(v) $ directly follow
by those of the speed function  $ c(v) $ in Proposition \ref{Construction of c close and away from PJ}.
% \eqref.
\end{proof}

The main result of this section is 
    the following ``Morse type"  result:  the reduced
    momentum  $ I(v) $ can be locally
    rectified, close to $ 0 $, 
   into the  purely quadratic momentum $ \cI (v) $.  
    \begin{theorem}\label{th:Moser's trick} {\bf (Straightening)}
        There exist open neighborhoods
        $ U_1, U_2  $ of $0$ in $ V $ with $ U_1, U_2 \subset B_{r'}^V $, and a $\T^2_\Gamma$-equivariant homeomorphism $\zeta:U_1\to U_2$
        satisfying  $ \zeta(0)=0 $, such that 
    \begin{equation}\label{Izeta}
    I\circ \zeta(v)=\cI(v) \, , \quad  \forall 
    v\in U_1 \, ,
    \end{equation}
where $I (v) $ is defined in \eqref{Def I dritto} and $ \cI (v) $ in \eqref{eq:cI}.  
    Furthermore 
    \begin{align}\label{diffeoint}
    & \zeta_{|U_1\setminus 
            {\bf V}^{\twod}}:U_1\setminus 
            {\bf V}^{\twod} \to U_2\setminus 
            {\bf V}^{\twod} \quad \text{ is a diffeomorphism}\, , \\
    & \label{diffeobordo}
    \zeta_{|(U_1\cap V_{\hat \jmath})\setminus\{0\}}:(U_1\cap V_{\hat \jmath})\setminus\{0\}\to (U_2\cap V_{\hat \jmath})\setminus\{0\} \, \ 
    \forall {\hat \jmath} \in \cD  \quad \text{ is a diffeomorphism}\, .
    \end{align}
    \end{theorem}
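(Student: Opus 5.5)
We argue by a Moser-type deformation along the path
\[
I_t(v):=\cI(v)+t\,R(v), \qquad R(v):=I(v)-\cI(v), \qquad t\in[0,1].
\]
We look for a time-dependent $\T^2_\Gamma$-equivariant vector field $Y_t$ on a neighborhood of $0$ solving the homological equation
\[
\di_v I_t(v)[Y_t(v)]=-R(v).
\]
If the flow $\zeta_t$ of $Y_t$ exists on $U_1$ for $t\in[0,1]$, then $\frac{d}{dt}\big(I_t\circ\zeta_t\big)=0$. Hence $I_1\circ\zeta_1=\cI$, and $\zeta:=\zeta_1$ is the required map.

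First we estimate the remainder. By \eqref{eq:def I(c,v)}, $I(v)=\cI(v+w(c(v),v))$, and $w=\cO(\|v\|^2)$ by \eqref{properties of w(c,v)}, so $R=\cO(\|v\|^3)$. The main structural input is the $2d$ symmetry. For $v\in V_{\hat\jmath}$ we have $w\in\cX^{\twod}_{[j]}$ by Lemma \ref{Further properties of w(c,v)}, so $R^{\bot_{\hat\jmath}}(v)=0$ by \eqref{momezero2}. Combining this with the $\T^2_\Gamma$-invariance and Lemma \ref{lm:A}-$(ii)$, we expect
\[
R^{\bot_{\hat\jmath}}(v)=\cO\big(\|\Pi^\bot_{\hat\jmath}v\|^2\|v\|\big)
\]
near $V_{\hat\jmath}$. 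The derivative bounds should follow as in \eqref{eq:Gjecc}, using the derivative estimates \eqref{derivicino} of $c(v)$ as in the proof of Lemma \ref{lm:regularity 1}.

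Next we solve the homological equation with the ansatz
\[
Y_t=\nabla_v I_t(v)^{\top}\lambda_t(v), \qquad \lambda_t\in\R^2 .
\]
This gives the $2\times2$ system
\[
\big(\cA(v)+\cO(\|v\|^3)\big)\lambda_t=-R(v),
\]
with $\cA(v)$ the matrix of Lemma \ref{Properties of a}. In $\cF_{r'}$ we use \eqref{lem:c away from S* estimate 3}. In $\cN_{\hat\jmath}(r')$ we rotate by $Q_{\hat\jmath}$ and use the inverse bound \eqref{lem:c near PJ estimate 3}, exactly as in the construction of $c(v)$ in Proposition \ref{c near PJ}. The weight $\|\Pi^\bot_{\hat\jmath}v\|^{-2}$ in the $(2,2)$-entry of the inverse is compensated by the factor $\|\Pi^\bot_{\hat\jmath}v\|^2$ in $R^{\bot_{\hat\jmath}}$. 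This gives
\[
\lambda_t=\cO(\|v\|), \qquad Y_t=\cO(\|v\|^2),
\]
with $Y_t$ tangent to each $V_{\hat\jmath}$, since $\nabla_v\cI^{\bot_{\hat\jmath}}=0$ there by \eqref{pibot}. Equivariance of $Y_t$ follows from the invariance of $I$ (Lemma \ref{lem:regI}) and of the scalar product.

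We now check the flow. Off ${\bf V}^{\twod}$ and on each $V_{\hat\jmath}\setminus\{0\}$, $Y_t$ is smooth, since $I$ is analytic there by Lemma \ref{lem:regI}. The bound $Y_t=\cO(\|v\|^2)$ keeps $0$ fixed and keeps small balls inside $B^V_{r'}$ for $t\in[0,1]$. Each stratum is invariant: $V_{\hat\jmath}$ by tangency, and $V\setminus{\bf V}^{\twod}$ by Remark \ref{rem:Vinv}. This yields \eqref{diffeoint} and \eqref{diffeobordo}, and $\zeta$ is a bijection onto $U_2:=\zeta(U_1)$ with inverse given by the backward flow.

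The main obstacle is that $Y_t$ is only known to be continuous, not Lipschitz, near ${\bf V}^{\twod}$. The $\bot$-component of $\lambda_t$ may have derivative of size $\|v\|/\|\Pi^\bot_{\hat\jmath}v\|$, compare \eqref{derivicino}, so uniqueness of trajectories and continuity of $\zeta$ across ${\bf V}^{\twod}$ are not automatic. To handle this, we would write the $\bot$-part of $Y_t$ as
\[
\lambda^{\bot}_t\,\nabla_v I^{\bot_{\hat\jmath}}_t(v), \qquad \lambda^{\bot}_t\ \text{bounded}, \qquad \nabla_v I^{\bot_{\hat\jmath}}_t(v)=\cO(\|\Pi^\bot_{\hat\jmath}v\|),
\]
so that $\|\Pi^\bot_{\hat\jmath}\zeta_t(v)\|$ satisfies a differential inequality of the form
\[
\frac{d}{dt}\|\Pi^\bot_{\hat\jmath}\zeta_t\|=\cO\big(\|v\|\,\|\Pi^\bot_{\hat\jmath}\zeta_t\|\big).
\]
Grönwall's lemma then shows that trajectories neither reach nor leave $V_{\hat\jmath}$ in finite time. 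This also gives uniform convergence of trajectories as $v\to V_{\hat\jmath}$ to the flow on $V_{\hat\jmath}$, hence continuity of $\zeta$ and $\zeta^{-1}$. If the Grönwall estimate fails, the fallback is to construct $\zeta$ separately on the strata and glue, using the $C^1$ regularity of $I$ from Lemma \ref{lem:regI} to control the mismatch.
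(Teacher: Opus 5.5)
Your proposal is correct and is essentially the paper's proof. It uses the same Moser deformation $I(t,\cdot)=\cI+t(I-\cI)$, and the same $2\times 2$ linear system whose singular $(2,2)$-entry near $V_{\hat\jmath}$ is compensated by the $\|\Pi^\bot_{\hat\jmath}v\|^2$-smallness of the orthogonal components coming from the $\T^2_\Gamma$-symmetry. It also relies on the same Gr\"onwall lower bound on $\|\Pi^\bot_{\hat\jmath}\zeta_t\|$ (Lemma \ref{lm:B1}) to get uniqueness for a field that is only continuous near ${\bf V}^{\twod}$.

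The one variation is that the paper expands the field along $\nabla_v\cI$ rather than $\nabla_v I_t$. This gives the diagonal form $X=-\sum_{\hat\jmath}\mu^{\parallel_{\hat\jmath}}\Pi_{\hat\jmath}v$, so invariance of each $V_{\hat\jmath}$, the bound $\|\Pi^\bot_{\hat\jmath}X\|\lesssim\|\Pi^\bot_{\hat\jmath}v\|$, and continuity (from continuity of $\mu^{\parallel_{\hat\jmath}}$) are immediate. Your Gram-matrix ansatz instead needs the structured bounds \eqref{Estimate on d I1Q dritto} on $\nabla_v(I-\cI)$, so that the perturbation of $\cA^{\hat\jmath}$ has the shape in \eqref{AQ = cal AQ + error}. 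A bare $\cO(\|v\|^3)$ error, as you wrote it, would not allow you to invert the system near $V_{\hat\jmath}$.
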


    \begin{remark}\label{rmk:U2 subset Br''}
        Actually $U_2\subset B_{r''}^V$ where $r''\in (0,r')$ is fixed in Lemma \ref{Estimates on A, AQ, tilde AQ11} (cf. \eqref{def:U2}).
    \end{remark}
    
    Theorem \ref{th:Moser's trick} is proved in Section \ref{sec:MT}
    using estimates proved in the 
    next section.

\subsection{The reduced momentum $ I(v) $ and the matrix $ A(v)$}
\label{sec:prel}

For any $ {\hat \jmath} \in \cD  $, we set 
    \begin{equation}\label{Def I dritto j}
    I^{\hat \jmath} (v) =\vect{I^{\parallel_{\hat \jmath}}(v)}{I^{\bot_{\hat \jmath}}(v)}
    \stackrel{\eqref{eq:def yj}} {:=} 
    Q_{\hat \jmath}I(v)  \, , 
    \quad \cI^{\hat \jmath}(v) = \vect{\cI^{\parallel_{\hat \jmath}}(v)}{\cI^{\bot_{\hat \jmath}}(v)} = Q_{\hat \jmath}\cI(v) \, .
    \end{equation}
The reduced momentum $ I(v) $ is a perturbation of the quadratic momentum $ \cI (v) $ in \eqref{eq:cI}. 
    
    \begin{lemma}\label{Properties of I dritto}
     For any ${\hat \jmath} \in \cD$ and any $v\in
    B_{r'}^V $ 
     \begin{align}
         \label{Estimate on Ij}
         &     I^{\hat \jmath}(v)=  \cI^{\hat \jmath}(v)+\cO\vect{\|v\|^3}{\|\Pi_{\hat \jmath}^\bot v\|^2 \|v \|}\, ,
         \end{align}
         \begin{equation}\label{Estimate on d I1Q dritto}
        \begin{aligned}
            &\di_v I^{\hat \jmath}(v)[\hat v]=\di_v \cI^{\hat \jmath}(v)[\hat v]+\cO(\|v\|^2\|\hat v\|) \, ,  \qquad \qquad \qquad \qquad \quad \qquad \ \ \,   \forall  v\in \cF_{r'}  \ \text{cf.} \  \eqref{defFr} \, ,  \\
            &\di_v I^{\hat \jmath}(v)[\hat v]=
            \di_v \cI^{\hat \jmath}(v)[\hat v]+\cO\begin{pmatrix}
                \|v\|(\|v\|\|\Pi_{\hat \jmath} \hat v\|+\|\Pi_{\hat \jmath}^\bot v\|\|\Pi_{\hat \jmath}^\bot \hat v\|)\\
                \|\Pi_{\hat \jmath}^\bot v\|(\|\Pi_{\hat \jmath}^\bot v\|\|\Pi_{\hat \jmath} \hat v\|+\|v\|\|\Pi_{\hat \jmath}^\bot \hat v\|)
            \end{pmatrix} \ \forall  v\in \cN_{\hat \jmath}(r')\, ,\   \hat \jmath \in \cD \, .
        \end{aligned}
    \end{equation}
    \end{lemma}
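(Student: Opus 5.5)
We will write $I(v)=\mathtt I(c(v),v)$ with $\mathtt I(c,v)=\cI(v+w(c,v))$ and split
\[
I(v)-\cI(v)=\big[\cI(v+w(c(v),v))-\cI(v)\big].
\]
Since $\cI$ is quadratic and $V$, $W$ are symplectic orthogonal, with $X_{\cI_i}=\partial_{x_i}$ preserving each $V_j$, the cross terms ${\bf \Omega}(\partial_x v,w)$ vanish. Hence
\[
\cI(v+w)=\cI(v)+\cI(w),
\]
so the remainder is $E(c,v):=\cI(w(c,v))=\cO(\|w\|^2)=\cO(\|v\|^4)$, using $w=\cO(\|v\|^2)$ from \eqref{properties of w(c,v)}. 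This is actually better than cubic. For the estimate it suffices to use $E(c,v)=\cO(\|v\|^3)$ uniformly in $c\in B_r(c_*)$, as $E$ is analytic with vanishing $2$-jet at $v=0$. This gives the parallel component of \eqref{Estimate on Ij} at once.

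For the orthogonal component, fix $\hat\jmath$ and set $E^{\bot_{\hat\jmath}}(c,v):=\hat\jmath^\bot\cdot E(c,v)$. By Lemma \ref{Further properties of w(c,v)}-$(i)$, for $v\in V_{\hat\jmath}$ the function $w(c,v)$ is a $2d$-wave along $[j]$, so $\cI^{\bot_{\hat\jmath}}(w)=0$ by \eqref{momezero2}. Thus $E^{\bot_{\hat\jmath}}(c,\cdot)$ is analytic, $\T^2_\Gamma$-invariant, $\cO(\|v\|^3)$, and vanishes on $V_{\hat\jmath}$. Lemma \ref{lm:A}-$(ii)$ then gives $E^{\bot_{\hat\jmath}}(c,v)=\cO(\|\Pi_{\hat\jmath}^\bot v\|^2\|v\|)$ uniformly in $c$; this is the same mechanism as in \eqref{Estimates on g}. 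Substituting $c=c(v)$ yields \eqref{Estimate on Ij}, since $c(v)\in B_r(c_*)$ for every $v$ and the bounds are uniform in $c$.

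For the differential, we compute on $B_{r'}^V\setminus{\bf V}^{\twod}$, where $c$ is analytic:
\[
\di_v I^{\hat\jmath}(v)[\hat v]-\di_v\cI^{\hat\jmath}(v)[\hat v]=(\di_v E^{\hat\jmath})(c(v),v)[\hat v]+(\partial_{c}E^{\hat\jmath})(c(v),v)\,\di_v c(v)[\hat v].
\]
The estimate will extend to ${\bf V}^{\twod}$ by the $C^1$ continuity of Lemma \ref{lem:regI}.

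\textbf{Region $\cF_{r'}$.} Here $\di_v E=\cO(\|v\|^2)$, $\partial_c E=\cO(\|v\|^3)$ and $\|\di_v c\|\lesssim1$ by \eqref{|c0-c*| |dc0|}. This gives $\cO(\|v\|^2\|\hat v\|)$.

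\textbf{Region $\cN_{\hat\jmath}(r')$.} Write $\partial_c E\,\di_v c=\partial_{c^{\parallel}}E\,\di_v c^{\parallel}+\partial_{c^{\bot}}E\,\di_v c^{\bot}$ and treat each row separately.
\begin{itemize}
\item[(a)] The derivative $\di_v E^{\hat\jmath}[\hat v]$ has the same structure as \eqref{eq:Gjecc}, again by Lemma \ref{lm:A} applied to a function vanishing cubically and, in the $\bot$ row, vanishing on $V_{\hat\jmath}$. This matches the claimed matrix.
\item[(b)] For the term $\partial_{c^\parallel}E^{\hat\jmath}$: this is $\cO(\|v\|^3)$ in the parallel row and $\cO(\|\Pi^\bot_{\hat\jmath} v\|^2\|v\|)$ in the $\bot$ row. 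It is multiplied by the first bound in \eqref{derivicino}, which is $\lesssim\|\Pi_{\hat\jmath}\hat v\|+\|\Pi^\bot_{\hat\jmath}\hat v\|$. Both products fit the claimed bounds, since $\|\Pi^\bot v\|\le\|v\|$.
\item[(c)] For the term $\partial_{c^\bot}E^{\hat\jmath}$: by Lemma \ref{Further properties of w(c,v)}-$(ii)$, $w$ is independent of $c^\bot$ on $V_{\hat\jmath}$, so $\partial_{c^\bot}E$ vanishes on $V_{\hat\jmath}$. Lemma \ref{lm:A}-$(ii)$ then gives $\cO(\|\Pi^\bot_{\hat\jmath}v\|^2\|v\|)$ in both rows. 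This is multiplied by the second bound in \eqref{derivicino}, which contains the singular factor $\|v\|/\|\Pi^\bot v\|$, giving
\[
\cO\big(\|\Pi^\bot v\|^2\|v\|\,\|\Pi\hat v\|+\|\Pi^\bot v\|\,\|v\|^2\,\|\Pi^\bot\hat v\|\big).
\]
\end{itemize}

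Step (c) is the main obstacle, because the singularity of $\di_v c^\bot$ must be exactly compensated by the vanishing of $\partial_{c^\bot}E$. For the parallel row this product is acceptable. For the $\bot$ row the term $\|\Pi^\bot v\|\|v\|^2\|\Pi^\bot\hat v\|$ must be bounded by $\|\Pi^\bot v\|\|v\|\|\Pi^\bot\hat v\|$, and this holds because $\|v\|<r'<1$.
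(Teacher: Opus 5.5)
Step (b) does not prove what you claim. You replace the first bound in \eqref{derivicino} by the cruder $|\di_v c^{\parallel_{\hat\jmath}}(v)[\hat v]|\lesssim\|\Pi_{\hat\jmath}\hat v\|+\|\Pi_{\hat\jmath}^\bot\hat v\|$. The parallel row of $\partial_{c^{\parallel_{\hat\jmath}}}E^{\hat\jmath}\,\di_v c^{\parallel_{\hat\jmath}}[\hat v]$ then contains the term $\|v\|^3\|\Pi^\bot_{\hat\jmath}\hat v\|$. The target parallel row only allows $\|v\|\,\|\Pi^\bot_{\hat\jmath}v\|$ in front of $\|\Pi^\bot_{\hat\jmath}\hat v\|$, so you would need $\|v\|^2\lesssim\|\Pi^\bot_{\hat\jmath}v\|$. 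This fails in $\cN_{\hat\jmath}(r')$ as $v$ approaches $V_{\hat\jmath}$: take $\Pi_{\hat\jmath}\hat v=0$ and $\|\Pi^\bot_{\hat\jmath}v\|\ll\|v\|^2$. The inequality $\|\Pi^\bot_{\hat\jmath}v\|\le\|v\|$ you invoke points the wrong way, and your improvement $E=\cO(\|v\|^4)$ does not rescue it either.

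This is not cosmetic. The factor $\|\Pi^\bot_{\hat\jmath}v\|$ in the parallel row is what later gives the $(2,1)$ entry of \eqref{eq:inside lemma AQ = cal AQ + error} the size $\|v\|\,\|\Pi^\bot_{\hat\jmath}v\|^2$; there $\hat v=\nabla_v\cI^{\bot_{\hat\jmath}}(v)$, so $\Pi_{\hat\jmath}\hat v=0$. The repair is to keep the factor from \eqref{derivicino}: $\|v\|^3\big(\|\Pi_{\hat\jmath}\hat v\|+\tfrac{\|\Pi^\bot_{\hat\jmath}v\|}{\|v\|}\|\Pi^\bot_{\hat\jmath}\hat v\|\big)=\|v\|^3\|\Pi_{\hat\jmath}\hat v\|+\|v\|^2\|\Pi^\bot_{\hat\jmath}v\|\,\|\Pi^\bot_{\hat\jmath}\hat v\|$, which fits. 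The $\bot$ row of (b) is fine as written. You should state, though, that $\partial_{c^{\parallel_{\hat\jmath}}}E^{\bot_{\hat\jmath}}$ vanishes on $V_{\hat\jmath}$ because $E^{\bot_{\hat\jmath}}(c,\cdot)$ does for every $c$; you use this without saying it.

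With that fix your argument is correct and is essentially the paper's. The paper uses the same chain-rule split into $\di_v$, $\partial_{c^{\parallel}}$ and $\partial_{c^{\bot}}$ terms, Lemma \ref{lm:A} for the structured bounds, \eqref{|c0-c*| |dc0|} and \eqref{derivicino} for $\di_v c$, and extension to $V_{\hat\jmath}$ by the $C^1$ regularity of Lemma \ref{lem:regI}. The only difference is how the remainder is identified. The paper writes $\mathtt I(c,v)-\cI(v)=\partial_c G_{\geq3}(c,v)$ via \eqref{partial c Phi}, \eqref{exp1Phi}, and gets the vanishing on $V_{\hat\jmath}$ from the $c^{\bot_{\hat\jmath}}$-independence in Proposition \ref{lem:expa}. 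You use $\mathtt I(c,v)-\cI(v)=\cI(w(c,v))$, which is valid since $\cI$ is diagonal in the coordinates \eqref{Momuntum in coordinates} and $V$, $W$ use disjoint sets of wave vectors, together with Lemma \ref{Further properties of w(c,v)} and \eqref{momezero2}. Your version is slightly more direct and gives the sharper $\cO(\|v\|^4)$, though that gain is not needed here.
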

    
\begin{proof}
For any $ {\hat \jmath} \in \cD $,  we write 
$ g(v) :=  I^{\hat \jmath}(v) - \cI^{\hat \jmath}(v)  = 
{\mathtt g}(c(v),v) 
$ 
where
\begin{equation}\label{gcv}
{\mathtt g}(c,v) 
= {\mathtt I}^{\hat \jmath}(c,v)- \cI^{\hat \jmath} (v) 
\stackrel{\eqref{partial c Phi},\eqref{exp1Phi}} 
=  \pa_{c^{\hat \jmath}} G_{\geq 3}(c,v)
= \begin{pmatrix}
            \partial_{c^{\parallel_{\hat \jmath}}} G_{\geq 3}(c,v)\\
            \partial_{c^{\bot_{\hat \jmath}}} G_{\geq 3}(c,v)
        \end{pmatrix} 
\end{equation}
and  $G_{\geq 3}:B_{r}(c_*)\times B_{r}^V\to \R^2$ is   defined in \eqref{exp1Phi}.
    By \eqref{Gprgeq3} we have ${\mathtt g}(c,v) =\cO(\|v\|^3) $ uniformly in $ c $,  
    proving 
    the first estimate  \eqref{Estimate on Ij}.  
    Moreover, since $ G_{\geq 3}(c,v) $ is independent of $ c^{\bot_{\hat \jmath}} $ on $B_{r}^V\cap V_{\hat \jmath}$ (cf. Proposition \ref{lem:expa}),  it results %  \eqref{nablaIv}
    \begin{equation}\label{eq:property of g(c,v)}
        {\mathtt g}^{\bot_{\hat \jmath}}(c,v)=0 \quad \text{for any} \quad   v\in B_{r'}^V\cap V_{\hat \jmath} 
    \end{equation}
        and  \eqref{eq:A1} implies the second 
    estimate  \eqref{Estimate on Ij}. 
   Furthermore \eqref{A2ora} and \eqref{eq:A3} 
   imply 
\begin{equation}\label{eq:gs1}
        \di_v {\mathtt g} (c,v)[\hat v]=\cO\begin{pmatrix}
            \|v\|(\|v\|\|\Pi_{\hat \jmath} \hat v\|+\|\Pi_{\hat \jmath}^\bot v\|\|\Pi_{\hat \jmath}^\bot \hat v\|)\\
            \|\Pi_{\hat \jmath}^\bot v\|(\|\Pi_{\hat \jmath}^\bot v\|\|\Pi_{\hat \jmath} \hat v\|+\|v\|\|\Pi_{\hat \jmath}^\bot \hat v\|)
        \end{pmatrix}\, .
\end{equation}
    We now estimate    
\begin{equation}\label{eq:gcomp}
\di_v g(v)  =(\di_v \tg)(c(v),v) +(\partial_{c^{\parallel_{\hat \jmath}}}\tg)(c(v),v)\di_v c^{\parallel_{\hat\jmath}}(v) 
+(\partial_{c^{\bot_{\hat \jmath}}}\tg)(c(v),v)\di_v c^{\bot_{\hat\jmath}}(v) \,.
\end{equation}
    For any $ v\in B_{r'}^V\cap V_{\hat \jmath}$, by Proposition \ref{lem:expa} the term $ {\mathtt g}(c,v) $ in \eqref{gcv} satisfies
    $\partial_{c^{\bot_{\hat \jmath}}}{\mathtt g} (c,v)=0 $ and by \eqref{eq:property of g(c,v)} it satisfies $\partial_{c^{\parallel_{\hat \jmath}}}{\mathtt g}^{\bot_{\hat \jmath}} (c,v)=0$.
        Therefore  \eqref{eq:A1} and \eqref{Gprgeq3} imply
\begin{equation}\label{eq:gs2}
        \partial_{c^{\parallel_{\hat \jmath}}}{\mathtt g}(c,v)=\cO\begin{pmatrix}
            \|v\|^3\\
            \|v\|\|\Pi_{\hat \jmath}^\bot v\|^2
        \end{pmatrix} \, , \ 
        \partial_{c^{\bot_{\hat \jmath}}}{\mathtt g}(c,v)=\cO\begin{pmatrix}
            \|v\|\|\Pi_{\hat \jmath}^\bot v\|^2\\
            \|v\|\|\Pi_{\hat \jmath}^\bot v\|^2
        \end{pmatrix} \, . 
\end{equation}
 Then estimates \eqref{Estimate on d I1Q dritto} follow by
  \eqref{eq:gcomp},
  \eqref{eq:gs1}, \eqref{eq:gs2} and using \eqref{|c0-c*| |dc0|} if $v\in \cF_{r'}$, and  using \eqref{derivicino} if $v\in \cN_{\hat \jmath}(r')\setminus V_{\hat \jmath}$.
  Since $ I $ is $ C^1 (B_{r'}^V ) $ by Lemma \ref{lem:regI} the estimates
  \eqref{Estimate on d I1Q dritto} hold
  also on 
  each $ V_{\hat \jmath }$. 
\end{proof}

We now introduce the matrix valued function $A :B_{r'}^V\to \R^{2\times 2} $ as
\begin{equation}\label{Def AA}
        A(v):= 
        \begin{pmatrix} 
            \di_vI_1 (v)[\nabla_v \cI_1 (v)] & \di_vI_2  (v)[\nabla_v \cI_1 (v)] \\
            \di_v I_1 (v)[\nabla_v \cI_2 (v)] & \di_vI_2 (v)[\nabla_v \cI_2 (v)]
        \end{pmatrix}  
    \end{equation} 
whose relevance is 
highlighted by the next lemma. 
    \begin{lemma}
    \label{lem:indip}
        For  any $v\in V\setminus {\bf V}^{\twod}$ the matrix $A(v)$ is invertible if and only if
\begin{equation}\label{eq:decomposition}
            V = \ker(\di_vI(v))\oplus span(\nabla_v \cI_1(v),\nabla_v \cI_2(v))\, .
        \end{equation}
    \end{lemma}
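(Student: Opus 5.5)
The plan is to reduce the statement to finite-dimensional linear algebra, by viewing $A(v)$ as the matrix of the composite map
\[
\R^2 \ni \lambda \;\longmapsto\; \lambda_1\nabla_v\cI_1(v)+\lambda_2\nabla_v\cI_2(v)\in V \;\longmapsto\; \di_v I(v)\big[\lambda_1\nabla_v\cI_1(v)+\lambda_2\nabla_v\cI_2(v)\big]\in\R^2 .
\]
Denote the first map by $L_v:\R^2\to S_v:=\mathrm{span}(\nabla_v\cI_1(v),\nabla_v\cI_2(v))$ and the second by $D_v:=\di_v I(v)_{|S_v}$. By \eqref{Def AA}, the $(k,i)$ entry of $A(v)$ is $\di_v I_i(v)[\nabla_v\cI_k(v)]$. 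Hence $A(v)^\top$ is the matrix of $D_v\circ L_v$, and $A(v)$ is invertible if and only if $D_v\circ L_v$ is.

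\emph{Step 1.} Show that $L_v$ is injective, so $\dim S_v=2$, whenever $v\notin{\bf V}^{\twod}$. The Gram matrix of $\nabla_v\cI_1(v),\nabla_v\cI_2(v)$ is exactly $\cA(v)$ from \eqref{Def a}. By Lemma \ref{Properties of a}, \eqref{Def S*}, we have $\det\cA(v)>0$ off ${\bf V}^{\twod}$. Therefore the two gradients are linearly independent, and $L_v$ is an isomorphism onto $S_v$.

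\emph{Step 2.} Reduce to the restriction. Since $L_v$ is an isomorphism, $A(v)$ is invertible if and only if $D_v:S_v\to\R^2$ is an isomorphism. Both spaces are 2-dimensional, so this is equivalent to $\ker\di_v I(v)\cap S_v=\{0\}$.

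\emph{Step 3.} Conclude with a dimension count. Suppose first that $A(v)$ is invertible. Then $\di_v I(v):V\to\R^2$ is surjective, since its restriction to $S_v$ already is. Hence $\dim\ker\di_v I(v)=\dim V-2$. Combined with the trivial intersection from Step 2, this gives the direct sum \eqref{eq:decomposition}. Conversely, suppose \eqref{eq:decomposition} holds. Then the intersection is trivial, so $D_v$ is injective, hence bijective, and $A(v)$ is invertible by Step 2.

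The only non-formal point is Step 1. There one relies on Lemma \ref{Properties of a} to guarantee that the gradients of $\cI$ span a plane exactly off ${\bf V}^{\twod}$; this is the reason the hypothesis $v\notin{\bf V}^{\twod}$ is needed. The differentiability of $I$ is supplied by Lemma \ref{lem:regI}.
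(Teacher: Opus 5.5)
Your proof is correct and follows essentially the paper's argument: both identify $A(v)^\top$ with $\di_v I(v)$ restricted to $\mathrm{span}(\nabla_v\cI_1(v),\nabla_v\cI_2(v))$, and both use Lemma \ref{Properties of a} for the linear independence of these gradients off ${\bf V}^{2d}$. Your composite-map formulation makes explicit the trivial-intersection part of the direct sum. The paper's forward direction, which builds the decomposition via $\mu=A(v)^{-\top}\di_v I(v)[\hat v]$, leaves that part implicit, and its converse is the same rank count as your Step 3.
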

    
    \begin{proof}
     If  $A(v)$ is invertible,
 any  $\hat v\in V$ admits the decomposition 
        $
            \hat v=\hat v_I+\mu_1\nabla_v\cI_1(v)+\mu_2\nabla_v\cI_2(v)
        $
        where $\hat v_I\in \ker(\di_v I(v) )$ and $\mu=(\mu_1,\mu_2)^\top   = A(v)^{- \top} \di_vI(v)[\hat v]$, thus   \eqref{eq:decomposition} holds.
       Viceversa,  if  \eqref{eq:decomposition} holds then 
        $$
    \rank (A(v)) = \rank(\di_vI(v))=\dim(V)-\dim(\ker(\di_v I(v)))  = 2
        $$ 
by \eqref{eq:decomposition} and, noting that $\{\nabla_v\cI_1(v),\nabla_v \cI_1(v)\}$ are linearly independent, by  Lemma \ref{Properties of a}, for   any $v\in V\setminus {\bf V}^{\twod}$. Therefore the $ 2 \times 2 $ matrix $A(v)$ is non-singular.
    \end{proof}

For any $ \hat \jmath \in \cD $, any $ v \in B_{r'}^V $, we define 
    \begin{equation}\label{Def AAQ} 
        A^{\hat \jmath}(v):=
        \begin{pmatrix} 
            \di_vI^{\parallel_{\hat \jmath}}(v)[\nabla_v \cI^{\parallel_{\hat \jmath}}(v)] & \di_vI^{\bot_{\hat \jmath}}(v)[\nabla_v \cI^{\parallel_{\hat \jmath}}(v)] \\
            \di_vI^{\parallel_{\hat \jmath}}(v)[\nabla_v \cI^{\bot_{\hat \jmath}}(v)] & \di_vI^{\bot_{\hat \jmath}}(v)[\nabla_v \cI^{\bot_{\hat \jmath}}(v)]
        \end{pmatrix}
        \, , 
    \end{equation}
   where $ I^{\parallel_{\hat \jmath}}(v), I^{\bot_{\hat \jmath}}(v) $
   are introduced  in 
   \eqref{Def I dritto j},  i.e. $A^{\hat \jmath}(v)=Q_{\hat \jmath} A(v) Q_{\hat \jmath}^{-1}$.  For any $ t \in \R $ we define  
    \begin{equation}\label{eq:def At}
        A(t,v):=tA(v)+(1-t)\cA(v) \, , \qquad A^{\hat \jmath}(t,v):=tA^{\hat \jmath}(v)+(1-t)\cA^{\hat \jmath}(v)\, , 
    \end{equation}
where the matrices $ \cA(v) $, $\cA^{\hat \jmath}(v)$ are  defined in \eqref{Def a}, \eqref{Def aQ}.
    
    \begin{lemma} {\bf (Properties of $A(t,v)$)}
    \label{Estimates on A, AQ, tilde AQ11}
        The matrix $ A(t,v) $ defined in \eqref{eq:def At} is $\T^2_\Gamma$-invariant in $v$ and is continuous on 
        $ \R \times B_{r'}^V $, analytic in $\R\times [B_{r'}^V \setminus {\bf V}^{\twod}]$ and restricted to  each $\R\times [(B_{r'}^V\cap V_{\hat \jmath})\setminus\{0\}]$   
        for any  $ {\hat \jmath}  \in \cD$ 
        (where $r'$ is defined by Proposition \ref{Construction of c close and away from PJ}) and, for any  $ {\hat \jmath}  \in \cD $, $ v \in B_{r'}^V $,
        $ t \in \R $, 
  \begin{equation}\label{AQ = cal AQ + error}
    \begin{aligned}
        & A^{\hat \jmath}(t,v) = 
            \cO(1+|t|\|v\|)\|v\|^2 \, ,  & & 
            \forall (t,v)\in \R\times \cF_{r'} \, , \\
        & A^{\hat \jmath}(t,v) = 
            \cO(1+|t|\|v\|)\begin{pmatrix}
                \|v\|^2 & \|
                \Pi_{\hat \jmath}^\bot v\|^2\\
                \| \Pi_{\hat \jmath}^\bot v \|^2 & \| \Pi_{\hat \jmath}^\bot v \|^2
            \end{pmatrix}  & & 
            \forall 
            (t,v)\in \R\times \cN_{\hat \jmath}(r')\, , \ \hat \jmath\in\cD \, .
            \end{aligned}
        \end{equation}     
     There exists $r''\in (0,r')$ such that for any $(t,v)\in (-2,2)\times [B_{r''}^V\setminus {\bf V}^{\twod}]$ we have
    \begin{equation}\label{detpospe}
        \begin{aligned}
            & \det(A(t,v)) \gtrsim \|v\|^4  & & \text{if} \quad v\in \cF_{r''}  \ \text{cf.} \  \eqref{defFr} \, ,  \\
            &\det(A(t,v))=\det(A^{\hat \jmath}(t,v)) \geq \tfrac \disb 4\|v\|^2\|\Pi_{\hat \jmath}^\bot v\|^2  & & \text{if} \quad v\in \cN_{\hat \jmath}(r'')\setminus V_{\hat \jmath}\, ,
            \   \hat \jmath \in \cD \, , \\
            &
            (A^{\hat \jmath}(t,v))_{11}\gtrsim\|\Pi_{\hat \jmath}v\|^2=\|v\|^2  & & \text{if} \quad v\in  B_{r''}^V\cap V_{\hat \jmath}\, ,
            \   \hat \jmath \in \cD\, .
        \end{aligned}
    \end{equation}
    \end{lemma}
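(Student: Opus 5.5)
The plan is to compare $A^{\hat \jmath}(v)$ entrywise with $\cA^{\hat \jmath}(v)$ using Lemma \ref{Properties of I dritto}, and then let the lower bounds of Lemma \ref{Properties of QaQT} absorb the errors. Invariance and regularity come first. $I$ is $\T^2_\Gamma$-invariant and $C^1$ by Lemma \ref{lem:regI}. Each vector $\nabla_v\cI_i(v)$ is linear and equivariant in $v$, and the scalar product \eqref{eq:def scalar product} is invariant. Hence every entry $\di_v I_k(v)[\nabla_v\cI_i(v)]$ is invariant, and it is continuous on $B_{r'}^V$. Analyticity off ${\bf V}^{\twod}$ and on each punctured $V_{\hat \jmath}$ follows from Lemma \ref{lem:regI}. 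All of this passes to the affine combination $A(t,v)$. The conjugation identity $A^{\hat \jmath}(t,v)=Q_{\hat \jmath}A(t,v)Q_{\hat \jmath}^{-1}$ gives $\det A=\det A^{\hat \jmath}$.

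For \eqref{AQ = cal AQ + error} I write $A^{\hat \jmath}(t,v)=\cA^{\hat \jmath}(v)+t\,(A^{\hat \jmath}(v)-\cA^{\hat \jmath}(v))$. The entries of the difference are $(\di_v I^{\hat \jmath}-\di_v\cI^{\hat \jmath})(v)[\hat v]$ with $\hat v=\nabla_v\cI^{\parallel_{\hat \jmath}}(v)$ or $\hat v=\nabla_v\cI^{\bot_{\hat \jmath}}(v)$. Note that $\di_v\cI^{\hat \jmath}(v)[\nabla\cI^{\cdot}]$ reproduces $\cA^{\hat \jmath}$. By \eqref{eq:Nabla cI} and \eqref{pibot}, $\|\nabla\cI^{\parallel_{\hat \jmath}}\|\lesssim\|v\|$, while $\nabla\cI^{\bot_{\hat \jmath}}$ satisfies $\Pi_{\hat \jmath}\nabla\cI^{\bot_{\hat \jmath}}=0$ and $\|\nabla\cI^{\bot_{\hat \jmath}}\|\lesssim\|\Pi^\bot_{\hat \jmath}v\|$. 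On $\cF_{r'}$ the first line of \eqref{Estimate on d I1Q dritto} gives errors $\cO(\|v\|^3)$. On $\cN_{\hat \jmath}(r')$ the second line gives the following, after inserting $\hat v$:
\begin{itemize}
\item the $(1,1)$ error is $\cO(\|v\|^3)$;
\item the off-diagonal errors are $\cO(\|v\|\|\Pi^\bot v\|^2)$, since for $\hat v=\nabla\cI^{\bot}$ one has $\Pi\hat v=0$, and in the $\bot$ row the factor $\|\Pi^\bot v\|$ is explicit;
\item the $(2,2)$ error is $\cO(\|v\|\|\Pi^\bot v\|^2)$.
\end{itemize}
Combined with the upper bound in \eqref{Estimate on a(v,v)}, this gives \eqref{AQ = cal AQ + error}.

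For \eqref{detpospe} I take $|t|<2$. In $\cF_{r''}$ the estimate \eqref{lem:c away from S* estimate 30} gives $\det\cA\gtrsim\|v\|^4$, and the perturbation is $\cO(\|v\|^5)$, so it is absorbed for small $r''$. In $\cN_{\hat \jmath}(r'')\setminus V_{\hat \jmath}$ I use $\|\Pi_{\hat \jmath}v\|^2\ge(1-\disb/4)\|v\|^2$. The lower bound in \eqref{Estimate on a(v,v)} plus errors gives
\begin{itemize}
\item $A_{11}\ge(1-\disb/4-C\|v\|)\|v\|^2$;
\item $A_{22}\ge(\disb-C\|v\|)\|\Pi^\bot v\|^2$;
\item $|A_{12}|\le(1+C\|v\|)\|\Pi^\bot v\|^2$.
\end{itemize}
Hence
$$\det A\ge(1-\tfrac\disb4-C\|v\|)(\disb-C\|v\|)\|v\|^2\|\Pi^\bot v\|^2-(1+C\|v\|)^2\|\Pi^\bot v\|^4 .$$
Here $\|\Pi^\bot v\|^2\le\frac\disb4\|v\|^2$, and $(1-\frac\disb4)\disb-\frac\disb4\ge\frac\disb2$ because $\disb\le1$. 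So the right side is at least $\frac\disb4\|v\|^2\|\Pi^\bot v\|^2$ for $r''$ small. On $V_{\hat \jmath}$, $A_{11}\ge\|v\|^2-C\|v\|^3$.

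The main obstacle is the bookkeeping of the off-diagonal and $(2,2)$ errors near $V_{\hat \jmath}$. These need the sharp $\|\Pi^\bot v\|^2$ factors, which come from the symmetry-based estimates in \eqref{Estimate on d I1Q dritto}. A cruder $\cO(\|v\|^3)$ error there would destroy positivity of the determinant.
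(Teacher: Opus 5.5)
Your proposal is correct and follows the paper's proof essentially step by step. You write $A^{\hat \jmath}(t,v)=\cA^{\hat \jmath}(v)+t\,(A^{\hat \jmath}(v)-\cA^{\hat \jmath}(v))$ and bound the difference entrywise through \eqref{Estimate on d I1Q dritto}, using $\Pi_{\hat \jmath}\nabla_v\cI^{\bot_{\hat \jmath}}=0$. You then absorb the errors into the bounds of Lemma \ref{Properties of QaQT}: via $\det\cA\gtrsim\|v\|^4$ in $\cF_{r''}$, and via the entrywise determinant estimate near $V_{\hat \jmath}$.

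The differences are cosmetic. You perturb $\det\cA$ directly rather than factoring it as $\det\cA\,\det(\mathrm{Id}+t\cA^{-1}R)$. Also, $A(v)$ is not symmetric, so you should state the bound $(1+C\|v\|)\|\Pi_{\hat \jmath}^\bot v\|^2$ for both $A_{12}$ and $A_{21}$; your off-diagonal error estimates already give this.
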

    
    \begin{proof}
Since $\cI(v)$ and  $I(v)$ 
are $\T^2_\Gamma$-invariant (cf. Lemma \ref{lem:regI}), 
        their gradients are $ \T^2_\Gamma $-equivariant  and so the matrix $A(t,v)$ is $\T^2_\Gamma$-invariant.
The  regularity properties of 
$ A (t,v) $ are the same  of 
 the matrix  $ A(v)$ 
  in \eqref{Def AA} which, in turn, follow by  Lemma \ref{lem:regI}.
In view of 
 \eqref{Def AAQ}, \eqref{Def aQ}, \eqref{Estimate on d I1Q dritto}, 
\eqref{eq:Nabla cI}, \eqref{pibot},
that $ \Pi^\bot_{\hat \jmath}  $ commutes with  $ \nabla_v \cI^{\parallel_{\hat \jmath}} (v)  $ and $ \nabla_v \cI^{\bot_{\hat \jmath}} (v)  $,
 the matrix
$R^{\hat \jmath}(v):=A^{\hat \jmath}(v) - \cA^{\hat \jmath}(v)$ satisfies
\begin{equation}\label{eq:inside lemma AQ = cal AQ + error}
    \begin{aligned}
        & R^{\hat \jmath}(v) =
            \cO(\|v\|^3) \, , & & 
            \forall v\in \cF_{r'} \, , \\
        &  R^{\hat \jmath}(v)=
            \cO(\|v\|)\begin{pmatrix}
                \|v\|^2 & \|
                \Pi_{\hat \jmath}^\bot v\|^2\\
                \| \Pi_{\hat \jmath}^\bot v \|^2 & \| \Pi_{\hat \jmath}^\bot v \|^2
            \end{pmatrix} \, , & & 
            \forall 
            v\in \cN_{\hat \jmath}(r')\, , \ \hat \jmath\in\cD \, .
            \end{aligned}
\end{equation}
and hence $ A^{\hat \jmath}(t,v) = \cA^{\hat \jmath} (v) +  t \, (A^{\hat \jmath}(v) - \cA^{\hat \jmath}(v))=\cA^{\hat \jmath}(v)+tR^{\hat \jmath}(v)$ satisfies 
 \eqref{AQ = cal AQ + error} by 
\eqref{Estimate on a(v,v)}.
Let us prove \eqref{detpospe}.  
        Let $ R(v):=A(v)-\cA(v)=Q_{\hat \jmath}^{-1} R^{\hat \jmath}(v) Q_{\hat \jmath}$ (cf. \eqref{Def AAQ} and \eqref{coniugAQ}).
        For any  $ v\in \cF_{r''} $ 
        we have       
        $d(v,{\bf V}^{\twod})\geq \sqrt{\disb/4}\|v\|>0$ and then recalling  \eqref{eq:def At}, %\eqref{Def AAQ}, \eqref{eq:def At} and \eqref{coniugAQ}, 
        $$
            \det (A(t,v))= 
            \underbrace{\det (\cA(v))}_{ \gtrsim \|v\|^4 \, \text{by} \, \eqref{lem:c away from S* estimate 30}}
            \det \Big( \text{Id} +
            \underbrace{t\cA^{-1}(v)R(v)}_{= \cO(|t|\|v\|) \, \text{by} \,
            \eqref{lem:c away from S* estimate 3}, %\eqref{A= cal A + O(v3)}
            \eqref{eq:inside lemma AQ = cal AQ + error}
            } \Big)   \gtrsim \|v\|^4 
        $$
     for any $ t \in (-2,2) $ taking $ r'' \in (0,r')$ small enough, proving the first lower bound in \eqref{detpospe}.  Next
     we prove the second bound in \eqref{detpospe}. 
    By \eqref{Estimate on a(v,v)}
    and 
    \eqref{eq:inside lemma AQ = cal AQ + error} the matrix entries of 
$ A^{\hat \jmath}(t,v) $ satisfy
\begin{equation}\label{boundet}
    \begin{aligned}
    &         (A^{\hat \jmath}(t,v))_{11}\geq \|\Pi_{\hat \jmath} v\|^2-C|t|\|v\|^3\, , \quad  \ \,  (A^{\hat \jmath}(t,v))_{22}\geq \big(\disb -C|t|\|v\|\big)\|\Pi_{\hat \jmath}^\bot v\|^2 \, , \\
    & 
| (A^{\hat \jmath}(t,v))_{12}| , \ 
|(A^{\hat \jmath}(t,v))_{21} | \leq  
            \big(1+C|t|\|v\|\big)\|\Pi_{\hat \jmath}^\bot v\|^2 
    \end{aligned}
   \end{equation}
   for some  $C>0$.
   We deduce
$    (A^{\hat \jmath}(t,v))_{11} (A^{\hat \jmath}(t,v))_{22} 
    %& \geq  (\|\Pi_{\hat \jmath} v\|^2-C|t|\|v\|^3)(\disb -C|t|\|v\|)\|\Pi_{\hat \jmath}^\bot v\|^2\\ & 
    \geq 
    \disb \|\Pi_{\hat \jmath} v\|^2\|\Pi_{\hat \jmath}^\bot v\|^2-(\disb+1)C|t|\|v\|^3\|\Pi_{\hat \jmath}^\bot v\|^2 $ 
    and, assuming $C|t|\|v\|\leq 1 $, that 
   $ (A^{\hat \jmath}(t,v))_{12}^2
        %\leq (1+3C|t|\|v\|)\|\Pi_{\hat \jmath}^\bot v\|^4
        \leq \|\Pi_{\hat \jmath}^\bot v\|^4+3C|t|\|v\|^3\|\Pi_{\hat \jmath}^\bot v\|^2 $ 
so that  the determinant of 
        $ A(t,v) = 
        Q_{\hat \jmath}^{-1} A^{\hat \jmath}(t,v) Q_{\hat \jmath} 
        $ 
        satisfies 
 \begin{equation}\label{detAvok}
            \det(A(t,v))
             =\det(A^{\hat \jmath}(t,v))\geq (\disb \| \Pi_{\hat \jmath} v\|^2- \|\Pi_{\hat \jmath}^\bot v\|^2)\|\Pi_{\hat \jmath}^\bot v\|^2
            -(\delta+4)C|t|\|v\|^3\|\Pi_{\hat \jmath}^\bot v
            \|^2 \, .
            \end{equation}
    Recalling 
     \eqref{defNr},   
     if $v\in \cN_{\hat \jmath}(r'')$ then
    \begin{equation}\label{stimedis}
        \disb \|\Pi_{\hat \jmath} v\|^2- \|\Pi_{\hat \jmath}^\bot v\|^2\geq \tfrac \disb 2 \|v\|^2 \, , \quad \disb \in (0,1] \, .
     \end{equation}
        Hence by \eqref{detAvok} 
        and \eqref{stimedis} we conclude that 
        $$
            \det(A(t,v))
            \stackrel{ \eqref{stimedis}} \geq 
           \Big(\tfrac\disb 2-(\disb+4)C|t|\|v\|\Big)\|v\|^2\|\Pi_{\hat \jmath}^\bot v\|^2\geq \tfrac\disb3 \|v\|^2\|\Pi_{\hat \jmath}^\bot v\|^2
    $$
    for any  $ v \in B_{r''}^V $ with $ r'' \in (0,r')$ small enough.
    Finally the first inequality in \eqref{boundet} implies also
    the last bound in \eqref{detpospe} 
    for $r''\in (0,r')$ small enough,  since $ v = \Pi_{\hat \jmath} v  $
    for any $ v \in V_{\hat \jmath} $.
    \end{proof}
    
    The next key lemma  is used to prove Theorem \ref{th:Moser's trick} and Proposition \ref{A flow}.
    \begin{lemma}{\bf (Solution of  $A(t,v)^\top \mu(v)=b(v)$)}\label{lm:linear equation}
        Let  $b:B_{r'}^V\to \R^2$ be a $\T^2_\Gamma$-invariant continuous function, analytic in $B_{r'}^V\setminus {\bf V}^{\twod}$ and restricted to each $(B_{r'}^V\cap V_{\hat \jmath})\setminus\{0\}$, ${\hat \jmath} \in \cD $, satisfying 
        \begin{equation}\label{estbj}
            b^{\hat \jmath} (v)=Q_{\hat \jmath} b(v)=\cO\begin{pmatrix}
                \|v\|^2\\
                \|\Pi_{\hat \jmath}^\bot v\|^2
            \end{pmatrix} \, , \quad 
                   \forall  v\in B_{r'}^V \, , \ {\hat \jmath} \in \cD \, , 
        \end{equation}
        where $Q_{\hat \jmath}$ is defined in \eqref{eq:def yj}.
        Then, 
        there exists a $\T^2_\Gamma$-invariant function $\mu: \JJJ \times B_{r''}^V \to \R^2$ solving the system 
        \begin{equation}\label{eq:linear system}
            A(t,v)^\top\mu(t,v)=b(v) \, , 
            \quad \forall (t,v)\in \JJJ\times B_{r''}^V \, , 
        \end{equation}
        where the matrix $ A(t,v) $ is defined in \eqref{eq:def At} and $r''>0$ is the same of Lemma \ref{Estimates on A, AQ, tilde AQ11}.
        Moreover
        the vector field 
        \begin{equation}\label{defZ}
        Z:\JJJ \times B_{r''}^V\to V \, , \quad       Z(t,v):=\mu(t,v)\cdot \nabla_v  \cI(v) \, , 
        \end{equation}
        is $\T^2_\Gamma$-equivariant and satisfies the properties \eqref{list:Properties of Z} (stated in  Lemma \ref{lm:B1}).
    \end{lemma}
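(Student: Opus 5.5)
The plan is to define $\mu(t,v)$ piecewise, following the same three regimes used in the construction of $c(v)$: the region $\cF_{r''}$, the regions $\cN_{\hat\jmath}(r'')\setminus V_{\hat\jmath}$, and the $2d$ subspaces $V_{\hat\jmath}$. Throughout we work with $t\in\JJJ\subseteq(-2,2)$.

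\emph{Off ${\bf V}^{\twod}$.} For $v\in B_{r''}^V\setminus{\bf V}^{\twod}$, Lemma \ref{Estimates on A, AQ, tilde AQ11} gives $\det A(t,v)>0$. We therefore set $\mu(t,v):=A(t,v)^{-\top}b(v)$. This is the unique solution, and it is analytic there. It is $\T^2_\Gamma$-invariant because both $A(t,v)$ and $b$ are.

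\emph{On $\cF_{r''}$.} Here \eqref{detpospe} gives $\det A\gtrsim\|v\|^4$, and \eqref{AQ = cal AQ + error} gives $A=\cO(\|v\|^2)$, so $A^{-\top}=\cO(\|v\|^{-2})$. With $b=\cO(\|v\|^2)$ we obtain $\mu=\cO(1)$.

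\emph{Near $V_{\hat\jmath}$.} In $\cN_{\hat\jmath}(r'')$ we conjugate by $Q_{\hat\jmath}$. Setting $\mu^{\hat\jmath}:=Q_{\hat\jmath}\mu$, the system becomes $A^{\hat\jmath}(t,v)^\top\mu^{\hat\jmath}=b^{\hat\jmath}(v)$. By \eqref{AQ = cal AQ + error} and the lower bound $\det A^{\hat\jmath}\ge\tfrac\disb4\|v\|^2\|\Pi^\bot_{\hat\jmath}v\|^2$, the inverse has the structure of \eqref{lem:c near PJ estimate 3}:
\[
(A^{\hat\jmath})^{-1}=\cO\begin{pmatrix}\|v\|^{-2}&\|v\|^{-2}\\ \|v\|^{-2}&\|\Pi^\bot_{\hat\jmath}v\|^{-2}\end{pmatrix}.
\]
Combined with \eqref{estbj}, $b^{\hat\jmath}=\cO(\|v\|^2,\|\Pi^\bot_{\hat\jmath}v\|^2)$, this yields $\mu^{\hat\jmath}=\cO(1)$ uniformly. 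This cancellation is exactly why hypothesis \eqref{estbj} is phrased componentwise.

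\emph{On $V_{\hat\jmath}\setminus\{0\}$.} Here $\nabla_v\cI^{\bot_{\hat\jmath}}=0$ by \eqref{pibot}, so $Z$ only sees $\mu^{\parallel_{\hat\jmath}}$. We define $\mu^{\parallel_{\hat\jmath}}$ by the scalar equation $(A^{\hat\jmath}(t,v))_{11}\mu^{\parallel_{\hat\jmath}}=(b^{\hat\jmath})^{\parallel}$, which is solvable by the third bound in \eqref{detpospe}. The second component must then be shown consistent. On $V_{\hat\jmath}$ we have $b^{\bot_{\hat\jmath}}=0$ by \eqref{estbj}, and the entries $A^{\hat\jmath}_{12},A^{\hat\jmath}_{21},A^{\hat\jmath}_{22}$ vanish by \eqref{AQ = cal AQ + error}. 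Hence any choice works, and we take $\mu^{\bot_{\hat\jmath}}:=0$ there, or alternatively the limit from outside if one exists. At $v=0$ we set $\mu:=0$.

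\emph{Properties of $Z$.} Equivariance of $Z=\mu\cdot\nabla_v\cI$ follows from the invariance of $\mu$ and the equivariance of $\nabla_v\cI$. For the properties \eqref{list:Properties of Z} we expect to need three things.

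First, $\|Z(t,v)\|\lesssim\|v\|$. This uses $\mu^{\hat\jmath}=\cO(1)$, $\|\nabla_v\cI^{\parallel}\|\lesssim\|v\|$ and $\|\nabla_v\cI^{\bot}\|\lesssim\|\Pi^\bot_{\hat\jmath}v\|$. Note that $Z^\bot$-type terms carry an extra factor $\|\Pi^\bot_{\hat\jmath}v\|$.

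Second, $Z$ is continuous on $\JJJ\times B_{r''}^V$ and tangent to each $V_{\hat\jmath}$. Tangency holds because $\nabla_v\cI$ commutes with $\Pi_{\hat\jmath}$.

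Third, $Z$ is locally Lipschitz (indeed smooth) off ${\bf V}^{\twod}$ and on each $V_{\hat\jmath}\setminus\{0\}$, with a Lipschitz or log-Lipschitz type bound near $V_{\hat\jmath}$ sufficient for uniqueness of the flow. The smoothness comes from the analyticity of $A$ and $b$ in those regions.

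\emph{Main obstacle.} We expect the hardest step to be continuity of $Z$ across $V_{\hat\jmath}$ and the regularity needed for uniqueness of its flow. The component $\mu^{\bot_{\hat\jmath}}$ is only bounded, and may not converge, as $v\to V_{\hat\jmath}$. However, it multiplies $\nabla_v\cI^{\bot_{\hat\jmath}}(v)=\cO(\|\Pi^\bot_{\hat\jmath}v\|)\to0$, so that term of $Z$ tends to zero. For $\mu^{\parallel_{\hat\jmath}}$ the strategy is the one used in Lemma \ref{lm:continuity of cparallel}. The first row of the system reads $A_{11}\mu^\parallel+A_{21}\mu^\bot=b^\parallel$. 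Here $A_{21}\mu^\bot=\cO(\|\Pi^\bot_{\hat\jmath}v\|^2)\to0$, and $A_{11}$, $b^\parallel$ are continuous with $A_{11}\gtrsim\|v\|^2$. Hence $\mu^\parallel$ converges to the value solving the scalar equation on $V_{\hat\jmath}$. Derivative bounds for the Lipschitz property follow by differentiating the system, in the manner of \eqref{derivicino}. There the $\|v\|/\|\Pi^\bot_{\hat\jmath}v\|$ singularity of $\di_v\mu^\bot$ is again compensated by the factor $\nabla_v\cI^\bot$.
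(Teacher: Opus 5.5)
Your proof is correct in substance and follows the paper's argument; one planned step should be dropped.

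\textbf{Where you match the paper.} Your construction of $\mu$ is the paper's:
\begin{itemize}
\item $\mu=A(t,v)^{-\top}b(v)$ off ${\bf V}^{2d}$;
\item $\mu=\mu^{\parallel_{\hat\jmath}}\hat\jmath$ on $V_{\hat\jmath}\setminus\{0\}$, with $(A^{\hat\jmath}(t,v))_{11}\,\mu^{\parallel_{\hat\jmath}}=b^{\parallel_{\hat\jmath}}(v)$;
\item $\mu=0$ at $v=0$.
\end{itemize}
Boundedness of $\mu$ comes, as in the paper, from the anisotropic bound on $(A^{\hat\jmath})^{-\top}$ paired with the componentwise hypothesis \eqref{estbj}. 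Continuity of $\mu^{\parallel_{\hat\jmath}}$ across $V_{\hat\jmath}$ comes from passing to the limit in the first row $A^{\hat\jmath}_{11}\mu^{\parallel}+A^{\hat\jmath}_{21}\mu^{\bot}=b^{\parallel}$, using $A^{\hat\jmath}_{21}=\cO(\|\Pi^\bot_{\hat\jmath}v\|^2)$. The paper extracts subsequences there while you argue directly; the content is the same. Continuity of $Z$ then follows because the merely bounded $\mu^{\bot_{\hat\jmath}}$ multiplies $\nabla_v\cI^{\bot_{\hat\jmath}}(v)=\cO(\|\Pi^\bot_{\hat\jmath}v\|)$.

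\textbf{The step to drop.} Your third requirement, a Lipschitz or log-Lipschitz bound for $Z$ near $V_{\hat\jmath}$, is not among the properties \eqref{list:Properties of Z}. Your plan to obtain it by differentiating $A(t,v)^\top\mu=b$ ``in the manner of \eqref{derivicino}'' also has nothing to run on:
\begin{itemize}
\item $b$ is only assumed continuous, with the size bound \eqref{estbj}, so there is no control on $\di_v b$;
\item $A(t,v)$ is built from $\di_v I$, and $I$ is only $C^1$ across ${\bf V}^{2d}$ (Lemma \ref{lem:regI}), so no bounds on $\di_v A$ are available there.
\end{itemize}
The paper in fact remarks in Lemma \ref{lm:B1} that $Z$ is not Lipschitz near the $V_{\hat\jmath}$.

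\textbf{What replaces it.} Besides continuity, $\|Z\|\lesssim\|v\|$, and analyticity on the two strata (which you already have), \eqref{list:Properties of Z} requires the bound $\|\Pi^\bot_{\hat\jmath}Z(t,v)\|\lesssim\|\Pi^\bot_{\hat\jmath}v\|$. This is the quantitative form of your tangency remark. It follows at once from boundedness of $\mu$ together with $\Pi^\bot_{\hat\jmath}\nabla_v\cI(v)=\nabla_v\cI(\Pi^\bot_{\hat\jmath}v)$, or equivalently from $Z=-\sum_{\hat\jmath'}\mu^{\parallel_{\hat\jmath'}}\Pi_{\hat\jmath'}v$. Uniqueness of the flow in Lemma \ref{lm:B1} comes from this bound via Gronwall on $\|\Pi^\bot_{\hat\jmath}v(t)\|^2$: trajectories off ${\bf V}^{2d}$ never reach it in finite time, and trajectories on $V_{\hat\jmath}$ never leave it. Analyticity on each stratum then finishes the argument, with no Lipschitz control across $V_{\hat\jmath}$ needed.
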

    
    \begin{proof}
    By Lemma \ref{Estimates on A, AQ, tilde AQ11} there is $r''\in (0,r')$ such that 
  the matrix $A(t,v)^\top$ is invertible for any $(t,v)\in \JJJ\times [B_{r''}^V\setminus{\bf V}^{\twod}]$.
    Then the solution of system 
    \eqref{eq:linear system} is 
% $\mu:J\times B_{r''}^V\to \R^2$,  
\begin{equation}\label{def:mu}
\mu:\JJJ\times B_{r''}^V\to \R^2 \, , 
\ 
            \mu(t,v):=\begin{cases}
                \mu_{\threed}(t,v) & t \in \JJJ \, , \, v\in B_{r''}^V\setminus {\bf V}^{\twod}\\
                \mu_{\twod}^{\parallel_{\hat \jmath}}\hat\jmath & t \in \JJJ \, , \, v\in (B_{r''}^V\cap V_{\hat \jmath})\setminus\{0\} \, , \ {\hat \jmath} \in \cD \\
                0 & t \in \JJJ \, , \, v=0\, , 
            \end{cases} 
        \end{equation}
where 
        \begin{align}\label{defmu3}
     &    \mu_{\threed} : \JJJ\times [B_{r''}^V\setminus {\bf V}^{\twod}] \to \R^2 \, ,\quad    (t,v) \mapsto  \mu_{\threed}(t,v):=(A(t,v))^{-\top}b(v)\,  
     \\ 
&   \mu^{\parallel_{\hat \jmath}}_{\twod}:\JJJ\times [(B_{r''}^V\cap V_{\hat \jmath})\setminus\{0\}]\to \R \, , \quad (t, v) \mapsto          \mu^{\parallel_{\hat \jmath}}_{\twod}(t,v):=(A^{\hat \jmath}(t,v))_{11}^{-1}b^{\parallel_{\hat \jmath}}(v)\, , \ {\hat \jmath} \in \cD \, , \label{defmu2}
    \end{align}  
where $ (A^{\hat \jmath}(t,v))_{11} 
\gtrsim \| v \|^2 $ for $v\in (B_{r''}^V\cap V_{\hat \jmath})\setminus\{0\}$ by  \eqref{detpospe}.
    \\[1mm]
 {\sc Step 1:} {\it The function $\mu(t,v)$ satisfies the following properties:
        \begin{itemize}
            \item[$(i)$] $\mu(t,v)$ is analytic in $\JJJ\times[B_{r''}^V\setminus {\bf V}^{\twod}]$ and restricted to   $\JJJ\times[(B_{r''}^V\cap V_{\hat \jmath})\setminus\{0\}]$, for any $ {\hat \jmath} \in \cD $; 
            \item[$(ii)$] $\mu(t,v)$ is  $\T^2_\Gamma $-invariant
            and bounded; 
            \item[$(iii)$]  $\mu^{\parallel_{\hat \jmath}}(t, v)$ for any  ${\hat \jmath} \in \cD $ is continuous on $\JJJ\times [B_{r''}^{{\hat \jmath}}\setminus\{0\}]$, cf. \eqref{c1Q U tilde c1Q}.
        \end{itemize}}
The functions $ \mu_{\threed} $
and $ \mu^{\parallel_{\hat \jmath}}_{\twod} $ in \eqref{defmu3}-\eqref{defmu2} are  analytic by 
the analyticity properties of 
$ A(t,v) $ of 
     Lemma \ref{Estimates on A, AQ, tilde AQ11} and 
the analyticity 
assumptions on $ b(v) $.  
Furthermore  $\mu(t,v)$ is $\T^2_\Gamma $-invariant as $b(v) $ (by assumption) and $ A(t,v)$ are invariant as well.
The estimates   
    \eqref{AQ = cal AQ + error}-\eqref{detpospe} imply that
        \begin{equation}\label{eq:inverse di A}
        \begin{aligned}
        &     A(t,v)^{-\top}=\cO\begin{pmatrix}\frac1{\|v\|^2}\end{pmatrix} & &
            \forall (t,v)\in \JJJ\times \cF_{r''}\, , \\
         &    (A^{\hat \jmath}(t,v))^{-\top}=\cO\begin{pmatrix}
                \frac1{\|v\|^2} & \frac1{\|v\|^2}\\
                \frac1{\|v\|^2} & \frac1{\|\Pi_{\hat \jmath}^\bot v \|^2}
            \end{pmatrix} & &
            \forall 
            (t,v)\in \JJJ\times [\cN_{\hat \jmath}(r'')\setminus V_{\hat \jmath}]\, , \ \hat \jmath\in\cD \, ,  \\
        &     (A^{\hat \jmath}(t,v))_{11}^{-1}=\cO\begin{pmatrix}\frac1{\|\Pi_{\hat \jmath} v\|^2}\end{pmatrix} = \cO\begin{pmatrix}\frac1{\|v\|^2}\end{pmatrix} & &
            \forall (t,v)\in \JJJ\times [(B_{r''}^V\cap V_{\hat \jmath})\setminus\{0\}]\, ,
        \end{aligned}
        \end{equation}
        and jointly with  
        the estimates \eqref{estbj} on $b(v)$, we deduce  
        that $\mu_{\threed}(v)$ and
        $        \mu^{\parallel_{\hat \jmath}}_{2d} (v)$ 
        in \eqref{defmu3}-\eqref{defmu2} 
        are uniformly bounded.
 Let us prove $(iii)$.
        Let $(t_i,v_i)\to (\bar t,\bar v)\in 
        \JJJ\times [(B_{r''}^V\cap V_{\hat \jmath})\setminus\{0\}]$.
        For any subsequence $(t_{i_m},v_{i_m})$ there exists another subsequence $(t_{i_{m_\ell}},v_{i_{m_\ell}})$ and $ \bar \mu\in \R^2 $ such that $\mu(t_{i_{m_\ell}},v_{i_{m_\ell}})\to \bar \mu $.
        Passing to the limit in \eqref{eq:linear system} we deduce that $\bar \mu$ solves  
       $ A(\bar t,\bar v)^\top
       \bar \mu =b(\bar v)$, thus 
       $ A^{\hat \jmath}(\bar t,\bar v)^\top \bar \mu^{\hat \jmath} = b^{\hat \jmath} (\bar v)$ for any 
       $ \hat \jmath \in \cD $, thus, by \eqref{AQ = cal AQ + error}, 
        $$
            (A(\bar t, \bar v))_{11}\bar \mu^{\parallel_{\hat \jmath}}=b^{\parallel_{\hat \jmath}}(\bar v)\quad \text{where} \quad \bar \mu^{\parallel_{\hat \jmath}}=\bar \mu\cdot \hat\jmath \quad \text{(cf. \eqref{eq:compoc})}
        $$
        and hence $\bar \mu^{\parallel_{\hat \jmath}}=\mu_{\twod}^{\parallel_{\hat \jmath}}(\bar t, \bar v)$ in \eqref{defmu2}.
        The proof of $(iii)$ is concluded.
\\[1mm]
{\sc Step 2:}
{\it The  vector field $Z(t,v)$ 
        in \eqref{defZ}
        satisfies properties \eqref{list:Properties of Z}. } 
        Since $\mu(t,v)$ is $\T^2_\Gamma$-invariant
        and $ \nabla_v  \cI(v) $ is 
        $\T^2_\Gamma$-equivariant,  the vector field $Z(t,v)$ in \eqref{defZ} is $\T^2_\Gamma$-equivariant as well.
        In view of \eqref{eq:Nabla cI} we have
        \begin{equation}\label{eq:Z explicit formula}
            Z(t,v)=-\sum_{{\hat \jmath}\in \cD}\mu(t,v)\cdot \hat\jmath\,  \Pi_{\hat \jmath}v=-\sum_{{\hat \jmath}\in \cD}\mu^{\parallel_{\hat \jmath}}(t,v)\Pi_{\hat \jmath}v\, .
        \end{equation}
        Item
        $(iii)$ implies that 
        % the function  $v\mapsto \mu^{\parallel_{\hat \jmath}}(t,v)\Pi_{\hat \jmath}v$ 
        $ Z(t,v) $ is continuous on $ \JJJ\times (B_{r''}^V \setminus \{0\})$ and
         $(ii)$ implies that 
         $ \| Z(t,v) \| \lesssim \| v \| $ thus it is continuous
         also at $ v = 0 $. 
        Properties \eqref{list:Properties of Z}-$(1)$ and $(3)$ are proved.
        Property \eqref{list:Properties of Z}-$(2)$ follows from $(i)$, and  \eqref{list:Properties of Z}-$(4)$ from \eqref{eq:Z explicit formula}.
    \end{proof}

\subsection{Rectification of the  momentum }\label{sec:MT}

To prove Theorem \ref{th:Moser's trick}
we implement a suitable Moser's trick.
For any $t\in \R $ and $v\in B_{r'}^V$ let 
\begin{equation}\label{Def ItQ}
    I(t,v):= t I(v)+(1-t)\cI(v)\, .
\end{equation}
We look for  a family of equivariant homeomorphisms $ \zeta (t,\cdot)  $ satisfying 
\eqref{diffeoint}, \eqref{diffeobordo} and 
\begin{equation}\label{eq:forma}
    I(t, \cdot)  \circ \zeta (t, \cdot)  := I(0, \cdot) 
    \stackrel{\eqref{Def ItQ}} = \cI (\cdot )   \, , \quad \forall t \in [0,1] \, , \quad \zeta (0, \cdot)  = \text{Id} \, , 
\end{equation}
so that at $ t = 1 $ the map 
$ \zeta(1, \cdot)   $
solves \eqref{Izeta}. 
We look for $ \zeta(t, \cdot)  $ as the flow 
$$
\pa_t \zeta (t, v) = X(t,\zeta (t, v)) \, , \quad 
\zeta (0, v) = v  \, , 
$$
generated by a time dependent vector field of the form 
\begin{equation}\label{Def X}
X:\JJJ\times B_{r''}^V\to V \, , \quad
    X(t,v):=\mu(t,v)\cdot \nabla_v  \cI(v)  \, , 
\end{equation}
for some $ \mu:\JJJ\times B_{r''}^V \to  \R^2 $
and   $r''\in (0,r')$ sufficiently small. 
By differentiating in $ t $,   \eqref{eq:forma} is equivalent to
\begin{equation}\label{Moser's trick}
        \di_v I(t,v)[X(t,v)] 
        + (I- \cI)(v) = 0  
        \, , 
\end{equation} 
that, by  \eqref{Def X} and recalling \eqref{eq:def At},
\eqref{Def AA}, \eqref{Def AAQ}, \eqref{Def ItQ},  amounts to the system 
\begin{equation}\label{EQ k}
        A^\top(t,v) \mu(t,v)=\cI(v)-I(v) 
\end{equation}
where $ A(t,v) $ is the matrix defined 
in \eqref{eq:def At}. 
 Lemmata \ref{lem:regI},  \ref{Properties of I dritto} imply that 
$ b(v) := \cI(v)-I(v) $ satisfies the assumptions of Lemma \ref{lm:linear equation}
and 
  we deduce the  existence of $ r''\in (0,r')$ and 
    $ \mu:\JJJ\times B_{r''}^V \to  \R^2 $, 
        such that the vector-field \eqref{Def X} solves %\eqref{Moser's trick} (i.e 
        \eqref{EQ k} and satisfies the assumptions of Lemma \ref{lm:B1}.
    Therefore Lemma \ref{lm:B1} implies  that the vector-field $ X(t,v) $ in \eqref{Def X} generates 
     flow $\zeta (t, v) $ on $B_{r''}^V$ locally in time, well defined  in $[0,1] \times B_{r'''}^V $ 
    for some  $r'''\in(0,r'')$.
    We set $U_1:=B_{r'''}^V$.
    By  the Brouwer theorem on invariance of domains we deduce that 
    \begin{equation}\label{def:U2}
        U_2:=\zeta(1, U_1)\subset B_{r''}^V\quad \text{is open} \, . 
    \end{equation}
    In view of Remark \ref{rem:Vinv} and the regularity properties of $X(t,v)$ (specifically \eqref{list:Properties of Z}-$(2)$) we deduce \eqref{diffeoint}-\eqref{diffeobordo}. The  proof 
    %of Theorem \ref{th:Moser's trick} 
    is concluded.

\section{Topology of $ \MA$}

In this section we carefully 
describe the topology of  
% $ \MA $ 
%the next Section %\ref{sec:final} we shall 
%prove existence and multiplicity of $3d$-Stokes waves as  stationary points of a gradient-like flow 
the  level sets 
\begin{equation}\label{defSa}
     \MA:=
    \Big\{ v\in U_2  \ | 
    \ I(v)=a \Big\}  \, , \quad a \in \R^2 \, , 
\end{equation}
where $ I(v)$ is the reduced momentum in \eqref{Def I dritto}
and
$U_2 $ is the open neighborhood of $ 0 $ defined  in Theorem \ref{th:Moser's trick}.
To  start 
note that 
%with simple 
%considerations. 
%Clearly 
$ \MA $
  is a $ \T^2_\Gamma $-space 
equipped with the linear action $ (\tau_\theta ) $
in \eqref{T2 symmetry}.
Furthermore 

\begin{lemma}\label{rmk:nablaI12}
   $ \MA^{3d} := \MA \setminus {\bf V}^{\twod} $ is a manifold and
\begin{equation}\label{supplem}
    V= \underbrace{T_v\MA^{\threed}}_{= \ker \di_v I (v)  }\oplus \, \text{span}(\nabla_v \cI_1(v),\nabla_v \cI_2(v))\, ,
    \quad 
\forall  v \in \MA^{3d} \,  .
\end{equation}
\end{lemma}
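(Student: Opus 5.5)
The plan is to transport the question to the quadratic momentum through the straightening homeomorphism $\zeta$ of Theorem \ref{th:Moser's trick}, and to use the identity between $\ker\di_v I(v)$ and $\nabla_v\cI$ packaged in the matrix $A(v)$ of \eqref{Def AA}. Since $\zeta$ maps $U_1\setminus{\bf V}^{\twod}$ diffeomorphically onto $U_2\setminus{\bf V}^{\twod}$ by \eqref{diffeoint}, and $I\circ\zeta=\cI$ by \eqref{Izeta}, we have
$$\MA^{3d}=\zeta\big(\{v\in U_1\setminus{\bf V}^{\twod}\ |\ \cI(v)=a\}\big).$$
So it suffices to show that $a$ is a regular value of $\cI$ restricted to $V\setminus{\bf V}^{\twod}$. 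By \eqref{eq:Nabla cI} and Lemma \ref{Properties of a}, the Gram matrix of $\nabla_v\cI_1(v),\nabla_v\cI_2(v)$ is $\cA(v)$, and $\det\cA(v)>0$ off ${\bf V}^{\twod}$. Hence $\di_v\cI(v)$ is surjective at every point of $V\setminus{\bf V}^{\twod}$, and the level set there is a smooth submanifold of codimension $2$. Its image under the diffeomorphism $\zeta$ is then a submanifold, so $\MA^{3d}$ is a manifold.

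An alternative route, which also gives \eqref{supplem} directly, avoids $\zeta$ at this step. $I$ is analytic on $B_{r'}^V\setminus{\bf V}^{\twod}$ by Lemma \ref{lem:regI}. By Lemma \ref{Estimates on A, AQ, tilde AQ11} with $t=1$, $\det A(v)=\det A(1,v)>0$ for every $v\in B_{r''}^V\setminus{\bf V}^{\twod}$, using the first bound of \eqref{detpospe} on $\cF_{r''}$ and the second on each $\cN_{\hat\jmath}(r'')\setminus V_{\hat\jmath}$. Since $B_{r''}^V$ is the union of $\cF_{r''}$ and the sets $\cN_{\hat\jmath}(r'')$, this covers all of $B_{r''}^V\setminus{\bf V}^{\twod}$. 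By Remark \ref{rmk:U2 subset Br''}, $U_2\subset B_{r''}^V$, so $A(v)$ is invertible at every $v\in\MA^{3d}$.

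Invertibility of $A(v)$ forces $\di_vI(v)$ to have rank $2$, so $a$ is a regular value of $I$ on the open set $U_2\setminus{\bf V}^{\twod}$. The implicit function theorem then shows that $\MA^{3d}$ is an analytic submanifold with $T_v\MA^{\threed}=\ker\di_vI(v)$. Lemma \ref{lem:indip} applied at $v\notin{\bf V}^{\twod}$ gives exactly the splitting \eqref{supplem}, since $\nabla_v\cI_1(v),\nabla_v\cI_2(v)$ are independent there.

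The only delicate point is making sure the nondegeneracy estimates \eqref{detpospe} cover all of $U_2\setminus{\bf V}^{\twod}$, uniformly up to ${\bf V}^{\twod}$. The covering of $B_{r''}^V$ by $\cF_{r''}$ and the $\cN_{\hat\jmath}(r'')$, together with $U_2\subset B_{r''}^V$, settles this. Finally, $\MA^{3d}$ is $\T^2_\Gamma$-invariant because $I$ is invariant and ${\bf V}^{\twod}$ is invariant by Remark \ref{rem:Vinv}.
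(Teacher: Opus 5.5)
Your second route is exactly the paper's proof: $U_2\subset B_{r''}^V$, the bounds \eqref{detpospe} at $t=1$ make $A(v)$ invertible on $\MA^{3d}$, so $\di_v I_1(v),\di_v I_2(v)$ are independent and Lemma \ref{lem:indip} yields \eqref{supplem}. The preliminary $\zeta$-argument is correct but redundant. It gives the manifold structure, but not the transversality $\ker\di_v I(v)\cap\mathrm{span}(\nabla_v\cI_1(v),\nabla_v\cI_2(v))=\{0\}$ that the splitting requires.
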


\begin{proof}
  The set  $\MA^{\threed}\subset U_2\subset B_{r''}^V$  by  Remark \ref{rmk:U2 subset Br''} and, for any 
  $ v \in \MA^{3d} $,   Lemma \ref{Estimates on A, AQ, tilde AQ11} implies that the matrix $A(v)=A(1,v)$ in \eqref{Def AA}, \eqref{eq:def At}  is invertible.
    Therefore $
     \{\di_v I_1(v),\di_v I_2(v)\}$  are linearly independent and by Lemma \ref{lem:indip} we deduce \eqref{supplem}.
\end{proof}

%We now provide the topological classification of $ \MA $. 
For any $ a \in \R^2 $ we consider the orthogonal 
decomposition  
\begin{equation}\label{eq:def V-0+}
    V = V_- \oplus V_0 \oplus V_+   
 \end{equation}   
where
\begin{equation}\label{eq:def cD-0+}
V_-:=\bigoplus_{\hat \jmath \in \cD_- }V_{\hat \jmath}\, , \quad  V_0:=\bigoplus_{\hat \jmath \in \cD_0} V_{\hat \jmath} \, , \quad  V_+:=\bigoplus_{\hat \jmath \in \cD_+ }V_{\hat \jmath}  \, , \end{equation}
and
$$
     \cD_-:= \big\{ \hat \jmath\in \cD \, | \, \hat \jmath 
     \cdot a^\bot <0 \big\} \, ,      \ 
        \cD_0:= \big\{ \hat \jmath\in \cD \, | \, 
        \hat \jmath \cdot a^\bot =0 \big\}\, , \ \cD_+:= \big\{ \hat \jmath\in \cD \, | \, 
        \hat \jmath \cdot a^\bot  >0 
        \big\}\, .
$$
The subspaces 
$ V_-, V_0, V_+ $ are  invariant
under the linear action $ \tau_\theta $ in \eqref{T2 symmetry}.

\begin{remark}\label{lem:linese}
If $a\in \interior(\cC)$  is in the interior of
the  cone  defined   in \eqref{defC} 
  then  
  $\cD_\pm \not=\emptyset$ and so 
  $\dim(V_\pm ) \geq 2$.
 % Here  $\dim(\cdot)$ denotes the real dimension of a vector space. 
%Let $\cC$ be    
 %[$(\beth)$]  {\bf (Boundary case)}  
%    
If 
$a \in \partial \cC\setminus\{0\} $ 
 then 
  $\cD_0 \not =\emptyset $, so  
$\dim(V_0)\geq 2 $, and   
%since $\cC$ is  convex, 
either $\cD_- =\emptyset$ or $\cD_+ =\emptyset$.   
\end{remark}

We denote $S(V_-) $, $S(V_0) $, $S(V_+) $ the unit spheres 
in $ V_-, V_0, V_+  $.

\begin{theorem}\label{Topology of Sa}
{\bf (Topology of $\MA$)}
    If $a\not \in \cC$ 
    %does not 
    %belong to  the convex cone $\cC$ defined   in \eqref{defC} 
    then $ \MA =\emptyset $.
    If $a=0$ then $\MA=\{0\}$. 
    Moreover there exists $\varepsilon>0$ such that for any $a\in \cC\cap \overline{B_\varepsilon} \setminus \{0\} $ the following holds:   
    \begin{itemize}
        \item[$(\beth)$] {\bf (Boundary case)} If $a\in \partial \cC\setminus\{0\}$ then   
        $ \MA \subset  V_0 $ is a submanifold of $ V_0 $ which is $\T^2_{\Gamma}$-equivariantly diffeomorphic to the unit sphere 
        $ S(V_0) $, i.e. 
        $   \MA \cong S(V_0) $.

        \item[$(\daleth)$] {\bf (Interior case)} If $a\in \interior(\cC)$  then  two cases may occur:
        \begin{enumerate}
            \item[$(\daleth$1)] {\bf (Non-collinear case)} If $a 
            \not \parallel \hat \jmath $  for any $ \hat \jmath \in \cD $,  then  $\MA$ is a closed submanifold of $V$  contained in $V\setminus 
            {\bf V}^{\twod} $ which is 
            $\T^2_{\Gamma}$-equivariantly diffeomorphic to   $ S(V_-)\times S(V_+) $. 
        
            \item[$(\daleth$2) ] {\bf (Collinear case)} If $a 
             \parallel \hat \jmath_0 $ is parallel to some resonant wave direction $ \hat \jmath_0 \in \cD $ then  
            \begin{equation}\label{Sa3dSa2d}
                           \MA =  \MA^{\threed} \, \sqcup  \, \MA^{\twod} 
\end{equation}
where  
$ \MA^{\threed}:=
 \MA\setminus {\bf V}^{\twod} = \MA\setminus V_0 $ is a submanifold of $ V $ and 
    $ \MA^{\twod}:=\MA\cap V_0
            $ is a submanifold of $ 
                V_0 $  diffeomorphic to the unit sphere    $ S(V_0)$  
(note that $V_0=V_{\hat \jmath_0}$ and $ \dim (V_0) \geq 2 $). There is a $\T^2_{\Gamma}$-equivariant homeomorphism           \begin{equation}\label{gamma homeomorphism}
         \quad            \gamma:\MA\toup^{\cong}
                    [S(V_-)\times S(V_+)] \star S(V_0)\, ,
                    \quad \gamma(\MA^{\twod})=S(V_0) \, , 
                \end{equation}
                where $\star$ denotes the {\sc join product} \eqref{defjoin} and $\dim(V_\pm ) \geq 2 $.
            \end{enumerate} 
    \end{itemize}
\end{theorem}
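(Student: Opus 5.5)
The plan is to transfer everything to the quadratic momentum via the straightening map of Theorem \ref{th:Moser's trick}. Since $\zeta:U_1\to U_2$ is a $\T^2_\Gamma$-equivariant homeomorphism with $I\circ\zeta=\cI$, we get $\MA=\zeta(\{v\in U_1\,|\,\cI(v)=a\})$. By \eqref{diffeoint}-\eqref{diffeobordo}, $\zeta$ preserves ${\bf V}^{\twod}$ and each $V_{\hat\jmath}$, and is a diffeomorphism off these sets. So it suffices to describe the quadric level set $\cI^{-1}(a)\cap U_1$ and its stratification, and then push it forward by $\zeta$. By \eqref{eq:cI}, $\cI(V)=\cC$, so $a\notin\cC$ gives $\MA=\emptyset$. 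For $a=0$, \eqref{eq:ilcono} and $c_*\cdot\cI(v)=-\frac12\sum\omega(j)|j|^{-1}\|\pi_jv\|^2$ force $v=0$. I will choose $\varepsilon$ so that the level sets $\cI^{-1}(a)$ with $|a|\le\varepsilon$ lie inside a ball $B_\rho\subset U_1$. This works because $-c_*\cdot\cI(v)\gtrsim\|v\|^2$, which gives $\|v\|^2\lesssim|a|$.

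Next I compute $\cI^{-1}(a)$ by splitting $\cI(v)=a$ into its $a$- and $a^\bot$-components. Writing $s_{\hat\jmath}:=\|\Pi_{\hat\jmath}v\|^2$, the $a^\bot$ equation reads
\[\textstyle\sum_{\cD_-}|\hat\jmath\cdot a^\bot|\,s_{\hat\jmath}=\sum_{\cD_+}|\hat\jmath\cdot a^\bot|\,s_{\hat\jmath},\]
and the $a$-component is a positive-definite combination equal to $|a|^2$. I will treat the three cases separately.

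\emph{Boundary case.} By Remark \ref{lem:linese} one of $\cD_\pm$ is empty, so $V_\pm$-components vanish and $v\in V_0$. There $\cI(v)=-\frac12\sum_{\cD_0}\hat\jmath\,s_{\hat\jmath}$ with all $\hat\jmath\parallel a$, giving an ellipsoid in $V_0$. It is equivariantly diffeomorphic to $S(V_0)$ by radial rescaling along each $V_{\hat\jmath}$, which commutes with the unitary $\T^2$ action.

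\emph{Non-collinear case.} Here $V_0=0$. The level set is $\{(v_-,v_+)\}$ with $q_-(v_-)=q_+(v_+)$, where $q_\pm$ are positive-definite invariant quadratic forms, together with one extra normalization. It is therefore a product of two ellipsoids, equivariantly $S(V_-)\times S(V_+)$. It avoids ${\bf V}^{\twod}$ because both sides must be nonzero and $v\in V_{\hat\jmath}$ would force $a\parallel\hat\jmath$.

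\emph{Collinear case.} Here $V_0=V_{\hat\jmath_0}$ by Lemma \ref{lem:coll} and Remark \ref{rm:j hat}. Parametrize by $t\in[0,1]$: $t$ is the share of the $a$-component carried by $V_0$, and the remaining share is split between $V_\pm$ with $q_-=q_+$. This yields the map $[(x_-,x_+),x_0,t]\mapsto$ (rescaled $x_\pm$ with weight $\sqrt{1-t}$, $x_0$ with weight $\sqrt t$). The collapses of the join occur exactly where a factor has zero weight. I will check that the map is a continuous equivariant bijection from a compact space onto a Hausdorff one, hence a homeomorphism. Its image at $t=1$ is $\cI^{-1}(a)\cap V_0$, and points with $t<1$ avoid ${\bf V}^{\twod}$. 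The manifold statement for $\MA^{3d}$ follows from Lemma \ref{rmk:nablaI12}.

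The main obstacle I expect is the collinear case. The difficulty is writing the explicit join parametrization so that it is equivariant and bijective, specifically making sure each $a$-component normalization is solvable with positive coefficients, which uses $a\in\interior(\cC)$. Composing with $\zeta^{-1}$ then gives $\gamma$ with $\gamma(\MA^{2d})=S(V_0)$.
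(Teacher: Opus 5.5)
Your overall route is the paper's: pull back by $\zeta$, confine $\cI^{-1}(a)$ to $U_1$ via $-c_*\cdot\cI(v)\gtrsim\|v\|^2$, then describe the quadric by equivariant rescalings along the $V_{\hat\jmath}$ and a join map. However, one assertion on which your case analysis rests is false. The $a$-component of $\cI(v)=a$, i.e. $\sum_{\hat\jmath\in\cD}(-\hat\jmath\cdot a)\,\|\Pi_{\hat\jmath}v\|^2=2|a|^2$, is \emph{not} a positive-definite combination in general. The cone $\cC$ is proper (angle $<\pi$) but need not be acute, so $-\hat\jmath\cdot a$ can be negative. For instance, in the collinear case $a=-|a|\hat\jmath_0$, and $-\hat\jmath\cdot a=|a|\,\hat\jmath\cdot\hat\jmath_0<0$ for every resonant direction forming an obtuse angle with $\hat\jmath_0$. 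Obtuse pairs do occur: take $\Gamma'=\Z^2$, $j_1=(1,0)$, $j_2=(-1,1)$ and $c_*\in{\mathsf R}_{j_1}\cap{\mathsf R}_{j_2}$. With a sign-indefinite normalization it is not automatic that every ray of $\{q_-=q_+\}$ meets the level set exactly once. So your product parametrization in ($\daleth$1) and your join parametrization in ($\daleth$2) are not justified as written. Note also that the positivity you need does not come from $a\in\interior(\cC)$, which only guarantees $\cD_\pm\neq\emptyset$.

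The repair is exactly the paper's trick. Since $a\cdot c_*<0$ by \eqref{eq:ilcono}, the pair $\{\cI(v)\cdot a=|a|^2,\ \cI(v)\cdot a^\bot=0\}$ is equivalent to $\{\cI(v)\cdot c_*=a\cdot c_*,\ \cI(v)\cdot a^\bot=0\}$, i.e. to $\cB(v)=1$, $\cK(v)=0$ with $\cB,\cK$ as in \eqref{cBcK}. Here $\cB$ is positive definite because $c_*\cdot\hat\jmath=\omega(j)/|j|>0$. The rescaling $\xi$ of \eqref{diffeoxi} then carries $\cI^{-1}(a)$ onto the normalized set $\Sigma$, from which all three cases are read off, the collinear one via $f(v_1,v_0,t)=\sqrt{1-t^2}\,v_1+t\,v_0$.

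Alternatively, you can keep your normalization and use the following observation. If $v\neq 0$ and $\cI(v)\cdot a^\bot=0$, then $\cI(v)\in\cC\cap\R a=\{ta\ :\ t\geq 0\}$ and $\cI(v)\neq 0$, so $\cI(v)\cdot a>0$. Thus the $a$-component is positive wherever your rescalings evaluate it: on $V_-\oplus V_+$ restricted to $q_-=q_+$, and on $V_0$. That is all you need.

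The remaining steps are fine: $a\notin\cC$, $a=0$, avoidance of the $2d$ subspaces when $t<1$, and transport by $\zeta$ using \eqref{diffeoint}--\eqref{diffeobordo}.
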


    The case $(\daleth$1) was  proved in \cite{CN}. The novel cases  are $(\beth) $ and $(\daleth$2). 

    We now prove  
    Theorem \ref{Topology of Sa}.
     The straightening Theorem \ref{th:Moser's trick}  implies that $I(v)=\cI ( \zeta^{-1}(v))  
 $ for any $v\in U_2$. 
    By \eqref{eq:cI} the image of $ \cI : V \to \R^2 $ is the cone  $\cC$  in \eqref{defC}
    and therefore, if $a\not \in \cC$ we deduce that   $ \MA=\emptyset$.
    Furthermore 
    if $a=0$ then $ \cI^{-1} (a) = 0 $ and so 
    $\MA=\{0\}$.

%\begin{lemma}\label{a nel cono}
%    There exists $\varepsilon>0$ such that, for any $ |a| \leq \varepsilon  $, 
%\begin{equation}\label{eq:Depsilon subsetsubset U2}
%        \cI^{-1}(a) \subset U_1 \quad \text{and} \quad \overline{D_\varepsilon}\subset U_2\quad \text{where} \quad D_\varepsilon:=\{v\in U_2 \, | \, |I(v)|\leq \varepsilon\}\, . 
%    \end{equation}
%    For any   $a\in \cC\cap \overline{B_\varepsilon}$
%    the set $\MA $ in \eqref{defSa} is the non-empty compact set $ \MA = \zeta ( \cI^{-1}(a))  $.   
%\end{lemma}
%\begin{proof}
%If $ v \in \cI^{-1}(a) $ then, by \eqref{eq:cI} and \eqref{Def cV} we deduce   
%$ \sum_{{\hat \jmath}\in \cD} \frac{\omega(j)}{2|j|} \|\Pi_{\hat \jmath} v\|^2 \leq |a | |c_* | $.
%Since $ \omega (j) > 0 $ we deduce  that \eqref{eq:Depsilon subsetsubset U2} holds for any $ |a| \leq \varepsilon $ small enough.  
%\end{proof}

%\begin{lemma}\label{a nel cono}
%    For any open neighborhoods 
%    $ U_1'  \subseteq U_1  $ and $U_2'\subseteq U_2$ of $0$ in $V$, there exists $\varepsilon>0$ such that, for any $ |a| \leq \varepsilon  $, 
%\begin{equation}\label{eq:Depsilon subsetsubset U2}
%        \cI^{-1}(a) \subset U_1' \quad \text{and} \quad \overline{D_\varepsilon}\subset U_2'\quad \text{where} \quad D_\varepsilon:=\{v\in U_2 \, | \, |I(v)|\leq \varepsilon\}\, . 
%    \end{equation}
%\end{lemma}
\begin{lemma}\label{a nel cono}
    There exists $\varepsilon>0$ such that, for any $ |a| \leq \varepsilon  $, 
\begin{equation}\label{eq:Depsilon subsetsubset U2}
        \cI^{-1}(a) \subset U_1 \quad \text{and} \quad \overline{D_\varepsilon}\subset U_2\quad \text{where} \quad D_\varepsilon:=\{v\in U_2 \, | \, |I(v)|\leq \varepsilon\}\, . 
    \end{equation}
\end{lemma}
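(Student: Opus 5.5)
The first inclusion follows from the coercivity of the quadratic momentum along $c_*$, and the second from the fact that $\zeta$ is a homeomorphism fixing $0$.

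\textbf{Step 1: $\cI^{-1}(a)\subset U_1$.} Pair \eqref{eq:cI} with $c_*$. Since $c_*\cdot\hat\jmath=\omega(j)/|j|>0$ for every $j\in\cV$, we get
\[
-c_*\cdot\cI(v)=\tfrac12\sum_{\hat\jmath\in\cD}(c_*\cdot\hat\jmath)\,\|\Pi_{\hat\jmath}v\|^2\ \ge\ \kappa_0\|v\|^2,
\qquad
\kappa_0:=\tfrac12\min_{\hat\jmath\in\cD}c_*\cdot\hat\jmath>0 .
\]
Hence $\cI(v)=a$ forces $\|v\|^2\le |c_*|\,|a|/\kappa_0$. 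The set $U_1=B^V_{r'''}$ is an open ball around $0$, so choosing $\varepsilon$ with $|c_*|\varepsilon/\kappa_0<(r''')^2$ gives $\cI^{-1}(a)\subset U_1$ for all $|a|\le\varepsilon$.

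\textbf{Step 2: $\overline{D_\varepsilon}\subset U_2$.} By Theorem \ref{th:Moser's trick}, $I=\cI\circ\zeta^{-1}$ on $U_2$. Therefore
\[
D_\varepsilon=\zeta\big(\{u\in U_1 : |\cI(u)|\le\varepsilon\}\big).
\]
By Step 1, $\{u\in U_1:|\cI(u)|\le\varepsilon\}$ is contained in the closed ball $K_\varepsilon:=\overline{B}^V_{\rho(\varepsilon)}$ with $\rho(\varepsilon)^2=|c_*|\varepsilon/\kappa_0$. Fix $\rho_0<r'''$. For $\varepsilon$ small enough that $\rho(\varepsilon)\le\rho_0$, the set $K_\varepsilon$ is a compact subset of $U_1$. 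Since $\zeta$ is continuous on $U_1$, the image $\zeta(K_\varepsilon)$ is compact, and it lies in $\zeta(U_1)=U_2$.

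Thus $D_\varepsilon$ is contained in the compact set $\zeta(K_\varepsilon)\subset U_2$. A compact set in the Hausdorff space $V$ is closed, so $\overline{D_\varepsilon}\subset\zeta(K_\varepsilon)\subset U_2$. (In fact $D_\varepsilon$ is itself closed in $V$, being the image under the homeomorphism $\zeta$ of a set that is closed in $U_1$ and contained in a compact subset of it.) Taking the smaller of the two thresholds for $\varepsilon$ from Steps 1 and 2 proves \eqref{eq:Depsilon subsetsubset U2}.

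The only step that needs care is the last one. Closedness of $D_\varepsilon$ within $U_2$ does not by itself give closedness in $V$. What rules out accumulation at $\partial U_2$ is the compactness coming from the coercive bound on $\cI$ together with continuity of $\zeta$. No quantitative information about $\zeta$ is needed beyond $\zeta(0)=0$ and its continuity.
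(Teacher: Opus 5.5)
Your proof is correct and follows the paper's argument. The paper also pairs the momentum with $c_*$ to get the coercive bound $\sum_{\hat\jmath\in\cD}\frac{\omega(j)}{2|j|}\|\Pi_{\hat\jmath}v\|^2\le |a|\,|c_*|$, and from it deduces both inclusions for small $\varepsilon$; your Step 2 just spells out the compactness step the paper leaves implicit, namely $D_\varepsilon=\zeta(\{u\in U_1 : |\cI(u)|\le\varepsilon\})\subset\zeta(K_\varepsilon)\subset U_2$ with $\zeta(K_\varepsilon)$ compact.
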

\begin{proof}
If $ v \in \cI^{-1}(a) $ then,
by \eqref{eq:cI} and \eqref{Def cV}
we deduce   
$ \sum_{{\hat \jmath}\in \cD} \frac{\omega(j)}{2|j|} 
        \|\Pi_{\hat \jmath} v\|^2 
        \leq |a | |c_* | $. Since $ \omega (j) > 0 $ 
we deduce   \eqref{eq:Depsilon subsetsubset U2}  
for any $ |a| \leq \varepsilon $ small enough.  
\end{proof}
    %We choose $\varepsilon>0$ such that \eqref{eq:Depsilon subsetsubset U2} holds with $U_1'=U_1$ and $U_2'=U_2 $. 
   % Thus f
   For any   $a\in \cC\cap \overline{B_\varepsilon}$
    the set $\MA $ in \eqref{defSa} is the non-empty compact set $ \MA = \zeta ( \cI^{-1}(a))  $, and 
    it is then sufficient to characterize  the level sets of the quadratic momentum 
    $ \cI (v) $ in \eqref{eq:cI}.

        \begin{lemma}{\bf (Level sets of $\cI$)}\label{prop:levelsets of cI}
           If  $a\in \cC \setminus \{0\} $ then 
    \\[1mm]     
        $(\beth)$
               If $a\in \partial \cC\setminus\{0\}$ then  $\cI^{-1}(a) \subset V_0 $ is $\T^2_\Gamma$-equivariantly diffeomorphic to $S(V_0)$.
               \\[1mm]
  $(\daleth)$
                If $a\in \interior(\cC)\setminus\{0\}  $ then the following exclusive cases may occur:
                \begin{itemize}
                    \item[$(\daleth$1)] 
                     If $a\not \parallel \hat \jmath$ for any $\hat \jmath\in \cD$ then  $\cI^{-1}(a)$ is a closed submanifold of $V$  contained in $V\setminus 
                    {\bf V}^{\twod} $ which is 
                    $\T^2_\Gamma$-equivariantly diffeomorphic to $S(V_-)\times S(V_+) $.
                    
                    \item[$(\daleth$2)] 
                     If $a\parallel \hat \jmath_0$ for some resonant wave direction $\hat \jmath_0\in \cD$
                    then  
 $       \cI^{-1}(a)\setminus {\bf V}^{\twod}  = 
                     \cI^{-1}(a)\setminus  V_0  $
is a submanifold of $  V $, 
       $  \cI^{-1}(a)\cap {\bf V}^{\twod}  = \cI^{-1}(a)\cap V_0 $ is a submanifold of  $ V_0 = V_{\hat \jmath_0} $, diffeomorphic to 
       $ S(V_0) $ 
                      and there is a $\T^2_\Gamma$-equivariant homeomorphism 
                    \begin{equation}\label{gamma homeomorphism_2}
                        \gamma_1:\cI^{-1}(a)\toup^{\cong}[S(V_-)\times S(V_+)] \star S(V_0)\, ,
                        \quad \gamma_1(\cI^{-1}(a) {\cap {\bf V}^{2d}})=S(V_0) \, .
                    \end{equation}
                \end{itemize}
        \end{lemma}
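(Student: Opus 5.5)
The plan is to work in the quadratic coordinates. Set $x_{\hat\jmath}:=\|\Pi_{\hat\jmath} v\|^2\ge 0$ for $\hat\jmath\in\cD$, so that by \eqref{eq:cI}
\[
\cI(v)=-\tfrac12\sum_{\hat\jmath\in\cD}x_{\hat\jmath}\,\hat\jmath .
\]
We then decompose the equation $\cI(v)=a$ into its components along $a$ and along $a^\bot$. The component along $a^\bot$ reads
\[
\sum_{\cD_+}x_{\hat\jmath}\,(\hat\jmath\cdot a^\bot)=\sum_{\cD_-}x_{\hat\jmath}\,|\hat\jmath\cdot a^\bot| .
\]
The component along $a$ reads $\sum_{\cD} x_{\hat\jmath}\,(-\hat\jmath\cdot a)=2|a|^2$. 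Every coefficient $-\hat\jmath\cdot\hat a$ is strictly positive when $a\in\interior(\cC)$, and also on $\cD_0$ when $a\in\partial\cC$, because $\cC$ is proper; see \eqref{eq:ilcono}. The level set is then the preimage, under the $\T^2_\Gamma$-invariant map $v\mapsto(x_{\hat\jmath})_{\hat\jmath}$, of a compact polytope intersected with the positive orthant. Equivariance of all the maps below is automatic, because they only rescale the blocks $\Pi_{\hat\jmath} v$ by positive functions of the invariants $x$.

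In case $(\beth)$, $\cD_-$ (say) is empty. The $a^\bot$ equation then forces $x_{\hat\jmath}=0$ on $\cD_+$, so $\cI^{-1}(a)\subset V_0$. On $V_0$, the $a$ equation defines a positive definite quadratic form $\sum_{\cD_0}x_{\hat\jmath}\,(-\hat\jmath\cdot\hat a)=2|a|$. Its level set is an ellipsoid, and radial projection gives an equivariant diffeomorphism onto $S(V_0)$.

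In case $(\daleth)$ both $\cD_\pm$ are nonempty. Define the positive definite forms
\[
P_\pm(v):=\sum_{\cD_\pm}x_{\hat\jmath}\,|\hat\jmath\cdot a^\bot|,
\qquad
N_\pm(v):=\sum_{\cD_\pm}x_{\hat\jmath}\,(-\hat\jmath\cdot\hat a),
\qquad
N_0(v):=\sum_{\cD_0}x_{\hat\jmath}\,(-\hat\jmath\cdot\hat a) .
\]
The level set is then $\{P_+=P_-,\ N_-+N_++N_0=2|a|\}$. In subcase $(\daleth1)$ the set $\cD_0$ is empty. Write $v_\pm=\lambda_\pm u_\pm$ with $P_\pm(u_\pm)=1$. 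The first equation gives $\lambda_+=\lambda_-=\lambda$, and the second gives $\lambda^2\big(N_-(u_-)+N_+(u_+)\big)=2|a|$, which determines $\lambda>0$ uniquely and smoothly. Hence the level set is diffeomorphic to $\{P_-=1\}\times\{P_+=1\}\cong S(V_-)\times S(V_+)$. Smoothness follows because $\cI$ is a submersion there: both $v_\pm\neq0$, which is exactly Lemma \ref{Properties of a}. Moreover $v\notin{\bf V}^{\twod}$, since $v_+$ and $v_-$ are both nonzero and lie in distinct directions.

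In subcase $(\daleth2)$ we have $V_0=V_{\hat\jmath_0}$, with $\hat\jmath_0\parallel a$. Parametrize by $t\in[0,1]$, setting
\[
N_0(v_0)=2|a|\,t,\qquad N_-(v_-)+N_+(v_+)=2|a|\,(1-t),\qquad P_+=P_- .
\]
Given $(u_-,u_+)\in\{P_-=1\}\times\{P_+=1\}$, $t\in[0,1]$ and $u_0\in\{N_0=1\}$, define the scalings
\[
v_0=\sqrt{2|a|t}\,u_0,\qquad v_\pm=\lambda(1-t)^{1/2}u_\pm,\qquad \lambda=\sqrt{\frac{2|a|}{N_-(u_-)+N_+(u_+)}} .
\]
This map descends to the join $[S(V_-)\times S(V_+)]\star S(V_0)$: at $t=1$ the $u_\pm$ coordinates disappear, and at $t=0$ the $u_0$ coordinate disappears, with the endpoint conventions matched to \eqref{eqrela}. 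We then check that it is a continuous equivariant bijection from a compact space onto the Hausdorff space $\cI^{-1}(a)$, hence a homeomorphism, and that it sends the end $S(V_0)$ onto $\cI^{-1}(a)\cap V_0$. The remaining claims follow from Lemma \ref{Properties of a}. Away from $V_0$ the blocks $v_\pm$ are nonzero, so $\cI$ is submersive and that part is a manifold. On $V_0$ the constraint reduces to the ellipsoid $N_0=2|a|$.

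The main obstacle is making sure that ${\bf V}^{\twod}\cap\cI^{-1}(a)=\cI^{-1}(a)\cap V_0$, that is, that no other single-direction subspace meets the level set. For $v\in V_{\hat\jmath}$ with $\hat\jmath\ne\hat\jmath_0$, the vector $\cI(v)$ is a multiple of $\hat\jmath$, which is not parallel to $a$; so $v$ is excluded. We must also track carefully the join topology near $t=0$ and $t=1$, where the continuity at the collapsed ends needs checking.
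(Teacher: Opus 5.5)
Your overall scheme matches the paper's: reduce $\cI(v)=a$ to two scalar equations in the block norms $x_{\hat\jmath}$, rescale the blocks, and use a join parameter $t$. However, one step is false as stated.

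You claim that every coefficient $-\hat\jmath\cdot\hat a$ is strictly positive when $a\in\interior(\cC)$, so that $N_\pm$ are positive definite. Properness of $\cC$ only gives an opening angle $<\pi$ (equivalently, $c_*\cdot\hat\jmath>0$ for all $\hat\jmath\in\cD$). It does not give an angle $<\pi/2$. Suppose two resonant directions $\hat\jmath,\hat\jmath'\in\cD$ form an obtuse angle, which nothing excludes. Then for $a\in\interior(\cC)$ close to the ray $-\R_+\hat\jmath$ you get $-\hat\jmath'\cdot\hat a\approx\hat\jmath'\cdot\hat\jmath<0$. So $N_\pm$ can be indefinite. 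Your step ``$\lambda^2(N_-(u_-)+N_+(u_+))=2|a|$ determines $\lambda>0$ uniquely and smoothly'' is therefore not justified by your argument. You use it in $(\daleth1)$, in $(\daleth2)$, and for the continuity of the map on the join.

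The missing idea is the one the paper uses: the $a$-equation only has to be positive on the constraint set $\{P_+=P_-\}$, and there you may add any multiple of the $a^\bot$-equation. Decompose $c_*=|a|^{-2}\big((c_*\cdot a)\,a+(c_*\cdot a^\bot)\,a^\bot\big)$. For $x\ge 0$ with $\sum_{\cD}x_{\hat\jmath}\,(\hat\jmath\cdot a^\bot)=0$ this gives
\[
\sum_{\hat\jmath\in\cD}x_{\hat\jmath}\,(-\hat\jmath\cdot\hat a)=\frac{|a|}{|c_*\cdot a|}\sum_{\hat\jmath\in\cD}x_{\hat\jmath}\,(c_*\cdot\hat\jmath) .
\]
The right-hand side is $>0$ for $x\neq0$, because $c_*\cdot\hat\jmath=\omega(j)/|j|>0$ and $c_*\cdot a<0$. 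This is exactly the paper's replacement of the $a$-equation by the positive definite equation $\cB(v)=1$ of \eqref{cBcK}. With it, $N_-(u_-)+N_+(u_+)>0$ on $\{P_-=1\}\times\{P_+=1\}$. Then your scalings, the bijectivity check, and the compact-to-Hausdorff argument go through unchanged.

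Once repaired, your direct parametrization differs only cosmetically from the paper. The paper packages the same block rescalings into a single equivariant diffeomorphism $\xi$ of $V\setminus\{0\}$, which maps $\cI^{-1}(a)$ onto the model set $\Sigma$. It then builds the join homeomorphism on $\Sigma$ via $\sqrt{1-t^2}\,v_1+t\,v_0$.
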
    
        \begin{proof} 
       
    Projecting  $ \cI (v) $ in \eqref{eq:cI}  
       along  $ a $ and 
       $ a^\bot $, 
    we write
    \begin{equation}
    \label{J-1a}
    \cI^{-1} (a) 
    = \Big\{ v\in V \ | \
    \sum_{\hat \jmath\in \cD}\frac{-\hat \jmath \cdot a}{2|a|^2}\|\Pi_{\hat \jmath}v\|^2=1 \, ,  \ \sum_{\hat \jmath\in \cD}\frac{-\hat \jmath \cdot a^\bot}{2|a|^2}\|\Pi_{\hat \jmath}v\|^2=0 
    \Big\} \, . 
\end{equation}
By \eqref{eq:ilcono}, 
we have $a\cdot c_*<0 $ and
 summing the second equation in \eqref{J-1a} multiplied by $\frac{ a^\bot\cdot c_*}{a\cdot c_*}$ to the first equation, 
 and decomposing  $  c_* = 
a (c_* \cdot a)|a|^{-2} + a^\bot 
(c_* \cdot a^\bot) |a|^{-2} $, we deduce that 
\begin{equation}\label{J-2a}
  \cI^{-1} (a)  = \Big\{ v\in V \ | \
    \cB(v)=1 \, ,  \ \cK(v)=0 
    \Big\}
    \end{equation}
    where
     \begin{equation}\label{cBcK}
            \cB(v):=\frac{1}{2|a\cdot c_*|}\sum_{\hat \jmath\in \cD} c_*\cdot \hat \jmath \, \|\Pi_{\hat \jmath} v\|^2 \, , \quad 
            \cK(v):=\sum_{\hat \jmath \in \cD} \hat \jmath \cdot a^\bot \|\Pi_{\hat \jmath} v\|^2\, .
    \end{equation}
    Note that $c_*\cdot \hat \jmath  >0$ for any $ \hat \jmath \in \cD$ (cf. \eqref{Def cV}) and hence $ \sqrt{\cB(v)}$ is a norm on $ V $. 

    We define the $\T^2_\Gamma$-equivariant diffeomorphism
\begin{equation}\label{diffeoxi}
\xi:V\setminus\{0\}\to V\setminus\{0\} \, , \quad
            \xi(v):= g(v) \psi(v)
        \, ,  \quad  
g(v) :=  \frac{\vvv v\vvv}{\sqrt{\cB( \psi(v))}}     \, , 
\end{equation}
where $ \psi:V\to V $
            is the linear isomorphism    
      \begin{equation}\label{defpsiso}
         \psi(v) := 
         \sum_{\hat \jmath\in \cD_-} |\hat \jmath \cdot a^\bot|^{-\frac12}\, \Pi_{\hat \jmath} v  +
        \sum_{\hat \jmath\in \cD_0} \Pi_{\hat \jmath} v
         + \sum_{\hat \jmath\in \cD_+} (\hat \jmath\cdot {a^\bot})^{-\frac12}\, \Pi_{\hat \jmath} v 
        \end{equation}
  (it is well defined in view of \eqref{eq:def cD-0+}) 
  and
\begin{equation}\label{def3bar}
\vvv v\vvv^2:=\tfrac 12\|\Pi_- v\|^2+\tfrac 12\|\Pi_+ v\|^2+\|\Pi_0 v\|^2 \, , 
\end{equation}
denoting 
$ \Pi_\pm :V\to V_\pm $ and $\Pi_0:V\to V_0$ 
 the orthogonal projectors associated to the decomposition \eqref{eq:def V-0+}.   
Note that $ g(v) $ in \eqref{diffeoxi} is a  
positive,  $ C^\infty (V \setminus \{0\}, \R) $
homogenous function  
of degree $ 0 $.
Under the diffeomorphism  $ \xi $ in \eqref{diffeoxi}
the functions $ \cB $ and $ \cK $ in \eqref{cBcK} transform into
\begin{equation}\label{futras}
            (\cB \circ \xi)(v) =\vvv v\vvv^2\, ,\quad 
            (\cK \circ \xi)(v) =g^2(v)\big(\|\Pi_+ v\|^2 -\|\Pi_- v\|^2\big)\, ,
    \quad \forall 
    v\in V\setminus\{0\} \,            .
\end{equation}
As a consequence of \eqref{futras} and \eqref{def3bar}, the map   
$\xi$ defines a 
        $ \T^2_\Gamma $-equivariant diffeomorphism which maps the set $\cI^{-1}(a)$ 
        in \eqref{J-2a}  onto 
\begin{equation}\label{eq:Sigma}
        \Sigma := \Big\{ v \in V 
        \ | \  
            \tfrac12\|\Pi_- v\|^2+\tfrac 12\|\Pi_+ v\|^2+\|\Pi_0 v\|^2=1 \, , \|\Pi_+ v\|^2 =\|\Pi_- v\|^2 \Big\}  \, . 
        \end{equation} 
       $(\beth)$ {\sc Case $a\in \partial \cC\setminus\{0\}$}.
        In view of  Remark \ref{lem:linese},
        either $ \cD_- $ or $ \cD_+ $ is empty, say $ \cD_- = \emptyset $. Then   
        the set $ \Sigma $ in \eqref{eq:Sigma} 
        reduces to
        $\Sigma= \{ v \in V \ | \  \Pi_+ v = 0 \, , \ 
        \| \Pi_0 v \|^2 = 1 \} \equiv S(V_0) $ and  
        $\xi:
           S(V_0) \to \cI^{-1}(a) 
           = \xi  (S(V_0)) \subset V_0 $ 
           (the map $ \psi$ in \eqref{defpsiso}, thus $ \xi $,  maps $  V_0\setminus\{0\} \to V_0\setminus\{0\} $) is a $\T^2_\Gamma$-equivariant diffeomorphism.
           \\[1mm]
     $(\daleth)$ {\sc Case  $a\in \interior(\cC)$}.
        Remark \ref{lem:linese} shows that 
        $\dim(V_\pm )  \geq 2$. 
        We distinguish two  sub-cases:
        \begin{itemize}
            \item[$(\daleth$1)] Assume $ a \not \parallel \hat \jmath $ for any $\hat \jmath\in \cD$. Then $\cD_0=\emptyset$ and 
           \eqref{eq:Sigma} reduces to
        $\Sigma= \{ v \in V \ | \  
        \| \Pi_- v \|^2 = 
        \| \Pi_+ v \|^2 = 1 \} 
           $
           $ \equiv S(V_-)\times S(V_+)\subset V\setminus{\bf V}^{\twod}$. Hence $\xi: S(V_-) \times S(V_+) \to \cI^{-1}(a) $ 
             is a $\T^2_\Gamma$-equivariant diffeomorphism and Remark \ref{rem:Vinv}  implies  $ \cI^{-1}(a)\subset V\setminus{\bf V}^{\twod}$.
            \item[$(\daleth$2)] Assume $a  \parallel \hat \jmath_0 $ for some $\hat \jmath_0 \in \cD $.
Since the diffeomorphism 
$ \xi $ in \eqref{diffeoxi} maps each 
$ V_{\hat \jmath} \setminus \{0  \} $ in itself and  
$ V \setminus  {\bf V}^{\twod} $ in itself, we have
$$
\xi : 
\underbrace{\Sigma \cap 
{\bf V}^{\twod}}_{= S(V_0) \, \text{by} \,  \eqref{eq:Sigma}} \to \underbrace{\cI^{-1}(a)\cap {\bf V}^{\twod}}_{= \cI^{-1}(a)\cap V_0} 
\, , \quad \xi : \Sigma \setminus  
{\bf V}^{\twod} \to \cI^{-1}(a)\setminus  {\bf V}^{\twod} =
\cI^{-1}(a)\setminus V_0\, , 
% = \xi (\Sigma \cap {\bf V}^{\twod}) 
$$         
and  $ \cI^{-1}(a)\setminus {\bf V}^{\twod}$  is a sub-manifold of $V$ of codimension two because, 
for any $v\in V\setminus {\bf V}^{\twod}$ the vectors  $\{\nabla_v \cI_1(v), \nabla_v \cI_2 (v)\}$ are linearly independent 
by Lemma \ref{Properties of a}.        
         We now prove that 
         $ \Sigma $
         is homeomorphic to the join 
         topological space $ M \star Y $
         where $ M := S(V_-)\times S(V_+) $ and 
         $ Y := S(V_0) $. 
        The map
        $$
        f : M \times   Y \times [0,1] \to \Sigma \, , \quad  (v_1, v_0,t) \mapsto 
        f(v_1, v_0, t) := \sqrt{1-t^2}\, v_1 + {t}\, v_0 \, ,
        $$
        is 
        continuous, surjective, $ \T^2_\Gamma $-equivariant,  
and defines a map $ \tilde f $ on the quotient space
\[
\begin{tikzcd}[row sep=large]
 M \times Y \times [0,1] 
 \arrow[d,"\pi"']  \arrow[r,"f"]
 & 
\Sigma 
 \\
M \star Y = \frac{M \times Y\times[0,1]}\sim 
 \arrow[ur,"\tilde f "',"\cong"]
 & 
\end{tikzcd}
\]
where $ \sim $ is the equivalence relation defined in \eqref{eqrela}.
The map $ \tilde f $ is continuous, bijective and since $ M, Y $ are compact it is a $ \T^2_\Gamma $-equivariant homeomorphism between 
$ M \star Y $ and $ \Sigma $. The homeomorphism \eqref{gamma homeomorphism_2} is 
$ \gamma_1 := \tilde f^{-1} \circ \xi^{-1} $ (note that $ \tilde f (Y) = S(V_0) $).        
        \qedhere
        \end{itemize}
        \end{proof}
        
\section{Existence and multiplicity of $3d$ Stokes waves}
\label{sec:final}

In this section we finally prove Theorem \ref{Existence of 3d solutions collinear nonresonant} regarding 
existence and multiplicity of truly $ 3d $ Stokes waves 
having  
momentum 
    $    \cI (u) = a$, 
   {\it collinear} with exactly {\it one} resonant wave vector $j\in \cV$. Specifically 
   we take any 
   \begin{equation} \label{comeprendoa}  
   a\in  \interior(\cC) \cap \overline{B_\varepsilon} 
   \quad \text{where} \quad   
   \varepsilon>0 \
\text{is fixed as in Theorem }  \ref{Topology of Sa} 
   \end{equation} 
 (i.e. as in Lemma 
 \ref{a nel cono}), and
\begin{equation} \label{comeprendoa1}
a \parallel j_0 \ \text{for some} \ j_0 \in \cV \, , \quad 
a \not \parallel j \, , \ \forall  j \in \cV \setminus \{ j_0  \} \, . 
\end{equation}
In this case 
the $\T^2_\Gamma$-space 
   $\MA$ in 
\eqref{defSa} is $\T^2_\Gamma$-equivariantly homeomorphic to
    $[S(V_-)\times S(V_+)] \star S^1$
    where $ S^1 \equiv S(V_0)   $ because 
    $ \dim (V_0 ) = 2 $,  
and  Theorem 
\ref{Topology of Sa}-($\daleth 2$) implies that
\begin{itemize}
\item {\bf (topology of $\MA $)}
the compact $ \T^2_\Gamma$-space $\MA $ is the disjoint union 
\begin{equation}\label{MaS1}
    \MA = 
\underbrace{\MA^{3d}}_{\stackrel{\gamma} \cong 
S(V_-) \times S(V_+)} \sqcup \underbrace{\MA^{2d}}_{\stackrel{\gamma} 
\cong 
S(V_0) \equiv S^1} \, ,\quad \MA^{3d} = \MA \setminus {\bf V}^{\twod} \, ,   
\ \MA^{2d} =  \MA \cap V_0  \, ,
\end{equation}
where  $ V_0 = V_{j_0} $ and 
$  \dim (V_\pm) \geq 2 $. 
The set  of $ 3d$-waves 
$ \MA^{3d}  $ is a manifold  
and \eqref{supplem} holds.
The linear action $ (\tau_\theta )_{\theta \in \T^2_\Gamma}$ in \eqref{T2 symmetry} 
acts transitively 
on  $ \MA^{2d} $ (it 
consists of only {\it one} 
$ \T^2_\Gamma $-orbit) and, for any
$ v $ in $ \MA^{\threed} $ 
the  stabilizer  $ (\T_\Gamma^2)_v $, cf. \eqref{stabilu}, are finite. 
\end{itemize}

\begin{remark}\label{sonosta}
 For any $ v \in \MA^{2d} $ then $ u = v + w(c_{\twod}^{\parallel_{\hat \jmath}}(v)\hat \jmath+c_*^{\bot_{\hat \jmath}}\hat \jmath^{\bot},v) $  is the celebrated $ 2d $ Stokes wave   constructed  by the Crandall-Rabinowitz bifurcation theorem having momentum $ \cI (u) = a $. 
\end{remark}

We first  prove the existence of at least one $3d$ Stokes wave by a simple variational argument. 
Then, to prove multiplicity,  we 
 construct  in Proposition  \ref{A flow}  a gradient-like flow of the metric space $ \MA $,  whose stationary points 
are  Stokes waves and 
we exploit the rich equivariant topology of $ \MA $  described in \eqref{MaS1} via the  abstract  Theorem \ref{teo:ast}.
Clearly 
the  existence result is  covered by Theorem \ref{Existence of 3d solutions collinear nonresonant}, but we find instructive to include it.
%for its simplicity.  
Finally  we 
prove Theorem \ref{th:(u,c) and cI}. 

\paragraph{\bf Existence of at least one $ 3d$-Stokes wave.} % \label{sec:ex}
We define the reduced Hamiltonian
\begin{equation}\label{eq def:Hdiritto}
H:B_{r'}^V\to \R \, , \quad 
 v \mapsto    H(v):= {\mathtt H} (c(v),v)\, ,
\end{equation}
where $c(v)$ is defined in Proposition \ref{Construction of c close and away from PJ} and, for any $ 
    (c,v)\in B_{r}(c_*)\times B_{r}^V $, 
\begin{equation}\label{def:H(c,v)}
\begin{aligned}
    \mathtt{H}(c,v):=\cH(v+w(c,v))
    \stackrel{\eqref{def reduced functional}, \eqref{eq:def I(c,v)}}  = \Phi(c,v)-c\cdot {\mathtt I} (c,v)    \, .
\end{aligned}
\end{equation}

\begin{lemma}\label{HisC1}
The reduced Hamiltonian  $H(v)$ in \eqref{eq def:Hdiritto} is $ \T^2_\Gamma \rtimes \Z_2$-invariant and  of class $C^1 (B_{r'}^V)$. 
\end{lemma}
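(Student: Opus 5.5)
The plan is to write $H(v)=\mathtt H(c(v),v)$ and apply Lemma \ref{lm:regularity 1} to the analytic function $\mathtt H(c,v)=\Phi(c,v)-c\cdot\mathtt I(c,v)$ defined on $B_r(c_*)\times B_r^V$. Invariance under $\T^2_\Gamma\rtimes\Z_2$ should be immediate. By \eqref{wequi} the map $w(c,\cdot)$ commutes with $\tau_\theta$ and $\rho$, and $\cH$ is invariant under both by \eqref{Z2 symmetry}–\eqref{T2 symmetry}, so $\mathtt H(c,\cdot)$ is invariant. Since $c(v)$ is $\T^2_\Gamma\rtimes\Z_2$-invariant by Proposition \ref{Construction of c close and away from PJ}, the composition $H$ is invariant as well.

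For the $C^1$ regularity I will check the two hypotheses \eqref{assuC1} of Lemma \ref{lm:regularity 1}. The first, $\mathtt H(c,0)=0$, holds because $w(c,0)=0$ by \eqref{properties of w(c,v)} and $\cH(0)=0$. Here I take the Hamiltonian normalized by dropping the constant capillary term $\kappa$; alternatively, apply the lemma to $\mathtt H-\cH(0)$, which changes neither regularity nor critical points. For the second hypothesis, fix $v\in B_r^V\cap V_{\hat\jmath}$. Lemma \ref{Further properties of w(c,v)}-$(ii)$ says $w(c,v)$ does not depend on $c^{\bot_{\hat\jmath}}$. Hence $\mathtt H(c,v)=\cH(v+w(c,v))$ is independent of $c^{\bot_{\hat\jmath}}$ on $V_{\hat\jmath}$, and so $\partial_{c^{\bot_{\hat\jmath}}}\mathtt H(c,v)=0$ there.

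The lemma also needs $\mathtt H$ to be analytic and $\T^2_\Gamma$-invariant in $v$. Analyticity follows from the analyticity of $w$ (Lemma \ref{lem:range}) and of $\cH$ on $X$, the latter by the analyticity result for the Dirichlet–Neumann operator quoted in the functional setting. Invariance was shown in the first paragraph. Lemma \ref{lm:regularity 1} then gives $H\in C^1(B_{r'}^V)$.

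The only delicate point is the normalization $\mathtt H(c,0)=0$; I expect it to be resolved by subtracting the constant as above. Everything else reduces to results already proved earlier in the paper.
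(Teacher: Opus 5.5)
Your proposal is correct and follows the paper's proof: both apply Lemma \ref{lm:regularity 1} to $\mathtt H(c,v)=\cH(v+w(c,v))$. In both, the invariance comes from \eqref{wequi} together with the invariance of $c(v)$, and the vanishing of $\partial_{c^{\bot_{\hat\jmath}}}\mathtt H$ on $V_{\hat\jmath}$ comes from Lemma \ref{Further properties of w(c,v)}-$(ii)$, which is what the paper's appeal to Proposition \ref{lem:expa} rests on. Your observation that $\mathtt H(c,0)=\cH(0)=\kappa\neq 0$ is accurate and is glossed over in the paper; subtracting this constant, as you propose, fixes it without affecting regularity (Lemma \ref{lm:regularity 1} in fact only uses that $\mathtt E(c,0)$ is independent of $c$).
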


\begin{proof}
The function $c(v)$ in 
Proposition \ref{Construction of c close and away from PJ}  is $\T^2_\Gamma\rtimes \Z_2$-invariant and ${\mathtt H} (c,v) 
$ is $\T^2_\Gamma\rtimes \Z_2$-invariant and 
satisfies the assumptions of Lemma \ref{lm:regularity 1}, by Proposition \ref{lem:expa}.
\end{proof}  

%The set $ \MA $ in 
%\eqref{defSa} is a 
% for the functional $ \Phi (c(v), v) $. 

\begin{lemma}\label{thm:Lagrange}
   {\bf  (Natural constraint)} If $ \bar v \in {\MA^{3d} } $ is a constrained critical point of $H_{|\MA} : \MA \to \R $ 
    then $ \bar u := \bar v+  w( c(\bar v), \bar v)$ is a 3d-Stokes wave solution of $\cF(c(\bar v), \bar u)=0 $ 
    with momentum $ \cI (\bar u) = a $.
\end{lemma}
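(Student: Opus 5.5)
The plan is a Lagrange multiplier argument on the manifold $\MA^{3d}$, followed by checking that the multiplier enters exactly as the speed correction built into $c(v)$. By Lemma \ref{rmk:nablaI12}, $\MA^{3d}$ is a manifold with $T_{\bar v}\MA^{3d}=\ker \di_v I(\bar v)$, and $H$ is $C^1$ by Lemma \ref{HisC1}. Since $\bar v\notin{\bf V}^{2d}$, the functions $c(v)$ and $I(v)$ are analytic near $\bar v$, and $H$ is differentiable there. A constrained critical point therefore satisfies $\di_v H(\bar v)=\lambda\cdot \di_v I(\bar v)$ for some $\lambda\in\R^2$. This follows from the splitting \eqref{supplem}: $\di_v H(\bar v)$ vanishes on $\ker\di_v I(\bar v)$, and $\di_v I_1(\bar v),\di_v I_2(\bar v)$ are linearly independent.

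Next I compute $\di_v H$. From \eqref{def:H(c,v)} and \eqref{partial c Phi}, $\mathtt H(c,v)=\Phi(c,v)-c\cdot\partial_c\Phi(c,v)$. Differentiating $H(v)=\mathtt H(c(v),v)$ along $\hat v$, the two terms $\partial_c\Phi\,\di_v c$ cancel, and I get
\begin{equation*}
\di_v H(v)[\hat v]=\di_v\Phi(c(v),v)[\hat v]-c(v)\cdot\di_v\mathtt I(c(v),v)[\hat v]-c(v)\cdot\partial_c\mathtt I(c(v),v)\,\di_v c(v)[\hat v].
\end{equation*}
Since $I(v)=\mathtt I(c(v),v)$, the last two terms combine into $-c(v)\cdot\di_v I(v)[\hat v]$. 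So $\di_v H(v)=\di_v\Phi(c(v),v)-c(v)\cdot\di_v I(v)$, and at $\bar v$ the multiplier equation becomes
\begin{equation*}
\di_v\Phi(c(\bar v),\bar v)=(\lambda+c(\bar v))\cdot \di_v I(\bar v).
\end{equation*}

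The key step is to show $\mu:=\lambda+c(\bar v)=0$. For this I evaluate the identity on $\nabla_v\cI_1(\bar v)$ and $\nabla_v\cI_2(\bar v)$. The left-hand side vanishes because $c(v)$ solves \eqref{EQ c Phi}. The right-hand side equals $A(\bar v)^\top\mu$, with $A$ the matrix in \eqref{Def AA}. Lemma \ref{Estimates on A, AQ, tilde AQ11} at $t=1$ gives $\det A(\bar v)>0$ on $B_{r''}^V\setminus{\bf V}^{2d}$, and $\MA\subset U_2\subset B_{r''}^V$ by Remark \ref{rmk:U2 subset Br''}. Hence $\mu=0$ and $\di_v\Phi(c(\bar v),\bar v)=0$.

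Finally, Lemma \ref{lem:varia} shows that $\bar u=\bar v+w(c(\bar v),\bar v)$ solves $\cF(c(\bar v),\bar u)=0$. Its momentum is $\cI(\bar u)=I(\bar v)=a$ by \eqref{Def I dritto} and $\bar v\in\MA$.

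It remains to check that $\bar u$ is truly $3d$. Since $\bar v\notin{\bf V}^{2d}$, $\bar v$ has at least two active frequencies that are not parallel. The correction $w$ lies in $W$, so $\Pi_V\bar u=\bar v$, and $\bar u$ cannot lie in any $\cX^{2d}_{[j]}$, because $\cX^{2d}_{[j]}\cap V=V_{\hat\jmath}$.

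The main obstacle I expect is the chain-rule computation together with its regularity. The clean cancellation needs $c(v)$ to be differentiable at $\bar v$, and this holds only because $\bar v$ lies off ${\bf V}^{2d}$.
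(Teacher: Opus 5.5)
Your proposal is correct and is essentially the paper's argument. The paper inserts the identity $\di_v H(v)=\di_v\Phi(c(v),v)-c(v)\cdot\di_v I(v)$ into the splitting \eqref{supplem}, so that $\di_v\Phi(c(\bar v),\bar v)$ vanishes both on $\ker\di_v I(\bar v)$ and on ${\rm span}(\nabla_v\cI_1(\bar v),\nabla_v\cI_2(\bar v))$; you package the same linear algebra as a Lagrange multiplier killed by the invertibility of $A(\bar v)$, which is exactly what underlies \eqref{supplem}. (A harmless slip: with the convention \eqref{Def AA} the resulting system is $A(\bar v)\mu=0$ rather than $A(\bar v)^\top\mu=0$.)
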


\begin{proof}
  %      By Lemma \ref{lm:regularity} the function $\nabla_v\Phi(c(v),v)$ is continuous.
        We differentiate the function $ H(v) $  in \eqref{eq def:Hdiritto} 
        that,  
        in view of \eqref{def:H(c,v)} and \eqref{Def I dritto}, is equal to 
         $H(v)=\Phi(c(v),v)-c(v)\cdot I(v)$.
        For any $v\in B_{r'}^V\setminus {\bf V}^{\twod}$ and any $\hat v\in V$ we get
\begin{equation}\label{eq:the usual computations}
            \di_v H(v)[\hat v]=\underbrace{\big(\partial_c\Phi(c(v),v)-I(v)\big)}_{\stackrel{\eqref{partial c Phi},\eqref{Def I dritto}}=0}\cdot \di_v c(v)[\hat v]+(\di_v \Phi)(c(v),v)[\hat v]-c(v)\cdot \di_v I(v)[\hat v]\, .
        \end{equation}
Since $\bar v \in \MA^{\threed} = \MA\setminus {\bf V}^{\twod} $ is a  critical point of $H_{|\MA} $ 
and $ T_{\bar v}\MA^{\threed} = \ker \di_v I (\bar v) $, cf. \eqref{supplem}, we have
\begin{equation}\label{diftanzero}
   0 = \di_vH(\bar v)_{|T_{\bar v}\MA^{\threed}} \stackrel{\eqref{eq:the usual computations}} = 
   (\di_v\Phi)(c(\bar v),\bar v)_{|T_{\bar v}\MA^{\threed}} \, . 
\end{equation}
   By the splitting \eqref{supplem}, the fact that  
   $ c(v) = c_{3d} (v) $ solves \eqref{EQ c Phi} by   Proposition \ref{Construction of c close and away from PJ},  and
\eqref{diftanzero} 
        we conclude that  $(\di_v\Phi)(c(\bar v), \bar v) =  0 $. Then 
        Lemma \ref{lem:varia} implies that 
$\cF(c( \bar v), \bar v+ 
w(c(\bar v ), \bar v)  )=0$.
\end{proof}

    By Lemma \ref{thm:Lagrange}, it is enough to find a critical point $\bar v\in \MA^{\threed}$ of $H_{|\MA} $.
    Since $\MA^{\twod}$
    is a 
    $ \T^2_\Gamma$-orbit (see  comments below \eqref{MaS1}) and $ H $ is $\T^2_\Gamma$-invariant,  it is contained in a level of $H_{|\MA}$, namely 
    $H (\MA^{\twod} ) = \ell_* $. 
    Since $ \MA $  is compact, $ H_{|\MA} $ attains 
    minimum, say $m $, and  maximum, say $ M $,  on $\MA $. 
    If $m=M$ the function  $ H_{|\MA} $ is constant and hence {\it any}  point of $ \MA^{\threed}$ is critical.
    If $m<M$ then, either  $\ell_* \not=m$ or 
    $ \ell_* \not= M $,  and so 
 there exists at least {\it one}  critical  
 point $ \bar v $ of $ H_{|\MA} $  in $\MA^{\threed}$.

\paragraph{\bf Gradient-like flow.} 

In order to prove multiplicity results, 
we first construct a 
 gradient-like flow for the  
reduced Hamiltonian $H $ defined in \eqref{eq def:Hdiritto}.

\begin{proposition}\label{A flow}
{\bf (Gradient-like flow)}
There exists a global flow $ \phi :\R\times D_{\varepsilon}\to D_{\varepsilon} $ where  $D_{\varepsilon}$ is defined in \eqref{eq:Depsilon subsetsubset U2}  
such that, for any $ t\in \R $,  
\\[1mm]
$(i)$
  $  I( \phi^t(v)) = I(v) $ for
any $ v\in D_{\varepsilon} $, any $ t \in \R $;
\\[1mm]
$(ii)$ 
$ \phi^t$ is $\T^2_\Gamma $-equivariant and
\begin{equation}\label{eq:Ftv}
    \phi^t(  D_{\varepsilon}\setminus {\bf V}^{\twod} ) = D_{\varepsilon}\setminus {\bf V}^{\twod} \, , \quad \phi^t( D_{\varepsilon}\cap V_{\hat \jmath})=
    D_{\varepsilon}\cap V_{\hat \jmath} \, , \quad 
    \forall {\hat \jmath} \in \cD\, ;
\end{equation}
$(iii)$
for any $v\in D_{\varepsilon} $
            \begin{equation}\label{H-HFt}
                H(\phi^t(v))=H(v)-\int_0^t \Big\|(\nabla_v\Phi)
                \big(c(\bar v),\bar v \big)\big|_{\bar v=\phi^\tau (v)}\Big\|^2\di \tau \, .
            \end{equation}
            Moreover $H$ is strictly decreasing on non-constant trajectories of $\phi$. 
\end{proposition}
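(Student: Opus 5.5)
The plan is to construct $\phi$ as the flow of a ``projected gradient'' vector field. Project $\nabla_v \Phi(c(v),v)$ onto the tangent space of the level sets of $I$ along the directions $\nabla_v\cI_1,\nabla_v\cI_2$, reusing the machinery of Lemma \ref{lm:linear equation}. Concretely, set
\[
Y(v):=-(\nabla_v\Phi)(c(v),v)+\mu(v)\cdot\nabla_v\cI(v),
\]
where $\mu(v)\in\R^2$ is fixed by requiring $\di_v I(v)[Y(v)]=0$. Using the matrix $A(v)=A(1,v)$ in \eqref{Def AA}, this is the linear system
\[
A(v)^\top\mu(v)=b(v),\qquad b(v):=\di_v I(v)[(\nabla_v\Phi)(c(v),v)].
\]

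The first task is to check that $b$ satisfies the hypotheses of Lemma \ref{lm:linear equation}. Continuity, $\T^2_\Gamma$-invariance and piecewise analyticity follow from Lemma \ref{lm:regularity} applied to $\mathtt E=\Phi$, whose derivative in $c^{\bot_{\hat\jmath}}$ vanishes on $V_{\hat\jmath}$ by \eqref{Phi is independent on c2Q if v is in V2dj}, together with Lemma \ref{lem:regI}. The bound \eqref{estbj} comes from three inputs:
\begin{itemize}
\item $\nabla_v\Phi(c,v)=\cO(\|v\|)$, from \eqref{exp1Phi};
\item $\Pi_{\hat\jmath}^\bot\nabla_v\Phi=\cO(\|\Pi^\bot_{\hat\jmath}v\|)$, using Lemma \ref{lm:A} and equivariance;
\item the structure of $\di_v I^{\bot_{\hat\jmath}}$ from \eqref{Estimate on d I1Q dritto} and \eqref{pibot}, which gives $b^{\bot_{\hat\jmath}}=\cO(\|\Pi^\bot_{\hat\jmath}v\|^2)$.
\end{itemize}

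Lemma \ref{lm:linear equation} then yields $\mu$ with the properties \eqref{list:Properties of Z}. These make $Y$ continuous, equivariant, tangent to each $V_{\hat\jmath}$ and to $V\setminus{\bf V}^{2d}$, and vanishing at $0$. Lemma \ref{lm:B1} then gives a local flow preserving these invariant sets, which proves (ii).

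Next, $I$ is constant along trajectories by construction, and this gives (i). It also implies that $D_\varepsilon=\{|I|\le\varepsilon\}$ is invariant; since $\overline{D_\varepsilon}\subset U_2$ is compact by Lemma \ref{a nel cono}, trajectories cannot escape and the flow is global.

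For (iii), differentiate $H=\Phi(c(v),v)-c(v)\cdot I(v)$ along the flow as in \eqref{eq:the usual computations}. The $\di_v c$ terms cancel by \eqref{partial c Phi}, and $\di_v I[Y]=0$, so
\[
\tfrac{d}{dt}H=\langle\nabla_v\Phi,Y\rangle=-\|\nabla_v\Phi\|^2+\mu\cdot\di_v\Phi[\nabla_v\cI].
\]
The last term vanishes because $c(v)$ solves \eqref{EQ c Phi}. Off ${\bf V}^{2d}$ this is immediate. On $V_{\hat\jmath}$ one uses $\mu=\mu^{\parallel}\hat\jmath$ and the parallel equation \eqref{eq:mf2d}, so only the parallel equation is needed there. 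Integrating gives \eqref{H-HFt}, which requires $H$ to be $C^1$ along trajectories; this follows from Lemma \ref{HisC1}.

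Finally, for strict decrease: if $\nabla_v\Phi(c(v),v)=0$ then $b=0$, hence $\mu=0$ and $Y=0$, so $v$ is stationary. Thus equality of $H$ along a trajectory forces it to be constant.

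The main obstacle is uniqueness of the flow. $Y$ is only continuous across ${\bf V}^{2d}$, so well-posedness must come from Lemma \ref{lm:B1}, which exploits the block structure $Y=-\sum_{\hat\jmath}(\ldots)\Pi_{\hat\jmath}v$. One therefore has to verify that $\nabla_v\Phi(c(v),v)$ itself has that structure, namely a component tangent to $V_{\hat\jmath}$ plus a part of size $\cO(\|\Pi^\bot v\|)$. I would try to fold it into the form required by \eqref{list:Properties of Z}, absorbing it into the same estimates used for $Z$.
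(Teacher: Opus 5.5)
Your proposal is correct and follows the paper's proof essentially step by step: the same vector field $Y$, the same $\mu$ obtained from Lemma \ref{lm:linear equation} at $t=1$, the same invariance-plus-compactness argument for globality, and the same strict-decrease argument. The only cosmetic difference is that the paper proves $\di_v H(v)[Y(v)]=-\|(\nabla_v\Phi)(c(v),v)\|^2$ off ${\bf V}^{2d}$ and extends it by continuity, whereas you compute it directly on $V_{\hat\jmath}$ via $\mu=\mu^{\parallel_{\hat\jmath}}\hat\jmath$ and \eqref{eq:mf2d}.

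The ``obstacle'' you flag at the end is not really one. Lemma \ref{lm:B1} needs no block structure, only properties (1)--(4) of \eqref{list:Properties of Z}. Your first two bullets are exactly properties (3)--(4) for $(\nabla_v\Phi)(c(v),v)$, and properties (1)--(2) follow from Lemma \ref{lm:regularity}. This is precisely how the paper concludes that $Y$ satisfies \eqref{list:Properties of Z}.
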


In view of $ (i) $ the set $ \MA $ in \eqref{defSa} is   invariant under $ \phi^t $ and $(iii)$ implies the following corollary.  

\begin{lemma}\label{lem:cor}
If $ \bar v \in \MA^{\threed} $ 
is a stationary point  of the gradient-like flow  $ \phi^t_{|\MA} $,   
    then $ \bar u:= \bar v+w(c( \bar v), \bar v)$ is a $3d$ Stokes wave solution of $\cF(c( \bar v), \bar u)=0$ with momentum $ \cI (\bar u) = a $.
\end{lemma}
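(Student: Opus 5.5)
The plan is to combine Proposition \ref{A flow} with Lemma \ref{thm:Lagrange}. A stationary point of the restricted flow turns out to be a constrained critical point of $H_{|\MA}$ on the manifold part $\MA^{3d}$.

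\textbf{Step 1 (zero dissipation).} Let $\bar v\in\MA^{3d}$ with $\phi^t(\bar v)=\bar v$ for all $t$. Then $H(\phi^t(\bar v))=H(\bar v)$, and by \eqref{H-HFt}
\[
0=\int_0^t\big\|(\nabla_v\Phi)(c(\phi^\tau(\bar v)),\phi^\tau(\bar v))\big\|^2\,\di\tau=t\,\big\|(\nabla_v\Phi)(c(\bar v),\bar v)\big\|^2 ,
\]
since the integrand is constant in $\tau$. Hence $(\di_v\Phi)(c(\bar v),\bar v)=0$ on all of $V$.

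\textbf{Step 2 (full critical point).} This step bypasses the constraint entirely, because the vanishing holds on the whole space $V$ and not only on $T_{\bar v}\MA^{3d}$. The remaining work is to pass to Lemma \ref{lem:varia}. Two checks are needed:
\begin{itemize}
\item[$\bullet$] $\bar v\in\MA\subset U_2\subset B_{r''}^V\subset B_r^V$, which follows from Remark \ref{rmk:U2 subset Br''};
\item[$\bullet$] $c(\bar v)\in B_r(c_*)$, which follows from Proposition \ref{Construction of c close and away from PJ}.
\end{itemize}
Lemma \ref{lem:varia} then gives $\cF(c(\bar v),\bar u)=0$ with $\bar u=\bar v+w(c(\bar v),\bar v)$. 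Equivalently, one could note that the restriction to $T_{\bar v}\MA^{3d}$ vanishes and invoke Lemma \ref{thm:Lagrange} directly; by \eqref{eq:the usual computations}, $\di_vH(\bar v)$ restricted there equals $(\di_v\Phi)(c(\bar v),\bar v)$ restricted there.

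\textbf{Step 3 (momentum and dimensionality).} For the momentum, $\cI(\bar u)=\cI(\bar v+w(c(\bar v),\bar v))=I(\bar v)=a$ by \eqref{Def I dritto} and $\bar v\in\MA$.

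For the dimensionality, $\bar v\notin{\bf V}^{\twod}$, so $\bar v$ has two non-parallel active frequencies. The correction $w$ lies in $W$, which is Fourier-supported off $\cV$, so the $V$-component of $\bar u$ is exactly $\bar v$. If $\bar u\in\cX^{\twod}_{[j]}$ for some $j$, then projecting onto $V$, which preserves Fourier support, would give $\bar v\in V\cap\cX^{\twod}_{[j]}=V_{\hat\jmath}$. This is a contradiction, so $\bar u$ is truly $3d$.

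\textbf{Main obstacle.} The only point needing care is Step 1. One must make sure the integrand in \eqref{H-HFt} is continuous, so that the vanishing of the integral for all $t$ forces the pointwise zero at $\bar v$. Here the integrand is even constant along the stationary orbit, and continuity of $(\nabla_v\Phi)(c(v),v)$ is provided by Lemma \ref{lm:regularity}, since $\partial_{c^{\bot_{\hat\jmath}}}\Phi=0$ on $V_{\hat\jmath}$ by Proposition \ref{lem:expa}.
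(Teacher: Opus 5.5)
Your proposal is correct and follows the paper's route. The paper derives this lemma directly from item $(iii)$ of Proposition \ref{A flow}: at a stationary point, \eqref{H-HFt} forces $(\nabla_v\Phi)(c(\bar v),\bar v)=0$, and then Lemma \ref{lem:varia} together with $I(\bar v)=a$ concludes, exactly as you do. Your Fourier-support argument for the $3d$ character makes explicit a step the paper leaves implicit; one harmless caveat is that $V\cap\cX^{\twod}_{[j]}$ equals $V_{\hat\jmath}$ only when $[j]$ meets $\cV$, and is $\{0\}\subset{\bf V}^{\twod}$ otherwise.
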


To  prove Proposition \ref{A flow} we look for $\phi^t(\cdot) $ as the flow generated by a vector field  of the form 
\begin{equation}\label{Def Y field}
    Y(v):=
        - (\nabla_v\Phi) (c(v),v)+\mu(v)\cdot \nabla_v \cI(v) \, , 
        \quad \forall 
        v\in B_{r'}^V \, , 
\end{equation}
where 
$\mu(v)$ is determined by the condition 
\begin{equation}\label{I is conserved}
    \frac{\di}{\di t} I(\phi^t(v)) =0\quad \text{i.e.} \quad \di_v I(v)[Y(v)]=0\, .
\end{equation}
 By \eqref{Def Y field} 
system \eqref{I is conserved} is equivalent to 
\begin{equation}\label{EQ mu}
        A^\top(v)\mu(v)=b(v) \qquad \text{where}\qquad b(v):=\di_v I(v)[(\nabla_v \Phi)(c(v),v)] 
        \
\end{equation}
and $A(v)$  is the matrix in 
\eqref{Def AA}. 

\begin{lemma}\label{lm:mu and Y}
%{\bf 
%Solution of \eqref{I is conserved}.}
    There is 
    a solution $\mu:B_{r''}^V\to \R^2$ of system  \eqref{EQ mu} where $r''>0$ is fixed in Lemma \ref{Estimates on A, AQ, tilde AQ11}.
    The corresponding vector field $Y(v)$ in \eqref{Def Y field} satisfies the properties \eqref{list:Properties of Z} of Lemma \ref{lm:B1}. 
\end{lemma}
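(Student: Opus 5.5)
The plan is to reduce to Lemma \ref{lm:linear equation}, applied with the time-independent matrix $A(v)=A(1,v)$ (cf. \eqref{eq:def At}; $t=1$ lies in $\JJJ$) and right-hand side $b(v)=\di_v I(v)[(\nabla_v\Phi)(c(v),v)]$ as in \eqref{EQ mu}. The lemma then produces a $\T^2_\Gamma$-invariant solution $\mu:B_{r''}^V\to\R^2$ of $A^\top(v)\mu(v)=b(v)$. It also gives that $\mu(v)\cdot\nabla_v\cI(v)$ satisfies \eqref{list:Properties of Z}. Since $Y$ in \eqref{Def Y field} is this field minus $(\nabla_v\Phi)(c(v),v)$, it remains to check that the extra term also has the properties \eqref{list:Properties of Z}: continuity, analyticity off ${\bf V}^{2d}$ and on each $V_{\hat\jmath}\setminus\{0\}$, a bound $\lesssim\|v\|$, and tangency/equivariance. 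Most of the work is verifying the hypotheses on $b$.

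\textbf{Regularity and invariance of $b$.} By Lemma \ref{lem:expa}, $\Phi(c,\cdot)$ is $\T^2_\Gamma\rtimes\Z_2$-invariant, and $\partial_{c^{\bot_{\hat\jmath}}}\Phi(c,v)=0$ for $v\in V_{\hat\jmath}$ by \eqref{Phi is independent on c2Q if v is in V2dj}. Hence Lemma \ref{lm:regularity} applies to ${\mathtt E}=\Phi$. It gives that $(\nabla_v\Phi)(c(v),v)$ is continuous on $B_{r'}^V$, analytic off ${\bf V}^{2d}$, and analytic on each $(B_{r'}^V\cap V_{\hat\jmath})\setminus\{0\}$. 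It is also $\T^2_\Gamma$-equivariant, because $c(v)$ is invariant. Combining this with Lemma \ref{lem:regI} ($I\in C^1$, invariant, analytic on the same strata), $b$ is continuous, invariant and analytic where required. Remark \ref{rem:Vinv} shows that $(\nabla_v\Phi)(c(v),v)$ maps $V_{\hat\jmath}$ into $V_{\hat\jmath}$, so $Y$ preserves the strata, which is property (4).

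\textbf{The estimate \eqref{estbj}.} This is the main obstacle. Write $Q_{\hat\jmath}b=\di_vI^{\hat\jmath}(v)[\nabla_v\Phi(c(v),v)]$ and first aim for
\[
\nabla_v\Phi(c,v)=\cO(\|v\|^2),\qquad \Pi_{\hat\jmath}^\bot\nabla_v\Phi(c,v)=\cO(\|\Pi_{\hat\jmath}^\bot v\|\,\|v\|).
\]
The first bound follows from \eqref{exp1Phi}, \eqref{Gprgeq3} together with $|c(v)-c_*|\lesssim\|v\|$. For the second, $V_{\hat\jmath}$ is invariant under the gradient by equivariance, so $\Pi_{\hat\jmath}^\bot\nabla_v\Phi$ vanishes on $V_{\hat\jmath}$; the appendix estimates (Lemma \ref{lm:A}) then give the stated bound. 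Next use \eqref{Estimate on d I1Q dritto}, i.e. $\di_vI^{\hat\jmath}=\di_v\cI^{\hat\jmath}+$ error, where \eqref{eq:Nabla cI}, \eqref{pibot} give $\di_v\cI^{\bot_{\hat\jmath}}(v)[\hat v]=\cO(\|\Pi_{\hat\jmath}^\bot v\|\|\Pi_{\hat\jmath}^\bot\hat v\|)$. Inserting $\hat v=\nabla_v\Phi$:
\begin{itemize}
\item the parallel component is $\cO(\|v\|\cdot\|v\|^2)$, which is even better than needed;
\item the orthogonal component is $\cO(\|\Pi^\bot_{\hat\jmath}v\|\cdot\|\Pi_{\hat\jmath}^\bot v\|\|v\|)$, plus error terms $\|\Pi^\bot v\|(\|\Pi^\bot v\|\|v\|^2+\|v\|\|\Pi^\bot v\|\|v\|)$.
\end{itemize}
All of these are $\cO(\|\Pi_{\hat\jmath}^\bot v\|^2)$. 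On $\cF_{r'}$ the two norms $\|v\|$ and $\|\Pi_{\hat\jmath}^\bot v\|$ are comparable, so \eqref{estbj} is immediate there. The delicate point is making the $\Pi^\bot$-bound on $\nabla_v\Phi$ uniform after composing with $c(v)$. Here $\partial_{c^\bot}$ is harmless: $\partial_{c^{\bot_{\hat\jmath}}}\nabla\Phi$ is of order $\|\Pi^\bot v\|$, because $\partial_{c^\bot}\Phi$ vanishes on $V_{\hat\jmath}$.

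Finally, apply Lemma \ref{lm:linear equation} at $t=1$. This yields $\mu$ and the properties \eqref{list:Properties of Z} for $\mu\cdot\nabla\cI$. The estimate $\|(\nabla_v\Phi)(c(v),v)\|\lesssim\|v\|^2$ shows that $Y$ remains continuous at $0$ and satisfies $\|Y\|\lesssim\|v\|$, which concludes the proof.
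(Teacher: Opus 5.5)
Your proposal follows the paper's proof. You apply Lemma~\ref{lm:linear equation} at $t=1$, take the regularity and invariance of $b$ from Lemma~\ref{lm:regularity} (with ${\mathtt E}=\Phi$) and Lemma~\ref{lem:regI}, and obtain \eqref{estbj} by inserting bounds on $(\nabla_v\Phi)(c(v),v)$ into \eqref{Estimate on d I1Q dritto} and \eqref{eq:Nabla cI}.

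One step needs correcting. Property (4) in \eqref{list:Properties of Z} is the quantitative bound $\|\Pi_{\hat\jmath}^\bot Y(v)\|\lesssim\|\Pi_{\hat\jmath}^\bot v\|$. It is not the invariance of the strata that Remark~\ref{rem:Vinv} gives you. Lemma~\ref{lm:B1} uses exactly this bound to show that trajectories starting off ${\bf V}^{2d}$ never reach it in finite time, which is what gives uniqueness for a field that is not Lipschitz near $V_{\hat\jmath}$. Invariance of $V_{\hat\jmath}$ alone would not give this. You should therefore derive (4) for the $\nabla_v\Phi$ term from your $\Pi_{\hat\jmath}^\bot$-estimate, not from Remark~\ref{rem:Vinv}.

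On that estimate: Lemma~\ref{lm:A}(i) applied to $\Phi$ with $m=2$ gives $\|\nabla_v\Phi(c,v)\|\lesssim\|v\|$ and $\|\Pi_{\hat\jmath}^\bot\nabla_v\Phi(c,v)\|\lesssim\|\Pi_{\hat\jmath}^\bot v\|$, uniformly in $c$. It does not give the extra factor $\|v\|$ you state. That factor would require splitting via \eqref{exp1Phi}, applying Lemma~\ref{lm:A} with $m=3$ to $G_{\geq 3}$, and using $|c(v)-c_*|\lesssim\|v\|$. You do not need it: the weaker bounds are what the paper uses, and they already give \eqref{estbj} and properties (3)--(4). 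Because these bounds are uniform in $c$, substituting $c=c(v)$ is immediate. The ``delicate point'' about composing with $c(v)$ and the remark on $\partial_{c^{\bot_{\hat\jmath}}}$ can be dropped.
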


\begin{proof}
We solve \eqref{EQ mu} applying Lemma \ref{lm:linear equation} with $ t = 1 $.
    Let us verify 
    its assumptions. The $\T^2_\Gamma $ invariance of $ b(v) $ follows 
    form the invariance of $ I (v) $ and $ \Phi (c, v) $. Moreover  
    Proposition \ref{lem:expa} and Lemma  \ref{lm:regularity} imply that the map $v\mapsto (\nabla_v\Phi) (c(v),v)$ is continuous on $B_{r'}^V$, analytic in $B_{r'}^V\setminus {\bf V}^{\twod}$ and restricted to each $(B_{r'}^V\cap V_{\hat \jmath})\setminus\{0\}$, for any $ {\hat \jmath} \in \cD $
    (thus  satisfies properties \eqref{list:Properties of Z}-$(1)$-$(2)$).   
    Therefore, using also Lemma \ref{lem:regI}, the map 
    $ b:B_{r'}^V\to \R^2$ satisfies the same properties as well. Let us verify also \eqref{estbj}. Since the function $ \Phi(c,v) $ satisfies the assumptions of Lemma 
    \ref{lm:A} with $ m = 2 $, we deduce by 
    \eqref{A2ora} that, 
     for any $v\in B_{r'}^V$, \begin{equation}\label{eq:estimate on nabla Phi}
    \|(\nabla_v\Phi)(c(v),v)\|\lesssim \|v\|\, , \ \|\Pi_{\hat \jmath}^\bot\nabla_v\Phi(c(v),v)\|\lesssim \|\Pi_{\hat \jmath}^\bot v\|\, ,  \quad \forall  {\hat \jmath}\in \cD \, , 
    \end{equation}
   in particular  $ (\nabla_v\Phi) (c(v),v)$ satisfies \eqref{list:Properties of Z}-$(3)$-$(4)$.
    Hence, for any $  {\hat \jmath}\in \cD $, $ v\in B_{r'}^V $, we deduce
    $$ 
    b^{\hat \jmath}(v) = 
    Q_{\hat \jmath} b(v) 
    =
    \di_v I^{\hat \jmath}(v)[(\nabla_v\Phi)(c(v),v)] =\cO\begin{pmatrix}
                \|v\|^2\\
                \|\Pi_{\hat \jmath}^\bot v\|^2
            \end{pmatrix} 
    $$
    by \eqref{eq:Nabla cI}, \eqref{Estimate on d I1Q dritto} and \eqref{eq:estimate on nabla Phi}.
    Lemma \ref{lm:linear equation} implies the existence  of  $\mu(v):=\mu(1,v)$ solving \eqref{EQ mu} and such that $\mu(v)\cdot \nabla \cI(v)$ satisfies the assumptions \eqref{list:Properties of Z} of Lemma \ref{lm:B1}.
    As we already verified that $(\nabla_v\Phi)(c(v),v)$ satisfies {\eqref{list:Properties of Z}}, the vector field $Y(v)$ satisfies properties \eqref{list:Properties of Z} as well. 
\end{proof}

The flow of $ Y(v)  $ is a  Lyapunov-function for $ H(v) $ in \eqref{eq def:Hdiritto}.  

\begin{lemma}
    The vector field $Y(v)$ in \eqref{Def Y field} with $\mu(v)$ constructed in Lemma \ref{lm:mu and Y}
   satisfies
    \begin{equation}\label{X H decreases 2}
        \di_v H(v)[Y(v)]=- \|(\nabla_v \Phi)(c(v),v)\|^2\, ,
        \quad \forall v\in B_{r''}^V \, . 
    \end{equation}
  %  where $H$ is the $ C^1 $ function defined in \eqref{eq def:Hdiritto}.
\end{lemma}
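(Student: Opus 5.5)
We must show $\di_v H(v)[Y(v)]=-\|(\nabla_v\Phi)(c(v),v)\|^2$ on $B_{r''}^V$. The plan is to reuse the chain-rule identity \eqref{eq:the usual computations} and then the two defining properties of $Y$: it preserves $I$ by \eqref{I is conserved}, and it is built from $\nabla_v\Phi$ and $\nabla_v\cI$ as in \eqref{Def Y field}.

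\emph{Step 1 (points off ${\bf V}^{\twod}$).} Let $v\in B_{r''}^V\setminus{\bf V}^{\twod}$. Write $H(v)=\Phi(c(v),v)-c(v)\cdot I(v)$, which follows from \eqref{def:H(c,v)} and \eqref{Def I dritto}. By \eqref{eq:the usual computations}, the term containing $\di_v c$ vanishes because of \eqref{partial c Phi}, and we get
\[
\di_vH(v)[Y(v)]=(\di_v\Phi)(c(v),v)[Y(v)]-c(v)\cdot\di_vI(v)[Y(v)] .
\]
The second term is zero since $\mu(v)$ solves \eqref{EQ mu}, which is \eqref{I is conserved}. Next insert \eqref{Def Y field}. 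The first term becomes
\[
-\|(\nabla_v\Phi)(c(v),v)\|^2+\mu(v)\cdot(\di_v\Phi)(c(v),v)[\nabla_v\cI(v)] .
\]
The last summand vanishes because $c(v)$ solves \eqref{EQ c Phi} (Proposition \ref{Construction of c close and away from PJ}). This gives the identity at such $v$.

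\emph{Step 2 (extension by continuity).} Proposition \ref{lem:expa} lets us apply Lemma \ref{lm:regularity} to $\nabla_v\Phi$, so $v\mapsto(\nabla_v\Phi)(c(v),v)$ is continuous on $B_{r'}^V$. By Lemma \ref{HisC1}, $\di_vH$ is continuous. By Lemma \ref{lm:mu and Y}, $Y$ is continuous, since property \eqref{list:Properties of Z}-$(1)$ holds. Both sides of the identity are therefore continuous on $B_{r''}^V$. The set $B_{r''}^V\setminus{\bf V}^{\twod}$ is dense, because ${\bf V}^{\twod}$ is a finite union of proper subspaces. Hence the identity extends to all of $B_{r''}^V$.

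The only delicate point is Step 2, which needs $H\in C^1$ and $Y$ continuous across ${\bf V}^{\twod}$, where $c(v)$ itself may jump. These facts are already supplied by Lemmata \ref{HisC1} and \ref{lm:mu and Y}. As an alternative on ${\bf V}^{\twod}\setminus\{0\}$, one could verify the identity directly using \eqref{eq:mf2d} and $\nabla_v\cI^{\bot_{\hat\jmath}}=0$ from \eqref{pibot}. At $v=0$ both sides vanish.
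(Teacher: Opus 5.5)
Your proof is correct and is essentially the paper's argument: you obtain the identity on $B_{r''}^V\setminus{\bf V}^{\twod}$ by inserting $\hat v=Y(v)$ into \eqref{eq:the usual computations} and using \eqref{I is conserved} and \eqref{EQ c Phi}, and then extend it to all of $B_{r''}^V$ by continuity of $\di_v H$, $Y$ and $(\nabla_v\Phi)(c(v),v)$. Your explicit density remark (which tacitly uses $\#\cD\geq 2$, as the paper does throughout) and the optional direct check on ${\bf V}^{\twod}$ via \eqref{eq:mf2d} are harmless additions.
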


\begin{proof}
   For any  $v\in B_{r''}^V {\setminus {\bf V}^{2d} } \subset B_{r'}^V {\setminus {\bf V}^{2d} }  $, inserting $\hat v=Y(v)$ in 
\eqref{eq:the usual computations} we get 
\begin{equation}
\label{graps} 
    \begin{aligned}
     \di_v H(v)[Y(v)]& = (\di_v \Phi)(c(v),v)[Y(v)] {-}  c(v)\cdot \di_v I(v)[Y(v)]  \\
     \stackrel{\eqref{I is conserved},\eqref{Def Y field}}=& (\di_v\Phi) (c(v),v)[{-} (\nabla_v \Phi)(c(v),v)+\mu(v)\cdot \nabla_v\cI(v)] 
      \stackrel{\eqref{EQ c Phi}}=-
\| \nabla_v\Phi(c(v),v) \|^2 
    \end{aligned}
    \end{equation}
proving \eqref{X H decreases 2} for any $v\in B_{r''}^V\setminus {\bf V}^{\twod}$.
    Since $ \di_v H (v) $, $ Y(v) $ and $ (\nabla_v\Phi)(c(v),v) $ are continuous in $ v $, we deduce by  \eqref{graps} that 
    \eqref{X H decreases 2} 
    holds for any  
    $ v \in  B_{r''}^V $.
\end{proof}

\noindent 
{\bf Proof of Proposition \ref{A flow}.}
    By Lemma \ref{lm:B1},  the vector field $Y(v)$ 
     in Lemma \ref{lm:mu and Y} admits 
     a 
     $\T^2_\Gamma$-equivariant local in time 
      flow   $\phi^t(v)$ for any $ v \in B_{r''}^V $,  that is globally defined on $ D_\varepsilon $. Indeed, 
  by \eqref{I is conserved} 
    the restricted  
    momentum $I(\cdot)$ 
    is conserved by the flow $ \phi $. 
    Furthermore 
     $\overline{D_{\varepsilon}} \subset U_2\subset  B_{r''}^V$
    by \eqref{eq:Depsilon subsetsubset U2} and Remark \ref{rmk:U2 subset Br''} ($\varepsilon $ is fixed in Lemma \ref{a nel cono}). 
    Since   the set $D_{\varepsilon} $ is {\it compact} and {\it invariant} under $ \phi^t $ the flow  $ \phi $ is globally defined on $ D_{\varepsilon} $ and
$(i)$ holds.
Remark \ref{rem:Vinv} implies \eqref{eq:Ftv}.
  %   We pass to $(iii)$.
    Finally  \eqref{X H decreases 2} implies \eqref{H-HFt}.
    Moreover, if $H(\phi^{t_1}(v))=H(\phi^{t_2}v))$ for some $t_2>t_1$ then  \eqref{H-HFt} implies that $ (\nabla_v\Phi)(c(v),v)$  vanishes on $ \phi^t(v) $ for any $t\in [t_1,t_2]$ as well as  $ Y(v) $ in \eqref{Def Y field}
    (if $ (\nabla_v\Phi)(c(v),v) =  0 $ then $ \mu (v) = 0 $ by  \eqref{EQ mu} and $ Y(v) = 0 $). $ \hfill \square$
    
\paragraph{\bf Multiplicity of $3d$-Stokes waves: proof of Theorem \ref{Existence of 3d solutions collinear nonresonant}. } % \label{sec:MC}

    Let 
    $a $ as in 
    \eqref{comeprendoa}-\eqref{comeprendoa1}. 
    In view of Lemma
    \ref{lem:cor}, stationary points in $  \MA^{\threed} $ of the gradient-like flow 
    $ \phi^t( \cdot ) $  of $ H $  restricted to  $ \MA $ give rise to truly $3d $ Stokes waves.  
    If there are infinitely many stationary 
    $ \T^2_\Gamma $-orbits of the gradient-like flow, then there exist also infinitely many
    $ \T^2_\Gamma \rtimes \Z_2 $
    critical orbits of $ H $, thus geometrically distinct $ 3d $-Stokes waves.
    Otherwise  
we apply Theorem \ref{teo:ast}
to the $ \T^2_\Gamma $-invariant continuous 
function 
$$ 
F : \underbrace{M}_{= S(V_-)\times S(V_+)} \star 
\underbrace{S^1}_{\equiv S(V_0)}  \to \R \, , \quad 
F := H \circ \gamma^{-1}  \, , 
$$ 
where $ \gamma $ is the  
$ \T^2_\Gamma $-equivariant homeomorphism of  Theorem 
\ref{Topology of Sa}-($\daleth 2$). 
By the comments below  \eqref{MaS1}, the action  $ (\tau_\theta)_{\theta \in \T^2_\Gamma}$  acts transitively 
on the orbit $S^1$,
and 
the  stabilizers  
at any point  
of  $ M $ are finite.  
The function $ F $ has the  gradient-like flow $  \gamma  \circ \phi^t_{| \MA} \circ \gamma^{-1}$ for which 
(cf. Remark \ref{sonosta}) 
all $ S^1 $ is stationary.
Therefore Theorem \ref{teo:ast} 
proves the existence  
of at least $ \cuplength_{\T^2_\Gamma}(M)+1 $
{\it critical values} of $ F$ 
% thus $ H $, 
{\it different} from the critical level  
$ \ell_* = F(S^1) $,
%$ F(\gamma (\MA^{\twod} ) ) = H (\MA^{\twod})  
% $$
% of the $ 2d$-Stokes wave $ S^1 $, 
and thus 
the same number of 
$ \T^2_\Gamma  \rtimes \Z_2 $-critical orbits of $ H $ 
at a level different from 
$ \ell_* = H (\MA^{\twod})  $.
In view of \eqref{MaS1} these orbits are contained in $ \MA^{\threed} $, thus are truly $ 3d $-waves. 

\begin{lemma}
$ \cuplength_{\T^2_\Gamma}(S(V_-)\times S(V_+)) \geq \tfrac12(\dim(V_-)+\dim(V_+))-{2}=\#\cV- {3} $. 
\end{lemma}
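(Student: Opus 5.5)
The plan is to identify $S(V_-)$ and $S(V_+)$ with unit spheres $S(k_1)\subset\C^{n_1}$, $S(k_2)\subset\C^{n_2}$ carrying torus actions of the form \eqref{eq:def action k1...kn}. Then Example~\ref{cup-length of two spheres} gives the bound $n_1+n_2-2$ directly.

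First, complex coordinates. On each $V_j$, $j\in\cV$, use $z_j=\alpha_j+\im\beta_j$. By \eqref{eq:trasab}, $\tau_\theta$ acts by $z_j\mapsto e^{-\im j\cdot\theta}z_j$. The scalar product \eqref{eq:def scalar product} is $\sum|j||z_j|^2$, so rescaling $z_j\mapsto |j|^{1/2}z_j$ is a $\T^2_\Gamma$-equivariant isometry sending the unit sphere of $V_\pm$ to the standard unit sphere in $\C^{n_\pm}$. Here $n_\pm:=\#\{j\in\cV: \hat\jmath\in\cD_\pm\}$, so $\dim V_\pm=2n_\pm$.

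Second, identify the group with a standard torus. Write $\T^2_\Gamma=\R^2/\Gamma$ with $\Gamma=2\pi\lattice\Z^2$ and set $\theta=2\pi\lattice s$, $s\in\R^2/\Z^2=\T^2$. Then $e^{-\im j\cdot\theta}=e^{\im 2\pi\langle k_j,s\rangle}$ with $k_j:=-\lattice^\top j$. Since $j\in\Gamma'=\lattice^{-\top}\Z^2$, each $k_j$ lies in $\Z^2\setminus\{0\}$. This realises $S(V_-)\times S(V_+)$ as $S(k_1)\times S(k_2)$ with the diagonal $\T^2$-action, where $k_1=(k_j)_{\hat\jmath\in\cD_-}$ and $k_2=(k_j)_{\hat\jmath\in\cD_+}$. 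The equivariant cuplength is unchanged under this group isomorphism.

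Third, check the non-collinearity hypothesis \eqref{eq:non-coll}. If $j\in\cV$ has $\hat\jmath\in\cD_-$ and $j'\in\cV$ has $\hat\jmath'\in\cD_+$, then $\hat\jmath\cdot a^\bot<0<\hat\jmath'\cdot a^\bot$. Hence $j\not\parallel j'$: two parallel resonant vectors lie on the same side of the half plane \eqref{eq:ilcono}, so they have the same direction $\hat\jmath$. Because $\lattice^\top$ is invertible, $k_j$ and $k_{j'}$ are linearly independent over $\Q$. Note that vectors within the same group may be collinear; this is allowed, since \eqref{eq:non-coll} only constrains pairs taken across the two groups.

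Example~\ref{cup-length of two spheres} then yields
\[
\cuplength_{\T^2_\Gamma}(S(V_-)\times S(V_+))\ \ge\ n_-+n_+-2=\tfrac12(\dim V_-+\dim V_+)-2 .
\]

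Finally, count. Under \eqref{comeprendoa1}, $\cD_0=\{\hat\jmath_0\}$ and $V_0=V_{j_0}$, since the only resonant vector parallel to $a$ is $j_0$ and hence $\dim V_0=2$. The remaining $\#\cV-1$ resonant vectors are split between $\cD_-$ and $\cD_+$, so $n_-+n_+=\#\cV-1$. The bound is therefore $\#\cV-3$.

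The only delicate point is the bookkeeping in the second step: the sign and lattice conventions relating $\T^2_\Gamma$ to $\R^2/\Z^2$, and integrality of the weights $k_j$. Nothing beyond \eqref{eq:trasab} and the definition of $\Gamma'$ is needed there.
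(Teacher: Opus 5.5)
Your proposal is correct and follows the paper's own proof. The paper likewise identifies $S(V_\pm)$ with $S(k_1),S(k_2)$ through weights $\pm\lattice^\top j$, checks \eqref{eq:non-coll} for pairs taken across $\cV_-$ and $\cV_+$, applies Example~\ref{cup-length of two spheres}, and uses $\dim V_-+\dim V_+=2(\#\cV-1)$. Your additional bookkeeping only makes explicit what the paper leaves implicit and does not change the bound: the sign $k_j=-\lattice^\top j$ dictated by \eqref{eq:trasab}, the rescaling $z_j\mapsto|j|^{1/2}z_j$, and the half-plane argument \eqref{eq:ilcono} that excludes anti-parallel pairs.
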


\begin{proof}
The  lower bound for $ \cuplength_{\T^2_\Gamma}(M) $
is provided by Example \ref{cup-length of two spheres}
with $n_1=\tfrac12 \dim(V_-)$
and $n_2=\tfrac12 \dim(V_+)$.
 Let us verify the non-collinearity assumption \eqref{eq:non-coll}. 
The spheres $S(V_-)$ and $S(V_+)$ correspond to $S(k_1)$ and $S(k_2)$ with matrices $k_l =\lattice^{\top}(j_{l,1},\dots, j_{l, n_l})\in \Z^{2\times n_l }$ for $ l =1,2$ where $\lattice$ is the matrix defining the lattice $\Gamma$ (cf. \eqref{eq:Gamma'}) and  
    \begin{equation}\label{eq:cV+cV-}
    \cV_-:=\{j_{1, i}\}_{i=1}^{n_1}:=\{j\in \cV \ | \ j\cdot a^\bot<0\}\, , \quad \cV_+:=\{j_{2, i}\}_{i=1}^{n_2}:=\{j\in \cV \ | \ j\cdot a^\bot>0\}\, .
    \end{equation}
    Any pair of vectors in $ \cV_- $ and $ \cV_+ $ 
    are  independent 
    (non-collinear) 
    and thus  \eqref{eq:non-coll} holds. 
    The lemma follows by 
    \eqref{eq:CL product of spheres} and 
    $ \dim (V_-) + \dim (V_+)   = 2 \#\cV - \dim (V_0) =
    2 (\#\cV - 1)  $. 
\end{proof}
    
    We conclude the  existence of  at least $\#\cV-{2}$ distinct geometrically 
    distinct $3d$-Stokes waves.

    \paragraph{\bf 
    Non-collinear case: proof of Theorem \ref{th:noncollinear}.}
    
        In view of Theorem \ref{Topology of Sa}-$(\daleth 1)$ the set $\MA $ is a manifold of $ 3d$-waves 
        in $  V\setminus{\bf V}^{2d}$,  
        equivariantly diffeomorphic to $S(V_-)\times S(V_+)$,       and any stabilizer $ (\T_\Gamma^2)_v $, $ v \in \MA $, is finite (cf. \eqref{stabilu}).
      %  Then
       %The manifold $\MA $ corresponds to $S(k_1)\times S(k_2)$ where $k_l =\lattice^{\top}(j_{l,1},\dots, j_{l, n_l})\in \Z^{2\times n_l }$ for $ l =1,2$ and $j_{l,s}$ are defined as in \eqref{eq:cV+cV-} above.
      %  Since $\cV_+\sqcup \cV_-=\cV$ we deduce $\cuplength_{\T^2_\Gamma}(\MA)\geq \#\cV-2$ by \eqref{eq:CL product of spheres}.
      Then   Theorem \ref{th:noncollinear}
        follows similarly by 
         Theorem \ref{prop:T2G}
         and
\eqref{eq:CL product of spheres}.

\paragraph{\bf Proof of Theorem 
\ref{th:(u,c) and cI}.}

        Let $r>0$ be as Lemma \ref{lem:range}, $r'\in (0,r)$ as in the definition 
        \eqref{c defined on the ball} of $c(v)$  and $\varepsilon>0$ be fixed by  Theorem \ref{Topology of Sa}.
        We recall that the set 
        $ D_\varepsilon $
        in \eqref{eq:Depsilon subsetsubset U2}
        and the range $ U_2 $ of the 
        straightening 
        homeomorphism $ \zeta $  
        %(cf. Lemma \ref{a nel cono} and 
        (cf. Theorem \ref{th:Moser's trick}) satisfy
   $
             D_\varepsilon
        \subset U_2\subset B_{r'}^V\subset B_r^V $. 
        We take $\delta \in (0,{r'}) $ 
        small such that 
        \begin{equation}\label{eq:def delta}
        \Pi_V B_{\delta}^X\subset D_\varepsilon
        \qquad
        \text{where} \qquad  B_\delta^X:= \big\{ u\in X \ |\ \|u\|_X<\delta \big\} 
        \end{equation}
         and $\Pi_V$ is the projector onto $V$ associated to the decomposition \eqref{L2L2VW}.
The Lyapunov-Schmidt reduction describes {\it all} the small amplitude Stokes waves with speed near $ c_* $. 
     
        \begin{lemma}\label{lem:uvw}
        Let  
        $(c, u)\in B_\delta(c_*) \times B_\delta^X $     be 
        a Stokes wave solution of  \eqref{Bifurcation problem}. Then 
$ u= v+w(c( v),  v)$ where 
  $  v:=\Pi_V u\in D_\varepsilon $ and 
$c(v)$ is defined in \eqref{c defined on the ball}.
        \end{lemma}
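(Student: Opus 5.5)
The plan is to combine the uniqueness part of the range equation (Lemma \ref{lem:range}) with the uniqueness part of the construction of the speed (Proposition \ref{Construction of c close and away from PJ}). There are two things to show. First, $u$ coincides with $v+w(c,v)$ for the given speed $c$. Second, the given speed $c$ may be replaced by $c(v)$ without changing $w$.

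\emph{Step 1: $u=v+w(c,v)$ and $v\in D_\varepsilon$.} Set $v:=\Pi_V u$ and $\tilde w:=u-v\in W\cap X$. Then $\|v\|\lesssim\|u\|_X<\delta$, and by \eqref{eq:def delta} we get $v\in D_\varepsilon\subset B_{r'}^V$. Since $\cF(c,u)=0$, in particular $\Pi_{W\cap Y}\cF(c,v+\tilde w)=0$. The function $w(c,v)$ of Lemma \ref{lem:range} is obtained from the implicit function theorem, so it is the unique solution of the range equation in a neighbourhood of $0$ in $W\cap X$. Taking $\delta$ small enough that $\|\tilde w\|_X\lesssim\delta$ lies in that neighbourhood gives $\tilde w=w(c,v)$. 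Hence $u=v+w(c,v)$, and $v$ is a critical point of $\Phi(c,\cdot)$ by Lemma \ref{lem:varia}.

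\emph{Step 2: $c$ satisfies \eqref{EQ c Phi} at $v$.} Because $\di_v\Phi(c,v)=0$, certainly $\di_v\Phi(c,v)[\nabla_v\cI(v)]=0$, so $c$ solves system \eqref{EQ c Phi}.

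\emph{Step 3: identify $w(c,v)$ with $w(c(v),v)$.} Here I split into cases according to the position of $v$.

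\begin{itemize}
\item If $v\notin{\bf V}^{\twod}$, Proposition \ref{Construction of c close and away from PJ} states that $c(v)$ is the unique solution of \eqref{EQ c Phi} in $B_r(c_*)$. Since $c\in B_\delta(c_*)\subset B_r(c_*)$, it follows that $c=c(v)$.
\item If $v\in V_{\hat\jmath}\setminus\{0\}$, the same proposition says that every solution has the form $c(v)+k\hat\jmath^\bot$. So $c$ and $c(v)$ have the same parallel component $c^{\parallel_{\hat\jmath}}$. Lemma \ref{Further properties of w(c,v)}-$(ii)$ shows that $w(\cdot,v)$ does not depend on $c^{\bot_{\hat\jmath}}$, and therefore $w(c,v)=w(c(v),v)$.
\item If $v=0$, then $w(c,0)=0=w(c_*,0)$ by \eqref{properties of w(c,v)}, so $u=0$ and the claim holds trivially.
\end{itemize}

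The main obstacle is the uniqueness claim in Step 1. Lemma \ref{lem:range} is stated as existence, so I need the local uniqueness coming from the implicit function theorem, with the radius controlled uniformly in $c\in B_\delta(c_*)$. I also need $\|\tilde w\|_X\le C\|u\|_X$, which holds because $\Pi_V$ is bounded on $X$. I would shrink $\delta$ until both of these conditions hold. A second, more routine check is that $c$ lies in the ball where uniqueness of $c(v)$ was proved; this is ensured by $\delta<r'<r$.
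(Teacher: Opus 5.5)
Your proof is correct and follows the paper's argument. You first reduce to $u=v+w(c,v)$ with $\di_v\Phi(c,v)=0$. Off ${\bf V}^{2d}$ you then use uniqueness of the speed. On $V_{\hat\jmath}$ you use uniqueness of the parallel component together with the independence of $w$ from $c^{\bot_{\hat\jmath}}$ (Lemma \ref{Further properties of w(c,v)}-$(ii)$). You are slightly more explicit than the paper, which uses the local uniqueness of the implicit-function solution of the range equation, and the trivial case $v=0$, without comment.
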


        \begin{proof}
        Lemmata \ref{lem:range} and \ref{lem:varia}       imply that \begin{equation}\label{eq: (bar c,bar u)Bifurcation equation}
            u= v+w( c,  v)\, , \quad v = \Pi_V u
            \,
            \, , \quad \di_v\Phi( c,  v)=0\, \quad \text{where}
            \ v  
        \in D_\varepsilon 
        \end{equation}
      by \eqref{eq:def delta}.  
        If $ v\in V\setminus{\bf V}^{\twod}$  we deduce that $c\in B_{{r'}}(c_*)$ solves \eqref{EQ c Phi} and thus  Lemma \ref{Construction of c close and away from PJ} implies that  $ c=c_{\threed}( v)$.
        Hence $ u= v+w(c( v),  v)$.
        Otherwise, if $ v\in  V_{\hat \jmath}$ for some 
        %resonant wave direction 
        $ \hat \jmath \in \cD $, we have $\di_v\Phi (c,v)=\di_v\Phi(c^{\parallel_{\hat \jmath}}\hat \jmath,v)$ and by \eqref{eq: (bar c,bar u)Bifurcation equation} we deduce that  $ c^{\parallel_{\hat \jmath}}\in (c_*^{\parallel_{\hat \jmath}}- {r'}, c_*^{\parallel_{\hat \jmath}}+{r'})$ solves \eqref{eq:mf2d}. Thus by Lemma \ref{c on PJ}, we conclude that  $ c^{\parallel_{\hat \jmath}}\hat \jmath=c^{\parallel_{\hat \jmath}}_{\twod}( v)\hat \jmath =c( v)$ and
       % by \eqref{eq: (bar c,bar u)Bifurcation equation} we have 
       $ u=v+w(c^{\parallel_{\hat \jmath}}\hat \jmath,v)=v+w(c( v), v)$  by 
       Lemma \ref{Further properties of w(c,v)}-$(ii)$. 
\end{proof}
        
        The momentum of the Stokes wave $ u $ is  equal to 
        $$
        a :=\cI( u) \stackrel{\text{Lemma} \, \ref{lem:uvw}}= \cI (v + w(c(v),v)) \stackrel{ \eqref{Def I dritto}} = I( v)
        \, , \quad v\in 
        D_\varepsilon   \subset U_2 \, , 
        $$ 
        and therefore 
        $ v\in  \MA $ 
         (cf.  definition \eqref{defSa}).
        Theorem \ref{Topology of Sa} implies that  $a\in \cC$, proving \eqref{Identroa}. 
        Since  $v\in D_\varepsilon$ then  $|a|\leq \varepsilon$.
        The proof of Theorem \ref{th:(u,c) and cI} is concluded  by  the following two cases:
        \\[1mm]
    $(\mathfrak{B})$] If $a=0$ then,  Theorem \ref{Topology of Sa} implies that  $\MA=\{0\}$,  thus $ v=0$ and $ u=0$.
            If $a\in \partial \cC\setminus\{0\}$ then,  Theorem \ref{Topology of Sa}-$(\beth)$ implies that   $ v\in \MA\subset {\bf V}^{\twod}$
            and we deduce by  
Lemma \ref{Further properties of w(c,v)}
 that 
$ u = v + w(c(v),v)$ is a $2d$ wave.
    \\[1mm]
    $(\mathfrak{I})$] If $a\in \interior(\cC)$,  and $ a $ is not collinear with  any $\hat\jmath \in \cD$ then,   Theorem \ref{Topology of Sa}-$(\daleth 1)$ implies that  $ v\in \MA\subset V\setminus {\bf V}^{\twod}$ is a $ 3 d$ wave, and thus $ u = v + w(c(v),v)$ is a $3d$ wave.

\begin{appendix}
        \section{Invariant functions and equivariant flows}\label{Appendix A}

        We  give some  properties of $\T^2_\Gamma $-invariant functions defined on the Euclidean $\T^2_\Gamma $-space $V$
        in \eqref{defKer}.
        
        \begin{lemma}\label{lm:AAA}
        Let $ f : B_r^V \to \R $  be a $ \T^2_\Gamma $-invariant function
        of class $ C^1 $. Then, for any 
        $ \hat \jmath \in \cD $,    \begin{equation}\label{Tailo2}
                \di_v f(\Pi_{\hat \jmath}v)  \Pi_{\hat \jmath}^\bot = 0 \, ,  \quad         \forall  v \in B_r^V \, . 
        \end{equation} 
        \end{lemma}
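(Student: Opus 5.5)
The plan is to differentiate the invariance identity along the torus action and exploit the fact that, at a $2d$ wave $\Pi_{\hat\jmath}v$, the stabilizer ${\mathtt T}_{[j]}$ acts trivially on $V_{\hat\jmath}$ but acts on each $V_{j'}$ with $j'\not\parallel j$ by nontrivial rotations, with no fixed vectors. Fix $\hat\jmath\in\cD$ and $v\in B_r^V$, and set $p:=\Pi_{\hat\jmath}v\in V_{\hat\jmath}\cap B_r^V$. By Lemma \ref{ortoges}, $\tau_\theta p=p$ for every $\theta\in{\mathtt T}_{[j]}$. Invariance gives $f(\tau_\theta(p+h))=f(p+h)$, and since $\tau_\theta$ is linear and fixes $p$, this reads $f(p+\tau_\theta h)=f(p+h)$. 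Differentiating at $h=0$ in an arbitrary direction $h$ yields
\[
\di_v f(p)[\tau_\theta h]=\di_v f(p)[h],\qquad \forall\,\theta\in{\mathtt T}_{[j]},\ h\in V .
\]
Consequently the linear functional $\ell:=\di_v f(p)$ restricted to $\Pi_{\hat\jmath}^\bot V=\bigoplus_{j'\in\cV,\,j'\not\parallel j}V_{j'}$ is ${\mathtt T}_{[j]}$-invariant.

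Next I would average over the compact group ${\mathtt T}_{[j]}$ with its Haar measure. Since $\ell$ is invariant, $\ell(h)=\ell(\bar h)$, where $\bar h:=\int_{{\mathtt T}_{[j]}}\tau_\theta h\,\di\theta$. By \eqref{eq:trasab}, on each $V_{j'}$ the map $\tau_\theta$ acts as multiplication of $z_{j'}$ by $e^{-\im j'\cdot\theta}$. The character $\theta\mapsto e^{-\im j'\cdot\theta}$ restricted to ${\mathtt T}_{[j]}$ is nontrivial whenever $j'\not\parallel j$, so its Haar integral vanishes, giving $\bar h=0$ for every $h\in\Pi_{\hat\jmath}^\bot V$. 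Hence $\ell\circ\Pi_{\hat\jmath}^\bot=0$, which is \eqref{Tailo2}.

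The one step that needs care is showing the character is nontrivial on ${\mathtt T}_{[j]}$ when $j'\not\parallel j$. I would write ${\mathtt T}_{[j]}=\{\theta\in\T^2_\Gamma \mid \underline{j}\cdot\theta\in 2\pi\Z\}$, which is a closed one-dimensional subgroup whose identity component is the circle $\{\theta=s\,\underline{j}^\bot \bmod \Gamma\}$. On this circle $j'\cdot\theta=s\,(j'\cdot\underline{j}^\bot)$, and $j'\cdot\underline{j}^\bot\neq 0$ precisely because $j'\not\parallel j$, so the character is nonconstant there. This is essentially the argument in the proof of Lemma \ref{ortoges}, so no real obstacle is expected.
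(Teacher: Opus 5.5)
Your proof is correct and follows the paper's argument. Like the paper, you differentiate the invariance at the ${\mathtt T}_{[j]}$-fixed point $\Pi_{\hat\jmath}v$, which shows that $\di_v f(\Pi_{\hat\jmath}v)$ restricted to each $V_{j'}$ with $j'\not\parallel j$ is a ${\mathtt T}_{[j]}$-invariant functional, and you then show this functional vanishes.

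The only difference is the final step. The paper kills the functional with Schur's lemma, using irreducibility of $V_{j'}$ under ${\mathtt T}_{[j]}$, which it asserts without proof. You instead use Haar averaging together with an explicit check that the character $\theta\mapsto e^{-\im j'\cdot\theta}$ is nontrivial on ${\mathtt T}_{[j]}$. Your version needs only that $V_{j'}$ has no nonzero ${\mathtt T}_{[j]}$-fixed vectors, and it supplies the representation-theoretic fact that the paper leaves implicit.
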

        
        \begin{proof}       
Since $ f $ is $ \T^2_\Gamma $-invariant,   $ \di_v f( \tau_\theta v) \, \tau_\theta = \di_v f( \tau_\theta v) $ for any $ v \in B_r^V $, $ \theta \in \T^2_\Gamma $,   
and therefore, for any $ \hat \jmath \in \cD $,   for any $ \theta $ in the subgroup $ {\mathtt T}_{[j]} \subset \T^2_\Gamma $ defined in \eqref{eq:def H[j]},    % $ j' \in \cV $, 
\begin{equation}\label{divtj}
            \di_v f(\Pi_{\hat \jmath}v) \pi_{j'}  \stackrel{\eqref{One dimensional iff simmetric}} = \di_v f( \tau_\theta \Pi_{\hat \jmath}v) \pi_{j'}   = \di_v f(\Pi_{\hat \jmath}v) \pi_{j'}\tau_\theta    
        \end{equation}
where $\pi_{j'}:V\to V_{j'}$ 
is the projector. 
        %(into irreducible representations) 
        By 
        \eqref{divtj} 
         the linear operator $\di_v f(\Pi_{\hat \jmath}v) \pi_{j'}:V_{j'}\to V$ is $\mathtt{T}_{[j]}$-invariant.
If   $j'\in \cV\setminus[j]$ then $ V_{j'} $ is an irreducible representation of  $\mathtt{T}_{[j]}  $ and,            since $\di_v f(\Pi_{\hat \jmath}v) \pi_{j'}$ is not injective, Schur lemma implies that 
\begin{equation}
\label{dalschu}
\di_v f(\Pi_{\hat \jmath}v) \pi_{j'} = 0 \, , \quad 
\forall j \in \cV \setminus [j] \, . 
\end{equation}
By \eqref{dalschu}, recalling \eqref{Def P},  we deduce \eqref{Tailo2}.
        \end{proof}

By the  previous lemma a $ \T^2_\Gamma $-invariant function $ f $ satisfies  $\di_v f(0)=0$ and 
a critical point $v \in V_{\hat \jmath} $ of $ f_{|V_{\hat \jmath}}$ is actually a critical point of 
$ \di_v f (v) = 0 $. 
        \begin{lemma}\label{lm:A}
        {\bf (\texorpdfstring{$\T^2$-}{T^2_Gamma-}invariant functions)}
            Let $f:B_r(c_*)\times B_r^V\to \R$ be a  smooth function with bounded derivatives at any order,  
            satisfying $f(c,v)= \cO (\|v\|^m)$ uniformly in $ c $,   for some $m\geq 2$, 
and $ f(c, \cdot ) $ is              $\T^2_\Gamma$-invariant for any $ c \in B_r(c_*) $.             Then,  for any $ \hat \jmath \in \cD $, the following  holds:
\\[1mm]
$(i)$ 
            for any  $ v \in B_r^V $, $ \hat v \in V $, 
            \begin{equation}\label{A2ora}
                    \di_v f(c,v)[\hat v]= \cO(\|v\|^{m-1}\|\Pi_{\hat \jmath} \hat v\|+\|v\|^{m-2}\|\Pi_{\hat \jmath}^\bot v\|\| \Pi_{\hat \jmath}^\bot \hat v\| )
                    \quad
                    \text{ uniformly in} \ c \, . 
            \end{equation} 
$(ii)$ If   
            $
                f(c,v)=0 $ for any $ v\in B_r^V\cap V_{\hat \jmath}
            $
            then
            \begin{align}\label{eq:A1}
              &   f(c,v)= \cO (\|\Pi_{\hat \jmath}^\bot v\|^2 \|v\|^{m-2}) 
              \end{align}
            uniformly in $c$, 
            and if  $m\geq 3$ then
              \begin{align}
        & \label{eq:A3}
                \di_v f(c,v)[\hat v]= \cO (\|v\|^{m-3}\|\Pi_{\hat \jmath}^\bot v\|^2\|\Pi_{\hat \jmath} \hat v\|+\|v\|^{m-2}\|\Pi_{\hat \jmath}^\bot v\|\| \Pi_{\hat \jmath}^\bot \hat v\| )
            \quad
                    \text{ uniformly in} \ c \, .\end{align}
            
        \end{lemma}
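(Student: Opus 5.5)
The plan is to reduce everything to Taylor expansion along the fibre direction $\Pi_{\hat \jmath}^\bot v$, using Lemma~\ref{lm:AAA} to kill the first order term in that direction. Fix $\hat \jmath \in \cD$ and write $v = v_0 + v_1$ with $v_0 := \Pi_{\hat \jmath} v$ and $v_1 := \Pi_{\hat \jmath}^\bot v$, so that $\|v_0\|,\|v_1\| \le \|v\|$. The two basic ingredients are the following. First, $\T^2_\Gamma$-invariance of $f(c,\cdot)$ gives, by Lemma~\ref{lm:AAA}, $\di_v f(c,v_0)\Pi_{\hat \jmath}^\bot = 0$. Second, since $f$ is smooth with bounded derivatives and $f = \cO(\|v\|^m)$ uniformly in $c$, Taylor's formula at $0$ gives $\di_v^k f(c,v) = \cO(\|v\|^{m-k})$ for $k \le m$, uniformly in $c$.

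For $(i)$, I would split $\hat v = \Pi_{\hat \jmath}\hat v + \Pi_{\hat \jmath}^\bot \hat v$. The first piece is controlled by $\|\di_v f(c,v)\| \lesssim \|v\|^{m-1}$. For the second piece I would write
\[
\di_v f(c,v)[\Pi_{\hat \jmath}^\bot\hat v] = \di_v f(c,v_0)[\Pi_{\hat \jmath}^\bot\hat v] + \int_0^1 \di_v^2 f(c,v_0+s v_1)[v_1,\Pi_{\hat \jmath}^\bot \hat v]\,\di s .
\]
The first term vanishes by Lemma~\ref{lm:AAA}. The integrand is $\cO(\|v\|^{m-2}\|v_1\|\|\Pi_{\hat \jmath}^\bot\hat v\|)$ because $\|v_0+sv_1\| \le \|v\|$. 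This gives \eqref{A2ora}.

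For \eqref{eq:A1}, I would use the second order Taylor expansion in $v_1$:
\[
f(c,v) = f(c,v_0) + \di_v f(c,v_0)[v_1] + \int_0^1 (1-s)\,\di_v^2 f(c,v_0+sv_1)[v_1,v_1]\,\di s .
\]
The first term vanishes by the hypothesis $f=0$ on $V_{\hat \jmath}$, and the second vanishes by Lemma~\ref{lm:AAA}. The remainder is $\cO(\|v\|^{m-2}\|v_1\|^2)$, which is \eqref{eq:A1}.

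For \eqref{eq:A3} with $m\ge3$, the $\Pi_{\hat \jmath}^\bot\hat v$ part is already bounded by $(i)$. The remaining point, and the main obstacle, is the $\Pi_{\hat \jmath}\hat v$ part, where $(i)$ only gives $\|v\|^{m-1}$ and I need $\|v\|^{m-3}\|v_1\|^2$. Set $g(c,v) := \di_v f(c,v)[\Pi_{\hat \jmath}\hat v]$ for fixed $\hat v$; I will show $g$ satisfies the hypotheses of $(ii)$ with $m-1$.
\begin{itemize}
\item $g$ vanishes on $V_{\hat \jmath}$: differentiating $f(c,\cdot)\equiv0$ on $V_{\hat \jmath}$ along the direction $\Pi_{\hat \jmath}\hat v\in V_{\hat \jmath}$ gives $g(c,v_0)=0$.
\item $g(c,\cdot)$ is not invariant, so I cannot quote Lemma~\ref{lm:AAA} for it directly. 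What the argument actually uses is $\di_v g(c,v_0)[v_1] = \di_v^2 f(c,v_0)[v_1,\Pi_{\hat \jmath}\hat v] = 0$. This follows by differentiating the identity $\di_v f(c,w)\Pi_{\hat \jmath}^\bot=0$, valid for all $w\in V_{\hat \jmath}$, along the direction $\Pi_{\hat \jmath}\hat v\in V_{\hat \jmath}$, and then using symmetry of $\di_v^2 f$.
\end{itemize}
With these two vanishing terms, the same second order expansion of $g$ in $v_1$ yields $g = \cO(\|v\|^{m-3}\|v_1\|^2\|\Pi_{\hat \jmath}\hat v\|)$. Combining with the $\Pi_{\hat \jmath}^\bot\hat v$ part gives \eqref{eq:A3}. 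All bounds are uniform in $c$ because the derivative bounds on $f$ are.
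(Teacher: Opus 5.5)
Your proof is correct and uses the same mechanism as the paper: Lemma~\ref{lm:AAA} kills the first-order term in the transverse direction $\Pi_{\hat\jmath}^\bot v$, and an integral Taylor remainder along $\Pi_{\hat\jmath}v+s\,\Pi_{\hat\jmath}^\bot v$ supplies the extra transverse smallness. The difference is only bookkeeping. The paper writes the single identity $f(c,v)=f(c,\Pi_{\hat\jmath}v)+R(c,v)[\Pi_{\hat\jmath}^\bot v]^2$ and differentiates it in $v$, so the vanishing of $\di_v^2 f(c,\Pi_{\hat\jmath}v)[\Pi_{\hat\jmath}^\bot v,\Pi_{\hat\jmath}\hat v]$, which you derive by hand for \eqref{eq:A3}, is built in. You instead expand $\di_v f$ directly, which lets you prove $(i)$ using only second derivatives.
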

        
        \begin{proof}        
        %We claim that,   
        %\begin{equation}\label{Tailo2}
        %        \di_v f(c,\Pi_{\hat \jmath}v) \circ \Pi_{\hat \jmath}^\bot = 0 \, , \quad       \forall         \hat \jmath \in \cD \, ,        \         v \in B_r^V \, . 
        %\end{equation}
        %Indeed $ \di_v f(c, \tau_\theta v) \, \tau_\theta = \di_v f(c, \tau_\theta v) $ for any $ \theta \in \T^2_\Gamma $, and therefore, for any $ \theta \in {\mathtt T}_{[j]}$, 
        %\begin{equation}\label{divtj}
        %    \di_v f(c,\Pi_{\hat \jmath}v)   \stackrel{\eqref{One dimensional iff simmetric}} = \di_v f(c, \tau_\theta \Pi_{\hat \jmath}v)   = \di_v f(c,\Pi_{\hat \jmath}v)\tau_\theta  \, . 
        %\end{equation}
        %As a consequence \eqref{Tailo2} follows because, for any $ \hat v \in V $,  
        %$$
        %        \di_v f(c,\Pi_{\hat \jmath}v)[\Pi_{\hat \jmath}^\bot\hat v] \stackrel{\eqref{divtj}}  =\int_{{\mathtt T}_{[j]}}\di_v f(c,\Pi_{\hat \jmath}v)[\tau_\theta \Pi_{\hat \jmath}^\bot\hat v] \di \theta\stackrel{\eqref{eq:observation 1}}=\di_v f(c,\Pi_{\hat \jmath}v)[\Pi_{\hat \jmath} \Pi_{\hat \jmath}^\bot\hat v]=0  \, . 
        %$$
 Since  $ f (c, \cdot) $  is $ \T^2_\Gamma $-invariant for any $ c \in B_r (c_*) $, by     \eqref{Tailo2} 
           the Taylor expansion of $ f(c, \cdot) $
          at any $ \Pi_{\hat \jmath} v $ in the direction
            $ \Pi_{\hat \jmath}^\bot v $ is  
          \begin{equation}\label{Tailor3}
                f(c,v)=f(c,\Pi_{\hat \jmath} v)+R(c,v)[\Pi_{\hat \jmath}^\bot v]^2\, , 
                \qquad \forall v \in B_r^V \, ,  
            \end{equation}
           with 
           quadratic integral Taylor remainder 
            $  R(c,v)[h ]^2:=\int_0^1(1-s)\di_v^2 f(c,\Pi_{\hat \jmath} v+s\Pi_{\hat \jmath}^\bot v)[h ]^2\,\di s $ for any  $ h \in V $.
        Taking the differential in \eqref{Tailor3} we get 
            \begin{equation}\label{diffok}
            \di_v f(c,v)[\hat v]=\di_v f(c,\Pi_{\hat \jmath} v)[\Pi_{\hat \jmath} \hat v]+\di_v R (c,v)[\hat v] [\Pi_{\hat \jmath}^\bot v]^2+
               2 R (c,v)[\Pi_{\hat \jmath}^\bot v,\Pi_{\hat \jmath}^\bot \hat v]\, .
            \end{equation}         
            Now $(i)$ follows by \eqref{diffok}
            since $|\di_v^{\ell} f(c,v)|\lesssim_m \|v\|^{m-\ell} $ for any $  0\leq \ell \leq m$, 
        which follows because $ f (c, v) = \cO (\|v\|^m)$ uniformly in $ c $,  is smooth, and has bounded derivatives
        (for $ m =2 $ just use that $ \di_v^2 f(c,v) $ is bounded). 
Item  $(ii)$ follows similarly by \eqref{Tailor3} and \eqref{diffok}  because 
$ f(c,\Pi_{\hat \jmath} v) = 0 $  and  
    $ \di_v f(c,\Pi_{\hat \jmath} v) \Pi_{\hat \jmath} = 0 $ 
    for any $ v\in B_r^V\cap V_{\hat \jmath} $. 
        \end{proof}

        The following lemma is used twice in the paper (for proving Theorem \ref{th:Moser's trick} and
        Proposition 
         \ref{A flow}). 
       
        \begin{lemma}\label{lm:B1}
        {\bf (Continuous equivariant flow)}
            Let 
            $J\subseteq\R$ be an open interval. Assume that the vector field  $Z:J\times B_{r''}^V\to V$ is
            $\T^2_\Gamma$-equivariant for any $ t \in J $, and   
            \begin{align}
            \label{list:Properties of Z}\tag{$Z$}
            \begin{minipage}{0.9\linewidth}
            \begin{enumerate}
                \item[(1)] $Z(t,v)$ is continuous on $ J \times B_{r''}^V $;
                \item[(2)] $Z(t,v)$ is analytic on $J\times  [B_{r''}^V\setminus {\bf V}^{\twod}]$
                      and restricted to  $J\times [(B_{r''}^V\cap V_{\hat \jmath})\setminus\{0\}]$ for any ${\hat \jmath} \in \cD $;
                \item[(3)] $\|Z(t,v)\|\lesssim \|v\|$ for any $(t,v)\in J\times B_{r''}^V$;
                \item[(4)] For any ${\hat \jmath}\in\cD$  we have 
                $\|\Pi_{\hat \jmath}^\bot Z(t,v)\|\lesssim 
                {\|\Pi_{\hat \jmath}^\bot v\|} $ for any $(t,v)\in J\times B_{r''}^V$. 
            \end{enumerate}
            \end{minipage}
            \end{align}
        Then $Z(t,v)$ admits a  flow $\zeta(t,v)$ 
            defined for any $ v \in B_{r''}^V $, locally in time,
which is 
continuous and  
            $\T^2_\Gamma$-equivariant.  
            Moreover, for any closed interval $J'\subset J $  there is $ {r'''}\in(0,r'')$ such that $\zeta$ is defined on $J'\times B_{r'''}^V$.          
        \end{lemma}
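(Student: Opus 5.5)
The natural approach is to build the flow by the Peano existence theorem in finite dimension, and to obtain uniqueness and equivariance from the structure of $Z$ rather than from a Lipschitz bound. The reason is that $Z$ is only continuous across ${\bf V}^{\twod}$, so standard ODE uniqueness is unavailable there.

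\emph{Step 1: uniqueness off ${\bf V}^{\twod}$ and on each stratum.} By (2), $Z(t,\cdot)$ is analytic, hence locally Lipschitz, on the open set $B_{r''}^V\setminus{\bf V}^{\twod}$. It is also analytic on each punctured subspace $(B_{r''}^V\cap V_{\hat\jmath})\setminus\{0\}$. By equivariance and Remark \ref{rem:Vinv}, $Z(t,\cdot)$ maps $V_{\hat\jmath}$ into $V_{\hat\jmath}$, so it is tangent to each stratum. Moreover (3) forces $Z(t,0)=0$. Hence on each stratum (the open 3d part, each $V_{\hat\jmath}\setminus\{0\}$, and $\{0\}$) we have a locally Lipschitz vector field with unique local solutions.

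\emph{Step 2: invariance of the strata.} Next I show that no solution starting in one stratum enters another. For a solution $v(t)$ I would use (4) and estimate $\frac{d}{dt}\|\Pi_{\hat\jmath}^\bot v(t)\|^2\ge -2C\|\Pi_{\hat\jmath}^\bot v(t)\|^2$. Gronwall then gives the two-sided bound $\|\Pi_{\hat\jmath}^\bot v(t)\|\ge e^{-C|t|}\|\Pi_{\hat\jmath}^\bot v(0)\|$, and by the same estimate forward and backward this quantity vanishes identically if it vanishes once. Consequently trajectories starting off $V_{\hat\jmath}$ never reach $V_{\hat\jmath}$ in finite time, and trajectories in $V_{\hat\jmath}$ stay there. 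Similarly, (3) gives $\|v(t)\|\ge e^{-C|t|}\|v(0)\|$, so $0$ is neither reached nor left.

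It follows that every Peano solution lies in a single stratum, where the ODE has unique solutions. Hence Peano solutions are unique globally on $B_{r''}^V$, defining $\zeta(t,v)$.

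\emph{Step 3: continuity, equivariance and time of existence.} Uniqueness plus compactness of solution sets (Kamke's theorem) yield continuous dependence on initial data. Equivariance follows because $\tau_\theta\zeta(t,v)$ solves the same ODE, since $Z$ is equivariant and $\tau_\theta$ is linear and isometric. Finally, (3) gives $\|\zeta(t,v)\|\le e^{C|t-t_0|}\|v\|$. So for a closed interval $J'\subset J$ we may choose $r'''=r''e^{-C|J'|}/2$, which keeps solutions in $B_{r''}^V$ and makes $\zeta$ defined on $J'\times B_{r'''}^V$.

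The main obstacle is Step 2, namely ruling out non-unique solutions that leave or hit ${\bf V}^{\twod}$. The linear bound (4) is exactly what makes the Gronwall argument work there.
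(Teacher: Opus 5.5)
Your proposal is correct and follows essentially the same route as the paper. Both arguments use Peano existence, uniqueness on each stratum from the analyticity in (2), the Gronwall bounds from (3)--(4) to keep every solution inside a single stratum, continuity from uniqueness, equivariance from uniqueness, and the a priori growth bound for the existence time. Your formulation that $\|v(t)\|$ and $\|\Pi_{\hat\jmath}^\bot v(t)\|$ vanish either identically or never is a slightly cleaner packaging of the paper's Step~1. For continuity you cite Kamke's theorem, where the paper runs the Ascoli--Arzel\`a argument by hand.

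Two small corrections. First, the two-sided bound needs $\bigl|\tfrac{d}{dt}\|\Pi_{\hat\jmath}^\bot v(t)\|^2\bigr|\le 2C\|\Pi_{\hat\jmath}^\bot v(t)\|^2$, which follows from (4) and Cauchy--Schwarz; the one-sided lower bound you wrote only controls forward times. Second, since the flow starts at $t=0$, choose $r'''$ using $T=\max_{t\in J'}|t|$ rather than the length $|J'|$.
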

        
        \begin{proof}
           For any $v_0\in B_{r''}^V $  let $v(t)$ be a solution of 
            \begin{equation}\label{eq:ode}
                    \dot v(t)=Z(t,v(t)) \, ,  \ 
                    v(0)=v_0 \, , \quad 
                    t\in J(v) \, , 
            \end{equation}
            defined on the open interval $J(v)\subseteq J$ containing $0$. By the regularity Assumptions \eqref{list:Properties of Z}, 
           problem \eqref{eq:ode} has always a solution that is  unique for any $ v_0 \in  B_{r''}^V \setminus {\bf V}^{2d}$. 
           {However  $ Z (t, v) $ does not satisfy a Lipschitz property near the subspaces $  V_{\hat \jmath } $,  $ \hat \jmath \in \cD $.}  
           We  deduce 
           uniqueness of the solution  by the following estimates.  
            \\[1mm]
            {\sc Step 1:} {\it there is 
            $C >  0 $ such that any solution  
            $ v:J(v)\to B_{r''}^V$ of \eqref{eq:ode}  satisfies}
            \begin{equation}\label{uniqueness: step1}
                \|v(t)\|\geq e^{-C|t|} \|v_0\| \, , \quad  d(v(t),{\bf V}^{\twod})\geq e^{-C|t|} d(v_0,{\bf V}^{\twod}) \, ,  \qquad \forall  t\in J(v)\, .
            \end{equation}
            The function  $ \varphi(t) := \|v(t)\|^2$
            satisfies $\varphi(0) = {\| v_0 \|^2} $ and, by \eqref{eq:ode}  and Assumption ${(3)}$, 
    \begin{equation}\label{apriori}
       { |\varphi'(t)| =  
            2 |\langle Z(t,v(t)),v(t)\rangle |
            \leq 2 C  \varphi(t)} \, . 
    \end{equation}
        Thus by comparison  
        $           {\varphi(t) \geq  \varphi(0) e^{- 2 C |t|}} 
        $ for any $t \in J(v) $ and we deduce the first inequality in \eqref{uniqueness: step1}.  Let us prove the other one.       
        For any ${\hat \jmath}\in \cD $,  the function $
        \varphi_{\hat \jmath}(t) := \|\Pi_{\hat \jmath}^\bot v(t)\|^2$
        satisfies  $\varphi_{\hat \jmath}(0) {= \|\Pi_{\hat \jmath}^\bot v_0 \|^2 } $ and
        by \eqref{eq:ode}  and Assumption $ {(4)}$, 
        $
            |\varphi_{\hat \jmath}'(t)|= {2 |\langle \Pi_{\hat \jmath}^\bot Z(t,v(t)),\Pi_{\hat \jmath}^\bot v(t)\rangle |
            \leq 2 C  \varphi_{\hat \jmath}(t)}$. 
        By comparison we get  $ \varphi_{\hat \jmath}(t) \geq 
        \varphi_{\hat \jmath}(0) e^{-2C |t|}$ and thus 
        $ \|\Pi_{\hat \jmath}^\bot v(t)\|\geq e^{-C |t|}\|\Pi_{\hat \jmath}^\bot v_0\|  \geq e^{-C |t|} d({v_0},{\bf V}^{\twod})
        $
(recall \eqref{defFr}). 
Taking the minimum in $ {\hat \jmath} \in \cD $ 
we deduce also the second inequality in \eqref{uniqueness: step1}.
\\[1mm]
        {\sc Existence of the flow.} 
        By the regularity Assumption ($2$) the solutions of 
     \eqref{eq:ode} are unique in
        $ B_{r''} \setminus {\bf V}^{2d}$. 
        Assumptions ($3$)-($4$)
imply that   $ Z(t,0) = 0 $ and $ Z(t, \cdot)  : V_{\hat \jmath } \to V_{\hat \jmath } $ 
and thus, if $ v_0 = 0 $, a solution of the Cauchy problem \eqref{eq:ode} is $ v(t) = 0 $ and for any $ v_0 \in V_{\hat \jmath } \setminus \{0\} $, 
a solution of  \eqref{eq:ode} lives in 
$ V_{\hat \jmath } \setminus \{0\} $. These are the unique solutions because, 
by \eqref{uniqueness: step1},  if $v_0 \in B_{r''}^V 
        \setminus {\bf V}^{2d} $  then the solution $ v(t) $
        of  \eqref{eq:ode} 
        will never reach in finite time  $  {\bf V}^{2d} $. 
        This proves that, for any $ v \in B_r^V $ the flow  $ v_0  \mapsto \zeta (t,v) := v(t) $ of $Z(t,v) $ is well defined locally in time near $ t = 0 $.
        The equivariance of $ Z(t, v) $ and   the uniqueness of the solutions of \eqref{eq:ode} also imply that  
        $ \zeta (t, \cdot) $ 
        is $\T^2_\Gamma$-equivariant. Furthermore for any closed  $ J' \subset J $ the flow $ \zeta (t, v) $ is defined for any $ v \in B_{r'''}^V $ sufficiently small. Indeed  \eqref{apriori} implies the upper bound $ \varphi(t) \leq  \varphi(0) e^{ 2 C |t|}  $ and therefore 
        $ \|v (t)\| \leq \| v_0 \| e^{2C T }
     $
        for any $ t \in J' $  with  $ T := \max_{t \in J'}{|t|} $. Then it is sufficient to take $ r''' < r'' e^{-2C T} $.  
        \\[1mm]
        {\sc Continuity of the flow.}
        Let $D\subseteq J\times B_{r''}^V$ be the domain of definition of the flow $\zeta$ i.e. the set of pairs $(t,v_0)\in J\times B_{r''}^V$ such that the maximal solution of \eqref{eq:ode} is defined at time $t$. We claim that the uniqueness of the solutions implies also the  continuity of the flow $\zeta:D\to B_{r''}^V$ at any $(\bar t, \bar v)\in D$. Let $\{(t_i, v_i)\} \subset D $ such that $(t_i, v_i)\to (\bar t, \bar v)$.
        Note that, since $ Z $ is bounded (cf. \eqref{list:Properties of Z}-$(3)$) we have
        \begin{equation}\label{eq:lip}
            |\zeta(v,t)-\zeta(v,t_1)|\leq \sup_{(s,u)\in J\times B_{r''}^V}\|Z(s,v)\| |t-t_1|  \lesssim |t-t_1| \,
\quad \forall (v,t),(v,t_1)\in D\, .
        \end{equation}  
        If $\bar t=0$, since $Z(t,v)$ is uniformly bounded, we have $| \zeta(\bar t,\bar v)-\zeta(t_i, v_i)|\leq | \zeta(0,\bar v)-\zeta(0, v_i)| +| \zeta(0,v_i)-\zeta(t_i, v_i)|\lesssim |\bar v-v_i|+|t_i| \to 0$ by \eqref{eq:lip}. 
        Consider the case  $\bar t>0$ (for $ \bar t < 0 $ the argument is the same).
        For any $\tau\in [0,\bar t)$ we have $t_i>\tau$ for $ i  $ large enough, 
each 
        $\zeta(\cdot, v_i)$ and $\zeta(\cdot ,\bar v)$ are well defined for $ [0, \tau ]$, and 
        we claim that % the uniform convergence 
        \begin{equation}\label{eq:a claim}
            \sup_{t\in[0,\tau]}|\zeta (t,v_i) -\zeta(t,\bar v) |\to 0\quad \text{as}\quad i\to +\infty\, .
        \end{equation}   
        For any subsequence $i_m$, since  the functions $\{\zeta(\cdot,v_{i_m}):[0,\tau]\to B_{r''}^V\}$ are uniformly equicontinuous (cf. \eqref{eq:lip} ), by Ascoli-Arzel\'a theorem there exists another subsequence $i_{m_\ell} $ such that $\zeta(\cdot, {i_{m_\ell}})$ converges uniformly to a function $\bar {\mathsf{z}}\in C([0,\tau],\overline{B_{r''}^V})$.
        The function $\bar {\mathsf{z}} $ is a solution of \eqref{eq:ode} with $v_0=\bar v$ on 
        $\mathfrak{p}:=\{t\in [0,\tau] \ |  \ [0,t]\subset {\mathsf{\bar z}}^{-1}
        (B_{r''}^V) \}$. The set $\mathfrak{p}$ is open in $[0,\tau]$.
        By uniqueness, $  \mathsf{\bar z}(t)=\zeta(t , \bar v)$ for any $t\in \mathfrak{p}$ and hence also for $t\in \overline{\mathfrak{p}}$.
        By the definition  of $\mathfrak{p}$ we deduce that $\overline{\mathfrak{p}}\subset \mathfrak{p}$ and thus $\mathfrak{p}=[0,\tau]$ i.e. $\mathsf{\bar z}([0,\tau])\subset B_{r''}^V$.
        We deduce \eqref{eq:a claim}.    
By triangular inequality $
            |\zeta(\bar t,\bar v)-\zeta(t_i,v_i)|\leq
            |\zeta(\bar t,\bar v)-\zeta(\tau,\bar v)| + 
            |\zeta(\tau,\bar v)-\zeta(\tau,v_i)|+
            |\zeta(\tau,v_i)-\zeta(t_{i},v_i)|
        $
        and \eqref{eq:lip}, \eqref{eq:a claim} we deduce $\zeta(t_i,v_i)\to \zeta(\bar t,\bar v)$.
        Thus $\zeta$ is continuous on $D$.
        \end{proof}

        \paragraph{Solution of the range equation.} 
       We  prove Lemma \ref{lem:range}.
    The range equation \eqref{solution of the range equation} amounts to solve 
$ \cG(c,v,w) = 0 $ where 
$\cG:\R^2\times 
    V\times (W\cap X) \to W\cap Y $ is 
    $
    \cG(c,v,w):= \Pi_{W\cap Y}\cF(c,v+w) $.
The map $ \cG $ is analytic in a small neighborhood of 
$ (c_*,0,0) $ by  \cite{BMV2}[Theorem 1.2]
about the analyticity  of the Dirichlet-Neumann operator 
and the algebra properties of $H^{\sigma,s}(\T^2_\Gamma)$. 
It results  
\begin{equation}\label{dG}
        \cG(c,0,0) = 0 \, , \quad  \di_w\cG(c,0,0)=\Pi_{W\cap Y}\cL_{c|W\cap X}\, .
    \end{equation}
    We now  construct the inverse $L : W\cap Y\to W\cap X $ of  $\di_w\cG(c_*,0,0) $.
    Let $\mathscr{W}$ be the linear span generated by $\{v_j^{(1)},v_j^{(2)} \ |\ j \in \Gamma'\setminus(\cV\cup\{0\})\}$ and let $\mathscr{W}_X:=\mathscr{W}\cup\{v_0^{(1)} \}$ and $\mathscr{W}_Y:=\mathscr{W}\cup\{v_0^{(2)}\}$.
    %Let $\mathscr{W}_X$ and $\mathscr{W}_Y$ be  the linear spans 
    %$$
    %  \mathscr{W}_X  \equiv  \{v_j^{(1)},v_j^{(2)}|\ j \in \Gamma'\setminus(\cV\cup\{0\})\}\cup\{v_0^{(1)} \} \, ,  \quad 
    %  \mathscr{W}_Y  \equiv \{v_j^{(1)},v_j^{(2)} \ |\ j \in \Gamma'\setminus(\cV\cup\{0\})\}\cup\{v_0^{(2)}\}.
    %$$  
    We consider  the linear operator $L:\mathscr{W}_Y\to \mathscr{W}_X$,  
    \begin{equation}\label{Def L}
        \begin{cases}
            L v_j^{(1)}:=\frac{-1}{c_*\cdot j-\omega(j)} v_j^{(2)}\\
            L v_j^{(2)}:=\frac{1}{c_*\cdot j-\omega(j)} v_j^{(1)}
        \end{cases}\quad \forall j \not \in \cV\cup \{0\} \, , \qquad  Lv_0^{(2)}:=-\frac1g v_0^{(1)}\, ,
    \end{equation}
which is well defined since $c_*\cdot j - \omega (j) \neq 0 $ for any $j\in \Gamma'\setminus (\cV\cup \{0\})$. 
    By Lemma \ref{symplectic base and coordinates} $(iii)$, \eqref{dG} and \eqref{Def L} we have
    \begin{equation}\label{L is an inverse}
            L \di_w\cG(c_*,0,0)_{|\mathscr{W}_X}= \text{Id}_{\mathscr{W}_Y}
            \, , \quad 
            \di_w\cG(c_*,0,0)  L= \text{Id}_{\mathscr{W}_X}\, .
    \end{equation}
   
   \begin{lemma}
\label{continuity of L}
       $  \|Lu\|_{H^{\sigma,s}\times H^{\sigma,s}}\lesssim \|u\|_{H^{\sigma,s-1}\times H^{\sigma,s-2}}$ 
       for any $ u \in \mathscr{W}_Y $. Thus $L$ extends to a continuous operator $W\cap Y\to W\cap X$ (still denoted $L$) that is
    an inverse of $ \di_w\cG(c_*,0,0) $.  
    \end{lemma}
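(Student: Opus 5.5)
The plan is to check the estimate on the basis vectors first and then extend by density.

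On the basis elements, \eqref{Def L} shows that $L$ acts diagonally in Fourier space. For each $j\in\Gamma'\setminus(\cV\cup\{0\})$ it maps the two-dimensional block $V_j$ to itself, multiplying by $\pm (c_*\cdot j-\omega(j))^{-1}$ and swapping $v_j^{(1)}$ with $v_j^{(2)}$. Write $u=\sum_j \alpha_j v_j^{(1)}+\beta_j v_j^{(2)}$. By \eqref{symplectic base}, the $\eta$-component of $u$ has $j$-th Fourier coefficients of size $M_j(|\alpha_j|+|\beta_j|)$, and the $\psi$-component has coefficients of size $M_j^{-1}(|\alpha_j|+|\beta_j|)$. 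Swapping and rescaling, the components of $Lu$ then have coefficients of size $M_j|\gamma_j|$ (for $\eta$) and $M_j^{-1}|\gamma_j|$ (for $\psi$), where $|\gamma_j|\lesssim |c_*\cdot j-\omega(j)|^{-1}(|\alpha_j|+|\beta_j|)$. The claim therefore reduces to weighted sequence inequalities on the coefficients.

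The key step is a uniform lower bound on the small divisor:
\[
|c_*\cdot j-\omega(j)|\gtrsim \langle j\rangle^{3/2}\qquad \forall\, j\in\Gamma'\setminus(\cV\cup\{0\}).
\]
This follows from capillarity. Since $\omega(j)\sim\sqrt{\kappa}|j|^{3/2}$, we have $\omega(j)-c_*\cdot j\geq \tfrac12\omega(j)$ for all $|j|$ larger than some $R$. The remaining $j$ with $|j|\leq R$ form a finite set, and $c_*\cdot j-\omega(j)\neq 0$ there by the definition of $\cV$. Hence the minimum of $|c_*\cdot j-\omega(j)|$ over this finite set is positive. The zero mode is handled separately by $Lv_0^{(2)}=-g^{-1}v_0^{(1)}$, which is harmless.

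Next compare the weights. By \eqref{eq:Mj}, $M_j\sim |j|^{-1/4}$ and $M_j^{-1}\sim|j|^{1/4}$ for large $|j|$; in finite depth $\tanh(\tth|j|)$ is bounded below away from $j=0$, so this asymptotic is uniform. Denote the $j$-th Fourier coefficients of the two components of $u$ by $\hat u_{1,j}$ and $\hat u_{2,j}$. Then the $j$-th Fourier coefficient of the $\eta$-component of $Lu$ is bounded by
\[
M_j\,\langle j\rangle^{-3/2}\bigl(M_j^{-1}|\hat u_{1,j}|+M_j|\hat u_{2,j}|\bigr)\lesssim \langle j\rangle^{-3/2}|\hat u_{1,j}|+\langle j\rangle^{-2}|\hat u_{2,j}|.
\]
Similarly, the $j$-th coefficient of the $\psi$-component of $Lu$ is bounded by $\langle j\rangle^{-1}|\hat u_{1,j}|+\langle j\rangle^{-3/2}|\hat u_{2,j}|$.

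Multiplying by $\langle j\rangle^{s}e^{\sigma|j|_1}$ and summing, each component of $Lu$ is controlled by $\|u_1\|_{\sigma,s-1}+\|u_2\|_{\sigma,s-2}$. This gives $\|Lu\|_{H^{\sigma,s}\times H^{\sigma,s}}\lesssim\|u\|_{H^{\sigma,s-1}\times H^{\sigma,s-2}}$ on the dense subspace $\mathscr W_Y$.

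Finally, $L$ extends by continuity to a bounded operator $W\cap Y\to W\cap X$. The extended operator preserves the zero-average condition: an element of $W\cap Y$ has a zero $v_0^{(1)}$-component, and $L$ maps it to something with no $v_0^{(2)}$-component. The identities \eqref{L is an inverse} hold on the dense sets $\mathscr W_X$ and $\mathscr W_Y$. Since $\di_w\cG(c_*,0,0)$ is bounded from $W\cap X$ to $W\cap Y$, both identities pass to the closure, so $L$ is a two-sided inverse.

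The main point to get right is the lower bound on $|c_*\cdot j-\omega(j)|$. It requires both the $|j|^{3/2}$ growth from surface tension and the finiteness of the low-frequency set. Without $\kappa>0$ this would be a genuine small-divisor problem.
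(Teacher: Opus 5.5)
There is a genuine gap, in the estimate for the $\psi$-component of $Lu$. Your reduction to weighted sequence estimates and the closing density/extension argument are fine. You obtain for the $j$-th coefficient of the second component of $Lu$ the bound $\langle j\rangle^{-1}|\hat u_{1,j}|+\langle j\rangle^{-3/2}|\hat u_{2,j}|$. After multiplying by $\langle j\rangle^{s}e^{\sigma|j|_1}$, the second term becomes $\langle j\rangle^{s-3/2}e^{\sigma|j|_1}|\hat u_{2,j}|$. That is controlled by $\|u_2\|_{\sigma,s-3/2}$, not by $\|u_2\|_{\sigma,s-2}$.

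So your argument only gives $L\colon H^{\sigma,s-1}\times H^{\sigma,s-3/2}\to H^{\sigma,s}\times H^{\sigma,s}$. This is half a derivative short, and it does not show that $L$ maps $W\cap Y$ into $W\cap X$. It is not a bookkeeping slip. The lower bound $|c_*\cdot j-\omega(j)|\gtrsim\langle j\rangle^{3/2}$, used block by block together with $M_j^{\pm1}\sim|j|^{\mp1/4}$, cannot give more than a gain of $3/2$ derivatives on the $\psi\to\psi$ part of $L$. That part has to gain two.

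The missing idea is a cancellation between the blocks $V_j$ and $V_{-j}$. Both contribute to the Fourier mode $e^{ij\cdot x}$; your first step attributes the $j$-th coefficient to $V_j$ alone. Their divisors are $c_*\cdot j-\omega(j)$ and $-c_*\cdot j-\omega(j)$. The paper puts them over the common denominator $(c_*\cdot j)^2-\omega(j)^2\approx|j|^3$, as in \eqref{alpha  pm alpha beta pm beta}. Then $\beta_j+\beta_{-j}$, which is determined by $\hat u_{2,j}$, enters $\alpha_j(Lu)-\alpha_{-j}(Lu)$, which builds the second component of $Lu$. It does so only through the coefficient $c_*\cdot j/((c_*\cdot j)^2-\omega(j)^2)=O(|j|^{-2})$. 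The large numerator $\omega(j)$ multiplies only $\beta_j-\beta_{-j}$, which is determined by $\hat u_{1,j}$, and there a gain of one derivative suffices. This is what the bounds \eqref{a uniform bound} encode.

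Equivalently, invert the Fourier symbol of $\cL_{c_*}$ in the variables $(\eta,\psi)$ directly. Write $G_j$ for the symbol of $G(0)$, so that $\omega(j)^2=(g+\kappa|j|^2)G_j$. Then
\[
\begin{pmatrix} i\,c_*\cdot j & G_j\\ -(g+\kappa|j|^2) & i\,c_*\cdot j\end{pmatrix}^{-1}
=\frac{1}{\omega(j)^2-(c_*\cdot j)^2}\begin{pmatrix} i\,c_*\cdot j & -G_j\\ g+\kappa|j|^2 & i\,c_*\cdot j\end{pmatrix},
\]
whose entries are $O(|j|^{-2})$, $O(|j|^{-2})$, $O(|j|^{-1})$ and $O(|j|^{-2})$. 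This is exactly the mapping property $H^{\sigma,s-1}\times H^{\sigma,s-2}\to H^{\sigma,s}\times H^{\sigma,s}$. The diagonal entries $i\,c_*\cdot j$ are of order $|j|$ rather than $|j|^{3/2}$, and your block-diagonal bound discards precisely this.

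In either route the common denominator also vanishes at the finitely many $j\in-\cV$. There only one of $V_{\pm j}$ lies in $W$, and $L$ acts on it through the nonzero divisor $-2\omega(j)$, so these modes do not affect the estimate.
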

    
    \begin{proof}
The analytic norms $ \| \ \|_{H^{\sigma,s}} $ defined in  \eqref{def:Hs} of the components of
          $ u  = \begin{psmallmatrix}
          \eta \\ \psi 
          \end{psmallmatrix}
        \in \mathscr{W}$ expanded as in  
\eqref{coordiantes} are, recalling \eqref{symplectic base}, 
\begin{equation}
        \begin{aligned}
        \label{norm of eta}
            &\|\eta\|^2_{H^{\sigma,s}}=\tfrac{1}{4}\sum \big(|\alpha_j(u)+\alpha_{-j}(u)|^2+|\beta_j(u)-\beta_{-j}(u)|^2\big)\, M_j^2\, | j|^{2s}e^{2\sigma |j|_1}\\
            &\|\psi\|^2_{H^{\sigma,s}}=\tfrac{1}{4}\sum \big(|\alpha_j(u)-\alpha_{-j}(u)|^2+|\beta_j(u)+\beta_{-j}(u)|^2\big)\, M_j^{-2}\, | j|^{2s}e^{2\sigma |j|_1}
        \end{aligned}
        \end{equation}
        where the sums are over $ j \in \Gamma'\setminus(\cV\cup \{0\})$.
       By \eqref{Def L},    the operator $ L $ 
       acts, in the coordinates $\{(\alpha_j,\beta_j)\}_{j}$ in \eqref{abu},  as
        $$      \alpha_j(Lu)=
        \tfrac{\beta_j(u)}{c_*\cdot j-\omega(j)} \, , \quad 
                \beta_j(Lu)
                =\tfrac{-\alpha_j(u)}{c_*\cdot j-\omega(j)} \, , 
            \quad \forall j \in \Gamma'\, , \  \forall u\in \mathscr{W} \, . 
      $$
     In view of  \eqref{norm of eta},   in order to estimate $\|Lu\|_{H^{\sigma,s}\times H^{\sigma,s}}$ we consider
%       $(\alpha_j(Lu)\pm \alpha_{-j}(Lu))$ and $(\beta_j(Lu)\pm \beta_{-j}(Lu))$ as follows
 %   and thus 
       \begin{equation}\label{alpha  pm alpha beta pm beta}
           \begin{aligned}
               &\alpha_j(Lu) \pm \alpha_{-j}(Lu)=
                    \tfrac{c_*\cdot j}{(c_*\cdot j)^2-\omega(j)^2}(\beta_j(u) \mp \beta_{-j}(u))+
                    \tfrac{\omega(j)}{(c_*\cdot j)^2- \omega(j)^2}(\beta_j(u) \pm \beta_{-j}(u))\\
                &\beta_j(Lu)\pm \beta_{-j}(Lu)=
                    -\tfrac{c_*\cdot j}{(c_*\cdot j)^2-\omega(j)^2}(\alpha_j(u) \mp \alpha_{-j}(u))
                    -\tfrac{\omega(j)}{(c_*\cdot j)^2-\omega(j)^2}(\alpha_j(u) \pm \alpha_{-j}(u)) \, . 
           \end{aligned}
       \end{equation}
By \eqref{omega}  with positive surface tension $ \kappa > 0 $ we have
$           |(c_*\cdot j)^2-\omega(j)^2| \approx |j|^{3} $ 
for any $ j \in \Gamma'\setminus(\cV\cup\{0\}) $
    and, since $
            M_j \approx |j|^{-\frac 14} $ by  \eqref{eq:Mj},   
            then 
        \begin{equation}\label{a uniform bound}
            \Big|\frac{(c_*\cdot j)\, | j|}{(c_*\cdot j)^2-\omega(j)^2}\Big|\, , \
            \Big|\frac{\omega(j)\, M_j^2\,  | j|^2}{(c_*\cdot j)^2-\omega(j)^2}\Big|\, , \
            \Big|\frac{(c_*\cdot j)\,  | j|^2}{(c_*\cdot j)^2-\omega(j)^2}\Big|
            \, , \
            \Big|\frac{\omega(j)\, M_j^{-2}\,  | j|}{(c_*\cdot j)^2-\omega(j)^2}\Big|\leq K\, .
        \end{equation} 
        In view of  \eqref{norm of eta}, \eqref{alpha  pm alpha beta pm beta} and \eqref{a uniform bound} the analytic norms of the components of
        $ Lu = \begin{psmallmatrix}
            \eta_1\\
            \psi_1
        \end{psmallmatrix} $ satisfy 
        \[
                \|\eta_1\|^2_{H^{\sigma,s}}\leq K^2\big(\|\eta\|_{H^{\sigma,s-1}}^2+\|\psi\|_{H^{\sigma,s-2}}^2\big)\, , \quad 
                \|\psi_1\|^2_{H^{\sigma,s}}\leq K^2\big(\|\eta\|_{H^{\sigma,s-1}}^2+\|\psi\|_{H^{\sigma,s-2}}^2\big) \, . 
        \]
By \eqref{L is an inverse}
the extended continuous operator $ L : W\cap Y\to W\cap X $ 
    is an inverse of $ \di_w\cG(c_*,0,0)$. 
    \end{proof}
    
%Lemma \ref{continuity of L}  
By  the Implicit function theorem there is $r>0$ and an analytic function $w:B_{r}(c_*)\times B_{r}^V\to W\cap X$ solving 
    $ 
        \cG(c,v,w(c,v))=0 $ for all 
        $ (c,v)\in B_{r}(c_*)\times B^V_r $
        and satisfying 
    $w(c,0)=0$ for any $c\in B_{r}(c_*)$
    because  $\cG(c,0,0) \equiv 0$. The equivariance properties \eqref{wequi} follow by 
those of $ {\cal F }(c, u) $ inherited by 
\eqref{Z2 symmetry}, \eqref{T2 symmetry} and 
uniqueness. Finally differentiating $ 0 = \cG(c,v,w(c,v)) $ 
at $ v = 0 $ we obtain, since $ w(c,0) = 0 $
    $$
    0 = \underbrace{\di_v\cG(c,0,0)}_{\Pi_{W\cap Y} {\cL_c}_{|V} = 0 \ \text{as} \, \cL_cV\subseteq V } + \, \di_w\cG(c,0,0) \, \di_v w(c,0) = \di_w\cG(c,0,0) \, \di_v w(c,0) \, .
    $$
    Since $\di_w\cG(c,0,0)$ is invertible we deduce $\di_v w(c,0)=0$.
    The proof is concluded.
    \end{appendix}

  \begin{footnotesize}

% \newpage

\vspace{7pt}
\noindent 
Tommaso Barbieri\\ 
SISSA, Via Bonomea 265, 34136, Trieste, Italy\\ 
\texttt{tbarbier@sissa.it}

\vspace{3pt}

\noindent 
Massimiliano Berti\\ 
SISSA, Via Bonomea 265, 34136, Trieste, Italy\\ 
\texttt{berti@sissa.it}

\vspace{3pt}

\noindent
Marco Mazzucchelli\\ 
Sorbonne Université, Université Paris Cité, CNRS, IMJ-PRG, F-75005 Paris, France\\ 
\texttt{marco.mazzucchelli@imj-prg.fr}.

\end{footnotesize}

\newpage

\makeatletter
\providecommand\@dotsep{5}
\renewcommand{\listoftodos}[1][\@todonotes@todolistname]{%
  \@starttoc{tdo}{#1}}
\makeatother
% \listoftodos


\begin{thebibliography}{999}                                         

%\bibitem{AR} 
%    Ambrosetti A., Rabinowitz P., {\it Dual Variational Methods in Critical Point Theory and Applications}. Journ. Func. Anal, 14, 349-381, 1973.

\bibitem{AFT}
    Amick C.,  Fraenkel L.,  Toland J., {\it On the Stokes conjecture for the wave of extreme form}. Acta Math. 148, 193–214, 1982.


%\bibitem{Atiyah:1984aa}
%Atiyah M. F., Bott R., \mass{non e' citato, serve? }
%{\it The moment map and equivariant cohomology.} 
%Topology, 1, 1--28, 23, 1984.

%\bibitem{AGN}
%Ahmad R, 
%· M. D. Groves, Nilsson D., %%A Resonant Lyapunov Centre Theorem with an Application
%to Doubly Periodic Travelling Hydroelastic Waves

\bibitem{Atiyah:1983aa}
Atiyah M. F.,  Bott R.,
{\it The {Y}ang-{M}ills equations over {R}iemann surfaces.} 
Philos. Trans. Roy. Soc. London Ser. A, 
1505, 523--615, 308, 1983.

\bibitem{BG} 
Bagri G. S.,  Groves M. D., 
{\it A Spatial Dynamics Theory for Doubly Periodic
Travelling Gravity-Capillary Surface Waves on Water
of Infinite Depth}. 
J. Dyn. Diff. Equat., 27:343-370, 2015. 

\bibitem{BBMM}
Barbieri T., Berti M., Maspero A., Mazzucchelli M.,
{\it Bifurcation of gravity-capillary Stokes waves with constant vorticity}.
J. Differential Equations 451, 113753, 2026.






\bibitem{Bartsch:1993aa} Bartsch T., {\it Topological methods for variational problems with symmetries}.
Lecture Notes in Math., 1560
Springer-Verlag, Berlin, x+152 pp.,
1993.

\bibitem{Bart1}
Bartsch T.
{\it A generalization of the Weinstein-Moser theorems on periodic orbits of a Hamiltonian system near an equilibrium}. 
Ann. Inst. H. Poincar\'e C 
Anal. Non Lin\'eaire 14, no. 6, 691-718, 1997.


\bibitem{Bartsch:1990aa}
Bartsch T., Clapp M.,
{\it Bifurcation theory for symmetric potential operators and the equivariant cup-length.}  
Mathematische Zeitschrift,
3, 341--356, 204, 1990.

\bibitem{Benci1991} Benci V., {\it A New Approach to the Morse-Conley Theory
and Some Applications.} Annali di Matematica pura ed applicata
(IV), Vol. CLVIII, 1991.

%\bibitem{Berti}
 %   Berti M.,   {\it Nonlinear Oscillations of Hamiltonian PDEs}. {Progress in Nonlinear Differential Equations}. {\sc book},  Birkh\"auser, 1-180 pages, Boston, ISBN-13: 978-0-8176-4680-6, 2008. 

\bibitem{BFM}
	Berti M., Franzoi L.,  Maspero A., \emph{Traveling quasi-periodic water waves with constant vorticity.} Arch. Rat. Mech. Anal., 240, 99-202, 2021. 

  \bibitem{BFM2}
Berti M., Franzoi L., Maspero A.,
 {\it  Pure gravity traveling quasi-periodic water waves with constant vorticity}.
 Comm. Pure Applied Math., 77(2): 990--1064, 2024.
	
\bibitem{BMV2}  Berti M.,   Maspero A.,  Ventura P., 
\emph{On the analyticity of the Dirichlet-Neumann operator and Stokes waves}. 
Rend. Lincei Mat. Appl., %doi 10.4171/RLM/983, 
33, 611--650, 2022.


\bibitem{Bre}  Bredon G. E., {\it Introduction to Compact Transformation Groups}. Academic Press, New York and London, 1972.

%\bibitem{BuTo} Buffoni B., Toland J., {\it Analytic Theory of Global Bifurcation: An Introduction.} Princeton Series in Applied Mathematics, 2003.


%\bibitem{Chang:1993aa}
%Chang C., 
	%Infinite-dimensional {M}orse theory and multiple solution problems,  x+312, 
 %   Birkh\"{a}user Boston, Inc., Boston, MA,
	%Progress in Nonlinear Differential Equations and their Applications, 6, 1993.

%\bibitem{ChG}  Ghoussoub N., 
%{\it The Conley Index and the critical groups via an extension of Gromoll-Meyer
%Theory}. Topological Methods in Nonlinear Analysis, % Journal of the Juliusz Schauder Center 
%Volume 7,  77–93, 1996. 

%\bibitem{Conley:1971aa} 
%Conley C., Easton, R.,
%{\it Isolated invariant sets and isolating blocks.}  
%Transactions of the American Mathematical Society,
%	158, 1971.

\bibitem{Conley:1978aa} 
Conley C., 
{\it  Isolated invariant sets and the {M}orse index.}, iii+89,
	American Mathematical Society, Providence, RI, CBMS Regional Conference Series in Mathematics,
	38, 1978.
    
%\bibitem{const_book}
%Constantin A., 
%{\it Nonlinear Water Waves with %Applications to Wave-Current Interaction and Tsunamis.}
%CBMS-NSF Regional Conf, Series in Applied Math., 81,  SIAM, 2011.


	%\bibitem{CIP}
	%Constantin A., Ivanov R.I., Prodanov E.M., 
	%\emph{Nearly-Hamiltonian Structure for Water Waves with Constant Vorticity.}
	%J. Math. Fluid Mech., 
	%10, 224–237, 2008.

%\bibitem{CSt}
%Constantin A., Strauss W., 
%{\it Exact steady periodic water waves with vorticity}, 
%Comm. Pure Appl. Math. 57,  4, 481-527, 2004. 

\bibitem{CSV}
Constantin A., Strauss W., Varvaruca E., 
{\it Global bifurcation of steady gravity water waves with critical layers}.
Acta Math.   217,  no. 2, 195–262, 2016.

%\bibitem{CHOS}
%Craig W., 
%Henderson D., 
%Oscamou M., 
%Segur H., {\it Stable three-dimensional waves of nearly permanent form on deep water}, Mathematics and Computers in Simulation
%Volume 74, Issues 2-3, 7 2007, Pages 135-144, 2007. 


\bibitem{CN}
	Craig W., Nicholls  D., 
	\emph{Traveling two and three dimensional capillary gravity water waves}.  SIAM J. Math. Anal., 
	32, 323-359, 2000.

\bibitem{CN2}
Craig W., Nicholls D.,
\newblock {\it Traveling gravity water waves in two and three dimensions.}
\newblock { Eur. J. Mech. B Fluids}, 21(6):615-641, 2002.


 
	\bibitem{CS}
	Craig W., Sulem  C., 
	\emph{Numerical simulation of gravity water waves}. J. Comput. Phys., 
	108, 73-83, 1993.

    \bibitem{Dold}  Dold A., 
    \emph{Partitions of Unity in the Theory of Fibrations}. Annals of Mathematics, Vol. 78, No. 2, 1963.  

	\bibitem{dubreil}
		Dubreil-Jacotin M.-L.,
		\newblock{\em Sur la d\'etermination rigoureuse des ondes permanentes
			p\'eriodiques d'ampleur finie}.
		\newblock{ J. Math. Pures Appl. 13}, 217-291, 1934.


\bibitem{Fadell:1988aa}
Fadell  E.,  Husseini S., 
{\it An ideal-valued cohomological index theory with applications to {B}orsuk-{U}lam and {B}ourgin-{Y}ang theorems.} Ergodic Theory and Dynamical Systems,  73--85,
1988. 


\bibitem{FR} Fadell E., Rabinowitz P., {\it Generalized 
cohomological index theories for the group actions 
with an application to bifurcation questions  for Hamiltonian
systems}. Inventiones Math. 45, 139-174, 1978.






	\bibitem{FG}
		Feola R.,  Giuliani F.,
		\emph{Quasi-periodic traveling waves on an infinitely deep fluid under gravity.}
		Memoires AMS, Volume 295,  164 pp., 2024.


\bibitem{FMT} 
Feola R., Montalto R., Terracina S.,  
{\it Time  quasi-periodic 3d traveling gravity water waves.}
Arxiv 2509.10318, 2025. 


\bibitem{Floer:1988aa} 
Floer  A., Zehnder  E., 
{\it The equivariant {C}onley index and bifurcations of periodic solutions of {H}amiltonian systems.} 
Ergodic Theory Dynam. Systems,
87--97, 8$^*$, 1988.

%\bibitem{gerstner}
%		Gerstner F., 
%		\newblock{\em Theorie der Wellen}. 
%		\newblock{ Abh. K\"onigl. B\"ohm. Ges. Wiss}, 1802.

\bibitem{goyon}
		Goyon R., 
		\newblock{\em Contribution \`a la th\'eorie des houles}.
		\newblock{ Ann. Sci. Univ. Toulouse 22}, 1-55, 1958.

%\bibitem{GrM}
%Gromoll D., Meyer W., 
%{\it On differentiable functions with isolated critical points}. Topology 8, 361–369, 1969.

\bibitem{GrovesHaragus}
Groves M., Haragus M.,  {\it A Bifurcation Theory for Three-Dimensional Oblique Travelling Gravity-Capillary Water Waves.} J. Nonlinear Sci. 13, 397–447, 2003.

\bibitem{GrovesMielke} Groves M., Mielke A., {\it A spatial dynamics approach to three-dimensional gravity-capillary steady water waves.} Proceedings of the Royal Society of Edinburgh: Section A Mathematics.  131(1):b 83-136, 2001.

\bibitem{GNPW}
 Groves M., Nilsson D., Pasquali S., Wahlén E., {\it Analytical study of a generalised Dirichlet–Neumann operator and application to three-dimensional water waves on Beltrami flows}.
J. Diff. Eq., 413(25): 129-189, 2024.

\bibitem{HHS}
Hammack J.L.,  Henderson
D.M., Segur H., 
{\it Progressive waves with persistent, two-dimensional surface patterns in deep water}. J. Fluid Mech.
532,  1–51, 2005.


    \bibitem{Hatcher:2002aa}
	Hatcher A., 
    {\it Algebraic topology.} xii+544,
Cambridge University Press,  2002.

\bibitem{HHSTWWW}
Haziot S.,  Hur V.M., Strauss W., Toland J., 
Wahlen E.,  Walsh S.,  Wheeler M.,
{\it Traveling water waves -the ebb and flow of two centuries}. 
Quarterly of Applied Mathematics, 80, 2, 317-401, 2022. 

%\bibitem{Hsiang:1975aa}
%Hsiang Wu-yi, 
%{\it Cohomology theory of topological transformation groups}, x+164, 
%Springer-Verlag, 
%Ergebnisse der Mathematik und ihrer Grenzgebiete, 1975.

\bibitem{IP-Mem-2009}
Iooss G., Plotnikov P.,
\newblock {\it Small divisor problem in the theory of three-dimensional water
  gravity waves}. 
\newblock {Mem. Amer. Math. Soc.}, 200(940):viii+128, 2009.

\bibitem{IP2}
Iooss G., Plotnikov P.,
\newblock {\it Asymmetrical tridimensional traveling gravity waves}. 
\newblock { Arch. Rat. Mech. Anal.}, 200(3):789--880, 2011.



\bibitem{JT}
    Jones M., Toland J., {\it The bifurcation and secondary bifurcation of capillary gravity waves.} Proc. Royal Soc. London Ser. A, 399, 391–417, 1985. 

\bibitem{JT2}
    Jones M., Toland J., {\it Symmetry and bifurcation of capillary gravity waves.} Arch. Rat. Mech. Anal., 96, 29–53, 1986. 

%\bibitem{KN}
 %   Keady G., Norbury J., {\it On the existence theory for irrotational water waves.} Math. Proc. Cambridge Philos. Soc. 83, no. 1, 137-157, 1978. Arch. Rat. Mech. Anal. 247(98), 2023. 

\bibitem{KK}
    Kozlov V., Lokharu  E. {\it Global Bifurcation and Highest Waves on Water of Finite Depth.} Arch. Ration. Mech. Anal. 247, 5, 98, 2023.

\bibitem{LC}
    Levi-Civita  T., 
    \newblock {\it D\'etermination rigoureuse des ondes permanentes d' ampleur finie.} 
    \newblock { Math. Ann.}, 93, 264-314, 1925.

\bibitem{LSW}
    Lokharu E.,  Seth D.,  Wahlén E., {\it An Existence Theory for Small-Amplitude Doubly Periodic Water Waves with Vorticity}. Arch Rational Mech Anal 238, 607–637, 2020.


%\bibitem{Lyusternik:1947aa}
%Lyusternik L. and \v{S}nirelman L.,
%{\it Topological methods in variational problems and their application to the differential geometry of surfaces},  Uspehi Matem. Nauk (N.S.), 1(17),
%166--217, 2, 1947.

        
%\bibitem{MS}
 %   Maelhlen O., Svensson Seth D.,  {\it Asymmetric travelling wave solutions of the capillary-gravity Whitham Equation}. SIAM J. Math. Anal. 56, no. 6, 8096-8124, 2024.


\bibitem{M}
	Martin C.I., \emph{Local bifurcation and regularity for steady periodic capillary-gravity water waves with constant vorticity}. Nonlinear Anal.: Real World Applications, \textbf{14}, 131-149, 2013.

%\bibitem{ML}
 %   McLeod J. B., {\it The Stokes and Krasovskii conjectures for the wave of greatest height.} Stud. Appl. Math. 98, no. 4, 311-333, 1997.


\bibitem{Milnor:1974aa} 
Milnor J. W., Stasheff J. D.,
{\it Characteristic classes}, 
vii+331, Princeton Univ. Press,
Annals of Math. Studies,  76,
1974.


\bibitem{Morse:1996aa}
Morse M., 
{\it The calculus of variations in the large.} Reprint of the 1932 original,
xii+368, 
American Mathematical Society, Providence, RI,
American Mathematical Society Colloquium Publications, 18, 1996.


\bibitem{Mo}
    Moser J., {\it Periodic orbits near an Equilibrium and a Theorem by Alan Weinstein}.  Comm. Pure  Appl. Math., XXIX, 1976.

\bibitem{M2010} McCleary J., {\it A User's Guide to Spectral Sequences}. Cambridge University Press, 2010.

\bibitem{Nek}
		Nekrasov A. I.,  {\it On steady waves}.  Izv. Ivanovo-Voznesenk. Politekhn. 3, 1921.

\bibitem{Nil}
Nilsson D., 
{\it Three-dimensional internal gravity-capillary waves in
finite depth}. Mathematical methods in the applied sciences, 
4113-4145, Volume 42, Issue 2,  2019.


%\bibitem{NR}
 %   Nicholls D., Reitich F.,  \emph{On analyticity of travelling water waves}. Proc. R. Soc. Lond. Ser. A Math. Phys. Tech. Sci. Inf. Sci. 461 (2057), 1283-1309, 2005. 	

   % \bibitem{PSV} Piccinini L. C., Stampacchia G., Vidossich G., {\it Ordinary Differential Equations in $R^n$}. Applied Mathematical Sciences, Springer-Verlag New York Inc. 1984

\bibitem{P1982} 
    Plotnikov P. I., {\it A proof of the Stokes conjecture in the theory of surface waves}. Dinamika Sploshn Sred, 57, 41–76, 1982.

%\bibitem{R3} 
 %   Rabinowitz P., {\it A bifurcation theorem for potential operators}. J. Func. Anal., 25,  412-424, 1977.
 
	%\bibitem{R}
	%Rabinowitz P.H., \emph{Minimax Methods in Critical Point Theory with Applications to Differential Equations}. AMS-CBMS 65, 1986.
    
\bibitem{RS1}
     Reeder J., Shinbrot M., 
{\it On Wilton Ripples, II: Rigorous Results}, 
Arch. Rational Mech. Anal. 77, 321–347, 1981.

\bibitem{RS}
     Reeder J., Shinbrot M., \newblock{\it Three-dimensional, nonlinear wave interaction in water of constant depth.} \newblock{ Nonlinear Anal.,} T.M.A., 5(3), 303--323, 1981.

\bibitem{Salamon1985} Salamon D., {\it Connected simple systems and the Conley index of isolated invariant sets.} Transactions of the AMS, Volume 291. Number 1, 1985.

\bibitem{SVW}
Seth D., 
Varholm K., 
Wahl\'en E., 
{\it Symmetric doubly periodic gravity-capillary waves
with small vorticity}. 
Advances in Math. 447,  109-683, 2024. 


	\bibitem{Spanier}
		Spanier E., {\it Algebraic topology.} % Corrected 
        reprint of the 1966, % original, 
        Springer-Verlag, % New York,  
        xvi+528 pp., ISBN:0-387-94426-5, 1995.


  

	\bibitem{stokes}
		Stokes G., {\it On the theory of oscillatory waves}. Trans. Cambridge Phil. Soc. 8, 441-455, 1847.
  
	\bibitem{Struik}
		Struik D.,  {\it D\'etermination rigoureuse des ondes irrotationelles p\'eriodiques dans un canal \'a profondeur finie}.  Math. Ann. 95, 595-634, 1926.

  %  \bibitem{Seth} Svensson Seth D., {\it On small-amplitude asymmetric water waves}.  Water Waves,   
% 7, 407-430, 2025.

%\bibitem{To}
 %   Toland J. F., {\it On the existence of a wave of greatest height and Stokes conjecture.} Proc. Roy. Soc. London Ser. A 363, 1715, 469-485, 1978.

\bibitem{Tu} Tu L. W., {\it Introductory lectures on equivariant cohomology}. Princeton University Press, 2021.

%\bibitem{W}
%	Wahl\'{e}n E., 
	%\emph{Steady Periodic Capillary‐Gravity Waves with Vorticity}. SIAM J. Math. Anal. 
	%38, 921-943, 2006.

\bibitem{Wh0}
    Wahl\'en E., {\it Steady periodic capillary-gravity waves with vorticity}. SIAM J. Math. Anal. 38, 921-943, 2006.
 

%\bibitem{Wh}
 %   Wahl\'en E., {\it A Hamiltonian formulation of water waves with constant vorticity}. Letters in Math. Physics, 79, 303-315, 2007. 


\bibitem{WW24}
	Wahl\'en E.,  Weber J., {\it Large-amplitude steady gravity water waves with general vorticity and critical layers.} Duke Math. J. 173 (11), 2197-2258, 2024. 

%\bibitem{We1}
 %   Weinstein A., {\it Lagrangian submanifolds and Hamiltonian systems}. Ann. Math., 98, 377-410, 1973.

\bibitem{We2}
    Weinstein A., {\it Normal modes for non-linear Hamiltonian systems}. Inv. Math., 20, 47-57, 1973. 

%\bibitem{Yang:1957aa}
%Yang C.T., \mass{non citato}
%{\it On a problem of {M}ontgomery.} 
%Proc. Amer. Math. Soc.,
%255--257, 8, 1957.

\bibitem{Z}
	Zakharov V.E., \emph{Stability of periodic waves of finite amplitude on the surface of a deep fluid}. J. Appl. Mech. Tech. Phys., 9, 73-83, 1968.

\bibitem{Zei}
    Zeidler E., {\it Existenzbeweis f\"ur cnoidal waves unter Ber\"ucksichtigung der Oberfl\"achen spannung.} Arch. Rational Mech. Anal, 41, 81-107, 1971.

%\bibitem{Zei2}
%Zeidler E.,
%\newblock{\it Existenzbeweis f\"ur permanente Kapillar-Schwerewellen mit allgemeinen %Wirbelverteilungen},
%\newblock{ Arch. Ration. Mech. Anal.}, 50, 34--72, 1973.

\end{thebibliography}
\end{document}